\numberwithin{equation}{section}
\newtheorem{theorem}[equation]{Theorem}
\newtheorem{lemma}[equation]{Lemma}
\newtheorem{proposition}[equation]{Proposition}
\newtheorem{corollary}[equation]{Corollary}
\theoremstyle{definition}
\newtheorem{definition}[equation]{Definition}
\theoremstyle{remark}
\newtheorem{remark}[equation]{Remark}
\renewcommand{\phi}{\varphi}
\DeclareMathSymbol{\boxprod}{\mathbin}{AMSa}{"03} % binary operator version of \square
\DeclareMathSymbol{\mixprod}{\mathbin}{AMSa}{"4F} % binary operator version of \triangledown
\newcommand{\convto}{\Rightarrow}
\newcommand{\dirsum}{\oplus}
\newcommand{\Dirsum}{\bigoplus}
\newcommand{\disjunion}{\sqcup}
\newcommand{\Disjunion}{\coprod}
\newcommand{\hmtpc}{\simeq}
\newcommand{\includesin}{\hookrightarrow}
\newcommand{\intersect}{\cap}
\newcommand{\iso}{\cong}
\newcommand{\Mackey}[1]{\overline{#1}\vphantom{#1}}
\newcommand{\smsh}{\wedge}
\newcommand{\susp}{\Sigma}
\newcommand{\tensor}{\otimes}
\newcommand{\Union}{\bigcup}
\newcommand{\C}{{\mathbb C}}
\newcommand{\D}{{\mathscr D}}
\newcommand{\LL}{{\Lambda}}			% irreducible repn of Z/2
\newcommand{\MM}{{\mathrm M}}					% irreducible repn of Z/p
\newcommand{\R}{{\mathbb R}}
\newcommand{\Trep}{{\mathbb T}}
\newcommand{\U}{{\mathscr U}}
\newcommand{\Urep}{{\mathbb U}}
\newcommand{\V}{{\mathscr V}}
\newcommand{\Vrep}{\mathbb{V}}
\newcommand{\Wrep}{\mathbb{W}}
\newcommand{\Xrep}{\mathbb{X}}
\newcommand{\Yrep}{\mathbb{Y}}
\newcommand{\Z}{\mathbb{Z}}
\newcommand{\Vinit}{\V^i}			% initial segments
\newcommand{\Vlow}{\V^\ell}			% lowest terms
\newcommand{\vRO}{v\mathscr{R}^iO}	% repns in v\Vinit
\newcommand{\CP}{\C P}
\newcommand{\tE}{\tilde E}
\newcommand{\ev}{\text{ev}}
\newcommand{\Ab}{\text{\textit{Ab}}}
\newcommand{\orb}[1]{{\mathscr{O}_{#1}}}
\newcommand{\sorb}[1]{{\widehat{\mathscr{O}}_{#1}}}
\newcommand{\conc}[1]{\langle #1 \rangle}
\DeclareMathOperator*{\colim}{colim}
\DeclareMathOperator{\Hom}{Hom}
\DeclareMathOperator{\Ext}{Ext}
\DeclareMathOperator{\Pic}{Pic}
\newcommand{\tensorS}{\tensor_{\Mackey H_G^\bullet(S^0)}}
\begin{document}
\title{The $\Z/p$ ordinary cohomology of $B_GU(1)$}

\author{Steven R. Costenoble}
\address{Department of Mathematics\\Hofstra University\\
   Hempstead, NY 11549}
\email{Steven.R.Costenoble@Hofstra.edu}

\subjclass[2010]{Primary 55R40;
Secondary 55N91, 55R91}

\date{\today}

\abstract
With $G = \Z/p$, $p$ prime, we calculate the ordinary $G$-cohomology (with Burnside ring coefficients) of 
$\CP_G^\infty = B_GU(1)$,
the complex projective space, a model for the classifying space for $G$-equivariant
complex line bundles. The $RO(G)$-graded ordinary cohomology was
calculated by Gaunce Lewis, but here we extend to a larger grading
in order to capture a more natural set of generators, including the Euler class
of the canonical bundle, as well as a significantly simpler set of relations.
\endabstract
\maketitle
\tableofcontents

%\mainmatter

%-------------------------------------------------------
\section*{Introduction}

It is hard to overstate the importance of characteristic classes of bundles, 
and the related characteristic numbers
of smooth manifolds, in nonequivariant algebraic topology.
Equivariantly, however, a good theory of characteristic classes has been lacking,
hence so have the techniques and calculations they would permit.
One reason for this problem has been the lack of a good ordinary cohomology theory
in which characteristic classes could live.
Nonequivariantly, the Euler class of an $n$-dimensional vector
bundle lies in the $n$th cohomology group. Equivariantly, what should we call the
dimension of a vector bundle? The integer dimension is inadequate
to get results such as a Thom isomorphism theorem in Bredon cohomology.
In the highly restricted case where all the fibers of the bundle are modeled on a single
representation of the ambient group, that representation can be used as the dimension
if we extend to $RO(G)$-graded cohomology.
But the vector bundles we encounter are rarely that simple.

It was with this in mind that Stefan Waner and I wrote \cite{CW:ordinaryhomology},
defining and exploring an ordinary cohomology theory with a grading expanded
beyond $RO(G)$, in which there are natural gradings to use as the dimensions
of arbitrary vector bundles. This theory does possess a Thom isomorphism theorem
for any vector bundle. In particular, it allows us to define Euler clases of
arbitrary vector bundles, suggesting the possibility of developing a good theory
of characteristic classes.

Of course, characteristic classes are best viewed as elements in the cohomology
of classifying spaces of vector bundles.Various
attempts have been made to calculate the cohomology rings of some equivariant
classifying spaces: 
See, for example, \cite{Le:projectivespaces}, \cite{Kro:SerreSS}, and \cite{Dug:Grassmannians}.
In particular, in \cite{Le:projectivespaces}, Guance Lewis gave calculations of the
$RO(G)$-graded cohomologies of complex projective spaces, including the classifying
space $B_GU(1)$, for $G = \Z/p$ with $p$ any prime.
But, because the canonical line bundle over $B_GU(1)$ has fibers modeled on all
the possible one-dimensional complex representations of $G$, there is no obvious candidate
for the Euler class, or the first Chern class.
Moreover, as we shall see, the calculation, particularly for odd $p$, is
much more complicated than it needs to be because of the restriction to $RO(G)$ grading.
In effect, these calculations saw only a small slice of a structure that
can be better described if we allow ourselves to use the larger grading
developed in \cite{CW:ordinaryhomology}.

The goal of this paper, then, is to calculate the equivariant ordinary cohomology
groups of $B_GU(1)$ for $G = \Z/p$, $p$ prime, in the expanded grading
defined in  \cite{CW:ordinaryhomology}. The first part of the paper reviews
some necessary background material and describes the ordinary cohomology theory
we use. The equivariant cohomology of a $G$-space is a module over the
$RO(G)$-graded equivariant cohomology of a point, which is highly nontrivial
away from the integer-graded part. The equivariant cohomology of a point was first
calculated by Stong, in an unpublished manuscript, and first published by
Lewis in \cite{Le:projectivespaces}. We summarize the calculation
in \S4, where we also give some other calculations we need, the cohomologies
of $EG$ and $\tE G$; part of our calculation of the cohomology of $B_GU(1)$
is based on the cofibration sequence $EG_+\to S^0 \to \tE G$.

Part 2 of the paper gives the calculation of the cohomology of $B_GU(1)$
with coefficients in the Burnside ring Mackey functor.
The main results are Theorem~\ref{thm:oddadditivestructure},
which shows that the cohomology of $B_GU(1)$ is a free module over the cohomology
of a point, and gives an explicit basis,
and Theorem~\ref{thm:multstructure}, which gives generators and relations
for the cohomology of $B_GU(1)$ as an algebra over the cohomology a a point.
Among the generators is the Euler class of the canonical bundle, as expected,
but there are a good many other interesting classes as well.
We give a brief comparison of our results with Lewis's
and also describe the result when using other coefficient systems,
including constant $\Z$ coefficients.

Part 3 returns to some general results about ordinary cohomology,
starting with a discussion of the ``decatorification'' process necessary
to consider equivariant cohomology as graded on a group, rather than as a functor
on a category of representations. This is necessary to make sure we get signs right.
The bulk of Part~3 gives another calculation of the cohomology of a point.
This might seem superfluous, given \cite{Le:projectivespaces},
but we need calculations of the cohomologies of $EG$ and $\tE G$, which do not
appear in the form we need in the literature, and once those are calculated it is a relatively simple
step to use them to calculate the cohomology of a point, so it seemed worthwhile to include
the complete calculation here. We also need to know how the three
cohomologies are related for our main calculation.
This calculation is similar to the ``Tate approach'' described by
John Greenlees in \cite{Green:fourapproaches}.
Although he pleads for consistency in notation in describing $RO(G)$-graded
theories in the case $G = \Z/2$, his suggested notation
would not work well for us here.

This work owes a large debt to Gaunce Lewis, of course.
The calculations he did and the methods he developed were the basis for a lot
of what appears here. I would like to think that he would have enjoyed this
paper, if he hadn't written it himself, first.
This paper is also founded on joint work with Peter May on equivariant orientation theory,
and on my long and continuing collaboration with Stefan Waner that produced
\cite{CW:ordinaryhomology} among many other results.

%-------------------------------------------------------
\part{Equivariant ordinary cohomology}\label{part:cohomology}

%-------------------------------------------------------
\section{The representation ring and the Burnside ring}

Throughout this paper $G$ will denote a cyclic group $\Z/p$ of prime order,
with generator $t$.
We begin by introducing notations for some representations of $G$.

\begin{definition}\label{def:irrRepresentations}
\hspace{2em}
\begin{enumerate}
\item
If $G = \Z/2$, let $\LL$ denote its nontrivial irreducible real representation,
that is, $\R$ with $G$ acting by multiplication by $-1$.
We fix once and for all a nonequivariant identification $\R = \LL$.
We also write $\MM_1 = \LL^2$ and hence fix a nonequivariant identification of
$\MM_1$ with $\R^2$, to be consistent with the next definition.

\item
If $G = \Z/p$ with $p$ odd, let $\MM_k$ denote the real representation of $G$
with underlying space $\R^2 = \C$, on which the generator $t$ of $G$ acts by
multiplication by $e^{2\pi ik/p}$.
(In parallel with the use of $\Lambda$ above, you should think of $\MM$ as
the capital Greek letter ``mu,'' rather than the Latin letter ``em.'')
The nontrivial irreducible real representations of $G$ are given by
$\MM_k$, $1\leq k \leq (p-1)/2$.
We fix once and for all nonequivariant identifications $\R^2 = \C = \MM_k$.

\item
If $G = \Z/p$ with $p$ odd, we can consider $\MM_k$ for any integer $k$,
defined as above. With this definition, 
$\MM_0 = \R^2$ has trivial action by $G$, and
$\MM_k \iso \MM_{p-k}$ as real representations.
Although this gives us no new representations of $G$, 
the identifications $\R^2 = \MM_k$ give $\MM_k$ and $\MM_{p-k}$
opposite nonequivariant orientations, which will be a useful
distinction to be able to make.

\item
If $G = \Z/p$ for any prime $p$, let $\C_k$ denote the complex representation of $G$
with underlying space $\C$ on which $t$ acts by multiplication by
$e^{2\pi ik/p}$.
The nontrivial irreducible complex representations of $G$ are given by
$\C_k$, $1\leq k \leq p-1$.
By definition, there are fixed nonequivariant identifications of the $\C_k$ with $\C$.

\end{enumerate}
\end{definition}

We fix an identification $\C = \R^2$, which fixes identifications
$\C_k = \MM_k$ as real representations.

Recall that the {\em (real) representation ring} $RO(G)$ of  $G$ is the Grothendieck group
on the monoid of isomorphism classes of finite-dimensional real representations of $G$
under direct sum,
with multiplication given by tensor products.
It is the (additive) group structure on $RO(G)$ that matters most to us here.
For any group, $RO(G)$ is the free abelian group with one generator for each irreducible real
representation of $G$. In particular, $RO(\Z/2)$ is free abelian on two generators, which
we call $1$ and $\LL$. For odd $p$, $RO(\Z/p)$ is free abelian on $(p+1)/2$ generators,
$1$, $\MM_1$, \dots, $\MM_{(p-1)/2}$.

\begin{remark}
In \cite{Le:projectivespaces}, when $p$ is odd, Lewis grades his cohomology on $RSO(G)$, the
ring of oriented real representations, in order to simplify his calculations somewhat.
In $RSO(G)$, $\MM_k$ and $\MM_{p-k}$ give distinct generators.
We will do all our calculations using $RO(G)$ grading, but will have
to pay careful attention to signs.
\end{remark}

\begin{definition}\label{def:ro0G}
If $\alpha\in RO(G)$, let $|\alpha|\in \Z$ denote the dimension of $\alpha$
and let $\alpha^G\in\Z$ denote the dimension of its fixed set.
Let
\begin{align*}
 I^\ev(G) &= \{ \alpha\in RO(G) \mid |\alpha| = 0 \text{ and $\alpha^G$ even} \} \\
 RO_0(G) &= \{ \alpha\in RO(G) \mid |\alpha| = \alpha^G = 0 \} \\
 RO_+(G) &= \Big\{ \sum_k n_k(\MM_k-2) \in RO(G) \mid n_k \geq 0 \ \forall k \Big\}.
\end{align*}
\end{definition}

Note that $I^\ev(G)$ is the subgroup generated by the $\MM_k-2$.
When $p$ is odd, $|\alpha| = 0$ implies that $\alpha^G$ is even, so
$I^\ev(G) = I(G)$ is the usual augmentation ideal, but when $p=2$,
$I^\ev(G)$ is a strict subgroup of $I(G)$.
The subgroup $RO_0(G)$ was used in
\cite{tDP:geomodules} and \cite{Le:projectivespaces}.
Note that $RO_0(G) = 0$ for $p=2$ or $p=3$, but for $p>3$, $RO_0(G)$ is a
free abelian group on the elements $\MM_k - \MM_1$ for $2\leq k \leq (p-1)/2$.

Also recall that $A(G)$, the {\em Burnside ring} of $G$, 
is the Grothendieck ring on the monoid
of isomorphism classes of finite $G$-spaces and disjoint union, with multiplication given by Cartesian product.
Segal \cite{Seg:equivariantstable} showed that, for finite $G$, this definition is equivalent to saying that $A(G)$ is
the ring of stable $G$-maps from $S^0$ to itself.
When $G=\Z/p$, $A(G)$ is the free abelian group on two generators, $1 = [G/G]$ and $g = [G/e]$,
with multiplication given by $g^2 = pg$. 
We let $\kappa = p - g$, so $\kappa^2 = p\kappa$ and
the elements $1$ and $\kappa$ form another basis of $A(G)$.
We have the {\em augmentation map} $\epsilon\colon A(G) \to \Z$, 
the ring map given by forgetting the $G$-action
and counting the (signed) number of points. On elements, we have $\epsilon(1) = 1$, $\epsilon(g) = p$,
and $\epsilon(\kappa) = 0$.

For $G=\Z/p$, elements of $A(G)$ are characterized by the augmentation map and the fixed-point map,
that is, by their signed number of points and their signed number of fixed points.
Viewing elements of $A(G)$ as stable maps $f\colon S^V\to S^V$, these correspond
to the nonequivariant degree of $f$ and the nonequivariant degree of $f^G$, respectively,
which allows us to explicitly identify maps of spheres with elements of $A(G)$.
For example, when $p=2$, negation on $\LL$ defines a map $f\colon S^\LL\to S^\LL$ of
nonequivariant degree $-1$ with $f^G$ of degree 1 (being the identity map on $S^0$).
Thus, $f$ must represent the element $1-g\in A(G)$, because $\epsilon(1-g) = -1$ and
$(1-g)^G = 1$. Notice that $(1-g)^2 = 1$, so $1-g$ is a unit in $A(\Z/2)$.

%-------------------------------------------------------
\section{Mackey functors}\label{sec:MackeyFunctors}

We will view equivariant cohomology as Mackey functor--valued, so we review some basic
facts about such functors.

\subsection*{Definition and examples}

\begin{definition}
Let $\orb{G}$ denote the orbit category of $G$ and let $\sorb{G}$ denote the stable orbit category,
i.e., the category of orbits of $G$ and stable $G$-maps between them.
\end{definition}

In the case of $\Z/p$, the orbit and stable orbit categories each have two objects, $G/G$ and $G/e$.
We picture the maps as follows:
\[
 \xymatrix{
  G/G \\
  G/e \ar[u]^\rho \ar@(dl,dr)[]_t \\
  \orb{G}
 }
\qquad\qquad
 \xymatrix{
  G/G \ar@(ur,ul)[]_{A(G)} \ar@/^/[d]^{\tau} \\
  G/e \ar@/^/[u]^{\rho} \ar@(dl,dr)[]_{\Z[G]} \\
  \sorb{G}
 }
\]
That is, in the stable orbit category, the ring of self maps of $G/G$ is
$A(G)$ while the ring of self maps of $G/e$ is isomorphic to the group ring $\Z[G] \iso \Z[t]/\langle t^p \rangle$.
The group of maps $G/e\to G/G$ is free abelian on the projection $\rho$ while the group
of maps $G/G\to G/e$ is free abelian on the transfer map $\tau$.
We have $\rho \circ \tau = g$
and $\tau \circ \rho = N = \sum_{k=0}^{p-1} t^k$.
Finally, $\rho t = \rho$, $t\tau = \tau$, $g\rho = p\rho$, and $\tau g = p\tau$.

\begin{definition}
A {\em Mackey functor} is a contravariant additive functor from the stable orbit category to the
category of abelian groups.
\end{definition}

If $\Mackey T$ is a Mackey functor, we will generally picture $\Mackey T$ using a diagram of the following form:
\[
 \xymatrix{
  \Mackey T(G/G) \ar@/_/[d]_{\rho} \\
  \Mackey T(G/e) \ar@/_/[u]_{\tau} \ar@(dl,dr)[]_{t^*}
 }
\]
Here, $\rho$ and $\tau$ are the maps induced by the maps of the same name in $\sorb G$.
$\Mackey T(G/G)$ should be a module over the Burnside ring; the action is specified by this diagram
because the action of $g$ is given by $\tau\circ\rho$.

We now review and give names to the Mackey functors that will appear in our calculations, beginning
with the following two:
\[
 \Mackey A_{G/G} = \sorb{G}(-,G/G)\colon \xymatrix{
		A(G) \ar@/_/[d]_{\epsilon} \\
		\Z \ar@/_/[u]_{\cdot g} \ar@(dl,dr)[]_{1}
	}
\qquad\qquad
 \Mackey A_{G/e} = \sorb{G}(-,G/e)\colon \xymatrix{
		\Z \ar@/_/[d]_{\cdot N} \\
		\Z[G] \ar@/_/[u]_{\epsilon} \ar@(dl,dr)[]_{\cdot t}
	  }
\]
We call $\Mackey A_{G/G}$ the {\em Burnside ring Mackey functor}.
In $\Mackey A_{G/e}$, $N = \sum_{k=0}^{p-1}t^k$ as above, and
$\epsilon\colon\Z[G]\to\Z$ is
\[
 \epsilon\Big(\sum_{k=0}^{p-1} a_k t^k\Big) = \sum_{k=0}^{p-1} a_k.
\]
$\Mackey A_{G/G}$ and $\Mackey A_{G/e}$, being represented functors,
are both projective, with
\[
 \Hom(\Mackey A_{G/G}, \Mackey T) \iso \Mackey T(G/G) \qquad\text{and}\qquad
 \Hom(\Mackey A_{G/e}, \Mackey T) \iso \Mackey T(G/e).
\]
Related to these are the following functors, where $d\in\Z$:
\[
 \Mackey A[d]\colon
 \xymatrix{
	\Z\dirsum\Z \ar@/_/[d]_{\left(\begin{smallmatrix} d & p \end{smallmatrix}\right)} \\
	\Z \ar@/_/[u]_{\left(\begin{smallmatrix} 0 \\ 1 \end{smallmatrix}\right)} \ar@(dl,dr)[]_{1}
 }
\]
Here, we are thinking of elements of $\Z\dirsum\Z$ as column vectors, so that the descending vertical map
takes $\left(\begin{smallmatrix} a \\ b \end{smallmatrix}\right) \mapsto da + pb$.
$\Mackey A[0]$ is a direct sum of two simpler functors while
$\Mackey A[1] \iso \Mackey A_{G/G}$. Also, $\Mackey A[d_1] \iso \Mackey A[d_2]$ if and only if
$d_1 \equiv \pm d_2 \pmod p$.
Thus, when $p=2$, there are, up to isomorphism, two such functors, and when
$p$ is odd there are $(p+1)/2$ (but, see Proposition~\ref{prop:extensions}).
Note that these functors are of interest really only when $p > 3$.

The next two are examples of a general construction $\conc C$
for any abelian group $C$, but these are the two cases that will occur in our calculations:
\[
 \conc\Z \colon \xymatrix{
		\Z \ar@/_/[d] \\
		0 \ar@/_/[u] \ar@(dl,dr)[]_{}
	   }
\qquad\qquad
 \conc{\Z/p}\colon \xymatrix{
			\Z/p \ar@/_/[d] \\
			0 \ar@/_/[u] \ar@(dl,dr)[]
	     }
\]

For the last group of Mackey functors, consider the forgetful functor from Mackey functors to
$\Z[G]$-modules that takes $\Mackey T \mapsto \Mackey T(G/e)$.
This functor has both a left and a right adjoint. The left adjoint $\Mackey L$ is defined by
\[
 \Mackey L U\colon
 \xymatrix{
		\Z\tensor_{\Z[G]} U \ar@/_/[d]_{N\tensor 1} \\
		\Z[G]\tensor_{\Z[G]}U = U. \ar@/_/[u]_{\epsilon\tensor 1} \ar@(dl,dr)[]_{t\tensor 1}
 }
\]
The right adjoint $\Mackey R$ is defined by
\[
 \Mackey R U\colon
 \xymatrix{
		\Hom_{\Z[G]}(\Z, U) \ar@/_/[d]_{\epsilon^*} \\
		\Hom_{\Z[G]}(\Z[G], U) = U. \ar@/_/[u]_{N^*} \ar@(dl,dr)[]_{t^*}
 }
\]
The particular cases that will occur in our calculations use $U = \Z$ with trivial
$\Z[G]$ action, or, in the case $p=2$, $U=\Z_-$, on which $t\in\Z[G]$ acts as $-1$.
These give us the following four Mackey functors.
\[
\begin{array}{rcrc}
 \Mackey L\Z\colon & \xymatrix{
		\Z \ar@/_/[d]_{p} \\
		\Z \ar@/_/[u]_{1} \ar@(dl,dr)[]_{1}
	}
&\qquad\qquad
 \Mackey L\Z_{-}\colon & \xymatrix{
		\Z/2 \ar@/_/[d]_{0} \\
		\Z \ar@/_/[u]_{\pi} \ar@(dl,dr)[]_{-1}
	 }
\\ \\
 \Mackey R\Z\colon & \xymatrix{
		\Z \ar@/_/[d]_{1} \\
		\Z \ar@/_/[u]_{p} \ar@(dl,dr)[]_{1}
	}
&\qquad\qquad
 \Mackey R\Z_{-}\colon & \xymatrix{
		0 \ar@/_/[d] \\
		\Z \ar@/_/[u] \ar@(dl,dr)[]_{-1}
	 }
\end{array}
\]

\subsection*{Multiplicative structures}

Mackey functors can have multiplicative pairings, based on the box product $\boxprod$.
The box product itself is described in \cite{Le:projectivespaces};
for us it suffices to know that a map $\Mackey S\boxprod \Mackey T \to \Mackey U$
is equivalent to a pair of maps
\begin{align*}
 \Mackey S(G/G)\tensor \Mackey T(G/G) &\to \Mackey U(G/G) \quad\text{and} \\
 \Mackey S(G/e)\tensor \Mackey T(G/e) &\to \Mackey U(G/e)
\end{align*}
satisfying the following conditions, where we write
$xy$ for the image of $x\tensor y$ under the appropriate one of these maps:
\begin{align*}
 t(xy) &= (tx)(ty), \\
 \rho(xy) &= \rho(x)\rho(y), \\
 \tau(x\rho(y)) &= \tau(x)y, \quad\text{and} \\
 \tau(\rho(x)y) &= x\tau(y).
\end{align*}
The last two conditions are called the {\em Frobenius relations.}
By convention, if $x\in \Mackey S(G/G)$ and $y\in\Mackey T(G/e)$,
we will often write $xy$ for $\rho(x)y \in \Mackey U(G/e)$.

The functor $\Mackey A_{G/G}$ has a self-pairing 
$\Mackey A_{G/G}\boxprod \Mackey A_{G/G}\to \Mackey A_{G/G}$ using the usual
ring structures on $A(G)$ and $\Z$.
A {\em unital ring} is a Mackey functor $\Mackey T$ with an associative and unital
pairing $\Mackey T\boxprod \Mackey T\to \Mackey T$,
where the unit is given by a map $\Mackey A_{G/G}\to \Mackey T$.
(Here, we use that $\Mackey A_{G/G}$ is the unit for $\boxprod$,
meaning that $\Mackey A_{G/G}\boxprod\Mackey T \iso \Mackey T$ for
any Mackey functor $\Mackey T$.)
The conditions above say that this is equivalent to $\Mackey T(G/G)$ being
a unital ring, $\Mackey T(G/e)$ being a unital ring (with the action of $t$ being a ring map), 
$\rho\colon \Mackey T(G/G)\to \Mackey T(G/e)$ being a ring map,
and $\tau\colon\Mackey T(G/e)\to\Mackey T(G/G)$ being a left and right
$\Mackey T(G/G)$-module map.
Clearly, $\Mackey A_{G/G}$ is itself a unital ring.
Every Mackey functor is a {\em module} over $\Mackey A_{G/G}$ in the obvious sense.

$\Mackey R\Z$ is a unital ring with the usual ring structure on $\Z$.
A module over $\Mackey R\Z$ is precisely a Mackey functor such that $\tau\circ\rho$
is multiplication by $p$.
$\Mackey A_{G/e}$, $\Mackey L\Z$, and $\conc{\Z/p}$ are modules over $\Mackey R\Z$ and, if $p=2$, so are
$\Mackey R\Z_-$ and $\Mackey L\Z_-$.
$\Mackey R\Z$ and $\Mackey A_{G/e}$ are projective $\Mackey R\Z$-modules.

\subsection*{Generators and relations}

We will want to describe the results of our calculations in terms of generators and relations.
When doing so, we identify elements of $\Mackey T(G/G)$ with maps $\Mackey A_{G/G}\to \Mackey T$
and elements of $\Mackey T(G/e)$ with maps $\Mackey A_{G/e}\to \Mackey T$.
For example, we can say that $\Mackey R\Z$ is generated by an element $\xi$ at level $G/G$
subject to the relation $\kappa\xi = 0$. By this we mean that 
the following sequence is exact:
\[
 \Mackey A_{G/G} \xrightarrow{\kappa} \Mackey A_{G/G} \xrightarrow{\epsilon} \Mackey R\Z \to 0.
\]
Here, $\kappa$ is the map 
corresponding to $\kappa\in \Mackey A_{G/G}(G/G)$ and
$\epsilon$ is the map
corresponding to $1\in \Mackey R\Z(G/G)$, which we are also calling $\xi$.

Generators may occur at either level $G/G$ or level $G/e$, and similarly for relations.
Here are descriptions of the other examples in terms of generators and relations:

\begin{itemize}
\item $\conc\Z$: Generated by an element $e$ at level $G/G$ subject to $\rho(e) = 0$.
That is, there is an exact sequence
\[
 \Mackey A_{G/e} \to \Mackey A_{G/G} \to \conc\Z \to 0
\]
where the first map is specified at level $G/e$ by $\epsilon\colon\Z[G]\to\Z$.

\item $\conc{\Z/p}$: Generated by an element $e$ at level $G/G$ subject to
$\rho(e) = 0$ and $pe = 0$.

\item $\Mackey L\Z$: Generated by an element $\iota$ at level $G/e$
such that $t\iota = \iota$.

\item $\Mackey L\Z_-$ ($p=2$): Generated by an element $\iota$ at level $G/e$
such that $t\iota = -\iota$.

\item $\Mackey R\Z_-$ ($p=2$): Generated by an element $\iota$ at level $G/e$
such that $\tau\iota = 0$.

\end{itemize}

We noted that several of these Mackey functors are modules over the ring $\Mackey R\Z$.
We can describe modules over $\Mackey R\Z$ in terms of generators and relations as well,
where a generator at level $G/G$ gives a copy of $\Mackey R\Z$ while a generator
at level $G/e$ gives a copy of $\Mackey A_{G/e}$.
The modules $\Mackey L\Z$, $\Mackey L\Z_-$, and $\Mackey R\Z_-$ are described
by the same generators and relations as above.
However, we can simplify the description of $\conc{\Z/p}$:
As an $\Mackey R\Z$-module, it is generated by an element $e$ at level $G/G$
such that $\rho(e) = 0$.

The functor $\Mackey A[d]$ is an interesting, if annoying, case.
It can be described as generated (as a Mackey functor)
by two elements, $\mu$ at level $G/G$ and $\iota$ at level $G/e$,
such that $\rho(\mu) = d\iota$ and $t\iota = \iota$.
(Note that we can, of course, simplify this description if $d\equiv 1 \pmod p$.)
$\mu$ and $\tau(\iota)$ then form a $\Z$-basis for $\Mackey A[d](G/G)$.
This description depends on the particular integer $d$.
Suppose that $d' \equiv d \pmod p$ and let
\[
 \mu' = \mu + \frac{d'-d}{p}\tau(\iota),
\]
then $\mu'$ and $\iota$ generate $\Mackey A[d]$ with
$\rho(\mu') = d'\iota$.
Similarly, $-\mu$ and $\iota$ generate, with $\rho(-\mu) = -d\iota$.
So, the generator $\mu$ is determined only up to sign and a multiple of $\tau(\iota)$,
although it is determined by a particular choice of $d\in\Z$ in the appropriate congruence
classes modulo $p$, if we insist that $\rho(\mu) = d\iota$.

There is another basis for $\Mackey A[d]$ that is useful in our calculations.
Let $d^{-1}\in\Z$ denote an integer such that $d\cdot d^{-1} \equiv 1 \pmod p$
and let $q = (1 - dd^{-1})/p$, so that
$dd^{-1} + qp = 1.$ Then the matrix
\[
 \left(\begin{matrix}d^{-1}&p\\q&-d\end{matrix}\right)
\]
is invertible over $\Z$.
Note that
\[
 \kappa\mu = p\mu - d\tau(\iota).
\]
Moreover, this element is independent of the choice of $d$ in its congruence class modulo $p$
because $\kappa\tau(\iota) = 0$: If $\mu' = \mu + \frac{1}{p}(d'-d)\tau(\iota)$, then
\[
 \kappa\mu' = \kappa\mu + \frac{d'-d}{p}\kappa\tau(\iota) = \kappa\mu.
\]
So if we let 
\[
 \lambda = d^{-1}\mu + q\tau(\iota),
\]
then $\{\lambda, \kappa\mu\}$ is another basis of $\Mackey A[d](G/G)$,
with $\rho(\lambda) = \iota$ and $\rho(\kappa\mu) = 0$.
It follows that $\lambda$ and $\kappa\mu$ generate $\Mackey A[d]$, subject to the relations
$\rho(\kappa\mu) = 0$ and $\kappa\lambda = d^{-1}\kappa\mu$.
We can write our original basis as
\begin{align*}
 \mu &= d\lambda + \frac{1-dd^{-1}}{p}\kappa\mu \quad\text{and} \\
 \tau(\iota) &= p\lambda - d^{-1}\kappa\mu.
\end{align*}
Using the basis $\{\lambda,\kappa\mu\}$, the functor $\Mackey A[d]$ can be displayed as:
\[
 \Mackey A[d]\colon
 \xymatrix{
	\Z\dirsum\Z \ar@/_/[d]_{\left(\begin{smallmatrix} 1 & 0 \end{smallmatrix}\right)} \\
	\Z \ar@/_/[u]_{\left(\begin{smallmatrix} p \\ -d^{-1} \end{smallmatrix}\right)} \ar@(dl,dr)[]_{1}
 }
\]

\subsection*{Extensions}

Finally, a word about extension problems that will show up
in Part~\ref{part:point} when we are calculating the cohomology of a point.

\begin{proposition}\label{prop:extensions}
There are $p$ distinct extensions of $\Mackey R\Z$ by $\conc\Z$
(up to the equivalence used to define $\Ext^1$), all having the form
\[
 0 \to \conc\Z \to \Mackey A[d] \to \Mackey R\Z \to 0,
\]
where the map $\conc\Z\to\Mackey A[d]$ takes $1$ to $\kappa\mu$ and the
map $\Mackey A[d]\to\Mackey R\Z$ takes $\lambda$ to $1$ and $\kappa\mu$ to $0$.
Further, if we have a commutative diagram of the following form with exact rows:
\[
 \xymatrix{
  0 \ar[r] & \conc\Z \ar[r] \ar@{=}[d] & \Mackey A[d] \ar[r] \ar[d]
    & \Mackey R\Z \ar[r] \ar[d]^{\cdot m} & 0 \\
  0 \ar[r] & \conc\Z \ar[r] & \Mackey T \ar[r]
    & \Mackey R\Z \ar[r] & 0 
 }
\]
then $\Mackey T\iso \Mackey A[md]$.
\end{proposition}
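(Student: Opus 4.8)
The plan is to reduce the entire statement to the single computation $\Ext^1(\Mackey R\Z,\conc\Z)\iso\Z/p$ together with the naturality of $\Ext^1$ in its first variable. I would start from the presentation of $\Mackey R\Z$ recorded above: the map $\epsilon\colon\Mackey A_{G/G}\to\Mackey R\Z$ corresponding to $\xi$ is surjective, and an inspection at the two levels shows its kernel is the sub--Mackey functor of $\Mackey A_{G/G}$ generated by $\kappa\in A(G)=\Mackey A_{G/G}(G/G)$, which is a copy of $\conc\Z$ on whose value at $G/G$ the element $\kappa$ acts as multiplication by $p$. Since $\Mackey A_{G/G}$ is projective, applying $\Hom(-,\conc\Z)$ to $0\to\conc\Z\to\Mackey A_{G/G}\xrightarrow{\epsilon}\Mackey R\Z\to 0$ identifies $\Ext^1(\Mackey R\Z,\conc\Z)$ with the cokernel of the restriction map $\Hom(\Mackey A_{G/G},\conc\Z)\to\Hom(\conc\Z,\conc\Z)$. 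A homomorphism $\Mackey A_{G/G}\to\conc\Z$ is determined by the image $n$ of $1\in A(G)$ and sends $\kappa$ to $\kappa n=pn$; so, under $\Hom(\Mackey A_{G/G},\conc\Z)\iso\Z\iso\Hom(\conc\Z,\conc\Z)$, this restriction map is multiplication by $p$, giving $\Ext^1(\Mackey R\Z,\conc\Z)\iso\Z/p$ and hence exactly $p$ equivalence classes of extensions.

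I would then check that these classes are realized as claimed. For $d\not\equiv 0\pmod p$ a direct computation with the basis $\{\lambda,\kappa\mu\}$ of $\Mackey A[d](G/G)$ and the relations $\rho(\kappa\mu)=0$, $\rho(\lambda)=\iota$, $\kappa\lambda=d^{-1}\kappa\mu$, $\tau(\iota)=p\lambda-d^{-1}\kappa\mu$ shows that the maps in the statement do assemble into a short exact sequence $\mathcal E_d\colon 0\to\conc\Z\to\Mackey A[d]\to\Mackey R\Z\to 0$: the subfunctor generated by $\kappa\mu$ is $\conc\Z$, and the quotient is $\Mackey R\Z$. To identify the class of $\mathcal E_d$ I would run the usual connecting--map recipe: lift $\epsilon$ through $\Mackey A[d]\to\Mackey R\Z$ by sending $1\in A(G)$ to $\lambda$ (allowed since $\Mackey A_{G/G}$ is free on this element), restrict to the kernel $\conc\Z$, and read off $\kappa\mapsto\kappa\lambda=d^{-1}\kappa\mu$; thus $\mathcal E_d$ corresponds, under the identification above, to $d^{-1}\in\Z/p$. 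As $d$ ranges over the units mod $p$ this exhausts the nonzero classes, and the split extension $\conc\Z\dirsum\Mackey R\Z$ accounts for the zero class, so every extension of $\Mackey R\Z$ by $\conc\Z$ is equivalent to some $\mathcal E_d$ or is split. That gives the first two sentences.

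For the final assertion I would invoke the standard fact that a morphism of short exact sequences which is the identity on the common kernel and a map $f$ on the common quotient exists exactly when the class of the top row equals $f^{*}$ of the class of the bottom row. With $f=\cdot m$, and since $\Ext^1(-,\conc\Z)$ is additive so that $(\cdot m)^{*}$ is multiplication by $m$ on $\Ext^1(\Mackey R\Z,\conc\Z)\iso\Z/p$, the displayed diagram forces $d^{-1}\equiv m\cdot[\Mackey T]\pmod p$, hence $[\Mackey T]\equiv m^{-1}d^{-1}\equiv(md)^{-1}$, which is the class of $\mathcal E_{md}$; so the bottom extension is equivalent to $\mathcal E_{md}$, and in particular $\Mackey T\iso\Mackey A[md]$ as Mackey functors. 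I expect the only delicate point to be the bookkeeping in the middle paragraph --- confirming that $d^{-1}$ rather than $d$ appears, and handling consistently the sign implicit in the isomorphism $\Ext^1\iso\Z/p$ --- but since that isomorphism is fixed once and applies to $\mathcal E_d$ and $\mathcal E_{md}$ alike, the conclusion is insensitive to the choice.
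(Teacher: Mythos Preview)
Your proof is correct. Both you and the paper compute $\Ext^1(\Mackey R\Z,\conc\Z)\iso\Z/p$ from the projective resolution $0\to\conc\Z\to\Mackey A_{G/G}\to\Mackey R\Z\to0$ and exhibit the extensions $\mathcal E_d$. From there the approaches diverge: the paper distinguishes the $\mathcal E_d$ and handles the final diagram by explicit matrix computations in the $\{\lambda,\kappa\mu\}$ basis, showing that any equivalence (respectively, any morphism over $\cdot m$) of such extensions is given at level $G/G$ by a matrix $\left(\begin{smallmatrix}1&0\\q&1\end{smallmatrix}\right)$ (respectively $\left(\begin{smallmatrix}m&0\\q&1\end{smallmatrix}\right)$) and then reading off the congruence $d_1^{-1}\equiv d_2^{-1}$ or $d^{-1}\equiv m d_2^{-1}$. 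You instead identify the class of $\mathcal E_d$ in $\Z/p$ as $d^{-1}$ via the connecting map (lifting $1\mapsto\lambda$ and computing $\kappa\mapsto\kappa\lambda=d^{-1}\kappa\mu$) and then derive both the distinctness and the final assertion from naturality of $\Ext^1$ in its first variable. Your route is more conceptual and sidesteps the linear algebra; the paper's gives the explicit change-of-basis matrices, which can be handy elsewhere. Your caution about the split case is also warranted: the $\{\lambda,\kappa\mu\}$ basis exists only when $d$ is a unit mod $p$, and in fact $\Mackey A[0]\iso\conc\Z\dirsum\Mackey L\Z$ rather than $\conc\Z\dirsum\Mackey R\Z$, so the split extension of $\Mackey R\Z$ by $\conc\Z$ is not literally an $\Mackey A[d]$; the paper's ``one for each congruence class modulo $p$'' is slightly loose on this point, though the case $d\equiv 0$ never arises in the applications.
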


\begin{proof}
We begin by computing $\Ext^1(\Mackey R\Z, \conc\Z)$ to determine the number of possible extensions.
We have the short exact sequence
\[
 0 \to \conc\Z \to \Mackey A_{G/G} \to \Mackey R\Z \to 0.
\]
Here, the map $\conc\Z\to\Mackey A_{G/G}$ takes $1 \mapsto \kappa$ while
$\Mackey A_{G/G}\to \Mackey R\Z$ is $\epsilon$ at level $G/G$.
Because $\Mackey A_{G/G}$ is projective, we can calculate the Ext group as the cokernel in
\[
 \xymatrix@R-1.5ex{
 \Hom(\Mackey A_{G/G},\conc\Z) \ar[r] \ar@{=}[d]
   & \Hom(\conc\Z, \conc\Z) \ar[r] \ar@{=}[d]
   & \Ext^1(\Mackey R\Z, \conc\Z) \ar[r] & 0 \\
 \Z \ar[r]^p & \Z
 }
\]
Hence, $\Ext^1(\Mackey R\Z, \conc\Z) \iso \Z/p$ and there are exactly $p$ distinct extensions.
(As usual, this means distinct up to isomorphisms that are the identity on $\conc\Z$ and
$\Mackey R\Z$.)

For every $d\in\Z$, we have a short exact sequence
\[
 0 \to \conc\Z \to \Mackey A[d] \to \Mackey R\Z \to 0
\]
given by the following diagram, where we use the basis $\{\lambda,\kappa\mu\}$ for $\Mackey A[d]$:
\[
 \xymatrix@C+1.5em{
   0 \ar[r]
    & \Z \ar@/_/[d] \ar[r]^-{\left(\begin{smallmatrix}0\\1\end{smallmatrix}\right)}
     & \Z\dirsum\Z \ar@/_/[d]_{(\begin{smallmatrix}1&0\end{smallmatrix})}
           \ar[r]^-{(\begin{smallmatrix}1&0\end{smallmatrix})}
      & \Z \ar@/_/[d]_{1} \ar[r]
       & 0
\\
   0 \ar[r]
    & 0 \ar@/_/[u] \ar[r]
     & \Z \ar@/_/[u]_{\left(\begin{smallmatrix}p\\-d^{-1}\end{smallmatrix}\right)} \ar[r]_1
      & \Z \ar@/_/[u]_{p} \ar[r]
       & 0
 }
\]
If we have a diagram of the form
\[
 \xymatrix{
  0 \ar[r] & \conc\Z \ar[r] \ar@{=}[d] & \Mackey A[d_1] \ar[r] \ar[d]
    & \Mackey R\Z \ar[r] \ar@{=}[d] & 0 \\
  0 \ar[r] & \conc\Z \ar[r] & \Mackey A[d_2] \ar[r]
    & \Mackey R\Z \ar[r] & 0 
 }
\]
then the map $\Mackey A[d_1](G/e)\to \Mackey A[d_2](G/e)$ may be taken to be the identity and
it is straightforward to show that the map $\Mackey A[d_1](G/G)\to \Mackey A[d_2](G/G)$
must then be given by a matrix of the form $\left(\begin{smallmatrix}1&0\\q&1\end{smallmatrix}\right)$
where $d_1^{-1} = d_2^{-1} + qp$, hence $d_1\equiv d_2 \pmod p$.
Therefore, there are $p$ distinct such extensions, one for each congruence class modulo $p$,
and they account for all the possible extensions.

Finally, if we have a diagram of the form
\[
 \xymatrix{
  0 \ar[r] & \conc\Z \ar[r] \ar@{=}[d] & \Mackey A[d] \ar[r] \ar[d]
    & \Mackey R\Z \ar[r] \ar[d]^{\cdot m} & 0 \\
  0 \ar[r] & \conc\Z \ar[r] & \Mackey T \ar[r]
    & \Mackey R\Z \ar[r] & 0 
 }
\]
then we know that $\Mackey T\iso \Mackey A[d_2]$ for some $d_2\in\Z$.
A similar argument to the one above shows that
the map $\Mackey A[d]\to \Mackey A[d_2]$ must be given by a matrix of the form
$\left(\begin{smallmatrix}m&0\\q&1\end{smallmatrix}\right)$ where
$d^{-1} = md_2^{-1} + qp$, hence $d_2 \equiv md \pmod p$.
\end{proof}

\begin{proposition}\label{prop:extensions2}
All extensions of $\conc{\Z/p}$ by $\conc\Z$ have the form
\[
 0 \to \conc\Z \xrightarrow{i_2} \conc{\Z/p}\dirsum\conc\Z \xrightarrow{\pi_1}
  \conc{\Z/p} \to 0
\]
or
\[
 0 \to \conc\Z \xrightarrow{p} \conc\Z \xrightarrow{d} \conc{\Z/p} \to 0
\]
where $d\not\equiv 0 \pmod p$.
Further, if we have a commutative diagram of the form
\[
 \xymatrix{
  0 \ar[r] & \conc\Z \ar[r] \ar@{=}[d] & \Mackey A[d] \ar[r] \ar[d]
    & \Mackey R\Z \ar[r] \ar[d]^{\pi} & 0 \\
  0 \ar[r] & \conc\Z \ar[r] & \Mackey T \ar[r]
    & \conc{\Z/p} \ar[r] & 0 
 }
\]
where $\pi$ is the natural projection, then $\Mackey T\iso \conc{\Z/p}\dirsum\conc\Z$
if $d \equiv 0 \pmod p$, otherwise the diagram must take the form
\[
 \xymatrix{
  0 \ar[r] & \conc\Z \ar[r] \ar@{=}[d] & \Mackey A[d] \ar[r] \ar[d]^\eta
    & \Mackey R\Z \ar[r] \ar[d]^{\pi} & 0 \\
  0 \ar[r] & \conc\Z \ar[r]_p & \conc\Z \ar[r]_d
    & \conc{\Z/p} \ar[r] & 0 
 }
\]
where $\eta(\mu) = 1$ and $\eta(\tau(\iota)) = 0$.
\end{proposition}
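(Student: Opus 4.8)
The plan is to compute $\Ext^1(\conc{\Z/p}, \conc\Z)$ first, then identify the two families of extensions, and finally analyze the pullback diagram. For the Ext computation I would use a projective resolution of $\conc{\Z/p}$ as a Mackey functor. From the generators-and-relations description, $\conc{\Z/p}$ is generated by $e$ at level $G/G$ with $\rho(e) = 0$ and $pe = 0$; this gives a partial resolution $\Mackey A_{G/e} \dirsum \Mackey A_{G/G} \to \Mackey A_{G/G} \to \conc{\Z/p} \to 0$ where the $\Mackey A_{G/G}$ summand maps in via $\cdot p$ and the $\Mackey A_{G/e}$ summand encodes the relation $\rho(e)=0$ (i.e.\ $\epsilon\colon \Z[G]\to\Z$ at level $G/e$). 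Applying $\Hom(-,\conc\Z)$ and using that $\Hom(\Mackey A_{G/G},\conc\Z) = \conc\Z(G/G) = \Z$ and $\Hom(\Mackey A_{G/e},\conc\Z) = \conc\Z(G/e) = 0$, I expect the cochain complex to reduce to $\Z \xrightarrow{p} \Z \xrightarrow{0} 0$, so $\Ext^1(\conc{\Z/p},\conc\Z) \iso \Z/p$. Thus there are exactly $p$ extension classes, the trivial one plus $p-1$ nontrivial ones.

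Next I would exhibit the two claimed forms as actual extensions and match them to these $p$ classes. The split extension $0 \to \conc\Z \xrightarrow{i_2} \conc{\Z/p}\dirsum\conc\Z \xrightarrow{\pi_1} \conc{\Z/p} \to 0$ represents the zero class. For each $d\not\equiv 0 \pmod p$, the sequence $0 \to \conc\Z \xrightarrow{p} \conc\Z \xrightarrow{d} \conc{\Z/p} \to 0$ (concentrated entirely at level $G/G$, with all $G/e$-level groups zero) is exact since $d$ is surjective onto $\Z/p$ with kernel $p\Z$. I would then argue that distinct residues $d \bmod p$ among $\{1,\dots,p-1\}$ give distinct (inequivalent) extension classes — by chasing a hypothetical isomorphism of extensions, the induced map $\conc\Z \to \conc\Z$ in the middle must be an isomorphism over $\Z$ fixing the sub $\conc\Z$ and covering the identity on $\conc{\Z/p}$, forcing it to be the identity, hence $d_1 = d_2$ exactly (not just mod $p$, once we restrict $d$ to a fixed set of representatives). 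Together with the split case this accounts for all $p$ classes.

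For the pullback statement, I would start from the given diagram with $\Mackey A[d]$ on the top row over $\Mackey R\Z \xrightarrow{\pi} \conc{\Z/p}$, where $\pi$ is the natural projection $\Z \to \Z/p$ at level $G/G$ (and the zero map at level $G/e$, since $\conc{\Z/p}$ vanishes there). The bottom-row extension $\Mackey T$ is the pushout/pullback determined by $\pi_*$ applied to the class of $\Mackey A[d]$; concretely $\Mackey T$ is classified by the image of the class of $\Mackey A[d]$ under $\pi^*\colon \Ext^1(\conc{\Z/p},\conc\Z) \to \Ext^1(\Mackey R\Z,\conc\Z)$ — wait, more precisely $\Mackey T$ is the pullback of the bottom situation, so I should think of it via the map $\Ext^1(\conc{\Z/p},\conc\Z)\to\Ext^1(\Mackey R\Z,\conc\Z)$ induced by $\pi$; but since $\Mackey A[d]$ is given \emph{on top}, $\Mackey T$ is obtained by pushing the top extension forward along $\mathrm{id}$ and pulling the quotient back — i.e.\ $\Mackey T$ is simply the extension of $\conc{\Z/p}$ by $\conc\Z$ whose pullback along $\pi$ is $\Mackey A[d]$. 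Since $\Mackey A[d]$ as an extension of $\Mackey R\Z$ corresponds (by Proposition~\ref{prop:extensions}) to the class $d \in \Z/p \iso \Ext^1(\Mackey R\Z,\conc\Z)$, and $\pi^*\colon \Z/p \to \Z/p$ on Ext groups is (up to a unit coming from the resolution comparison) an isomorphism, I would track the class through and conclude: if $d \equiv 0 \pmod p$ then the class of $\Mackey T$ is zero, so $\Mackey T \iso \conc{\Z/p}\dirsum\conc\Z$; otherwise $\Mackey T \iso \conc\Z$ (the second form), and I would pin down the map $\eta\colon \Mackey A[d] \to \conc\Z$ explicitly by sending $\mu \mapsto 1$, $\tau(\iota)\mapsto 0$ (at level $G/e$ everything dies since $\conc\Z(G/e) = 0$), then check this is a Mackey functor map compatible with both rows and that $\eta$ covers $\pi$ and restricts to the identity on $\conc\Z \hookrightarrow \Mackey A[d]$ (recalling $\kappa\mu = p\mu - d\tau(\iota) \mapsto p$, matching the bottom-row inclusion $\conc\Z \xrightarrow{p}\conc\Z$).

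The main obstacle I anticipate is not the Ext count but getting the explicit map $\eta$ and its compatibilities exactly right — in particular verifying that $\eta(\kappa\mu) = p$ matches the inclusion $\conc\Z \xrightarrow{p} \conc\Z$ in the bottom row, that $\eta$ followed by $d\colon \conc\Z \to \conc{\Z/p}$ equals $\pi$ composed with $\Mackey A[d] \to \Mackey R\Z$ (this uses $\rho(\mu) = d\iota$ and the definition of $\lambda$, so that $\lambda \mapsto \mu$-component tracks correctly modulo $p$), and handling the sign/unit ambiguity when transporting classes between $\Ext^1(\Mackey R\Z,\conc\Z)$ and $\Ext^1(\conc{\Z/p},\conc\Z)$ so that the conclusion is $\Mackey A[md]$-free and genuinely states $\Mackey T \iso \conc\Z$ with the claimed $d$. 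A careful diagram chase at both levels $G/G$ and $G/e$ simultaneously, keeping $\tau$ and $\rho$ straight, should resolve these; the $G/e$ level is trivial throughout, which simplifies matters considerably.
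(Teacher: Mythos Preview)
Your approach is correct but substantially more elaborate than necessary, and the paper's proof exploits a simplification you overlook. Since both $\conc\Z$ and $\conc{\Z/p}$ vanish at level $G/e$, any extension $\Mackey T$ between them must also satisfy $\Mackey T(G/e)=0$; hence the category of Mackey functors concentrated at level $G/G$ is equivalent to abelian groups, and the classification of extensions reduces immediately to $\Ext^1_{\Z}(\Z/p,\Z)\iso\Z/p$. This bypasses your projective resolution entirely. Your resolution argument would work, but as written it is incomplete: a two-term partial resolution $P_1\to P_0\to M\to 0$ does not by itself compute $\Ext^1$ without either extending to $P_2$ or identifying $\Hom(\ker(P_1\to P_0),\conc\Z)$; you gloss over this with ``I expect.''

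For the second part, the paper's argument is again more direct than your Ext-class transport. Given the diagram, $\Mackey A[d]$ is automatically the pullback of the right-hand square (a general fact about maps of extensions with identity on the subobject). The paper then simply runs through the $p$ possible bottom rows, computes each pullback, and matches it to $\Mackey A[d]$; this establishes that pullback along $\pi$ gives a bijection $\Ext^1(\conc{\Z/p},\conc\Z)\to\Ext^1(\Mackey R\Z,\conc\Z)$ and simultaneously pins down $\eta$. Your approach via pushing classes through $\pi^*$ is equivalent in content but, as your own hesitation (``wait, more precisely\dots'') shows, harder to keep straight. Your explicit check that $\eta(\kappa\mu)=p$ and $\eta(\tau(\iota))=0$ is correct and is exactly what the direct verification amounts to.
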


\begin{proof}
The statement about the possible extensions of $\conc{\Z/p}$ by $\conc\Z$ follows
from the similar statement about the extensions of the group
$\Z/p$ by $\Z$ (where $\Ext(\Z/p,\Z) \iso \Z/p$).

If we have a diagram of the form
\[
 \xymatrix{
  0 \ar[r] & \conc\Z \ar[r] \ar@{=}[d] & \Mackey A[d] \ar[r] \ar[d]
    & \Mackey R\Z \ar[r] \ar[d]^{\pi} & 0 \\
  0 \ar[r] & \conc\Z \ar[r] & \Mackey T \ar[r]
    & \conc{\Z/p} \ar[r] & 0 
 }
\]
then $\Mackey A[d]$ must be the pullback of the right-hand square. Running through the $p$
possibilities for the bottom row shows that the pullback establishes an isomorphism
$\Ext^1(\conc{\Z/p},\conc\Z) \iso \Ext^1(\Mackey R\Z,\conc\Z)$
given explicitly as in the statement of the proposition.
\end{proof}

%-------------------------------------------------------
\section{Equivariant ordinary cohomology}\label{sec:genCohomology}

In \cite{CW:ordinaryhomology}, Stefan Waner and the author gave a detailed exposition of equivariant
ordinary cohomology graded on ``representations of the fundamental groupoid.''
In this section we review some of the basic definitions and properties.
We assume that $G$ is a finite group throughout, though
\cite{CW:ordinaryhomology} is written in the more general context of compact Lie groups.

\subsection*{The equivariant fundamental groupoid and its representations}

When $X$ is a $G$-space, we have the following definition,
given originally by tom Dieck \cite{tD:transfgroups}
and used extensively in \cite{CMW:orientation} and \cite{CW:ordinaryhomology}.

\begin{definition}
The {\em equivariant fundamental groupoid of $X$}, denoted $\Pi_G X$, is the category
whose objects are the $G$-maps $x\colon G/H\to X$ for all the orbits $G/H$ of $G$, 
and whose maps from
$x$ to $y\colon G/K\to X$ are pairs $(\omega, \alpha)$, where
$\alpha\colon G/H\to G/K$ is a $G$-map and $\omega$ is a $G$-homotopy class of paths,
rel endpoints, from $x$ to $y\circ\alpha$.
Composition is induced by composition of maps of orbits and the usual composition
of path classes.
\end{definition}

$\Pi_G$ is a 2-functor, taking $G$-maps of spaces to functors
and homotopies of $G$-maps to natural isomorphisms.
There is an evident functor $\pi\colon \Pi_G X\to \orb G$, with
$\pi(x\colon G/H\to X) = G/H$ and $\pi(\omega,\alpha) = \alpha$.
This makes $\Pi_G X$ a {\em bundle of groupoids} over $\orb G$
in the language of \cite{CMW:orientation}.

\begin{definition}
For $n$ an integer, let $v\V_G(n)$ denote the category of 
{\em virtual $n$-dimensional orthogonal bundles over orbits.}
Its objects are pairs $(G\times_H V, G\times_H W)$, where $V$ and $W$ are
representations of $H$ with $|V|-|W| = n$; we use the notation $G\times_H(V\ominus W)$
for such an object and think of it as a formal difference of bundles.
A morphism is a virtual map $G\times_H(V_1\ominus W_1) \to G\times_K(V_2\ominus W_2)$,
given by a $G$-map $\alpha\colon G/H\to G/K$ and an equivalence class of pairs of bundle maps
\begin{align*}
 \phi\colon G\times_H(V_1\dirsum Z_1) &\to G\times_K(V_2\dirsum Z_2) \\
 \psi\colon G\times_H(W_1\dirsum Z_1) &\to G\times_K(W_2\dirsum Z_2)
\end{align*}
over $\alpha$, where $Z_1$ is a representation of $H$ and $Z_2$ is a representation of $K$.
Two such pairs of maps are considered equivalent if they are stably $G$-homotopic
through orthogonal bundle maps over $\alpha$.
Here, stabilization is given by addition
to both $\phi$ and $\psi$ of the same bundle map $G\times_H U_1\to G\times_K U_2$
over $\alpha$, and we also allow replacement of $Z_1$ and $Z_2$ by isomorphic representations,
meaning that, if $f\colon Z_1\to Z'_1$ and $g\colon Z_2\to Z'_2$ are $H$- and $K$-isomorphisms,
respectively, then the pair $(\phi,\psi)$ is equivalent to $(\phi',\psi')$, where
the latter are determined by the fact that the following diagrams commute:
\[
 \xymatrix{
   G\times_H(V_1\dirsum Z_1) \ar[r]^\phi \ar[d]_f
     & G\times_K(V_2\dirsum Z_2) \ar[d]^g \\
  G\times_H(V_1\dirsum Z'_1) \ar[r]^{\phi'}
     & G\times_K(V_2\dirsum Z'_2)
  }
\]
and
\[
 \xymatrix{
   G\times_H(W_1\dirsum Z_1) \ar[r]^\psi \ar[d]_f
     & G\times_K(W_2\dirsum Z_2) \ar[d]^g \\
  G\times_H(V_1\dirsum Z'_1) \ar[r]^{\psi'}
     & G\times_K(W_2\dirsum Z'_2)
  }
\]
We write a typical map as $(\alpha,[\phi\ominus\psi])$.

We let $v\V_G$ be the (disjoint) union of the categories $v\V_G(n)$ for all $n$.
\end{definition}

It is not necessarily obvious from the definition above that the collection of morphisms
between two object is (only) a set, but
\cite[\S1.3]{CW:ordinaryhomology} gives an equivalent definition that makes it
clear.

We have a functor $\pi\colon v\V_G\to \orb G$, given by
$\pi(G\times_H(V\ominus W)) = G/H$ and
$\pi((\alpha,[\phi\ominus\psi])) = \alpha$.
This makes $v\V_G$ and each $v\V_G(n)$ into a bundle of groupoids over $\orb G$.

\begin{definition}
A {\em virtual $n$-dimensional orthogonal representation of $\Pi_G X$} is a functor
$\Pi_G X\to v\V_G(n)$ over $\orb G$.
These form the {\em category of virtual $n$-dimensional representations of $\Pi_G X$}
when we take morphisms to be natural isomorphisms.
\end{definition}

For each representation $V$ of $G$, there is a representation of $\Pi_G X$
we denote by $\Vrep$, given by taking each $x\colon G/H\to X$ to $G/H\times V$
and each $(\omega,\alpha)$ to $\alpha\times 1$.
This generalizes to virtual representations of $G$ to give
representations $\Vrep\ominus \Wrep$.

Less trivially, if $\xi\colon E\to X$ is a $G$-vector bundle over $X$, there is an associated
representation of $\Pi_G X$, denoted $\xi^*$, given by
$\xi^*(x\colon G/H\to X) = x^*(\xi)$.

\begin{definition}
The {\em orthogonal representation ring of $\Pi_G X$}, denoted $RO(\Pi_G X)$,
is the ring whose elements are the isomorphism classes of
the virtual orthogonal representations of $\Pi_G X$ of all dimensions.
Addition is given by direct sum of bundles and multiplication by tensor product.
\end{definition}

In particular, $RO(\Pi_G(*)) \iso RO(G)$ when $*$ denotes the one-point $G$-space.
In general, the map $X\to *$ induces a map $RO(G)\to RO(\Pi_G X)$, taking $V\ominus W$
to $\Vrep \ominus \Wrep$ as above.
We will not use the multiplication on $RO(\Pi_G X)$ in this paper, just the additive structure.

\subsection*{Ordinary cohomology}

Let $B$ be a $G$-space and let $(X,q,\sigma)$ be an ex-space over $B$, which is to say that
$q\colon X\to B$ is a $G$-map and $\sigma\colon B\to X$ is a section of $q$.
Suppose also given a virtual representation $\gamma$ of $\Pi_G B$ and a Mackey functor
$\Mackey T$. We can then define a group $\tilde H_G^\gamma(X;\Mackey T)$,
a contravariant functor of
$X$ and a covariant functor of $\gamma$ and of $\Mackey T$.

We will generally consider the reduced theory as written above. If we have
any parametrized space $q\colon X\to B$, rather than an ex-space, we can form the ex-space
$(X,q)_+$, which we will write $X_+$, given by
$(X\disjunion B, q\disjunion 1, \sigma)$, where
$\sigma\colon B\to X\disjunion B$ is the evident inclusion.
In particular, we may consider $\tilde H_G^\gamma(B_+;\Mackey T)$.

The collection of these groups as $\gamma$ varies gives what we call the
{\em $RO(\Pi_G B)$-graded ordinary cohomology} of ex-spaces over $B$.
(In \cite{CW:ordinaryhomology} we allow more general coefficient systems than just
Mackey functors, but in this paper we will stick to the simpler case.)
In particular, when we restrict the virtual representation $\gamma$ to
be of the form $\Vrep\ominus\Wrep$, the resulting groups are exactly the
$RO(G)$-graded ordinary cohomology of $X$
discussed in \cite{LMM:roghomology}, \cite{May:alaska}, and \cite{CW:ordinaryhomology},
which generalizes Bredon's integer-graded theory.
In particular, this theory obeys a dimension axiom that takes the following form:
For $x\colon G/H\to B$ and integers $n$, we have
\[
 \tilde H_G^n(G/H_+;\Mackey T) \iso
  \begin{cases}
    \Mackey T(G/H) & \text{if $n=0$} \\
    0 & \text{if $n\neq 0$,}
  \end{cases}
\]
naturally in $G/H$.

The $RO(\Pi_G B)$-graded theory has many nice properties, discussed in detail
in \cite{CW:ordinaryhomology}. One of the main reasons for introducing the enlarged
grading is to get general Thom isomorphism and Poincar\'e duality theorems.
In particular, the Thom isomorphism (\cite[3.11.3]{CW:ordinaryhomology}) takes the following form:
If $\xi\colon E\to X$ is a $G$-vector bundle, let
$T(\xi)$ denote the ex-space over $X$ given by taking the fiberwise one-point
compactification of each fiber, with the section given by the compactification points.
A {\em Thom class} for $\xi$ is a class $t\in \tilde H_G^{\xi^*}(T(\xi);\Mackey A_{G/G})$
such that, for every $x\colon G/K\to X$, the element
\begin{align*}
 x^*(t) &\in \tilde H_G^{\xi^*}(T(x^*\xi);\Mackey A_{G/G}) \\
  &\iso \tilde H_G^{x^*\xi^*}(G\times_K S^V;\Mackey A_{G/G}) \\
  &\iso \tilde H_K^{V}(S^V;\Mackey A_{K/K}) \\
  &\iso A(K)
\end{align*}
is a generator. Here, $V$ is the representation of $K$ such that $x^*\xi = G\times_K V$.

\begin{theorem}[Thom Isomorphism]
If $\xi\colon E\to X$ is a $G$-vector bundle, then there exists
a Thom class $t\in\tilde H_G^{\xi^*}(T(\xi);\Mackey A_{G/G})$.
For every Thom class $t$, the map
\[
 t\cup - \colon \tilde H_G^\gamma(X_+;\Mackey T) \to \tilde H_G^{\gamma+\xi^*}(T(\xi);\Mackey T)
\]
is an isomorphism for every representation $\gamma$ and Mackey functor $\Mackey T$.
\qed
\end{theorem}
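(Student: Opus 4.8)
The plan is to prove the two assertions separately: existence of a Thom class, and the fact that cup product with a Thom class is an isomorphism.

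For existence, the idea is to reduce to a local statement and patch. First I would observe that the condition defining a Thom class is entirely local on $X$: it asks that the restriction along each $x\colon G/K\to X$ be a generator of $A(K)\iso\tilde H_G^{x^*\xi^*}(T(x^*\xi);\Mackey A_{G/G})$. Over an orbit $G/K$ the bundle is $G\times_K V$ for a representation $V$ of $K$, and $T(G\times_K V)$ is the ``induced sphere'' $G\ltimes_K S^V$; by the dimension axiom in the $RO(G)$-graded part, together with change-of-groups and the computation $\tilde H_K^V(S^V;\Mackey A_{K/K})\iso A(K)$, there is a canonical generator here, and it is compatible with the maps in $\Pi_G X$ because the transition functions of $\xi$ act through orthogonal bundle maps, which by definition act trivially on the relevant cohomology (they are degree-one stably). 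So the collection of these local generators assembles, via a Mayer--Vietoris / colimit argument over a $G$-CW filtration of $X$, into a global class; the obstructions to patching vanish because they live in groups that the dimension axiom forces to be zero (there is no ``higher'' obstruction since the coefficient is the unit Mackey functor and the local data is a compatible system of isomorphisms, not just elements). Concretely I would induct over the skeleta of $X$, using the cofiber sequences of the $G$-CW structure and the long exact sequence of the pair, with the enlarged grading $\xi^*$ making the dimension shift work out; this is exactly the argument in \cite{CW:ordinaryhomology} and I would cite it rather than reproduce it in full.

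For the isomorphism statement, fix a Thom class $t$. The map $t\cup-$ is natural in $X$ and in $\gamma$, and it is compatible with the long exact sequences of cofiber sequences of ex-spaces over $X$ (since cup product is, and $T(-)$ turns cofibrations of base spaces into cofibrations of Thom ex-spaces). So one runs the standard five-lemma induction over a $G$-CW filtration of $X$: on a single cell $G\times_K D^n$ (attached along $G\times_K S^{n-1}$), the statement reduces to the case $X=G/K$, where by change of groups it is the assertion that $t\cup-\colon\tilde H_K^\gamma(S^0;\Mackey T)\to\tilde H_K^{\gamma+V}(S^V;\Mackey T)$ is an isomorphism. That last map is multiplication by the canonical generator of $A(K)$ in the $RO(K)$-graded cohomology of a point shifted by $V$, which is an isomorphism because the generator is a unit: $S^V$ is an invertible object in the $K$-equivariant stable category and smashing with it is an equivalence, hence induces an isomorphism on $\tilde H_K^\star(-;\Mackey T)$ for every $\Mackey T$. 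Passing to the colimit over skeleta (using that cohomology of a colimit of a CW filtration sits in a $\lim^1$ sequence that collapses here, or simply restricting to finite-type situations) finishes the general case.

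The main obstacle I expect is the \emph{existence} half, specifically checking that the locally-defined generators are genuinely compatible under all morphisms of $\Pi_G X$ — i.e.\ that a path $\omega$ together with an orbit map $\alpha$ carries the canonical local generator to the canonical local generator. This is where the choice of the enlarged grading $RO(\Pi_G X)$ really does work for us: the representation $\xi^*$ is \emph{defined} so that these transition data are built in, and the orthogonality of the bundle maps guarantees the induced maps on the one-dimensional $A(K)$-modules are the identity rather than merely an automorphism. Making this precise is a bookkeeping argument about the 2-functor $\Pi_G$ and the category $v\V_G$, and it is exactly the content of \cite[\S3]{CW:ordinaryhomology}; in the write-up I would state the needed compatibility as a lemma, sketch why orthogonality kills the ambiguity, and refer to the detailed construction there.
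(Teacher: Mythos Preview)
The paper does not prove this theorem at all: it is stated with a terminal \qed\ and an explicit citation to \cite[3.11.3]{CW:ordinaryhomology} immediately preceding it, so the ``paper's own proof'' is simply a reference to that source. Your sketch is a reasonable outline of the standard argument (local existence from the dimension axiom, patching over a $G$-CW filtration, and a five-lemma induction for the isomorphism), and indeed you yourself defer the delicate points to \cite{CW:ordinaryhomology}; there is nothing further in this paper to compare against.
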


As usual, given a Thom class $t(\xi)$ for a $G$-bundle $\xi$, we define
the corresponding {\em Euler class} $e(\xi) \in \tilde H_G^{\xi^*}(X_+;\Mackey A_{G/G})$
to be the restriction of $t(\xi)$ along the zero section $X_+\to T(\xi)$.

For computational purposes, it is useful to view cohomology not as group valued,
but as Mackey functor valued. We let
$\Mackey H_G^\gamma(X;\Mackey T)$ be the Mackey functor defined by
\[
 \Mackey H_G^\gamma(X;\Mackey T)(G/K) = \tilde H_G^\gamma(G/K_+\smsh_B X; \Mackey T).
\]
Because $\tilde H_G^*(-)$ is stable (it has suspension isomorphisms for all representations),
this is actually a functor on stable maps between orbits, so does define
a Mackey functor. 
On the other hand, the Wirthm\"uller isomorphism
(\cite[3.9.5]{CW:ordinaryhomology}) allows us to write this as
\[
 \Mackey H_G^\gamma(X;\Mackey T)(G/H) \iso \tilde H_K^{\gamma|K}(X;\Mackey T|K),
\]
where $\gamma|K$ and $\Mackey T|K$ are the restrictions from $G$ to $K$
defined in the most obvious ways.
Thus, treating ordinary cohomology as a Mackey functor amounts to considering
all the cohomologies of $X$ for all the subgroups of $G$ simultaneously,
along with the associated restriction and transfer maps.
As often happens, the more structure present, the more limited
the possibilities, hence the easier the computations.

Finally, a note about calling this $RO(\Pi_G B)$-graded cohomology.
For the purpose of computation, we want to pass from a functor on
representations $\gamma$ to a group graded on the ring $RO(\Pi_G B)$
of isomorphism classes of representations. As usual, this ``decategorification''
needs to be done with great care or we are sure to trip over
sign ambiguities. In Part~\ref{part:BU1} we will assume we can do this
so we can get on with the computations.
We will return to this issue and treat it properly in Part~\ref{part:point}.

%-------------------------------------------------------
\section{Summary of the cohomology of a point}\label{sec:pointSummary}

From this point on, all cohomology will be assumed to have coefficients in
$\Mackey A_{G/G}$ unless stated otherwise, and we write
$\Mackey H_G^\alpha(X)$ for $\Mackey H_G^\alpha(X;\Mackey A_{G/G})$.
Further, it will be useful to make a distinction later between cohomology graded
on $RO(G)$ and cohomology graded on $RO(\Pi_G B)$, so we now adopt the notation
that $\Mackey H_G^\bullet$ denotes an $RO(G)$-graded theory while
$\Mackey H_G^*$ denotes an $RO(\Pi_G B)$-graded theory (for a specified space $B$).

In Part~\ref{part:BU1} we shall describe the cohomology of $B_GU(1)$
as an algebra over the cohomology of a point, which is to say, the reduced
cohomology of $S^0$. The calculation will use
the cofibration sequence $EG_+\to S^0 \to \tE G$, where $EG$ is a nonequivariantly contractible 
free $G$ space. To that end, we need explicit descriptions of
the $RO(G)$-graded cohomologies of $EG_+$, $S^0$, and $\tE G$, and
the maps in the long exact sequence induced by the cofibration sequence.
We give the results we need here and the proofs in Part~\ref{part:point}.
The description of the cohomology of a point is essentially the same as that
given by Lewis in \cite{Le:projectivespaces},
but our notation is somewhat different.

We first introduce some elements in the cohomology of a point.

\begin{definition}\label{def:iota}
If $p=2$, let
\[
 \iota\in \tilde H_e^{\LL-1}(S^0) = \Mackey H_G^{\LL-1}(S^0)(G/e)
\]
be the image of the identity under the isomorphism
\[
 \tilde H_e^0(S^0) \iso \tilde H_e^1(S^1) \iso \tilde H_e^{\LL}(S^1)
  \iso \tilde H_e^{\LL-1}(S^0),
\]
using our chosen nonequivariant identification of $\LL$ with $\R$
from Definition~\ref{def:irrRepresentations}.
If $p$ is odd and $1\leq k \leq (p-1)/2$, let
\[
 \iota_k\in \tilde H_e^{\MM_k-2}(S^0) = \Mackey H_G^{\MM_k-2}(S^0)(G/e)
\]
be the image of the identity under the isomorphism
\[
 \tilde H_e^0(S^0) \iso \tilde H_e^2(S^2) \iso \tilde H_e^{\MM_k}(S^2)
  \iso \tilde H_e^{\MM_k-2}(S^0),
\]
using our chosen nonequivariant identifications of $\MM_k$ with $\R^2$.
\end{definition}

When $p=2$,
multiplication by $\iota$ is an isomorphism on
$\Mackey H_G^*(S^0)(G/e)$, so there is an element
$\iota^{-1}\in\Mackey H_G^{1-\LL}(S^0)(G/e)$ such that
$\iota\cdot \iota^{-1}$ is the identity in $\Mackey H_G^0(S^0)(G/e)$.
Similarly, when $p$ is odd, there are elements
$\iota_k^{-1} \in\Mackey H_G^{2-M_k}(S^0)$ such that
$\iota_k \cdot \iota_k^{-1}$ is the identity in $\Mackey H_G^0(S^0)(G/e)$.

We shall occasionally use the following convenient notation.
Recall Definition~\ref{def:ro0G}.

\begin{definition}
Suppose that $\alpha\in I^\ev(G)$, so 
$\alpha = \sum_k n_k(\MM_k-2)$ for $n_k\in\Z$. Define
\[
 \iota^\alpha = \prod_k \iota_k^{n_k}
  \in \Mackey H_G^\alpha(S^0)(G/e).
\]
\end{definition}

If we use this notation when $p=2$, understanding $\iota_1 = \iota^2$,
$\iota^\alpha$ will give only the even powers of $\iota$.

\begin{definition}\label{def:EulerClasses}
Let $V$ be a representation of $G$. 
The {\em Euler class} of $V$ is the
element 
\[
 e_V \in \tilde H_G^{V}(S^0) = \Mackey H_G^V(S^0)(G/G)
\]
that is the image of the identity
under the map
\[
 \tilde H_G^0(S^0) \iso \tilde H_G^V(S^V) \to \tilde H_G^V(S^0),
\]
where the last map is restriction along the inclusion.
\end{definition}

When $G = \Z/2$, we write $e = e_\LL$, using the notation introduced
in Definition~\ref{def:irrRepresentations}.
It will be convenient to write $e_1 = e_{\MM_1} = e^2$.
Similarly, when $G = \Z/p$ with $p$ odd, we write $e_k = e_{\MM_k}$
for $1\leq k\leq (p-1)/2$.

We will also be interested in the Euler classes of the complex representations $\C_k$
for all $1\leq k \leq p-1$. For $1\leq k \leq p/2$, this is just $e_k$.
For $p/2 < k \leq p-1$, $\C_k$ is isomorphic to $\MM_{p-k}$ as a
real representation, but with the opposite orientation, so it will be convenient
to let
\[
 e_k = -e_{p-k} \qquad (p+1)/2 \leq k \leq p-1.
\]
(We shall give a better justification of this in \S\ref{sec:pointprelim}.
The choice of sign really has to do with how we pass to grading on $RO(G)$
and is determined by the nonequivariant identifications with $\C$ that we fixed.)
Finally, we extend the notation to all integers by letting $e_0 = 0$ (the Euler class of 
the trivial representation $\R^2$ is 0) and saying that $e_k = e_{k'}$
if $k \equiv k' \pmod p$.
N.b., this means that
\[
 e_{-k} = \begin{cases}
            e_k & \text{if $p=2$, but} \\
            -e_k & \text{if $p$ is odd.}
          \end{cases}
\]

The following notation will be convenient.

\begin{definition}
If $p=2$ and $\alpha = n\Lambda\in RO(G)$ with $n\geq 0$, define
\[
 e^\alpha = e^n \in \Mackey H_G^\alpha(S^0)(G/G).
\]
If $p$ is odd and
$\alpha\in RO(G)$ has the form
$\alpha = \sum_{k=1}^{(p-1)/2} n_k\MM_k$ for $n_k \geq 0$, define
\[
 e^\alpha = \prod_{k=1}^{(p-1)/2} e_k^{n_k}
  \in \Mackey H_G^\alpha(S^0)(G/G).
\]
In contexts in which $e$ or each $e_k$ is invertible, we can extend this notation
to all $\alpha$ such that $\alpha^G = 0$.
\end{definition}

Lemma~\ref{lem:EGInvertibles} defines invertible elements
\[
 \xi_k \in \tilde H_G^{\MM_k-2}(EG_+)
\]
for $1\leq k \leq p/2$. Remark~\ref{rem:extendedXi} shows that the
natural extension of this definition is to let
\[
 \xi_k = \xi_{p-k} \qquad (p+1)/2 \leq k \leq p-1.
\]
The calculation of $\Mackey H_G^\bullet(S^0)$ shows that there are 
unique (noninvertible) elements
\[
 \xi_k \in \tilde H_G^{\MM_k-2}(S^0)
\]
mapping to $\xi_k\in \tilde H_G^{\MM_k-2}(EG_+)$ under the map induced by $EG_+\to S^0$.
When $p=2$ we often write $\xi$ for $\xi_1$.

We introduce the following notation.

\begin{definition}\label{def:xipower}
Suppose that $\alpha\in RO_+(G)$, so 
$\alpha = \sum_{1\leq k\leq p/2} n_k(\MM_k-2)$ for $n_k \geq 0$. Define
\[
 \xi^\alpha = \prod_{k=1}^{(p-1)/2} \xi_k^{n_k}
  \in \Mackey H_G^\alpha(S^0)(G/G).
\]
In contexts in which each $\xi_k$ is invertible, we can extend this notation
to all $\alpha\in I^\ev(G)$.
\end{definition}

\begin{definition}\label{def:nu}
If $p$ is odd, define a homomorphism
\[
 \nu\colon RO_0(G) \to (\Z/p)^\times
\]
(from the additive operation on the left to the multiplicative one on the right)
as follows:
Given $\alpha\in RO_0(G)$ write $\alpha = \sum_{k=2}^{(p-1)/2} n_k(\MM_k - \MM_1)$
and define
\[
 \nu(\alpha) = \Big[\prod_{k=2}^{(p-1)/2} k^{n_k}\Big].
\]
If $p=2$, $RO_0(G) = 0$ and we let $\nu(0) = [1] \in (\Z/2)^\times$.
\end{definition}

In our use of $\nu(\alpha)$, we shall usually think of it as a congruence class in $\Z$ and
write $a\in \nu(\alpha)$ to mean that $a$ is an integer in the congruence class $\nu(\alpha)$.
In Definition~\ref{def:muBetaD} and Corollary~\ref{cor:muBetaD} we will define elements
\[
 \mu^{\alpha,a} \in \Mackey H_G^{\alpha}(S^0)(G/G)
\]
for all $\alpha\in RO_0(G)$ and $a\in\nu(\alpha)$.
Because $RO_0(G) = 0$ when $p=2$ or $3$, these elements are primarily useful when $p > 3$.

We now state the calculations we need, with the proofs to be given
in Part~\ref{part:point}. The cases $p=2$ and $p$ odd are similar, but sufficiently different
that it makes sense to state them separately.

\begin{theorem}\label{thm:evenCohomPoint}
Let $p = 2$. Additively,
\[
 \Mackey H_G^\alpha(S^0) \iso
  \begin{cases}
   \Mackey A_{G/G} & \text{if $\alpha = 0$} \\
   \Mackey R\Z & \text{if $|\alpha| = 0$ and $\alpha^G < 0$ is even} \\
   \Mackey R\Z_- & \text{if $|\alpha| = 0$ and $\alpha^G \leq 1$ is odd} \\
   \Mackey L\Z & \text{if $|\alpha| = 0$ and $\alpha^G > 0$ is even} \\
   \Mackey L\Z_- & \text{if $|\alpha| = 0$ and $\alpha^G \geq 3$ is odd} \\
   \conc\Z & \text{if $|\alpha| \neq 0$ and $\alpha^G = 0$} \\
   \conc{\Z/2} & \text{if $|\alpha| > 0$ and $\alpha^G < 0$ is even} \\
   \conc{\Z/2} & \text{if $|\alpha| < 0$ and $\alpha^G \geq 3$ is odd} \\
   0 & \text{otherwise.}
  \end{cases}
\]
Multiplicatively, $\Mackey H_G^\bullet(S^0)$ is a strictly commutative $RO(G)$-graded ring,
generated by elements
\begin{align*}
 \iota &\in \Mackey H_G^{\LL-1}(S^0)(G/e) \\
 \iota^{-1} &\in \Mackey H_G^{1-\LL}(S^0)(G/e) \\
 \xi &\in \Mackey H_G^{2(\LL-1)}(S^0)(G/G) \\
 e &\in \Mackey H_G^{\LL}(S^0)(G/G) \\
 e^{-m}\kappa &\in \Mackey H_G^{-m\LL}(S^0)(G/G) & & m\geq 1 \\
 e^{-m}\delta\xi^{-n} &\in \Mackey H_G^{1 - m\LL - 2n(\LL-1)}(S^0)(G/G)
   & & m, n \geq 1.
\end{align*}
These generators satisfy the following {\em structural} relations:
\begin{align*}
 \tau(\iota^{-1}) &= 0 \\
 \tau(\iota^{-2n-1}) &= e^{-1}\delta\xi^{-n} & & \text{for $n\geq 1$} \\
 \kappa\xi &= 0 \\
 \rho(\xi) &= \iota^2 \\
 \rho(e) &= 0 \\
 \rho(e^{-m}\kappa) &= 0 & & \text{for $m\geq 1$}\\
 \rho(e^{-m}\delta\xi^{-n}) &= 0 & & \text{for $m\geq 2$ and $n\geq 1$}\\
 2e^{-m}\delta\xi^{-n} &= 0 & & \text{for $m\geq 2$ and $n\geq 1$}
\intertext{and the following {\em multiplicative} relations:}
  \iota\cdot \iota^{-1} &= \rho(1) \\
  e\cdot e^{-m}\kappa &= e^{-m+1}\kappa & &\text{for $m\geq 1$} \\
 \xi\cdot e^{-m}\kappa &= 0 & & \text{for $m\geq 1$} \\
 e^{-m}\kappa\cdot e^{-n}\kappa &= 2e^{-m-n}\kappa & & \text{for $m\geq 0$ and $n\geq 0$} \\
 e\cdot e^{-m}\delta\xi^{-n} &= e^{-m+1}\delta\xi^{-n} & &\text{for $m\geq 2$ and $n\geq 1$} \\
 \xi\cdot e^{-m}\delta\xi^{-n} &= e^{-m}\delta\xi^{-n+1} & & 
   \text{for $m\geq 1$ and $n\geq 2$} \\
 \xi\cdot e^{-m}\delta\xi^{-1} &= 0 & & \text{for $m\geq 2$} \\
 e^{-m}\kappa \cdot e^{-n}\delta\xi^{-k} &= 0 & & \text{if $m\geq 0$, $n\geq 1$, and $k\geq 1$}\\
 e^{-m}\delta\xi^{-k}\cdot e^{-n}\delta\xi^{-\ell} &= 0 & & 
   \text{if $m, n, k, \ell\geq 1$}
\end{align*}
The following relations are implied by the preceding ones:
\begin{align*}
 \kappa e &= 2e \\
 2e^m\xi^n &= 0 & & \text{if $m>0$ and $n>0$} \\
 t\iota^k &= (-1)^k\iota^k & & \text{for all $k$} \\
 \xi\cdot\tau(\iota^k) &= \tau(\iota^{k+2}) & &\text{for all $k$} \\
 e\cdot\tau(\iota^k) &= 0 & &\text{for all $k$} \\
 e^{-m}\kappa \cdot \tau(\iota^k) &= 0 & & \text{for all $m\geq 1$ and $k$} \\
 e^{-m}\delta\xi^{-n}\cdot \tau(\iota^k) &= 0 & & \text{for all $m, n\geq 1$ and $k$} \\
 \tau(\iota^k)\cdot\tau(\iota^\ell) &= 0 & &\text{if $k$ or $\ell$ is odd} \\
 \tau(\iota^{2k})\cdot\tau(\iota^{2\ell}) &= 2\tau(\iota^{2(k+\ell)}) & &\text{for all $k$ and $\ell$}\\
 \tau(\iota^{2k+1}) &= 0 & & \text{if $k\geq 0$} \\
 e\cdot e^{-1}\delta\xi^{-n} &= 0 & & \text{if $n\geq 1$} 
\end{align*}
\qed
\end{theorem}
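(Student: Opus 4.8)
The plan is to derive the whole calculation from the cofibration sequence $EG_+\to S^0\to\tE G$ and its long exact sequence in $\Mackey H_G^\bullet(-)$, after computing the two outer terms independently; this is the ``Tate approach'' referred to in the introduction, carried out in Part~\ref{part:point}. At level $G/e$ the answer is immediate: $EG_+$ restricts to a based space nonequivariantly equivalent to $S^0$, so $\Mackey H_G^\alpha(EG_+)(G/e)\iso\tilde H_e^{|\alpha|}(S^0)$, while $\tE G$ restricts to a contractible space, so $\Mackey H_G^\alpha(\tE G)(G/e)=0$; hence $\Mackey H_G^\alpha(S^0)(G/e)\iso\tilde H_e^{|\alpha|}(S^0)$, which is $\Z$ when $|\alpha|=0$ and $0$ otherwise. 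As a ring the $G/e$ level is $\Z[\iota^{\pm1}]$, with $\iota$ in degree $\LL-1$ and $t\iota=-\iota$. All the remaining work is at level $G/G$.

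For the additive structure I would first compute $\Mackey H_G^\bullet(EG_+)$. Since $EG=\colim_n S(n\LL)$ is free, this reduces to a Bredon cohomology of $\R P^\infty$ with coefficients twisted by the appropriate power of the sign local system, read off degree by degree; at level $G/G$ the answer is assembled from the Euler class $e$ and the invertible class $\xi\in\tilde H_G^{2(\LL-1)}(EG_+)$ of Lemma~\ref{lem:EGInvertibles}, with $2e=0$ once $\xi$ is inverted. Next I would compute $\Mackey H_G^\bullet(\tE G)$: because $\tE G|_e$ is contractible each $\Mackey H_G^\alpha(\tE G)$ has the form $\conc{C_\alpha}$ (the $G/e$ level vanishes, so $g=\tau\rho$ acts as $0$), and since $\tE G=S^{\infty\LL}$ one reads off $C_\alpha$ from the $G$-CW structure of the spheres $S^{n\LL}$ (two fixed $0$-cells and one free cell in each dimension $1,\dots,n$) in the colimit; here $e$ becomes invertible. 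Splicing these into the long exact sequence, together with an identification of the connecting homomorphisms, determines each $\Mackey H_G^\alpha(S^0)(G/G)$ up to an extension. The isomorphism type --- $\Mackey A_{G/G}$, $\Mackey R\Z$, $\Mackey R\Z_-$, $\Mackey L\Z$, $\Mackey L\Z_-$, $\conc\Z$, $\conc{\Z/2}$, or $0$ --- is then pinned down by the Burnside-ring module structure, i.e.\ by whether $g=\tau\rho$ acts on the $G/G$ level as $2$, as $0$, or trivially, together with the sign of the $t$-action on the $G/e$ level; all of these are visible from the maps $\rho$ and $\tau$ already appearing in the long exact sequence. This produces the case list.

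With the additive answer and the named generators in hand ($\iota^{\pm1}$ as suspension classes, $e$ as an Euler class, $\xi$ as the unique element of $\tilde H_G^{2(\LL-1)}(S^0)$ restricting to the class of Lemma~\ref{lem:EGInvertibles}, and the classes $e^{-m}\kappa$ and $e^{-m}\delta\xi^{-n}$ exhibited in the statement), the relations come in two stages. The ``structural'' relations record the values of $\rho$ and $\tau$ on generators; each follows either from the additively-determined Mackey functor containing the relevant element --- for instance $\kappa\xi=0$ because $\Mackey H_G^{2(\LL-1)}(S^0)\iso\Mackey R\Z$, on which $\kappa$ acts as $0$ --- or from one of the Frobenius relations. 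The genuinely multiplicative relations are obtained by combining three sources of information: restriction to the known ring $\tilde H_G^\bullet(EG_+)$, in which $\xi$ is invertible; restriction to the $G/e$ level, which is $\Z[\iota^{\pm1}]$; and the known ring $\tilde H_G^\bullet(\tE G)$, in which $e$ is invertible, pulled back along the ring map $j^*\colon\tilde H_G^\bullet(\tE G)\to\tilde H_G^\bullet(S^0)$. A product that survives one of these restrictions is thereby determined; a product that survives none of them lies in a group the additive calculation has already shown to be small (typically $\Z/2$ or $0$), so is settled by one further comparison. A degree-by-degree rank count against the additive answer then shows these relations are exhaustive.

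Strict commutativity of the $RO(G)$-graded ring $\Mackey H_G^\bullet(S^0)$, and the precise signs in relations such as $t\iota^k=(-1)^k\iota^k$ and $\tau(\iota^{2k+1})=0$, depend on the careful ``decategorification'' of the theory from a functor on representations to an honest $RO(G)$-graded ring, and getting this exactly right --- rather than the comparatively routine bookkeeping of the relation list --- is the step I expect to be the main obstacle. It is precisely the issue flagged in the introduction, which is why this calculation is deferred to Part~\ref{part:point}.
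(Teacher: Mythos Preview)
Your overall strategy matches the paper's: compute $\Mackey H_G^\bullet(EG_+)$ and $\Mackey H_G^\bullet(\tE G)$ first, then feed them into the long exact sequence of $EG_+\to S^0\to\tE G$. The paper does exactly this in Theorems~\ref{thm:evenCohomPointRedux} and~\ref{thm:evenCohomPointProved}, and most of your sketch (the $G/e$ level, the identification of the three regions $\alpha^G<0$, $|\alpha|<0$, $|\alpha|>0$ with $\alpha^G>0$, and the multiplicative relations) is essentially correct and in line with the paper.

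There is, however, a genuine gap in your resolution of the extension problems along the axis $|\alpha|=0$, $\alpha^G>0$ odd. You claim the Mackey functor is ``pinned down by the Burnside-ring module structure, i.e.\ by whether $g=\tau\rho$ acts on the $G/G$ level as $2$, as $0$, or trivially, together with the sign of the $t$-action on the $G/e$ level.'' But for $\alpha=(2k+1)(1-\LL)$ with $k\geq 1$ the long exact sequence yields
\[
 0\to\conc{\Z/2}\to\Mackey H_G^\alpha(S^0)\to\Mackey R\Z_-\to 0,
\]
and there are two candidate extensions, $\Mackey L\Z_-$ and $\conc{\Z/2}\oplus\Mackey R\Z_-$. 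In \emph{both} of these the $G/G$ level is $\Z/2$, the $G/e$ level is $\Z$ with $t$ acting as $-1$, $\rho=0$, and hence $g=\tau\rho=0$. Your stated invariants do not distinguish them; what does is whether $\tau$ is the surjection $\Z\to\Z/2$ or the zero map, and that is precisely the datum you cannot read off from the short exact sequence alone. The paper resolves this with a separate geometric input, the cofibration $S^{2k\LL}\to S^{(2k+1)\LL}\to\Sigma^{2k+1}G/e_+$, which exhibits $\Mackey H_G^\alpha(S^0)$ as a quotient of $\Mackey A_{G/e}$ and hence forces $\tau$ to be onto; the author even remarks parenthetically that a purely algebraic resolution along the lines you propose would be desirable but is not provided. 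So your plan is missing exactly this step.

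A smaller comment: you describe the computation of $\Mackey H_G^\bullet(\tE G)$ via the $G$-CW structure of $S^{n\LL}$ and passage to the colimit. That works, but the paper instead first proves directly that multiplication by $e$ is an isomorphism on $\Mackey H_G^\bullet(\tE G)$ (since $\tE G\smsh S^\LL\simeq\tE G$), reducing to the integer-graded part, and then reads that off from the long exact sequence using the dimension axiom for $S^0$ and the already-known $\Mackey H_G^\bullet(EG_+)$. Either route is fine. Finally, the sign and strict-commutativity issues you flag as ``the main obstacle'' are handled systematically in \S\ref{sec:ROGGrading} and turn out to be routine once set up; the genuinely delicate point in this particular theorem is the extension above.
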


The notation $e^{-m}\kappa$ comes from the fact that 
$e^m\cdot e^{-m}\kappa = \kappa \in \tilde H_G^0(S^0) = A(G)$.
The reason for the notation $e^{-m}\delta\xi^{-n}$ should become clearer shortly.

It helps to have a way to visualize these calculations, and the common
way of doing so is to plot the groups or Mackey functors on a grid.
Different authors, however, have used different axes.
Lewis, in \cite{Le:projectivespaces}, uses
$\alpha^G$ as the horizontal axis and $|\alpha|$ as the vertical axis;
because our results are stated in these terms we will do so as well.
Dugger \cite{Dug:AHSSforKR} and Kronholm \cite{Kro:SerreSS} use the
so-called motivic grading, plotting $|\alpha|$ vs $|\alpha| - \alpha^G$,
although Dugger uses $|\alpha|$ as the vertical axis while
Kronholm uses it as the horizontal axis.
Another set of axes for which one can make a case is
$\alpha^G$ for the horizontal axis and $|\alpha|-\alpha^G$
for the vertical axis.

So the top diagram in Figure~\ref{fig:EvenCohomPoint} shows the Mackey functors $\Mackey H_G^\alpha(S^0)$, with
$\alpha^G$ as the horizontal axis and $|\alpha|$ as the vertical axis,
with dots representing zero functors.
The bottom diagram gives the elements that generate the corresponding Mackey functors.
Generators shown in parentheses represent elements at level $G/e$.
\begin{figure}
\[\def\objectstyle{\scriptstyle}
 \xymatrix@!0@R=5ex@C=2.5em{
  & & & & & & & & & & \\
  & \conc{\Z/2} &\cdot& \conc{\Z/2} &\cdot& \conc\Z &\cdot& \cdot &\cdot& \cdot & \cdot \\
  & \conc{\Z/2} &\cdot& \conc{\Z/2} &\cdot& \conc\Z &\cdot& \cdot &\cdot& \cdot & \cdot \\
  & \conc{\Z/2} &\cdot& \conc{\Z/2} &\cdot& \conc\Z &\cdot& \cdot &\cdot& \cdot & \cdot \\
  & \conc{\Z/2} &\cdot& \conc{\Z/2} &\cdot& \conc\Z &\cdot& \cdot &\cdot& \cdot & \cdot \\
  \ar@{-}'[r]'[rr]'[rrr]'[rrrr]'[rrrrr]'[rrrrrr]'[rrrrrrr]'[rrrrrrrr]'[rrrrrrrrr]'[rrrrrrrrrr][rrrrrrrrrrr]
   & \Mackey R\Z & \Mackey R\Z_{\mathrlap{-}} & \Mackey R\Z & \Mackey R\Z_{\mathrlap{-}} & \Mackey A_{G/G} & \Mackey R\Z_{\mathrlap{-}} & \Mackey L\Z & \Mackey L\Z_{\mathrlap{-}} & \Mackey L\Z & \Mackey L\Z_{\mathrlap{-}} &\\
  & \cdot &\cdot& \cdot &\cdot& \conc\Z &\cdot& \cdot & \conc{\Z/2} & \cdot &  \conc{\Z/2}  \\
  & \cdot &\cdot& \cdot &\cdot& \conc\Z &\cdot& \cdot & \conc{\Z/2} & \cdot &  \conc{\Z/2}  \\
  & \cdot &\cdot& \cdot &\cdot& \conc\Z &\cdot& \cdot & \conc{\Z/2} & \cdot &  \conc{\Z/2}  \\
  & \cdot &\cdot& \cdot &\cdot& \conc\Z &\cdot& \cdot & \conc{\Z/2} & \cdot &  \conc{\Z/2}  \\
  & & & & & \ar@{-}'[u]'[uu]'[uuu]'[uuuu]'[uuuuu]'[uuuuuu]'[uuuuuuu]'[uuuuuuuu]'[uuuuuuuuu][uuuuuuuuuu]
 }
\]
%\vskip 0.5ex
\[\def\objectstyle{\scriptstyle}
 \xymatrix@!0@R=5ex@C=2.5em{
  & & & & & & & & & & \\
  & e^4\xi^2 &\cdot& e^4\xi &\cdot& e^4 &\cdot& \cdot &\cdot& \cdot & \cdot \\
  & e^3\xi^2 &\cdot& e^3\xi &\cdot& e^3 &\cdot& \cdot &\cdot& \cdot & \cdot \\
  & e^2\xi^2 &\cdot& e^2\xi &\cdot& e^2 &\cdot& \cdot &\cdot& \cdot & \cdot \\
  & e\xi^2 &\cdot& e\xi &\cdot& e &\cdot& \cdot &\cdot& \cdot & \cdot \\
  \ar@{-}'[r]'[rr]'[rrr]'[rrrr]'[rrrrr]'[rrrrrr]'[rrrrrrr]'[rrrrrrrr]'[rrrrrrrrr]'[rrrrrrrrrr][rrrrrrrrrrr]
   & \xi^2 & (\iota^3) & \xi & (\iota) & 1 & (\iota^{-1}) & (\iota^{-2}) & (\iota^{-3}) & (\iota^{-4}) & (\iota^{-5}) &\\
  & \cdot &\cdot& \cdot &\cdot& e^{-1}\kappa &\cdot& \cdot & e^{-2}\delta\xi^{-1} & \cdot &  e^{-2}\delta\xi^{-2}  \\
  & \cdot &\cdot& \cdot &\cdot& e^{-2}\kappa &\cdot& \cdot & e^{-3}\delta\xi^{-1} & \cdot &  e^{-3}\delta\xi^{-2}  \\
  & \cdot &\cdot& \cdot &\cdot& e^{-3}\kappa &\cdot& \cdot & e^{-4}\delta\xi^{-1} & \cdot &  e^{-4}\delta\xi^{-2}  \\
  & \cdot &\cdot& \cdot &\cdot& e^{-4}\kappa &\cdot& \cdot & e^{-5}\delta\xi^{-1} & \cdot &  e^{-5}\delta\xi^{-2}  \\
  & & & & & \ar@{-}'[u]'[uu]'[uuu]'[uuuu]'[uuuuu]'[uuuuuu]'[uuuuuuu]'[uuuuuuuu]'[uuuuuuuuu][uuuuuuuuuu]
 }
\]
\caption{$\protect\Mackey H_G^\bullet(S^0)$ and its generators, $p=2$}\label{fig:EvenCohomPoint}
\end{figure}

\begin{theorem}\label{thm:EvenEG}
Let $p=2$. Additively,
\[
 \Mackey H_G^{\alpha}(EG_+) \iso
  \begin{cases}
   \Mackey R\Z & \text{if $|\alpha| = 0$ and $\alpha^G$ is even} \\
   \Mackey R\Z_- & \text{if $|\alpha| = 0$ and $\alpha^G$ is odd} \\
   \conc{\Z/2} & \text{if $|\alpha|>0$ and $\alpha^G$ is even} \\
   0 & \text{otherwise.}
  \end{cases}
\]
Multiplicatively, $\Mackey H_G^\bullet(EG_+)$ is a strictly commutative $RO(G)$-graded
$\Mackey R\Z$-algebra generated by
\begin{align*}
 e &\in \Mackey H_G^{\LL}(EG_+)(G/G), \\
 \iota &\in \Mackey H_G^{\LL-1}(EG_+)(G/e), \\
 \iota^{-1} &\in \Mackey H_G^{1-\LL}(EG_+)(G/e), \\
 \xi &\in \Mackey H_G^{2(\LL-1)}(EG_+)(G/G), \quad\text{and} \\
 \xi^{-1} &\in \Mackey H_G^{2(1-\LL)}(EG_+)(G/G),
\end{align*}
subject to the relations
\begin{align*}
 \rho(e) &= 0 \\
 \tau(\iota) &= 0 \\
 \rho(\xi) &= \iota^2 \\
 \iota\cdot \iota^{-1} &= \rho(1) \quad\text{and} \\
 \xi\cdot\xi^{-1} &= 1.
\end{align*}
\qed
\end{theorem}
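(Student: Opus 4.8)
The plan is to compute $\Mackey H_G^\bullet(EG_+)$ additively first, exploiting the freeness of $EG$ to reduce everything to the non‑equivariant cohomology of $BG = EG/G$, and then to pin down the ring structure by naturality together with a count of generators and relations. I will use the model $EG = S(\infty\LL)$, so that $BG$ is $\R P^\infty$ and $EG\times_G\LL$ is the tautological real line bundle $\gamma$ over $BG$.

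For the additive statement, level $G/e$ is elementary: $\Mackey H_G^\alpha(EG_+)(G/e) = \tilde H_e^\alpha(EG_+)\iso\tilde H^{|\alpha|}(S^0)$ since $EG$ is non‑equivariantly contractible, so this is $\Z$ when $|\alpha|=0$ and $0$ otherwise; moreover $t$ acts on it by $(-1)^{\alpha^G}$ because $t$ reflects $S^\LL$, so that the generator $\iota^k$ (with $\alpha = k(\LL-1)$, $k=-\alpha^G$) satisfies $t\iota^k = (-1)^k\iota^k$. For level $G/G$ I would use that, since $EG$ is free and the underlying $\Z[G]$-module of $\Mackey A_{G/G}$ is $\Z$ with \emph{trivial} action, the cohomology in question is the ordinary cohomology $\tilde H^a(BG^{-b\gamma};\Z)$, where (writing $\alpha = a+b\LL$) $BG^{-b\gamma}$ is the Thom spectrum over $BG$ of $-b$ copies of $\gamma$; the ordinary Thom isomorphism then identifies this with $H^{a+b}(\R P^\infty;\Z_w) = H^{|\alpha|}(\R P^\infty;\Z_w)$, with $\Z_w$ the orientation system of $b\gamma$ (trivial if $b$ even, the sign system if $b$ odd). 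Since $b = |\alpha|-\alpha^G$, plugging in $H^*(\R P^\infty;\Z) = (\Z,0,\Z/2,0,\dots)$, $H^*(\R P^\infty;\Z_-) = (0,\Z/2,0,\Z/2,\dots)$, and $H^{<0}=0$ yields exactly the stated groups. Finally, the Mackey-functor structure in each degree is read off from the double cover $EG\to BG$: $\rho$ is restriction along it (an isomorphism when $|\alpha|=0$, zero otherwise since its target then vanishes), $\tau$ is the transfer, and $\tau\rho$ is multiplication by $p=2$; comparing with the defining diagrams identifies the functor as $\Mackey R\Z$, $\Mackey R\Z_-$, or $\conc{\Z/2}$ in the three cases.

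For the multiplicative structure, $\Mackey H_G^\bullet(EG_+)$ is a ring via the diagonal $EG\to EG\times EG\hmtpc EG$, and an $\Mackey R\Z$-algebra since $\tau\rho = 2$ in every degree. I take $e = e_\LL$ (Definition~\ref{def:EulerClasses}) and $\iota$ (Definition~\ref{def:iota}) to be the images of the corresponding classes under $EG_+\to S^0$; $\iota$ is a unit at level $G/e$ because non-equivariantly it is a generator of $\tilde H^0(S^0)$, giving $\iota^{-1}$; and $\xi = \xi_1$, $\xi^{-1}$ are supplied by Lemma~\ref{lem:EGInvertibles}. The five relations $\rho(e)=0$, $\tau(\iota)=0$, $\rho(\xi)=\iota^2$, $\iota\cdot\iota^{-1}=\rho(1)$, $\xi\cdot\xi^{-1}=1$ are then essentially automatic: $\rho(e)$ lands in $\tilde H_e^\LL(EG_+) = \tilde H^1(S^0) = 0$, $\tau(\iota)$ lands in $\Mackey H_G^{\LL-1}(EG_+)(G/G)=0$ (just computed), and the other three are the defining properties of $\iota^{-1}$, $\xi$, $\xi^{-1}$. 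The one input needing an argument is that $e^m\neq 0$ in $\Mackey H_G^{m\LL}(EG_+)(G/G) = \Z/2$ for every $m\geq 1$: smashing the cofibration $S(\LL)_+ = (G/e)_+\to S^0\to S^\LL$ with $EG_+$ and using $EG_+\smsh(G/e)_+\hmtpc(G/e)_+$ yields a cofibration $(G/e)_+\to EG_+\to EG_+\smsh S^\LL$ whose long exact sequence contains the exact piece $\Mackey H_G^{(m-1)\LL}(EG_+)(G/G)\xrightarrow{\,\cup e\,}\Mackey H_G^{m\LL}(EG_+)(G/G)\to\tilde H_G^{m\LL}((G/e)_+)$; since $\tilde H_G^{m\LL}((G/e)_+)\iso\tilde H^m(S^0)=0$ for $m\geq 1$, the map $\cup e$ is surjective, and starting from $\Mackey H_G^0(EG_+)(G/G)=\Z$ one gets inductively that $e^m$ is a generator. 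Given this, the additive description shows that $e$, $\iota^{\pm1}$, $\xi^{\pm1}$ generate $\Mackey H_G^\bullet(EG_+)$ as an $\Mackey R\Z$-algebra: the $G/G$-level $\Z$'s by the units $\xi^n$, the $G/e$-level $\Z$'s by the units $\iota^k$, the $\Z/2$'s by $e^m\xi^n$, and the transfers $\tau(\iota^{2n}) = 2\xi^n$, $\tau(\iota^{2n+1})=0$ recovered from $\tau(\iota)=0$, $\rho(\xi)=\iota^2$, Frobenius reciprocity, and $\tau\rho = 2$. Conversely, since $\rho(e)=0$ forces $2e = \tau\rho(e) = 0$ in any $\Mackey R\Z$-algebra, the monomials admitted by the five relations — namely $\xi^n$, $e^m\xi^n$ ($m\geq 1$), $\iota^k$ — lie in pairwise distinct bidegrees and each generates exactly the $\Z$ or $\Z/2$ we found there; being both spanning and linearly independent, they exhibit the presentation map as an isomorphism.

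The main obstacle, as always in this subject, will be the bookkeeping of the enlarged grading: identifying the twist in the Thom-spectrum description, handling negative gradings (one genuinely needs $\xi$ to be a unit, so that the calculation has no separate ``bottom''), and above all pinning down the signs needed to justify the word \emph{strictly} in ``strictly commutative.'' That sign question is precisely what the decategorification discussion of Part~\ref{part:point} is for; given that apparatus, the only extra observation needed here is that $e$ is the sole odd-dimensional generator and satisfies $2e=0$, which disposes of the only possibly troublesome sign. The invertibility of $\xi = \xi_1$ is taken as input from Lemma~\ref{lem:EGInvertibles}.
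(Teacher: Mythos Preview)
Your argument is correct, and the multiplicative half is essentially the paper's own proof: the cofibration $(G/e)_+\to EG_+\to EG_+\smsh S^\LL$ you smash together is exactly the Gysin sequence the paper invokes in the $p=2$ case of Theorem~\ref{thm:EGCohomology}, and the identification of $\xi$, $\iota$, and the relations proceeds identically.

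Where you differ is in the additive computation. The paper (Proposition~\ref{prop:EGAdditive}) runs the Serre spectral sequence of the equivariant fibration $G/e\to EG\to BG$, feeding in the calculation of $\Mackey H_G^\bullet(G/e_+)$ from Proposition~\ref{prop:cohomologyG} to see that it collapses to $\Mackey H_G^a(BG_+;(\Mackey A_{G/e})_\beta)$, and then identifies the latter with $H^a(BG;\Z_\beta)$ via Proposition~\ref{prop:cohomologyTrivialAction}. You instead pass directly from the free $G$-spectrum $\Sigma^{-b\LL}EG_+$ to the nonequivariant Thom spectrum $BG^{-b\gamma}$ and apply the twisted Thom isomorphism. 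Both routes land on the same computation, $H^{|\alpha|}(\R P^\infty;\Z_w)$ with $w$ determined by the parity of $b=|\alpha|-\alpha^G$; yours is more direct and avoids spectral-sequence bookkeeping, at the cost of invoking the orbit/Adams equivalence for free $G$-spectra and the twisted Thom isomorphism for virtual bundles, neither of which the paper has set up explicitly. The paper's route stays closer to the cellular/representable machinery already in play and makes the Mackey structure (the identification of $\rho$ and $\tau$ with the maps induced by $N$ and $\epsilon$ on coefficients) slightly more transparent.
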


Figure~\ref{fig:EvenEG} shows $\Mackey H_G^\bullet(EG_+)$ and its generators when $p=2$.
We will not use this fact directly, but it follows from the calculation, and
the action of $\Mackey H_G^\bullet(S^0)$ implied by
Proposition~\ref{prop:EvenMaps} below, that
$
 \Mackey H_G^\bullet(EG_+) \iso \Mackey H_G^\bullet(S^0)[\xi^{-1}]
$.
\begin{figure}
\[\def\objectstyle{\scriptstyle}
 \xymatrix@!0@R=5ex@C=2.5em{
  & & & & & & & & & & \\
  & \conc{\Z/2} &\cdot& \conc{\Z/2} &\cdot& \conc{\Z/2} &\cdot& \conc{\Z/2} &\cdot& \conc{\Z/2} \\
  & \conc{\Z/2} &\cdot& \conc{\Z/2} &\cdot& \conc{\Z/2} &\cdot& \conc{\Z/2} &\cdot& \conc{\Z/2} \\
  & \conc{\Z/2} &\cdot& \conc{\Z/2} &\cdot& \conc{\Z/2} &\cdot& \conc{\Z/2} &\cdot& \conc{\Z/2} \\
  & \conc{\Z/2} &\cdot& \conc{\Z/2} &\cdot& \conc{\Z/2} &\cdot& \conc{\Z/2} &\cdot& \conc{\Z/2} \\
  & \conc{\Z/2} &\cdot& \conc{\Z/2} &\cdot& \conc{\Z/2} &\cdot& \conc{\Z/2} &\cdot& \conc{\Z/2} \\
  \ar@{-}'[r]'[rr]'[rrr]'[rrrr]'[rrrrr]'[rrrrrr]'[rrrrrrr]'[rrrrrrrr]'[rrrrrrrrr][rrrrrrrrrr]
   & \Mackey R\Z & \Mackey R\Z_{\mathrlap{-}} & \Mackey R\Z & \Mackey R\Z_{\mathrlap{-}} & \Mackey R\Z & \Mackey R\Z_{\mathrlap{-}} & \Mackey R\Z & \Mackey R\Z_{\mathrlap{-}} & \Mackey R\Z & \\
  & & & & & \ar@{-}'[u]'[uu]'[uuu]'[uuuu]'[uuuuu]'[uuuuuu][uuuuuuu]
 }
\]
%\vskip 2ex
\[\def\objectstyle{\scriptstyle}
 \xymatrix@!0@R=5ex@C=2.5em{
  & & & & & & & & & & \\
  & e^5\xi^2 &\cdot& e^5\xi &\cdot& e^5 &\cdot& e^5\xi^{-1} &\cdot& e^5\xi^{-2} \\
  & e^4\xi^2 &\cdot& e^4\xi &\cdot& e^4 &\cdot& e^4\xi^{-1} &\cdot& e^4\xi^{-2} \\
  & e^3\xi^2 &\cdot& e^3\xi &\cdot& e^3 &\cdot& e^3\xi^{-1} &\cdot& e^3\xi^{-2} \\
  & e^2\xi^2 &\cdot& e^2\xi &\cdot& e^2 &\cdot& e^2\xi^{-1} &\cdot& e^2\xi^{-2} \\
  & e\xi^2 &\cdot& e\xi &\cdot& e &\cdot& e\xi^{-1} &\cdot& e\xi^{-2} \\
  \ar@{-}'[r]'[rr]'[rrr]'[rrrr]'[rrrrr]'[rrrrrr]'[rrrrrrr]'[rrrrrrrr]'[rrrrrrrrr][rrrrrrrrrr]
   & \xi^2 & (\iota^3) & \xi & (\iota) & 1 & (\iota^{-1}) & \xi^{-1} & (\iota^{-3}) & \xi^{-2} & \\
  & & & & & \ar@{-}'[u]'[uu]'[uuu]'[uuuu]'[uuuuu]'[uuuuuu][uuuuuuu]
 }
\]
\caption{$\protect\Mackey H_G^\bullet(EG_+)$ and its generators, $p=2$}\label{fig:EvenEG}
\end{figure}

Turning to $\Mackey H_G^\bullet(\tE G)$, its ring structure is not that useful to us, partly because
it is a ring without a unit. We will, instead, describe $\Mackey H_G^\bullet(\tE G)$ as a module
over $\Mackey H_G^\bullet(S^0)$. In fact, it is a module over the following localization.

\begin{proposition}\label{prop:localizedPoint}
Let $p=2$. On inverting $e$ in $\Mackey H_G^\bullet(S^0)$ we get
\[
 \Mackey H_G^\bullet(S^0)[e^{-1}] \iso
  \conc{\Z}[e,e^{-1},\xi]/\langle 2\xi\rangle.
\]
\end{proposition}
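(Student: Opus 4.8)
The plan is to invert $e$ in the presentation of $\Mackey H_G^\bullet(S^0)$ given by Theorem~\ref{thm:evenCohomPoint} and simplify. First I would observe what localizing at $e$ does to the additive description: the only Mackey functors $\Mackey H_G^\alpha(S^0)$ on which multiplication by $e$ can eventually be an isomorphism are those in the ``positive cone'' where $e$ acts invertibly in the limit. Concretely, $e\cdot\tau(\iota^k)=0$ and $e\cdot e^{-1}\delta\xi^{-n}=0$ and $\xi\cdot e^{-m}\kappa=0$-type relations show that the $\Mackey L\Z$, $\Mackey L\Z_-$, $\conc{\Z/2}$ (for $|\alpha|<0$), $e^{-m}\kappa$, and $e^{-m}\delta\xi^{-n}$ summands all become $0$ after inverting $e$: each is annihilated by a power of $e$, or maps to such. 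Dually, on the part of the plane with $\alpha^G\le 0$, multiplication by $e$ is already injective with cokernel killed by a further $e$, so it becomes an isomorphism. The upshot is that $\Mackey H_G^\bullet(S^0)[e^{-1}]$ is concentrated in bidegrees with $|\alpha|=0$ (where it is $\conc\Z$, since the $\Mackey R\Z$'s and $\Mackey A_{G/G}$ collapse: inverting $e$, which has $\rho(e)=0$, kills the level $G/e$ part) together with the $\alpha^G\ge 0$, $|\alpha|>0$ region carrying $\conc{\Z/2}$'s. So additively the localization is $\conc\Z$ in each ``$|\alpha|=0$'' degree and $\conc{\Z/2}$ in each degree of the form $m\LL + 2n(\LL-1)$ with $m>0$... which is exactly what $\conc\Z[e,e^{-1},\xi]/\langle 2\xi\rangle$ predicts: the monomials $e^j$ ($j\in\Z$) span copies of $\Z$, and $e^j\xi^n$ ($n\ge 1$) span copies of $\Z/2$.

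Next I would pin down the multiplicative structure. The generators of $\Mackey H_G^\bullet(S^0)$ that survive to nonzero classes after inverting $e$ are $e$, $e^{-1}$ (now honestly inverse to $e$), and $\xi$; the elements $\iota$, $\iota^{-1}$ disappear because $\rho$ is zero on the localization (any class at level $G/e$ is hit by something divisible by arbitrarily high powers of $e$ under $\rho$, forcing it to $0$), and $\kappa$, $\delta\xi^{-n}$ generators vanish as above. The relations among the survivors reduce, using Theorem~\ref{thm:evenCohomPoint}, to $e\cdot e^{-1}=1$ and $2e^m\xi^n=0$ for $m>0$, $n>0$; the relation $\kappa\xi=0$ together with $\kappa e=2e$ gives $2e\xi=0$, hence $2\xi=0$ after inverting $e$, and no other relations remain. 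This yields the ring map $\conc\Z[e,e^{-1},\xi]/\langle 2\xi\rangle \to \Mackey H_G^\bullet(S^0)[e^{-1}]$, which the additive count above shows is an isomorphism.

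I expect the main obstacle to be the additive bookkeeping: verifying precisely which summands in the rather large list in Theorem~\ref{thm:evenCohomPoint} are $e$-power-torsion (so die) versus $e$-divisible-in-the-limit (so stabilize to $\conc\Z$ or $\conc{\Z/2}$), and checking that the colimit over multiplication by $e$ has no lingering extension problems at the Mackey-functor level—i.e.\ that $\Mackey R\Z[e^{-1}]$ really is $\conc\Z$ and not something with nontrivial $G/e$ data. This is where I would be most careful, using that $e\in\Mackey H_G^\LL(S^0)(G/G)$ satisfies $\rho(e)=0$, so the colimit at level $G/e$ of multiplication by $e$ is the colimit of the zero map and hence vanishes, collapsing each $\Mackey R\Z$ to its $G/G$ part $\conc\Z$. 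The multiplicative step, by contrast, is essentially formal once the additive answer is in hand.
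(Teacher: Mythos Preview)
Your overall strategy---invert $e$ in the presentation from Theorem~\ref{thm:evenCohomPoint} and see what survives---is exactly what the paper does, and the multiplicative endgame (deducing $2\xi=0$ from $\kappa e=2e$ and $\kappa\xi=0$) is fine. But there is a real error in the survival analysis.

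You assert that the $e^{-m}\kappa$ generators (and $\kappa$ itself) ``become $0$ after inverting $e$,'' citing the relation $\xi\cdot e^{-m}\kappa=0$. That relation is about $\xi$, not $e$, and is irrelevant here. In fact $e^{-m}\kappa$ is \emph{not} $e$-torsion: the relations give $e^m\cdot e^{-m}\kappa=\kappa$ and $e^{m+1}\cdot e^{-m}\kappa=\kappa e=2e\neq 0$. So after inverting $e$ one has $e^{-m}\kappa=2e^{-m}$ and $\kappa=2$; these classes survive (as nonzero, non-torsion elements) rather than dying. This is precisely the content of the paper's proof, and it is how the copies of $\conc\Z$ at gradings $-m\LL$ for $m\ge 1$ are accounted for: they are generated by $e^{-m}$, with the old generator $e^{-m}\kappa$ now equal to twice that. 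Your argument as written would leave those $\conc\Z$'s unaccounted for---indeed you contradict yourself a sentence later when you say the $\alpha^G\le 0$ region stabilizes under $\cdot e$, which includes exactly these groups.

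There are also coordinate slips: the surviving $\conc\Z$'s lie along $\alpha^G=0$ (not $|\alpha|=0$), and the surviving $\conc{\Z/2}$'s lie in the region $\alpha^G<0$ (not $\alpha^G\ge 0$). These are harmless once you fix the $e^{-m}\kappa$ issue, but as written they add to the confusion.
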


\begin{proof}
Because $\rho(e) = 0$, inverting $e$ kills $\Mackey H_G^\bullet(S^0)(G/e)$,
hence the result is a module over $\conc{\Z}$.
In $\Mackey H_G^\bullet(S^0)$, every element is killed by a high enough power of $e$
except the terms $e^{-m}\kappa$ and $e^m\xi^n$ for $m\geq 0$ and $n\geq 0$.
We have $e^{m+1}\cdot e^{-m}\kappa = 2e$, so $e^{-m}\kappa = 2e^{-m}$
in $\Mackey H_G^\bullet(S^0)[e^{-1}]$.
We have $2e\xi = 0$, so $2\xi = 0$ in $\Mackey H_G^\bullet(S^0)[e^{-1}]$.
\end{proof}

\begin{theorem}\label{thm:EvenTEG}
Let $p=2$. Additively,
\[
 \Mackey H_G^\alpha(\tE G) \iso
  \begin{cases}
    \conc{\Z} & \text{if $\alpha^G = 0$} \\
    \conc{\Z/2} & \text{if $\alpha^G\geq 3$ is odd} \\
    0 & \text{otherwise.}
  \end{cases}
\]
Multiplication by $e\in\Mackey H_G^\bullet(S^0)$ is an isomorphism on $\Mackey H_G^\bullet(\tE G)$,
so $\Mackey H_G^\bullet(\tE G)$ is a module over $\Mackey H_G^\bullet(S^0)[e^{-1}]$.
As such, it is generated by elements
\begin{align*}
 \kappa &\in \Mackey H_G^0(\tE G)(G/G) \quad\text{and}\\
 \delta\xi^{-k} &\in \Mackey H_G^{1 - 2k(\LL-1)}(\tE G)(G/G) \quad k \geq 1
\end{align*}
such that 
\begin{align*}
 \xi\cdot\kappa &= 0 \\
 \xi\cdot\delta\xi^{-k} &= \delta\xi^{-(k-1)}\quad\text{$k>1$, and} \\
 \xi\cdot\delta\xi^{-1} &= 0.
\end{align*}
\qed
\end{theorem}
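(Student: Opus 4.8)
The plan is to compute $\Mackey H_G^\bullet(\tE G)$ from the cofibration sequence $EG_+\to S^0\to\tE G$, which gives a long exact sequence relating the three cohomologies. Since Theorem~\ref{thm:evenCohomPoint} and Theorem~\ref{thm:EvenEG} are already available, together with the maps between them (referenced above as Proposition~\ref{prop:EvenMaps}), the additive structure of $\Mackey H_G^\bullet(\tE G)$ should fall out degree by degree: comparing the tables for $\Mackey H_G^\alpha(S^0)$ and $\Mackey H_G^\alpha(EG_+)$, the map $\Mackey H_G^\alpha(S^0)\to\Mackey H_G^\alpha(EG_+)$ is an isomorphism in the region $\alpha^G\leq 0$ (where both are the ``$\Mackey R$-type'' functors) and is zero or injective-with-cokernel in the region $\alpha^G>0$; the connecting homomorphism then identifies $\Mackey H_G^\alpha(\tE G)$ with the appropriate kernels and cokernels. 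Concretely, $\tE G$ should see exactly the ``free part'' sitting over the fixed-point axis and the $\conc{\Z/2}$ summands with $\alpha^G$ odd and large, matching the stated answer.

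First I would fix the long exact sequence $\cdots\to\Mackey H_G^\alpha(\tE G)\to\Mackey H_G^\alpha(S^0)\to\Mackey H_G^\alpha(EG_+)\to\Mackey H_G^{\alpha+1}(\tE G)\to\cdots$ and run it in each of the relevant bands of the $(\alpha^G,|\alpha|)$ plane. For $\alpha^G=0$ one reads off $\Mackey H_G^\alpha(\tE G)\iso\conc\Z$ (the image of $\kappa$, since $\kappa$ maps to $0$ in $EG_+$ but its transfer-built summand survives); for $\alpha^G\geq 3$ odd the $\Mackey L\Z_-$ in $S^0$ and $\Mackey R\Z_-$ in $EG_+$ differ by a $\conc{\Z/2}$, producing the $\delta\xi^{-k}$ classes; everywhere else the map $S^0\to EG_+$ is an isomorphism, so $\Mackey H_G^\alpha(\tE G)=0$. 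Next I would check that multiplication by $e$ is an isomorphism on $\Mackey H_G^\bullet(\tE G)$: this follows because $\tE G$ is built from cells of the form $G/G$ (it is $\R$-CW with all cells free of the top group, equivalently $\tE G = S^{\infty\LL}$ up to the relevant homotopy), so its cohomology is $e$-local; alternatively it is forced by the long exact sequence since $e$ acts invertibly on the relevant subquotients of $\Mackey H_G^\bullet(S^0)$ after killing $EG_+$. This makes $\Mackey H_G^\bullet(\tE G)$ a module over $\Mackey H_G^\bullet(S^0)[e^{-1}]$, whose structure is pinned down by Proposition~\ref{prop:localizedPoint}.

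With the $e$-local module structure in hand, the generators and relations are immediate: $\Mackey H_G^\bullet(S^0)[e^{-1}]\iso\conc\Z[e,e^{-1},\xi]/\langle 2\xi\rangle$ acts, and $\Mackey H_G^\bullet(\tE G)$ is the cyclic-plus module generated by $\kappa$ (in degree $0$, with $\xi\kappa=0$ since $\kappa\xi=0$ already in $\Mackey H_G^\bullet(S^0)$) together with the classes $\delta\xi^{-k}$ in degree $1-2k(\LL-1)$. The relations $\xi\cdot\delta\xi^{-k}=\delta\xi^{-(k-1)}$ for $k>1$ and $\xi\cdot\delta\xi^{-1}=0$ are exactly the multiplicative relations already recorded in Theorem~\ref{thm:evenCohomPoint} for the images of these elements in $\Mackey H_G^\bullet(S^0)$, transported along the surjection $\Mackey H_G^\bullet(S^0)\to\Mackey H_G^\bullet(\tE G)$ coming from $S^0\to\tE G$; one checks that $\delta\xi^{-k}$ is precisely the image of the element of the same name and that these, together with $\kappa$, generate over the localized ring by a dimension count against the additive answer.

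The main obstacle I anticipate is not the additive calculation but keeping the connecting-homomorphism bookkeeping honest — in particular verifying that the class I call $\delta\xi^{-k}\in\Mackey H_G^{1-2k(\LL-1)}(\tE G)$ really is the image of $e^{-k-1}\delta\xi^{-k}\cdot e^{k+1}$ type elements (equivalently, that the notation $\delta$ is consistent with its use in Theorem~\ref{thm:evenCohomPoint}), and confirming that no extension problems arise when passing from the long exact sequence to the module description. Since every group in sight is either $\conc\Z$ or $\conc{\Z/2}$ and the $e$-action is invertible, the potential extensions are between a $\conc\Z$ and $\conc{\Z/2}$'s in different degrees, which the grading separates; so the extension issue should be vacuous, but it is the point that needs a careful sentence rather than a wave of the hand.
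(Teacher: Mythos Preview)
Your approach assumes Theorem~\ref{thm:evenCohomPoint} (the full $RO(G)$-graded cohomology of $S^0$) as input, but in this paper the logical dependence runs the other way: the results in \S\ref{sec:pointSummary} are summaries whose proofs are deferred to Part~\ref{part:point}, and there $\Mackey H_G^\bullet(\tE G)$ is computed \emph{before} $\Mackey H_G^\bullet(S^0)$, the latter being assembled afterwards from the cofibration $EG_+\to S^0\to\tE G$ (Theorem~\ref{thm:evenCohomPointRedux}). So as written your argument is circular.

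The paper's actual proof avoids this by working in two independent steps. First, $e$-invertibility on $\Mackey H_G^\bullet(\tE G)$ is proved directly and geometrically (Proposition~\ref{prop:tEGPeriodicity}): for any $V$ with $V^G=0$, the inclusion $\tE G=\tE G\smsh S^0\to\tE G\smsh S^V$ is a weak equivalence because smashing with $\tE G$ sees only fixed points, so multiplication by $e_V$ is an isomorphism. This is essentially your parenthetical about $\tE G\simeq S^{\infty\LL}$, and it reduces the whole computation to integer grading. Second, in integer grading the dimension axiom alone gives $\Mackey H_G^n(S^0)$---no full computation needed---and the long exact sequence together with the already-established $\Mackey H_G^\bullet(EG_+)$ yields $\Mackey H_G^0(\tE G)\iso\conc\Z$ generated by $\kappa$, and isomorphisms $\delta\colon\Mackey H_G^{2k}(EG_+)\xrightarrow{\iso}\Mackey H_G^{2k+1}(\tE G)\iso\conc{\Z/2}$ for $k\geq 1$ (Proposition~\ref{prop:intTEGCohomology}). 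The module relations then follow because $\delta$ is an $\Mackey H_G^\bullet(S^0)$-module map: $\xi\cdot\delta(\xi^{-k})=\delta(\xi\cdot\xi^{-k})=\delta(\xi^{-(k-1)})$, while $\xi\cdot\delta\xi^{-1}$ and $\xi\cdot\kappa$ land in zero groups.

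One smaller correction: the map $S^0\to\tE G$ induces $\psi\colon\Mackey H_G^\bullet(\tE G)\to\Mackey H_G^\bullet(S^0)$ in cohomology, not a surjection in the other direction as you wrote; the relations on the $\delta\xi^{-k}$ are not ``transported from $S^0$'' but come from $\delta$ being a module map as above.
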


Figure~\ref{fig:EvenTEG} shows $\Mackey H_G^\bullet(\tE G)$ and its generators.
\begin{figure}
\[\def\objectstyle{\scriptstyle}
 \xymatrix@!0@R=5ex@C=2.5em{
  & \ar@{-}'[d]'[dd]'[ddd]'[dddd]'[ddddd]'[dddddd]'[ddddddd][dddddddd] \\
  & \conc\Z & \cdot & \cdot & \conc{\Z/2} & \cdot & \conc{\Z/2} & \cdot & \conc{\Z/2} \\
  & \conc\Z & \cdot & \cdot & \conc{\Z/2} & \cdot & \conc{\Z/2} & \cdot & \conc{\Z/2} \\
  & \conc\Z & \cdot & \cdot & \conc{\Z/2} & \cdot & \conc{\Z/2} & \cdot & \conc{\Z/2} \\
  \ar@{-}'[r]'[rr]'[rrr]'[rrrr]'[rrrrr]'[rrrrrr]'[rrrrrrr]'[rrrrrrrr][rrrrrrrrr]
   & \conc\Z & \cdot & \cdot & \conc{\Z/2} & \cdot & \conc{\Z/2} & \cdot & \conc{\Z/2} &  \\
  & \conc\Z & \cdot & \cdot & \conc{\Z/2} & \cdot & \conc{\Z/2} & \cdot & \conc{\Z/2} \\
  & \conc\Z & \cdot & \cdot & \conc{\Z/2} & \cdot & \conc{\Z/2} & \cdot & \conc{\Z/2} \\
  & \conc\Z & \cdot & \cdot & \conc{\Z/2} & \cdot & \conc{\Z/2} & \cdot & \conc{\Z/2} \\
  & &  
 }
\]
%\vskip 2ex
\[\def\objectstyle{\scriptstyle}
 \xymatrix@!0@R=5ex@C=2.5em{
  & \ar@{-}'[d]'[dd]'[ddd]'[dddd]'[ddddd]'[dddddd]'[ddddddd][dddddddd] \\
  & e^3\kappa & \cdot & \cdot & e^2\delta\xi^{-1} & \cdot & e^2\delta\xi^{-2} & \cdot & e^2\delta\xi^{-3} \\
  & e^2\kappa & \cdot & \cdot & e\delta\xi^{-1} & \cdot & e\delta\xi^{-2} & \cdot & e\delta\xi^{-3} \\
  & e\kappa & \cdot & \cdot & \delta\xi^{-1} & \cdot & \delta\xi^{-2} & \cdot & \delta\xi^{-3} \\
  \ar@{-}'[r]'[rr]'[rrr]'[rrrr]'[rrrrr]'[rrrrrr]'[rrrrrrr]'[rrrrrrrr][rrrrrrrrr]
   & \kappa & \cdot & \cdot & e^{-1}\delta\xi^{-1} & \cdot & e^{-1}\delta\xi^{-2} & \cdot & e^{-1}\delta\xi^{-3} &  \\
  & e^{-1}\kappa & \cdot & \cdot & e^{-2}\delta\xi^{-1} & \cdot & e^{-2}\delta\xi^{-2} & \cdot & e^{-2}\delta\xi^{-3} \\
  & e^{-2}\kappa & \cdot & \cdot & e^{-3}\delta\xi^{-1} & \cdot & e^{-3}\delta\xi^{-2} & \cdot & e^{-3}\delta\xi^{-3} \\
  & e^{-3}\kappa & \cdot & \cdot & e^{-4}\delta\xi^{-1} & \cdot & e^{-4}\delta\xi^{-2} & \cdot & e^{-4}\delta\xi^{-3} \\
  & &  
 }
\]
\caption{$\protect\Mackey H_G^\bullet(\tE G)$ and its generators, $p=2$}\label{fig:EvenTEG}
\end{figure}

To complete the picture for $p=2$, we need to describe the maps in the following
long exact sequence:
\[
 \cdots \to \Mackey H_G^{\bullet-1}(EG_+)
  \xrightarrow{\delta} \Mackey H_G^\bullet(\tE G)
  \xrightarrow{\psi} \Mackey H_G^\bullet(S^0)
  \xrightarrow{\phi} \Mackey H_G^\bullet(EG_+)
  \to \cdots
\]
It helps to look at Figures~\ref{fig:EvenCohomPoint}--\ref{fig:EvenTEG}
when reading the following result.

\begin{proposition}\label{prop:EvenMaps}
Let $p=2$.
$\delta\colon \Mackey H_G^{\bullet-1}(EG_+)\to \Mackey H_G^\bullet(\tE G)$ is given by
\begin{align*}
  \delta(\iota^k) &= 0 \text{ for all $k$ and} \\
  \delta(e^m\xi^n) &=
   \begin{cases}
    e^m\delta\xi^n & \text{if $n\leq -1$} \\
    0 & \text{otherwise.}
   \end{cases}
\end{align*}
$\psi\colon \Mackey H_G^\bullet(\tE G) \to \Mackey H_G^\bullet(S^0)$ is given by
\begin{align*}
  \psi(e^m\kappa) &= e^m\kappa \\
  \psi(e^m\delta\xi^{-n}) &= 
     \begin{cases}
      e^m\delta\xi^{-n} & \text{if $m\leq -1$} \\
      0 & \text{otherwise.}
     \end{cases}
\end{align*}
$\phi\colon \Mackey H_G^\bullet(S^0) \to \Mackey H_G^\bullet(EG_+)$ is given by
\begin{align*}
 \phi(\iota^k) &= \iota^k \\
 \phi(e^m\xi^n) &= e^m\xi^n \text{ for $m\geq 0$ and $n\geq 0$} \\
 \phi(e^{-m}\kappa) &= 0 \text{ for $m\geq 1$} \\
 \phi(e^{-m}\delta\xi^{-n}) &= 0.
\end{align*}
\qed
\end{proposition}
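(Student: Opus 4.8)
The plan is to read all three maps off the long exact cohomology sequence of the cofiber sequence $EG_+\xrightarrow{c}S^0\xrightarrow{j}\tE G$. The cohomology $\Mackey H_G^\bullet(S^0)$ is a graded-commutative ring and every $\Mackey H_G^\bullet(-)$ is a module over it, so this cofiber sequence induces a long exact sequence of $\Mackey H_G^\bullet(S^0)$-modules in which $\phi=c^*$ and $\psi=j^*$ are module maps, the connecting map $\delta$ is a module map, and $\phi$ is in addition a ring homomorphism (with $\Mackey H_G^\bullet(EG_+)$ regarded as an $\Mackey H_G^\bullet(S^0)$-algebra via $\phi$). It therefore suffices to compute each map on a set of $\Mackey H_G^\bullet(S^0)$-module generators and then propagate, using the additive descriptions of Theorems~\ref{thm:evenCohomPoint}, \ref{thm:EvenEG} and~\ref{thm:EvenTEG} to see which target groups vanish and where multiplication by $e$ or by $\xi$ is injective, and invoking exactness to settle what remains. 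We use throughout that exactness of a sequence of Mackey functors may be checked at $G/G$ and at $G/e$ separately, and that $\Mackey H_G^\bullet(\tE G)(G/e)\iso\tilde H_e^\bullet(\tE G)=0$ since $\tE G$ is nonequivariantly contractible.

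For $\phi$: naturality of Euler classes and of the construction of $\iota^{\pm1}$ gives $\phi(e)=e$ and $\phi(\iota^{\pm1})=\iota^{\pm1}$, while $\phi(\xi)=\xi$ is the defining property of $\xi\in\Mackey H_G^\bullet(S^0)$ as the unique preimage of $\xi\in\Mackey H_G^\bullet(EG_+)$; as $\phi$ is multiplicative this gives $\phi(\iota^k)=\iota^k$ and $\phi(e^m\xi^n)=e^m\xi^n$ for $m,n\geq0$. The remaining generators $e^{-m}\kappa$ ($m\geq1$) and $e^{-m}\delta\xi^{-n}$ ($m,n\geq1$) are $\xi$-torsion in $\Mackey H_G^\bullet(S^0)$ --- using $\xi\kappa=0$, the identity $\xi\cdot e^{-1}\delta\xi^{-1}=\xi\cdot\tau(\iota^{-3})=\tau(\iota^{-1})=0$, and the cup-product relations of Theorem~\ref{thm:evenCohomPoint} to reduce the general case --- so, $\phi(\xi)$ being invertible in $\Mackey H_G^\bullet(EG_+)$, $\phi$ annihilates all of them. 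For $\psi$: in total degree $0$ the long exact sequence collapses to $0\to\conc\Z\xrightarrow{\psi}\Mackey A_{G/G}\xrightarrow{\phi}\Mackey R\Z\to0$, in which $\phi$ is the augmentation $\epsilon\colon A(G)\to\Z$ at level $G/G$; hence the monomorphism $\psi$ carries the degree-$0$ generator of $\Mackey H_G^0(\tE G)(G/G)$, named $\kappa$, onto a generator of $\ker\epsilon$, i.e.\ onto $\kappa\in A(G)$. Linearity then gives $\psi(e^m\kappa)=e^m\kappa$ for $m\geq1$; for $m=-m'\leq-1$, the relation $e^{m'}\cdot(e^{-m'}\kappa)=\kappa$ (valid in both $\Mackey H_G^\bullet(\tE G)$ and $\Mackey H_G^\bullet(S^0)$) together with the injectivity of multiplication by $e^{m'}$ on $\Mackey H_G^{-m'\LL}(S^0)\iso\conc\Z$ forces $\psi(e^{-m'}\kappa)=e^{-m'}\kappa$. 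Each $e^m\delta\xi^{-n}$ with $m\geq0$ sits in a degree where $\Mackey H_G^\bullet(S^0)$ vanishes ($|\alpha|>0$ and $\alpha^G\geq3$ is odd), so $\psi$ kills it; when $m\leq-1$ the term $\Mackey H_G^{\bullet-1}(EG_+)$ immediately preceding $\psi$ in that degree vanishes (its total dimension is negative), so $\psi$ is injective there, and since source and target are both $\Z/2$ at level $G/G$ one gets $\psi(e^{-m}\delta\xi^{-n})=e^{-m}\delta\xi^{-n}$.

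For $\delta$: exactness gives $\ker\delta=\mathrm{im}\,\phi$, so $\delta\circ\phi=0$, whence $\delta(\iota^k)=0$ and $\delta(e^m\xi^n)=0$ for all $k$ and all $m,n\geq0$. For $n=-n'<0$: in the degree of $\xi^{-n'}$ the group $\Mackey H_G^\bullet(EG_+)$ is $\Mackey R\Z$, infinite cyclic at level $G/G$, while $\mathrm{im}\,\phi$ there is the index-$2$ subgroup --- seen by comparing restriction maps: the $G/G$-generator of $\Mackey H_G^{-2n'(\LL-1)}(S^0)\iso\Mackey L\Z$ has $\rho$-image $2\iota^{-2n'}$ --- so $\delta(\xi^{-n'})$ must be the nonzero element of $\Mackey H_G^{1-2n'(\LL-1)}(\tE G)(G/G)\iso\Z/2$, that is, $\delta(\xi^{-n'})=\delta\xi^{-n'}$, and then $\delta(e^m\xi^{-n'})=e^m\cdot\delta(\xi^{-n'})=e^m\delta\xi^{-n'}$ by linearity. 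Finally one checks that these formulas are mutually consistent with exactness in every degree ($\mathrm{im}\,\psi=\ker\phi$ and $\mathrm{im}\,\delta=\ker\psi$). I expect the real labor, and the only point where real care is needed, to be exactly this bookkeeping against the three additive charts, together with the ``boundary'' degrees --- those involving $e^{-m}\kappa$ with $m\leq0$ and $e^{-1}\delta\xi^{-n}$ --- where the target group does not vanish, so one cannot argue by a zero group and must instead combine exactness with the precise cyclic structure; and, since the whole argument turns on how classes such as $e^{-m}\kappa$ are normalized and on the sign conventions for passing to the $RO(G)$-grading, on checking that the names match up with no stray signs or factors of $2$, rather than merely that each map has the stated shape.
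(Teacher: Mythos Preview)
Your proposal is correct and follows essentially the same approach as the paper: read the maps off the long exact sequence using the $\Mackey H_G^\bullet(S^0)$-module structure, exactness, and the additive descriptions of the three cohomologies. The paper's argument is scattered across Part~\ref{part:point} (the $\delta$ computation appears as a separate proposition, while $\phi$ and $\psi$ are extracted from the proof of Theorem~\ref{thm:evenCohomPointRedux}), but the ingredients are the same. Your $\xi$-torsion argument for $\phi(e^{-m}\kappa)=\phi(e^{-m}\delta\xi^{-n})=0$ is a tidy alternative to the paper's tactic of pointing to vanishing target groups; and for $\delta(\xi^{-n'})$ you recompute, via the index-$2$ image of $\phi$, what the paper takes as the \emph{definition} of $\delta\xi^{-n'}$ (cf.\ the proof of Theorem~\ref{thm:tEGCohomology}). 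One small point: your identification $\psi(e^{-m}\delta\xi^{-n})=e^{-m}\delta\xi^{-n}$ for $m\geq 1$ really rests on the paper's naming convention that the generators of $\Mackey H_G^\bullet(S^0)$ in the region $|\alpha|<0$ are \emph{defined} as the $\psi$-images of the like-named generators of $\Mackey H_G^\bullet(\tE G)$; your injectivity argument shows the map is nonzero between two copies of $\Z/2$, but the fact that the target generator bears the same name is a convention, not a computation.
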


Note that $\phi\colon \Mackey H_G^{2(\LL-1)}(S^0)\to \Mackey H_G^{2(\LL-1)}(EG_+)$
is an isomorphism, hence $\xi\in \Mackey H_G^{2(\LL-1)}(S^0)$ is characterized
by the fact that $\phi(\xi) = \xi$, where $\xi = \xi_1$ is the Thom class
defined in Lemma~\ref{lem:EGInvertibles}.

We now give the results for odd $p$. As mentioned, the case $p=3$ is simpler than
for larger primes, but we will not separate it out in these statements.

\begin{theorem}\label{thm:oddCohomPoint}
Let $p$ be odd.
Additively,
\[
 \Mackey H_G^{\alpha}(S^0) \iso
  \begin{cases}
   \Mackey A[\nu(\alpha)] & \text{if $|\alpha| = \alpha^G = 0$} \\
   \Mackey R\Z & \text{if $|\alpha| = 0$ and $\alpha^G < 0$ is even} \\
   \Mackey L\Z & \text{if $|\alpha| = 0$ and $\alpha^G > 0$ is even} \\
   \conc\Z & \text{if $|\alpha| \neq 0$ and $\alpha^G = 0$} \\
   \conc{\Z/p} & \text{if $|\alpha| > 0$ and $\alpha^G < 0$ is even} \\
   \conc{\Z/p} & \text{if $|\alpha| < 0$ and $\alpha^G \geq 3$ is odd} \\
   0 & \text{otherwise.}
  \end{cases}
\]
Multiplicatively, $\Mackey H_G^\bullet(S^0)$ is a strictly commutative unital $RO(G)$-graded ring,
generated by elements
\begin{align*}
 \iota_k &\in \Mackey H_G^{\MM_k-2}(S^0)(G/e) & & 1\leq k \leq (p-1)/2 \\
 \iota_k^{-1} &\in \Mackey H_G^{2-\MM_k}(S^0)(G/e) & & 1\leq k \leq (p-1)/2 \\
 \xi_1 &\in \Mackey H_G^{\MM_1-2}(S^0)(G/G) \\
 e_1 &\in \Mackey H_G^{\MM_1}(S^0)(G/G) \\
 \mu^{\alpha,a} &\in \Mackey H_G^\alpha(S^0)(G/G) & & \alpha\in RO_0(G),\ a\in\nu(\alpha) \\
 e_1^{-m}\kappa &\in \Mackey H_G^{-m\MM_1}(S^0)(G/G) & & m\geq 1 \\
 e_1^{-m}\delta\xi_1^{-n} &\in \Mackey H_G^{1-m\MM_1 - n(\MM_1-2)}(S^0)(G/G)
  & & m, n \geq 1
\end{align*}
These generators satisfy the following {\em structural} relations:
\begin{align*}
 t\iota_k^{-1} &= \iota_k^{-1} & & \text{ for all $k$} \\
 \kappa\xi_1 &= 0 \\
 \rho(\xi_1) &= \iota_1 \\
 \rho(e_1) &= 0 \\
 \rho(\mu^{\alpha,a}) &= a \iota^\alpha 
     & & \text{ for all $\alpha\in RO_0(G)$ and $a\in\nu(\alpha)$} \\
 \rho(e_1^{-m}\kappa) &= 0 & & \text{ for $m\geq 1$} \\
 \rho(e_1^{-m}\delta\xi_1^{-n}) & = 0 & & \text{ for $m, n \geq 1$} \\
 p e_1^{-m}\delta\xi_1^{-n} &= 0 & & \text{ for $m, n \geq 1$,}
\intertext{the {\em redundancy} relations}
 \mu^{0,1} &= 1 \\
 \mu^{\alpha, a + p} &= \mu^{\alpha,a} + \tau(\iota^\alpha)
  & & \text{for all $\alpha\in RO_0(G)$ and $a\in\nu(\alpha)$,}
\intertext{and the following {\em multiplicative} relations:}
 \iota_k \cdot \iota_k^{-1} &= \rho(1) & & \text{for all $k$} \\
 \mu^{\alpha,a}\cdot \mu^{\beta,b} &= \mu^{\alpha+\beta,ab}
      & & \text{for $\alpha, \beta \in RO_0(G)$} \\
 e_1\cdot e_1^{-m}\kappa &= e_1^{-m+1}\kappa & & \text{for $m\geq 1$} \\
 \xi_1\cdot e_1^{-m}\kappa &= 0 & & \text{for $m\geq 1$} \\
 e_1 \cdot e_1^{-m}\delta\xi_1^{-n} &= e_1^{-m+1}\delta\xi_1^{-n} & & \text{for $m\geq 2$ and $n\geq 1$} \\
 e_1 \cdot e_1^{-1}\delta\xi_1^{-n} &= 0 & & \text{for $n\geq 1$} \\
 \xi_1\cdot e_1^{-m}\delta\xi_1^{-n} &= e_1^{-m}\delta\xi_1^{-n+1} 
     & & \text{for $m\geq 1$ and $n\geq 2$} \\
 \xi_1\cdot e_1^{-m}\delta\xi_1^{-1} &= 0 & & \text{for $m\geq 1$} \\
 e_1^{-m}\kappa \cdot e_1^{-n}\kappa &= p e_1^{-m-n}\kappa & & \text{for $m, n \geq 1$} \\
 e_1^{-\ell}\kappa\cdot e_1^{-m}\delta\xi_1^{-n} &= 0 & & \text{for $\ell, m \geq 1$} \\
 e_1^{-k}\delta\xi_1^{-\ell} \cdot e_1^{-m}\delta\xi_1^{-n} &= 0
     & & \text{for $k, \ell, m, n \geq 1$}
\end{align*}
For $\alpha\in RO_0(G)$ and $a\in \nu(\alpha)^{-1}$, let $a^{-1}\in\nu(\alpha)$ so
that $aa^{-1} \equiv 1 \pmod p$ and let
\[
 \lambda^{\alpha,a} = a\mu^{\alpha,a^{-1}} + \frac{1-aa^{-1}}{p}\tau(\iota^\alpha);
\]
this is independent of the choice of $a^{-1}$.
The following relations are implied by the preceding ones:
\begin{align*}
 \xi_1 \tau(\iota^\alpha) &= \tau(\iota_1\iota^\alpha)
  && \text{for $\alpha^G = 0$} \\
 \mu^{\alpha,a}\tau(\iota^\beta) &= a\tau(\iota^{\alpha+\beta}) 
  && \text{for $\alpha\in RO_0(G)$ and $\beta^G = 0$} \\
 \tau(\iota^\alpha)\tau(\iota^\beta) &= p\tau(\iota^{\alpha+\beta}) 
  && \text{for $\alpha^G = 0 = \beta^G$} \\
 e_1\tau(\iota^\alpha) &= 0 
  && \text{for $\alpha^G = 0$}\\
 \kappa e_1 &= pe_1 \\
 pe_1\xi_1 &= 0 \\
 \mu^{\alpha,a} &= \mathrlap{a\lambda^{\alpha,a^{-1}} + \frac{1-aa^{-1}}{p}\kappa\mu^{\alpha,a}} \\
  & && \text{for $\alpha\in RO_0(G)$, $a\in\nu(\alpha)$, $a^{-1}\in\nu(\alpha)^{-1}$} \\
 \lambda^{\alpha, a + p} &= \lambda^{\alpha,a} + \kappa\mu^{\alpha,a^{-1}} 
  && \text{for $\alpha\in RO_0(G)$, $a\in\nu(\alpha)^{-1}$, $a^{-1}\in\nu(\alpha)$} \\
 \lambda^{\alpha, a + p}\xi_1 &= \lambda^{\alpha,a}\xi_1 
  && \text{for $\alpha\in RO_0(G)$} \\
 e_1\lambda^{\alpha,a} &= a e_1\mu^{\alpha,a^{-1}}
  && \text{for $\alpha\in RO_0(G)$, $a\in\nu(\alpha)^{-1}$, $a^{-1}\in\nu(\alpha)$} \\
 \mu^{\alpha,a}\xi_1 &= a\lambda^{\alpha,a^{-1}}\xi_1
  && \text{for $\alpha\in RO_0(G)$, $a\in\nu(\alpha)$, $a^{-1}\in\nu(\alpha)^{-1}$} \\
 \rho(\lambda^{\alpha,a}) &= \iota^\alpha  
  && \text{for $\alpha\in RO_0(G)$} \\
 \lambda^{\alpha,a}  \lambda^{\beta,b} &= \lambda^{\alpha+\beta, ab} 
  && \text{for $\alpha, \beta \in RO_0(G)$}
\end{align*}
\qed
\end{theorem}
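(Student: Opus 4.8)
The plan is to obtain Theorem~\ref{thm:oddCohomPoint} by gluing together two simpler computations along the long exact sequence
\[
 \cdots \to \Mackey H_G^{\bullet-1}(EG_+) \xrightarrow{\delta} \Mackey H_G^\bullet(\tE G)
  \xrightarrow{\psi} \Mackey H_G^\bullet(S^0) \xrightarrow{\phi} \Mackey H_G^\bullet(EG_+) \to \cdots
\]
induced by the cofibration sequence $EG_+ \to S^0 \to \tE G$, in exact parallel with the $p=2$ case recorded in Theorems~\ref{thm:EvenEG}--\ref{thm:EvenTEG} and Proposition~\ref{prop:EvenMaps}. So the first step is to compute $\Mackey H_G^\bullet(EG_+)$ and $\Mackey H_G^\bullet(\tE G)$, the second is to read off the connecting maps, and the third is to assemble the additive and then the multiplicative answer, keeping the sign conventions of the decategorification deferred to Part~\ref{part:point} firmly in mind.

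For $\Mackey H_G^\bullet(EG_+)$ I would model $EG$ by the unit sphere of $\infty(\MM_1\oplus\cdots\oplus\MM_{(p-1)/2})$, so that the $G$-CW filtration, the dimension axiom, and the Wirthm\"uller isomorphism (which makes level $G/e$ the ordinary cohomology of a free $G$-complex) determine the answer; the essential feature is that the Thom classes $\xi_k\in\tilde H_G^{\MM_k-2}(EG_+)$ of Lemma~\ref{lem:EGInvertibles} are invertible, which collapses the $RO(G)$-grading down to the fixed dimension $\alpha^G$ and leaves an $\Mackey R\Z$-algebra generated by $\iota_k$, $\iota_k^{-1}$, $e_1$, and $\xi_1$ (the remaining $\xi_k$ for $k>1$ being expressible through $\xi_1$ and the classes $\mu^{\MM_k-\MM_1,k}$, via Remark~\ref{rem:extendedXi} and the multiplicativity of $\nu$). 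For $\Mackey H_G^\bullet(\tE G)$ I would use that $\tE G = S^{\infty(\MM_1\oplus\cdots\oplus\MM_{(p-1)/2})}$ has $(\tE G)^G = S^0$ and $(\tE G)^e \simeq *$, so its cohomology is concentrated in degrees with $\alpha^G = 0$ (copies of $\conc\Z$) together with a band of $\conc{\Z/p}$'s; it is cleanest to describe it as a module over the localization $\Mackey H_G^\bullet(S^0)[e_1^{-1}]$ (the odd analogue of Proposition~\ref{prop:localizedPoint}), generated by $\kappa$ and the classes $\delta\xi_1^{-n}$.

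Feeding these into the long exact sequence, the maps $\phi$, $\psi$, $\delta$ are forced by naturality and by their effect on the invertible Thom classes (the odd analogue of Proposition~\ref{prop:EvenMaps}), and a degree-by-degree diagram chase then yields the additive statement: $\conc\Z$ when $|\alpha|\neq 0$ and $\alpha^G = 0$; $\Mackey R\Z$ or $\Mackey L\Z$ when $|\alpha| = 0$ and $\alpha^G$ is a nonzero even integer, according to the sign of $\alpha^G$; the $\conc{\Z/p}$'s from the bands in $\Mackey H_G^\bullet(\tE G)$; and, in the corner $|\alpha| = \alpha^G = 0$, a nonsplit extension of $\Mackey R\Z$ by $\conc\Z$. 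By Proposition~\ref{prop:extensions} every such extension is some $\Mackey A[d]$, and the crux is to identify $d$ with $\nu(\alpha)$: one constructs the generator $\mu^{\alpha,a}$ directly (Definition~\ref{def:muBetaD}, Corollary~\ref{cor:muBetaD}) from the chosen nonequivariant identifications $\R^2 = \MM_k$, so that $\rho(\mu^{\alpha,a}) = a\iota^\alpha$ holds with $a$ necessarily a representative of $\nu(\alpha)$, and the redundancy relation $\mu^{\alpha,a+p} = \mu^{\alpha,a} + \tau(\iota^\alpha)$ is exactly the ambiguity in the choice of integer $d$ discussed after Proposition~\ref{prop:extensions}.

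For the multiplicative structure, the structural relations ($\rho$ and $\tau$ on generators, $\kappa\xi_1 = 0$, $pe_1^{-m}\delta\xi_1^{-n} = 0$, and so on) fall out of the module structures over $\Mackey H_G^\bullet(EG_+)$ and $\Mackey H_G^\bullet(\tE G)$ and from the long exact sequence; the relations among the $e_1^{-m}\kappa$ and $e_1^{-m}\delta\xi_1^{-n}$ are most easily read off in $\Mackey H_G^\bullet(S^0)[e_1^{-1}]$ and then lifted; and the key relation $\mu^{\alpha,a}\cdot\mu^{\beta,b} = \mu^{\alpha+\beta,ab}$ is checked by applying $\rho$ (using that $\nu$ is a homomorphism, so $ab\in\nu(\alpha+\beta)$) and observing that in degree $\alpha+\beta$ the only remaining indeterminacy is a multiple of $\tau(\iota^{\alpha+\beta})$, pinned down by the congruence class $ab\bmod p$. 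The final block of ``implied'' relations involving $\lambda^{\alpha,a}$ and $\kappa\mu^{\alpha,a}$ is then pure $\Mackey A[d]$-algebra, following formally from the change of basis $\{\mu,\tau(\iota)\}\leftrightarrow\{\lambda,\kappa\mu\}$ displayed in Section~\ref{sec:MackeyFunctors}. I expect the main obstacle to be precisely the determination of $\nu(\alpha)$, i.e.\ pinning down which $\Mackey A[d]$ occurs in the corner $|\alpha| = \alpha^G = 0$: this is the one place where the enlarged grading and the careful tracking of the nonequivariant orientations of the $\MM_k$ genuinely matter, and it is also where the sign subtleties underlying both the decategorification and the claim of strict commutativity must be confronted. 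A secondary nuisance is verifying multiplicativity into the torsion groups $\conc{\Z/p}$, where $\rho$ is not injective and one must argue one degree at a time, exactly as in Theorem~\ref{thm:evenCohomPoint}.
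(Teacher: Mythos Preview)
Your overall strategy is exactly the paper's: compute $\Mackey H_G^\bullet(EG_+)$ and $\Mackey H_G^\bullet(\tE G)$ first, feed them into the long exact sequence of $EG_+\to S^0\to\tE G$, resolve the extension at $|\alpha|=\alpha^G=0$ via Proposition~\ref{prop:extensions} using the explicitly constructed $\mu^{\alpha,a}$, and then verify the multiplicative relations one by one. The paper computes $\Mackey H_G^\bullet(EG_+)$ via the Serre spectral sequence of $G/e\to EG\to BG$ rather than a CW filtration, but this is a cosmetic difference.

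There is one genuine imprecision in your sketch of the key relation $\mu^{\alpha,a}\mu^{\beta,b}=\mu^{\alpha+\beta,ab}$. Applying $\rho$ gives $\rho(\mu^{\alpha,a}\mu^{\beta,b})=ab\,\iota^{\alpha+\beta}$, but in $\Mackey A[\nu(\alpha+\beta)](G/G)\cong\Z^2$ the map $\rho$ has nontrivial kernel generated by $\kappa\mu^{\alpha+\beta}=p\mu^{\alpha+\beta,ab}-ab\,\tau(\iota^{\alpha+\beta})$, not by $\tau(\iota^{\alpha+\beta})$. So knowing $\rho$ leaves you with $\mu^{\alpha,a}\mu^{\beta,b}=\mu^{\alpha+\beta,ab}+k\kappa\mu^{\alpha+\beta}$ for some unknown $k$, and the congruence class $ab\bmod p$ does not resolve this. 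The paper closes this gap by tracking the action of $\mu^{\alpha,a}\mu^{\beta,b}$ on $\Mackey H_G^\bullet(\tE G)$ and $\Mackey H_G^\bullet(EG_+)$: on $\tE G$ it acts as the invertible $e^{\alpha+\beta}$, which forces the resulting map of short exact sequences $0\to\conc\Z\to\Mackey A_{G/G}\to\Mackey R\Z\to 0$ onto $0\to\conc\Z\to\Mackey H_G^{\alpha+\beta}(S^0)\to\Mackey R\Z\to 0$ to be the identity on $\conc\Z$. This shows $\mu^{\alpha,a}\mu^{\beta,b}$ and $\iota^{\alpha+\beta}$ together \emph{generate} $\Mackey H_G^{\alpha+\beta}(S^0)$, and that generation condition, combined with the value of $\rho$, is exactly the characterization of $\mu^{\alpha+\beta,ab}$ in Corollary~\ref{cor:muBetaD}. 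Your ``lift from $\Mackey H_G^\bullet(S^0)[e_1^{-1}]$'' idea is the right instinct here; it just needs to be said that the $\tE G$ side, not $\rho$ alone, is what pins down the integer and not merely its residue.
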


Figure~\ref{fig:OddCohomPoint} shows $\Mackey H_G^\bullet(S^0)$
and its generators.
For $\alpha\in RO_0(G)$,
this figure shows the groups in gradings that lie in the coset
$\alpha+\langle \R, \MM_1 \rangle \subset RO(G)$, with the horizontal grading being the fixed set dimension
and the vertical grading being the total dimension.
\begin{figure}
\[\def\objectstyle{\scriptstyle}
 \xymatrix@!0@R=5ex@C=2.5em{
  & & & & & & & & & & \\
  & \conc{\Z/p} & & \conc{\Z/p} & & \conc\Z & & \cdot & & \cdot &  \\
  &  &\cdot&  &\cdot&  &\cdot&  &\cdot&  & \cdot \\
  & \conc{\Z/p} & & \conc{\Z/p} & & \conc\Z & & \cdot & & \cdot &  \\
  &  &\cdot&  &\cdot&  &\cdot&  &\cdot&  & \cdot \\
  \ar@{-}'[r]'[rrr]'[rrrrr]'[rrrrrrr]'[rrrrrrrrr][rrrrrrrrrrr]
   & \Mackey R\Z &  & \Mackey R\Z &  & \Mackey A[\nu(\alpha)] &  & \Mackey L\Z &  & \Mackey L\Z &  &\\
  &  &\cdot&  &\cdot&  &\cdot&  & \conc{\Z/p} &  &  \conc{\Z/p}  \\
  & \cdot & & \cdot & & \conc\Z & & \cdot &  & \cdot &     \\
  &  &\cdot&  &\cdot&  &\cdot&  & \conc{\Z/p} &  &  \conc{\Z/p}  \\
  & \cdot & & \cdot & & \conc\Z & & \cdot &  & \cdot &     \\
  & & & & & \ar@{-}'[u]'[uuu]'[uuuuu]'[uuuuuuu]'[uuuuuuuuu][uuuuuuuuuu]
 }
\]
%\vskip 1ex
\[\def\objectstyle{\scriptstyle}
 \xymatrix@!0@R=5ex@C=2.5em{
  & & & & & & & & & & \\
  & \lambda^{\alpha,a^{-1}}e_1^2\xi_1^2 & & \lambda^{\alpha,a^{-1}}e_1^2\xi_1 & & \mu^{\alpha,a}e_1^2 & & \cdot & & \cdot &  \\
  &  &\cdot&  &\cdot&  &\cdot&  &\cdot&  & \cdot \\
  & \lambda^{\alpha,a^{-1}}e_1\xi_1^2 & & \lambda^{\alpha,a^{-1}}e_1\xi_1 & & \mu^{\alpha,a}e_1 & & \cdot & & \cdot &  \\
  &  &\cdot&  &\cdot&  &\cdot&  &\cdot&  & \cdot \\
  \ar@{-}'[r]'[rrr]'[rrrrr]'[rrrrrrr]'[rrrrrrrrr][rrrrrrrrrrr]
   & \lambda^{\alpha,a^{-1}}\xi_1^2 &  & \lambda^{\alpha,a^{-1}}\xi_1 &  & \{\mu^{\alpha,a},\iota^\alpha\} &  & (\iota^\alpha\iota_1^{-1}) &  & (\iota^\alpha\iota_1^{-2}) &  &\\
  &  &\cdot&  &\cdot&  &\cdot&  & \mu^{\alpha,a}e_1^{-1} \delta\xi_1^{-1} &  &  \mu^{\alpha,a}e_1^{-1} \delta\xi_1^{-2}  \\
  & \cdot & & \cdot & & \mu^{\alpha,a}e_1^{-1}\kappa & & \cdot &  & \cdot &     \\
  &  &\cdot&  &\cdot&  &\cdot&  & \mu^{\alpha,a}e_1^{-2} \delta\xi_1^{-1} &  &  \mu^{\alpha,a}e_1^{-2} \delta\xi_1^{-2}  \\
  & \cdot & & \cdot & & \mu^{\alpha,a}e_1^{-2}\kappa & & \cdot &  & \cdot &     \\
  & & & & & \ar@{-}'[u]'[uuu]'[uuuuu]'[uuuuuuu]'[uuuuuuuuu][uuuuuuuuuu]
 }
\]
\caption{$\protect\Mackey H_G^\bullet(S^0)$ and its generators, $p$ odd}\label{fig:OddCohomPoint}
\end{figure}

We mentioned earlier that this description is essentially the same as the one
given by Lewis in \cite{Le:projectivespaces}. The main difference is that
what Lewis did amounted to choosing one $a\in\nu(\alpha)$ for each $\alpha$ and using
that particular $\mu^{\alpha,a}$. In general there is no canonical choice and it makes
the relations easier if we allow all possible choices, at the expense, of course, of introducing
more generators.

The theorem mentions the Euler class $e_1$, but what about the other Euler classes $e_k$?
In Part~\ref{part:point}, we shall show the following.

\begin{proposition}\label{prop:EulerClasses}
With $e_k$ as in Definition~\ref{def:EulerClasses}, we have
\[
 e_k = \mu^{\MM_k-\MM_1,a} e_1, \quad 2\leq k \leq (p-1)/2,
\]
for any $a\in \nu(\MM_k-\MM_1) = [k]$. We also have
\[
 e_k = -\mu^{\MM_{p-k}-\MM_1,a} e_1, \quad (p+1)/2 \leq k \leq p-1,
\]
for $a\in [p-k] = -[k]$.
\qed
\end{proposition}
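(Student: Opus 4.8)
The plan is to realize the generator $\mu^{\MM_k-\MM_1,a}$ geometrically, by a power map between representation spheres, and then to push Euler classes through that map. Recall from Definition~\ref{def:muBetaD} that, under the suspension isomorphism $\susp^{\MM_1}\colon \tilde H_G^{\MM_k-\MM_1}(S^0)\iso\tilde H_G^{\MM_k}(S^{\MM_1})$, the class $\mu^{\MM_k-\MM_1,a}$ corresponds to $f_a^*(u_{\MM_k})$, where $u_{\MM_k}\in\tilde H_G^{\MM_k}(S^{\MM_k})$ is the suspension of $1\in\tilde H_G^0(S^0)$ and $f_a\colon S^{\MM_1}\to S^{\MM_k}$ is the based $G$-map induced by $z\mapsto z^a$ on $\MM_1=\C$. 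This is a $G$-map precisely because $a\equiv k\pmod p$; it has nonequivariant degree $a$ and identity fixed-point map, which are exactly the two invariants recorded by $\rho(\mu^{\MM_k-\MM_1,a})=a\iota^{\MM_k-\MM_1}$ together with the $\Mackey A[\nu(\alpha)]$-structure of $\Mackey H_G^{\MM_k-\MM_1}(S^0)$ from Theorem~\ref{thm:oddCohomPoint}. Since $f_a$ fixes both $0$ and $\infty$, it satisfies $f_a\circ i_1=i_k$, where $i_V\colon S^0\includesin S^V$ denotes the inclusion of the fixed sphere.

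With this in hand, I would compute, using $e_V=i_V^*(u_V)$ from Definition~\ref{def:EulerClasses} and functoriality,
\[
 e_k = i_k^*(u_{\MM_k}) = i_1^*\bigl(f_a^*(u_{\MM_k})\bigr) = i_1^*\bigl(\susp^{\MM_1}(\mu^{\MM_k-\MM_1,a})\bigr).
\]
Now $\susp^{\MM_1}$ is a map of $\Mackey H_G^\bullet(S^0)$-modules with $\susp^{\MM_1}(x)=u_{\MM_1}\cdot x$, and $i_1^*$ is likewise $\Mackey H_G^\bullet(S^0)$-linear with $i_1^*(u_{\MM_1})=e_1$; combining these gives $i_1^*(\susp^{\MM_1}(x))=e_1\,x$ for every $x$, so that $e_k=e_1\,\mu^{\MM_k-\MM_1,a}=\mu^{\MM_k-\MM_1,a}\,e_1$ by strict commutativity of $\Mackey H_G^\bullet(S^0)$. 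Independence of the choice of $a\in[k]$ then follows from the redundancy relation $\mu^{\MM_k-\MM_1,a+p}=\mu^{\MM_k-\MM_1,a}+\tau(\iota^{\MM_k-\MM_1})$ together with $e_1\tau(\iota^{\MM_k-\MM_1})=0$.

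For the range $(p+1)/2\leq k\leq p-1$ I would reduce to the first formula. By the convention in Definition~\ref{def:EulerClasses} one has $e_k=-e_{p-k}$ with $1\leq p-k\leq (p-1)/2$; when $2\leq p-k$ the first formula applies directly, and when $p-k=1$ one has $\MM_1-\MM_1=0$ and $\mu^{0,a}e_1=e_1$ for every $a\equiv 1\pmod p$ (from $\mu^{0,1}=1$, $\mu^{0,a+p}=\mu^{0,a}+g$, and $ge_1=pe_1-\kappa e_1=0$), so $e_{p-1}=-e_1=-\mu^{0,a}e_1$. In both cases $\nu(\MM_{p-k}-\MM_1)=[p-k]=-[k]$, matching the statement.

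The step I expect to be the main obstacle is the first one: confirming that the class produced by Definition~\ref{def:muBetaD} agrees with $(\susp^{\MM_1})^{-1}f_a^*(u_{\MM_k})$ \emph{on the nose}, and not merely up to a multiple of $\kappa\mu^{\MM_k-\MM_1,a}$ (equivalently, up to an element killed by restriction to $EG_+$), since that ambiguity lives exactly in the extra summand of $\Mackey A[\nu(\alpha)](G/G)$. This is also where the orientation bookkeeping matters — one must be sure $f_a$ has nonequivariant degree $+a$ rather than $-a$ with respect to the fixed identifications $\R^2=\MM_k$ — and it is precisely these sign conventions that produce the minus signs for $k>p/2$ and the equality $\nu(\MM_{p-k}-\MM_1)=-[k]$.
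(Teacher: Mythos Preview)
Your argument is correct and is essentially the paper's own proof, which invokes Proposition~\ref{prop:muTimesE} (the identity $\mu_{1,k,d}\,e_1=e_k$ coming from the commuting triangle $S^0\to S^{\MM_1}\xrightarrow{\mu_{1,k,d}} S^{\MM_k}$) together with $e_1\tau(\iota^\alpha)=0$ for independence of $a$, and then the sign convention $e_k=-e_{p-k}$ for the second range. Your computation $e_k=i_1^*\bigl(f_a^*(u_{\MM_k})\bigr)=e_1\cdot(\Sigma^{\MM_1})^{-1}f_a^*(u_{\MM_k})$ is exactly that triangle argument unwound.

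The obstacle you flag is not one. For the specific integer $a=k$, Definition~\ref{def:muBetaD} applied to $\alpha=\MM_k-\MM_1$ reads literally $\mu^{\MM_k-\MM_1,k}=\mu_{1,k,k}$, and Definition~\ref{def:mu} defines $\mu_{1,k,k}$ as precisely $(\Sigma^{\MM_1})^{-1}f_k^*(u_{\MM_k})$ for the power map $f_k(z)=z^k$. So for $a=k$ the identification is exact by definition, no $\kappa$-ambiguity possible. You then already have the passage to arbitrary $a\in[k]$ via the redundancy relation and $e_1\tau(\iota^{\MM_k-\MM_1})=0$. There is therefore no need to argue that $f_a^*(u_{\MM_k})$ realises $\mu^{\MM_k-\MM_1,a}$ for every $a\equiv k\pmod p$; doing so would require an extra argument (the element is only determined up to a multiple of $\kappa^\alpha$ by its restriction $\rho$), but it is simply unnecessary here.
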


Note that, as discussed in \S\ref{sec:MackeyFunctors}, as a set of generators
for $\Mackey H_G^{\alpha}(S^0) \iso \Mackey A[\nu(\alpha)]$, we can also take
$\{\lambda^{\alpha,a^{-1}}, \kappa\mu^{\alpha,a}\}$, and 
$\kappa\mu^{\alpha,a}$ is independent of the choice of $a$.
With this and the preceding proposition in mind, we introduce some more notation.

\begin{definition}\label{def:ekappa}
For $\beta\in RO_0(G)$, let
\[
 \kappa^\beta = \kappa\mu^{\beta,b}
\]
for any $b\in\nu(\beta)$; as noted above, the result does not depend on the choice of $b$.
If $\alpha\in RO(G)$ has $\alpha^G = 0$, so $\alpha = \sum_{k=1}^{(p-1)/2} n_k\MM_k$
with $n_k\in\Z$, let
\[
 e^\alpha\kappa = e_1^{|\alpha|}\kappa^{\alpha-|\alpha|\MM_1}.
\]
Note that this is defined for $|\alpha|<0$ as well as $|\alpha|\geq 0$.
Combining the two notations, for $\alpha^G = 0$ and $\beta\in RO_0(G)$, let
\[
 e^\alpha\kappa^\beta = e^\alpha\kappa\mu^{\beta,b}
\]
for any $b\in \nu(\beta)$.
\end{definition}

\begin{theorem}\label{thm:OddEG}
Let $p$ be odd.
Additively,
\[
 \Mackey H_G^{\alpha}(EG_+) \iso
  \begin{cases}
   \Mackey R\Z & \text{if $|\alpha| = 0$} \\
   \conc{\Z/p} & \text{if $|\alpha| > 0$ and $\alpha^G$ is even} \\
   0 & \text{otherwise.}
  \end{cases}
\]
Multiplicatively, $\Mackey H_G^\bullet(EG_+)$ is a strictly commutative
$RO(G)$-graded $\Mackey R\Z$-algebra generated by
\begin{align*}
 e_1 &\in\Mackey H_G^{\MM_1}(EG_+)(G/G), \\
 \xi_k &\in \Mackey H_G^{\MM_k-2}(EG_+)(G/G), \quad 1 \leq k \leq (p-1)/2, \text{ and} \\
 \xi_k^{-1} &\in \Mackey H_G^{2-\MM_k}(EG_+)(G/G), \quad 1 \leq k \leq (p-1)/2,
\end{align*}
subject to the relation $\rho(e_1) = 0$.
\qed
\end{theorem}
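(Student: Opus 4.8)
The plan is to work with the model $EG = S(\infty\C_1) = \colim_n S(n\C_1)$, the unit sphere in a countable sum of copies of the faithful representation $\C_1$; this is a free contractible $G$-space carrying the minimal free $G$-CW structure with exactly one free cell in each dimension $\geq 0$, so that $EG/G$ is the infinite lens space, a model for $B\Z/p$. The level-$G/e$ part of the statement is then immediate: restricting the $G$-action makes $EG$ nonequivariantly contractible, so $\Mackey H_G^\alpha(EG_+)(G/e)\iso\tilde H_e^{|\alpha|}(S^0)$, which is $\Z$ (with trivial action of $t$, since $t$ acts as the identity on $H^0$ of a connected space) when $|\alpha| = 0$ and $0$ otherwise. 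Since $|\alpha|\equiv\alpha^G\pmod 2$, this is consistent with each case of the theorem: $\Mackey R\Z$ has $\Z$ at level $G/e$, while $\conc{\Z/p}$ and $0$ have $0$ there.

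The rest is the computation of $\Mackey H_G^\alpha(EG_+)(G/G) = \tilde H_G^\alpha(EG_+)$, and the first step is to collapse the $RO(G)$-grading to the integer grading. Because $p$ is odd, every representation of $G$ is orientable with trivial $w_1$, which is what lets the invertible Thom classes $\xi_k\in\tilde H_G^{\MM_k-2}(EG_+)$, $1\leq k\leq(p-1)/2$, of Lemma~\ref{lem:EGInvertibles} exist; writing an arbitrary $\alpha\in RO(G)$ as $\alpha = \beta + |\alpha|$, with $\beta = \sum_k n_k(\MM_k-2)\in I^\ev(G)$ and $|\alpha|$ here the trivial representation of that dimension, multiplication by the unit $\prod_k\xi_k^{-n_k}$ is an isomorphism of Mackey functors $\Mackey H_G^\alpha(EG_+)\xrightarrow{\iso}\Mackey H_G^{|\alpha|}(EG_+)$. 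Thus it suffices to compute the integer-graded functors $\Mackey H_G^n(EG_+)$, $n\in\Z$, and for these I would use the free $G$-CW structure directly: at level $G/G$ the Bredon cochain complex with $\Mackey A_{G/G}$-coefficients is $\Hom_{\Z[G]}(C_*(EG), \Mackey A_{G/G}(G/e)) = \Hom_{\Z[G]}(C_*(EG),\Z)$, where $C_*(EG)$ is the standard free $\Z[G]$-resolution of $\Z$, so this is the cochain complex $\Z\xrightarrow{0}\Z\xrightarrow{p}\Z\xrightarrow{0}\Z\xrightarrow{p}\cdots$, whose cohomology is $\Z$ in degree $0$, $\Z/p$ in each positive even degree, and $0$ in odd degrees. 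Together with the level-$G/e$ computation and the standard fact that for a free $G$-space the transfer–restriction composite is multiplication by $p$ and the restriction of the unit is a unit — which forces $\Mackey H_G^0(EG_+)\iso\Mackey R\Z$, while each positive-even part, being $0$ at level $G/e$, is $\conc{\Z/p}$ — this reproduces the additive statement once the regrading is undone.

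For the ring structure, I would identify $e_1 = e_{\MM_1}$ with the Euler class of $\MM_1$ over $EG$, a generator of $\tilde H_G^{\MM_1}(EG_+)\iso H^2(B\Z/p;\Z)\iso\Z/p$, and the $\xi_k^{\pm 1}$ with the units above; the relation $\rho(e_1) = 0$ is inherited from $\Mackey H_G^\bullet(S^0)$ (equivalently, $e_1$ restricts to $0$ in positive-degree $\tilde H_e^\bullet(S^0)$), and then $pe_1 = \tau\rho(e_1) = 0$ holds automatically in any $\Mackey R\Z$-algebra, which is why no further relation need be listed. To see that these generate subject only to $\rho(e_1) = 0$, I would check grading by grading that the monomials $e_1^a\prod_k\xi_k^{b_k}$ with $a\geq 0$, $b_k\in\Z$, biject with an $\Mackey R\Z$-basis (resp.\ $\Mackey A_{G/e}$-basis at level $G/e$) realizing the additive answer: for $\alpha$ with $|\alpha|\geq 0$ even there is exactly one such monomial, with $a = |\alpha|/2$, and it spans $\Mackey H_G^\alpha(EG_+)(G/G)$ (as $\Z$ if $a=0$, as $\Z/p$ if $a\geq 1$), while for $|\alpha|<0$ or $|\alpha|$ odd there is none. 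The one genuinely delicate point is the regrading step: making precise that multiplication by the $\xi_k$ yields the stated natural isomorphisms is where oddness of $p$ is essential and where one leans on Lemma~\ref{lem:EGInvertibles}; the remainder is the standard group-cohomology computation together with routine bookkeeping.
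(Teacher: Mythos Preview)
Your approach is correct and close in spirit to the paper's, but the two diverge at a key step in the multiplicative structure, and your version has a small gap there worth flagging.

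For the additive computation, both you and the paper reduce to integer grading via the invertible Thom classes $\xi_k$ of Lemma~\ref{lem:EGInvertibles}, and both end up computing $H^*(B\Z/p;\Z)$. The paper (Proposition~\ref{prop:EGAdditive}) packages this as the Serre spectral sequence of the bundle $G/e\to EG\to BG$, which collapses to the line $b=0$ and yields nonequivariant cohomology of $BG$ with (possibly twisted) coefficients; your direct cellular computation using the free resolution $C_*(EG)$ is essentially the same thing unwound, since the fiber is discrete. No real difference here.

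For the multiplicative structure the paper does something you do not: it runs the Gysin sequence of $EG\times\MM_1\to EG$, using that $EG\times S(\MM_1)\simeq S(\MM_1)$ and the computation of $\Mackey H_G^\bullet(S(\MM_1)_+)$ in Lemma~\ref{lem:SMk}, to prove that multiplication by $e_1$ is an epimorphism when $|\alpha|=0$ and an isomorphism when $|\alpha|>0$. This is what establishes both that $e_1$ generates $\Mackey H_G^{\MM_1}(EG_+)\iso\conc{\Z/p}$ and, inductively, that $e_1^n\xi^\alpha$ generates in each grading. You instead assert that $e_1$ is a generator of $\tilde H_G^{\MM_1}(EG_+)\iso H^2(B\Z/p;\Z)$ and that the monomials $e_1^a\prod_k\xi_k^{b_k}$ span; this is true, but it rests on identifying the image of $e_1$ under the regrading with the first Chern class of the line bundle $EG\times_G\C_1\to BG$ and then invoking the known ring structure $H^{\mathrm{ev}}(B\Z/p;\Z)\cong\Z[x]/(px)$. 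That is a perfectly legitimate route, and arguably more direct than the Gysin argument, but you should state it explicitly rather than leaving ``it spans'' as an assertion---as written, nothing in your proposal rules out $e_1^2=0$, for instance. Either cite the cup-product structure of $H^*(B\Z/p;\Z)$ or supply the Gysin argument.
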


\begin{figure}
\[\def\objectstyle{\scriptstyle}
 \xymatrix@!0@R=5ex@C=2.5em{
  & & & & & & & & & & \\
  & \conc{\Z/p} &  & \conc{\Z/p} &  & \conc{\Z/p} &  & \conc{\Z/p} &  & \conc{\Z/p}  \\
  & &\cdot&  &\cdot&  &\cdot&  &\cdot&  \\
  & \conc{\Z/p} &  & \conc{\Z/p} &  & \conc{\Z/p} &  & \conc{\Z/p} &  & \conc{\Z/p}  \\
  & &\cdot&  &\cdot&  &\cdot&  &\cdot&  \\
  & \conc{\Z/p} &  & \conc{\Z/p} &  & \conc{\Z/p} &  & \conc{\Z/p} &  & \conc{\Z/p}  \\
  & &\cdot&  &\cdot&  &\cdot&  &\cdot&  \\
  \ar@{-}'[r]'[rrr]'[rrrrr]'[rrrrrrr]'[rrrrrrrrr][rrrrrrrrrr]
    & \Mackey R\Z &  & \Mackey R\Z &  & \Mackey R\Z &  & \Mackey R\Z &  & \Mackey R\Z & \\
  & & & & & \ar@{-}'[u]'[uuu]'[uuuuu]'[uuuuuuu][uuuuuuuu]
 }
\]
%\vskip 2ex
\[\def\objectstyle{\scriptstyle}
 \xymatrix@!0@R=5ex@C=2.5em{
  & & & & & & & & & & \\
  & e_1^3\xi^\alpha\xi_1^2 &  & e_1^3\xi^\alpha\xi_1 &  & e_1^3\xi^\alpha &  & e_1^3\xi^{\alpha}\xi_1^{-1} &  & e_1^3\xi^{\alpha}\xi_1^{-2}  \\
  &  &\cdot&  &\cdot&  &\cdot&  &\cdot&  \\
  & e_1^2\xi^\alpha\xi_1^2 &  & e_1^2\xi^\alpha\xi_1 &  & e_1^2\xi^\alpha &  & e_1^2\xi^{\alpha}\xi_1^{-1} &  & e_1^2\xi^{\alpha}\xi_1^{-2}  \\
  &  &\cdot&  &\cdot&  &\cdot&  &\cdot&  \\
  & e_1\xi^\alpha\xi_1^2 &  & e_1\xi^\alpha\xi_1 &  & e_1\xi^\alpha &  & e_1\xi^{\alpha}\xi_1^{-1} &  & e_1\xi^{\alpha}\xi_1^{-2}  \\
  &  &\cdot&  &\cdot&  &\cdot&  &\cdot&  \\
  \ar@{-}'[r]'[rrr]'[rrrrr]'[rrrrrrr]'[rrrrrrrrr][rrrrrrrrrr]
   & \xi^\alpha\xi_1^2 &  & \xi^{\alpha}\xi_1 &  & \xi^\alpha &  & \xi^{\alpha}\xi_1^{-1} &  & \xi^{\alpha}\xi_1^{-2} & \\
  & & & & & \ar@{-}'[u]'[uuu]'[uuuuu]'[uuuuuuu][uuuuuuuu]
 }
\]
\caption{$\protect\Mackey H_G^\bullet(EG_+)$ and its generators, $p$ odd}\label{fig:OddEG}
\end{figure}
Figure~\ref{fig:OddEG} shows $\Mackey H_G^\bullet(EG_+)$ and its generators.
As in Figure~\ref{fig:OddCohomPoint}, 
for an $\alpha\in RO_0(G)$, the figure shows the groups in gradings that lie in the coset
$\alpha+\langle \R, \MM_1 \rangle \subset RO(G)$.

We need the following analogue of Proposition~\ref{prop:localizedPoint}.

\begin{proposition}\label{prop:oddLocalizedPoint}
Let $p$ be odd. On inverting the elements $e_k$ in $\Mackey H_G^\bullet(S^0)$,
$1\leq k \leq (p-1)/2$, we get
\[
 \Mackey H_G^\bullet(S^0)[e_k^{-1}] 
 \iso \conc\Z [e_k, e_k^{-1}, \xi_1 \mid 1\leq k \leq (p-1)/2]/\langle p\xi_1 \rangle.
\]
\end{proposition}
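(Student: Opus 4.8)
The plan is to follow the proof of Proposition~\ref{prop:localizedPoint}, with extra bookkeeping to handle the several Euler classes $e_k$ and the generators $\mu^{\alpha,a}$. First I would reduce to inverting the single element $e_1$. In $\Mackey H_G^\bullet(S^0)[e_1^{-1}]$ the relation $\kappa e_1 = pe_1$ gives $\kappa = p$, hence $g = p - \kappa = 0$; the redundancy relation $\mu^{\alpha,a+p} = \mu^{\alpha,a} + \tau(\iota^\alpha)$ then forces $\mu^{0,c} = 1$ for every $c \equiv 1\pmod p$, so each $\mu^{\alpha,a}$ becomes a unit with inverse $\mu^{-\alpha,a^{-1}}$. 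Since $e_k = \mu^{\MM_k-\MM_1,a}e_1$ by Proposition~\ref{prop:EulerClasses}, every $e_k$ is then invertible, and hence $\Mackey H_G^\bullet(S^0)[e_k^{-1}\mid 1\le k\le(p-1)/2] = \Mackey H_G^\bullet(S^0)[e_1^{-1}]$; it remains to identify this ring.

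Next I would show that the evident map $\conc\Z[e_k,e_k^{-1},\xi_1\mid 1\le k\le(p-1)/2]/\langle p\xi_1\rangle \to \Mackey H_G^\bullet(S^0)[e_1^{-1}]$ is surjective; it is well defined because $pe_1\xi_1 = 0$ forces $p\xi_1 = 0$ once $e_1$ is inverted. Since $\rho(e_1) = 0$, inverting $e_1$ annihilates the $G/e$-level, so the localization is concentrated at level $G/G$ and it is enough to hit that level. Running down the generators of Theorem~\ref{thm:oddCohomPoint}: the $\iota_k^{\pm1}$ vanish; $e_1$ and $\xi_1$ are among the chosen generators; each $\mu^{\alpha,a}$ is a (Laurent) monomial in the $e_ke_1^{-1}$ by the multiplicative relations and Proposition~\ref{prop:EulerClasses}, the dependence on $a$ dropping out since $e_1\tau(\iota^\alpha) = 0$; $e_1^{-m}\kappa = pe_1^{-m}$ because $\kappa = p$; and an induction from $e_1\cdot e_1^{-1}\delta\xi_1^{-n} = 0$ together with $e_1\cdot e_1^{-m}\delta\xi_1^{-n} = e_1^{-m+1}\delta\xi_1^{-n}$ shows $e_1^{-m}\delta\xi_1^{-n} = 0$. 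Hence the map is onto.

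For injectivity I would compare graded pieces, both sides being $RO(G)$-graded and the map graded. On the source, the grading-$\alpha$ summand is spanned by a single monomial $\xi_1^n\prod_k e_k^{m_k}$ with $n = -\alpha^G/2 \ge 0$ when $\alpha^G$ is even and $\le 0$, and is $0$ otherwise; after imposing $p\xi_1 = 0$ it is $0$, or $\Z$ (when $\alpha^G = 0$), or $\Z/p$ (when $\alpha^G < 0$ is even) --- in every case cyclic. On the target, $\bigl(\Mackey H_G^\bullet(S^0)[e_1^{-1}]\bigr)^\alpha = \colim_n \Mackey H_G^{\alpha+n\MM_1}(S^0)$ with transition maps multiplication by $e_1$; since $(\MM_1)^G = 0$, translating by $n\MM_1$ fixes $\alpha^G$ and raises $|\alpha|$ by $2n$, so the additive answer of Theorem~\ref{thm:oddCohomPoint} feeds in to give exactly the same groups, provided one knows the transition maps are isomorphisms once $|\alpha|$ is large. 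A surjection between cyclic groups of the same isomorphism type is an isomorphism, so the comparison map is injective and the proof is complete.

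I expect the one genuinely fiddly point to be that last proviso: one must check that, away from the region near $|\alpha| = 0$, multiplication by $e_1$ carries a generator to a generator in each of the $\conc\Z$, $\conc{\Z/p}$, $\Mackey L\Z$, and $\Mackey A[\nu(\alpha)]$ ranges. This can be read off the explicit generators displayed in Figure~\ref{fig:OddCohomPoint}: multiplication by $e_1$ merely raises the $e_1$-exponent of a generator by one, so it is an isomorphism except across $|\alpha| = 0$, where $\tau(\iota^\alpha)$ dies and the $\kappa$-multiples $e_1^{-m}\kappa$ come in.
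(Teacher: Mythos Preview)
Your argument is correct and follows essentially the same line as the paper's (terse) proof, which says the argument is the same as for $p=2$ together with the observation that $\mu^{\alpha,a}$ becomes the Laurent monomial $\prod_k (e_k e_1^{-1})^{n_k}$ after localization. You supply more detail than the paper does --- the reduction to inverting $e_1$ alone and the explicit comparison of graded pieces for injectivity are both your additions --- but the underlying strategy is the same.
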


\begin{proof}
The proof is essentially the same as that of Proposition~\ref{prop:localizedPoint}.
We just need the additional observation that, if 
$\alpha = \sum_{k=2}^{(p-1)/2} n_k(\MM_k - \MM_1)$, then
\begin{align*}
 \mu^{\alpha,a} &= \prod_{k=2}^{(p-1)/2} (e_k e_1^{-1})^{n_k} = e^\alpha \quad\text{and} \\
 \lambda^{\alpha,b} &= b\prod_{k=2}^{(p-1)/2} (e_k e_1^{-1})^{n_k} = be^\alpha
\end{align*}
after inverting the $e_k$.
\end{proof}

\begin{theorem}\label{thm:OddTEG}
Let $p$ be odd.
Additively,
\[
 \Mackey H_G^\alpha(\tE G) \iso
  \begin{cases}
    \conc{\Z} & \text{if $\alpha^G = 0$} \\
    \conc{\Z/p} & \text{if $\alpha^G\geq 3$ is odd} \\
    0 & \text{otherwise.}
  \end{cases}
\]
Multiplication by each $e_k\in \Mackey H_G^\bullet(S^0)$ is an isomorphism
on $\Mackey H_G^\bullet(\tE G)$, so $\Mackey H_G^\bullet(\tE G)$ is a module
over $\Mackey H_G^\bullet(S^0)[e_k^{-1}]$. As such, it is generated by elements
\begin{align*}
 \kappa &\in \Mackey H_G^0(\tE G)(G/G) \quad\text{and}\\
 \delta\xi_1^{-k} &\in \Mackey H_G^{1-k(\MM_1-2)}(\tE G)(G/G) \quad k\geq 1
\end{align*}
such that
\begin{align*}
 \xi_1\cdot \kappa &= 0 \\
 \xi_1\cdot \delta\xi_1^{-k} &= \delta\xi^{-(k-1)} \quad k>1, \text{ and} \\
 \xi_1\cdot \delta\xi_1^{-1} &= 0.
\end{align*}
\qed
\end{theorem}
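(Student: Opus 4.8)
The plan is to run the long exact cohomology sequence of the cofibration $EG_+\to S^0\to\tE G$, exactly as in the proof of Theorem~\ref{thm:EvenTEG} for $p=2$, feeding in Theorem~\ref{thm:oddCohomPoint} for $\Mackey H_G^\bullet(S^0)$ and Theorem~\ref{thm:OddEG} for $\Mackey H_G^\bullet(EG_+)$. The one extra ingredient is an explicit description of the restriction map $\phi\colon\Mackey H_G^\bullet(S^0)\to\Mackey H_G^\bullet(EG_+)$, the odd-$p$ analogue of Proposition~\ref{prop:EvenMaps}, which I would establish first (and which in Part~\ref{part:point} can be obtained before, and independently of, the final form of $\Mackey H_G^\bullet(S^0)$, so as not to be circular): $\phi$ is the ring map sending each $\iota_k$, $e_1$, $\xi_1$ to the class of the same name, sending $\mu^{\alpha,a}$ to the element of $\Mackey H_G^\alpha(EG_+)\iso\Mackey R\Z$ characterized by $\rho\phi(\mu^{\alpha,a})=a\iota^\alpha$, and killing every $e_1^{-m}\kappa$ and $e_1^{-m}\delta\xi_1^{-n}$. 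From this description one reads off: $\phi$ is an isomorphism on the ``Tate range'' $\alpha^G<0$; it is surjective in every grading $\beta$ except when $\beta^G\geq2$ is even, where $\operatorname{coker}\phi_\beta\iso\conc{\Z/p}$ (coming from $\Mackey L\Z\xrightarrow{p}\Mackey R\Z$ at level $G/G$ when $|\beta|=0$, and from $0\to\conc{\Z/p}$ when $|\beta|>0$); and $\ker\phi_\beta\neq0$ only for $\beta^G=0$, where it is the $\conc\Z$ generated by $\kappa\mu^{\beta,a}$, and for $\beta^G\geq3$ odd with $|\beta|<0$, where it is all of $\Mackey H_G^\beta(S^0)\iso\conc{\Z/p}$.

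Granting this, the additive statement is bookkeeping. The sets of $\beta$ with $\ker\phi_\beta\neq0$ and with $\operatorname{coker}\phi_\beta\neq0$ are disjoint; moreover $\operatorname{coker}\phi_{\alpha-1}$ is nonzero only for $\alpha^G\geq3$ odd, while $\ker\phi_\alpha$ is nonzero only for $\alpha^G=0$ or $\alpha^G\geq3$ odd, and in the latter case the two are nonzero for complementary values of $|\alpha|$ (namely $|\alpha|\geq1$ for the cokernel and $|\alpha|<0$ for the kernel; recall $|\alpha|=0$ forces $\alpha^G$ even for odd $p$, so these exhaust the possibilities). Hence the long exact sequence splits into short exact sequences $0\to\operatorname{coker}\phi_{\alpha-1}\to\Mackey H_G^\alpha(\tE G)\to\ker\phi_\alpha\to0$ with at most one nonzero outer term, and the stated groups --- $\conc\Z$ for $\alpha^G=0$, $\conc{\Z/p}$ for $\alpha^G\geq3$ odd, $0$ otherwise --- fall out with no extension problem.

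For the multiplicative statement I would first show that each $e_k$ ($1\le k\le(p-1)/2$) acts invertibly on $\Mackey H_G^\bullet(\tE G)$: since $\MM_k$ is faithful with $\MM_k^G=0$, $S^{\infty\MM_k}$ is also a model for $\tE G$, and multiplication by the Euler class $e_k$ is induced by the equivalence $\tE G\wedge S^{\MM_k}\simeq S^{\infty\MM_k}\simeq\tE G$. Thus $\Mackey H_G^\bullet(\tE G)$ is a module over $\Mackey H_G^\bullet(S^0)[e_k^{-1}]$, which by Proposition~\ref{prop:oddLocalizedPoint} is $\conc\Z[e_k,e_k^{-1},\xi_1]/\langle p\xi_1\rangle$. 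Here $\kappa\in\Mackey H_G^0(\tE G)$ is the unique class with $\psi(\kappa)=\kappa\in A(G)$ (note $\phi(\kappa)=0$ since $\tau\rho$ acts as $p$ on $\Mackey R\Z$), and we set $\delta\xi_1^{-k}=\delta(\xi_1^{-k})$ for $\xi_1^{-k}\in\Mackey H_G^{k(2-\MM_1)}(EG_+)$; this is nonzero because $\phi_{k(2-\MM_1)}$ is multiplication by $p$ at level $G/G$, so does not hit the generator $\xi_1^{-k}$. Since each $e_k$ is invertible and preserves $\alpha^G$, the $e_k$-orbit of $\kappa$ exhausts all the fixed-degree-$0$ groups and the $e_k$-orbit of $\delta\xi_1^{-j}$ exhausts the fixed-degree-$(2j+1)$ groups, so $\kappa$ together with the $\delta\xi_1^{-k}$ generate. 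The relations then follow formally: $\xi_1\kappa=0$ since $\Mackey H_G^{\MM_1-2}(\tE G)=0$; and because $\delta$ is a map of $\Mackey H_G^\bullet(EG_+)$-modules, $\xi_1\cdot\delta\xi_1^{-k}=\delta(\xi_1^{-(k-1)})$, which for $k=1$ is $\delta(1)=0$ as $1$ lifts to $\Mackey H_G^\bullet(S^0)$.

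The step I expect to be the real obstacle is establishing the precise form of $\phi$ in a way that does not presuppose Theorem~\ref{thm:oddCohomPoint} (so that Part~\ref{part:point} remains noncircular), together with pushing the case analysis for the additive statement through every combination of $\alpha^G$ and $|\alpha|$ --- keeping straight which of $\Mackey A[\nu(\alpha)]$, $\Mackey L\Z$, $\Mackey R\Z$, $\conc\Z$, $\conc{\Z/p}$, $0$ occurs where, and checking that $\kappa$ and the $\delta\xi_1^{-k}$ really leave nothing uncovered. The coset picture of Figure~\ref{fig:OddCohomPoint} is what makes this tractable.
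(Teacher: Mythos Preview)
Your approach is logically sound but inverts the paper's order of dependence. In Part~\ref{part:point}, $\Mackey H_G^\bullet(\tE G)$ is computed \emph{before} $\Mackey H_G^\bullet(S^0)$ and is then used as input to derive the latter; so feeding in Theorem~\ref{thm:oddCohomPoint} as you propose would indeed be circular. The paper resolves this not by carefully disentangling $\phi$ from the structure of $\Mackey H_G^\bullet(S^0)$, as you anticipate in your final paragraph, but by a cleaner move: it establishes the $e_k$-periodicity \emph{first} (your $S^{\infty\MM_k}$ argument, which appears as Proposition~\ref{prop:tEGPeriodicity}), and uses it to reduce the entire additive computation to \emph{integer} grading. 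There the long exact sequence of $EG_+\to S^0\to\tE G$ needs only the dimension axiom for $S^0$ together with the already-computed $\Mackey H_G^n(EG_+)$ (this is Proposition~\ref{prop:intTEGCohomology}); no knowledge of $\phi$ in nontrivial gradings, and no case analysis involving $\Mackey A[\nu(\alpha)]$, $\Mackey L\Z$, etc., is required at all. You already have the periodicity ingredient in hand for the multiplicative statement --- the fix is simply to move it to the front and let it carry the additive statement as well. What your route buys, by contrast, is an explicit picture of $\phi$, $\psi$, and $\delta$ in every grading (the content of Proposition~\ref{prop:OddMaps}), which the paper only assembles afterward; but that picture is a consequence of the three computations rather than a prerequisite for any of them.
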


Figure~\ref{fig:OddTEG} shows $\Mackey H_G^\bullet(\tE G)$ and its generators.
\begin{figure}
\[\def\objectstyle{\scriptstyle}
 \xymatrix@!0@R=5ex@C=2.5em{
  & \ar@{-}'[d]'[ddd]'[ddddd]'[ddddddd]'[ddddddddd][dddddddddd] \\
  & \conc\Z & &  \cdot  & &  \cdot  & &  \cdot  \\
  & & \cdot & & \conc{\Z/p} & & \conc{\Z/p} & & \conc{\Z/p} \\
  & \conc\Z & &  \cdot  & &  \cdot  & &  \cdot  \\
  & & \cdot & & \conc{\Z/p} & & \conc{\Z/p} & & \conc{\Z/p} \\
  \ar@{-}'[r]'[rrr]'[rrrrr]'[rrrrrrr][rrrrrrrrr]
   & \conc\Z & &  \cdot  & &  \cdot  & &  \cdot  & & \\
  & & \cdot & & \conc{\Z/p} & & \conc{\Z/p} & & \conc{\Z/p} \\
  & \conc\Z & &  \cdot  & &  \cdot  & &  \cdot  \\
  & & \cdot & & \conc{\Z/p} & & \conc{\Z/p} & & \conc{\Z/p} \\
  & \conc\Z & &  \cdot  & &  \cdot  & &  \cdot  \\
  & &
 }
\]
%\vskip 2ex
\[\def\objectstyle{\scriptstyle}
 \xymatrix@!0@R=5ex@C=2.5em{
  & \ar@{-}'[d]'[ddd]'[ddddd]'[ddddddd]'[ddddddddd][dddddddddd] \\
  & e^\alpha e_1^2\kappa & &  \cdot  & &  \cdot  & &  \cdot  \\
  & & \cdot & & e^\alpha e_1\delta\xi_1^{-1} & & e^\alpha e_1\delta\xi_1^{-2} & & e^\alpha e_1\delta\xi_1^{-3} \\
  & e^\alpha e_1\kappa & &  \cdot  & &  \cdot  & &  \cdot  \\
  & & \cdot & & e^\alpha\delta\xi_1^{-1} & & e^\alpha\delta\xi_1^{-2} & & e^\alpha\delta\xi_1^{-3} \\
  \ar@{-}'[r]'[rrr]'[rrrrr]'[rrrrrrr][rrrrrrrrr]
   & e^\alpha\kappa & &  \cdot  & &  \cdot  & &  \cdot  & & \\
  & & \cdot & & e^\alpha e_1^{-1}\delta\xi_1^{-1} & & e^\alpha e_1^{-1}\delta\xi_1^{-2} & & e^\alpha e_1^{-1}\delta\xi_1^{-3} \\
  & e^\alpha e_1^{-1}\kappa & &  \cdot  & &  \cdot  & &  \cdot  \\
  & & \cdot & & e^\alpha e_1^{-2}\delta\xi_1^{-2} & & e^\alpha e_1^{-2}\delta\xi_1^{-2} & & e^\alpha e_1^{-2}\delta\xi_1^{-3} \\
  & e^\alpha e_1^{-2}\kappa & &  \cdot  & &  \cdot  & &  \cdot  \\
  & &
 }
\]
\caption{$\protect\Mackey H_G^\bullet(\tE G)$ and its generators, $p$ odd}\label{fig:OddTEG}
\end{figure}

Again, we need to describe the maps in the long exact sequence
\[
 \cdots \to \Mackey H_G^{\bullet-1}(EG_+)
  \xrightarrow{\delta} \Mackey H_G^\bullet(\tE G)
  \xrightarrow{\psi} \Mackey H_G^\bullet(S^0)
  \xrightarrow{\phi} \Mackey H_G^\bullet(EG_+)
  \to \cdots
\]

\begin{proposition}\label{prop:OddMaps}
Let $p$ be odd.
$\delta\colon \Mackey H_G^{\bullet-1}(EG_+)\to \Mackey H_G^\bullet(\tE G)$ is given by
\[
  \delta(e_1^m\xi^\alpha\xi_1^n) =
   \begin{cases}
    a e^\alpha e_1^m\delta\xi_1^n & \text{if $n\leq -1$} \\
    0 & \text{otherwise,}
   \end{cases}
\]
where $a\in \nu(\alpha)^{-1}$. (Note that the image, when nonzero, lies in
$\conc{\Z/p}$, so does not depend on the choice of $a$.)
$\psi\colon \Mackey H_G^\bullet(\tE G) \to \Mackey H_G^\bullet(S^0)$ is given by
\begin{align*}
  \psi(e^\alpha e_1^m\kappa) &= \mu^{\alpha,a}e_1^m\kappa \\
  \psi(e^\alpha e_1^m\delta\xi_1^{-n}) &= 
     \begin{cases}
      \mu^{\alpha,a}e_1^m\delta\xi_1^{-n} & \text{if $m\leq -1$} \\
      0 & \text{otherwise,}
     \end{cases}
\end{align*}
for $a\in\nu(\alpha)$. (Again, the choice of $a$ does not matter.)
$\phi\colon \Mackey H_G^\bullet(S^0) \to \Mackey H_G^\bullet(EG_+)$ is given by
\begin{align*}
 \phi(\iota^\alpha\iota_1^k) &= \iota^\alpha\iota_1^k = \rho(\xi^\alpha\xi_1^k) \\
 \phi(\lambda^{\alpha,a}e_1^m\xi_1^n) &= e_1^m\xi^\alpha \xi_1^n \text{ for $m\geq 0$ and $n\geq 1$} \\
 \phi(\mu^{\alpha,a} e_1^m) &= a\xi^\alpha e_1^m \text{ for $m\geq 0$} \\
 \phi(\mu^{\alpha,a}e_1^{-m}\kappa) &= 0 \text{ for all $m$} \\
 \phi(\mu^{\alpha,a}e_1^{-m}\delta\xi_1^{-n}) &= 0.
\end{align*}
\qed
\end{proposition}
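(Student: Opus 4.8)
The plan is to read all three maps off the long exact sequence
\[
 \cdots \to \Mackey H_G^{\bullet-1}(EG_+)
   \xrightarrow{\delta} \Mackey H_G^\bullet(\tE G)
   \xrightarrow{\psi} \Mackey H_G^\bullet(S^0)
   \xrightarrow{\phi} \Mackey H_G^\bullet(EG_+) \to \cdots
\]
of the cofibration $EG_+\to S^0\to\tE G$, using Theorems~\ref{thm:oddCohomPoint}, \ref{thm:OddEG}, and \ref{thm:OddTEG} together with the fact, valid once the decategorification of Part~\ref{part:point} is in place, that $\phi$ is a map of $RO(G)$-graded $\Mackey H_G^\bullet(S^0)$-algebras, $\psi$ a map of $\Mackey H_G^\bullet(S^0)$-modules, and $\delta$ a map of $\Mackey H_G^\bullet(S^0)$-modules of degree $1$. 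A guiding observation, read off the additive answers, is that in each grading $\alpha$ at least one of the three groups vanishes: $\Mackey H_G^\alpha(EG_+)=0$ whenever $|\alpha|\neq 0$, so $\phi=0$ and $\psi$ is surjective there, and $\Mackey H_G^\alpha(\tE G)=0$ unless $\alpha^G=0$ or $\alpha^G\geq 3$ is odd, so $\phi$ is injective in the complementary region. Throughout I use that $g=\tau\rho$ acts as multiplication by $p$ on every $\Mackey R\Z$-module.

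I would handle $\phi$ first. Since $EG$ is nonequivariantly contractible, $EG_+\to S^0$ is a nonequivariant equivalence, so $\phi$ is an isomorphism at level $G/e$, giving $\phi(\iota^\alpha\iota_1^k)=\iota^\alpha\iota_1^k$, and $\iota^\alpha\iota_1^k=\rho(\xi^\alpha\xi_1^k)$ in $\Mackey H_G^\bullet(EG_+)$ because $\rho$ is a ring map with $\rho(\xi_k)=\iota_k$. Naturality of Euler classes gives $\phi(e_1)=e_1$, and the classes $\xi_k\in\Mackey H_G^\bullet(S^0)$ were constructed as the lifts of $\xi_k\in\Mackey H_G^\bullet(EG_+)$, so $\phi(\xi_k)=\xi_k$. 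For $\alpha\in RO_0(G)$ the group $\Mackey H_G^\alpha(EG_+)\iso\Mackey R\Z$ has $\rho$ injective; since $\rho(\xi^\alpha)=\iota^\alpha$ and $\tau(\iota^\alpha)=g\xi^\alpha=p\xi^\alpha$ there, the relations $\rho(\mu^{\alpha,a})=a\iota^\alpha$, $\rho(\lambda^{\alpha,a})=\iota^\alpha$, together with $\lambda^{\alpha,a}=a\mu^{\alpha,a^{-1}}+\tfrac{1-aa^{-1}}{p}\tau(\iota^\alpha)$, force $\phi(\mu^{\alpha,a})=a\xi^\alpha$ and $\phi(\lambda^{\alpha,a})=\xi^\alpha$. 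Multiplicativity then gives $\phi$ on the products $\lambda^{\alpha,a}e_1^m\xi_1^n$ and $\mu^{\alpha,a}e_1^m$, while $\mu^{\alpha,a}e_1^{-m}\kappa$ and $\mu^{\alpha,a}e_1^{-m}\delta\xi_1^{-n}$ sit in gradings of negative total dimension, where $\Mackey H_G^\bullet(EG_+)=0$, so $\phi$ annihilates them.

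For $\delta$ and $\psi$ I would combine exactness with $\Mackey H_G^\bullet(S^0)$-linearity. Exactness at $\Mackey H_G^\bullet(EG_+)$ shows $\delta$ kills $\mathrm{im}\,\phi$, which contains every $e_1^m\xi^\alpha\xi_1^n$ with $n\geq 0$, giving the vanishing clause. For $n\leq -1$, writing $\xi^\alpha=\phi(\lambda^{\alpha,a})$ with $a\in\nu(\alpha)^{-1}$ and using linearity reduces $\delta(e_1^m\xi^\alpha\xi_1^n)$ to $e_1^m\lambda^{\alpha,a}\cdot\delta(\xi_1^n)$; because each $e_k$ acts invertibly on $\Mackey H_G^\bullet(\tE G)$, the class $\lambda^{\alpha,a}$ acts there through the localization of Proposition~\ref{prop:oddLocalizedPoint} as $a\,e^\alpha$, and since $\delta\xi_1^n$ is by definition $\delta(\xi_1^n)$ this yields the formula, the choice of $a$ being immaterial as the nonzero values lie in $\conc{\Z/p}$. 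For $\psi$: the grading-zero segment has $\Mackey H_G^{-1}(EG_+)=0$ and $\phi$ restricting at level $G/G$ to the augmentation $A(G)\to\Z$ (it sends $1\mapsto 1$, $g\mapsto p$), whose kernel $\Z\kappa$ is thus $\mathrm{im}\,\psi$, so $\psi(\kappa)=\kappa$. Consecutivity gives $\psi(\delta\xi_1^{-n})=0$, and $\psi(e^\alpha e_1^m\delta\xi_1^{-n})=0$ whenever $m\geq 0$, where $\Mackey H_G^\bullet(S^0)$ vanishes anyway; when $m\leq -1$ the class lies in a grading with $|\alpha|<0$ and $\alpha^G\geq 3$ odd, where $\Mackey H_G^\bullet(EG_+)=0$ and both other groups equal $\conc{\Z/p}$, so $\psi$ is a surjection $\conc{\Z/p}\to\conc{\Z/p}$, hence an isomorphism, and taking the generators named compatibly (as in Part~\ref{part:point}) gives $\psi(e^\alpha e_1^m\delta\xi_1^{-n})=\mu^{\alpha,a}e_1^m\delta\xi_1^{-n}$; the $\kappa$-terms follow from $\psi(\kappa)=\kappa$ by linearity, being careful that $\kappa e_1^m=p e_1^m$ in $\Mackey H_G^\bullet(S^0)$ for $m\geq 1$.

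The main obstacle I anticipate is the bookkeeping forced by the two module structures: $\psi$ and $\delta$ are linear over $\Mackey H_G^\bullet(S^0)$ but not over its localizations, so the negative powers of $e_1$ appearing in the $e_1^{-m}\kappa$ and $e_1^{-m}\delta\xi_1^{-n}$ terms cannot simply be divided out; one must instead argue grading by grading---using the vanishing of $\Mackey H_G^\bullet(EG_+)$ in negative total dimension---that $\psi$ is an isomorphism onto the relevant $\conc{\Z}$ or $\conc{\Z/p}$, and then propagate to the rest of each tower by multiplying by $e_1$. Interlocked with this is the need to fix the generators $\kappa$, $\delta\xi_1^{-k}$ of $\Mackey H_G^\bullet(\tE G)$ and the correspondingly named classes of $\Mackey H_G^\bullet(S^0)$ so that $\psi$ and $\delta$ take exactly the displayed form; this is genuinely part of the construction of these groups carried out in Part~\ref{part:point}, and once it is in place the proposition follows by assembling the steps above. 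Signs are not a separate concern here, since the decategorification has been arranged precisely so that $\delta$ is $\Mackey H_G^\bullet(S^0)$-linear.
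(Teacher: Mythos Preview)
Your proposal is correct and follows essentially the same approach as the paper, which distributes the argument across Proposition~\ref{prop:multByMu}, the unnamed proposition on $\delta$ preceding Theorem~\ref{thm:oddAdditiveCohomoPoint}, and the proof of that theorem itself. One small difference in emphasis: for $\phi(\mu^{\alpha,a})=a\xi^\alpha$ the paper argues via the explicit geometric maps $\mu_{j,k,d}\colon S^{\MM_j}\to S^{\MM_k}$ and their nonequivariant degrees (Proposition~\ref{prop:muTimesXi}), whereas you deduce it purely algebraically from injectivity of $\rho$ on $\Mackey R\Z$; both are valid, and in fact the paper's proof of Proposition~\ref{prop:muTimesXi} ultimately rests on the same injectivity-of-$\rho$ idea. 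Your final paragraph accurately identifies the one genuine subtlety---that $\psi$ and $\delta$ are $\Mackey H_G^\bullet(S^0)$-linear but not linear over the localization, so the formulas involving $e_1^{-m}$ must be checked by multiplying up and using that multiplication by $e_1$ is an isomorphism in the relevant gradings---and this is exactly how the paper handles it.
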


Note that the calculation of $\phi$ implies in particular that
\[
 \phi(\lambda^{\MM_k-\MM_1,a}\xi_1) = \xi^{\MM_k-\MM_1}\xi_1 = \xi_k.
\]
Because $\phi$ is an isomorphism in grading $\MM_k-2$, it makes sense to define
\[
 \xi_k = \lambda^{\MM_k-\MM_1,a}\xi_1 \in \Mackey H_G^{\MM_k-2}(S^0),
\]
so that $\xi_k\in \Mackey H_G^\bullet(S^0)$ is characterized by
$\phi(\xi_k) = \xi_k$.

One last bit of notation. When $p=2$, $RO_0(G) = 0$, so there was no need
to introduce the elements $\mu^{\alpha,a}$ or $\lambda^{\alpha,a}$.
In order to treat both the even and odd cases at the same time, however, we
shall write
\begin{align*}
 \mu^{0,a} &= 1 + \frac{a-1}{2}g \in \Mackey H_{\Z/2}^0(S^0) \\
\intertext{and}
 \lambda^{0,a} &= 1 + \frac{a-1}{2}\kappa \in \Mackey H_{\Z/2}^0(S^0)
\end{align*}
for $p=2$ and $a$ odd. The reader can check that these definitions are consistent with the 
case when $p$ is odd.

%-------------------------------------------------------
\part{The cohomology of $B_GU(1)$}\label{part:BU1}

%-------------------------------------------------------
\section{The topology of $B_GU(1)$}

We introduce our model for the classifying space $B_GU(1)$ and discuss its topology
and its fundamental groupoid.

Recall from Definition~\ref{def:irrRepresentations}
that we write $\C_k$ for the irreducible complex
representation of $G$ given by letting the generator $t$ act by multiplication by $e^{2\pi i k/p}$
on $\C$.
This definition fixes nonequivariant identifications of each $\C_k$ with $\C$,
hence nonequivariant identifications of the $\C_k$ with one another.
The irreducible complex representations of $G$ are given by the $\C_k$
for $0\leq k \leq p-1$, and it is useful to think of the subscript as taken modulo $p$.
For example, we can write $\C_k\tensor_\C \C_\ell \iso \C_{k+\ell}$
as a statement about irreducible representations.

As a model for $B_GU(1)$ we take
\[
 B_GU(1) = \CP_G^\infty = \CP\Bigl(\Dirsum_{k=0}^{p-1} \C_k^\infty\Bigr),
\]
the Grassmannian of complex lines in $\Dirsum_{k} \C_k^\infty$.
Nonequivariantly, this is a copy of $\CP^\infty$.
Its fixed sets are
\[
 B_GU(1)^G = \Disjunion_{k=0}^{p-1} \CP(\C_k^\infty),
\]
the disjoint union of $p$ copies of $\CP^\infty$.
For notational simplicity we shall write $B = B_GU(1)$ and $B_k = \CP(\C_k^\infty)$,
so $B^G = \Disjunion_{k} B_k$.

Let $\omega_G$ denote the canonical complex line bundle over $B$.
Nonequivariantly, it is the usual canonical line bundle over the infinite complex projective space.
Its restriction to $B_k$ is the nonequivariant canonical bundle $\omega$
with $G$ acting on the fibers so that each is isomorphic to $\C_k$.

There is another action of $\Z/p$ on $B$ we will want to take into account in our calculations.
Write elements of $B = \CP(\Dirsum_{k} \C_k^\infty)$ as equivalence classes of $p$-tuples
$[z_0, z_1, \dots, z_{p-1}]$, with $z_k\in \C_k^\infty$ not all 0. We define a $G$-map $\chi\colon B\to B$ by
\[
 \chi[z_0, z_1, \dots, z_{p-1}] = [z_1, z_2, \dots, z_{p-1}, z_0],
\]
using our chosen nonequivariant identifications of the $\C_k$ with one another.
It is straightforward to check that $\chi$ is equivariant, and clearly $\chi^p$ is the identity.
Further, $\chi^G$ cyclicly permutes the spaces $B_k$, inducing homeomorphisms
$\chi^G\colon B_k\to B_{k-1}$, understanding the subscript modulo $p$.
Notice also that $\chi^*\omega_G \iso \omega_G\tensor_\C \C_{p-1}$.
More generally, if $f\colon X\to B$ classifies the line bundle $\theta$ over $X$,
then $\chi f$ classifies $\theta\tensor_\C \C_{p-1}$ and
$\chi^k f$ classifies $\theta\tensor_\C \C_{p-k}$.

The fundamental groupoid $\pi\colon \Pi_G B\to \orb G$
is relatively simple because $B$ and 
the components of its fixed set are all simply connected.
It is equivalent to a category over $\orb G$ having one object
$b$ over $G/e$ and $p$ objects, $b_0$, $b_1$, \dots, $b_{p-1}$, over $G/G$,
corresponding to the components of the fixed set.
The self-maps of $b$ map isomorphically to the self-maps of $G/G$;
there is one map $b\to b_k$ for each $k$, over the map $G/e\to G/G$;
and there are no non-identity self-maps of $b_k$.
We can picture the category as follows:
\[
 \xymatrix@!C=.5em{
  b_0 & b_1 & \cdots & b_{p-2} & b_{p-1} & \qquad\qquad & G/G \\
  &&b \ar[ull] \ar[ul] \ar[ur] \ar[urr] \ar@(dl,dr)[]_t &&&& G/e \ar[u] \ar@(dl,dr)[]_t \\
  &&\Pi_G B &&&& \orb{G}
 }
\]
There is also an action of $\chi$, fixing $b$ and cyclicly permuting the
objects $b_k$: $\chi_*(b_k) = b_{k-1}$.

We now want to compute the representation ring $RO(\Pi_G B)$.
We consider $p$ odd and $p=2$ separately. Start with the case where $p$ is odd, and
suppose that $\gamma$ is a real virtual representation of $\Pi_G B$.
We have $\gamma(b) = (G\times \R^i)\ominus (G\times \R^j)$ and, by an abuse of notation,
we shall write this as $\gamma(b) = G\times\R^{i-j} = G\times\R^n$ where
$n\in\Z$. (For this discussion, we care only about the isomorphism type of $\gamma$.)
The action of $t$ on $G\times\R^n$ potentially involves the homotopy class of a nontrivial
linear self-map of $\R^n$ but, because this homotopy class must have odd order $p$,
it must be trivial.
Each $\gamma(b_k)$ is a virtual $G$-representation 
$\R^{n_0}\dirsum\Dirsum_{i=1}^{(p-1)/2} \MM_i^{n_i}$ with
$n_0 + 2\sum_{i} n_i = n$, and no additional restrictions apply.
Thus, $\gamma$ is entirely determined, up to isomorphism, by $p$
virtual representations of $G$, the $\gamma(b_k)$, all of the same dimension.
So we can represent the elements of $RO(\Pi_G B)$
by $p$-tuples $(\alpha_0, \alpha_1, \dots, \alpha_{p-1})$ of virtual real representations of $G$,
all of the same dimension; the group structure is given by vector addition.
The representation ring $RO(G)$ sits inside $RO(\Pi_G B)$ as the set of
constant tuples $(\alpha,\alpha,\dots,\alpha)$, $\alpha\in RO(G)$.

There are a number of bases for this group, but the most useful to us here
is the following.

\begin{proposition}
If $p$ is odd, then 
\[
 RO(\Pi_G B) \iso 
   \{ (\alpha_0, \alpha_1, \dots, \alpha_{p-1}) \mid \alpha_k\in RO(G),\ |\alpha_0| = \cdots = |\alpha_{p-1}| \}
\]
is a free abelian group of rank
$p(p-1)/2 + 1$.
It has as a basis the elements $1 = (1, 1, \dots, 1)$ and 
$\Omega_{i,j}$, $0\leq i \leq p-1$, $1\leq j\leq (p-1)/2$, where
\[
 \Omega_{i,j} = (0, \dots, 0, \MM_j-2, 0, \dots, 0)
\]
with the nonzero element in the $i$th position.
\end{proposition}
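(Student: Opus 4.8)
The plan is to reduce the statement to elementary linear algebra over $RO(G)$, building on the description of $\Pi_G B$ given just above. First I would make precise the identification
\[
 RO(\Pi_G B) \iso \{ (\alpha_0,\dots,\alpha_{p-1}) \mid \alpha_k\in RO(G),\ |\alpha_0| = \cdots = |\alpha_{p-1}| \}.
\]
Since $\Pi_G B$ is equivalent over $\orb G$ to the category with the single object $b$ over $G/e$ and the objects $b_0,\dots,b_{p-1}$ over $G/G$ pictured above, a virtual orthogonal representation $\gamma$ is, up to natural isomorphism, determined by the virtual $G$-representations $\gamma(b_k)$ together with $\gamma(b)$ and the bundle maps assigned to the maps $b\to b_k$ and to the self-map $t$ of $b$. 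The only non-identity self-maps occur at $b$, and as already noted the homotopy class of the linear self-map giving the $t$-action lies in $\pi_0$ of a general linear group (a copy of $\Z/2$) and has order dividing the odd number $p$, hence is trivial; the maps $b\to b_k$ force $\gamma(b)\iso\gamma(b_k)|_e$, so $\gamma(b)$ carries no independent information, and the sole surviving constraint is that all the $\gamma(b_k)$ have the same dimension. Conversely, every tuple of equidimensional virtual $G$-representations is realized, and direct sum of bundles corresponds to coordinatewise addition, giving the asserted group isomorphism. (Alternatively this step can simply be cited from the discussion preceding the statement.)

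Next I would compute the rank. The group on the right is the kernel of the homomorphism $RO(G)^p\to\Z^{p-1}$ sending $(\alpha_0,\dots,\alpha_{p-1})$ to $(|\alpha_1|-|\alpha_0|,\dots,|\alpha_{p-1}|-|\alpha_0|)$, which is surjective because we may alter each $\alpha_i$ by a multiple of the trivial representation. Since $RO(G)$ is free abelian of rank $(p+1)/2$ for $p$ odd, this kernel is free abelian of rank
\[
 p\cdot\frac{p+1}{2} - (p-1) = \frac{p(p-1)}{2} + 1.
\]

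Finally I would verify that the proposed set is a basis. It consists of $1 + p\cdot\frac{p-1}{2} = \frac{p(p-1)}{2}+1$ elements, matching the rank, so it suffices to show it spans (a spanning set of the right cardinality in a free abelian group is automatically a basis). Given $(\alpha_0,\dots,\alpha_{p-1})$ with common dimension $n$, write $\alpha_i = n_{i,0}\cdot 1 + \sum_{j=1}^{(p-1)/2} n_{i,j}\MM_j$ with $n_{i,0} + 2\sum_j n_{i,j} = n$. Then the $i$th coordinate of $n\cdot 1 + \sum_{i,j} n_{i,j}\,\Omega_{i,j}$ equals $n + \sum_j n_{i,j}(\MM_j-2) = n_{i,0} + \sum_j n_{i,j}\MM_j = \alpha_i$, so
\[
 (\alpha_0,\dots,\alpha_{p-1}) = n\cdot 1 + \sum_{i=0}^{p-1}\sum_{j=1}^{(p-1)/2} n_{i,j}\,\Omega_{i,j}.
\]
(One sees uniqueness of this expansion directly, since $n_{i,j}$ for $j\geq 1$ is the $\MM_j$-coefficient of $\alpha_i$ and $n$ is the common dimension; this gives linear independence without invoking the cardinality count.)

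The only genuinely delicate point is the first step: confirming that no hidden data — monodromy around the fixed components, or the homotopy class of the $t$-action on the free part — contributes to the isomorphism class of $\gamma$, and that equidimensionality is the only constraint. This is exactly what the simple-connectivity of $B$ and of each $B_k$, together with the oddness of $p$, provide, so in the write-up I would lean on the preceding paragraph rather than reprove it; everything after that is bookkeeping.
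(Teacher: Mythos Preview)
Your proposal is correct and follows essentially the same approach as the paper: identify $RO(\Pi_G B)$ with the equidimensional tuples (citing the preceding discussion), compute the rank as the kernel of the surjection $RO(G)^p\to\Z^{p-1}$, and then check that the proposed $p(p-1)/2+1$ elements span. The paper leaves the spanning step as ``straightforward,'' whereas you write out the explicit expansion and note uniqueness directly; otherwise the arguments coincide.
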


\begin{proof}
$RO(G)$ is free abelian of rank $(p+1)/2$, the number of irreducible representations of $G$.
We already gave the argument that $RO(\Pi_G B)$ is the subgroup of
$RO(G)^p$ specified in the proposition. This subgroup is the kernel of the homomorphism
$RO(G)^p \to \Z^{p-1}$ whose $k$th component, $1\leq k\leq p-1$, is
$(\alpha_0, \dots, \alpha_{p-1}) \mapsto |\alpha_0| - |\alpha_k|$.
But, this homomorphism is easily seen to be a split epimorphism, hence 
the kernel is a free abelian group of rank
$p(p+1)/2 - (p-1) = p(p-1)/2 + 1$.

It is then straightforward to show that any element of $RO(\Pi_G B)$
can be written as a linear combination of the $p(p-1)/2+1$ elements
$1$ and $\Omega_{i,j}$, hence these elements must form a basis.
\end{proof}

If $\alpha = (\alpha_0, \dots, \alpha_{p-1}) \in RO(\Pi_G B)$, we write
$|\alpha|\in \Z$ for the common dimension $|\alpha_k|$.
Note that, if we write an element $\alpha$ in terms of the basis as
\[
 \alpha = n + \sum_{i,j} n_{i,j}\Omega_{i,j},
\]
then
\[
 \alpha_k = n + \sum_{j} n_{k,j} (\MM_j-2).
\]
Note also that
\[
 \MM_j - 2 = \sum_i \Omega_{i,j}
\]
in $RO(\Pi_G B)$.

The action of $\chi$ on $RO(\Pi_G B)$ is given by
\[
 \chi(\alpha_0, \alpha_1, \dots, \alpha_{p-1})
  = (\alpha_{p-1}, \alpha_0, \alpha_1 \dots, \alpha_{p-2})
\]
(writing $\chi$ again for the induced map on the representation ring),
so $\chi(1) = 1$ and
\[
 \chi\Omega_{i,j} = \Omega_{i+1,j}.
\]

The canonical complex line bundle $\omega_G$ induces a representation 
\[
 \omega_G^*  \in RO(\Pi_G B). 
\]
Examining the real representations that occur over each
component of the fixed set, we see that
\begin{align*}
  \omega_G^* &= (2, \MM_1, \MM_2, \dots, \MM_{(p-1)/2}, \MM_{(p-1)/2}, \dots, \MM_1) \\
    &= 2 + \sum_{i=1}^{p-1} \Omega_{i,i} 
\end{align*}
As here, we allow ourselves to write $\Omega_{i,j}$ for any integers $i$ and $j$,
with $j\not\equiv 0 \pmod p$, understanding
the subscript $i$ as taken modulo $p$ and the subscript $j$
also taken modulo $p$, but with $j$ identified with $p-j$ as well.
(Formally, index on $i\in\Z/p$ and $j\in(\Z/p)^\times/\{\pm 1\}$.)

Now consider the case $p=2$ and again let $\gamma$ be a real virtual representation
of $\Pi_G B$.
We can write $\gamma(b) = G\times\R^n$, but this time there are two possible
actions of $t$ on $\gamma(b)$: the map $t_+$ in which $t$ acts trivially on $\R^n$
and the map $t_-$ in which $t$ acts on $\R^n$ by an orientation-reversing linear map.
If $\gamma(b_0) = \R^{n_0} \dirsum \LL^{n_1}$ with $n_0 + n_1 = n$, when is there a $G$-map
$G\times\R^n\to \R^{n_0} \dirsum \LL^{n_1}$ invariant (up to homotopy) with respect to the action of $t$?
If the action of $t$ on $G\times\R^n$ is by $t_+$, then we must have $n_1$ even, whereas, if
the action of $t$ is by $t_-$, we must have $n_1$ odd. The same applies to $\gamma(b_1)$,
leading to the following.

\begin{proposition}
If $p = 2$, then
\[
 RO(\Pi_G B)
  \iso \{ (\alpha_0, \alpha_1) \mid \alpha_k\in RO(G), |\alpha_0| = |\alpha_1|,
      \text{ and }\alpha_0^G \equiv \alpha_1^G \pmod 2 \}
\]
is a free abelian group of rank 3. It has as a basis the elements
\begin{align*}
 1 &= (1, 1) \\
 \LL &= (\LL, \LL) \\
\intertext{and}
 \Omega &= (1-\LL, \LL-1).
\end{align*}
\end{proposition}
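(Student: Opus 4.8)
The plan is to follow the same two-step pattern as the proof of the odd-$p$ proposition: first invoke the topological discussion already given to identify $RO(\Pi_G B)$ with the displayed subgroup $S$ of $RO(G)^2 = RO(\Z/2)^2$, and then carry out an elementary computation over $\Z$ to establish the rank and the claimed basis. Recall that $RO(\Z/2)$ is free abelian on $1$ and $\LL$, so $RO(G)^2$ is free abelian of rank $4$ with standard basis $(1,0)$, $(\LL,0)$, $(0,1)$, $(0,\LL)$; the subgroup $S$ consists of those $(\alpha_0,\alpha_1)$ with $|\alpha_0| = |\alpha_1|$ and $\alpha_0^G \equiv \alpha_1^G \pmod 2$.

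To get the rank, I would realize $S$ as a kernel. Consider the homomorphism $f\colon RO(G)^2 \to \Z\dirsum\Z/2$ sending $(\alpha_0,\alpha_1)$ to $\bigl(|\alpha_0|-|\alpha_1|,\ \overline{\alpha_0^G-\alpha_1^G}\bigr)$; by construction $S = \ker f$. A quick check on the standard basis (for instance $f(\LL,0) = (1,0)$ and $f(1,0) = (1,1)$, so the image already contains $(1,0)$ and $(0,1)$) shows $f$ is surjective. Since a subgroup of a free abelian group is free and $\Z\dirsum\Z/2$ has rank $1$, it follows that $S$ is free abelian of rank $4-1 = 3$. Equivalently, one may first split off the rank-one quotient $(\alpha_0,\alpha_1)\mapsto |\alpha_0|-|\alpha_1|$ to see that $\{(\alpha_0,\alpha_1)\mid |\alpha_0|=|\alpha_1|\}$ is free of rank $3$, and observe that the parity condition cuts out a subgroup of index $2$, hence still of rank $3$.

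It then remains to check that $1=(1,1)$, $\LL=(\LL,\LL)$ and $\Omega=(1-\LL,\LL-1)$ form a basis of $S$. Each lies in $S$: the total dimensions agree in all three cases, and $1^G-1^G=0$, $\LL^G-\LL^G=0$, and $(1-\LL)^G-(\LL-1)^G = 1-(-1) = 2$ are all even. They are $\Z$-linearly independent because their coordinate matrix in the standard basis of $RO(G)^2$ has a $3\times 3$ minor of determinant $\pm 2 \neq 0$. Finally, writing a general element of $S$ as $\alpha_i = a_i + b_i\LL$ and solving $(\alpha_0,\alpha_1) = x\cdot 1 + y\cdot\LL + z\cdot\Omega$, the equations force $z = (a_0-a_1)/2$, $x = (a_0+a_1)/2$, $y = (b_0+b_1)/2$, and these are integers precisely because $a_0\equiv a_1\pmod 2$ (the fixed-point parity condition) and $a_0+b_0 = a_1+b_1$ (the condition $|\alpha_0|=|\alpha_1|$, which also makes the two readings of $z$ consistent). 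So the three elements generate $S$, and being a generating set of size equal to the rank of a free abelian group, they form a basis.

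The only genuinely non-formal ingredient is the identification of $RO(\Pi_G B)$ with $S$ in the first place, which is exactly the homotopy-theoretic analysis sketched just before the statement: over $G/e$ the action on $\gamma(b)=G\times\R^n$ has two possible homotopy classes $t_+$ and $t_-$ (trivial versus orientation-reversing), and matching either one against $\gamma(b_k)=\R^{n_0}\dirsum\LL^{n_1}$ forces $n_1$ to have a prescribed parity, which is the sole constraint linking the two fixed components. Everything after that is linear algebra over $\Z$; the one place to stay alert is the parity bookkeeping, and in particular confirming — as in the last step above — that $1$, $\LL$, $\Omega$ generate $S$ exactly, rather than a proper finite-index subgroup.
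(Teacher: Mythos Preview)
Your proof is correct and follows essentially the same route as the paper: invoke the preceding topological discussion for the identification with $S$, then show that $1$, $\LL$, $\Omega$ span by solving explicitly for the coefficients (your $x,y,z$ agree with the paper's $\alpha_0^G-n$, $|\alpha_0|-\alpha_0^G+n$, $n$ where $n=(\alpha_0^G-\alpha_1^G)/2$) and check independence. The separate rank computation via the surjection to $\Z\dirsum\Z/2$ is a nice sanity check but is redundant once you have spanning and independence directly; the paper omits it.
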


\begin{proof}
The argument that $RO(\Pi_GB)$ is the indicated subgroup of $RO(G)^2$
was given just before the statement of the proposition.
Note that, if $|\alpha_0| = |\alpha_1|$, then $\alpha_0^G \equiv \alpha_1^G \pmod 2$
if and only if $|\alpha_0 - \alpha_0^G| \equiv |\alpha_1 - \alpha_1^G| \pmod 2$.

Now, for any $(\alpha_0,\alpha_1) \in RO(\Pi_GB)$, let
$n = (\alpha_0^G-\alpha_1^G)/2$. Then
\[
 (\alpha_0, \alpha_1) = 
  (\alpha_0^G - n)\cdot 1 + (|\alpha_0|-\alpha_0^G + n)\cdot\LL
    + n\cdot\Omega.
\]
On the other hand, if $a\cdot 1 + b\cdot\LL + c\cdot\Omega = 0$, 
then, looking at components, it is easy to see that we must have
$a = b = c = 0$. Hence, $\{1, \LL, \Omega\}$ is a basis.
\end{proof}

It will be useful to use the following elements:
\begin{align*}
 \Omega_{0,1} &= (\MM_1 - 2, 0) = (2\LL - 2, 0) = -1 + \LL - \Omega \\
 \Omega_{1,1} &= (0, \MM_1 - 2) = (0, 2\LL - 2) = -1 + \LL + \Omega,
\end{align*}
named similarly to the elements used in the case $p$ odd.
Note, however, that $\{1, \Omega_{0,1}, \Omega_{1,1}\}$
is {\em not} a basis, as $\Omega$ is not in its span.
We could use either $\{1, \LL, \Omega_{0,1}\}$ or $\{1, \LL, \Omega_{1,1}\}$
as a basis, but the lack of symmetry makes neither choice particularly appealing.
As with $p$ odd, we will consider $\Omega_{i,j}$ as indexed
on $i\in\Z/2$ and $j\in (\Z/2)^\times = \{[1]\}$.

The involution $\chi$ acts on $RO(\Pi_GB)$ by 
$\chi(\alpha_0,\alpha_1) = (\alpha_1,\alpha_0)$, so we have
\begin{align*}
 \chi(1) &= 1 \\
 \chi(\LL) &= \LL \\
 \chi(\Omega) &= -\Omega \\
 \chi(\Omega_{0,1}) &= \Omega_{1,1} \\
 \chi(\Omega_{1,1}) &= \Omega_{0,1}.
\end{align*}

The canonical line bundle $\omega_G$ induces a representation
$\omega_G^*$, with
\[
 \omega_G^* = (2, 2\LL) = 2 + \Omega_{1,1} = 1 + \LL + \Omega.
\]

The following observation shows one of the situations in which we can
treat the cases $p=2$ and $p$ odd simultaneously.

\begin{lemma}\label{lem:kernelproj}
For any prime $p$ and any $0\leq k \leq p-1$, the kernel of the projection
$RO(\Pi_G B)\to RO(G)$ given by $(\alpha_i)_i \mapsto \alpha_k$
is the free abelian subgroup generated by the elements $\Omega_{i,j}$ with
$i\neq k$.
\end{lemma}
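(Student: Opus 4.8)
The plan is to treat $p$ odd and $p=2$ in parallel, using in each case the explicit basis of $RO(\Pi_G B)$ established just above. Write $\pi_k\colon RO(\Pi_G B)\to RO(G)$ for the projection $(\alpha_i)_i\mapsto\alpha_k$. First I would note that every $\Omega_{i,j}$ with $i\neq k$ lies in $\ker\pi_k$: by construction $\Omega_{i,j}$ has all components $0$ except the $i$th, so its $k$th component vanishes once $i\neq k$ (for $p=2$ this is just the pair of identities $\Omega_{0,1}=(\MM_1-2,0)$, $\Omega_{1,1}=(0,\MM_1-2)$). These elements are $\Z$-linearly independent for free: when $p$ is odd they are part of the chosen basis of $RO(\Pi_G B)$, and when $p=2$ there is only the single nonzero element $\Omega_{1-k,1}$ sitting in the torsion-free group $RO(\Pi_G B)$. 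So the substance of the lemma is that they \emph{generate} $\ker\pi_k$.

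For $p$ odd I would take an arbitrary $\alpha=n+\sum_{i,j}n_{i,j}\Omega_{i,j}$ and read off $\alpha_k=n+\sum_j n_{k,j}(\MM_j-2)$ as noted above. Because $\{1\}\cup\{\MM_j-2\mid 1\leq j\leq (p-1)/2\}$ is again a $\Z$-basis of $RO(G)$ (the change from the standard basis $\{1,\MM_1,\dots,\MM_{(p-1)/2}\}$ is unimodular, and $I^\ev(G)=I(G)$ is generated by the $\MM_j-2$ as recorded after Definition~\ref{def:ro0G}), the equation $\pi_k(\alpha)=0$ forces $n=0$ and all $n_{k,j}=0$. Hence $\alpha=\sum_{i\neq k,\,j}n_{i,j}\Omega_{i,j}$, which lies in the claimed subgroup.

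For $p=2$ I would take $\alpha=(\alpha_0,\alpha_1)\in\ker\pi_k$, so $\alpha_k=0$; the conditions describing membership in $RO(\Pi_G B)$ then give $|\alpha_{1-k}|=|\alpha_k|=0$ and $\alpha_{1-k}^G\equiv\alpha_k^G=0\pmod 2$, i.e.\ $\alpha_{1-k}\in I^\ev(G)$. Since $I^\ev(\Z/2)$ is generated by $\MM_1-2$, we get $\alpha_{1-k}=m(\MM_1-2)$ for some $m\in\Z$, so $\alpha=m\,\Omega_{1-k,1}$, as needed. The only point requiring any care is the identification of the relevant subgroups of $RO(G)$ — the basis statement for odd $p$ and the equality $I^\ev(\Z/2)=\Z\cdot(\MM_1-2)$ — both of which are already in hand from the discussion following Definition~\ref{def:ro0G}; past that the argument is pure bookkeeping and I expect no genuine obstacle.
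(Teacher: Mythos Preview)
Your proof is correct and essentially matches the paper's. The only cosmetic difference is that the paper treats both parities of $p$ uniformly by arguing componentwise---$\alpha_k=0$ forces $|\alpha_i|=0$ and $\alpha_i^G$ even for all $i$, so each $\alpha_i$ with $i\neq k$ lies in $\langle \MM_j-2\rangle$---whereas you split into cases and, for odd $p$, read off coordinates in the basis $\{1\}\cup\{\Omega_{i,j}\}$; your $p=2$ argument is exactly the paper's argument specialized to that case.
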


\begin{proof}
If $\alpha\in RO(\Pi_G B)$ and $\alpha_k = 0$, then $|\alpha_i| = 0$
for all $i$, because the dimensions are all equal, and $\alpha_i^G$ is even for all $i$,
because the fixed sets all have the same parity.
The set of elements $\beta \in RO(G)$ with $|\beta| = 0$ and $\beta^G$ even
is the subgroup generated by the $\MM_j-2$, regardless of whether $p=2$ or $p$ is odd.
Applying this to the components $\alpha_i$, $i\neq k$, we get the claim of the lemma.
\end{proof}

There are several subsets of $RO(\Pi_G B)$ we shall want notations for.
Recall Definition~\ref{def:ro0G}.

\begin{definition}\label{def:roPi0G}
If $\alpha\in RO(\Pi_G B)$, recall that we write $|\alpha|$ for the common dimension
$|\alpha_k|$ of each component. Let
\begin{align*}
 I^\ev(\Pi_G B) &= \{ \alpha\in RO(\Pi_G B) \mid |\alpha| = 0 \text{ and }
                               \alpha_k^G \text{ even } \forall k \} \\
 RO_0(\Pi_G B) &= \{ \alpha\in RO(\Pi_G B) \mid |\alpha| = 0 \text{ and }
                               \alpha_k^G = 0\ \forall k \} \\ 
 RO_+(\Pi_G B) &= \Big\{ \sum_{i,j} n_{i,j}\Omega_{i,j} \mid n_{i,j} \geq 0\ \forall i, j \Big\}.
\end{align*}
\end{definition}

It is an easy exercise to show that $I^\ev(\Pi_G B)$ is the free subgroup generated by
the $\Omega_{i,j}$, for all $i$ and $j$, and that
$RO_0(\Pi_G B)$ is the free subgroup generated by the differences $\Omega_{i,j}-\Omega_{i,1}$
for $j\neq 1$.
Note also that $\alpha\in I^\ev(\Pi_G B)$ if and only if $\alpha_k \in I^\ev(G)$ for all $k$,
and similarly for the other two subsets.

%-------------------------------------------------------
\section{The cohomology of $B_GU(1)^G$}

In this section we calculate the $RO(\Pi_GB)$-graded cohomologies
of $B_+^G$, $B_+^G\smsh_{B} EG_+$, and $B^G_+\smsh_{B}\tE G$ as ex-spaces over $B$.
To the extent possible, we will consider the cases $p=2$ and $p$ odd simultaneously,
as the results are essentially the same in both cases.

Recall that 
\[
 B^G = \Disjunion_{k=0}^{p-1} B_k,
\]
where each $B_k$ is a copy of $\CP^\infty$.
So, we begin by considering the equivariant cohomology of this nonequivariant space,
for which we need a couple of general results.

\begin{proposition}\label{prop:cohomologyTrivialAction}
Let $X$ be a based space with trivial $G$-action and let $\Mackey T$ be a Mackey functor.
Then, in integer grading,
\[
 \Mackey H_G^n(X;\Mackey T)(G/K) \iso \tilde H^n(X;\Mackey T(G/K)),
\]
naturally in $K$.
\end{proposition}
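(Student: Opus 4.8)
The plan is to reduce to the case $K = G$ by the Wirthm\"uller isomorphism and then identify the resulting functor of $X$ using the uniqueness of ordinary cohomology.

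First, the Wirthm\"uller isomorphism \cite[3.9.5]{CW:ordinaryhomology}, recalled in \S\ref{sec:genCohomology}, gives
$\Mackey H_G^n(X;\Mackey T)(G/K) \iso \tilde H_K^n(X;\Mackey T|K)$,
where $X$ is regarded as a $K$-space by restriction and $\Mackey T|K$ is the restricted Mackey functor, which satisfies $(\Mackey T|K)(K/K) = \Mackey T(G/K)$. Since the $G$-action on $X$ is trivial, so is the restricted $K$-action, so it suffices to treat the case $K = G$, namely to construct a natural isomorphism $\tilde H_G^n(X;\Mackey T) \iso \tilde H^n(X;\Mackey T(G/G))$ for based spaces $X$ with trivial $G$-action; compatibility with varying $K$ will be handled afterward.

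For $K = G$, observe that a based space with trivial $G$-action is just a based space and that a closed cofibration of such is automatically a $G$-cofibration. Hence, restricting the $RO(G)$-graded theory to trivial $G$-spaces and using only the suspension isomorphisms for the trivial representation, the functors $X \mapsto \tilde H_G^n(X;\Mackey T)$ form a reduced cohomology theory on based CW complexes in the Eilenberg--Steenrod sense: exactness comes from the cofiber sequences of the equivariant theory, the wedge axiom from its additivity, and homotopy invariance is clear. Its coefficients are $\tilde H_G^n(S^0;\Mackey T) = \Mackey H_G^n(S^0;\Mackey T)(G/G)$, which by the dimension axiom of \S\ref{sec:genCohomology} is $\Mackey T(G/G)$ when $n = 0$ and $0$ otherwise. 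By uniqueness of ordinary reduced cohomology on CW complexes (and CW approximation in general) there is a natural isomorphism $\tilde H_G^n(X;\Mackey T) \iso \tilde H^n(X;\Mackey T(G/G))$ that is the identity on $S^0$; this can also be seen directly at the level of cellular cochains, since a trivial $G$-CW structure on $X$ has only $G/G$-cells and its Bredon cochain complex is the nonequivariant cellular cochain complex of $X$ with coefficients $\Mackey T(G/G)$.

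The remaining, and only delicate, point is naturality in $K$, i.e. compatibility with the restriction and transfer maps that make $\Mackey H_G^n(X;-)$ Mackey-functor valued. Each of these is induced by a (stable) map of orbits, hence is a natural transformation between cohomology theories of the type just described (for $G$ and for its subgroups), and so, by the same uniqueness statement, it is determined by its value on $S^0$, where it is the corresponding structure map of $\Mackey T$. Combined with the naturality of the Wirthm\"uller isomorphism, this shows that the isomorphisms above assemble into an isomorphism of Mackey functors between $\Mackey H_G^n(X;\Mackey T)$ and the Mackey functor obtained by applying the additive functor $\tilde H^n(X;-)$ levelwise to $\Mackey T$. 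Thus the main obstacle is not the levelwise isomorphism, which is soft, but the bookkeeping needed to verify that it respects the $\sorb{G}$-structure; that is exactly what the observation about structure maps being determined on $S^0$ takes care of.
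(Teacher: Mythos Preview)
Your proof is correct and follows essentially the same strategy as the paper: identify both sides as integer-graded ordinary cohomology theories in $X$ and invoke uniqueness via the dimension axiom. The only difference is packaging: you first apply the Wirthm\"uller isomorphism to reduce to the case $K=G$ and then argue naturality in $K$ separately, whereas the paper works directly with the definition $\Mackey H_G^n(X;\Mackey T)(G/K) = \tilde H_G^n(G/K_+\smsh X;\Mackey T)$, which is already a cohomology theory in $X$ with the correct coefficients $\Mackey T(G/K)$ on $S^0$, so no Wirthm\"uller step is needed and naturality in $K$ is immediate from the same uniqueness argument.
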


\begin{proof}
Both sides may be considered as integer-graded nonequivariant cohomology theories in $X$.
They obey the same dimension axiom, so, by the uniqueness of ordinary cohomology,
they must be isomorphic. Naturality in $K$ follows similarly. 
\end{proof}

\begin{proposition}
Let $C$ and $D$ be $G$-spaces,
let $X$ be an ex-space over $C$, and let $Y$ be an ex-space over $D$.
If $\alpha\in RO(\Pi_G C)$, and $\beta\in RO(\Pi_G D)$,
there is a spectral sequence
\[
 E_2^{p,q} = H_G^{\alpha+p}(X;\Mackey H_G^{\beta+q}(Y))
   \convto \Mackey H_G^{\alpha+\beta+p+q}(X\smsh_{C\times D} Y).
\]
\end{proposition}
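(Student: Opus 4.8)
The plan is to construct the spectral sequence from the skeletal filtration of $X$. Since $\Mackey H_G^*(-)$ is invariant under weak equivalences of ex-spaces over $C$ (\cite{CW:ordinaryhomology}), we may replace $X$ by a CW ex-space over $C$, with skeletal filtration $C = X_{-1}\includesin X_0\includesin X_1\includesin\cdots$ and $X\iso\colim_n X_n$, in which each quotient $X_n/X_{n-1}$ is a wedge of $n$-cells, each supported over a single orbit by a map $c_\lambda\colon G/H_\lambda\to C$. Smashing with $Y$ over $C\times D$ and putting $F_n = X_n\smsh_{C\times D}Y$ produces an increasing, exhaustive filtration of $X\smsh_{C\times D}Y$ in which each $F_{n-1}\includesin F_n$ is a cofibration: the external smash $-\smsh_{C\times D}Y$ is a left adjoint (its right adjoint being the fiberwise mapping space), built from pullback along the projections and fiberwise smash, so it preserves cofibrations, wedges, quotients, and the colimit. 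The long exact sequences in $\Mackey H_G^{\alpha+\beta+*}$ of the pairs $(F_n,F_{n-1})$ then assemble into an exact couple, and the associated spectral sequence is the one asserted, with $p$ the filtration degree.

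Next I would check convergence. The filtration is bounded below --- the reduced cohomology of $F_{-1}$, the zero section, vanishes --- and exhaustive, so this is a half-plane spectral sequence with entering differentials; strong convergence to $\Mackey H_G^{\alpha+\beta+*}(X\smsh_{C\times D}Y)$ then follows from the Milnor $\lim^1$ exact sequence applied to $X\iso\colim_n X_n$ in the usual way. Then comes the identification of the $E_1$-page. Because $-\smsh_{C\times D}Y$ preserves quotients and wedges, $F_n/F_{n-1}\iso (X_n/X_{n-1})\smsh_{C\times D}Y$ is a wedge, indexed by the $n$-cells of $X$, of terms of the form $\Susp^n$ of a pushforward along $c_\lambda\times\id$ of the pullback of $Y$ to $G/H_\lambda\times D$ --- a base-change/projection-formula identity for ex-spaces, using that smashing with $S^n$ is $n$-fold suspension. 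Applying the Wirthm\"uller and suspension isomorphisms, $\Mackey H_G^{\alpha+\beta+n+q}$ of such a term is naturally the group of $n$-cochains of the cellular cochain complex of $X$ (in the grading $\alpha+*$) with coefficients in the Mackey functor $\Mackey H_G^{\beta+q}(Y)$. A diagram chase with the connecting homomorphisms identifies $d_1$ with the cellular coboundary of $X$ with those coefficients, so $E_2^{p,q} = H_G^{\alpha+p}(X;\Mackey H_G^{\beta+q}(Y))$. Naturality in $X$, $Y$, $\alpha$, and $\beta$ is visible from the construction, and the usual care with signs in the exact couple is needed, as elsewhere in this subject.

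The step I expect to be the main obstacle is precisely this identification of $(E_1,d_1)$ with the cellular cochain complex of the $RO(\Pi_G C)$-graded theory: one must verify both the ex-space base-change identity for $(X_n/X_{n-1})\smsh_{C\times D}Y$ and, more delicately, that after the Wirthm\"uller reduction the cohomology of each cell contributes exactly a value of the Mackey functor $\Mackey H_G^{\beta+q}(Y)$, with the $\alpha$-twist absorbed entirely into the cells of $X$ and not leaking into the coefficients. This is cleanest if one takes for $X$ not an ordinary $G$-CW structure but the \GRCW structure of \cite{CW:ordinaryhomology}, for which the dimension axiom of the $RO(\Pi_G C)$-graded theory applies cell by cell; with that in hand the $(E_1,d_1)$-complex is literally the complex computing $H_G^{\alpha+*}(X;-)$, and the rest is the formal algebra of the exact couple.
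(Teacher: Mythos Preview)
Your proof is correct and is, at bottom, the same construction the paper invokes. The paper's two-line argument observes that $X\mapsto \Mackey H_G^*(X\smsh_{C\times D} Y)$ is a cohomology theory in $X$ with coefficient system $\Mackey H_G^*(Y)$, then cites the Atiyah--Hirzebruch spectral sequence from \cite{CW:ordinaryhomology}; that spectral sequence is built precisely from the skeletal filtration and exact couple you describe. The only difference is packaging: you redo the AHSS construction for this particular theory, whereas the paper appeals to the general machinery already established in \cite{CW:ordinaryhomology}, where the base-change identities, Wirthm\"uller reduction, identification of $(E_1,d_1)$ with the cellular cochain complex, and convergence are handled once and for all for any cohomology theory on ex-spaces.
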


\begin{proof}
We can consider $\Mackey H_G^*(X\smsh_{C\times D} Y)$ as a cohomology theory in $X$
with coefficient system $\Mackey H_G^*(Y)$.
The result then follows from the Atiyah-Hirzebruch spectral sequence 
from \cite{CW:ordinaryhomology}. 
\end{proof}

Recall our convention that $H_G^*$ means
$RO(\Pi_GB)$-graded cohomology and
$H_G^\bullet$ means $RO(G)$-graded cohomology.
Integer-graded nonequivariant cohomology will continue to be written as $H^*$.

\begin{proposition}\label{prop:integerCPinfty}
Let $\CP^\infty$ be the infinite complex projective space considered as a $G$-space
with trivial $G$-action. Then, in $RO(G)$ grading, we have
\begin{align*}
 \Mackey H_G^\bullet(\CP^\infty_+) &\iso \Mackey H_G^\bullet(S^0)[\sigma], \\
 \Mackey H_G^\bullet(\CP^\infty_+\smsh EG_+) &\iso 
        \Mackey H_G^\bullet(S^0)[\sigma]\tensor_{\Mackey H_G^\bullet(S^0)} \Mackey H_G^\bullet(EG_+)
        \iso \Mackey H_G^\bullet(EG_+)[\sigma], \text{ and} \\
 \Mackey H_G^\bullet(\CP^\infty_+\smsh\tE G) &\iso 
        \Mackey H_G^\bullet(S^0)[\sigma]\tensor_{\Mackey H_G^\bullet(S^0)} \Mackey H_G^\bullet(\tE G),
\end{align*}
where $\sigma$ is the Euler class of the canonical complex line bundle $\omega$
over $\CP^\infty$ with trivial $G$-action, so $|\sigma| = 2$. 
Moreover, the long exact sequence coming from the cofibration sequence
\[
 \CP^\infty_+\smsh EG_+ \to \CP^\infty_+ \to \CP^\infty_+\smsh\tE G
\]
is given by tensoring $\Mackey H_G^\bullet(S^0)[\sigma]$ with the long exact sequence
coming from the cofibration sequence
$EG_+\to S^0 \to \tE G$.
\end{proposition}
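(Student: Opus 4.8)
The plan is to deduce all three isomorphisms, and the compatibility of the long exact sequences, from the spectral sequence of the preceding proposition, run with $C=D=*$, $X=\CP^\infty_+$, and $Y$ taken in turn to be $S^0$, $EG_+$, and $\tE G$, so that $X\smsh_{C\times D}Y=\CP^\infty_+\smsh Y$. Because $G$ acts trivially on $\CP^\infty$, Proposition~\ref{prop:cohomologyTrivialAction} gives $\Mackey H_G^p(\CP^\infty_+;\Mackey S)(G/K)\iso\tilde H^p(\CP^\infty;\Mackey S(G/K))$, and since $H^*(\CP^\infty;A)\iso A[\sigma]$ with $|\sigma|=2$ and no higher $\mathrm{Tor}$ or $\Ext$ (the integral cohomology of $\CP^\infty$ is free), the $E_2$-term is
\[
 E_2^{p,q}\iso
  \begin{cases}
   \Mackey H_G^{\beta+q}(Y)\cdot\sigma^{p/2} & \text{if }p\geq 0\text{ is even,}\\
   0 & \text{otherwise,}
  \end{cases}
\]
for $\beta\in RO(G)$, converging to $\Mackey H_G^{\beta+p+q}(\CP^\infty_+\smsh Y)$ (the filtration in each total degree is finite, so convergence is strong). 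Row by row, $E_2$ is the free graded $\Z[\sigma]$-module on $\Mackey H_G^\bullet(Y)$.

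The main point is then that this spectral sequence collapses at $E_2$ with no extension problems. I would take $\sigma$ to be the Euler class $e(\omega)\in\tilde H_G^2(\CP^\infty_+)$; this sits in integer grading $2$ because $\omega$ has trivial-action fibers $\C=\R^2$, so $\omega^*=2$ in $RO(\Pi_G\CP^\infty)$. Collapse follows from three facts. First, the collapse map $\CP^\infty_+\to S^0$ (a point of $\CP^\infty$ is a retract, compatibly with smashing with $Y$) makes the restriction $\Mackey H_G^\bullet(\CP^\infty_+\smsh Y)\to\Mackey H_G^\bullet(Y)$ split surjective, so the edge homomorphism onto $E_\infty^{0,*}\subseteq E_2^{0,*}$ is onto; hence $E_\infty^{0,*}=E_2^{0,*}$ and no differential leaves the coefficient row. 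Second, $\sigma$ is a permanent cycle (it is an honest class) of filtration exactly $2$---indeed $\tilde H_G^2(S^0)=0$ by Theorems~\ref{thm:evenCohomPoint} and~\ref{thm:oddCohomPoint}, so $\sigma$ restricts to $0$ on $\CP^0$---and, by the defining property of Thom classes for complex bundles (\S\ref{sec:genCohomology}), $\sigma$ restricts on each cell $\CP^n/\CP^{n-1}=S^{2n}$ to a generator of $\tilde H_G^\bullet(S^{2n})\iso\Mackey A_{G/G}$, so it detects a module-generator of $E_\infty^{2,0}$. Third, for $Y=S^0$ and $Y=EG_+$ the spectral sequence is multiplicative and $E_2$ is generated over its coefficient row by $\sigma$, so the Leibniz rule together with the first two facts shows every class of $E_2$ is a permanent cycle, i.e.\ $E_\infty=E_2$; for $Y=\tE G$, whose cohomology is only a non-unital module over $\Mackey H_G^\bullet(S^0)$, I would instead use that its spectral sequence is a module over the one for $Y=S^0$ and that $\sigma$ acts on it as a permanent cycle, which yields the same conclusion.

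With collapse in hand, the extension problem is settled by the honest classes $\sigma^i$: cup product with them gives a filtration-preserving $\Mackey H_G^\bullet(S^0)$-algebra map $\Mackey H_G^\bullet(S^0)[\sigma]\to\Mackey H_G^\bullet(\CP^\infty_+)$ (a module map over $\Mackey H_G^\bullet(S^0)$ once $Y$ is smashed in), and since $\sigma$ detects a module-generator in filtration $2$ it induces an isomorphism on the associated graded $E_\infty$, hence is itself an isomorphism. Running the same argument with $EG_+$ and with $\tE G$ smashed into $\CP^\infty_+$ gives the remaining two isomorphisms; the identifications with $\Mackey H_G^\bullet(S^0)[\sigma]\tensor_{\Mackey H_G^\bullet(S^0)}\Mackey H_G^\bullet(Y)$, and, for $Y=EG_+$, with $\Mackey H_G^\bullet(EG_+)[\sigma]$, are then purely formal, because $\Mackey H_G^\bullet(S^0)[\sigma]$ is a free---in particular flat---$\Mackey H_G^\bullet(S^0)$-module, freely generated by the powers of $\sigma$.

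For the last assertion, the cofibration sequence $\CP^\infty_+\smsh EG_+\to\CP^\infty_+\to\CP^\infty_+\smsh\tE G$ is $\CP^\infty_+$ smashed with $EG_+\to S^0\to\tE G$, so naturality produces a map from the long exact sequence of $EG_+\to S^0\to\tE G$ to that of the cofibration, and all three connecting maps are $\Mackey H_G^\bullet(S^0)$-linear. Under the isomorphisms above, each term of the cofibration's sequence is the free $\Mackey H_G^\bullet(S^0)[\sigma]$-module on the corresponding point-level term and the maps respect the $\sigma$-grading, so the sequence is exactly the point-level long exact sequence with $\Mackey H_G^\bullet(S^0)[\sigma]\tensor_{\Mackey H_G^\bullet(S^0)}-$ applied; flatness then shows it is again exact, consistent with its coming from a cofibration. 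The step I expect to be the real obstacle is the collapse argument of the second paragraph---specifically, verifying that $\sigma=e(\omega)$ detects a genuine module-generator of $E_2^{2,0}$ rather than a proper submodule, which is what makes the $\sigma^i$ span $E_\infty$ freely over $\Mackey H_G^\bullet(S^0)$. This is exactly where complex orientability of the theory, i.e.\ the existence and fiberwise normalization of Thom classes from \S\ref{sec:genCohomology}, does the work; everything else is bookkeeping with the multiplicative structure and the retraction $\CP^\infty_+\to S^0$.
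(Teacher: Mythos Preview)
Your proof is correct but uses a different collapse argument. Both approaches set up the same spectral sequence and resolve the extension problem identically, via the honest class $\sigma=e(\omega)$. For collapse, the paper argues by parity: since $H^*(\CP^\infty;\Z)$ is concentrated in even degrees, any nonzero $d_r$ has $r$ even and shifts $q$ by the odd amount $r-1$; inspection of $\Mackey H_G^\bullet(Y)$ for $Y=S^0,EG_+,\tE G$ then shows that along any integer line $\alpha+\Z\subset RO(G)$ there are no nonzero Mackey-functor maps between groups an odd integer apart (either one of the two vanishes, or one is concentrated at level $G/G$ and the other at level $G/e$). Your route---split off the $p=0$ column via the section $S^0\to\CP^\infty_+$, note $\sigma$ is a permanent cycle generating $E_2^{2,0}$, and apply Leibniz---is the standard complex-orientability argument and is more portable, but as you rightly flag it hinges on $\sigma$ generating $E_2^{2,0}\iso\Mackey A_{G/G}$ rather than a proper submodule. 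Your justification there is slightly garbled ($\sigma$ is the Euler class and restricts to $0$ at points, not to a generator), but the conclusion holds: for a trivial-$G$ bundle over a trivial-$G$ space, the nonequivariant Thom class pushed along the unit $\Z\to A(G)$ in coefficients restricts to $1\in A(K)$ fiberwise and is therefore an equivariant Thom class, so $\sigma$ is a unit of $A(G)$ times the generator of $H^2(\CP^\infty;A(G))$. The paper's parity argument sidesteps this verification for the collapse step, at the cost of relying on the detailed structure of the coefficient cohomologies already computed.
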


\begin{proof}
We consider the spectral sequences
\[
 E_2^{p,q} = \Mackey H_G^p(\CP^\infty_+; \Mackey H_G^{\alpha+q}(Y))
  \convto \Mackey H_G^{\alpha+p+q}(\CP^\infty_+\smsh Y)
\]
where $Y$ is $S^0$, $EG_+$, or $\tE G$,
and all spaces are taken as parametrized over a point.
By Proposition~\ref{prop:cohomologyTrivialAction},
we can write the $E_2$ page in terms of nonequivariant cohomology:
\[
 E_2^{p,q} = H^p(\CP^\infty; \Mackey H_G^{\alpha+q}(Y))
  \convto \Mackey H_G^{\alpha+p+q}(\CP^\infty_+\smsh Y)
\]
But, from the nonequivariant calculation of the cohomology of $\CP^\infty$, we know that
\[
 H^*(\CP^\infty;\Mackey H_G^{\alpha+q}(Y))
  \iso H^*(\CP^\infty;\Z)\tensor \Mackey H_G^{\alpha+q}(Y)
  \iso \Z[\sigma]\tensor \Mackey H_G^{\alpha+q}(Y)
\]
with $|\sigma| = 2$.
Because $H^*(\CP^\infty;\Z)$ is concentrated in even grading, any nonzero differential
on $H^p(\CP^\infty;\Z)\tensor \Mackey H_G^{\alpha+q}(Y)$ would have to increase $p$ by an even
amount and decrease $q$ by an odd amount.
Examination of the 
cohomologies of $S^0$, $EG_+$, and $\tE G$ show that there are no nonzero maps
possible on $\Mackey H_G^{\alpha+q}(Y)$ that lower $q$ by an odd amount.
Hence, all differentials must vanish in all three spectral sequences.
Therefore, the pairing
\[
 H^p(\CP^\infty;\Z)\tensor \Mackey H_G^{\alpha+q}(Y) \to 
   \Mackey H_G^{\alpha+p+q}(\CP^\infty_+\smsh Y)
\]
must be an isomorphism in each case.
We may take $\sigma$ to be the equivariant Euler class of the canonical bundle $\omega$,
which restricts to the nonequivariant Euler class.
This gives the results claimed in the proposition.
\end{proof}

Fix a $k$ and consider $B_k\to B$ as a space over $B$.
The following result tells us the $RO(\Pi_G B)$-graded cohomology of $B_k$.

\begin{proposition}\label{prop:calcFixedPoints}
For each $k$, we have the following calculation of
cohomology graded on $RO(\Pi_GB)$:
\begin{align*}
 \Mackey H_G^*((B_k)_+) 
   \iso \Mackey H_G^\bullet(S^0)&[\sigma_k, \zeta_{i,j,k}, \zeta_{i,j,k}^{-1} \mid
             0\leq i\leq p-1,\ i\neq k,\ 1\leq j\leq p/2] \\
 \Mackey H_G^*((B_k)_+\smsh_B EG_+) 
   &\iso \Mackey H_G^*((B_k)_+)\tensor_{\Mackey H_G^\bullet(S^0)} \Mackey H_G^\bullet(EG_+) \\
 \Mackey H_G^*((B_k)_+\smsh_B \tE G) 
   &\iso \Mackey H_G^*((B_k)_+)\tensor_{\Mackey H_G^\bullet(S^0)} \Mackey H_G^\bullet(\tE G),
\end{align*}
where $|\sigma_k| = 2$ and $|\zeta_{i,j,k}| = \Omega_{i,j}$.
Therefore,
\begin{align*}
 \Mackey H_G^*(B^G_+)
   &\iso \Dirsum_{k=0}^{p-1}\Mackey H_G^*((B_k)_+)  \\
 \Mackey H_G^*(B^G_+\smsh_B EG_+) 
   &\iso \Mackey H_G^*(B^G_+)\tensor_{\Mackey H_G^\bullet(S^0)} \Mackey H_G^\bullet(EG_+) \\
 \Mackey H_G^*(B^G_+\smsh \tE G) 
   &\iso \Mackey H_G^*(B^G_+)\tensor_{\Mackey H_G^\bullet(S^0)} \Mackey H_G^\bullet(\tE G).
\end{align*}
\end{proposition}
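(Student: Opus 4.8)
The plan is to reduce everything to the single space $B_k$ for fixed $k$ and then quote Proposition~\ref{prop:integerCPinfty}. First I would observe that $B_k$ is a copy of $\CP^\infty$ with trivial $G$-action, so, being connected and simply connected, $\Pi_G B_k$ is equivalent to $\orb G$ and $RO(\Pi_G B_k)\iso RO(G)$; moreover the restriction map $i_k^*\colon RO(\Pi_G B)\to RO(\Pi_G B_k)=RO(G)$ induced by the inclusion $i_k\colon B_k\includesin B$ is exactly the $k$th coordinate projection of Lemma~\ref{lem:kernelproj}, whose kernel is freely generated by the $\Omega_{i,j}$ with $i\neq k$. Since $\Mackey H_G^\gamma((B_k)_+)$, for $(B_k)_+$ regarded as an ex-space over $B$, is by construction computed from the pulled-back representation $i_k^*\gamma$, it agrees with the $RO(\Pi_G B_k)$-graded cohomology of $(B_k)_+$ viewed as an ex-space over itself, i.e.\ with $\Mackey H_G^\bullet(\CP^\infty_+)$ reindexed along $i_k^*$.

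Next, for each pair $(i,j)$ with $i\neq k$ I would exploit that $i_k^*\Omega_{i,j}=0$: the associated suspension isomorphism $\Mackey H_G^\gamma((B_k)_+)\xrightarrow{\ \iso\ }\Mackey H_G^{\gamma+\Omega_{i,j}}((B_k)_+)$ lets us define an invertible class $\zeta_{i,j,k}\in\Mackey H_G^{\Omega_{i,j}}((B_k)_+)$ as the image of the unit (the decategorification of Part~\ref{part:point} makes this canonical; for now it suffices that such an invertible class exists). Multiplying by powers of the $\zeta_{i,j,k}^{\pm1}$ and of $\sigma_k$ — the equivariant Euler class of $\omega$ over $B_k$, with $|\sigma_k|=2$ — gives a map of $RO(\Pi_G B)$-graded $\Mackey H_G^\bullet(S^0)$-algebras $\Mackey H_G^\bullet(S^0)[\sigma_k,\zeta_{i,j,k},\zeta_{i,j,k}^{-1}]\to\Mackey H_G^*((B_k)_+)$. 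To see it is an isomorphism, fix a grading and write it as $\alpha+\sum_{i\neq k}n_{i,j}\Omega_{i,j}$ with $\alpha$ in the constant copy of $RO(G)$ inside $RO(\Pi_G B)$ (a legitimate decomposition, since $RO(G)\cdot 1\dirsum\bigoplus_{i\neq k,\,j}\Z\,\Omega_{i,j}=RO(\Pi_G B)$ by a direct check using the relation $\MM_j-2=\sum_i\Omega_{i,j}$ together with Lemma~\ref{lem:kernelproj}); on that piece the map becomes the multiplication $\Mackey H_G^\bullet(S^0)[\sigma_k]\to\Mackey H_G^\alpha(\CP^\infty_+)$, which is an isomorphism by Proposition~\ref{prop:integerCPinfty}. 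This gives the first displayed isomorphism.

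For the versions with $EG_+$ and $\tE G$, I would run the identical reduction: after reindexing along $i_k^*$, the ex-space $(B_k)_+\smsh_B Y$ for $Y\in\{EG_+,\tE G\}$ becomes $\CP^\infty_+\smsh Y$ as in Proposition~\ref{prop:integerCPinfty}, whose cohomology is $\Mackey H_G^\bullet(Y)[\sigma]$ in the appropriate tensored-up form; combining this with the invertible classes $\zeta_{i,j,k}$ (which already live in $\Mackey H_G^*((B_k)_+)$) yields $\Mackey H_G^*((B_k)_+\smsh_B Y)\iso\Mackey H_G^*((B_k)_+)\tensor_{\Mackey H_G^\bullet(S^0)}\Mackey H_G^\bullet(Y)$. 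Finally, since $B^G=\Disjunion_k B_k$ we have $B^G_+=\bigvee_k(B_k)_+$ in ex-spaces over $B$, a finite wedge is carried by cohomology to the corresponding direct sum, and $\smsh_B Y$ distributes over the wedge; this gives the last three displayed isomorphisms.

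The main obstacle I anticipate is not conceptual but bookkeeping: establishing cleanly that $\Mackey H_G^\gamma((B_k)_+)$ depends only on $i_k^*\gamma$ (so that Proposition~\ref{prop:integerCPinfty} really applies) and pinning down the suspension isomorphisms used to define $\zeta_{i,j,k}$ and $\sigma_k$ without sign ambiguity — exactly the decategorification issues deferred to Part~\ref{part:point}, which here we are content to use under the blanket assumption made at the end of \S\ref{sec:genCohomology}.
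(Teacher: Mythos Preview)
Your proposal is correct and follows essentially the same approach as the paper: both identify $\Pi_G B_k$ with $\orb G$, use Lemma~\ref{lem:kernelproj} to describe the kernel of the restriction $RO(\Pi_G B)\to RO(G)$, define the invertible classes $\zeta_{i,j,k}$ via the resulting grading isomorphisms, and then reduce to the $RO(G)$-graded computation of Proposition~\ref{prop:integerCPinfty}. You are somewhat more explicit than the paper about the direct-sum splitting $RO(\Pi_G B)=RO(G)\oplus\bigoplus_{i\neq k,\,j}\Z\,\Omega_{i,j}$ and about the wedge decomposition for $B^G$, but these are exactly the points the paper leaves implicit.
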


\begin{proof}
Because $B_k$ is simply connected and has trivial $G$-action, its
fundamental groupoid is equivalent to $\orb G$. The restriction map
$RO(\Pi_G B) \to RO(\Pi_G B_k) \iso RO(G)$ is given by
$(\alpha_i)_i \mapsto \alpha_k$.
By Lemma~\ref{lem:kernelproj}, the kernel is the free abelian subgroup generated
by the elements $\Omega_{i,j}$, $i\neq k$.
For each $\Omega_{i,j}$ with $i\neq k$, this argument shows that we have a well-defined isomorphism
$\Mackey H_G^{\Omega_{i,j}}((B_k)_+) \iso \Mackey H_G^0((B_k)_+)$,
and we let $\zeta_{i,j,k}\in \Mackey H_G^{\Omega_{i,j}}((B_k)_+)$
be the element corresponding to $1 \in \Mackey H_G^0((B_k)_+)$.
Looking at $-\Omega_{i,j}$, we find an element that acts as the inverse
of $\zeta_{i,j,k}$, hence
$\zeta_{i,j,k}$ is a unit.
It is easy to see now that these elements, together with the
$RO(G)$-graded part,
determine the whole $RO(\Pi_GB)$-graded cohomology, in the sense that
\[
 \Mackey H_G^*((B_k)_+) \iso 
   \Mackey H_G^\bullet((B_k)_+)[\zeta_{i,j,k},\zeta_{i,j,k}^{-1} \mid
     0\leq i\leq p-1,\ i\neq k,\ 1\leq j\leq p/2].
\]
The same argument applies to $(B_k)_+\smsh_B EG_+$ and $(B_k)_+\smsh_B\tE G$.
Combined with the preceding proposition, we get the results stated.
\end{proof}

\begin{definition}
Let $\alpha\in RO(\Pi_G B)$ with $\alpha_k = 0$, so
$\alpha = \sum_{i\neq k}\sum_j n_{i,j}\Omega_{i,j}$ for some $n_{i,j}\in\Z$. Then we write
\[
 \zeta_k^\alpha = \prod_{i\neq k}\prod_j \zeta_{i,j,k}^{n_{i,j}}.
\]
\end{definition}

With this notation we can write
\[
 \Mackey H_G^*((B_k)_+) 
  \iso \Mackey H_G^\bullet(S^0)[\sigma_k, \zeta_k^\alpha \mid \alpha\in RO(\Pi_G B) \text{ with }\alpha_k = 0],
\]
where we implicitly understand the relations $\zeta_k^\alpha \zeta_k^\beta = \zeta_k^{\alpha+\beta}$.

We shall need the following calculation in the next section.
Note that $\omega_G$ is an $\omega_G^*$-dimensional bundle
in the sense of \cite{CMW:orientation} and \cite{CW:ordinaryhomology},
so we have the Euler class
$c = e(\omega_G)\in \Mackey H_G^{\omega_G^*}(B_+)$.

\begin{proposition}\label{prop:calcEulerClass}
For $0\leq k\leq p-1$, the restriction of $c = e(\omega_G)$ to $B_k$ is
\[
 c|B_k = 
   (e_k + \xi_k\sigma_k)\zeta_k^{\omega_G^*-\MM_k}
   \in \Mackey H_G^*((B_k)_+).
\]
\end{proposition}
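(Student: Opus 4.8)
The plan is to reduce the statement to a nonequivariant Euler class computation carried out one fixed component at a time, using the naturality of Euler classes under pullback and the identification of $RO(\Pi_G B_k)$ with $RO(G)$. Fix $k$. The restriction $\omega_G|B_k$ is, by construction, the nonequivariant canonical bundle $\omega$ over $B_k = \CP^\infty$ with $G$ acting on each fiber as the representation $\C_k = \MM_k$. First I would identify the dimension: under the restriction map $RO(\Pi_G B)\to RO(\Pi_G B_k)\iso RO(G)$ given by $(\alpha_i)_i\mapsto\alpha_k$, the class $\omega_G^*$ maps to $\MM_k$ (its $k$th component), so $e(\omega_G|B_k)$ lives in $\Mackey H_G^{\MM_k}((B_k)_+)$ as an $RO(G)$-graded class, and $c|B_k = \zeta_k^{\omega_G^*-\MM_k}\cdot e(\omega_G|B_k)$ once we reinsert the $\zeta_{i,j,k}$ bookkeeping via Proposition~\ref{prop:calcFixedPoints}. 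So the whole problem is to show
\[
 e(\omega\text{ with fiber }\MM_k) = e_k + \xi_k\sigma_k \in \Mackey H_G^{\MM_k}((B_k)_+) \iso \Mackey H_G^\bullet(S^0)[\sigma_k].
\]

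Next I would split this into an ``at $EG$'' computation and an ``at a point'' computation, exploiting the cofibration sequence $EG_+\to S^0\to\tE G$ and the fact (Proposition~\ref{prop:integerCPinfty} together with Proposition~\ref{prop:calcFixedPoints}) that the cohomology of $(B_k)_+$ and of $(B_k)_+\smsh_B EG_+$ are free polynomial extensions of the cohomologies of $S^0$ and $EG_+$. Over $EG$: the bundle $\omega$ with fiber $\MM_k$ becomes, after smashing with $EG_+$, a bundle classified by a map into $B_GU(1)\smsh EG_+$; its Euler class in $\Mackey H_G^\bullet(EG_+)[\sigma_k]$ must be a polynomial in $\sigma_k$ of degree determined by the dimension $\MM_k$, i.e.\ linear, $e(\omega_G|B_k)\big|_{EG} = a + b\sigma_k$ with $a\in\Mackey H_G^{\MM_k}(EG_+)$ and $b\in \Mackey H_G^{\MM_k-2}(EG_+)$. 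Since over $EG$ the bundle $\omega\otimes\C_k^{-1}$ is pulled back from $\CP^\infty$ with trivial action (the fiber becomes equivariantly trivial after this twist), its Euler class is $\sigma_k$ up to a unit; combined with the known multiplicative structure of $\Mackey H_G^\bullet(EG_+)$ (Theorem~\ref{thm:EvenEG} / Theorem~\ref{thm:OddEG}), where $e_1$ restricts to the Euler class of $\MM_1$ and the $\xi_k$ are the invertible classes in degree $\MM_k-2$, one reads off $a = e_k|_{EG}$ and $b = \xi_k|_{EG}$. Here one uses that $e_k = \lambda^{\MM_k-\MM_1,a}e_1$ restricts compatibly and that multiplicativity of Euler classes gives $e(\omega\text{ fiber }\MM_k)\cdot(\text{inverse}) = e(\MM_k$-twist$)$; the twisting bundle over $EG$ is trivial so contributes a unit.

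Then I would transfer this back to $(B_k)_+$ itself (before inverting anything). The restriction map $\phi\colon\Mackey H_G^\bullet(S^0)\to\Mackey H_G^\bullet(EG_+)$ is injective in the relevant gradings $\MM_k$ and $\MM_k-2$ — indeed, from Proposition~\ref{prop:EvenMaps}/Proposition~\ref{prop:OddMaps}, $\phi$ is an isomorphism in grading $\MM_k-2$ (this is exactly how $\xi_k$ is characterized) and is injective in grading $\MM_k$ since $\Mackey H_G^{\MM_k}(S^0)\iso\conc\Z$ maps isomorphically onto its image. Because $(B_k)_+$ and $(B_k)_+\smsh_B EG_+$ have cohomologies that are $\Mackey H_G^\bullet(S^0)$- and $\Mackey H_G^\bullet(EG_+)$-free on the same monomial basis in $\sigma_k$ and the $\zeta$'s, the map $\Mackey H_G^*((B_k)_+)\to\Mackey H_G^*((B_k)_+\smsh_B EG_+)$ is injective in the gradings containing $e(\omega_G|B_k)$. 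Since $e_k + \xi_k\sigma_k \in\Mackey H_G^\bullet(S^0)[\sigma_k]$ maps to $a+b\sigma_k = e(\omega_G|B_k)|_{EG}$, the two classes $c|B_k$ and $(e_k+\xi_k\sigma_k)\zeta_k^{\omega_G^*-\MM_k}$ agree after restriction to $EG$, hence agree. The main obstacle I anticipate is pinning down the \emph{normalization} — i.e.\ showing the coefficient of $\sigma_k$ is exactly $\xi_k$ (with the right sign, and for odd $p$ the right $\lambda$-twist giving $e_k$ rather than some other scalar multiple) rather than merely a unit multiple; this requires carefully chasing the chosen nonequivariant identifications of the $\C_k$ with $\C$ (Definition~\ref{def:irrRepresentations}) through the suspension isomorphisms defining $\iota_k$, $e_k$, and $\xi_k$, and matching them against the nonequivariant identification of $\omega|B_k$ with the standard canonical bundle. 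Everything else is a routine application of multiplicativity and injectivity.
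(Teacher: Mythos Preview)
Your approach has a genuine gap: the injectivity claim you rely on is false. You assert that $\phi\colon\Mackey H_G^{\MM_k}(S^0)\to\Mackey H_G^{\MM_k}(EG_+)$ is injective, but in fact $\Mackey H_G^{\MM_k}(S^0)\iso\conc\Z$ while $\Mackey H_G^{\MM_k}(EG_+)\iso\conc{\Z/p}$ (this is the region $|\alpha|>0$, $\alpha^G=0$ in Theorems~\ref{thm:EvenEG}/\ref{thm:OddEG}), and the map is reduction mod~$p$. Consequently, knowing the image of $c|B_k$ in $\Mackey H_G^*((B_k)_+\smsh_B EG_+)$ determines the coefficient of $e_k$ only modulo~$p$, not as an integer. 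Your anticipated ``normalization obstacle'' is therefore not a matter of signs or $\lambda$-twists but a genuine loss of information that no amount of bookkeeping inside the $EG$ picture will recover. (A smaller issue: the twist argument via $\omega\otimes\C_k^{-1}$ appeals to ``multiplicativity of Euler classes,'' but Euler classes are multiplicative for direct sums, not tensor products; for line bundles the relevant relation is additivity of first Chern classes, which in the equivariant setting and in these gradings requires more care.)

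The paper's argument is both simpler and avoids this problem. After identifying $\Mackey H_G^{\MM_k}((B_k)_+)\iso\conc\Z\oplus\Mackey R\Z$ generated by $e_k$ and $\xi_k\sigma_k$, it writes $c|B_k=ae_k+b\xi_k\sigma_k$ and determines the two integer coefficients by two transverse restrictions: restriction to level $G/e$ (nonequivariant cohomology) kills $e_k$ and sends $\xi_k\sigma_k$ to the ordinary Euler class of $\omega$, forcing $b=1$; restriction to a single point of $B_k$ kills $\sigma_k$ and sends $c$ to the Euler class of the fiber $\C_k$, namely $e_k$, forcing $a=1$. The second of these is exactly what supplies the integer (not merely mod~$p$) information that your $EG$ restriction cannot.
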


\begin{proof}
Under the map $RO(\Pi_GB)\to RO(G)$ induced by the inclusion of $B_k$,
$\omega_G^*$ maps to $\MM_k$. 
(Recall that $\MM_0 = 2$ and,
if $p=2$, $\MM_1 = \LL^2$.)

Considering the $RO(G)$-graded cohomology $\Mackey H_G^\bullet((B_k)_+)$ for the moment,
Proposition~\ref{prop:integerCPinfty} and
our calculations of the cohomology of a point show that
\[
 \Mackey H_G^{\MM_k}((B_k)_+)\iso 
   \begin{cases}
     \conc\Z\dirsum \Mackey R\Z & \text{if $k\neq 0$} \\
     \Mackey R\Z & \text{if $k=0$,}
   \end{cases}
\]
generated
by $e_k$ and $\xi_k\sigma_k$ (using the convention that $e_0 = 0$),
hence $c|B_k = ae_k + b\xi_k\sigma_k$ for some integers $a$ and $b$.
Consider the restriction to $G/e$: $e_k$ restricts to 0,
$\xi_k$ restricts to 1, and $\sigma_k$ restricts
to the Euler class of the canonical nonequivariant line bundle.
Therefore, $ae_k+b\xi_k\sigma_k$ restricts to $b$ times this Euler class, 
while $c|B_k$ restricts to 1 times the Euler class; therefore, $b=1$.

Now consider the restriction of $c$ to any point in $B_k$:
it must restrict to the Euler class of the corresponding fiber of $\omega_G$,  which
is $e_k$. Noting that $\sigma_k$ restricts to 0 at any point, we must therefore have $a=1$.
Thus, $c|B_k = e_k + \xi_k\sigma_k$ in the $RO(G)$-grading.

Extending the grading to $RO(\Pi_GB)$, we must multiply by the appropriate unit
to adjust the grading of $c|B_k$ to $\omega_G^*$, and that unit is
$\zeta_k^{\omega_G^*-\MM_k}$.
Note that $(\omega_G^*-\MM_k)_k = 0$,
so $\zeta_k^{\omega_G^*-\MM_k}$ is defined.
\end{proof}

Because $\chi$ cyclicly permutes the fixed-point components $B_k$, we get
the following immediate corollary.

\begin{corollary}
$(\chi^i c)|B_k = 
   (e_{k-i} + \xi_{k-i}\sigma_k)\zeta_k^{\chi^i\omega_G^*-\MM_{k-i}}
   \in \Mackey H_G^*((B_k)_+)$.
\qed
\end{corollary}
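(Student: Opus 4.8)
The plan is to obtain the formula by naturality, transporting the computation of $c|B_{k-i}$ in Proposition~\ref{prop:calcEulerClass} along the homeomorphism that $\chi^i$ restricts to on fixed sets. Write $\iota_k\colon B_k\hookrightarrow B$ for the inclusion. Since $\chi^G$ carries $B_k$ homeomorphically onto $B_{k-1}$, iterating gives $\chi^i\circ\iota_k = \iota_{k-i}\circ(\chi^G)^i$, where $(\chi^G)^i\colon B_k\to B_{k-i}$ is, under the chosen identifications $\C_j=\C$, the identity map of $\CP^\infty$. Reading this as an equality of maps of ex-spaces over $B$ covering $\chi^i\colon B\to B$, and recalling that $\chi^i c$ is shorthand for the pullback class $(\chi^i)^*c$, the first step is
\[
 (\chi^i c)|B_k = \iota_k^*(\chi^i)^*c = (\chi^i\circ\iota_k)^*c = ((\chi^G)^i)^*\bigl(\iota_{k-i}^*c\bigr) = ((\chi^G)^i)^*\bigl(c|B_{k-i}\bigr),
\]
where $((\chi^G)^i)^*\colon \Mackey H_G^\gamma((B_{k-i})_+)\xrightarrow{\ \iso\ }\Mackey H_G^{\chi^i\gamma}((B_k)_+)$ for each $\gamma\in RO(\Pi_G B)$.

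Next I would substitute $c|B_{k-i} = (e_{k-i}+\xi_{k-i}\sigma_{k-i})\zeta_{k-i}^{\omega_G^*-\MM_{k-i}}$ from Proposition~\ref{prop:calcEulerClass} and evaluate $((\chi^G)^i)^*$ on each factor. As $(\chi^G)^i$ commutes with the projections to a point, $((\chi^G)^i)^*$ fixes the classes $e_{k-i}$ and $\xi_{k-i}$ coming from $\Mackey H_G^\bullet(S^0)$; as it carries the (trivially acted on) canonical line bundle over $B_{k-i}$ to that over $B_k$, it sends $\sigma_{k-i}\mapsto\sigma_k$; and, since $\zeta_{l,j,k-i}$ is by construction the class in $\Mackey H_G^{\Omega_{l,j}}((B_{k-i})_+)$ corresponding to $1$ under the grading isomorphism $\Mackey H_G^{\Omega_{l,j}}((B_{k-i})_+)\iso\Mackey H_G^0((B_{k-i})_+)$, naturality of that construction together with $\chi^i\Omega_{l,j} = \Omega_{l+i,j}$ gives $((\chi^G)^i)^*\zeta_{l,j,k-i} = \zeta_{l+i,j,k}$, hence $((\chi^G)^i)^*\zeta_{k-i}^{\beta} = \zeta_k^{\chi^i\beta}$ for every $\beta\in RO(\Pi_G B)$ whose $(k-i)$th component vanishes. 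Applying this with $\beta = \omega_G^*-\MM_{k-i}$, and using that $\chi$ fixes the constant tuple $\MM_{k-i}$ (so $\chi^i\beta = \chi^i\omega_G^* - \MM_{k-i}$, which indeed has vanishing $k$th component, making $\zeta_k^{\chi^i\omega_G^*-\MM_{k-i}}$ defined), gives
\[
 (\chi^i c)|B_k = (e_{k-i}+\xi_{k-i}\sigma_k)\,\zeta_k^{\chi^i\omega_G^*-\MM_{k-i}},
\]
the asserted formula.

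The step needing the most care is the identification $((\chi^G)^i)^*\zeta_{l,j,k-i} = \zeta_{l+i,j,k}$: the classes $\zeta_{l,j,k}$ are pinned down only implicitly, through the $RO(\Pi_G B)$-grading and the isomorphisms of Proposition~\ref{prop:calcFixedPoints}, so one must verify that those isomorphisms are natural enough that a homeomorphism reindexing the grading by the component-permutation $\chi^i$ carries the distinguished unit to the distinguished unit. The remainder is routine bookkeeping with the basis $\{1,\Omega_{i,j}\}$ of $RO(\Pi_G B)$ and the rule $\chi\Omega_{i,j}=\Omega_{i+1,j}$.
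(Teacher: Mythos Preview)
Your proposal is correct and follows exactly the approach the paper has in mind: the paper gives no proof beyond the sentence ``Because $\chi$ cyclicly permutes the fixed-point components $B_k$, we get the following immediate corollary,'' and what you have written is precisely the unpacking of that sentence, tracking each factor of $c|B_{k-i}$ through the homeomorphism $(\chi^G)^i\colon B_k\to B_{k-i}$.
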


%-------------------------------------------------------
\section{The cohomologies of $B_GU(1)_+\smsh EG_+$ and $B_GU(1)_+\smsh\tE G$}

We start with an easy result.

\begin{proposition}\label{prop:BEGtilde}
The inclusion $B^G\to B$ induces an isomorphism
\begin{align*}
 \Mackey H_G^*(B_+\smsh_B \tE G) &\iso \Mackey H_G^*(B_+^G\smsh_B \tE G) \\
  &\iso \Dirsum_{k=0}^{p-1}\Mackey H_G^\bullet(S^0)[\sigma_k, \zeta_k^\alpha \mid
             \alpha_k = 0]
           \tensor_{\Mackey H_G^\bullet(S^0)}\Mackey H_G^\bullet(\tE G).
\end{align*}
\end{proposition}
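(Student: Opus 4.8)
\emph{Plan of proof.} The strategy is to obtain the first isomorphism from a long exact sequence and then to read off the concrete description from Proposition~\ref{prop:calcFixedPoints}. Let $j\colon B^G\hookrightarrow B$ denote the inclusion and let $C$ be the cofiber of $j_+\colon B^G_+\to B_+$ in the category of ex-spaces over $B$, so that there is a cofibration sequence of ex-spaces over $B$
\[
 B^G_+ \to B_+ \to C.
\]
Smashing fiberwise with $\tE G$ (pulled back from a point to $B$) preserves cofibrations, and $S^0$ is the unit for $\smsh_B$, so we obtain a cofibration sequence $B^G_+\smsh_B\tE G \to B_+\smsh_B\tE G \to C\smsh_B\tE G$ and hence a long exact sequence in $\Mackey H_G^*(-)$ in which the map $\Mackey H_G^*(B_+\smsh_B\tE G)\to\Mackey H_G^*(B^G_+\smsh_B\tE G)$ induced by $j$ is one of the arrows. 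Thus the first isomorphism of the proposition follows once we show that $\Mackey H_G^*(C\smsh_B\tE G) = 0$.

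For this, the essential point is that $\tE G$ sees only fixed points. Since $G=\Z/p$ has prime order, every point of $B$ is either $G$-fixed (hence lies in $B^G=\Disjunion_k B_k$) or has trivial isotropy, so $B\setminus B^G$ is a free $G$-space and $C = B/B^G$ is built over $B$ out of its section by attaching cells of orbit type $G/e$ only. Smashing any such free cell with $\tE G$ over $B$ gives, fiberwise, the constant ex-space on $G_+\smsh\tE G$, which is equivariantly contractible because the underlying nonequivariant space of $\tE G$ is contractible (equivalently, $\Mackey H_G^\bullet(\tE G)(G/e) = 0$, as recorded in Theorems~\ref{thm:EvenTEG} and~\ref{thm:OddTEG}); inducting up the cell filtration of $C$ relative to its section then kills $\Mackey H_G^*(C\smsh_B\tE G)$. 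An equivalent, filtration-free argument uses the Atiyah--Hirzebruch spectral sequence of \cite{CW:ordinaryhomology}: the coefficient system $G/K\mapsto \Mackey H_G^\bullet(\tE G)(G/K)$ is concentrated at $G/G$, and Bredon cohomology with such coefficients depends only on the fixed set, so $j$ induces an isomorphism on $\Mackey H_G^*(-\smsh_B\tE G)$ directly.

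Finally, the second isomorphism in the statement is precisely the computation of $\Mackey H_G^*(B^G_+\smsh_B\tE G)$ already given in Proposition~\ref{prop:calcFixedPoints}, and combining it with the isomorphism just established yields the displayed formula. I expect the one genuinely delicate point to be the parametrized bookkeeping in either route: verifying that smashing a free cell with $\tE G$ over $B$ kills its contribution to the full $RO(\Pi_G B)$-graded theory (and not merely nonequivariantly, or in a single $RO(G)$ grading), or equivalently that coefficient systems concentrated at $G/G$ contribute only the fixed-point cohomology in the $RO(\Pi_G B)$-graded spectral sequence. This is routine given the framework of \cite{CW:ordinaryhomology}, but it warrants care.
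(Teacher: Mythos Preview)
Your proposal is correct and follows essentially the same route as the paper. The paper states the first isomorphism as an instance of the general fact that, for any ex-$G$-space $X$ over $B$, the inclusion $X^G\to X$ induces a weak equivalence $X^G\smsh_B\tE G\to X\smsh_B\tE G$; your cofibration argument (and its spectral-sequence variant) is exactly a proof of that fact in this case, and the paper likewise cites Proposition~\ref{prop:calcFixedPoints} for the second isomorphism.
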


\begin{proof}
For any ex-$G$-space $X$ over $B$, the inclusion $X^G\to X$ induces a weak equivalence
$X^G\smsh_B\tE G\to X\smsh_B\tE G$, from which the first isomorphism follows.
The second is Proposition~\ref{prop:calcFixedPoints}.
\end{proof}

In one sense, the calculation of the cohomology of $B_+\smsh_B EG_+$ is just as easy,
as we have the following result.

\begin{proposition}\label{prop:BEGplus}
Each inclusion $B_k\to B$ induces an isomorphism
\begin{align*}
 \Mackey H_G^*(B_+\smsh_B EG_+)
  &\iso \Mackey H_G^*((B_k)_+\smsh_B EG_+) \\
  &\iso \Mackey H_G^\bullet(EG_+)[\sigma_k, \zeta_k^\alpha \mid \alpha_k = 0].
\end{align*}
Consequently, the inclusion $B^G\to B$ induces a split short exact sequence
\[
 0
 \to \Mackey H_G^*(B_+\smsh_B EG_+)
 \xrightarrow{\eta} \Mackey H_G^*(B_+^G\smsh_B EG_+)
 \xrightarrow{\theta} \Mackey H_G^{*+1}(B/_B B^G\smsh_B EG_+)
 \to 0.
\]
The last group is
\[
 \Mackey H_G^{*+1}(B/_B B^G\smsh_B EG_+)
  \iso \Dirsum_{k=1}^{p-1} \Mackey H_G^\bullet(EG_+)[\sigma_k,\zeta_k^\alpha \mid \alpha_k = 0].
\]
\end{proposition}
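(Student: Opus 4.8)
The plan is to deduce the first isomorphism from the fact that $-\smsh_B EG_+$ only sees underlying nonequivariant homotopy types, and then feed that into the long exact sequence obtained by smashing the fiberwise cofibration $(B^G)_+ \to B_+ \to B/_B B^G$ with $EG_+$.

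First I would check that each inclusion $B_k \hookrightarrow B$ is a nonequivariant homotopy equivalence: nonequivariantly it is the map $\CP(\C_k^\infty) \to \CP\bigl(\Dirsum_{j=0}^{p-1}\C_j^\infty\bigr)$ induced by the inclusion of one summand, and since both spaces are copies of $K(\Z,2)$ and the map carries a projective line onto a projective line, it is an isomorphism on $\pi_2$ and hence an equivalence. Because $EG$ is a free, nonequivariantly contractible $G$-space, the functor $X \mapsto X\smsh_B EG_+$ carries any map of ex-$G$-spaces over $B$ whose underlying nonequivariant map is an equivalence to a $G$-equivalence --- the $EG$-analogue of the fact used in the proof of Proposition~\ref{prop:BEGtilde}. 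Applying this to $(B_k)_+ \to B_+$ and combining with Proposition~\ref{prop:calcFixedPoints} yields the two isomorphisms of the first assertion. In particular, under the splitting $\Mackey H_G^*(B^G_+\smsh_B EG_+) \iso \Dirsum_{k=0}^{p-1}\Mackey H_G^*((B_k)_+\smsh_B EG_+)$ of Proposition~\ref{prop:calcFixedPoints}, the map $\eta$ has as its $k$th component the restriction $\eta_k$ along $B_k\hookrightarrow B$, and each $\eta_k$ is an isomorphism.

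Next I would smash the fiberwise cofibration $(B^G)_+ \to B_+ \to B/_B B^G$ (an ex-cofibration over $B$ since $B^G\subset B$ is a $G$-CW pair) with $EG_+$ and apply $\Mackey H_G^*$, obtaining the long exact sequence of $RO(\Pi_G B)$-graded $\Mackey H_G^\bullet(S^0)$-modules
\[
 \cdots \to \Mackey H_G^*\bigl(B/_B B^G\smsh_B EG_+\bigr) \to \Mackey H_G^*(B_+\smsh_B EG_+) \xrightarrow{\eta} \Mackey H_G^*(B^G_+\smsh_B EG_+) \xrightarrow{\theta} \Mackey H_G^{*+1}\bigl(B/_B B^G\smsh_B EG_+\bigr) \to \cdots.
\]
Since every component $\eta_k$ of $\eta$ is an isomorphism, $\eta$ is a split monomorphism, a retraction being $\eta_0^{-1}$ followed by the projection to the $k=0$ summand. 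Hence the maps into $\Mackey H_G^*(B_+\smsh_B EG_+)$ vanish, the long exact sequence breaks into the asserted short exact sequences, and these are split by the same retraction.

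Finally, the short exact sequence identifies $\Mackey H_G^{*+1}(B/_B B^G\smsh_B EG_+)$ with $\operatorname{coker}\eta$. Writing $M_k = \Mackey H_G^*((B_k)_+\smsh_B EG_+)$, the surjection $\Dirsum_{k=0}^{p-1}M_k \to \Dirsum_{k=1}^{p-1}M_k$ sending $(m_k)_k$ to $(m_k - \eta_k\eta_0^{-1}m_0)_{k\geq 1}$ has kernel exactly the image of $\eta$, so $\operatorname{coker}\eta \iso \Dirsum_{k=1}^{p-1}M_k$; inserting the description of each $M_k$ from Proposition~\ref{prop:calcFixedPoints} gives the stated formula. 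The one step that is not formal homological algebra is the first --- that $-\smsh_B EG_+$ turns the nonequivariant equivalence $B_k\to B$ into a $G$-equivalence of ex-spaces over $B$ --- and that, together with the bookkeeping above, is the whole content; the argument runs parallel to, though is slightly more elaborate than, the proof of Proposition~\ref{prop:BEGtilde}.
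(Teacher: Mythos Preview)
Your proof is correct and follows essentially the same approach as the paper: both argue that $B_k\to B$ is a nonequivariant equivalence, hence becomes a $G$-equivalence after crossing with $EG$, and then split $\eta$ via projection onto the $k=0$ summand to break the long exact sequence into short exact ones. Your write-up is slightly more detailed in justifying the cokernel identification, but the argument is the same.
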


\begin{proof}
For each $k$, the inclusion $B_k\to B$ is a nonequivariant equivalence, hence
$B_k\times EG \to B\times EG$ is an equivariant equivalence.
The explicit calculation of $\Mackey H_G^*((B_k)_+\smsh_B EG_+)$
is part of Proposition~\ref{prop:calcFixedPoints}.

We can take as a splitting of $\eta$ the map
\begin{multline*}
 \eta'\colon \Mackey H_G^*(B_+^G\smsh_B EG_+) \iso \Dirsum_{k=0}^{p-1} \Mackey H_G^*((B_k)_+\smsh_B EG_+) \\
  \to \Mackey H_G^*((B_0)_+\smsh_B EG_+) \iso \Mackey H_G^*(B_+\smsh_B EG_+)
\end{multline*}
given by projection to the 0th summand.
We then have
$\Mackey H_G^{*+1}(B/_B B^G\smsh_B EG_+) \iso \ker\eta'$,
giving the proposition.
\end{proof}

We need more information about the image of $\Mackey H_G^*(B_+\smsh_B EG_+)$
in $\Mackey H_G^*(B^G_+\smsh_B EG_+)$.
Recall that we write $c = e(\omega_G) \in \Mackey H_G^{\omega_G^*}(B_+)$.

\begin{definition}
Let $c\in \Mackey H_G^{\omega_G^*}(B_+\smsh EG_+)$ denote the image of
$c\in \Mackey H_G^{\omega_G^*}(B_+)$ under the projection $B\times EG\to B$.
Let
\[
 \xi_{i,j} \in \Mackey H_G^{\Omega_{i,j}}(B_+\smsh EG_+),
   \quad 1\leq i\leq p-1, \quad 1\leq j\leq p/2
\]
be the element that restricts
to $\zeta_{i,j,0}\in \Mackey H_G^*((B_0)_+\smsh EG_+)$.
For $p/2 < j \leq p-1$, we shall write $\xi_{i,j} = \xi_{i,p-j}$.
\end{definition}

\begin{corollary}\label{cor:BtimesEG}
\[
 \Mackey H_G^*(B_+\smsh EG_+)
 \iso \Mackey H_G^\bullet(EG_+)[c, \xi_{i,j}, \xi_{i,j}^{-1} \mid 1\leq i\leq p-1,\ 1\leq j\leq p/2].
\]
\end{corollary}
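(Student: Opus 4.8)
The plan is to pull everything back to the fixed-point component $B_0$, where this cohomology has already been computed, and to read off the asserted polynomial generators from what is known there. First, by Proposition~\ref{prop:BEGplus} the restriction map along the inclusion $B_0\hookrightarrow B$ is an isomorphism
\[
 \Mackey H_G^*(B_+\smsh EG_+) \xrightarrow{\iso} \Mackey H_G^*((B_0)_+\smsh_B EG_+)
  \iso \Mackey H_G^\bullet(EG_+)[\sigma_0,\zeta_{i,j,0},\zeta_{i,j,0}^{-1}],
\]
the second isomorphism being the $k=0$ case of Proposition~\ref{prop:calcFixedPoints}. By the very definition of $\xi_{i,j}$, this isomorphism carries $\xi_{i,j}$ to $\zeta_{i,j,0}$; in particular each $\xi_{i,j}$ is a unit, and the subalgebra generated over $\Mackey H_G^\bullet(EG_+)$ by the $\xi_{i,j}^{\pm 1}$ maps isomorphically onto the coefficient ring $\Mackey H_G^\bullet(EG_+)[\zeta_{i,j,0},\zeta_{i,j,0}^{-1}]$.

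It then remains only to identify the image of $c$ and to see that it supplies the one missing polynomial generator, namely $\sigma_0$. For this I would invoke Proposition~\ref{prop:calcEulerClass}, which gives, after pulling back to $EG_+$,
\[
 c|B_0 = (e_0+\xi_0\sigma_0)\,\zeta_0^{\omega_G^*-\MM_0} = \sigma_0\,\zeta_0^{\omega_G^*-\MM_0},
\]
since $e_0=0$ and $\xi_0$ restricts to $1$ in $\Mackey H_G^\bullet(EG_+)$. Now $\omega_G^*-\MM_0=\sum_{i\neq 0}\Omega_{i,i}$ has vanishing $0$th component, so $\zeta_0^{\omega_G^*-\MM_0}$ is a product of the invertible classes $\zeta_{i,i,0}$, hence a unit of the coefficient ring; multiplying by its inverse exhibits $\sigma_0$ as a unit multiple of $c|B_0$. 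Therefore $c|B_0$ is itself a polynomial generator over $\Mackey H_G^\bullet(EG_+)[\zeta_{i,j,0}^{\pm1}]$, so $\Mackey H_G^*((B_0)_+\smsh EG_+)\iso\Mackey H_G^\bullet(EG_+)[c|B_0,\zeta_{i,j,0},\zeta_{i,j,0}^{-1}]$; transporting this back along the isomorphism above gives the statement, the algebraic independence of $c$ coming for free from the unit relation between $c|B_0$ and $\sigma_0$.

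There is no genuine obstacle here: the corollary is bookkeeping on top of Propositions~\ref{prop:BEGplus} and~\ref{prop:calcEulerClass}. The only point needing a line of care is the claim that $\xi_0$ restricts to $1$: one notes that $\xi_0\in\Mackey H_G^0(S^0)(G/G)=A(G)$ satisfies $\rho(\xi_0)=\iota_0=1$, so $\xi_0$ has augmentation $1$, and that the map $A(G)\to\Mackey H_G^0(EG_+)(G/G)=\Z$ induced by $EG_+\to S^0$ is the augmentation, whence $\xi_0\mapsto 1$. (Even without this, $\xi_0$ remains a unit in $\Mackey H_G^\bullet(EG_+)$, so the argument goes through with $\xi_0\zeta_0^{\omega_G^*-\MM_0}$ as the unit relating $c|B_0$ and $\sigma_0$.)
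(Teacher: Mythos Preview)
Your argument is correct and is essentially the paper's own proof: both reduce to $(B_0)_+\smsh_B EG_+$ via Proposition~\ref{prop:BEGplus}, then use Proposition~\ref{prop:calcEulerClass} to see that $c|B_0$ equals $\sigma_0$ times the unit $\zeta_0^{\omega_G^*-2}=\prod_{i\neq 0}\zeta_{i,i,0}$, so that $c$ may replace $\sigma_0$ as a polynomial generator. Your parenthetical hedge about $\xi_0$ is well placed, since $\xi_0$ is not formally defined in the paper; the cleanest reading is simply that for $k=0$ the formula in Proposition~\ref{prop:calcEulerClass} degenerates to $c|B_0=\sigma_0\,\zeta_0^{\omega_G^*-2}$.
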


\begin{proof}
This is clear from the identification of $\Mackey H_G^*(B_+\smsh_B EG_+)$
with $\Mackey H_G^*((B_0)_+\smsh_B EG_+)$ given by the
preceding proposition, except possibly for the use of $c$ as one
of the generators. This is justified by Proposition~\ref{prop:calcEulerClass},
which implies that
$c\cdot\prod_{i\neq 0}\xi_{i,i}^{-1}$ maps to $\sigma_0 \in \Mackey H_G^*((B_0)_+\smsh_B EG_+)$.
\end{proof}

We introduce notation analogous to the notation $\zeta_k^\alpha$.
Recall that $\xi_j$ is invertible in $\Mackey H_G^\bullet(EG_+)$.

\begin{definition}
In $\Mackey H_G^*(B_+\smsh_B EG_+)$, for each $1\leq j \leq p/2$ let
\[
 \xi_{0,j} = \xi_j\cdot\prod_{i=1}^{p-1} \xi_{i,j}^{-1},
\]
so $\xi_{0,j}$ is a unit with $|\xi_{0,j}| = \Omega_{0,j}$, and
\[
 \prod_{i=0}^{p-1} \xi_{i,j} = \xi_j.
\]
For $\alpha\in I^\ev(\Pi_G B)$ (recall Definition~\ref{def:roPi0G}), so
$\alpha = \sum_{i,j}n_{i,j}\Omega_{i,j}$, let
\[
 \bar\xi^\alpha = \prod_{i,j} \xi_{i,j}^{n_{i,j}}.
\]
\end{definition}

With this notation we can write the result of the corollary above as
\[
 \Mackey H_G^*(B_+\smsh EG_+)
 \iso \Mackey H_G^\bullet(EG_+)[c, \bar\xi^\alpha \mid \alpha\in I^\ev(\Pi_G B)],
\]
where we implicitly understand the relations $\bar\xi^\alpha\bar\xi^\beta = \bar\xi^{\alpha+\beta}$
and $\bar\xi^{\alpha} = \xi^\alpha$ if $\alpha\in RO(G)$.

Write elements of $\Mackey H_G^*(B^G_+\smsh_B EG_+)$ as $p$-tuples
$(x_k)_{0\leq k\leq p-1}$ of elements $x_k\in \Mackey H_G^*((B_k)_+\smsh_B EG_+)$.

\begin{proposition}
Under the inclusion 
$\eta\colon \Mackey H_G^*(B_+\smsh_B EG_+) \includesin \Mackey H_G^*(B^G_+\smsh_B EG_+)$
we have
\begin{align*}
  \eta(c) &= 
      ((e_k + \xi_k\sigma_k)\zeta_k^{\omega_G^*-\MM_k})_k \\
  \eta(\bar\xi^\alpha) &= (\xi^{\alpha_k}\zeta_k^{\alpha-\alpha_k})_k 
  		\qquad\text{for } \alpha\in I^\ev(\Pi_G B).
\end{align*}
\end{proposition}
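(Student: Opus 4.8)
The plan is to use that, via the identifications in Propositions~\ref{prop:calcFixedPoints} and~\ref{prop:BEGplus}, $\eta$ is the ring homomorphism whose $k$th component $\eta(x)_k$ is the image of $x$ under restriction along $B_k\hookrightarrow B$ (smashed with $EG_+$), i.e. the map $\Mackey H_G^*(B_+\smsh_B EG_+)\to\Mackey H_G^*((B_k)_+\smsh_B EG_+)$. Since $\eta$ is multiplicative and $\bar\xi^\alpha\bar\xi^\beta=\bar\xi^{\alpha+\beta}$, it is enough to evaluate $\eta$ on $c$ and on the generators $\xi_{i,j}$, $0\le i\le p-1$, $1\le j\le p/2$.

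The formula for $\eta(c)$ is immediate: $c\in\Mackey H_G^{\omega_G^*}(B_+\smsh EG_+)$ is the pullback of $e(\omega_G)\in\Mackey H_G^{\omega_G^*}(B_+)$ along $B\times EG\to B$, so $\eta(c)_k$ is the pullback of $e(\omega_G)|B_k$ along $B_k\times EG\to B_k$; by Proposition~\ref{prop:calcEulerClass} and naturality of cup products this is $(e_k+\xi_k\sigma_k)\zeta_k^{\omega_G^*-\MM_k}$, now read inside $\Mackey H_G^*((B_k)_+\smsh_B EG_+)$.

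For the $\xi_{i,j}$ the key points are: first, each $\xi_{i,j}$ is a \emph{homogeneous unit} of $\Mackey H_G^*(B_+\smsh_B EG_+)$ --- it is a unit since its restriction to $B_0$ is the unit $\zeta_{i,j,0}$ (for $i\neq0$), or, for $i=0$, since $\xi_{0,j}=\xi_j\prod_{i=1}^{p-1}\xi_{i,j}^{-1}$ is a ratio of units; consequently $\xi_{i,j}|B_k$ is a homogeneous unit of grading $\Omega_{i,j}$ in $\Mackey H_G^*((B_k)_+\smsh_B EG_+)\iso\Mackey H_G^\bullet(EG_+)[\sigma_k,\zeta_{i',j',k}^{\pm1}]$, and a short bookkeeping argument with the basis $\{1,\Omega_{i',j'}\}$ of $RO(\Pi_G B)$ shows that the only such units are $\pm\zeta_{i,j,k}$ when $k\neq i$ and $\pm\,\xi_j\prod_{i'\neq i}\zeta_{i',j,i}^{-1}$ when $k=i$ (homogeneity excludes the non-monomial $p$-torsion perturbations that $\Mackey H_G^\bullet(EG_+)$ carries). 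To pin down the sign, restrict further to $G/e$: in grading $\Omega_{i,j}$ the group $\Mackey H_G^*((B_k)_+\smsh_B EG_+)(G/G)$ is $\Z$ with $\rho$ injective, and $(B_k)_+\smsh_B EG_+\simeq\CP^\infty$ nonequivariantly with $\Omega_{i,j}$ of underlying dimension $0$ and $B_k\hookrightarrow B$ a nonequivariant equivalence, so $(\xi_{i,j}|B_k)|_{G/e}=\xi_{i,j}|_{G/e}\in\tilde H^0(\CP^\infty)=\Z$. Finally $\xi_{i,j}|_{G/e}=1$: for $i\neq0$ it equals $\rho(\zeta_{i,j,0})=\rho(1)=1$, and for $i=0$ it equals $\rho(\xi_j)\prod_{i'\geq1}\rho(\zeta_{i',j,0})^{-1}=\rho(\xi_j)=1$, using $\rho(\xi_j)=\iota_j=1$ under the standard nonequivariant identifications (Theorem~\ref{thm:oddCohomPoint}; Theorem~\ref{thm:EvenEG} when $p=2$). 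Hence every sign is $+1$, giving $\eta(\xi_{i,j})_k=\zeta_{i,j,k}$ for $k\neq i$ and $\eta(\xi_{i,j})_i=\xi_j\prod_{i'\neq i}\zeta_{i',j,i}^{-1}$; multiplying out $\bar\xi^\alpha=\prod_{i,j}\xi_{i,j}^{n_{i,j}}$ and comparing with $\xi^{\alpha_k}\zeta_k^{\alpha-\alpha_k}=\prod_j\xi_j^{n_{k,j}}\prod_{i\neq k,j}\zeta_{i,j,k}^{n_{i,j}-n_{k,j}}$ yields the claim.

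The main obstacle is entirely the sign/convention bookkeeping: one must be careful that the trivializations defining the $\zeta_{i,j,k}$ are consistent enough that the homogeneous units of each fixed-point summand are determined up to a sign detectable at $G/e$, and then verify that $\xi_{i,j}$ itself restricts to $1$ there. Everything else is formal naturality plus Proposition~\ref{prop:calcEulerClass}.
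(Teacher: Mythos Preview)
Your proof is correct and follows essentially the same approach as the paper: both compute $\eta(c)$ directly from Proposition~\ref{prop:calcEulerClass}, and both compute $\eta(\xi_{i,j})_k$ by recognizing it as a homogeneous unit in the rank-one group $\Mackey H_G^{\Omega_{i,j}}((B_k)_+\smsh_B EG_+)\iso\Mackey R\Z$ and then pinning down the sign by restricting to level $G/e$. The only difference is organizational: the paper treats the cases $k=0$ (by definition), $i\neq 0,k$ (``from the definitions''), $i=k$ (unit plus $G/e$), and $i=0$ (algebraically from $\xi_{0,j}=\xi_j\prod_{i'\neq 0}\xi_{i',j}^{-1}$) separately, whereas you run the unit-plus-$G/e$ argument uniformly for all $i$ and $k$.
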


\begin{proof}
The calculation of $\eta(c)$ was done in Proposition~\ref{prop:calcEulerClass}.
We have that 
\[
 \bar\xi^\alpha|B_0 = \xi^{\alpha_0}\zeta_0^{\alpha-\alpha_0},
\]
by definition, so assume that $k\neq 0$.
The element $\xi_{i,j}|B_k$ is a unit in grading $\Omega_{i,j}$.
For $i\neq 0$ and $i\neq k$, it is easy to see from the definitions that
we must have
\[
 \xi_{i,j}|B_k = \zeta_{i,j,k} = \zeta_k^{\Omega_{i,j}} \qquad k \neq 0,\ i\neq 0, k.
\]
Consider $\xi_{k,j}|B_k$: This is a unit in
$\Mackey H_G^{\Omega_{k,j}}((B_k)_+\smsh_B EG_+) \iso \Mackey R\Z$
and, from the definitions, restricts to the identity at level $G/e$.
This characterizes the element 
$\xi_j\prod_{i\neq k}\zeta_{i,j,k}^{-1} = \xi_j\zeta_k^{\Omega_{k,j}-(\MM_j-2)}$,
so we must have
\[
 \xi_{k,j}|B_k =  \xi_j\zeta_k^{\Omega_{k,j}-(\MM_j-2)} \qquad k\neq 0.
\]
Finally, restricting the equation $\xi_{0,j} = \xi_j\prod_{i\neq 0}\zeta_{i,j}^{-1}$
to $B_k$ gives
\begin{align*}
 \xi_{0,j}|B_k 
  &= \xi_j\Big(\prod_{i\neq 0, k}\zeta_{i,j,k}^{-1}\Big) \Big(\xi_j\zeta_k^{\Omega_{k,j}-(\MM_j-2)}\Big)^{-1} \\
  &= \xi_j\zeta_k^{\Omega_{0,j}+\Omega_{k,j} - (\MM_j-2)}\xi_j^{-1}\zeta_k^{\MM_j-2-\Omega_{k,j}} \\
  &= \zeta_k^{\Omega_{0,j}}.
\end{align*}
The proposition now follows.
\end{proof}

The following result shows how the $\chi^i c$ are related when considered
as elements of $\Mackey H_G^*(B_+\smsh_B EG_+)$.

\begin{proposition}\label{prop:chicEG}
In $\Mackey H_G^*(B_+\smsh_B EG_+)$ we have
\[
 \chi^i c 
   = c\cdot \bar\xi^{\chi^i\omega_G^* - \omega_G^*} - ie_1 \bar\xi^{\chi^i\omega_G^*-\MM_1}.
\]
\end{proposition}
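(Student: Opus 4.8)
The plan is to exploit the injectivity of the restriction map $\eta\colon \Mackey H_G^*(B_+\smsh_B EG_+)\to \Mackey H_G^*(B^G_+\smsh_B EG_+)$, which is part of the split short exact sequence of Proposition~\ref{prop:BEGplus}. Since $\eta(x) = (x|B_k)_k$, it suffices to verify the asserted identity after restricting both sides to each $B_k$. For the left-hand side, the corollary to Proposition~\ref{prop:calcEulerClass} gives
\[
 (\chi^i c)|B_k = (e_{k-i} + \xi_{k-i}\sigma_k)\,\zeta_k^{\chi^i\omega_G^*-\MM_{k-i}},
\]
and for the right-hand side the proposition immediately preceding Proposition~\ref{prop:chicEG} gives $\eta(c) = ((e_k+\xi_k\sigma_k)\zeta_k^{\omega_G^*-\MM_k})_k$ together with $\eta(\bar\xi^\alpha) = (\xi^{\alpha_k}\zeta_k^{\alpha-\alpha_k})_k$. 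Thus the problem reduces to a calculation inside each ring $\Mackey H_G^*((B_k)_+\smsh_B EG_+)\iso \Mackey H_G^\bullet(EG_+)[\sigma_k,\zeta_k^\alpha\mid\alpha_k=0]$.

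The one substantive input is the identity $e_j\,\xi_1 = j\,e_1\,\xi_j$ in $\Mackey H_G^\bullet(EG_+)$, valid for every integer $j$ with subscripts read modulo $p$ and $\xi_j = \xi_{p-j}$ (and $e_0 = 0$, $\xi_0 = 1$). For odd $p$ one applies $\phi$ to the identity $e_j = \pm\mu^{\MM_j-\MM_1,a}e_1$ of Proposition~\ref{prop:EulerClasses} and uses the formula $\phi(\mu^{\alpha,a}e_1^m) = a\xi^\alpha e_1^m$ of Proposition~\ref{prop:OddMaps}; since $\xi^{\MM_j-\MM_1} = \xi_j\xi_1^{-1}$ and $pe_1 = 0$ in $\Mackey H_G^\bullet(EG_+)$ by Theorem~\ref{thm:OddEG}, this yields $e_j = j\,\xi_j\xi_1^{-1}e_1$, with $\xi_1$ invertible by Theorem~\ref{thm:OddEG} (see Lemma~\ref{lem:EGInvertibles}). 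For $p=2$ the identity follows from $2e_1 = 0$, which holds because $\Mackey H_G^{n\LL}(EG_+)\iso\conc{\Z/2}$ for $n\geq 1$ (Theorem~\ref{thm:EvenEG}).

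With this in hand the rest is bookkeeping. Restricting the right-hand side of the proposition to $B_k$, the $\zeta_k$-exponents of both terms combine to the single exponent $\chi^i\omega_G^*-\MM_{k-i}$: for the first term one gets $(\omega_G^*-\MM_k) + \bigl((\chi^i\omega_G^*-\omega_G^*)-(\MM_{k-i}-\MM_k)\bigr) = \chi^i\omega_G^*-\MM_{k-i}$, and the second term is handled identically. Using $\xi^{\MM_{k-i}-\MM_k} = \xi_{k-i}\xi_k^{-1}$ and $\xi^{\MM_{k-i}-\MM_1} = \xi_{k-i}\xi_1^{-1}$, the remaining coefficient of $\zeta_k^{\chi^i\omega_G^*-\MM_{k-i}}$ is
\[
 (e_k+\xi_k\sigma_k)\xi_{k-i}\xi_k^{-1} - i\,e_1\xi_{k-i}\xi_1^{-1}
  = e_k\xi_k^{-1}\xi_{k-i} + \xi_{k-i}\sigma_k - i\,e_1\xi_1^{-1}\xi_{k-i};
\]
substituting $e_k\xi_k^{-1} = k\,e_1\xi_1^{-1}$ (from the displayed identity) collapses the $e$-terms to $(k-i)e_1\xi_1^{-1}\xi_{k-i} = e_{k-i}$, so the coefficient equals $e_{k-i}+\xi_{k-i}\sigma_k$ and the right-hand side restricts on $B_k$ to $(\chi^i c)|B_k$, as required.

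The main obstacle is purely notational rather than conceptual: one must keep the index conventions consistent (subscripts of $e$ and $\xi$ reduced modulo $p$, with $\xi_j = \xi_{p-j}$ and $e_j = -e_{p-j}$ for odd $p$), check that every grading shift $\zeta_k^{(\cdot)}$ that appears is legal (its $k$th component must vanish), verify that the relevant classes $\chi^i\omega_G^*-\omega_G^*$ and $\chi^i\omega_G^*-\MM_1$ lie in $I^\ev(\Pi_G B)$ so the formula for $\eta(\bar\xi^\alpha)$ applies, and carry the $p=2$ case---where $RO_0(G) = 0$ and the $\mu$-classes do not exist---along in parallel.
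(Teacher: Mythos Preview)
Your proof is correct and follows essentially the same route as the paper: reduce to a componentwise check via the injection $\eta$, use the identity $e_j = j\,e_1\,\xi_j\xi_1^{-1}$ in $\Mackey H_G^\bullet(EG_+)$ (the paper writes this as $e_j = j e_1 \xi^{\MM_j-\MM_1}$ and $\xi_j = \xi_1\xi^{\MM_j-\MM_1}$, derived in the same way), and then match the $\zeta_k$-exponents and coefficients. The only cosmetic difference is that the paper starts from $\eta(\chi^i c)$ and rewrites it into the form $\eta(c\cdot\bar\xi^{\chi^i\omega_G^*-\omega_G^*}) - \eta(ie_1\bar\xi^{\chi^i\omega_G^*-\MM_1})$, whereas you compute $\eta$ of the right-hand side and show it equals $\eta(\chi^i c)$; these are the same verification run in opposite directions.
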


\begin{proof}
Because $\eta$ is injective, it suffices to show this equality
in $\Mackey H_G^*(B^G_+\smsh_B EG_+)$.
Recall that
\[
 \eta(\chi^i c) = ((e_{k-i}+\xi_{k-i}\sigma_k)\zeta_k^{\chi^i\omega_G^*-\MM_{k-i}})_k.
\]
In $\Mackey H_G^\bullet(EG_+)$, where $pe_1 = 0$, we have
\[
 e_j = \mu^{\MM_j-\MM_1,j}e_1 = j\lambda^{\MM_j-\MM_1,j^{-1}}e_1 = je_1\xi^{\MM_j-\MM_1}
\]
when $0\leq j \leq p/2$. When $p$ is odd and $(p+1)/2 \leq j \leq p-1$, we have
\[
 e_j = -\mu^{\MM_{p-j}-\MM_1,-j}e_1 = -(-j)\lambda^{\MM_{p-j}-\MM_1,j^{-1}}e_1 = je_1\xi^{\MM_j-\MM_1},
\]
using our usual convention that $\MM_j = \MM_{p-j}$. On the other hand, we have
\[
 \xi_j = \xi_1\xi^{\MM_j-\MM_1}
\]
for any $j$. So we can write
\begin{align*}
 \eta(\chi^i c) &= ((e_{k-i}+\xi_{k-i}\sigma_k)\zeta_k^{\chi^i\omega_G^*-\MM_{k-i}})_k \\
  &= (((k-i)e_1 + \xi_1\sigma_k)\xi^{\MM_{k-i}-\MM_1}\zeta_k^{\chi^i\omega_G^*-\MM_{k-i}})_k \\
  &= ((ke_1 + \xi_1\sigma_k)\xi^{\MM_{k-i}-\MM_1}\zeta_k^{\chi^i\omega_G^*-\MM_{k-i}})_k \\
  &\qquad - (ie_1 \xi^{\MM_{k-i}-\MM_1}\zeta_k^{\chi^i\omega_G^*-\MM_{k-i}})_k \\
  &= ((ke_1+\xi_1\sigma_k)\xi^{\MM_k-\MM_1}\zeta_k^{\omega_G^*-\MM_k}
        \cdot\xi^{\MM_{k-i}-\MM_1}\zeta_k^{\chi^i\omega_G^* - \omega_G^* - \MM_{k-i}+\MM_1})_k \\
  &\qquad  - ie_1(\xi^{\MM_{k-i}-\MM_1}\zeta_k^{\chi^i\omega_G^*-\MM_{k-i}})_k \\
  &= \eta(c\cdot \bar\xi^{\chi^i\omega_G^* - \omega_G^*}) - \eta(ie_1 \bar\xi^{\chi^i\omega_G^*-\MM_1}),
\end{align*}
which proves the claim.
\end{proof}

%-------------------------------------------------------

\section{Preliminary results on the cohomology of $B_GU(1)$}

Many of our arguments in the remainder of our calculation will be based on the following diagram, in which
all of the rows and columns are parts of long exact sequences:
\[
 \xymatrix{
   \Mackey H_G^*(B_+\smsh_B\tE_G) \ar[r]^\eta_\iso \ar[d]_{\psi}
     & \Mackey H_G^*(B_+^G\smsh_B\tE_G) \ar[d]^\psi \\
   \Mackey H_G^*(B_+) \ar[r]^\eta \ar[d]_\phi
     & \Mackey H_G^*(B_+^G) \ar[r]^-\theta \ar[d]^\phi
     & \Mackey H_G^{*+1}(B/_B B^G) \ar[d]_\iso^\phi \\
   \Mackey H_G^*(B_+\smsh_B EG_+) \ar@{>->}[r]^\eta \ar[d]_\delta
     & \Mackey H_G^*(B_+^G\smsh_B EG_+) \ar@{->>}[r]^-\theta \ar[d]^\delta
     & \Mackey H_G^{*+1}(B/_B B^G\smsh_B EG_+) \\
   \Mackey H_G^{*+1}(B_+\smsh_B\tE G) \ar[r]^\eta_\iso
     & \Mackey H_G^{*+1}(B_+^G\smsh_B\tE G)
  }
\]
That the map $\eta$ in the top row of the diagram is an isomorphism
is part of Proposition~\ref{prop:BEGtilde}.
That implies that $\Mackey H_G^*(B/_B B^G\smsh_B\tE G) = 0$;
alternatively, this vanishing follows from the fact that
$(B/_B B^G)^G$ is the trivial ex-space over $B^G$.
In turn, this vanishing implies that
$\Mackey H_G^*(B/_B B^G)\iso \Mackey H_G^*(B/_B B^G\smsh_B EG_+)$.
Finally, that $\eta$ is a monomorphism and $\theta$ is an epimorphism in the third row
is part of Proposition~\ref{prop:BEGplus}.

\begin{proposition}\label{prop:etamono}
$\eta\colon \Mackey H_G^\alpha(B_+)\to \Mackey H_G^\alpha(B_+^G)$ is a monomorphism
for $|\alpha| \leq 0$.
If $p=2$, then $\eta$ is also a monomorphism if $|\alpha| > 0$ and $\alpha^G_k$ is even for all $k$.
If $p$ is odd, then $\eta$ is also a monomorphism if $\alpha^G_k$ is even for all $k$.
\end{proposition}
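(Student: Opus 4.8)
The statement is about injectivity of the restriction map $\eta\colon \Mackey H_G^\alpha(B_+)\to \Mackey H_G^\alpha(B_+^G)$, and the natural tool is the long exact sequence of the pair $(B, B^G)$, i.e. the column
\[
 \cdots \to \Mackey H_G^{\alpha-1}(B_+^G)\xrightarrow{\theta} \Mackey H_G^{\alpha}(B/_B B^G)\to \Mackey H_G^\alpha(B_+)\xrightarrow{\eta} \Mackey H_G^\alpha(B_+^G)\to\cdots
\]
So $\eta$ is a monomorphism in degree $\alpha$ precisely when $\theta\colon \Mackey H_G^{\alpha-1}(B_+^G)\to \Mackey H_G^{\alpha}(B/_B B^G)$ is onto. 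The idea is to compare this with the bottom two rows of the displayed diagram, where $B/_B B^G$ has been smashed with $EG_+$: there we already know from Proposition~\ref{prop:BEGplus} that $\theta\colon \Mackey H_G^*(B_+^G\smsh_B EG_+)\to \Mackey H_G^{*+1}(B/_B B^G\smsh_B EG_+)$ is an epimorphism, and moreover $\phi\colon \Mackey H_G^{*+1}(B/_B B^G)\to\Mackey H_G^{*+1}(B/_B B^G\smsh_B EG_+)$ is an isomorphism (noted in the excerpt, since $(B/_B B^G)^G$ is trivial over $B^G$, making $\Mackey H_G^*(B/_B B^G\smsh_B\tE G)=0$). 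A diagram chase then reduces the surjectivity of $\theta$ on $B/_B B^G$ to surjectivity of $\phi\colon \Mackey H_G^{\alpha-1}(B_+^G)\to\Mackey H_G^{\alpha-1}(B_+^G\smsh_B EG_+)$ onto the image of $\eta'$, equivalently to surjectivity of $\theta\circ\phi = \phi\circ\theta$ at the relevant spot; the cleanest route is: given $y\in\Mackey H_G^\alpha(B/_B B^G)$, push it to $\phi(y)\in\Mackey H_G^\alpha(B/_B B^G\smsh_B EG_+)$, lift it along the (surjective) $\theta$ to some $x\in\Mackey H_G^{\alpha-1}(B_+^G\smsh_B EG_+)$, and then lift $x$ back along $\phi\colon \Mackey H_G^{\alpha-1}(B_+^G)\to\Mackey H_G^{\alpha-1}(B_+^G\smsh_B EG_+)$.

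The content of the hypotheses on $\alpha$ is exactly what makes that last lift possible. By Proposition~\ref{prop:calcFixedPoints}, $\Mackey H_G^*(B_+^G)$ is a sum over $k$ of polynomial rings on $\sigma_k$ and the invertible $\zeta_{i,j,k}$ over $\Mackey H_G^\bullet(S^0)$, and similarly $\Mackey H_G^*(B_+^G\smsh_B EG_+)$ replaces the point coefficients by $\Mackey H_G^\bullet(EG_+)$; the map $\phi$ between them is induced levelwise by $\phi\colon \Mackey H_G^\bullet(S^0)\to\Mackey H_G^\bullet(EG_+)$ tensored with the identity on the polynomial generators. So the question is in which $RO(\Pi_G B)$-gradings $\phi\colon \Mackey H_G^\beta(S^0)\to\Mackey H_G^\beta(EG_+)$ is surjective, as $\beta$ ranges over the gradings $\alpha-1$ translated by monomials in $\sigma_k,\zeta_{i,j,k}$ — but those monomials have gradings of the form (integer)$+\sum\Omega_{i,j}$, so they do not change $\alpha_k^G$ and only shift $|\alpha|$ up by even amounts. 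Consulting Theorems~\ref{thm:evenCohomPoint}, \ref{thm:EvenEG}, \ref{thm:oddCohomPoint}, \ref{thm:OddEG} and Propositions~\ref{prop:EvenMaps}, \ref{prop:OddMaps}: for $p$ odd, $\phi\colon\Mackey H_G^\beta(S^0)\to\Mackey H_G^\beta(EG_+)$ is surjective exactly when $\beta^G=\beta_k^G$ is even and $|\beta|\ge 0$, or $\beta$ lies in the range where the point cohomology already matches $EG_+$ — in particular it is surjective whenever every $\beta_k^G$ is even; for $p=2$ the parity condition plus $|\beta|\ge 0$ is what is needed, matching the two cases in the statement ($|\alpha|\le 0$ handled by exactness with nothing above, and $|\alpha|>0$ with all $\alpha_k^G$ even handled by this surjectivity). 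The case $|\alpha|\le 0$ can also be gotten for free: then the group $\Mackey H_G^{\alpha}(B/_B B^G)$ into which the cokernel of $\eta$ injects vanishes, because $B/_B B^G$ smashed over $B$ is built from positive-codimension strata and the point cohomology is zero in those gradings — but I'd prefer to route everything uniformly through the $EG_+$ comparison.

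The plan in order, then, is: (1) write down the long exact sequence of $(B,B^G)$ and reduce ``$\eta$ mono in degree $\alpha$'' to ``$\theta$ epi onto $\Mackey H_G^\alpha(B/_B B^G)$''; (2) use the vanishing $\Mackey H_G^*(B/_B B^G\smsh_B\tE G)=0$ to get $\phi\colon\Mackey H_G^\alpha(B/_B B^G)\xrightarrow{\iso}\Mackey H_G^\alpha(B/_B B^G\smsh_B EG_+)$; (3) invoke Proposition~\ref{prop:BEGplus} for surjectivity of $\theta$ on the $EG_+$-smashed row; (4) chase the commuting square $\phi\theta=\theta\phi$ to reduce to surjectivity of $\phi\colon\Mackey H_G^{\alpha-1}(B_+^G)\to\Mackey H_G^{\alpha-1}(B_+^G\smsh_B EG_+)$ onto the relevant elements; (5) use Proposition~\ref{prop:calcFixedPoints} to split this into a graded sum of copies of $\phi\colon\Mackey H_G^\beta(S^0)\to\Mackey H_G^\beta(EG_+)$ in gradings $\beta$ with $\beta_k^G = (\alpha-1)_k^G$ and $|\beta|\equiv|\alpha|-1\pmod 2$, $|\beta|\ge|\alpha|-1$; (6) read off from the point/$EG_+$ tables (Theorems~\ref{thm:evenCohomPoint}, \ref{thm:EvenEG}, \ref{thm:oddCohomPoint}, \ref{thm:OddEG}) that $\phi$ is surjective in exactly those gradings under the stated hypotheses on $\alpha$, distinguishing $p=2$ (need parity and $|\alpha|>0$, or $|\alpha|\le 0$) from $p$ odd (parity alone suffices). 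The main obstacle is step (6): one has to be careful about which specific gradings arise after translating $\alpha-1$ by the monomial gradings $\ell + \sum n_{i,j}\Omega_{i,j}$ (with $\ell\ge 0$ from powers of $\sigma_k$), and check that in every such grading $\phi$ on the cohomology of a point is onto — the subtlety being the $|\alpha|\le 0$ versus $|\alpha|>0$ boundary for $p=2$, where for $|\alpha|>0$ one genuinely needs the even-$\alpha_k^G$ hypothesis to avoid the gradings where $\Mackey H_G^\bullet(S^0)$ has $\Mackey R\Z_-$ or $\conc{\Z/2}$ summands that do not surject onto the corresponding $EG_+$ groups. Once that bookkeeping is pinned down the result follows formally.
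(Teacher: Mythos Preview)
Your approach is correct, but takes a considerably longer route than the paper's. The paper observes, as you do, that $\phi\colon \Mackey H_G^{\alpha}(B/_B B^G)\to \Mackey H_G^{\alpha}(B/_B B^G\smsh_B EG_+)$ is an isomorphism, but then simply shows the \emph{target} vanishes in the stated gradings, which immediately gives $\ker\eta=0$. The explicit description in Proposition~\ref{prop:BEGplus} exhibits $\Mackey H_G^{*}(B/_B B^G\smsh_B EG_+)$ as a degree-$1$ shift of a free $\Mackey H_G^\bullet(EG_+)$-module on generators in gradings $2m+\sum n_{i,j}\Omega_{i,j}$; since both the generators and $\Mackey H_G^\bullet(EG_+)$ are concentrated in $|\cdot|\ge 0$, everything lies in $|\cdot|\ge 1$, and (for $p$ odd, or $p=2$ with $|\cdot|>0$) in gradings with odd fixed-set dimension. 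Your diagram chase through steps (3)--(5), reducing to surjectivity of $\phi\colon \Mackey H_G^{\alpha-1}(B_+^G)\to \Mackey H_G^{\alpha-1}(B_+^G\smsh_B EG_+)$, ends up analyzing the same groups $\Mackey H_G^{(\alpha-1)_k-2m}(EG_+)$, just embedded in a larger argument. One minor advantage of your route: for $p=2$ with $|\alpha|>0$ odd and $\alpha^G_k$ even, the group $\Mackey H_G^{\alpha}(B/_B B^G\smsh_B EG_+)$ can pick up a stray $\Mackey R\Z_-$ from the $|\beta|=0$ contribution, so the paper's vanishing claim is slightly imprecise there; your surjectivity argument handles this cleanly since $\phi\colon \Mackey H_G^\beta(S^0)\to \Mackey H_G^\beta(EG_+)$ is onto whenever $\beta^G$ is odd. (Incidentally, your description of ``which summands fail to surject'' is off: the obstructions are $\Mackey L\Z\to\Mackey R\Z$ and $0\to\conc{\Z/2}$, not $\Mackey R\Z_-$ or $\conc{\Z/2}$.)
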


\begin{proof}
From the long exact sequence, we see that $\eta$ is a monomorphism if the group
$\Mackey H_G^{\alpha}(B/_B B^G) = 0$.
In the diagram above we noted that
$\Mackey H_G^{*}(B/_B B^G) \iso \Mackey H_G^{*}(B/_B B^G\smsh_B EG_+)$
and we calculated the latter in
Proposition~\ref{prop:BEGplus}.
As a module over $\Mackey H_G^\bullet(EG_+)$, 
it is a shift up by one of an algebra with multiplicative generators
in gradings $2$ and $\Omega_{i,j}$, with $|2| > 0$ and $|\Omega_{i,j}| = 0\geq 0$.
Further, $\Mackey H_G^\bullet(EG_+)$ is concentrated in gradings $\alpha$
for which $|\alpha|\geq 0$. If $p$ is odd or if $p=2$ and $|\alpha|>0$, it is also concentrated in gradings
for which $\alpha^G_k$ is even for all $k$.
It follows that
\[
 \Mackey H_G^{\alpha}(B/_B B^G) \iso \Mackey H_G^{\alpha}(B/_B B^G\smsh_B EG_+) = 0
\]
for the $|\alpha|$ specified in the statement of the proposition.
\end{proof}

\begin{corollary}\label{cor:xibar}
For every $\alpha\in RO_+(\Pi_G B)$, there is a unique element
$\bar\xi^{\alpha}\in \Mackey H_G^\alpha(B_+)$ such that
\[
 \eta(\bar\xi^\alpha) = (\xi^{\alpha_k}\zeta_k^{\alpha-\alpha_k})_k,
\]
using the calculation of $\Mackey H_G^*(B_+^G)$ in
Proposition~\ref{prop:calcFixedPoints}.
Further, we have
\begin{align*}
 \bar\xi^\alpha\bar\xi^\beta &= \bar\xi^{\alpha+\beta} \\
 \bar\xi^{M_j-2} &= \xi^{M_j-2} = \xi_j \\
\intertext{and}
 \phi(\bar\xi^\alpha) &= \bar\xi^\alpha \in\Mackey H_G^\alpha(B_+\smsh_B EG_+).
\end{align*}
\end{corollary}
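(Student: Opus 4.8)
The plan is to produce $\bar\xi^\alpha$ on $B_+$ by lifting, along $\eta$, the evident element on $B_+^G$, using the large commutative diagram with exact rows and columns displayed at the start of this section. First I would dispatch uniqueness: every $\alpha\in RO_+(\Pi_G B)$ has $|\alpha|=0$, so Proposition~\ref{prop:etamono} says $\eta\colon\Mackey H_G^\alpha(B_+)\to\Mackey H_G^\alpha(B_+^G)$ is a monomorphism, whence a class with the prescribed restriction is unique if it exists. Also note the target makes sense: since $\alpha_k=\sum_j n_{k,j}(\MM_j-2)\in RO_+(G)$, the class $\xi^{\alpha_k}\in\Mackey H_G^{\alpha_k}(S^0)$ is defined by Definition~\ref{def:xipower}, and $\zeta_k^{\alpha-\alpha_k}$ is defined because $(\alpha-\alpha_k)_k=0$, so $w:=(\xi^{\alpha_k}\zeta_k^{\alpha-\alpha_k})_k\in\Mackey H_G^\alpha(B_+^G)$ is well defined.

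For existence I would argue that $w$ lifts along $\eta$. By exactness of the middle row of the diagram it suffices to show $\theta(w)=0$ in $\Mackey H_G^{\alpha+1}(B/_B B^G)$. As noted just after the diagram, $\Mackey H_G^*(B/_B B^G\smsh_B\tE G)=0$, so $\phi\colon\Mackey H_G^{\alpha+1}(B/_B B^G)\to\Mackey H_G^{\alpha+1}(B/_B B^G\smsh_B EG_+)$ is an isomorphism, and it is enough to check $\phi(\theta(w))=\theta(\phi(w))=0$. Here I would compute $\phi(w)$: the map $\phi$ is a ring map commuting with restriction to each $B_k$, it fixes the classes $\sigma_k$ and $\zeta_{i,j,k}$ (naturality of the isomorphisms defining them), and $\phi(\xi_j)=\xi_j$ in the cohomology of a point (Propositions~\ref{prop:EvenMaps} and~\ref{prop:OddMaps} together with the remarks following them); hence $\phi(w)=(\xi^{\alpha_k}\zeta_k^{\alpha-\alpha_k})_k$, now read in $\Mackey H_G^\alpha(B_+^G\smsh_B EG_+)$. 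Since $RO_+(\Pi_G B)\subset I^\ev(\Pi_G B)$, the (unlabelled) proposition of the previous section computing $\eta$ on $B_+\smsh_B EG_+$ identifies this with $\eta(\bar\xi^\alpha)$ for the class $\bar\xi^\alpha\in\Mackey H_G^\alpha(B_+\smsh_B EG_+)$ already constructed there, so $\theta(\phi(w))=\theta(\eta(\bar\xi^\alpha))=0$ by exactness of the bottom row. Therefore $w\in\operatorname{im}\eta$, and I would define $\bar\xi^\alpha\in\Mackey H_G^\alpha(B_+)$ to be its unique preimage.

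The three identities all follow by applying the monomorphism $\eta$. For $\bar\xi^\alpha\bar\xi^\beta=\bar\xi^{\alpha+\beta}$ one uses that $\eta$ is multiplicative, that $\xi^{\alpha_k}\xi^{\beta_k}=\xi^{\alpha_k+\beta_k}$ (all exponents are nonnegative), and that $\zeta_k^{\alpha-\alpha_k}\zeta_k^{\beta-\beta_k}=\zeta_k^{(\alpha+\beta)-(\alpha+\beta)_k}$. For $\bar\xi^{\MM_j-2}=\xi_j$ one notes $\MM_j-2=\sum_i\Omega_{i,j}\in RO_+(\Pi_G B)$ with $(\MM_j-2)_k=\MM_j-2$, so that $\eta(\xi_j)=(\xi_j)_k=(\xi^{(\MM_j-2)_k}\zeta_k^0)_k=\eta(\bar\xi^{\MM_j-2})$, using $\xi^{\MM_j-2}=\xi_j$ from Definition~\ref{def:xipower}. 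For $\phi(\bar\xi^\alpha)=\bar\xi^\alpha$ in $\Mackey H_G^\alpha(B_+\smsh_B EG_+)$, recall that $\eta$ is injective there by Proposition~\ref{prop:BEGplus}, and $\eta(\phi(\bar\xi^\alpha))=\phi(\eta(\bar\xi^\alpha))=\phi(w)=\eta(\bar\xi^\alpha)$ by the computation in the previous paragraph.

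The one step that needs genuine care is the identification $\phi(w)=(\xi^{\alpha_k}\zeta_k^{\alpha-\alpha_k})_k$ after passing to $EG_+$, i.e.\ the fact that $\phi$ carries the point-level generators $\xi_j$, and the fixed-point generators $\sigma_k$ and $\zeta_{i,j,k}$, to the namesake classes in $\Mackey H_G^*(B_+^G\smsh_B EG_+)$; each of these is either a naturality statement about the defining isomorphisms or was recorded in the explicit descriptions of $\phi$ in Propositions~\ref{prop:EvenMaps} and~\ref{prop:OddMaps}. Beyond that, the argument is a diagram chase through the displayed diagram combined with the injectivity statements of Propositions~\ref{prop:etamono} and~\ref{prop:BEGplus}.
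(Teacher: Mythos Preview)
Your proof is correct and follows essentially the same route as the paper's: both arguments define $w=(\xi^{\alpha_k}\zeta_k^{\alpha-\alpha_k})_k$, use the isomorphism $\phi$ on $\Mackey H_G^{*+1}(B/_B B^G)$ to reduce $\theta(w)=0$ to $\theta(\phi(w))=0$, identify $\phi(w)$ with $\eta(\bar\xi^\alpha)$ for the already-constructed $EG_+$-class, and then invoke injectivity of $\eta$ (from $|\alpha|=0$) for uniqueness and the three identities. Your version spells out a bit more explicitly why $\phi(w)$ has the claimed form, which is a reasonable addition.
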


\begin{proof}
Because $\alpha\in RO_+(\Pi_G B)$, the element
$z = (\xi^{\alpha_k}\zeta_k^{\alpha-\alpha_k})_k \in \Mackey H_G^*(B_+^G)$ is defined.
We have
\[
 \theta\phi(z) = \theta\eta(\bar\xi^\alpha) = 0,
\]
where the $\bar\xi^\alpha$ used here is the element of that name in $\Mackey H_G^*(B_+\smsh_B EG_+)$.
Because $\theta\phi(z) = \phi\theta(z)$ and the latter map $\phi$ is an isomorphism, it follows that
$\theta(z) = 0$. Because $|\alpha|=0$, $\eta$ maps $\Mackey H_G^\alpha(B_+)$
isomorphically onto $\ker\theta$, hence we have a unique $\bar\xi^\alpha\in\Mackey H_G^\alpha(B_+)$
such that $\eta(\bar\xi^\alpha) = z$, as claimed.
That $\bar\xi^\alpha\bar\xi^\beta = \bar\xi^{\alpha+\beta}$ and that
$\bar\xi^{M_j-2} = \xi_j$ follow from the similar statements
about the $\zeta_k^\alpha$ and uniqueness.
That $\phi(\bar\xi^\alpha) = \bar\xi^\alpha$ follows from the definition of 
the latter element.
\end{proof}

We emphasize that $\bar\xi^\alpha \in \Mackey H_G^*(B_+)$ is defined only for $\alpha\in RO_+(\Pi_G B)$, 
unlike the $\bar\xi^\alpha\in \Mackey H_G^*(B_+\smsh_B EG_+)$, which are defined for all
$\alpha \in I^\ev(\Pi_G B)$. 
Further, $\bar\xi^\alpha$ is generally not invertible in $\Mackey H_G^*(B_+)$, unlike
its image in $\Mackey H_G^*(B_+\smsh_B EG_+)$.
The collection $\{\bar\xi^\alpha\}$ is a multiplicative
subset of $\Mackey H_G^*(B_+)$ generated by the elements
$\xi_{i,j} = \bar\xi^{\Omega_{i,j}}$.

Note also that $\chi\bar\xi^\alpha = \bar\xi^{\chi\alpha}$, as we can see from the fact
that $\chi$ acts by cyclically permuting the summands in
$\Mackey H_G^*(B_+^G)$.

%-------------------------------------------------------
\section{The additive structure of the cohomology of $B_GU(1)$}\label{sec:oddadditive}

There are three classes of elements of $\Mackey H_G^*(B_+)$ that will turn out to
generate the whole algebra. These are
\begin{enumerate}
\item
$\chi^i c \in \Mackey H_G^{\chi^i\omega_G^*}(B_+)$, $0\leq i \leq p-1$. 
These elements satisfy (and are characterized by)
\[
 \eta(\chi^i c) = ((e_{k-i} + \xi_{k-i}\sigma_k)\zeta_k^{\chi^i\omega_G^* - \MM_{k-i}})_k.
\]

\item
$\bar\xi^\alpha \in \Mackey H_G^{\alpha}(B_+)$, $\alpha\in RO_+(\Pi_G B)$.
These elements satisfy
\[
 \eta(\bar\xi^\alpha) = (\xi^{\alpha_k}\zeta_k^{\alpha-\alpha_k})_k.
\]

\item
$\bar\lambda^{\alpha,a} \in \Mackey H_G^\alpha(B_+)$, $\alpha\in RO_0(\Pi_G B)$
and $a = (a_k)_k$ with $a_k \in \nu(\alpha_k)^{-1}$ for all $k$.
These elements satisfy
\[
 \eta(\bar\lambda^{\alpha,a}) = (\lambda^{\alpha_k,a_k}\zeta_k^{\alpha-\alpha_k})_k.
\]

\end{enumerate}
That the elements $\bar\lambda^{\alpha,a}$ exist follows from the fact that
\[
 \phi((\lambda^{\alpha_k,a_k}\zeta_k^{\alpha-\alpha_k})_k) = \bar\xi^\alpha
\]
is in the image of $\Mackey H_G^*(B_+\smsh_B EG_+)$ for any $a$,
and that $\eta$ is a monomorphism in grading $\alpha$, by
Proposition~\ref{prop:etamono}.
It follows from the properties of the $\zeta_k$ and $\lambda$ that
\begin{align*}
 \bar\lambda^{\alpha,a}\bar\lambda^{\beta,b} &= \bar\lambda^{\alpha+\beta,ab},
\intertext{where the product $ab$ is taken coordinate-wise, and}
 \bar\lambda^{\alpha,a} &= \lambda^{\alpha,a}
\end{align*}
if $\alpha\in RO_0(G)$ and $a$ is constant.

We have both $\phi(\bar\xi^\alpha) = \bar\xi^\alpha$ and
$\phi(\bar\lambda^{\alpha,a}) = \bar\xi^\alpha$, but note that
$RO_+(\Pi_G B) \intersect RO_0(\Pi_G B) = 0$, so there is no real overlap here.

Our first goal will be to show that $\Mackey H_G^*(B_+)$ is a free
module over $\Mackey H_G^\bullet(S^0)$. In order to do this, we shall give an
explicit basis, consisting of monomials of the form
\[
 x = 
 \big(\prod_i (\chi^i c)^{m_i} \big)
 \big(\prod_i (\chi^i c \cdot \xi_{i,1})^{\epsilon_i}\big)
 \big(\prod_i \xi_{i,1}^{n_i}\big)
 \bar\lambda^{\beta,b}
\]
for certain allowed values of the $m_i$, $\epsilon_i$, $n_i$, $\beta$, and $b$.
Let
\[
 \gamma = \omega_G^* - \MM_1 + \Omega_{1,0} = \omega_G^* - 2 - \sum_{i=1}^{p-1} \Omega_{1,i}
   = \sum_{i=2}^{p-1} (\Omega_{i,i} - \Omega_{1,i}),
\]
so $\gamma\in RO_0(\Pi_G B)$ and $\omega_G^* = \MM_1 - \Omega_{1,0} + \gamma$.
Then $x$ has grading
\begin{align*}
 |x| &= \sum_i \big(m_i\chi^i\omega_G^* + \epsilon_i(\chi^i\omega_G^*+\Omega_{1,i})
        + n_i\Omega_{1,i}\big) + \beta \\
     &= \sum_i \big((m_i+\epsilon_i)(\MM_1 + \chi^i\gamma) + (n_i-m_i)\Omega_{1,i}\big) + \beta.
\end{align*}
The integer dimension of this grading is
\[
 \|x\| = \sum_i 2(m_i+\epsilon_i).
\]

For any $\alpha\in RO(\Pi_G B)$, we will be interested in the sequence of fixed-point
dimensions $(\alpha^G_*) = (\alpha^G_k)_{0\leq k \leq p-1}$. In the case of $|x|$, these are
\[
 |x|^G_k = 2(m_k - n_k)
\]
from the formula above.
For any $\alpha$, the class of the sequence $(\alpha^G_*) \in \Z^p/\Z$, quotienting
out the diagonal copy of $\Z$, is an invariant of the coset $\alpha+RO(G)$.
It determines an ordering $(k_i)_i$ of $\{0,\dots, p-1\}$ such that
\begin{itemize}
\item $\alpha^G_{k_{i+1}} \leq \alpha^G_{k_i} \qquad$ for $0\leq i < p-1$ and
\item if $\alpha^G_{k_{i+1}} = \alpha^G_{k_i}$ then $k_{i+1} > k_i$.
\end{itemize}
In other words, we order the fixed-set dimensions from highest to lowest, taking
the natural order among dimensions that are the same. Applied to $|x|$, this gives the ordering
$(k_i)$ such that
\begin{itemize}
\item $m_{k_{i+1}} - n_{k_{i+1}} \leq m_{k_i} - n_{k_i}$ and
\item $m_{k_{i+1}} - n_{k_{i+1}} = m_{k_i} - n_{k_i}$ implies that $k_{i+1} > k_i$.
\end{itemize}

\begin{definition}\label{def:admissiblemonomials}
We say that a monomial
\[
 \big(\prod_i (\chi^i c)^{m_i} \big)
 \big(\prod_i (\chi^i c \cdot \xi_{i,1})^{\epsilon_i}\big)
 \big(\prod_i \xi_{i,1}^{n_i}\big)
 \bar\lambda^{\beta,b}
\]
is {\em admissible} if the following are true:
\begin{enumerate}
\item For all $i$, $m_i\geq 0$, $\epsilon_i = 0$ or $1$, and $n_i\geq 0$.
\item If $m_i > 0$ or $\epsilon_i > 0$, then $n_i = 0$.
\item For at least one $i$, $\epsilon_i = 0 = n_i$.
\item Using the ordering of $m_k-n_k$ from highest to lowest as above,
if $\epsilon_{k_i} = 0$ then $\epsilon_{k_j} = 0$ for all $j > i$.
\item $1 \leq b_i \leq p-1$ for all $i$.
\item If $I$ is the least index such that $\epsilon_{k_I} = 0$, then
$\beta_{k_I} = 0$ (hence $b_{k_I} = 1$).
\end{enumerate}
\end{definition}

Let $A$ be the set of admissible monomials.
Our goal now is to show that $A$
is a basis for $\Mackey H_G^*(B_+)$ as a module over $\Mackey H_G^\bullet(S^0)$.
To that end, let
\[
 \Mackey P^* = \Mackey H_G^\bullet(S^0)\{A\}
\]
denote the free $\Mackey H_G^\bullet(S^0)$-module
determined by the $RO(\Pi_G B)$-graded set $A$.
The set map $A\to \Mackey H_G^*(B_+)$ taking each element of $A$ to the corresponding
element of $\Mackey H_G^*(B_+)$ gives rise to a map $f\colon \Mackey P^*\to \Mackey H_G^*(B_+)$
of $\Mackey H_G^\bullet(S^0)$-modules, which we will show to be an isomorphism.

Because $\Mackey P^*$ is free, tensoring with the long exact sequence of the cofibration
$EG_+\to S^0\to \tE G$ gives us a long exact sequence
\begin{multline*}
 \cdots\to (\Mackey P^*\tensorS \Mackey H_G^\bullet(\tE G))^*
  \to \Mackey P^*
  \to (\Mackey P^*\tensorS \Mackey H_G^\bullet(EG_+))^* \\
  \to (\Mackey P^*\tensorS \Mackey H_G^\bullet(\tE G))^{*+1}
  \to \cdots
\end{multline*}
The map $f$ induces the following map of long exact sequences:
\[
 \xymatrix@C-1.5em{
 \cdots \ar[r]
  & \Mackey P^*\tensorS \Mackey H_G^\bullet(\tE G) \ar[r] \ar[d]_{f_{\tE G}}
  & \Mackey P^* \ar[r] \ar[d]_f
  & \Mackey P^*\tensorS \Mackey H_G^\bullet(EG_+) \ar[r] \ar[d]_{f_{EG_+}}
  & \cdots \\
 \cdots \ar[r]
  & \Mackey H_G^*(B_+\smsh_B \tE G) \ar[r]
  & \Mackey H_G^*(B_+) \ar[r]
  & \Mackey H_G^*(B_+\smsh_B EG_+) \ar[r]
  & \cdots
 }
\]
If we can show that both $f_{EG_+}$ and $f_{\tE G}$ are isomorphisms, then it will
follow that $f$ is an isomorphism.

Because we are comparing modules over $\Mackey H_G^\bullet(S^0)$,
it is useful to restrict to one coset $\alpha+RO(G)$ of grading at a time.
To that end, the following result tells us which elements of $A$ have grading
in $\alpha+RO(G)$.

\begin{lemma}\label{lem:Agrading}
Fix an $\alpha\in RO(\Pi_G B)$ and let
$(k_i)$ be the ordering of the fixed-set dimensions $(\alpha^G_*)$ from highest to lowest,
as earlier in this section. The admissible monomials
\[
 x = 
 \big(\prod_i (\chi^i c)^{m_i} \big)
 \big(\prod_i (\chi^i c \cdot \xi_{i,1})^{\epsilon_i}\big)
 \big(\prod_i \xi_{i,1}^{n_i}\big)
 \bar\lambda^{\beta,b}
\]
in grading $\alpha+RO(G)$ are those such that
\begin{itemize}
\item we have 
\[
 (|x|^G_k)_k = (\alpha^G_k)_k + N
\]
for some integer $N\geq -\alpha^G_{k_0}$; and

\item if $I$ is the least index such that $\epsilon_{k_I} = 0$, then
$\beta$ is the unique element of $RO_0(\Pi_G B)$ such that $\beta_{k_I} = 0$ and
\[
 \beta + \sum_i \big((m_i+\epsilon_i)\chi^i \omega_G^* + (\epsilon_i+n_i)\Omega_{1,i}\big)
   \in \alpha + RO(G).
\]
\end{itemize}
For a given $N\geq -\alpha^G_{k_0}$, the number of admissible monomials with
$(|x|^G_*) = (\alpha^G_*) + N$ is the number of indices $k$ with $\alpha^G_k+N \geq 0$.
On the other hand, there is exactly one $x$ with integer dimension $\|x\| = 2\ell$
for each integer $\ell\geq 0$.
\end{lemma}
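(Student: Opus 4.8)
The plan is to fix one coset $\alpha + RO(G)$ of gradings and to classify the admissible monomials lying in it purely in terms of their exponent data $(m_i),(\epsilon_i),(n_i),\beta,b$, using the two facts recorded just before the lemma: that $|x|^G_k = 2(m_k - n_k)$ and that $\|x\| = \sum_i 2(m_i + \epsilon_i)$. Throughout, let $W \in RO(\Pi_G B)$ denote the grading of $\bigl(\prod_i(\chi^i c)^{m_i}\bigr)\bigl(\prod_i(\chi^i c\cdot\xi_{i,1})^{\epsilon_i}\bigr)\bigl(\prod_i\xi_{i,1}^{n_i}\bigr)$, so that $|x| = W + \beta$ and, by the computation behind $|x|^G_k = 2(m_k - n_k)$ (using $\beta^G_k = 0$), one has $W^G_k = 2(m_k - n_k)$.

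\emph{Necessity of the two bullets.} If $x$ lies in grading $\alpha + RO(G)$ then $|x| - \alpha$ is a constant tuple, so $N := |x|^G_k - \alpha^G_k$ is independent of $k$; this is the first bullet apart from the bound. For the bound, admissibility~(3) gives an index $i$ with $\epsilon_i = 0 = n_i$, hence $m_i - n_i = m_i \geq 0$; since the ordering $(k_i)$ of $(\alpha^G_*)$ coincides with that of $(|x|^G_*)$, this forces $m_{k_0} - n_{k_0} = \tfrac12 |x|^G_{k_0} \geq 0$, i.e. $N \geq -\alpha^G_{k_0}$. For the second bullet, note that $RO_0(\Pi_G B) + RO(G)$ is exactly the set of $\delta \in RO(\Pi_G B)$ whose fixed-point dimensions $\delta^G_k$ are independent of $k$ (this uses only the explicit descriptions of $RO_0(\Pi_G B)$ and $RO_0(G)$, together with $\alpha^G_k \equiv |\alpha| \pmod 2$ for $\alpha \in RO(\Pi_G B)$), and that two elements of $RO_0(\Pi_G B)$ in a common $RO(G)$-coset differ by a constant tuple lying in $RO_0(G)$, so that evaluation at one coordinate, $\beta \mapsto \beta_{k_I}$, carries such a coset bijectively onto $RO_0(G)$. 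Hence in the coset of $\alpha - W$ — which meets $RO_0(\Pi_G B)$, since $W^G_k = 2(m_k-n_k) = \alpha^G_k + N$ makes $(\alpha-W)^G_k = -N$ constant — there is a unique $\beta \in RO_0(\Pi_G B)$ with $\beta_{k_I} = 0$; admissibility~(6) forces the $\beta$ of $x$ to be this one, and then $\beta_{k_I} = 0$ forces $b_{k_I} = 1$ while the ranges $1 \leq b_k \leq p-1$ recover the remaining $b_k$ from $\beta_k$. Conversely, the two bullets pin $x$ down completely.

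\emph{The count for a fixed $N$.} Fix $\alpha$ and an integer $N \geq -\alpha^G_{k_0}$ (necessarily of the common parity of the $\alpha^G_k$, since the $|x|^G_k$ are even; for $N$ of the other parity there are no monomials). Demanding $(|x|^G_*) = (\alpha^G_*) + N$ forces $m_k - n_k = \tfrac12(\alpha^G_k + N)$, and admissibility~(2) turns each such difference $d$ into the unique pair $(m_k,n_k) = (d,0)$ for $d > 0$ and $(0,-d)$ for $d \leq 0$. The only remaining freedom is the cutoff $I$: by~(4), $\epsilon_{k_j} = 1$ for $j < I$ and $0$ for $j \geq I$. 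Now~(2) forces $n_{k_j} = 0$, i.e. $\alpha^G_{k_j} + N \geq 0$, for $j < I$, while~(3) forces $n_{k_j} = 0$ for some $j \geq I$; since the values $\alpha^G_{k_j} + N$ are non-increasing in $j$, writing $r = \#\{k : \alpha^G_k + N \geq 0\}$ these two conditions are together equivalent to $0 \leq I \leq r - 1$. For each such $I$ the element $\beta$ demanded by the second bullet exists (by the constancy of $(\alpha - W)^G$ noted above), $b$ is then determined, and one checks all of~(1)--(6). Hence there are exactly $r$ admissible monomials with $(|x|^G_*) = (\alpha^G_*) + N$, and (as $r \geq 1$ precisely when $N \geq -\alpha^G_{k_0}$) none for smaller $N$.

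\emph{The count by integer dimension.} Write $N = -\alpha^G_{k_0} + 2t$, $t \geq 0$, and substitute the recovered data into $\|x\| = \sum_i 2(m_i + \epsilon_i)$: one gets $\tfrac12\|x\| = S(t) + I$, where $S(t) = \sum_{\alpha^G_k + N \geq 0}\tfrac12(\alpha^G_k + N)$ and $0 \leq I \leq r(t) - 1$ with $r(t) = \#\{k : \alpha^G_k + N \geq 0\}$. Passing from $t$ to $t+1$ raises each already-surviving term of $S$ by $1$, while any term newly reaching $0$ contributes $0$; hence $S(t+1) = S(t) + r(t)$, with $S(0) = 0$ and $r(t) \geq r(0) \geq 1$ for all $t$. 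Therefore the blocks $[\,S(t),\,S(t+1)-1\,]$, $t \geq 0$, are consecutive and partition $\Z_{\geq 0}$, so $\tfrac12\|x\|$ realizes each value $\ell \geq 0$ for exactly one admissible monomial, which is the last assertion. The argument is essentially bookkeeping; its one genuinely structural step, used twice, is the existence and uniqueness of the required $\beta \in RO_0(\Pi_G B)$ with a prescribed vanishing coordinate, for which the descriptions of $RO_0(\Pi_G B)$ and $RO_0(G)$ are needed, and the only real care required is keeping straight the interplay of the sequences $(m_k),(n_k),(\epsilon_k)$, the cutoff $I$, the shift $N$, and the ordering $(k_i)$.
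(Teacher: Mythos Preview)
Your proof is correct and follows essentially the same approach as the paper's: recover $(m_k,n_k)$ from the shift $N$, parametrize the remaining freedom by the cutoff $I$, and then count by $N$ and by integer dimension. You are in fact a bit more careful than the paper in two places: you make the existence and uniqueness of $\beta$ explicit via the bijection $\beta\mapsto\beta_{k_I}$ onto $RO_0(G)$, and you handle the parity of $N$ correctly by writing $N=-\alpha^G_{k_0}+2t$ (the paper's ``increase $N$ by $1$'' is sloppy, since $\alpha^G_k+N$ must be even).
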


\begin{proof}
Let $x$ be an admissible monomial with grading in $\alpha+RO(G)$. Because
$|x| + RO(G) = \alpha+RO(G)$, we must have $(|x|^G_*) = (\alpha^G_*) + N$ for some
integer $N$. Recall that $|x|^G_k = 2(m_k-n_k)$ and that, for each $k$,
at most one of $m_k$ and $n_k$ can be nonzero. From this it follows that $N$ determines
the $m_k$ and $n_k$:
\begin{align*}
 m_k &= \begin{cases}
          (\alpha^G_k+N)/2 & \text{if $\alpha^G_k+N \geq 0$} \\
          0 & \text{if $\alpha^G_k+N < 0$}
        \end{cases} \\
\intertext{and}
 n_k &= \begin{cases}
           0 & \text{if $\alpha^G_k+N \geq 0$} \\
         -(\alpha^G_k+N)/2 & \text{if $\alpha^G_k+N < 0$.}
        \end{cases}
\end{align*}
Further, at least one $n_k$ must be 0.
It follows that the least possible $N$ is $-\alpha^G_{k_0}$.

Once the $m_k$, $\epsilon_k$, and $n_k$ are known, $\beta$ is determined by the requirements
that $\beta_{k_I} = 0$ and $|x| \in \alpha+RO(G)$; the statement in the lemma is just another
way of saying that. Note that
\[
 \Big[ \alpha - \sum_i \big((m_i+\epsilon_i)\chi^i \omega_G^* + (\epsilon_i+n_i)\Omega_{1,i}\big)
  \Big]^G_k
  = \alpha^G_k - 2(m_k-n_k)
\]
is constant, so such a $\beta$ exists.
Thus, $x$ must have the form stated in the lemma.

If we fix an $N\geq -\alpha^G_{k_0}$, it determines the $m_k$ and $n_k$ as above.
$N$ does not determine the $\epsilon_k$, but does restrict which ones can be nonzero:
If any are nonzero than they must be an initial sequence $\epsilon_{k_0}$, $\epsilon_{k_1}$, \dots,
$\epsilon_{k_i}$ with $n_{k_{i+1}} = 0$, so that we satisfy the requirements that
$\epsilon_k = 0 = n_k$ for at least one $k$, and that $\epsilon_{k_j} = 0$ implies
that all subsequent ones are also 0. Thus, including the possibility that
all $\epsilon_k$ are 0, the number of choices possible for the $\epsilon_k$
is equal to the number of indices $k$ with $n_k = 0$, which is the same
as the number of indices with $\alpha^G_k+N\geq 0$, as claimed.

Finally, consider the possible integer dimensions
$\|x\| = \sum_i 2(m_i+\epsilon_i)$.
Certainly we must have $\|x\|\geq 0$, because $m_i\geq 0$
and $\epsilon_i\geq 0$. Consider those $x$ with $(|x|^G_*) = (\alpha^G_*)+N$
for a given $N$. 
As above, $N$ determines the $m_i$ and limits the possible $\epsilon_i$.
Using the $m_i$ so determined, the possible integer dimensions of $x$ are then $2\sum_i m_i$, $2+2\sum_i m_i$, \dots,
$2(j-1)+2\sum_i m_i$, where $j$ is the number of indices $k$
with $\alpha^G_k+N\geq 0$. If we increase $N$ by 1, then there are exactly $j$
exponents $m_i$ that will increase by 1, so that the possible integer dimensions
of elements $x$ such that $(|x|^G_*) = (\alpha^G_*) + N+1$ start at $2j + 2\sum_i m_i$ and go up from there.
It is clear now that there is one $x$ with each even nonnegative integer dimension,
with dimension 0 occurring when $N = -\alpha^G_{k_0}$.
\end{proof}

\begin{proposition}\label{prop:fEG+}
$f_{EG_+}\colon \Mackey P^*\tensorS \Mackey H_G^\bullet(EG_+)
\to \Mackey H_G^*(B_+\smsh_B EG_+)$ is an isomorphism.
\end{proposition}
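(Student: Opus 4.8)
The plan is to prove the isomorphism one coset $\alpha + RO(G)$ of the $RO(\Pi_G B)$-grading at a time. Since $f_{EG_+}$ is $\Mackey H_G^\bullet(S^0)$-linear it splits as a direct sum over the cosets, so it suffices to treat each coset separately. Both sides are free over the ground ring $\Mackey H_G^\bullet(EG_+)$: the source $\Mackey P^*\tensorS\Mackey H_G^\bullet(EG_+)$ on the admissible monomials $A$ by construction, and the target $\Mackey H_G^*(B_+\smsh_B EG_+)$ on the monomials $c^m\bar\xi^\delta$ by Corollary~\ref{cor:BtimesEG}. The key point is that, within a fixed coset, each side has exactly one free generator in each even integer dimension $2\ell$, $\ell\geq 0$: for the source this is the content of Lemma~\ref{lem:Agrading}, which produces one admissible monomial $x_\ell$ with grading in $\alpha + RO(G)$ and $\|x_\ell\| = 2\ell$ for every $\ell\geq 0$.

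For the target, I would apply Lemma~\ref{lem:kernelproj} with $k = 0$: the kernel of the projection $RO(\Pi_G B)\to RO(G)$, $(\alpha_i)_i\mapsto\alpha_0$, is the free subgroup $\Lambda$ on the $\Omega_{i,j}$ with $i\neq 0$, and together with the inclusion of $RO(G)$ as constant tuples this gives $RO(\Pi_G B) = RO(G)\dirsum\Lambda$. Hence for each $m\geq 0$ there is a unique $\delta = \delta(m,\alpha)\in\Lambda$ with $m\omega_G^* + \delta\in\alpha + RO(G)$, and by Corollary~\ref{cor:BtimesEG} the elements $c^m\bar\xi^{\delta(m,\alpha)}$, $m\geq 0$, which have integer dimensions $2m$, form a free $\Mackey H_G^\bullet(EG_+)$-basis for the part of $\Mackey H_G^*(B_+\smsh_B EG_+)$ in the coset.

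It remains to show $f_{EG_+}$ sends $\{x_\ell\}$ to a basis. Now $f_{EG_+}(x_\ell)$ is the same monomial read in $\Mackey H_G^*(B_+\smsh_B EG_+)$, i.e.\ the image of $x_\ell$ under $\phi$, which fixes the $\chi^i c$ and $\xi_{i,1}$ and replaces $\bar\lambda^{\beta,b}$ by $\bar\xi^\beta$; in particular $f_{EG_+}(x_\ell)$ has integer dimension $2\ell$, so it is automatically an $\Mackey H_G^\bullet(EG_+)$-combination $\sum_{m\leq\ell}a_m\,c^m\bar\xi^{\delta(m,\alpha)}$ with $a_m$ in the integer-degree $2(\ell - m)\geq 0$ part of $\Mackey H_G^\bullet(EG_+)$. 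Thus the transition matrix is triangular, and it is enough to see that the diagonal coefficient $a_\ell$ is a unit. For this I would invoke Proposition~\ref{prop:chicEG}, $\chi^i c = c\,\bar\xi^{\chi^i\omega_G^* - \omega_G^*} - ie_1\bar\xi^{\chi^i\omega_G^* - \MM_1}$: the first summand raises the $c$-degree by one, the second (carrying the factor $e_1$) does not. Expanding $f_{EG_+}(x_\ell) = \phi\bigl(\prod_i(\chi^i c)^{m_i}\prod_i(\chi^i c\cdot\xi_{i,1})^{\epsilon_i}\prod_i\xi_{i,1}^{n_i}\,\bar\xi^\beta\bigr)$ and collecting the term of top $c$-degree $\sum_i(m_i+\epsilon_i) = \|x_\ell\|/2 = \ell$ gives $c^\ell\,\bar\xi^\Gamma$ for an explicit $\Gamma\in I^\ev(\Pi_G B)$ assembled from the $\chi^i\omega_G^* - \omega_G^*$, the $\Omega_{i,1}$, and $\beta$. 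Since $c^\ell\bar\xi^\Gamma$ has grading in $\alpha + RO(G)$, we get $\Gamma\equiv\delta(\ell,\alpha)$ modulo $I^\ev(G)$, hence $\bar\xi^\Gamma = \xi^{\Gamma - \delta(\ell,\alpha)}\,\bar\xi^{\delta(\ell,\alpha)}$ with $\xi^{\Gamma-\delta(\ell,\alpha)}$ a unit of $\Mackey H_G^\bullet(EG_+)$; all other terms have strictly smaller $c$-degree. So $a_\ell$ is a unit, $f_{EG_+}$ is triangular with unit diagonal on each coset, and therefore an isomorphism. As the diagonal entries are genuine units of $\Mackey H_G^\bullet(EG_+)$, the argument applies at both levels $G/G$ and $G/e$ and yields an isomorphism of Mackey functors; the case $p = 2$ is identical using the conventions fixed earlier for $\mu^{0,a}$ and $\lambda^{0,a}$.

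The main obstacle is the combinatorial bookkeeping tying the two indexings together: checking that the count of admissible monomials per integer dimension from Lemma~\ref{lem:Agrading} (where conditions (3)--(6) of Definition~\ref{def:admissiblemonomials} are what force exactness of the count) matches the Laurent-polynomial basis of Corollary~\ref{cor:BtimesEG}, and verifying that the top-$c$-degree term extracted from Proposition~\ref{prop:chicEG} really is a single unit rather than a sum that could fail to be invertible. Once those are in hand the rest is formal.
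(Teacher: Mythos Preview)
Your proposal is correct and follows essentially the same approach as the paper: fix a coset $\alpha+RO(G)$, exhibit one free $\Mackey H_G^\bullet(EG_+)$-generator on each side in every even nonnegative integer dimension (via Lemma~\ref{lem:Agrading} for the source and Corollary~\ref{cor:BtimesEG} for the target), then use Proposition~\ref{prop:chicEG} to see that the transition matrix is triangular with invertible diagonal. The only cosmetic difference is that the paper writes the target basis directly as $c^n\bar\xi^{\alpha-n\omega_G^*+2n}$ (having normalized $|\alpha|=0$) rather than invoking Lemma~\ref{lem:kernelproj} to parametrize it; your route via the splitting $RO(\Pi_G B)=RO(G)\oplus\Lambda$ is an equally valid way to name the same elements.
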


\begin{proof}
View both the source and target of $f_{EG_+}$ as modules over
$\Mackey H_G^\bullet(EG_+)$.
Fix a coset $\alpha+RO(G)$ of the grading and choose a representative
$\alpha$ with $|\alpha| = 0$ for convenience.

By construction, the source is free with a basis of admissible monomials
as described in Lemma~\ref{lem:Agrading}.
We order this basis by the integer dimensions of the monomials, which,
by the lemma, are the even nonnegative integers, without repeats.

By Corollary~\ref{cor:BtimesEG}, $\Mackey H_G^{\alpha+\bullet}(B_+\smsh_B EG_+)$
is a free $\Mackey H_G^\bullet(EG_+)$-modules with a basis given by the set
\[
 \{ \bar\xi^\alpha,\ c\bar\xi^{\alpha-\omega_G^*+2},
        \ c^2\bar\xi^{\alpha-2\omega_G^*+4},\ \dots,
        \ c^n\bar\xi^{\alpha-n\omega_G^*+2n},\ \dots \}.
\]
We order this basis by the powers of $c$. Since 
$\|c^n\bar\xi^{\alpha-n\omega_G^*+2n}\| = 2n$,
this is the same as ordering by integer dimension.

Consider an admissible monomial
\[
 x = 
 \big(\prod_i (\chi^i c)^{m_i} \big)
 \big(\prod_i (\chi^i c \cdot \xi_{i,1})^{\epsilon_i}\big)
 \big(\prod_i \xi_{i,1}^{n_i}\big)
 \bar\lambda^{\beta,b}
\]
with $|x|\in \alpha+RO(G)$. Its image is
\[
 f_{EG_+}(x) = \big(\prod_i (\chi^i c)^{m_i+\epsilon_i}\big)
                  \bar\xi^\delta
\]
for the appropriate $\delta$ (whose formula we could write down, but is unimportant here).
In $\Mackey H_G^*(B_+\smsh_B EG_+)$ we have
\begin{align*}
 \chi^i c 
   &= c\cdot \bar\xi^{\chi^i\omega_G^* - \omega_G^*} - ie_1 \bar\xi^{\chi^i\omega_G^*-\MM_1} \\
   &= (c - ie_1 \bar\xi^{\omega_G^*-\MM_1})\bar\xi^{\chi^i\omega_G^* - \omega_G^*}
\end{align*}
by Proposition~\ref{prop:chicEG}, so we can write
\[
 f_{EG_+}(x) = \big(\prod_i(c - ie_1\bar\xi^{\omega_G^*-\MM_1})^{m_i+\epsilon_i}\big) \bar\xi^{\delta'}
\]
for the appropriate $\delta'$.
If we were to multiply out, the leading term would be
\[
 c^n \bar\xi^{\delta'} = c^n\bar\xi^{\alpha-n\omega_G^*+2n}\cdot\bar\xi^{\delta''},
\]
where
$n = \sum_i(m_i+\epsilon_i) = \|x\|/2$; the other terms would involve lower powers of $c$.
Now, we must have
\[
 n\omega_G^* + \alpha - n\omega_G^* + 2n + \delta'' = \alpha+2n+\delta'' \in \alpha+RO(G),
\]
hence $\delta''\in RO(G)$ and $\bar\xi^{\delta''} = \xi^{\delta''}$
is an invertible element of $\Mackey H_G^\bullet(EG_+)$.
Thus, the (infinite) matrix of $f_{EG_+}$ with respect to the two chosen ordered bases is
upper triangular, with the diagonal elements invertible in $\Mackey H_G^\bullet(EG_+)$.
The inverse of this matrix therefore exists and is again an upper  triangular matrix.
Hence $f_{EG_+}$ is an isomorphism.
\end{proof}

\begin{proposition}\label{prop:ftEG}
$f_{\tE G}\colon \Mackey P^*\tensorS \Mackey H_G^\bullet(\tE G)
\to \Mackey H_G^*(B_+\smsh\tE G)$ is an isomorphism.
\end{proposition}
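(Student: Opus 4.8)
The proof will closely parallel that of Proposition~\ref{prop:fEG+}; the one new feature is that the classes $\xi_k$, which are units in $\Mackey H_G^\bullet(EG_+)$, are not units in $\Mackey H_G^\bullet(\tE G)$, so a little more care is needed.

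First I would fix a coset $\alpha+RO(G)$ of the grading and regard both sides as $\Mackey H_G^\bullet(\tE G)$-modules. By construction the source, $\Mackey P^*\tensorS\Mackey H_G^\bullet(\tE G)$, is free over $\Mackey H_G^\bullet(\tE G)$ on the admissible monomials lying in that coset, of which, by Lemma~\ref{lem:Agrading}, there is exactly one in each integer dimension $2\ell$, $\ell\geq 0$. For the target, Proposition~\ref{prop:BEGtilde} replaces $\Mackey H_G^*(B_+\smsh_B\tE G)$ by $\Dirsum_k\Mackey H_G^*((B_k)_+\smsh_B\tE G)$ and Proposition~\ref{prop:calcFixedPoints} identifies each summand with $\Mackey H_G^\bullet(\tE G)[\sigma_k,\zeta_k^{\pm}]$, which in the coset is free over $\Mackey H_G^\bullet(\tE G)$ on the monomials $\sigma_k^n\zeta_k^{\beta_{k,n}}$, $n\geq 0$, where $\zeta_k^{\beta_{k,n}}$ is the unique unit carrying $\sigma_k^n$ into $\alpha+RO(G)$. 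As a consistency check, combining Lemma~\ref{lem:Agrading} with the additive descriptions of $\Mackey H_G^\bullet(\tE G)$ in Theorems~\ref{thm:EvenTEG} and~\ref{thm:OddTEG} shows that the two $\Mackey H_G^\bullet(\tE G)$-modules have the same, finitely generated, value in every grading, so it would in fact suffice to prove surjectivity of $f_{\tE G}$; but the triangular argument below gives injectivity at the same time.

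Next I would compute the components of $\eta(f_{\tE G}(x))$ for an admissible monomial $x$. By Proposition~\ref{prop:calcEulerClass} and its corollary, $\chi^i c$ restricts on $B_k$ to $(e_{k-i}+\xi_{k-i}\sigma_k)\zeta_k^{\cdots}$; since $e_0=0$ and $\xi_0=1$, the factor with $i\equiv k$ contributes $\sigma_k$ with no constant term, while for $i\not\equiv k$ the constant term $e_{k-i}$ is a unit of $\Mackey H_G^\bullet(\tE G)$. Hence the $k$th component of $\eta(f_{\tE G}(x))$ is $\sigma_k^{m_k+\epsilon_k}$ times a polynomial in $\sigma_k$ over $\Mackey H_G^\bullet(\tE G)$ with unit constant term, times a monomial in the $\zeta_k$ and a power of $\xi_1$ whose effect is controlled because $\Mackey H_G^\bullet(\tE G)$ is $\xi_1$-power-torsion. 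The decisive point is the component at the first undecorated index $k_I$ in the ordering of fixed-point dimensions from Lemma~\ref{lem:Agrading}: there conditions (2)--(4) and (6) of Definition~\ref{def:admissiblemonomials} force $\epsilon_{k_I}=n_{k_I}=0$ and $\beta_{k_I}=0$, hence $b_{k_I}=1$ and $\bar\lambda^{\beta,b}$ restricts at $B_{k_I}$ to a unit, so that the $k_I$-component of $\eta(f_{\tE G}(x))$ is a polynomial in $\sigma_{k_I}$ over $\Mackey H_G^\bullet(\tE G)$ of $\sigma_{k_I}$-order exactly $m_{k_I}=\alpha^G_{k_I}+N$ with unit leading coefficient, $N$ being the fixed-point shift of $x$. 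This singles out one target basis element for each $x$; ordering both bases accordingly makes the matrix of $f_{\tE G}$ triangular with invertible diagonal, so its inverse (again triangular) exists and $f_{\tE G}$ is an isomorphism, which together with Proposition~\ref{prop:fEG+} completes the proof that $f$ is an isomorphism.

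The part I expect to be the main obstacle is exactly this triangularity bookkeeping. In Proposition~\ref{prop:fEG+} the invertibility of the $\xi_k$ let one eliminate $\sigma$ in favor of $c$ and reduce to a single $\Mackey H_G^\bullet(EG_+)$-generator per dimension, giving a clean upper-triangular matrix; here the target irreducibly carries one generator per fixed-point component in each dimension, and one must track how the $\sigma_k$-adic filtrations on the summands $\Mackey H_G^*((B_k)_+\smsh_B\tE G)$ mesh with the $e_k$-periodicity and $\xi_1$-power-torsion of $\Mackey H_G^\bullet(\tE G)$. Verifying that conditions (1)--(6) of Definition~\ref{def:admissiblemonomials} are precisely engineered so that the resulting indexing is a bijection and the matrix comes out triangular is where essentially all of the work lies.
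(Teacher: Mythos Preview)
Your outline matches the paper's approach closely: work componentwise over the fixed-point pieces $B_k$, compute $\eta(f_{\tE G}(x))$ using the formula $(e_{k-i}+\xi_{k-i}\sigma_k)\zeta_k^{\cdots}$, and show the resulting matrix is triangular with invertible diagonal at the index $k_I$. The key ingredients you identify---that $e_{k-i}$ acts invertibly on $\Mackey H_G^\bullet(\tE G)$ for $i\neq k$, that conditions (2)--(6) of Definition~\ref{def:admissiblemonomials} force $\epsilon_{k_I}=n_{k_I}=0$ and $\beta_{k_I}=0$, and that $\xi_1$-power-torsion controls the off-diagonal terms---are exactly the ones the paper uses.

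Where your write-up falls short is precisely the step you flag at the end. Saying ``the matrix is triangular with invertible diagonal, so its inverse exists'' glosses over a genuine difficulty: the matrix is \emph{lower} triangular and infinite, so while injectivity follows immediately (look at the first nonzero coefficient), surjectivity does not---the formal inverse has columns with infinitely many nonzero entries. Your fallback (``both sides have the same finitely generated value, so surjectivity suffices'') goes the wrong way: Hopfianness lets you deduce isomorphism from surjectivity, not from injectivity, and surjectivity is exactly the direction that fails for an infinite lower-triangular matrix. The paper confronts this head-on: it passes to the $\xi_i$-adic completion of both free $\Mackey H_G^\bullet(S^0)[e_i^{-1}]$-modules, observes that the formal inverse is well-defined there because the off-diagonal entries involve increasing powers of the $\xi_i$, and then uses that every element of $\Mackey H_G^\bullet(\tE G)$ is annihilated by high enough $\xi_i$-powers to conclude that tensoring with $\Mackey H_G^\bullet(\tE G)$ is unaffected by the completion. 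You gesture at ``$\xi_1$-power-torsion'' but do not carry out this step, and without it the argument is incomplete.

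Two small corrections: $\Mackey H_G^\bullet(\tE G)$ has no unit, so speaking of ``units of $\Mackey H_G^\bullet(\tE G)$'' is imprecise---the paper instead works over $\Mackey H_G^\bullet(S^0)[e_i^{-1}]$, where the $e_{k-i}$ genuinely are units; and your formula $m_{k_I}=\alpha^G_{k_I}+N$ should be $m_{k_I}=(\alpha^G_{k_I}+N)/2$ per the proof of Lemma~\ref{lem:Agrading}.
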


\begin{proof}
Rather than view both sides as modules over $\Mackey H_G^\bullet(\tE G)$, which has no unit,
it is more convenient to think of them as modules over $\Mackey H_G^\bullet(S^0)[e_i^{-1}]$.
The source is isomorphic to
\[
 \Mackey P^*[e_i^{-1}]\tensor_{\Mackey H_G^\bullet(S^0)[e_i^{-1}]} \Mackey H_G^\bullet(\tE G)
 =
 \Mackey H_G^\bullet(S^0)[e_i^{-1}]\{A\} \tensor_{\Mackey H_G^\bullet(S^0)[e_i^{-1}]}
  \Mackey H_G^\bullet(\tE G).
\]
By Proposition~\ref{prop:BEGtilde}, the target of $f_{\tE G}$ is isomorphic to
\[
 \Big(\Dirsum_{k=0}^{p-1}\Mackey H_G^\bullet(S^0)[e_i^{-1}][\sigma_k,\zeta_k^\alpha \mid \alpha_k = 0]\Big)
  \tensor_{\Mackey H_G^\bullet(S^0)[e_i^{-1}]} \Mackey H_G^\bullet(\tE G).
\]
The map $f_{\tE G}$ is induced by a map
\[
 \bar f_{\tE G}\colon \Mackey P^*[e_i^{-1}] \to \Dirsum_{k=0}^{p-1}\Mackey H_G^\bullet(S^0)[e_i^{-1}][\sigma_k,\zeta_k^\alpha \mid \alpha_k = 0]
\]
of free modules over $\Mackey H_G^\bullet(S^0)[e_i^{-1}]$, so we work on $\bar f_{\tE G}$
for the moment, where the light is better.
Explicitly, for an admissible monomial
\[
 x = 
 \big(\prod_i (\chi^i c)^{m_i} \big)
 \big(\prod_i (\chi^i c \cdot \xi_{i,1})^{\epsilon_i}\big)
 \big(\prod_i \xi_{i,1}^{n_i}\big)
 \bar\lambda^{\beta,b}
\]
we have
\[
 \bar f_{\tE G}(x) =
 \big(\big(\prod_i (e_{k-i} + \xi_{k-i}\sigma_k)^{m_i+\epsilon_i}\big)
 \cdot b_k e^{\beta_k}\xi_1^{\epsilon_k+n_k} \zeta_k^{|x|-|x|_k}\big)_k.
\]
(The exponent on $\zeta_k$ must be $|x|-|x|_k$ as given, because the other terms
have grading in $RO(G)$, hence the grading of the $\zeta_k$ term must be the unique
element of $|x|+RO(G)$ with $k$th component 0.)

As in the proof of the preceding proposition, we concentrate on the elements in gradings
$\alpha+RO(G)$ for a fixed $\alpha$. The basis for the source of $\bar f_{\tE G}$ is characterized
again as in Lemma~\ref{lem:Agrading}, and again we order it by integer dimension.

As a basis for the target of $\bar f_{\tE G}$ we can take the set
\[
 \{ \sigma_k^n \zeta_k^{\alpha-\alpha_k} \mid 0\leq k \leq p-1 \text{ and } n\geq 0 \},
\]
where we are writing $\sigma_k^n \zeta_k^{\alpha-\alpha_k}$ for the $p$-tuple
in grading $\alpha+2n-\alpha_k$ that
is 0 in all components except for the $k$th, which is $\sigma_k^n \zeta_k^{\alpha-\alpha_k}$
(and similarly if we write, say, just $\zeta_k^{\alpha-\alpha_k}$).
Note that the sequence of fixed-point dimensions of the grading of one of these basis elements is
\[
 (|\sigma_k^n \zeta_k^{\alpha-\alpha_k}|^G_*) = (\alpha^G_*) + 2n - \alpha^G_k.
\]
To describe the order we use on this basis, let
$(k_i)$ be the ordering of the fixed-sets $\alpha^G_k$, from highest to lowest.
We order the basis elements by saying that $\sigma_{k_i}^m\zeta_{k_i}^{\alpha-\alpha_{k_i}}$
precedes $\sigma_{k_j}^n\zeta_{k_j}^{\alpha-\alpha_{k_j}}$ if
$2m - \alpha^G_{k_i} < 2n - \alpha^G_{k_j}$ or, if these are equal, if $i < j$.
Thus, we order them by fixed-point dimensions and, within each such dimension, by the order
chosen for the indices. 
For a fixed $N$ (which must be even), the basis elements with fixed-point dimensions
$(\alpha^G_*) + N$ are, in order,
\[
 \sigma_{k_0}^{(\alpha^G_{k_0}+N)/2}\zeta_{k_0}^{\alpha-\alpha_{k_0}},\ \dots,
 \ \sigma_{k_i}^{(\alpha^G_{k_i}+N)/2}\zeta_{k_i}^{\alpha-\alpha_{k_i}},
\]
where $i$ is the largest index such that $\alpha^G_{k_i}+N \geq 0$ (which may be $i = p-1$ if $N$ is large).
So the number of such basis elements is the number of indices $k$ with $\alpha^G_k + N \geq 0$,
not coincidentally matching the number of admissible monomials of the same fixed-point dimensions,
per Lemma~\ref{lem:Agrading}.

Now consider an admissible monomial
\[
 x = 
 \big(\prod_i (\chi^i c)^{m_i} \big)
 \big(\prod_i (\chi^i c \cdot \xi_{i,1})^{\epsilon_i}\big)
 \big(\prod_i \xi_{i,1}^{n_i}\big)
 \bar\lambda^{\beta,b}
\]
With our assumption that $|x| + RO(G) = \alpha + RO(G)$,
we have $|x|-|x|_k = \alpha - \alpha_k$ for all $k$, so we can write
\begin{align*}
 \bar f_{\tE G}(x) 
 &= \sum_k \big(\prod_i (e_{k-i} + \xi_{k-i}\sigma_k)^{m_i+\epsilon_i}\big)
     \cdot b_k e^{\beta_k}\xi_1^{\epsilon_k+n_k} \zeta_k^{\alpha-\alpha_k} \\
 &= \sum_k b_ke^{\beta_k}\xi_1^{\epsilon_k+n_k}
     (e^\delta\sigma_k^{m_k+\epsilon_k} + \cdots)\zeta_k^{\alpha-\alpha_k}
\end{align*}
where $e^\delta = \prod_{i\neq k} e_{k-i}^{m_i+\epsilon_i}$ is invertible and
the terms omitted involve higher powers of $\sigma_k$ with coefficients having as factors
at least one of the $\xi_{k-i}$ for $k\neq i$.
Thinking of this as a linear combination of the basis elements $\sigma_k^n\zeta_k^{\alpha-\alpha_k}$,
and ordering those basis elements as above, the first basis element that appears will be
$\sigma_{k_I}^{m_{k_I}}\zeta_{k_I}^{\alpha-\alpha_{k_I}}$ where $I$ is the least index such
that $\epsilon_{k_I} = 0$. Its coefficient will be
\[
 b_{k_I}e^{\beta_{k_I}+\delta}\xi_1^{\epsilon_{k_I}+n_{k_I}} = e^\delta
\]
because, by the definition of admissible monomial, $\beta_{k_I} = 0$, $b_{k_I} = 1$,
and $\epsilon_{k_I} = 0 = n_{k_I}$.
Moreover, comparing the orderings of the two bases, we see that $x$ appears in the same
place in its order as $\sigma_{k_I}^{m_{k_I}}\zeta_{k_I}^{\alpha-\alpha_{k_I}}$
does in its. Therefore, the matrix of $\bar f_{\tE G}$ with respect to these ordered
bases is lower triangular with invertible elements on the diagonal.

This matrix has a formal inverse, also lower triangular with invertible elements
on the diagonal, but here we run into an issue: Each column may well contain infinitely
many nonzero elements. Therefore, trying to use it to define an inverse to $\bar f_{\tE G}$
would lead to infinite sums. So let's allow that. As we go down any column in this inverse
matrix, the elements must involve increasing powers of the $\xi_i$, for dimensional reasons.
We are led, therefore, to consider the completions
\[
 \Mackey P^*[e_i^{-1}]^{\wedge}_{\xi_i}
\]
and
\[
 \Dirsum_{k=0}^{p-1}\Mackey H_G^\bullet(S^0)[e_i^{-1}]
   [\sigma_k,\zeta_k^\alpha \mid \alpha_k = 0]^{\wedge}_{\xi_i},
\]
allowing infinite sums involving increasing powers of the $\xi_i$, $i\neq 0$.
What makes this feasible is that every element of $\Mackey H_G^\bullet(\tE G)$
is annihilated by sufficiently high powers of the $\xi_i$, with the result that we have
isomorphisms
\[
 \Mackey P^*[e_i^{-1}]\tensor_{\Mackey H_G^\bullet(S^0)[e_i^{-1}]} \Mackey H_G^\bullet(\tE G)
 \iso \Mackey P^*[e_i^{-1}]^{\wedge}_{\xi_i}\tensor_{\Mackey H_G^\bullet(S^0)[e_i^{-1}]} \Mackey H_G^\bullet(\tE G)
\]
and
\begin{multline*}
 \Big(\Dirsum_{k=0}^{p-1}\Mackey H_G^\bullet(S^0)[e_i^{-1}][\sigma_k,\zeta_k^\alpha \mid \alpha_k = 0]\Big)
  \tensor_{\Mackey H_G^\bullet(S^0)[e_i^{-1}]} \Mackey H_G^\bullet(\tE G) \\
  \iso
 \Big(\Dirsum_{k=0}^{p-1}\Mackey H_G^\bullet(S^0)[e_i^{-1}][\sigma_k,\zeta_k^\alpha \mid \alpha_k = 0]^{\wedge}_{\xi_i}\Big)
  \tensor_{\Mackey H_G^\bullet(S^0)[e_i^{-1}]} \Mackey H_G^\bullet(\tE G).
\end{multline*}
Using the inverse matrix of $\bar f_{\tE G}$ therefore defines a map
\[
 \Dirsum_{k=0}^{p-1}\Mackey H_G^\bullet(S^0)[e_i^{-1}]
   [\sigma_k,\zeta_k^\alpha \mid \alpha_k = 0]
  \to \Mackey P^*[e_i^{-1}]^{\wedge}_{\xi_i}
\]
which, on tensoring with $\Mackey H_G^\bullet(\tE G)$, gives a left inverse to $f_{\tE G}$,
as the composite takes each basis element of $\Mackey P^*[e_i^{-1}]$ to itself.
Similarly, considering the completion $(\bar f_{\tE G})^{\wedge}_{\xi_i}$ and tensoring
with $\Mackey H_G^\bullet(\tE G)$, we see that it also gives a right inverse to $f_{\tE G}$.
Thus, $f_{\tE G}$ is an isomorphism as claimed.
\end{proof}

\begin{theorem}\label{thm:oddadditivestructure}
$\Mackey H_G^*(B_+)$ is a free $\Mackey H_G^\bullet(S^0)$-module.
The set of all admissible monomials, as defined in Definition~\ref{def:admissiblemonomials},
is a basis.
\end{theorem}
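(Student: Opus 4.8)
The plan is to deduce Theorem~\ref{thm:oddadditivestructure} from the two propositions just proved, namely Proposition~\ref{prop:fEG+} and Proposition~\ref{prop:ftEG}, via a five lemma argument. Recall that $\Mackey P^* = \Mackey H_G^\bullet(S^0)\{A\}$ is by construction a free $\Mackey H_G^\bullet(S^0)$-module on the $RO(\Pi_G B)$-graded set $A$ of admissible monomials, and that the set map sending each admissible monomial to the corresponding element of $\Mackey H_G^*(B_+)$ induces a map $f\colon \Mackey P^* \to \Mackey H_G^*(B_+)$ of $\Mackey H_G^\bullet(S^0)$-modules. So the entire content of the theorem is that $f$ is an isomorphism: surjectivity of $f$ gives that the admissible monomials generate, and injectivity gives that they are linearly independent over $\Mackey H_G^\bullet(S^0)$, which together is exactly the statement that $A$ is a basis of the free module $\Mackey H_G^*(B_+)$.

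First I would invoke freeness of $\Mackey P^*$: tensoring the long exact sequence of the cofibration $EG_+\to S^0\to\tE G$ with the flat (indeed free) $\Mackey H_G^\bullet(S^0)$-module $\Mackey P^*$ yields a long exact sequence relating $\Mackey P^*\tensorS \Mackey H_G^\bullet(\tE G)$, $\Mackey P^*$, and $\Mackey P^*\tensorS\Mackey H_G^\bullet(EG_+)$ — this is precisely the top row of the ladder diagram displayed just before the theorem. The map $f$, together with the induced maps $f_{EG_+}$ and $f_{\tE G}$, gives a map from this long exact sequence to the long exact sequence of the cofibration applied to $B_+$, i.e. the ladder with bottom row $\Mackey H_G^*(B_+\smsh_B\tE G)\to\Mackey H_G^*(B_+)\to\Mackey H_G^*(B_+\smsh_B EG_+)$. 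One checks that this ladder commutes, which follows from naturality of the connecting maps and from the fact that $f_{EG_+}$ and $f_{\tE G}$ are the maps induced by $f$ after smashing with $EG_+$ and $\tE G$ respectively (using that $\Mackey H_G^*(B_+\smsh_B EG_+)$ and $\Mackey H_G^*(B_+\smsh_B\tE G)$ are computed as the appropriate tensor products over $\Mackey H_G^\bullet(S^0)$, per Propositions~\ref{prop:calcFixedPoints}, \ref{prop:BEGtilde}, and Corollary~\ref{cor:BtimesEG}).

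Then I would apply the five lemma. By Proposition~\ref{prop:fEG+}, $f_{EG_+}$ is an isomorphism, and by Proposition~\ref{prop:ftEG}, $f_{\tE G}$ is an isomorphism. Since these hold in every grading, the five lemma applied degreewise to the commuting ladder of long exact sequences forces $f$ to be an isomorphism in every grading $\alpha\in RO(\Pi_G B)$, hence an isomorphism of $\Mackey H_G^\bullet(S^0)$-modules. Therefore $\Mackey H_G^*(B_+)$ is free on the image of $A$, which is the set of admissible monomials, completing the proof.

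Honestly there is not much of an obstacle left at this stage: essentially all the work has been front-loaded into Propositions~\ref{prop:fEG+} and~\ref{prop:ftEG}, whose proofs are the genuinely delicate parts (the upper/lower triangularity of the transition matrices with invertible diagonal entries, and in the $\tE G$ case the completion argument needed because columns of the inverse matrix may be infinite). The only thing requiring a little care here is to make sure the ladder diagram genuinely commutes — in particular that the square involving the connecting homomorphism $\delta$ commutes, since $\delta$ is where sign issues could in principle enter. This is handled by the naturality of the Puppe sequence in the ex-space $X$ over $B$, applied to $X = B$ itself versus the free modules, so I would spell that out but not belabor it.
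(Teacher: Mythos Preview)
Your proposal is correct and takes essentially the same approach as the paper: the paper's proof simply cites Propositions~\ref{prop:fEG+} and~\ref{prop:ftEG} and the map of long exact sequences, then concludes $f$ is an isomorphism by the five lemma. Your write-up is more explicit than the paper's (which is quite terse), but the argument is identical.
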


\begin{proof}
This follows from Propositions~\ref{prop:fEG+} and~\ref{prop:ftEG}, and the map of
long exact sequences
\[
 \xymatrix@C-1.5em{
 \cdots \ar[r]
  & \Mackey P^*\tensorS \Mackey H_G^\bullet(\tE G) \ar[r] \ar[d]_{f_{\tE G}}
  & \Mackey P^* \ar[r] \ar[d]_f
  & \Mackey P^*\tensorS \Mackey H_G^\bullet(EG_+) \ar[r] \ar[d]_{f_{EG_+}}
  & \cdots \\
 \cdots \ar[r]
  & \Mackey H_G^*(B_+\smsh \tE G) \ar[r]
  & \Mackey H_G^*(B_+) \ar[r]
  & \Mackey H_G^*(B_+\smsh EG_+) \ar[r]
  & \cdots
 }
\]
\end{proof}

\begin{corollary}\label{cor:oddmultgenerators}
$\Mackey H_G^*(B_+)$ is generated as an algebra over $\Mackey H_G^\bullet(S^0)$ by
the elements
\begin{align*}
 \chi^i c \quad& 0 \leq i \leq p-1 \\
 \bar\xi^\alpha \quad&  \alpha\in RO_+(\Pi_G B) \qquad\text{and} \\
 \bar\lambda^{\beta,b} \quad& \beta\in RO_0(\Pi_G B).
\end{align*}
\qed
\end{corollary}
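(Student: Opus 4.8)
The plan is to read this off directly from Theorem~\ref{thm:oddadditivestructure}. That theorem exhibits the set $A$ of admissible monomials (Definition~\ref{def:admissiblemonomials}) as a basis for $\Mackey H_G^*(B_+)$ as a module over $\Mackey H_G^\bullet(S^0)$; in particular every element of $\Mackey H_G^*(B_+)$ is an $\Mackey H_G^\bullet(S^0)$-linear combination of admissible monomials. So it suffices to verify that each admissible monomial lies in the $\Mackey H_G^\bullet(S^0)$-subalgebra generated by the elements $\chi^i c$ for $0\leq i\leq p-1$, the $\bar\xi^\alpha$ for $\alpha\in RO_+(\Pi_G B)$, and the $\bar\lambda^{\beta,b}$ for $\beta\in RO_0(\Pi_G B)$.

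This last point is immediate from the shape of an admissible monomial,
\[
 x = \big(\prod_i (\chi^i c)^{m_i} \big)\big(\prod_i (\chi^i c \cdot \xi_{i,1})^{\epsilon_i}\big)\big(\prod_i \xi_{i,1}^{n_i}\big)\bar\lambda^{\beta,b}.
\]
Each $\chi^i c$ is one of the listed generators. Each $\xi_{i,1}$ equals $\bar\xi^{\Omega_{i,1}}$ with $\Omega_{i,1}\in RO_+(\Pi_G B)$, so the $\xi_{i,1}$ are among the listed $\bar\xi^\alpha$, and hence each factor $\chi^i c\cdot\xi_{i,1}$ is a product of two listed generators. Finally, in an admissible monomial $\beta\in RO_0(\Pi_G B)$, so $\bar\lambda^{\beta,b}$ is one of the listed generators. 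Therefore $x$ is a product of the listed generators, and since such monomials span $\Mackey H_G^*(B_+)$ over $\Mackey H_G^\bullet(S^0)$, these elements generate $\Mackey H_G^*(B_+)$ as an algebra over $\Mackey H_G^\bullet(S^0)$.

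There is no genuine obstacle here: the entire content is carried by the additive freeness statement of Theorem~\ref{thm:oddadditivestructure} (itself proved through the triangular-matrix arguments for $f_{EG_+}$ and $f_{\tE G}$ in Propositions~\ref{prop:fEG+} and~\ref{prop:ftEG}). The only thing worth remarking is that the generating set named in the corollary is in fact somewhat redundant, since only the $\xi_{i,1}=\bar\xi^{\Omega_{i,1}}$ among the $\bar\xi^\alpha$, and only admissible choices of $\bar\lambda^{\beta,b}$, actually occur in the chosen module basis; listing all of $RO_+(\Pi_G B)$ and $RO_0(\Pi_G B)$ simply makes the statement cleaner and matches the notation used in Theorem~\ref{thm:multstructure}.
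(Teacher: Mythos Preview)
Your proposal is correct and is exactly the argument the paper intends: the corollary is marked \qed\ because it is immediate from Theorem~\ref{thm:oddadditivestructure}, and you have spelled out precisely that deduction. Your closing observation about redundancy also matches the content of Remark~\ref{rem:generators}.
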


\begin{remark}\label{rem:generators}
Note that, rather than taking all the elements $\bar\xi^\alpha$, it suffices to take
just the $p$ elements $\xi_{i,1}$, $0\leq i \leq p-1$.
Similarly, rather than take all $\bar\lambda^{\beta,b}$, we could restrict to just those
in which $\beta_k = 0$ for all but one $k$, and $b_k = 1$ except for that same index.
\end{remark}

In the following section we shall investigate the relations among these generators.

%-------------------------------------------------------
\section{The multiplicative structure of the cohomology of $B_GU(1)$}

In Corollary~\ref{cor:oddmultgenerators} we gave a set of multiplicative generators for
$\Mackey H_G^*(B_+)$ (which can be reduced further as in Remark~\ref{rem:generators}). 
In this section we describe a set of relations that
generate all the relations among these generators. We make no claim that
this set of relations is minimal.

We shall need some notation and calculations.

\begin{definition}
For $1\leq i \leq p-1$, let
\[
 s(i) = \begin{cases}
             1 & \text{if $i \leq p/2$} \\
             -1 & \text{if $i > p/2$.}
        \end{cases}
\]
We extend the definition to all $i\not\equiv 0 \pmod p$ by setting
$s(i) = s(j)$ where $j \equiv i \pmod p$ and $1\leq j \leq p-1$.
\end{definition}

\begin{lemma}\label{lem:elambda}
Let $1 \leq i, j \leq p-1$.
\begin{enumerate}
\item Let $a\in \nu(\MM_j-\MM_i)^{-1}$. Then $s(i)s(j)a \equiv ij^{-1} \pmod p$ and
\[
 e_i\lambda^{\MM_j-\MM_i,a} = s(i)s(j)ae_j
\]
in $\Mackey H_G^\bullet(S^0)$.
\item Let $b$ be such that $b\equiv ij^{-1} \pmod p$. Then
$s(i)s(j)b \in \nu(\MM_j-\MM_i)^{-1}$ and
\[
 be_j = e_i\lambda^{\MM_j-\MM_i,s(i)s(j)b}
\]
in $\Mackey H_G^\bullet(S^0)$.
\end{enumerate}
\end{lemma}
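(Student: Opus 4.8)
The two parts are essentially inverse bookkeeping statements, so the plan is to prove part~(1) carefully and then deduce part~(2) by substituting indices and using the redundancy/compatibility relations for the $\lambda$ classes. First I would reduce to the case $1\le i,j\le (p-1)/2$ (i.e.\ where $\MM_i$ and $\MM_j$ are already among the chosen irreducibles), since for $i$ or $j$ in the range $(p+1)/2\le\cdot\le p-1$ one has $\MM_i=\MM_{p-i}$ as real representations and the $s(\cdot)$ factors are exactly what account for the orientation flip; the identity $e_k=-e_{p-k}$ from the discussion after Definition~\ref{def:EulerClasses}, together with $\nu(\MM_j-\MM_i)=[j^{-1}i]$ (mod $\pm$), makes the sign bookkeeping in $s(i)s(j)a\equiv ij^{-1}$ a direct check. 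The key computational input is Proposition~\ref{prop:EulerClasses}, which gives $e_k=\mu^{\MM_k-\MM_1,a}e_1$ for the appropriate residue $a$, combined with the multiplicative relation $\mu^{\alpha,a}\mu^{\beta,b}=\mu^{\alpha+\beta,ab}$ and the formula $e_1\lambda^{\alpha,a}=ae_1\mu^{\alpha,a^{-1}}$ (listed among the implied relations in Theorem~\ref{thm:oddCohomPoint}).

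Concretely, for part~(1): write $\MM_j-\MM_i=(\MM_j-\MM_1)-(\MM_i-\MM_1)$, both summands lying in $RO_0(G)$, and use $\nu(\MM_j-\MM_1)=[j]$, $\nu(\MM_i-\MM_1)=[i]$, so $\nu(\MM_j-\MM_i)=[ji^{-1}]$ and hence $\nu(\MM_j-\MM_i)^{-1}=[ij^{-1}]$; this already gives the congruence $a\equiv ij^{-1}$ (in the reduced range; the $s(i)s(j)$ factors appear only when we leave it). Then compute
\[
 e_i\lambda^{\MM_j-\MM_i,a}
  = \mu^{\MM_i-\MM_1,i}\,e_1\,\lambda^{\MM_j-\MM_i,a}
  = \mu^{\MM_i-\MM_1,i}\,a\,e_1\,\mu^{\MM_j-\MM_i,a^{-1}},
\]
using $e_1\lambda^{\alpha,a}=ae_1\mu^{\alpha,a^{-1}}$, then collapse the two $\mu$'s via $\mu^{\MM_i-\MM_1,i}\mu^{\MM_j-\MM_i,a^{-1}}=\mu^{\MM_j-\MM_1,ia^{-1}}$ and observe $ia^{-1}\equiv j\pmod p$, so the product is $\mu^{\MM_j-\MM_1,j}$ up to the redundancy relation $\mu^{\alpha,a+p}=\mu^{\alpha,a}+\tau(\iota^\alpha)$; but $e_1\tau(\iota^\alpha)=0$ (an implied relation), so the ambiguity is killed after multiplying by $e_1$, and we land on $a\,\mu^{\MM_j-\MM_1,j}e_1=a\,e_j$. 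Reinstating the general-range index correction multiplies this by $s(i)s(j)$, giving the stated formula.

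For part~(2) I would simply set $a=s(i)s(j)b$; part~(1)'s congruence $s(i)s(j)a\equiv ij^{-1}$ becomes $b\equiv ij^{-1}$, which is the hypothesis, and since $s(i)s(j)=\pm1$ the condition $a\in\nu(\MM_j-\MM_i)^{-1}$ is equivalent to $s(i)s(j)b\in\nu(\MM_j-\MM_i)^{-1}$; then part~(1) reads $e_i\lambda^{\MM_j-\MM_i,s(i)s(j)b}=s(i)s(j)\cdot s(i)s(j)b\,e_j=be_j$, which is exactly the claim. I expect the only genuine obstacle to be getting every sign right when $i$ or $j$ exceeds $p/2$ — i.e.\ verifying that the $s(i)s(j)$ factors in the congruence and in the Euler-class identity are consistent with the conventions $e_k=-e_{p-k}$, $\MM_k=\MM_{p-k}$, and the definition of $\nu$ — so I would do that index reduction first and keep the reduced-range computation free of $s(\cdot)$'s.
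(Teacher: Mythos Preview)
Your proposal is correct and follows essentially the same approach as the paper's proof, which consists of the single sentence ``it amounts to the observation that the congruence class of $s(i)i$, modulo $p$, contains an element between $1$ and $(p-1)/2$.'' Your reduction to the range $1\le i,j\le (p-1)/2$ via $\tilde i\equiv s(i)i\pmod p$ and $e_i=s(i)e_{\tilde i}$ is exactly that observation, and your explicit chain using $e_i=\mu^{\MM_i-\MM_1,i}e_1$, the relation $e_1\lambda^{\alpha,a}=ae_1\mu^{\alpha,a^{-1}}$, the product rule $\mu^{\alpha,a}\mu^{\beta,b}=\mu^{\alpha+\beta,ab}$, and the vanishing $e_1\tau(\iota^\alpha)=0$ is the intended unpacking that the paper leaves to the reader.
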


\begin{proof}
If $p=2$ the result is trivial.
For $p$ odd,
it amounts to the observation that the congruence class of $s(i)i$, modulo $p$,
contains an element between $1$ and $(p-1)/2$.
\end{proof}

In the statement of the theorem below we will use the following elements, defined in terms of
our multiplicative generators. We use a notation introduced in
Definition~\ref{def:ekappa}.

\begin{definition}
Suppose that $\alpha\in RO(G)$ with $\alpha^G = 0$, $\beta\in RO_0(\Pi_G B)$, and $0\leq k\leq p-1$.
Then let
\[
 e^\alpha\bar\kappa_k^\beta
  = e^\alpha\Big(\prod_{i=1}^{p-1} e_i\Big)^{-1}\kappa^{\beta_k}
   \Big(\prod_{i\neq k} \chi^i c\cdot \xi_{i,i-k}\Big) \bar\lambda^{\beta-\beta_k,b}
   \in \Mackey H_G^{\alpha+\beta}(B_+)
\]
where $b$ is an appropriate integer vector with $b_k = 1$.
(The result is independent of the other entries of $b$, as will be shown
in Theorem~\ref{thm:multstructure}.)
\end{definition}

The point of this expression is that it defines an element having
the following image under $\eta$.

\begin{lemma}
The image $\eta(e^\alpha\bar\kappa^\beta_k)\in \Mackey H_G^*(B^G_+)$ is given by
\[
 \eta(e^\alpha\bar\kappa^\beta_k)_\ell =
  \begin{cases}
   e^\alpha\kappa^{\beta_k}\zeta_k^{\beta-\beta_k} & \text{if $\ell = k$} \\
   0 & \text{if $\ell\neq k$.}
  \end{cases}
\]
\end{lemma}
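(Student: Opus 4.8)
The plan is to use that $\eta$ is a ring homomorphism and that $\Mackey H_G^*(B^G_+)\iso\Dirsum_{\ell=0}^{p-1}\Mackey H_G^*((B_\ell)_+)$, so $\eta(e^\alpha\bar\kappa^\beta_k)_\ell$ is simply the product, inside $\Mackey H_G^*((B_\ell)_+)$, of the $\ell$-th coordinates of $\eta$ applied to each factor of $e^\alpha\bar\kappa^\beta_k$. First I would record the restriction formulas needed: for a point class $x\in\Mackey H_G^\bullet(S^0)$ (such as the factor $e^\alpha(\prod_i e_i)^{-1}\kappa^{\beta_k}$, which is a genuine element of $\Mackey H_G^\bullet(S^0)$ when read through Definition~\ref{def:ekappa}) the $\ell$-th coordinate of $\eta(x)$ is $x$ via the unit map; by the corollary to Proposition~\ref{prop:calcEulerClass}, $\eta(\chi^i c)_\ell=(e_{\ell-i}+\xi_{\ell-i}\sigma_\ell)\zeta_\ell^{\chi^i\omega_G^*-\MM_{\ell-i}}$; by Corollary~\ref{cor:xibar}, $\eta(\xi_{i,j})_\ell=\eta(\bar\xi^{\Omega_{i,j}})_\ell$ equals the unit $\zeta_\ell^{\Omega_{i,j}}$ when $\ell\neq i$ and equals $\xi_j\zeta_\ell^{\Omega_{i,j}-(\MM_j-2)}$ when $\ell=i$; and $\eta(\bar\lambda^{\gamma,b})_\ell=\lambda^{\gamma_\ell,b_\ell}\zeta_\ell^{\gamma-\gamma_\ell}$.

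The one non-formal ingredient is the vanishing $\kappa^{\beta_k}\cdot\xi_j=0$ in $\Mackey H_G^\bullet(S^0)$ for every $\beta_k\in RO_0(G)$ and every $j\not\equiv 0\pmod p$ (for $p=2$ this is just $\kappa\xi=0$). I would prove it by writing $\xi_j=\lambda^{\MM_j-\MM_1,a}\xi_1$ and combining the implied relations $\lambda^{\delta,a+p}\xi_1=\lambda^{\delta,a}\xi_1$ and $\lambda^{\delta,a+p}=\lambda^{\delta,a}+\kappa\mu^{\delta,a^{-1}}$ of Theorem~\ref{thm:oddCohomPoint}, which together force $\kappa\mu^{\delta,c}\xi_1=0$ for all $\delta\in RO_0(G)$; hence $\kappa^{\beta_k}\xi_j=\lambda^{\MM_j-\MM_1,a}\cdot(\kappa\mu^{\beta_k,b}\xi_1)=0$. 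The same computation, together with the relation $\xi_1\cdot e_1^{-m}\kappa=0$, shows that any multiple $e^\gamma\kappa^{\beta_k}$ with $\gamma^G=0$ is still annihilated by $\xi_j$.

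Then the two cases. For $\ell\neq k$: the index $i=\ell$ lies in the range of $\prod_{i\neq k}$, and $\eta(\xi_{\ell,\ell-k})_\ell=\xi_{j'}\zeta_\ell^{\Omega_{\ell,\ell-k}-(\MM_{j'}-2)}$ with $\MM_{j'}=\MM_{\ell-k}$ and $j'\not\equiv 0$; since $\eta(e^\alpha\bar\kappa^\beta_k)_\ell$ has $e^\alpha(\prod_i e_i)^{-1}\kappa^{\beta_k}$ and this $\xi_{j'}$ among its factors, the previous paragraph gives $\eta(e^\alpha\bar\kappa^\beta_k)_\ell=0$. For $\ell=k$: every $\xi_{i,i-k}$ with $i\neq k$ restricts to the unit $\zeta_k^{\Omega_{i,i-k}}$, and the $k$-th coordinate of $\bar\lambda^{\beta-\beta_k,b}$ is $\lambda^{0,b_k}\zeta_k^{\beta-\beta_k}=\zeta_k^{\beta-\beta_k}$ because $(\beta-\beta_k)_k=0$ and $b_k=1$; expanding $\prod_{i\neq k}(e_{k-i}+\xi_{k-i}\sigma_k)$, every term other than $\prod_{i\neq k}e_{k-i}=\prod_{j=1}^{p-1}e_j$ carries a factor $\xi_{k-i}$ with $k-i\not\equiv 0$ and so dies against $\kappa^{\beta_k}$, while $\prod_{j=1}^{p-1}e_j$ cancels $(\prod_{i=1}^{p-1}e_i)^{-1}$ exactly. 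What remains is $e^\alpha\kappa^{\beta_k}$ times a monomial in the units $\zeta_k$; since $\eta$ preserves grading and $e^\alpha\kappa^{\beta_k}$ has grading $\alpha+\beta_k$ while $\eta(e^\alpha\bar\kappa^\beta_k)_k$ has grading $\alpha+\beta$, that monomial must be $\zeta_k^{\beta-\beta_k}$, which is the asserted formula.

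The main obstacle is not mathematical depth but careful bookkeeping: making ``$(\prod_i e_i)^{-1}\kappa^{\beta_k}$'' precise via Definition~\ref{def:ekappa} so the argument never leaves honest cohomology; keeping the conventions $\MM_j=\MM_{p-j}$, $e_j=-e_{p-j}$, $e_0=0$, $\xi_0=1$ straight as $i$ runs over residues mod $p$; and trusting the grading argument to fix the $\zeta_k$-exponent rather than multiplying out $\sum_{i\neq k}(\chi^i\omega_G^*-\MM_{k-i}+\Omega_{i,i-k})$ and checking it vanishes (which it does, and which one could verify directly if preferred).
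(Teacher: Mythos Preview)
Your proof is correct and follows essentially the same approach as the paper: restrict each factor to $B_\ell$, observe that for $\ell\neq k$ a factor $\xi_{\ell-k}$ appears and is annihilated by the $\kappa$ present in $\kappa^{\beta_k}$, and for $\ell=k$ expand $\prod_{i\neq k}(e_{k-i}+\xi_{k-i}\sigma_k)$ and kill all terms containing a $\xi_j$. The only differences are cosmetic: you spell out the vanishing $\kappa^{\beta_k}\xi_j=0$ in more detail than the paper (which simply invokes $\kappa\xi_{\ell-k}=0$), and you pin down the exponent on $\zeta_k$ by a grading argument rather than by verifying directly that $\sum_{i\neq k}(\chi^i\omega_G^*-\MM_{k-i}+\Omega_{i,i-k})=0$ as the paper does parenthetically.
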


\begin{proof}
If $\ell\neq k$, then 
\[
 \eta(\xi_{\ell,\ell-k})_\ell = \xi_{\ell-k}\zeta_\ell^{\Omega_{\ell,\ell-k}-\MM_{\ell-k}+2}.
\]
Because $\kappa\xi_{\ell-k} = 0$, the presence of this factor
makes $\eta(e^\alpha\bar\kappa^\beta_k)_\ell = 0$.

So now consider $\eta(e^\alpha\bar\kappa^\beta_k)_k$. We calculate
\begin{align*}
 \Big(\prod_{i\neq k} \chi^i c\cdot \xi_{i,i-k}\Big)_k
  &= \prod_{i\neq k} (e_{k-i} + \xi_{k-i}\sigma_k)\zeta_k^{\chi^i\omega_G^* - \MM_{k-i} + \Omega_{i,i-k}} \\
  &= \Big(\prod_{i=1}^{p-1} e_i\Big) + \text{terms with factors $\xi_j$},
\end{align*}
(it is part of this calculation that
$\sum_{i\neq k}(\chi^i\omega_G^* - \MM_{k-i} + \Omega_{i,i-k}) = 0$)
and the latter terms vanish when we multiply by $\kappa$, so we get
\begin{align*}
 \eta\Big(e^\alpha\Big(\prod_{i=1}^{p-1} e_i\Big)^{-1}\kappa^{\beta_k}
   \Big(\prod_{i\neq k} \chi^i c\cdot {}&\xi_{i,i-k}\Big) \bar\lambda^{\beta-\beta_k,b}\Big)_k \\
  &= e^\alpha\Big(\prod_{i=1}^{p-1} e_i\Big)^{-1}\kappa^{\beta_k}\Big(\prod_{i=1}^{p-1} e_i\Big)
     \zeta_k^{\beta-\beta_k} \\
  &= e^\alpha\kappa^{\beta_k}\zeta_k^{\beta-\beta_k}. \qedhere
\end{align*}
\end{proof}

\begin{theorem}\label{thm:multstructure}
$\Mackey H_G^*(B_+)$ is generated as an algebra over $\Mackey H_G^\bullet(S^0)$ by
the elements
\begin{align*}
 \chi^i c &\in \Mackey H_G^{\chi^i\omega_G^*}(B_+) && 0 \leq i \leq p-1 \\
 \bar\xi^\alpha &\in \Mackey H_G^\alpha(B_+) &&  \alpha\in RO_+(\Pi_G B)  \\
 \bar\lambda^{\alpha,a} &\in \Mackey H_G^\alpha(B_+) 
    && \alpha\in RO_0(\Pi_G B),\ a_k\in\nu(\alpha_k)^{-1} \\
\end{align*}
All relations among these generators are consequences of the following relations,
where $\delta_i$ is the integer vector with $(\delta_i)_i = 1$ and all other entries $0$:
\begin{align*}
 \bar\xi^\alpha &= \xi^\alpha && \text{if $\alpha\in RO(G)$} 
\\
 \bar\lambda^{\alpha,a} &= \lambda^{\alpha,a}
   &&\text{if $\alpha\in RO(G)$ and $a$ is constant} 
\\
 \bar\xi^\alpha \bar\xi^\beta &= \bar\xi^{\alpha+\beta} 
\\
 \bar\lambda^{\alpha,a}\bar\lambda^{\beta,b} &= \bar\lambda^{\alpha+\beta,ab}
   &&\text{where $ab$ is taken coordinate-wise} 
\\
 \bar\xi^\alpha\bar\lambda^{\beta,b} &= \bar\xi^\gamma\bar\lambda^{\delta,d}
   &&\parbox[m]{2in}{\raggedright if $\alpha+\beta = \gamma+\delta$ 
           and \linebreak $b_k = d_k$ when $\alpha_k = 0$} 
\\
 e_1^{-m}\kappa \xi_{i,1}\bar\lambda^{\beta,b} &= e_1^{-m}\kappa \xi_{i,1}\bar\lambda^{\beta,b'}
   &&\text{if $b_k = b'_k$ for $k\neq i$}
\\
 \bar\lambda^{\alpha,a+p\delta_i} - \bar\lambda^{\alpha,a} &=
   \bar\kappa_i^\alpha
   && \text{for all $a$} 
\\
 e_1^{-m}\kappa\bar\lambda^{\alpha,a} &= \sum_{k=0}^{p-1} a_k e_1^{-m}\bar\kappa^\alpha_k
\\
 \bar\lambda^{\alpha,a}\cdot e_1^{-m}\bar\kappa_i^\beta &= a_i e_1^{-m}\bar\kappa_i^{\alpha+\beta}
\\
 \chi^i c \cdot \xi_{i,1} 
   &= \mathrlap{\chi^j c \cdot \xi_{j,1}
      \bar\lambda^{\chi^i\omega_G^* + \Omega_{i,1} - \chi^j\omega_G^* - \Omega_{j,1},a}
      - e_{i-j}\bar\lambda^{\chi^i \omega_G^* + \Omega_{i,1} - \MM_{i-j},b}} \\
   & \quad\mathrlap{\text{if $b_k = \begin{cases}
                        1 &\text{$k=j$ and $p$ odd}  \\
                        -1 &\text{$k=j$ and $p=2$}  \\
                        a_i & k=i \\
                        s(i-j)(s(k-j)a_k-s(k-i)) & k \neq i, j
                     \end{cases}$}}
\end{align*}
The following relations follows from the others:
\begin{align*}
 \bar\xi^\alpha \cdot e_1^{-m}\bar\kappa_i^\beta &= 0
   &&\text{if $\alpha_i \neq 0$}
\\
 e^\alpha\bar\kappa^\beta_i &= e^\gamma\bar\kappa^\delta_i
   &&\text{if $\alpha+\beta = \gamma+\delta$} 
\\
 \chi^i c \cdot \xi_{i,1} e_1^{-m}\bar\kappa^\alpha_k
   &= s(k-i)e_1^{-(m-1)}\bar\kappa_k^{\alpha+\chi^i\omega_G^*+\Omega_{i,1}-\MM_1}
   && \text{if $k\neq i$}
\end{align*}
\end{theorem}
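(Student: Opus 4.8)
The plan is to prove this in three stages: first establish that the listed relations hold in $\Mackey H_G^*(B_+)$, then show they suffice to generate all relations, and along the way verify the ``independence'' claims (that $e^\alpha\bar\kappa^\beta_k$ does not depend on the choice of the auxiliary vector $b$, etc.) and the derived relations. For the first stage, the strategy is uniform: since $\eta\colon \Mackey H_G^\alpha(B_+)\to\Mackey H_G^\alpha(B_+^G)$ is a monomorphism in all the relevant gradings by Proposition~\ref{prop:etamono} (each relation lives in a grading $\alpha$ with $\alpha_k^G$ even for all $k$, or with $|\alpha|\le 0$), it suffices to check each relation after applying $\eta$, i.e.\ componentwise in $\bigoplus_k \Mackey H_G^*((B_k)_+)$. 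Here I would use the explicit formulas $\eta(\chi^i c) = ((e_{k-i}+\xi_{k-i}\sigma_k)\zeta_k^{\chi^i\omega_G^*-\MM_{k-i}})_k$, $\eta(\bar\xi^\alpha) = (\xi^{\alpha_k}\zeta_k^{\alpha-\alpha_k})_k$, $\eta(\bar\lambda^{\alpha,a}) = (\lambda^{\alpha_k,a_k}\zeta_k^{\alpha-\alpha_k})_k$, and the lemma computing $\eta(e^\alpha\bar\kappa^\beta_k)$, reducing everything to identities in $\Mackey H_G^\bullet(S^0)$ already recorded in Theorems~\ref{thm:evenCohomPoint} and~\ref{thm:oddCohomPoint}. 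The key $\Mackey H_G^\bullet(S^0)$-level inputs are the redundancy relation $\mu^{\alpha,a+p} = \mu^{\alpha,a}+\tau(\iota^\alpha)$ (hence $\lambda^{\alpha,a+p} = \lambda^{\alpha,a}+\kappa\mu^{\alpha,a^{-1}}$), the Frobenius/Euler-class identities $\kappa\xi_1 = 0$, $e_1\tau(\iota^\alpha) = 0$, $\kappa e_1 = pe_1$, and Lemma~\ref{lem:elambda} relating $e_i\lambda^{\MM_j-\MM_i,a}$ to $s(i)s(j)ae_j$; the last of these is exactly what produces the sign pattern $s(i-j)(s(k-j)a_k - s(k-i))$ in the last listed relation. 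The vanishing-type relations ($\bar\xi^\alpha\cdot e_1^{-m}\bar\kappa_i^\beta = 0$ when $\alpha_i\ne0$) follow because the $i$th component then picks up a factor $\xi_j$ with $\kappa\xi_j=0$.

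The second and harder stage is completeness: showing every relation among the generators is a consequence of those listed. The approach is to use the free-module basis of Theorem~\ref{thm:oddadditivestructure}: every element of $\Mackey H_G^*(B_+)$ is uniquely an $\Mackey H_G^\bullet(S^0)$-combination of admissible monomials. So it suffices to show that, modulo the listed relations, an \emph{arbitrary} monomial $(\prod_i(\chi^ic)^{m_i})(\prod_i(\chi^ic\cdot\xi_{i,1})^{\epsilon_i})(\prod_i\xi_{i,1}^{n_i})\bar\lambda^{\beta,b}$ in the generators can be rewritten as an $\Mackey H_G^\bullet(S^0)$-linear combination of admissible ones. I would organize this as a rewriting procedure that enforces, one at a time, the six admissibility conditions of Definition~\ref{def:admissiblemonomials}: condition (2) ($n_i=0$ whenever $m_i>0$ or $\epsilon_i>0$) is forced using $\bar\xi^\alpha\bar\xi^\beta = \bar\xi^{\alpha+\beta}$ and the factorization $\chi^ic\cdot\xi_{i,1}^2 = (\chi^ic\cdot\xi_{i,1})\xi_{i,1}$; condition (4) (the $\epsilon_{k_i}$ form an initial segment in the fixed-dimension order) is achieved using the last listed relation, which lets one ``trade'' a $\chi^ic\cdot\xi_{i,1}$ for a $\chi^jc\cdot\xi_{j,1}$ at the cost of lower-order terms; condition (3) (some $\epsilon_i=n_i=0$) follows once a common power of $\xi_{i,1}$ can be pulled out and absorbed — here one uses that $\chi^jc\cdot\prod_{i\ne j}\xi_{i,1} $ relates to $\sigma_j$-type classes; condition (6) ($\beta_{k_I}=0$) is forced by the redundancy relation $\bar\lambda^{\alpha,a+p\delta_i}-\bar\lambda^{\alpha,a} = \bar\kappa_i^\alpha$ together with the relations expressing $e_1^{-m}\kappa\bar\lambda^{\alpha,a}$ and $\bar\lambda^{\alpha,a}e_1^{-m}\bar\kappa_i^\beta$ back in terms of the $\bar\kappa$'s; and condition (5) ($1\le b_i\le p-1$) is again the redundancy relation applied coordinatewise. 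The consistency of this rewriting — that it always terminates and lands on a \emph{unique} admissible combination — then follows formally from Theorem~\ref{thm:oddadditivestructure}: the map from the presented algebra to $\Mackey H_G^*(B_+)$ is surjective by Corollary~\ref{cor:oddmultgenerators}, the rewriting shows the presented algebra is spanned over $\Mackey H_G^\bullet(S^0)$ by (images of) admissible monomials, and since these are a free basis on the target, a dimension count in each grading forces injectivity.

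The derived relations at the end are then bookkeeping: $e^\alpha\bar\kappa_i^\beta = e^\gamma\bar\kappa_i^\delta$ when $\alpha+\beta=\gamma+\delta$ follows from $\bar\xi^\alpha\bar\lambda^{\beta,b}=\bar\xi^\gamma\bar\lambda^{\delta,d}$ and $e^\alpha\kappa = e_1^{|\alpha|}\kappa^{\alpha-|\alpha|\MM_1}$; and the formula for $\chi^ic\cdot\xi_{i,1}\cdot e_1^{-m}\bar\kappa_k^\alpha$ (with the sign $s(k-i)$) follows by multiplying the last main relation into the definition of $\bar\kappa_k^\alpha$ and simplifying via Lemma~\ref{lem:elambda} and $\kappa\xi_j=0$ to kill the $\chi^jc\cdot\xi_{j,1}$ term. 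The main obstacle I anticipate is the completeness argument, specifically verifying that the six-step rewriting procedure is confluent — that applying the relations in different orders yields the same admissible normal form — and that no new relations are silently needed to close the loops; the cleanest way around this is not to prove confluence by hand but to use the additive basis theorem as a ``rigidity'' input, as sketched above, so that one only needs \emph{a spanning statement} modulo the listed relations, never a uniqueness statement proved syntactically. A secondary technical nuisance will be keeping the sign functions $s(\cdot)$ and the modular inverses $a_k^{-1}$ straight across the $p=2$ and $p$ odd cases simultaneously, for which Lemma~\ref{lem:elambda} and the conventions $\mu^{0,a}=1+\tfrac{a-1}{2}g$, $\lambda^{0,a}=1+\tfrac{a-1}{2}\kappa$ from the end of Part~1 are the essential lubricant.
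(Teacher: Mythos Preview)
Your overall architecture matches the paper's: verify the relations by applying the injective map $\eta$ and computing componentwise in $\Mackey H_G^*(B_+^G)$, then prove completeness by showing that every monomial in the generators can be rewritten, using only the listed relations, as an $\Mackey H_G^\bullet(S^0)$-combination of admissible monomials. Your use of Theorem~\ref{thm:oddadditivestructure} to avoid a syntactic confluence argument is exactly right.

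However, there is a real gap in your completeness sketch. When you enforce conditions (5) and (6) via the redundancy relation $\bar\lambda^{\alpha,a+p\delta_i} - \bar\lambda^{\alpha,a} = \bar\kappa_i^\alpha$, you spin off error terms of the form
\[
 \big(\textstyle\prod_i (\chi^i c)^{m_i}\big)\big(\textstyle\prod_i \xi_{i,1}^{n_i}\big)\,\bar\kappa_j^\alpha,
\]
and these are \emph{not} admissible monomials (the element $\bar\kappa_j^\alpha$ is by definition a product of the generators with a coefficient from $\Mackey H_G^\bullet(S^0)$, but that product is itself not in admissible form). You note the relation $e_1^{-m}\kappa\,\bar\lambda^{\alpha,a} = \sum_k a_k e_1^{-m}\bar\kappa_k^\alpha$, which expresses certain multiples of admissible monomials \emph{in terms of} the $\bar\kappa$'s --- but that is the wrong direction; you need to invert it. The paper handles this with a specific argument you have not supplied: for fixed $m_i, n_i, \alpha$, one lets $N$ be the number of indices with $n_k = 0$, and for each $0 \le j \le N-1$ multiplies a carefully chosen admissible monomial (with $j$ of the $\epsilon$'s equal to $1$) by $e_1^{-j}\kappa^{\cdots}$. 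Using the listed relations, each such product expands as a linear combination of the $N$ elements $\big(\prod_i (\chi^i c)^{m_i}\big)\big(\prod_i \xi_{i,1}^{n_i}\big)\bar\kappa_{k_\ell}^\alpha$, and the resulting $N\times N$ system is lower triangular with diagonal entries $\pm 1$ (because one arranges $a^j_{k_j} = 1$). Inverting this system writes every $\bar\kappa$-type error term as a combination of admissible monomials. Without this step --- or an equivalent mechanism --- your rewriting procedure does not terminate in admissible monomials, and the spanning statement you need does not follow.
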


\begin{proof} 
{\bf The relations hold.}
We first check that the relations given actually hold.
Each relation listed takes place in a dimension with even fixed points,
so, by Proposition~\ref{prop:etamono}, it suffices to check if the relation holds
after applying $\eta$. Recall that the images of the generators are as follows:
\begin{align*}
 \eta(\chi^i c)_k &= (e_{k-i} + \xi_{k-i}\sigma_k)\zeta_k^{\chi^i\omega_G^*-\MM_{k-i}}
\\
 \eta(\bar\xi^\alpha)_k &= \xi^{\alpha_k}\zeta_k^{\alpha-\alpha_k}
\\
 \eta(\bar\lambda^{\alpha,a})_k &= \lambda^{\alpha_k,a_k}\zeta_k^{\alpha-\alpha_k}.
\end{align*}
The first five relations are clear, following from similar
relations in $\Mackey H_G^\bullet(S^0)$ after applying $\eta$.

The relation
$e_1^{-m}\kappa \xi_{i,1}\bar\lambda^{\beta,b} = e_1^{-m}\kappa \xi_{i,1}\bar\lambda^{\beta,b'}$,
if $b_k = b'_k$ for $k\neq i$, follows from the fact that the $i$th component of each
side is 0 after applying $\eta$, while the remaining components are identical.
One reason we need this relation is that it directly implies that $e^\alpha\bar\kappa^\beta$
does not depend on the choice of $b$ used in its definition,
so $e^\alpha\bar\kappa^\beta$ is well-defined based just on the relations listed
in the theorem.

The relation
$\bar\lambda^{\alpha,a+p\delta_i} - \bar\lambda^{\alpha,a} =
   \bar\kappa_i^\alpha$
follows from the calculation of $\eta(\bar\kappa_i^\alpha)$ in the lemma above,
and the identity
\[
 \lambda^{\alpha,a+p} - \lambda^{\alpha,a} = \kappa^\alpha
\]
in $\Mackey H_G^\bullet(S^0)$,
with the right side not depending on $a$. 

The relation
$e_1^{-m}\kappa\bar\lambda^{\alpha,a} = \sum_{k} a_ke_1^{-m}\bar\kappa^\alpha_k$
follows from the identity $e_1^{-m}\kappa\lambda^{\beta,b} = be_1^{-m}\kappa^{\beta}$
in $\Mackey H_G^\bullet(S^0)$.
The same identity is used to show the relation
$\bar\lambda^{\alpha,a}\cdot e_1^{-m}\bar\kappa_i^\beta = a_i e_1^{-m}\bar\kappa_i^{\alpha+\beta}$.

Finally, we compare $\chi^i c \cdot \xi_{i,1}$ and
$\chi^j c \cdot \xi_{j,1}
      \bar\lambda^{\chi^i\omega_G^* + \Omega_{i,1} - \chi^j\omega_G^* - \Omega_{j,1},a}$,
where we assume that $i\neq j$. 
We have
\begin{align*}
 \eta(\chi^i c \cdot \xi_{i,1})_k
  &= \begin{cases}
      \xi_1\sigma_i\zeta_i^{\chi^i\omega_G^*+\Omega_{i,1}-\MM_1} & k = i \\
      (e_{k-i}+\xi_{k-i}\sigma_k)\zeta_k^{\chi^i\omega_G^*+\Omega_{i,1}-\MM_{k-i}} & k\neq i
     \end{cases}
\\
\intertext{and}
 \chi^j c \cdot \xi_{j,1}
      \bar\lambda&^{\chi^i\omega_G^* + \Omega_{i,1} - \chi^j\omega_G^* - \Omega_{j,1},a}
\\
  &= \begin{cases}
      \xi_1\sigma_j\lambda^{\MM_{j-i}-\MM_1,a_j}\zeta_0^{\chi^i\omega_G^*+\Omega_{i,1}-\MM_{i}} & k = j \\
      (e_{i-j}+\xi_{i-j}\sigma_i)\lambda^{\MM_{1}-\MM_{i-j},a_i}\zeta_i^{\chi^i\omega_G^*+\Omega_{i,1}-\MM_{1}} & k = i
\\
      (e_{k-j}+\xi_{k-j}\sigma_k)\lambda^{\MM_{k-i}-\MM_{k-j},a_k}\zeta_k^{\chi^i\omega_G^*+\Omega_{i,1}-\MM_{k-i}} & k\neq i,j
     \end{cases}
\end{align*}
This gives
\begin{multline*}
 \eta(\chi^j c \cdot \xi_{j,1}
      \bar\lambda^{\chi^i\omega_G^* + \Omega_{i,1} - \chi^j\omega_G^* - \Omega_{j,1},a}
      - \chi^i c \cdot \xi_{i,1})_k
\\
 = \begin{cases}
     -e_{j-i} \zeta_j^{\chi^i\omega_G^*+\Omega_{i,1}-\MM_{i-j}} & k = j \\
     e_{i-j}\lambda^{\MM_1-\MM_{i-j},a_{i}} \zeta_i^{\chi^i\omega_G^*+\Omega_{i,1}-\MM_{1}} & k = i \\
     (e_{k-j}\lambda^{\MM_{k-i}-\MM_{k-j},a_{k}} - e_{k-i})\zeta_k^{\chi^i\omega_G^*+\Omega_{i,1}-\MM_{k-i}} & k\neq i,j.
   \end{cases}
\end{multline*}
By Lemma~\ref{lem:elambda}, in the case $k\neq i,j$,
\begin{align*}
 e_{k-j}\lambda^{\MM_{k-i}-\MM_{k-j},a_{k}} - e_{k-i}
  &= s(k-i)s(k-j)a_{k}e_{k-i} - e_{k-i} \\
  &= (s(k-i)s(k-j)a_{k} - 1) e_{k-i}
\end{align*}
with $s(k-i)s(k-j)a_{k} \equiv (k-j)(k-i)^{-1} \pmod p$. So
\begin{align*}
 s(k-i)s(k-j)a_{k} - 1 
  &\equiv (k-j)(k-i)^{-1} - 1 \\
  &\equiv (k-j)(k-i)^{-1} - (k-i)(k-i)^{-1} \\
  &\equiv (i-j)(k-i)^{-1} \pmod p.
\end{align*}
Hence, applying the lemma again,
\begin{align*}
  (s(k-i)s(k-j)a_{k} - 1) e_{k-i}
  &= e_{i-j}\lambda^{\MM_{k-i}-\MM_{i-j},s(k-i)s(i-j)(s(k-i)s(k-j)a_k-1)} \\
  &= e_{i-j}\lambda^{\MM_{k-i}-\MM_{i-j},s(i-j)(s(k-j)a_k-s(k-i))}.
\end{align*}
Thus, we can write
\begin{multline*}
 \eta(\chi^j c \cdot \xi_{j,1}
      \bar\lambda^{\chi^i\omega_G^* + \Omega_{i,1} - \chi^j\omega_G^* - \Omega_{j,1},a}
      - \chi^i c \cdot \xi_{i,1})_k
\\
 = \begin{cases}
     -e_{j-i} \zeta_j^{\chi^i\omega_G^*+\Omega_{i,1}-\MM_{i-j}} & k = j \\
     e_{i-j}\lambda^{\MM_1-\MM_{i-j},a_{i}} \zeta_i^{\chi^i\omega_G^*+\Omega_{i,1}-\MM_{1}} & k = i \\
     e_{i-j}\lambda^{\MM_{k-i}-\MM_{i-j},s(i-j)(s(k-j)a_k-s(k-i))}\zeta_k^{\chi^i\omega_G^*+\Omega_{i,1}-\MM_{k-i}} & k\neq i,j.
   \end{cases}
\end{multline*}
Noting that $-e_{j-i} = e_{i-j}$ if $p$ is odd, but $-e_{j-i} = -e_{i-j}$ if $p=2$,
it follows that
\[
 \chi^j c \cdot \xi_{j,1}
      \bar\lambda^{\chi^i\omega_G^* + \Omega_{i,1} - \chi^j\omega_G^* - \Omega_{j,1},a}
      - \chi^i c \cdot \xi_{i,1}
  = e_{i-j}\bar\lambda^{\chi^i\omega_G^*+\Omega_{i,1}-\MM_{i-j},b}
\]
with $b$ as in the statement of the theorem.

We now show that the other relations given in the statement of the theorem follow from
the ones we have just shown. (We could verify them by applying $\eta$, but the point
is not just that they're true, but that they follow from the other relations.)

Consider $\xi_{i,j}\cdot e_1^{-m}\bar\kappa_i^\beta$. If we substitute the definition
of $e_1^{-m}\kappa_i^\beta$, we will have as a factor in the result
\[
 \xi_{i,j}\prod_{k\neq i}\xi_{k,k-i}
  = \xi_1\bar\lambda^{\alpha,a}
\]
where $\alpha = \Omega_{i,j} + \sum_{k\neq i}\Omega_{k,k-i} - (\MM_1-2)$, and
$a$ is any suitable vector. 
(This is a special case of the identity 
$\bar\xi^\alpha\bar\lambda^{\beta,b} = \bar\xi^\gamma\bar\lambda^{\delta,d}$
with the conditions as in the statement of the theorem.)
Also present is a factor of $\kappa$, and $\kappa\xi_1 = 0$,
hence $\xi_{i,j}\cdot e_1^{-m}\bar\kappa_i^\beta = 0$.
From this follows the general result that 
$\bar\xi^\alpha\cdot e_1^{-m}\bar\kappa_i^\beta = 0$ if $\alpha_i \neq 0$.

The relation $e^\alpha\bar\kappa^\beta_i = e^\gamma\bar\kappa^\delta_i$
for suitable $\alpha$, $\beta$, $\gamma$, and $\delta$, follows from the definition
of $e^\alpha\bar\kappa^\beta_i$ and the similar identity
$e^\alpha\kappa^{\beta_i} = e^\gamma\kappa^{\delta_i}$ in $\Mackey H_G^\bullet(S^0)$.

For the final relation, if $k\neq i$, we have
\begin{align*}
 \chi^i c \cdot \xi_{i,1} &e_1^{-m}\bar\kappa_k^\alpha \\
  &= (\chi^k c\cdot \xi_{k,1}\bar\lambda^{\chi^i\omega_G^*+\Omega_{i,1}-\chi^k\omega_G^*-\Omega_{k,1},a}
      - e_{i-k}\bar\lambda^{\chi^i\omega_G^*+\Omega_{i,1}-\MM_{i-k},b}) e_1^{-m}\bar\kappa_k^\alpha \\
  &= -e_{i-k}\bar\lambda^{\chi^i\omega_G^*+\Omega_{i,1}-\MM_{i-k},b} e_1^{-m}\bar\kappa_k^\alpha \\
  &= -b_k e_{i-k}e_1^{-m}\bar\kappa_k^{\alpha+\chi^i\omega_G^*+\Omega_{i,1}-\MM_{i-k}}
\end{align*}
For suitatble $a$ and $b$. Now, if $p=2$, then $b_k = -1$ and $e_{i-k} = e_1$, so the relation follows in this case.
If $p$ is odd, then $b_k = 1$ and $-e_{i-k} = e_{k-i}$. Further, we have
\[
 e_{k-i} = s(k-i)e_1\mu^{\MM_{k-i}-\MM_1,d}, 
\]
for $d\in \nu(\MM_{k-i}-\MM_1)$,
so again we get the result stated.

{\bf Reducing to admissible monomials, almost.}
We now want to show that the relations given so far determine the whole algebra.
By Theorem~\ref{thm:oddadditivestructure}, it suffices to show that these
relations allow us to rewrite any monomial in the generators
as a linear combination of admissible monomials with coefficients in
$\Mackey H_G^\bullet(S^0)$.
So consider a general monomial
\[
 x = \big(\prod_i (\chi^i c)^{m_i}\big) \bar\xi^\alpha\bar\lambda^{\beta,b}.
\]
Using the relations 
$\bar\xi^{\Omega_{i,j}} = \xi_{i,1}\bar\lambda^{\Omega_{i,j}-\Omega_{i,1},j^{-1}\delta_i}$
and regrouping elements, we may rewrite $x$ in the form
\[
 x = \big(\prod_i (\chi^i c)^{m_i}\big) \big(\prod_i (\chi^i c \cdot\xi_{i,1})^{q_i}\big)
    \big(\prod_i \xi_{i,1}^{n_i} \big) \bar\lambda^{\beta,b}
\]
where, for each $i$, at least one of $m_i$ or $n_i$ is $0$ (so each $q_i$ is as large as possible).
This property will be maintained in each step to follow.

There will be an implicit induction on the degree $\sum m_i + \sum q_i + \sum n_i$ in the following.
For example, if, for each $i$, $q_i + n_i > 0$, then $x$ contains a factor of
$\xi_1 = \prod_i \xi_{i,1}$, so can be written as $\xi_1$ times a monomial of lower degree,
which, by induction, we can assume can be written in terms of admissible monomials.

Suppose that $\sum q_i$ is at least as large as the number of $i$ for which $n_i = 0$.
Using the relation
\[
 \chi^i c \cdot \xi_{i,1} 
   = \chi^j c \cdot \xi_{j,1}
      \bar\lambda^{\chi^i\omega_G^* + \Omega_{i,1} - \chi^j\omega_G^* - \Omega_{j,1},a}
      - e_{i-j}\bar\lambda^{\chi^i \omega_G^* + \Omega_{i,1} - \MM_{i-j},b}
\]
(for suitable $a$ and $b$)
we can, at the expense of spinning off terms of lower degree that can be handled by induction, change factors
$\chi^i c \cdot \xi_{i,1}$ to factors $\chi^j c \cdot \xi_{j,1}$
in such a way that, for each $i$, at least one of $q_i$ or $n_i$ is nonzero.
We may then factor out a $\xi_1$ to get to a monomial of lower degree.
Hence, we may assume that $\sum q_i$ is less than the number of terms for which $n_i = 0$.

Let $(k_i)$ be an ordering of the fixed points $|x|^G_k$ from highest to lowest.
Because $m_i > 0$ implies $n_i = 0$, the nonzero $m_i$ must all occur before the nonzero $n_i$
in this order. 
By changing factors $\chi^i c\cdot\xi_{i,1}$ to factors $\chi^j c\cdot\xi_{j,1}$ as above,
and using the assumption that $\sum q_i$ is less than the number of $i$ for
which $n_i = 0$, we can arrange that each $q_i$
is either $0$ or $1$ and that the nonzero $q_i$ occur first in the order, before the zero $q_i$.
Let us rename $q_i$ as $\epsilon_i$, and note that we have now reduced to the case
\[
 x = \big(\prod_i (\chi^i c)^{m_i}\big) \big(\prod_i (\chi^i c \cdot\xi_{i,1})^{\epsilon_i}\big)
    \big(\prod_i \xi_{i,1}^{n_i} \big) \bar\lambda^{\beta,b}
\]
with the $m_i$, $\epsilon_i$, and $n_i$ satisfying the first four
requirements in Definition~\ref{def:admissiblemonomials} for an admissible monomial.

To achieve the sixth requirement in that definition,
that $\beta_{k_I} = 0$, where $I$ is the least index such that $\epsilon_{k_I} = 0$,
use the relations to write
\[
 \bar\lambda^{\beta,b} = \lambda^{\beta_{k_I},b_{k_I}}\bar\lambda^{\beta-\beta_{k_I},a}
   + \sum_j \tfrac{1}{p}(b_j - b_{k_I}a_j)\bar\kappa^\beta_j
\]
for some $a$. 
We can then write
\begin{align*}
 x &= \lambda^{\beta_{k_I},b_{k_I}}\big(\prod_i (\chi^i c)^{m_i}\big) \big(\prod_i (\chi^i c \cdot\xi_{i,1})^{\epsilon_i}\big)
    \big(\prod_i \xi_{i,1}^{n_i} \big) \bar\lambda^{\beta-\beta_{k_I},a} \\
   &\qquad {} + \sum_j \big(\prod_i (\chi^i c)^{m_i}\big) \big(\prod_i (\chi^i c \cdot\xi_{i,1})^{\epsilon_i}\big)
    \big(\prod_i \xi_{i,1}^{n_i} \big) \tfrac{1}{p}(b_j - b_{k_I}a_j)\bar\kappa^\beta_j.
\end{align*}
This exhibits $x$ as the sum of a multiple of a monomial satisfying all but the fifth condition
and a linear combination of terms involving the $\bar\kappa^\beta_i$. We will see below
how to deal with the latter terms.

So let us assume that $x$ now satisfies all but the fifth condition, which requires that
$1 \leq b_i \leq p-1$ for all $i$. Let $b'$ be the vector with $1\leq b'_i \leq p-1$ and
$b'_i \equiv b_i \pmod p$ for all $i$. Then
\begin{align*}
 x &= \big(\prod_i (\chi^i c)^{m_i}\big) \big(\prod_i (\chi^i c \cdot\xi_{i,1})^{\epsilon_i}\big)
    \big(\prod_i \xi_{i,1}^{n_i} \big) \bar\lambda^{\beta,b'} \\
   &\qquad {} + \sum_j \big(\prod_i (\chi^i c)^{m_i}\big) \big(\prod_i (\chi^i c \cdot\xi_{i,1})^{\epsilon_i}\big)
    \big(\prod_i \xi_{i,1}^{n_i} \big) \tfrac{1}{p}(b_j - b'_j)\bar\kappa^\beta_j 
\end{align*}
displays $x$ as the sum of an admissible monomial and a linear combination of terms of the form
\[
 \big(\prod_i (\chi^i c)^{m_i}\big) \big(\prod_i (\chi^i c \cdot\xi_{i,1})^{\epsilon_i}\big)
    \big(\prod_i \xi_{i,1}^{n_i} \big) \bar\kappa^\beta_j.
\]

{\bf Terms involving $\bar\kappa_j^\alpha$.}
To finish the last two steps, it now remains to show that terms of the form displayed just above can be written as linear combinations of admissible monomials.
It is tempting to substitute the definition of $\bar\kappa_j^\beta$, but that will not produce
an admissible monomial and will just take us back to the start of this whole argument.
Instead, we proceed as follows.

Because of the relation $\xi_{j,1}\bar\kappa^\beta_j = 0$, we may assume that $\epsilon_j = 0 = n_j$.
Using the relation
$\chi^{i} c \cdot\xi_{i,1}\bar\kappa^\beta_{j} 
 = e_1\bar\kappa^{\beta+\chi^i\omega_G^*+\Omega_{i,1}-M_1}_{j}$
when $i\neq j$, we may further reduce to the case where all $\epsilon_i = 0$.
So it remains to show that all elements of the form
\[
 \big(\prod_i (\chi^i c)^{m_i}\big) 
    \big(\prod_i \xi_{i,1}^{n_i} \big)
    \bar\kappa^{\alpha}_{j},
\]
where $n_j = 0$ and, for each $i$, at least one of $m_i$ or $n_i$ is zero, 
can be written as linear combinations of admissible monomials.

Consider then a fixed $\alpha\in RO_0(\Pi_G B)$ and a fixed collection of nonnegative integers
$m_i$ and $n_i$ such that, for each $i$, at least one of $m_i$ or $n_i$ is $0$.
Let $(k_i)$ be the ordering of the integers $m_i-n_i$ from highest to lowest.
Let $N$ be the number of $i$ such that $n_i = 0$.
To ease notation, write
\[
 \beta^i = \chi^{k_i}\omega_G^* + \Omega_{k_i,1} - \MM_1 \in RO_0(\Pi_G B).
\]
For $0\leq j \leq N-1$, write $a^{j} = (a^{j}_k)_k$ where
$1\leq a^{j}_k \leq p-1$ and
\[
 a^{j}_k \in \nu(\alpha_k - \alpha_{k_j} - \sum_{i=0}^{j-1}(\beta^{i}_k-\beta^{i}_{k_j})).
\]
Note that $a^j_{k_j} = 1$.
For $0\leq j \leq N-1$, consider
\begin{multline*}
 e_1^{-j}\kappa^{\alpha_{k_j} - \sum_{i=0}^{j-1}\beta^{i}_{k_j}} \big(\prod_i (\chi^i c)^{m_i}\big) 
    \big(\prod_{i=0}^{j-1} \chi^{k_i} c \cdot\xi_{k_i,1}\big)
    \big(\prod_i \xi_{i,1}^{n_i} \big)
    \bar\lambda^{\alpha-\alpha_{k_{j}}-\sum_{i=0}^{j-1}(\beta^{i}-\beta^{i}_{k_j}), a^j} \\
  = \sum_{\ell=j}^{M-1} \big(\prod_i (\chi^i c)^{m_i}\big) 
    \big(\prod_{i=0}^{j-1} \chi^{k_i} c \cdot\xi_{k_i,1}\big)
    \big(\prod_i \xi_{i,1}^{n_i} \big)
    a^j_{k_\ell}e_1^{-j}
      \bar\kappa^{\alpha-\sum_{i=0}^{j-1}\beta^{i}_{k_\ell}} \\
  = \sum_{\ell=j}^{M-1} \pm a^j_{k_\ell}\big(\prod_i (\chi^i c)^{m_i}\big) 
    \big(\prod_i \xi_{i,1}^{n_i} \big)
    \bar\kappa^{\alpha}_{k_\ell}
\end{multline*}
For the first equality we use the relations
$e_1^{-j}\kappa\lambda^{\alpha,a} = \sum_k a_k e_1^{-j}\bar\kappa^\alpha_k$,
$\xi_{i,1}e_1^{-j}\bar\kappa^\alpha_i = 0$, and 
$\mu^{\alpha,a}e_1^{-j}\bar\kappa^\beta = e_1^{-j}\bar\kappa^{\alpha+\beta}$.
For the second equality we use the relation
$\chi^{k_i} c \cdot\xi_{k_i,1}e_1^{-m}\bar\kappa^\alpha_{k_\ell} 
 = \pm e_1^{-(m-1)}\bar\kappa^{\alpha+\beta^i}_{k_\ell}$
when $i\neq \ell$.
In the display above, on the left is a multiple of an admissible monomial.
On the right is a linear combination of elements of the form
$\big(\prod_i (\chi^i c)^{m_i}\big) 
    \big(\prod_i \xi_{i,1}^{n_i} \big)
    \bar\kappa^{\alpha}_{k_\ell}$,
for the $N$ indices $k_\ell$ for which $n_{k_\ell} = 0$.
Letting $j$ vary from $0$ to $N-1$, we get a system of $N$ linear equations
in the $N$ elements 
$\big(\prod_i (\chi^i c)^{m_i}\big) 
    \big(\prod_i \xi_{i,1}^{n_i} \big)
    \bar\kappa^{\alpha}_{k_\ell}$.
The matrix of this system is
lower triangular, with diagonal entries equal to $\pm 1$ because we have chosen
$a^j_{k_j} = 1$ for each $j$. Therefore, we can solve this system. 
The result is to write every element of the
form
$\big(\prod_i (\chi^i c)^{m_i}\big) 
    \big(\prod_i \xi_{i,1}^{n_i} \big)
    \bar\kappa^{\alpha}_{j}$,
$n_j = 0$, as a linear combination of admissible monomials, as required.
\end{proof}

The statement for the case $p=2$ can be simplified greatly, so we state it
here explicitly.

\begin{corollary}\label{cor:evenResult}
If $p=2$, then $\Mackey H_G^*(B_+)$ is generated as an algebra over $\Mackey H_G^\bullet(S^0)$
by the elements
\begin{align*}
 c &\in \Mackey H_G^{\omega_G^*}(B_+) \\
 \chi c &\in \Mackey H_G^{\chi\omega_G^*}(B_+) \\
 \xi_{0,1} &\in \Mackey H_G^{\Omega_{0,1}}(B_+) = \Mackey H_G^{\chi\omega_G^* - 2}(B_+) \qquad\text{and}\\
 \xi_{1,1} &\in \Mackey H_G^{\Omega_{1,1}}(B_+) = \Mackey H_G^{\omega_G^* - 2}(B_+)
\end{align*}
subject to the two relations
\begin{align*}
 \xi_{0,1}\xi_{1,1} &= \xi_1 \qquad\text{and} \\
 \chi c \cdot \xi_{1,1} &= (1-\kappa)c\cdot \xi_{0,1} + e_1
\end{align*}
\end{corollary}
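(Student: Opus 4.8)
The plan is to derive Corollary~\ref{cor:evenResult} from Theorem~\ref{thm:multstructure} by specializing to $p=2$, where the decisive simplification is that $RO_0(\Pi_G B)=0$. For the generation statement I would invoke Theorem~\ref{thm:oddadditivestructure}: $\Mackey H_G^*(B_+)$ is free over $\Mackey H_G^\bullet(S^0)$ on the admissible monomials of Definition~\ref{def:admissiblemonomials}. When $p=2$ each admissible monomial has $\beta=0$ and, by condition~(5), $b=(1,1)$, so its $\bar\lambda^{\beta,b}$ factor equals $\bar\lambda^{0,(1,1)}=1$; hence every admissible monomial is a product of $c=\chi^0 c$, $\chi c$, $\xi_{0,1}=\bar\xi^{\Omega_{0,1}}$, and $\xi_{1,1}=\bar\xi^{\Omega_{1,1}}$ (since each $\bar\xi^\alpha$, $\alpha\in RO_+(\Pi_G B)$, is a monomial in $\xi_{0,1}$ and $\xi_{1,1}$), and these four elements generate.

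Next I would verify that the two stated relations hold. The relation $\xi_{0,1}\xi_{1,1}=\xi_1$ is $\bar\xi^{\Omega_{0,1}}\bar\xi^{\Omega_{1,1}}=\bar\xi^{\Omega_{0,1}+\Omega_{1,1}}=\bar\xi^{\MM_1-2}=\xi_1$, from Corollary~\ref{cor:xibar}. For the second, I would specialize the relation expressing $\chi^i c\cdot\xi_{i,1}$ in Theorem~\ref{thm:multstructure} to $i=0$, $j=1$, $a=(1,1)$: since $\omega_G^*+\Omega_{0,1}-\chi\omega_G^*-\Omega_{1,1}=0=\omega_G^*+\Omega_{0,1}-\MM_1$ and $e_{-1}=e_1$, it becomes $c\cdot\xi_{0,1}=\chi c\cdot\xi_{1,1}-e_1\bar\lambda^{0,(1,-1)}$. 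The redundancy relation $\bar\lambda^{0,a+2\delta_1}-\bar\lambda^{0,a}=\bar\kappa_1^0$, together with the value $\bar\kappa_1^0=e_1^{-1}\kappa\,c\,\xi_{0,1}$ read off from the definition of $e^\alpha\bar\kappa_k^\beta$ at $\alpha=\beta=0$ (where $e_1^{-1}\kappa=e^{-2}\kappa\in\Mackey H_G^\bullet(S^0)$ by Theorem~\ref{thm:evenCohomPoint}), gives $\bar\lambda^{0,(1,-1)}=1-e_1^{-1}\kappa\,c\,\xi_{0,1}$, and substituting and rearranging yields $\chi c\cdot\xi_{1,1}=(1-\kappa)c\cdot\xi_{0,1}+e_1$.

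It then remains to see that these two relations imply all the others. By Theorem~\ref{thm:multstructure} it suffices to check that each of its relation families collapses for $p=2$. The redundancy relations, with $\bar\kappa_0^0=e_1^{-1}\kappa\,\chi c\,\xi_{1,1}$ and $\bar\kappa_1^0=e_1^{-1}\kappa\,c\,\xi_{0,1}$, let one rewrite every $\bar\lambda^{0,a}$ as $1+\frac{a_0-1}{2}\,e_1^{-1}\kappa\,\chi c\,\xi_{1,1}+\frac{a_1-1}{2}\,e_1^{-1}\kappa\,c\,\xi_{0,1}$, so the $\bar\lambda$ generators disappear and every relation mentioning them becomes a statement over $\Mackey H_G^\bullet(S^0)$ in $c$, $\chi c$, $\xi_{0,1}$, $\xi_{1,1}$. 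The relations $\bar\xi^\alpha=\xi^\alpha$ and $\bar\xi^\alpha\bar\xi^\beta=\bar\xi^{\alpha+\beta}$ reduce to $\xi_{0,1}^m\xi_{1,1}^m=\xi_1^m$, a consequence of the first relation, while the constant-$a$ case of $\bar\lambda^{\alpha,a}=\lambda^{\alpha,a}$, the relation $\bar\lambda^{\alpha,a}\bar\lambda^{\beta,b}=\bar\lambda^{\alpha+\beta,ab}$, the relations involving $\kappa$ and the $\bar\kappa_i^\beta$, and the remaining instance ($i=1$, $j=0$) of the $\chi^i c\cdot\xi_{i,1}$-relation reduce to identities following from the second relation and the point-level facts $\kappa\xi_1=0$, $\kappa e_1=2e_1$, and $\kappa^2=2\kappa$.

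The main obstacle is precisely this last step: it carries no conceptual content, but one must push the bookkeeping through, and in particular compute the products $\bar\kappa_0^0\bar\kappa_0^0$, $\bar\kappa_1^0\bar\kappa_1^0$, and $\bar\kappa_0^0\bar\kappa_1^0$, since it is there that $\kappa\xi_1=0$ and $\kappa^2=2\kappa$ are needed to make the rewritten expressions agree.
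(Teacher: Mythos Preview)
Your proposal is correct and follows essentially the same route as the paper: specialize Theorem~\ref{thm:multstructure} to $p=2$, eliminate the $\bar\lambda$ generators using $RO_0(\Pi_G B)=0$, keep $\xi_{0,1}\xi_{1,1}=\xi_1$, and extract the second relation from the $\chi^i c\cdot\xi_{i,1}$ family.

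The one tactical difference worth noting is in how the second relation is derived. You take $(i,j)=(0,1)$ with $a=(1,1)$, which forces the non-constant vector $b=(1,-1)$; you then have to compute $\bar\lambda^{0,(1,-1)}$ via the redundancy relation and the explicit formula for $\bar\kappa_1^0$, and finally rearrange. The paper instead takes $(i,j)=(1,0)$ with the \emph{constant} choice $a=(-1,-1)$, which forces $b=(-1,-1)$ also constant; then $\bar\lambda^{0,(-1,-1)}=\lambda^{0,-1}=1-\kappa$ immediately from the ``$\bar\lambda^{\alpha,a}=\lambda^{\alpha,a}$ when $a$ is constant'' relation, and using $e_1(1-\kappa)=-e_1$ gives the stated relation in one line without ever invoking $\bar\kappa_i^0$. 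Your path works, but the paper's choice of a constant $a$ is the slicker move here. For the final ``all other relations follow'' step, the paper simply leaves it to the reader, so your more detailed sketch is if anything more thorough than what the paper provides.
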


\begin{proof}
We first note that we do not need to include any elements $\bar\lambda^{\alpha,a}$
as generators, largely because $RO_0(\Pi_G B) = 0$. Explicitly, from the definition we have
\begin{align*}
 \bar\kappa_0^0 &= e_1^{-1}\kappa\cdot\chi c\cdot \xi_{1,1} \quad\text{and} \\
 \bar\kappa_1^0 &= e_1^{-1}\kappa\cdot c\cdot \xi_{0,1},
\end{align*}
and then, using $\bar\lambda^{0,(1,1)} = 1$, we can write
\begin{align*}
 \bar\lambda^{0,a} 
  &= 1 + \frac{a_0-1}{2}\bar\kappa_0^0 + \frac{a_1-1}{2}\bar\kappa_1^0 \\
  &= 1 + \frac{a_0-1}{2}e_1^{-1}\kappa\cdot\chi c\cdot \xi_{1,1}
      + \frac{a_1-1}{2}e_1^{-1}\kappa\cdot c\cdot \xi_{0,1},
\end{align*}
exhibiting $\bar\lambda^{0,a}$ in terms of the other generators.

For the relations, we can dispense with those that merely describe how the
$\bar\lambda^{\alpha,a}$ behave.
We do need the relation $\xi_{0,1}\xi_{1,1} = \xi_1$, which suffices to
describe the behavior of the $\bar\xi^\alpha$.

The only other relation required then is the one relating $\chi^i c\cdot\xi_{i,1}$
to $\chi^j c \cdot \xi_{j,1}$, and it suffices to consider the case $i = 1$ and $j=0$.
Taking $a = (-1,-1)$ in the general relation requires $b = (-1,-1)$ and we note that
$\bar\lambda^{0,(-1,-1)} = \lambda^{0,-1} = 1-\kappa$. Using the fact that
$e_1(1-\kappa) = -e_1$, the relation becomes
\[
 \chi c \cdot \xi_{1,1} = (1-\kappa)c\cdot \xi_{0,1} + e_1
\]
as stated. 
Note that, if we multiply by $1-\kappa$ and use the identity $(1-\kappa)^2 = 1$, we
get the equivalent relation
\[
 c\cdot\xi_{0,1} = (1-\kappa)\chi c\cdot\xi_{1,1} + e_1.
\]
We leave to the reader to show that the relations arising from
all other choices of $a$ can now be derived from the relations given so far.
\end{proof}

%-------------------------------------------------------
\section{Comparison to Lewis's calculation in $RO(G)$-grading}

In \cite{Le:projectivespaces}, Gaunce Lewis calculated $\Mackey H_G^\bullet(B_+)$,
the $RO(G)$-graded part
of $\Mackey H_G^*(B_+)$. In this section we compare our calculation to his.

We begin with the case $p=2$. 
Part of Lewis's Theorem~5.1 shows that $\Mackey H_G^\bullet(B_+)$ is generated by
two elements we shall call $\gamma \in \Mackey H_G^{\MM_1}(B_+)$ (Lewis's $c$) and
$\Gamma\in\Mackey H_G^{\MM_1+2}(B_+)$ (Lewis's $C(1)$), with the single relation
\[
 \gamma^2 = \xi_1\Gamma + e_1\gamma.
\]
This results in an additive basis consisting of
\[
 1,\ \gamma,\ \Gamma,\ \Gamma\gamma,\ \Gamma^2,\ \Gamma^2\gamma,\ \ldots
\]
The additive calculation we did gives a basis for 
$\Mackey H_G^\bullet(B_+)$ consisting of
\[
 1,\ c\cdot\xi_{0,1},\ c\cdot\chi c,\ c\cdot\chi c \cdot(c\cdot\xi_{0,1}),\ 
 (c\cdot\chi c)^2, (c\cdot\chi c)^2\cdot(c\cdot\xi_{0,1}),\ \ldots
\]
If we let $\gamma = c\cdot\xi_{0,1}$ and $\Gamma = c\cdot\chi c$, we recover
Lewis's generators. 
Moreover, referring to Corollary~\ref{cor:evenResult}, if we multiply the relation
$c\cdot\xi_{0,1} = (1-\kappa)\chi c\cdot\xi_{1,1} + e_1$ by $c\cdot\xi_{0,1}$,
and apply the equality $(1-\kappa)\xi_{0,1}\xi_{1,1} = (1-\kappa)\xi_1 = \xi_1$,
we recover the relation $\gamma^2 = \xi_1\Gamma + e_1\gamma$ found by Lewis.

One last comment on the case $p=2$: In his Remark~5.3, Lewis
introduces an element $\tilde\gamma = (1-\kappa)\gamma + e_1$ (he writes $\tilde c$) and points
out that $\tilde\gamma$ could be used as a generator in place of $\gamma$.
In fact, $\tilde\gamma = \chi c\cdot\xi_{1,1}$, and his equation relating
$\gamma$ and $\tilde\gamma$ is our basic relation.

When $p$ is odd, Lewis's description of $\Mackey H_G^\bullet(B_+)$ becomes much more
complicated, partly because he tries to choose specific integers in places
analogous to those where
we allow more freedom, for example in our $\bar\lambda^{\alpha,a}$.
However, most of the complications come from the fact that he was working strictly
in $RO(G)$ grading, when there are real simplifications to be had in this case
by grading on $RO(\Pi_G B)$, where the more natural generators and relations live. 
We shall not attempt a complete dictionary relating his results to ours, but only
sketch the comparison. We start by reviewing what our results say about
$\Mackey H_G^\bullet(B_+)$.

To ease notation in stating our additive result, let
\[
 \beta^k = \sum_{i=0}^{k-1}(\MM_{k-i} - \chi^i\omega_G^* - \Omega_{i,1}) \in RO_0(\Pi_G B),
\]
and notice that $\beta^k_k = 0$. Let $b^k = (b^k_i)_i$ be such that
$b^k_i \in \nu(\beta^k_i)^{-1}$ with $1\leq b^k_1\leq (p-1)/2$. In particular, $b^k_k = 1$.
The basis of admissible monomials in $RO(G)$ grading then takes the form
\[
 \Big\{ \Big(\prod_{i=0}^{p-1}\chi^i c\Big)^{m}
		\Big(\prod_{i=0}^{k-1} \chi^i c\cdot \xi_{i,1}\Big)
		\bar\lambda^{\beta^k,b^k}
		\Bigm| m\geq 0 \text{ and } 0\leq k\leq p-1 \Big\}.
\]
The grading of each monomial is given by
\[
 \big\lvert \Big(\prod_{i=0}^{p-1}\chi^i c\Big)^{m}
		\Big(\prod_{i=0}^{k-1} \chi^i c\cdot \xi_{i,1}\Big)
		\bar\lambda^{\beta^k,b^k} \big\rvert
 = m\Big(2 + \sum_{i=1}^{p-1} \MM_i\Big) + \sum_{i=1}^k \MM_i.
\]
If we were to plot these gradings as usual, with fixed-point dimension horizontally
and dimension vertically, we would see the ``stair-step'' pattern mentioned by
Lewis at the end of his \S3. (Think of the dictionary order on the pairs $(m,k)$.)

Notice that $\Gamma = \prod_{i=0}^{p-1}\chi^i c$ lies in $RO(G)$ grading and generates
a polynomial subalgebra. This element corresponds to Lewis's $C_0(1)$
in his Theorem~5.5.
The product $\Delta_k = (\prod_{i=0}^{k-1} \chi^i c\cdot \xi_{i,1})\bar\lambda^{\beta^k,b^k}$
corresponds to his $D_k$. 
The elements $\Gamma$ and $\Delta_k$, $1\leq k \leq p-1$ (we have $\Delta_0 = 1$), 
generate $\Mackey H_G^\bullet(B_+)$ as an algebra over $\Mackey H_G^\bullet(S^0)$.
(We say ``corresponds to'' rather than ``equals'' because there are a number of arbitrary
choices to be made in writing down these generators and we have not checked that our
choices exactly match Lewis's.)

At this point we should write down the relations among products of these
multiplicative generators.
This amounts to writing the products $\Delta_k \cdot \Delta_\ell$ in terms of admissible monomials;
the corresponding result is one of the most complicated parts of Lewis's discussion.
(See his Proposition~5.12.)
The proof of our Theorem~\ref{thm:multstructure} amounts to an algorithm
for doing this, using the relations in the statement of that theorem.
To carry out the calculation would be possible but, with no immediate need for the resulting
complicated and unilluminating formulas, we shall stop here.
We simply note that Lewis was obliged to do this calculation, because he was working only in
$RO(G)$ grading, whereas we know that such relations follow from the 
much simpler ones listed in Theorem~\ref{thm:multstructure}.

%-------------------------------------------------------
\section{Cohomology with other coefficient systems}

Knowing the cohomology with $\Mackey A_{G/G}$ coefficients, we can relatively
easily calculate the cohomology with some other coefficient systems.
The case of $\Mackey R\Z$ coefficients appears in the literature
as ``constant $\Z$'' coefficients,
for example, in \cite{Kro:SerreSS}, \cite{Dug:Grassmannians}, and \cite{Green:fourapproaches}.
To calculate this cohomology we use the short exact sequence
\[
 0 \to \conc\Z \xrightarrow{\kappa} \Mackey A_{G/G} \to \Mackey R\Z \to 0,
\]
where the first map takes $1$ to $\kappa\in A(G)$.
So we begin by looking at the cohomology with $\conc\Z$ coefficients.
We refer to Proposition~\ref{prop:concZcohomologypoint} for the calculation
of $\Mackey H_G^\bullet(S^0;\conc\Z)$.

\begin{proposition}\label{prop:BGU1ConcZ}
\[
 \Mackey H_G^*(B_+;\conc\Z) \iso
  \Dirsum_{k=0}^{p-1} \Mackey H_G^\bullet(S^0;\Mackey A_{G/G})[\sigma_k, \zeta_k^\alpha \mid \alpha_k = 0]
   \tensor_{\Mackey H_G^\bullet(S^0;\Mackey A_{G/G})}\Mackey H_G^\bullet(S^0;\conc\Z).
\]
The map $\Mackey H_G^*(B_+;\conc\Z) \to \Mackey H_G^*(B_+;\Mackey A_{G/G})$ is injective, with
its image being the ideal 
$\langle e^\alpha\kappa^\beta_k\rangle \subset \Mackey H_G^*(B_+;\Mackey A_{G/G})$.
\end{proposition}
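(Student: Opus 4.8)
The plan is to exploit the fact that $\conc\Z$-coefficient cohomology is insensitive to the free part of a $G$-spectrum, which forces $\Mackey H_G^*(B_+;\conc\Z)$ to be detected entirely on the fixed set $B^G$, and then to transport the already-established Burnside-coefficient calculation along that identification.

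\textbf{Reduction to $B^G$ and the first isomorphism.} First I would record that $\conc\Z$ is supported at $G/G$: since $\conc\Z(G/e)=0$, the reduced cohomology $\tilde H_G^*(Y;\conc\Z)$ vanishes for every free based $G$-spectrum $Y$. Hence $\Mackey H_G^*(X\smsh_B EG_+;\conc\Z)=0$ for every ex-space $X$ over $B$, because $X\times EG$ is free; feeding this into the long exact sequence of the cofibration $X\smsh_B EG_+\to X\to X\smsh_B\tE G$ gives $\Mackey H_G^*(X;\conc\Z)\iso\Mackey H_G^*(X\smsh_B\tE G;\conc\Z)$ for all such $X$. Applying this to $X=B_+$ and to $X=B/_B B^G$, together with the weak equivalence $X^G\smsh_B\tE G\to X\smsh_B\tE G$ (proof of Proposition~\ref{prop:BEGtilde}) and the vanishing $\Mackey H_G^*(B/_B B^G\smsh_B\tE G;\conc\Z)=0$ (because $(B/_B B^G)^G$ is a trivial ex-space over $B^G$), I obtain $\Mackey H_G^*(B/_B B^G;\conc\Z)=0$ and therefore that $\eta\colon\Mackey H_G^*(B_+;\conc\Z)\to\Mackey H_G^*(B_+^G;\conc\Z)$ is an isomorphism. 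Since $B^G=\Disjunion_k B_k$ with each $B_k$ a copy of $\CP^\infty$ carrying the trivial action, the Atiyah--Hirzebruch argument of Propositions~\ref{prop:integerCPinfty} and~\ref{prop:calcFixedPoints} applies verbatim with $\conc\Z$ coefficients — using Proposition~\ref{prop:concZcohomologypoint} in place of the point calculation to see that all differentials vanish, and Lemma~\ref{lem:kernelproj} to produce the invertible classes $\zeta_k^\alpha$ — yielding the summand-wise formula, which with the isomorphism $\eta$ gives the first displayed isomorphism.

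\textbf{Injectivity and the image.} For the comparison map I would use the square commuting $\kappa_*$ with $\eta$, the left-hand $\eta$ being the isomorphism just found. On $B_+^G$ the map $\kappa_*$ is, summand by summand, the point-level map $\kappa_*\colon\Mackey H_G^\bullet(S^0;\conc\Z)\to\Mackey H_G^\bullet(S^0)$ base-changed up the flat polynomial/Laurent algebra in $\sigma_k,\zeta_k^\alpha$; since that point-level map is injective (Proposition~\ref{prop:concZcohomologypoint}), so is $\kappa_*$ on $B_+^G$, whence $\eta\circ\kappa_*$, whence $\kappa_*$ on $B_+$, is injective. To identify the image, note that $\kappa\colon\conc\Z\to\Mackey A_{G/G}$ is a map of $\Mackey A_{G/G}$-modules, so $\operatorname{im}\kappa_*$ is an ideal. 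The inclusion $\langle e^\alpha\bar\kappa^\beta_k\rangle\subseteq\operatorname{im}\kappa_*$ comes from the formula for $\eta(e^\alpha\bar\kappa^\beta_k)$ computed in the previous section (it is supported on the $k$th summand with value $e^\alpha\kappa^{\beta_k}\zeta_k^{\beta-\beta_k}$): the point class $e^\alpha\kappa^{\beta_k}$ lifts uniquely to $\conc\Z$ coefficients by Proposition~\ref{prop:concZcohomologypoint}, so the corresponding $B^G$-class lifts; its transport along $\eta^{-1}$ on the $\conc\Z$ side maps under $\kappa_*$ to a class with the same $\eta$-image as $e^\alpha\bar\kappa^\beta_k$, and since this grading has all fixed-point dimensions equal to $0$, Proposition~\ref{prop:etamono} makes $\eta$ injective there, forcing the two classes to agree. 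For the reverse inclusion I would transport along $\eta$ on the $\conc\Z$ side and check that $\Mackey H_G^*(B_+;\conc\Z)$ is spanned, over the subring of $\Mackey H_G^*(B_+;\Mackey A_{G/G})$ realized by restriction, by the classes $e^\alpha\bar\kappa^\beta_k$: this uses that $\Mackey H_G^\bullet(S^0;\conc\Z)$ is generated over $\Mackey H_G^\bullet(S^0)$ by the $e^\alpha\kappa^\beta$, that restriction from $B_+$ realizes all powers $\sigma_k^m$ (via $\prod_i\chi^i c$, whose $\sigma_k$-contribution survives the factor of $\kappa$ only through the single factor $\chi^k c$, because $\kappa\xi_j=0$ annihilates the $\sigma_k$-term of $\chi^i c|B_k$ for $i\neq k$) and all nonnegative powers $\zeta_k^\delta$ with $\delta\in RO_+(\Pi_G B)$, $\delta_k=0$, and that letting $\beta$ range over $RO_0(\Pi_G B)$ supplies the remaining negative $\zeta_k$-powers.

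\textbf{Main obstacle.} The genuinely technical point is this last span computation — the bookkeeping that products of the multiplicative generators $\chi^i c$, $\bar\xi^\delta$ and $\bar\lambda^{\gamma,a}$, restricted to each fixed component and multiplied by the $\kappa$-divisible classes pulled back from the point, exactly exhaust the ideal $\langle e^\alpha\bar\kappa^\beta_k\rangle$. Everything else is a routine transport of the Burnside-coefficient calculation (Theorems~\ref{thm:oddadditivestructure} and~\ref{thm:multstructure}) and the point calculation (Proposition~\ref{prop:concZcohomologypoint}) along the isomorphism $\eta$ established in the first step.
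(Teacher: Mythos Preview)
Your reduction to $B^G$ and the injectivity argument are essentially the same as the paper's, though the paper invokes Proposition~\ref{prop:concCcohomology} directly (which gives $\tilde H_G^\alpha(X;\conc\Z)\iso\tilde H^{\alpha^G}(X^G;\Z)$ in one step) rather than routing through the $EG_+\to S^0\to\tE G$ cofibration.

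The genuine difference is in identifying the image. You attempt a direct span computation, matching products of $\chi^i c$, $\bar\xi^\delta$, $\bar\lambda^{\gamma,a}$ against the target ideal summand by summand; you correctly flag this as the main obstacle, and while the argument can be made to work, it is fiddly. The paper avoids this entirely by factoring the comparison map as
\[
 \Mackey H_G^*(B_+;\conc\Z)\;\iso\;\Mackey H_G^*(B_+\smsh_B\tE G;\conc\Z)\;\to\;\Mackey H_G^*(B_+\smsh_B\tE G;\Mackey A_{G/G})\;\xrightarrow{\psi}\;\Mackey H_G^*(B_+;\Mackey A_{G/G}).
\]
By Proposition~\ref{prop:concZcohomologypoint}, the middle arrow identifies the source with the submodule of $\Mackey H_G^*(B_+\smsh_B\tE G;\Mackey A_{G/G})\iso\Dirsum_k\Mackey H_G^\bullet(S^0)[\sigma_k,\zeta_k^\alpha]\tensorS\Mackey H_G^\bullet(\tE G)$ consisting of elements whose $\Mackey H_G^\bullet(\tE G)$-coefficients lie in gradings with $\alpha^G=0$ (i.e., multiples of $e^\alpha\kappa$). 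Then $\psi$ on these coefficients is already known from Propositions~\ref{prop:EvenMaps}/\ref{prop:OddMaps} to be injective with image the $e^\alpha\kappa$ classes, and the characterization of $\eta(e^\alpha\bar\kappa^\beta_k)$ in the lemma preceding Theorem~\ref{thm:multstructure} finishes the identification. This route converts your span computation into a lookup in the already-computed structure of $\Mackey H_G^*(B_+\smsh_B\tE G)$, which is what buys the brevity.
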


\begin{proof}
It follows from Proposition~\ref{prop:concCcohomology} that
$\Mackey H_G^*(B_+;\conc\Z) \iso \Mackey H_G^*(B^G_+;\conc\Z)$.
The computation of $\Mackey H_G^*(B_+;\conc\Z)$ then follows much as in
Proposition~\ref{prop:calcFixedPoints}.
Moreover, Proposition~\ref{prop:concZcohomologypoint} implies that the
map 
\begin{multline*}
 \Mackey H_G^*(B_+;\conc\Z)
 \iso \Mackey H_G^*(B_+\smsh \tE G;\conc\Z) \to \Mackey H_G^*(B_+\smsh\tE G;\Mackey A_{G/G}) \\
 \iso \Dirsum_{k=0}^{p-1}\Mackey H_G^\bullet(S^0;\Mackey A_{G/G})[\sigma_k, \zeta_k^\alpha \mid
             \alpha_k = 0]
           \tensor_{\Mackey H_G^\bullet(S^0;\Mackey A_{G/G})}\Mackey H_G^\bullet(\tE G;\Mackey A_{G/G})
\end{multline*}
is the inclusion of the submodule of elements with coefficients
from $\Mackey H_G^\bullet(\tE G;\Mackey A_{G/G})$
in gradings $\alpha$ with $\alpha^G = 0$.
From our computations, this submodule injects into
$\Mackey H_G^*(B_+;\Mackey A_{G/G})$ with image the ideal
$\langle e^\alpha\kappa^\beta_k\rangle$, as claimed.
(This is clearest by looking at the characterization of $e^\alpha\kappa^\beta_k$ by
its image $\eta(e^\alpha\kappa_k^\beta)$.)
\end{proof}

We now turn to the cohomology with $\Mackey R\Z$ coefficients,
referring to Theorems~\ref{thm:pointEvenRZcoeffs} and~\ref{thm:pointOddRZcoeffs}
for the structure of $\Mackey H_G^\bullet(S^0;\Mackey R\Z)$.

\begin{theorem}\label{thm:BGU1cohomRZ}
$\Mackey H_G^*(B_+;\Mackey R\Z)$ is generated as an algebra over $\Mackey H_G^\bullet(S^0;\Mackey R\Z)$
by elements
\begin{align*}
 \chi^i c &\in \Mackey H_G^{\chi^i\omega_G^*}(B_+;\Mackey R\Z) && 0 \leq i \leq p-1 \\
 \bar\xi^\alpha &\in \Mackey H_G^\alpha(B_+\Mackey R\Z) &&  \alpha\in RO_+(\Pi_G B)  \\
 \bar\lambda^{\alpha} &\in \Mackey H_G^\alpha(B_+\Mackey R\Z) && \alpha\in RO_0(\Pi_G B) \\
\end{align*}
All relations among these generators are consequences of the following relations:
\begin{align*}
 \bar\xi^\alpha &= \xi^\alpha && \text{if $\alpha\in RO(G)$} 
\\
 \bar\lambda^{\alpha} &= \lambda^{\alpha}
   &&\text{if $\alpha\in RO(G)$} 
\\
 \bar\xi^\alpha \bar\xi^\beta &= \bar\xi^{\alpha+\beta} 
\\
 \bar\lambda^{\alpha}\bar\lambda^{\beta} &= \bar\lambda^{\alpha+\beta}
\\
 \bar\xi^\alpha\bar\lambda^{\beta} &= \bar\xi^\gamma\bar\lambda^{\delta}
   &&\text{if $\alpha+\beta = \gamma+\delta$} 
\\
 \chi^i c \cdot \xi_{i,1} 
   &= \mathrlap{\chi^j c \cdot \xi_{j,1}
      \bar\lambda^{\chi^i\omega_G^* + \Omega_{i,1} - \chi^j\omega_G^* - \Omega_{j,1}}
      - e_{i-j}\bar\lambda^{\chi^i \omega_G^* + \Omega_{i,1} - \MM_{i-j}}}
   && \vrule width 1.5in height 0pt depth 0pt
\end{align*}
$\Mackey H_G^*(B_+;\Mackey R\Z)$ is a free $\Mackey H_G^\bullet(S^0;\Mackey R\Z)$-module,
with a basis given by the admissible monomials, which here means ones of the form
\[
 \big(\prod_i (\chi^i c)^{m_i} \big)
 \big(\prod_i (\chi^i c \cdot \xi_{i,1})^{\epsilon_i}\big)
 \big(\prod_i \xi_{i,1}^{n_i}\big)
 \bar\lambda^{\beta}
\]
satisfying:
\begin{enumerate}
\item For all $i$, $m_i\geq 0$, $\epsilon_i = 0$ or $1$, and $n_i\geq 0$.
\item If $m_i > 0$ or $\epsilon_i > 0$, then $n_i = 0$.
\item For at least one $i$, $\epsilon_i = 0 = n_i$.
\item Using the ordering of $m_k-n_k$ from highest to lowest,
if $\epsilon_{k_i} = 0$ then $\epsilon_{k_j} = 0$ for all $j > i$.
\item If $I$ is the least index such that $\epsilon_{k_I} = 0$, then
$\beta_{k_I} = 0$.
\end{enumerate}
\end{theorem}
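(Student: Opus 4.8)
The plan is to deduce Theorem~\ref{thm:BGU1cohomRZ} from the already-established $\Mackey A_{G/G}$-coefficient results (Theorems~\ref{thm:oddadditivestructure} and~\ref{thm:multstructure}) together with the point calculations with $\Mackey R\Z$ coefficients, exactly as the $\conc\Z$ case was handled in Proposition~\ref{prop:BGU1ConcZ}. The key is the short exact sequence of Mackey functors
\[
 0 \to \conc\Z \xrightarrow{\kappa} \Mackey A_{G/G} \to \Mackey R\Z \to 0,
\]
which induces a long exact sequence in $\Mackey H_G^*(B_+;-)$. First I would invoke Proposition~\ref{prop:BGU1ConcZ}, which identifies $\Mackey H_G^*(B_+;\conc\Z)$ with the ideal $\langle e^\alpha\bar\kappa^\beta_k\rangle$ inside $\Mackey H_G^*(B_+;\Mackey A_{G/G})$, and in particular shows the connecting map in the long exact sequence vanishes. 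Hence the long exact sequence breaks into short exact sequences
\[
 0 \to \Mackey H_G^*(B_+;\conc\Z) \to \Mackey H_G^*(B_+;\Mackey A_{G/G}) \to \Mackey H_G^*(B_+;\Mackey R\Z)\to 0,
\]
so $\Mackey H_G^*(B_+;\Mackey R\Z)$ is the quotient of $\Mackey H_G^*(B_+;\Mackey A_{G/G})$ by the ideal generated by the $e^\alpha\bar\kappa^\beta_k$.

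Next I would extract the additive statement. Under the quotient map, the admissible monomials of Theorem~\ref{thm:oddadditivestructure} whose $\bar\lambda$-factor satisfies the extra conditions (in particular those surviving when we kill all $\bar\kappa_k^\beta$) map to a generating set; the monomials involving nontrivial $b$-data beyond what is listed become redundant because in $\Mackey H_G^\bullet(S^0;\Mackey R\Z)$ the relation $\lambda^{\alpha,a+p} - \lambda^{\alpha,a} = \kappa^\alpha$ collapses (since $\kappa^\alpha \mapsto 0$), so $\bar\lambda^{\alpha,a}$ depends only on $\alpha$. Then I would check freeness: tensoring the free-module statement over $\Mackey H_G^\bullet(S^0;\Mackey A_{G/G})$ with the short exact sequence $0\to\conc\Z\to\Mackey A_{G/G}\to\Mackey R\Z\to 0$ termwise, and using that the point-level sequence $0\to\Mackey H_G^\bullet(S^0;\conc\Z)\to\Mackey H_G^\bullet(S^0;\Mackey A_{G/G})\to\Mackey H_G^\bullet(S^0;\Mackey R\Z)\to 0$ is also short exact (from Theorems~\ref{thm:pointEvenRZcoeffs} and~\ref{thm:pointOddRZcoeffs}), one gets that $\Mackey H_G^*(B_+;\Mackey R\Z)$ is free over $\Mackey H_G^\bullet(S^0;\Mackey R\Z)$ on the images of the admissible monomials, and these images are linearly independent precisely because the $\bar\kappa$-terms were the only part killed. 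The list of five admissibility conditions is exactly Definition~\ref{def:admissiblemonomials} with condition~(5) (the $1\le b_i\le p-1$ clause) deleted and condition~(6) rephrased, since the $b$-data is now irrelevant.

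Finally I would address the multiplicative relations: each relation in Theorem~\ref{thm:multstructure} maps to a relation with $\Mackey R\Z$ coefficients, and those mentioning $\bar\kappa_k^\beta$ or involving the ambiguity in $b$ become trivial or redundant, leaving exactly the six relations listed. That the listed relations generate all relations follows from the quotient description together with the corresponding completeness statement in Theorem~\ref{thm:multstructure}: any relation with $\Mackey R\Z$ coefficients lifts (non-uniquely) to a relation mod the $\bar\kappa$-ideal with $\Mackey A_{G/G}$ coefficients, hence is a consequence of the Theorem~\ref{thm:multstructure} relations, and reducing those modulo the $\bar\kappa$-ideal yields consequences of the six relations stated here. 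The main obstacle I anticipate is bookkeeping rather than conceptual: carefully verifying that the ideal generated by $\{e^\alpha\bar\kappa^\beta_k\}$ is precisely the kernel (i.e.\ that no further relations are introduced and none are lost when passing to the quotient), and confirming that the reduced admissibility conditions cut out exactly a basis of the quotient — this requires matching the grading-by-grading dimension count of Lemma~\ref{lem:Agrading} against the point calculation $\Mackey H_G^\bullet(S^0;\Mackey R\Z)$, checking that removing the $\bar\kappa$-monomials removes exactly the right number of basis elements in each coset $\alpha+RO(G)$.
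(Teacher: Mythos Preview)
Your proposal is correct and follows essentially the same route as the paper: use the coefficient short exact sequence $0\to\conc\Z\to\Mackey A_{G/G}\to\Mackey R\Z\to 0$, invoke Proposition~\ref{prop:BGU1ConcZ} to see that the connecting map vanishes so that $\Mackey H_G^*(B_+;\Mackey R\Z)$ is the quotient of $\Mackey H_G^*(B_+;\Mackey A_{G/G})$ by the ideal $\langle e^\alpha\bar\kappa^\beta_k\rangle$, then read off generators, relations, and freeness from Theorems~\ref{thm:oddadditivestructure} and~\ref{thm:multstructure}. The paper's proof is in fact briefer than you anticipate: the bookkeeping you worry about (matching the reduced admissibility conditions against a dimension count) is not carried out explicitly, because freeness over $\Mackey H_G^\bullet(S^0;\Mackey R\Z)$ follows directly from freeness over $\Mackey H_G^\bullet(S^0;\Mackey A_{G/G})$ once you know the point-level sequence is also short exact---the basis simply passes to the quotient, with the $(p-1)$-fold redundancy in the $b$-parameter collapsing exactly as you describe via $\bar\lambda^{\alpha,a+p\delta_i}-\bar\lambda^{\alpha,a}=\bar\kappa^\alpha_i\mapsto 0$.
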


\begin{proof}
Consider the long exact sequence induced by the short exact sequence
\[
 0 \to \conc\Z \xrightarrow{\kappa} \Mackey A_{G/G} \to \Mackey R\Z \to 0.
\]
From the preceding proposition we conclude that 
$\Mackey H_G^*(B_+;\Mackey A_{G/G}) \to \Mackey H_G^*(B_+;\Mackey R\Z)$
is surjective, with kernel the ideal $\langle e^\alpha\kappa^\beta_k\rangle$.
As a result, $\bar\lambda^{\alpha,a}$ is identified with
$\bar\lambda^{\alpha,a'}$ for all $a$ and $a'$. We call the common image of
these elements $\bar\lambda^\alpha$. The generators and relations in the statement
of the theorem are then those left over from Theorem~\ref{thm:multstructure}
after eliminating any that 
become trivial on setting the $e^\alpha\kappa$ and $e^\alpha\bar\kappa^\beta_k$
equal to 0.

Moreover, because $\Mackey H_G^*(B_+;\Mackey A_{G/G})$ is a free
module over $\Mackey H_G^\bullet(S^0;\Mackey A_{G/G})$,
with basis given by the admissible monomials,
the preceding proposition implies that $\Mackey H_G^*(B_+;\Mackey R\Z)$ is a
free module over $\Mackey H_G^\bullet(S^0;\Mackey R\Z)$ on the image
of that basis, which is as given in the statement of the theorem.
\end{proof}

Notice that the elements $\bar\lambda^{\alpha}$ are invertible now.

For $p=2$, we can state the result very simply:
$\Mackey H_G^*(B_+; \Mackey R\Z)$ is generated as an algebra over $\Mackey H_G^\bullet(S^0; \Mackey R\Z)$
by the elements
\begin{align*}
 c &\in \Mackey H_G^{\omega_G^*}(B_+; \Mackey R\Z) \\
 \chi c &\in \Mackey H_G^{\chi\omega_G^*}(B_+; \Mackey R\Z) \\
 \xi_{0,1} &\in \Mackey H_G^{\Omega_{0,1}}(B_+; \Mackey R\Z) = \Mackey H_G^{\chi\omega_G^* - 2}(B_+; \Mackey R\Z) \qquad\text{and}\\
 \xi_{1,1} &\in \Mackey H_G^{\Omega_{1,1}}(B_+; \Mackey R\Z) = \Mackey H_G^{\omega_G^* - 2}(B_+; \Mackey R\Z)
\end{align*}
subject to the two relations
\begin{align*}
 \xi_{0,1}\xi_{1,1} &= \xi_1 \qquad\text{and} \\
 \chi c \cdot \xi_{1,1} &= c\cdot \xi_{0,1} + e_1
\end{align*}

Corollaries~\ref{cor:manyIsoCoeffs} and~\ref{cor:manyOddIsoCoeffs}
imply calculations of $\Mackey H_G^*(B_+;\Mackey T)$
with $\Mackey T$ equal to $\Mackey L\Z$, $\Mackey R\Z_-$,
or $\Mackey L\Z_-$.
Each of these is best understood as a free module over $\Mackey H_G^*(B_+;\Mackey R\Z)$
on a single generator.

%-------------------------------------------------------
\part{On equivariant ordinary cohomology}\label{part:point}

\section{Grading on $RO(\Pi_G B)$}\label{sec:ROGGrading}

We have referred to cohomology as being graded on $RO(\Pi_G B)$,
but cohomology as it comes to us is a functor on the category of representations of $\Pi_G B$.
Though it is convenient for computations to grade on the group $RO(\Pi_G B)$
of isomorphism classes of representations,
doing so involves a ``decategorification'' which, as always,
leads to sign ambiguities if not done {\em very} carefully.
(See \cite{Le:projectivespaces} and \cite{May:alaska}.)
As this doesn't appear in the literature in the detail it ought, even for $RO(G)$, we outline
an approach here.

As a first step, we look at some small subcategories of $v\V_G$.
We have in mind $G = \Z/p$, but some of what we say may be applicable more widely.

\begin{definition}
\hspace{2em}
\begin{enumerate}
\item
Let $V_0$, $V_1$, \dots, $V_k$ be an enumeration of the irreducible representations of $G$,
with $V_0 = \R$, and let
\[
 \U = \Dirsum_{i=0}^k V_i^\infty,
\]
where $V_i^\infty = V_i\dirsum V_i\dirsum\cdots$ is the direct sum of countably infinitely many copies of $V_i$.

\item
For $0\leq n < \infty$, write $V_i^n \subset V_i^\infty$ for the sum of the 
first $n$ copies of $V_i$, so $V_i^{n+1} = V_i^n \dirsum V_i$
and $V_i^\infty = \Union_n V_i^n$.
In particular, when we write $\R^n$ we mean specifically $V_0^n$.
Call a finite-dimensional $V\subset \U$ an {\em initial segment} of $\U$ if
\[
 V = \Dirsum_{i=0}^k V_k^{n_i}
\]
for integers $0\leq n_i < \infty$.

\item
Let $v\Vinit_G$ be the full subcategory of $v\V_G$ on the objects of the form
$G/H\times (V\ominus W)$ with $V$ and $W$ initial segments of $\U$.

\item
Let $v\Vlow_G$ be the full subcategory of $v\Vinit_G$ on the objects of the form
$G/H\times (V\ominus W)$ with $V\intersect W = 0$.
We say in this case that $V\ominus W$ is {\em in lowest terms.}
\end{enumerate}
\end{definition}

If $\bar\pi\colon v\Vlow_G\to \orb G$ denotes the restriction of
$\pi\colon v\V_G\to \orb G$, then $\bar\pi^{-1}(G/G)$ is a skeleton of
$\pi^{-1}(G/G)$.
When $G = \Z/p$, $v\Vlow_G$ is equivalent to $v\V_G$, but for more general groups
it may be missing some virtual bundles over some orbits $G/H$ for $H$ a proper subgroup.

The intermediate category $v\Vinit_G$ is more convenient for describing direct sums:
If $T$ and $U$ are initial segments in $\U$, let $T\dirsum U$ denote the initial segment
isomorphic to the direct sum. Explicitly, if
$T = \Dirsum V_i^{m_i}$ and $U = \Dirsum V_i^{n_i}$, then
\[
 T\dirsum V = \Dirsum_{i=0}^k V_i^{m_i+n_i}.
\]
To make this natural in $T$ and $U$, we take
the inclusion $T\to T\dirsum U$  to be the inclusion of the first $m_i$ summands
for each $i$, and $U\to T\dirsum U$ to be the inclusion of the last $n_i$ summands.
Using this direct sum, we define
\[
 [G/H\times (T\ominus U)] \dirsum [G/H\times (V\ominus W)]
  = G/H\times [(T\dirsum V) \ominus (U\dirsum W)].
\]
This defines a direct sum functor $\dirsum\colon v\Vinit_G \times_{\orb G} v\Vinit_G \to v\Vinit_G$.

We describe how to get $RO(G) = RO(\Pi_G(*))$ from this setup;
we'll consider $RO(\Pi_G B)$ for some other spaces $B$ later.
If $T$ and $U$ are initial segments, write
\[
 \Trep\ominus \Urep\colon \Pi_G(*) = \orb G \to v\Vinit_G
\]
for the functor taking $G/H$ to $G/H\times (T\ominus U)$ and taking a map
$\alpha\colon G/H\to G/K$ to $\alpha\times 1$.
If $X$ is another initial segment, write
\[
 \sigma^X\colon \Trep\ominus\Urep \to (\Trep\dirsum\Xrep)\ominus(\Urep\dirsum\Xrep)
\]
for the ``suspension'' map given by the stable map 
$T\ominus U\to (T\dirsum X)\ominus (U\dirsum X)$ specified by 
the identity maps $T\dirsum X \to T\dirsum X$
and $U\dirsum X\to U\dirsum X$,
using the direct sum on $v\Vinit_G$.

Now, there are several ways to think of an element $\alpha\in RO(G)$.
We can consider $\alpha$ to be a formal sum $\sum_{i=0}^k m_i V_i$, 
an equivalence class of formal differences of representations
of $G$, or an equivalence class of representations of $\orb G$ in $v\Vinit_G$.
It is the latter description that will be most useful here.
To be very precise, we think of $\alpha$ as a particular {\em diagram} of
representations of $\orb G$ in $v\Vinit_G$:

\begin{definition}
\hspace{2em}
\begin{enumerate}
\item
Let $\vRO(G)$ denote the category of representations of $\orb G$ in $v\Vinit_G$
and natural transformations.

\item
For $\alpha\in RO(G)$,
let $\D_\alpha$ be the category whose objects are pairs of initial segments
$(T,U)$ where $\alpha = [T\ominus U] \in RO(G)$,
with a map $(T,U)\to (V,W)$ when there are inclusions $T\subset V$ and $U\subset W$.
When there is such a map, we have $(V,W) = (T\dirsum X, U\dirsum X)$ for a unique
initial segment $X$. 

\item
For $\alpha\in RO(G)$, let $\{\alpha\}\colon \D_\alpha\to \vRO(G)$ be the functor defined by
$\{\alpha\}(T,U) = \Trep\ominus\Urep$ on objects, and defined to take an inclusion
$(T,U)\to (T\dirsum X, U\dirsum X)$ to
$\sigma^X\colon \Trep\ominus\Urep \to (\Trep\dirsum\Xrep)\ominus(\Urep\dirsum\Xrep)$.

\end{enumerate}
\end{definition}

We effectively identify the element $\alpha$ with the diagram $\{\alpha\}$,
which specifies all the formal differences equivalent to $\alpha$ as well as choices of
isomorphisms between them.

All this machinery starts to pay off when we look at the sum in $RO(G)$ and its relation
to the direct sum of virtual representations.
We first need to introduce notation for some units in the Burnside ring.

\begin{definition}
\hspace{2em}
\begin{enumerate}
\item
If $V_i$ is an irreducible representation of $G$, write
\[
 \gamma(V_i^m, V_i^n)\in A(G)
\]
for the stable homotopy class of
\[
 S^{V_i^{m+n}} = S^{V_i^m}\smsh S^{V_i^n} \xrightarrow{\gamma}
  S^{V_i^n}\smsh S^{V_i^m} = S^{V_i^{m+n}},
\]
where the map $\gamma$ interchanges the two factors.
Note that $\gamma(V_i^m, V_i^n)^2 = 1$ for any $m$ and $n$.

\item
If $V = \Dirsum_i V_i^{m_i}$ and $W = \Dirsum_i V_i^{n_i}$ are initial segments, write
\[
 \gamma(V,W) = \prod_i \gamma(V_i^{m_i}, V_i^{n_i}) \in A(G).
\]

\item
If $T\ominus U$ and $V\ominus W$ are virtual representations given by initial segments, let
\[
 \gamma(T\ominus U, V\ominus W) = \gamma(T,V)\gamma(U,V)\gamma(T,W)\gamma(U,W) \in A(G).
\]
(We arguably should have written $\gamma(T,V)\gamma(U,V)^{-1}\gamma(T,W)^{-1}\gamma(U,W)$,
but each of these elements is its own inverse in $A(G)$.)

\item
If $\alpha$ and $\beta$ are elements of $RO(G)$, write $\gamma(\alpha,\beta)\in A(G)$
for $\gamma(T\ominus U, V\ominus W)$ for any choice of initial segments with
$\alpha = [T\ominus U]$ and $\beta = [V\ominus W]$.

\end{enumerate}
\end{definition}

When $V_i$ is irreducible, $\gamma(V_i^m, V_i^n)$ is the same element of $A(G)$
as that given by multiplication by $(-1)^{mn}$ on $S^{V_i}$, as can be seen by a little
linear algebra.
When $G = \Z/2$ and $\LL$ is the nontrivial irreducible representation,
multiplication by $-1$ on $S^{\LL}$ is the element 
\[
 \gamma(\LL,\LL) = 1 - g = \kappa - 1 \in A(\Z/2).
\]
(Notice that $(1-g)^2 = 1$.)
When $G = \Z/p$ for $p$ odd, all the nontrivial irreducible representations are
two-dimensional and multiplication by $-1$ on any of them gives a map homotopic to the identity,
so the only interesting unit that arises in this case is $-1$ and comes from
permutation of trivial representations.

\begin{definition}
\hspace{2em}
\begin{enumerate}
\item
If $\alpha$ and $\beta$ are elements of $RO(G)$, let $\{\alpha\}\dirsum\{\beta\}$
denote the composite
\[
 \D_\alpha\times \D_\beta \xrightarrow{\{\alpha\}\times\{\beta\}}
  \vRO(G)\times\vRO(G) \xrightarrow{\dirsum} \vRO(G).
\]

\item
Let $\gamma\colon \D_\alpha\times \D_\beta \to \D_\beta\times \D_\alpha$
denote the interchange functor.

\item
Let $\dirsum\colon \D_\alpha\times\D_\beta\to \D_{\alpha+\beta}$ be the functor defined by
$(T,U)\dirsum (V,W) = (T\dirsum V, U\dirsum W)$.

\end{enumerate}
\end{definition}

We first compare the two functors $\{\alpha\}\dirsum\{\beta\}$ and $\{\alpha+\beta\}\circ\dirsum$.
Suppose that $T$, $U$, $V$, $W$, $X$ and $Y$ are all initial segments, and consider the
following commutative diagram:
\[
 \xymatrix@C-2.5em{
  (\Trep\ominus \Urep)\dirsum (\Vrep\ominus \Wrep) \ar@{=}[r] 
             \ar[d]_{\sigma^\Xrep\dirsum\sigma^\Yrep}
    & (\Trep\dirsum \Vrep) \ominus (\Urep\dirsum \Wrep) \ar[dd]^{\sigma^{\Xrep\dirsum \Yrep}} \\
  [(\Trep\dirsum \Xrep)\ominus (\Urep\dirsum \Xrep)] \dirsum [(\Vrep\dirsum \Yrep)\ominus (\Wrep\dirsum \Yrep)]
             \ar@{=}[d] \\
  (\Trep\dirsum \Xrep\dirsum \Vrep\dirsum \Yrep) \ominus (\Urep\dirsum \Xrep\dirsum \Wrep\dirsum \Yrep)
             \ar[r]_\gamma
  & (\Trep\dirsum \Vrep\dirsum \Xrep\dirsum \Yrep) \ominus (\Urep\dirsum \Wrep\dirsum \Xrep\dirsum \Yrep)
 }
\]
The composite down the left side is the effect of
$\{\alpha\}\dirsum\{\beta\}$ on
the map $(T,U)\times (V,W) \to (T\dirsum X,U\dirsum X)\times (V\dirsum Y,W\dirsum Y)$.
The composite across the top and down the right side is the effect of
$\{\alpha+\beta\}\circ\dirsum$
on the same map. 
If we view the two objects on the bottom of the diagram as the same object,
we see that
the diagram would not commute without $\gamma$, which we interpret as the sign
$\gamma(X,V\ominus W)$. In order to define a natural transformation
\[
 \mu\colon \{\alpha\}\dirsum\{\beta\} \to \{\alpha+\beta\}\circ\dirsum,
\]
we must therefore take signs
into account, and the most straightforward way to do that is to say that the $(T,U)\times(V,W)$ component
of $\mu$ is the map
\begin{multline*}
 \mu\colon (\{\alpha\}\dirsum\{\beta\})((T,U)\times(V,W))
   = (\Trep\dirsum \Vrep) \ominus (\Urep\dirsum \Wrep) \\
   \xrightarrow{\gamma(T,V\ominus W)} (\Trep\dirsum \Vrep) \ominus (\Urep\dirsum \Wrep) 
   = \{\alpha+\beta\}(T\dirsum V, U\dirsum W).
\end{multline*}
From the diagram above, it is straightforward to check that this does define
a natural transformation.

Now for commutativity relations. We first note that
$\{\alpha+\beta\}\circ\dirsum$ and $\{\beta+\alpha\}\circ\dirsum\circ\gamma$ are exactly
the same functors from $\D_\alpha\times \D_\beta$ to $\vRO(G)$.
(In fact, $\dirsum\circ\gamma = \dirsum$.)
On the other hand, $\{\alpha\}\dirsum\{\beta\}$ and $(\{\beta\}\dirsum\{\alpha\})\circ\gamma$
are not the same. There is, however, a natural transformation
$\gamma\colon \{\alpha\}\dirsum\{\beta\}\to (\{\beta\}\dirsum\{\alpha\})\circ\gamma$
whose $(T,U)\times (V,W)$ component
\[
 \gamma\colon (\Trep\dirsum\Vrep)\ominus(\Urep\dirsum\Wrep)
  \to (\Vrep\dirsum\Trep)\ominus(\Wrep\dirsum\Urep)
\]
interchanges summands in the obvious way.
If we view the source and target as the same object, this map is multiplication by the unit
$\gamma(T,V)\gamma(U,W)$.

We can now compare the composite natural transformations
\[
 \{\alpha\}\dirsum\{\beta\} \xrightarrow{\mu}
  \{\alpha + \beta\}\circ\dirsum = \{\beta+\alpha\}\circ\dirsum\circ\gamma
\]
and
\[
 \{\alpha\}\dirsum\{\beta\} \xrightarrow{\gamma}
  (\{\beta\}\dirsum\{\alpha\})\circ\gamma \xrightarrow{\mu\circ\gamma}
  \{\beta+\alpha\}\circ\dirsum\circ\gamma.
\]
Evaluating at $(T,U)\times(V,W)$ we get the following diagram:
\[
 \xymatrix@C+2em{
  (\Trep\dirsum\Vrep) \ominus (\Urep\dirsum\Wrep)
    \ar[r]^{\gamma(T,V\ominus W)}
    \ar[d]_{\gamma(T,V)\gamma(U,W)}
  & (\Trep\dirsum\Vrep) \ominus (\Urep\dirsum\Wrep) \ar@{=}[d] \\
  (\Vrep\dirsum\Trep) \ominus (\Wrep\dirsum\Urep)
    \ar[r]_{\gamma(T\ominus U,V)}
  & (\Vrep\dirsum\Trep) \ominus (\Wrep\dirsum\Urep)
 }
\]
This diagram does not commute on the nose, due to the signs indicated, but does up to 
their combination, which is
\[
 \gamma(T,V\ominus W)\gamma(T,V)\gamma(U,W)\gamma(T\ominus U,V)
  = \gamma(T\ominus U,V\ominus W) = \gamma(\alpha,\beta).
\]
As we shall see in a moment, this is the source of the familiar sign in the
anticommutativity of the cup product in cohomology.

So, let us return to the ordinary cohomology $\Mackey H_G^*(X;\Mackey T)$ and say exactly what we mean
by $RO(G)$ grading. (We are thinking again of the case where all spaces are considered
as parametrized over a point, $*$.)
Given $\alpha\in RO(G)$, we have $\{\alpha\}\colon \D_\alpha\to \vRO(G)$
and, on applying $\Mackey H_G^* = \Mackey H_G^*(X;\Mackey T)$, a functor
$\Mackey H_G^{\{\alpha\}}\colon \D_\alpha\to \Ab$.
(We are suppressing the space and coefficient system to concentrate on the grading.)
Note that all the maps in this diagram are isomorphisms. We then define
\[
 \Mackey H_G^\alpha = \colim_{\D_\alpha} \Mackey H_G^{\{\alpha\}}
  = \colim_{\D_\alpha} \Mackey H_G^{\Trep\ominus\Urep}.
\]
Thus, we remain uncommitted as to which particular representative of $\alpha$ we shall use,
but have specified isomorphisms between the various possibilities.

The cup product is a natural pairing
\[
 \cup\colon\Mackey H_G^{\Trep\ominus\Urep}\boxprod \Mackey H_G^{\Vrep\ominus\Wrep}
  \to \Mackey H_G^{(\Trep\dirsum\Vrep)\ominus (\Urep\dirsum\Wrep)}.
\]
Precisely, it is natural when we view both sides as functors on $\vRO(G)\times \vRO(G)$,
the right hand side factoring through $\dirsum\colon \vRO(G)\times \vRO(G)\to \vRO(G)$.
As such, composition gives a natural transformation
\[
 \cup\colon \Mackey H_G^{\{\alpha\}}\boxprod \Mackey H_G^{\{\beta\}}
  \to \Mackey H_G^{\{\alpha\}\dirsum\{\beta\}}.
\]
Now, we would prefer the target to be 
$\Mackey H_G^{\{\alpha+\beta\}\circ\dirsum}$,
whose colimit is $\Mackey H_G^{\alpha+\beta}$,
so we need to apply the natural transformation
$\mu$ defined above. This means that we need to introduce signs, replacing $\cup$ with
\[
 \gamma(T,V\ominus W){\cup}\colon
  \Mackey H_G^{\Trep\ominus\Urep}\boxprod \Mackey H_G^{\Vrep\ominus\Wrep}
  \to \Mackey H_G^{(\Trep\dirsum\Vrep)\ominus (\Urep\dirsum\Wrep)}.
\]
With this adjustment of signs we get a natural transformation
\[
 \Mackey H_G^{\{\alpha\}}\boxprod \Mackey H_G^{\{\beta\}}
  \to \Mackey H_G^{\{\alpha+\beta\}\circ\dirsum}
\]
and, on taking colimits,
\[
 \Mackey H_G^\alpha \boxprod \Mackey H_G^\beta \to \Mackey H_G^{\alpha+\beta}.
\]
Finally, the commutativity analysis above leads to the conclusion that the diagram
\[
 \xymatrix@R-1.5em{
   \Mackey H_G^\alpha \boxprod \Mackey H_G^\beta \ar[dr] \ar[dd]_\gamma \\
   & \Mackey H_G^{\alpha+\beta} \\
   \Mackey H_G^\beta \boxprod \Mackey H_G^\alpha \ar[ur]
 }
\]
commutes up to the sign $\gamma(\alpha,\beta)$, i.e., with this understanding of $RO(G)$ grading,
the cup product is anticommutative, meaning that
\[
 y \cup x = \gamma(|x|,|y|)(x\cup y).
\]

We can be more explicit for the groups we are most interested in.
We use the notation introduced in Definition~\ref{def:irrRepresentations}.
In the case $G = \Z/2$, if $\alpha = m_0\R + m_1\LL$ and
$\beta = n_0\R + n_1\LL$, then
\[
 \gamma(\alpha, \beta) = (-1)^{m_0 n_0}(1-g)^{m_1 n_1}.
\]
In the case $G = \Z/p$ with $p$ odd, if
$\alpha = m_0\R + \sum_k m_k\MM_k$ and
$\beta = n_0\R + \sum_k n_k\MM_k$,
then we have simply
\[
 \gamma(\alpha, \beta) = (-1)^{m_0 n_0}.
\]

\begin{remark}[Restriction to $G/e$]\label{rem:restrictionG/e}
It is tempting to think of restriction to the $G/e$ level of our
Mackey-functor valued cohomology as simply restriction from
$G$-cohomology to nonequivariant cohomology, but that is somewhat misleading.
Notice that, in the case of $G = \Z/2$, anticommutativity at the $G/e$ level is not
what we might think: The map $A(G)\to \Z$ takes $(-1)^{m_0 n_0}(1-g)^{m_1 n_1}$ to
$(-1)^{m_0 n_0 + m_1 n_1}$, {\em not} $(-1)^{(m_0+m_1)(n_0+n_1)}$.
So, if we think about the forgetful map 
$\tilde H_G^\alpha\to \tilde H_e^{\alpha} \iso \tilde H^{|\alpha|}$,
the commutation law on the source does not agree with the usual anticommutativity on the target.
We really need to think of $\tilde H_e^*$ as $RO(G)$-graded
in this context, distinguishing the parts of the grading coming from $\R$ from those
coming from $\LL$.
\end{remark}

What about grading on $RO(\Pi_G B)$, with $B = B_GU(1)$?
(Here we restrict to $G = \Z/p$.)
The outline of the approach is similar to the discussion above of $RO(G)$ grading.
Cohomology of spaces over $B$ is most naturally graded on the category
of representations of $\Pi_G B$, meaning functors
$\Pi_GB\to v\V_G$ over $\orb G$.
We may restrict to functors $\Pi_GB\to v\Vinit_G$ taking values in formal
differences of initial segments.
For any $\alpha = (\alpha_0,\dots,\alpha_{p-1}) \in RO(\Pi_GB)$,
we consider the category $\D_\alpha$ with objects $p$-tuples of
pairs $((V_k,W_k))_k$, where the $V_k$ and $W_k$ are initial segments with
$\alpha_k = [V_k\ominus W_k]$ and with $|V_0| = \cdots = |V_{p-1}|$;
the maps in $\D_\alpha$ are the inclusions.
There is then a functor $\{\alpha\}$ from $\D_\alpha$ to representations
$\Pi_GB\to v\Vinit_G$ that takes $((V_k,W_k))_k$ to the representation
specified on $b_k$ by $V_k\ominus W_k$, and specified on $b$ by 
$G\times(V_0\ominus W_0)$,
with the maps induced by $b\to b_k$ given by the obvious identifications.
(In the case $p=2$ and $|\alpha|-|\alpha^G|$ odd, 
$G\times(V_0\ominus W_0)$ will have the nonidentity self-map.)
$\{\alpha\}$ takes maps to the suspension maps given by the inclusions.
We then define $\Mackey H_G^\alpha$ to be the colimit of the diagram $\Mackey H_G^{\{\alpha\}}$.

From here the analysis goes much as for the case of $RO(G)$,
and anticommutativity works perhaps more nicely than expected:
For $\alpha\in RO(\Pi_GB)$, recall that the $|\alpha_k|$ are all equal,
the $\alpha_k^G$ all have the same parity, and so the
$|\alpha_k-\alpha_k^G|$ all have the same parity as well.
Write $|\alpha|$ for the common dimension of the $|\alpha_k|$, write
$|\alpha^G|\in\Z/2$ for the common parity of the $\alpha_k^G$,
and write $|\alpha-\alpha^G|\in\Z/2$ for the common parity of the $|\alpha_k-\alpha_k^G|$.
Then, for $x\in\Mackey H_G^\alpha$ and $y\in\Mackey H_G^\beta$,
\begin{align*}
 yx &= \gamma(\alpha,\beta)xy,
\intertext{where }
  \gamma(\alpha,\beta) &= (-1)^{|\alpha^G|\cdot|\beta^G|}
                         (1-g)^{|\alpha-\alpha^G|\cdot|\beta-\beta^G|}
      && \text{if $p = 2$, and} \\
\gamma(\alpha,\beta) &= (-1)^{|\alpha|\cdot|\beta|} && \text{if $p$ is odd.}
\end{align*}

%-------------------------------------------------------
\section{Preliminary results on the cohomology of a point}\label{sec:pointprelim}

Over the next several sections we will calculate the $RO(G)$-graded cohomology of a point
for $G = \Z/p$, verifying the results stated in \S\ref{sec:pointSummary}.
In that section, we stated $\Mackey H_G^\bullet(S^0)$ first
and then gave the structures of $\Mackey H_G^\bullet(EG_+)$ and
$\Mackey H_G^\bullet(\tE G)$ as modules over the cohomology of a point.
However, we shall calculate $\Mackey H_G^\bullet(EG_+)$ and
$\Mackey H_G^\bullet(\tE G)$ first, as they are actually relatively simple to handle.
We then use those results to derive the cohomology of a point, using
the cofibration sequence $EG_+\to S^0\to \tE G$.
But we begin in this section by identifying
several elements of $\Mackey H_G^\bullet(S^0)$ that come from the
equivariant (stable) homotopy of spheres.

Recall the Euler classes $e_V \in \Mackey H_G^{V}(S^0)$ defined in Definition~\ref{def:EulerClasses}.
It is not meant to be obvious that any of these classes are nonzero.
In fact, if $V^G \neq 0$, then the map $S^0\to S^V$ is equivariantly null homotopic, so $e_V = 0$.
In particular, $e_0 = e_{\MM_0} = 0$.
However, if $V^G = 0$, the map is equivariantly essential, and we shall see that,
at least for the groups $G = \Z/p$, the resulting cohomology class is nonzero.

Note that, for any based $G$-space $X$, the map
\[
 \Mackey H_G^\alpha(X) \iso \Mackey H_G^{\alpha+V}(X\smsh S^V)
  \xrightarrow{1\smsh e_V^*} \Mackey H_G^{\alpha+V}(X)
\]
is the same as by multiplication by $e_V\in \Mackey H_G^{V}(S^0)$.

Now, $e_V$ is invariant with respect to isomorphisms of $V$. That is, given
an isomorphism $f\colon V\to W$, $f_*e_V = e_W$ regardless of which isomorphism is chosen.
This follows from the fact that the suspension isomorphism is invariant, so
that the following diagram commutes:
\[
 \xymatrix@C=.1em{
   & \Mackey H_G^0(S^0) \ar[dl]_\iso \ar[dr]^\iso \\
   \Mackey H_G^V(S^V) \ar[rr] \ar[d] & & \Mackey H_G^W(S^W) \ar[d] \\
   \Mackey H_G^V(S^0) \ar[rr] & & \Mackey H_G^W(S^0).
 }
\]
Here, the map $\Mackey H_G^V(S^V)\to \Mackey H_G^W(S^W)$
is the one induced by $f$ on the grading and
$f^{-1}\colon S^W\to S^V$, while the map
$\Mackey H_G^V(S^0)\to \Mackey H_G^W(S^0)$
is induced by $f$ on the grading.

\begin{remark}
At first glance, the invariance of the Euler classes with respect to isomorphisms
seems strange. For example, when $p=2$, we shall show that $e = e_\LL$
generates $\Mackey H_G^\LL(S^0) \iso \conc\Z$. The invariance then says that the negation
map $f\colon\LL\to\LL$ has to act by the identity on this group, when we might have expected it
to act by negation. Remember, however, that $S^f\colon S^\LL\to S^\LL$ represents
$1-g\in A(G)$, and $g$ acts as 0 on $\conc\Z$, so the negation map
on $\LL$ does act as the identity on $\Mackey H_G^\LL(S^0)$.
\end{remark}

We mentioned in \S\ref{sec:pointSummary} that, 
for $p$ odd, we are also interested in the Euler classes of the
complex representations $\C_k$ for all $1\leq k\leq p-1$ or, equivalently,
the Euler classes of the ``oriented'' representations $\MM_k$.
If $1\leq k\leq (p-1)/2$, this is just the Euler class $e_k$.
For $(p+1)/2 \leq k \leq p-1$, the Euler class lives naturally in
$\tilde H_G^{\MM_k}(S^0)$, if we think of cohomology as a functor on
actual representations of $G$, but because we are grading on $RO(G)$ we need to
identify this group with $\tilde H_G^{\MM_{p-k}}(S^0)$.
We need, therefore, to choose and fix an identification of these groups,
which we do by choosing a stable isomorphism between $M_k$ and $M_{p-k}$.
There are two candidates, one being the (unstable) $G$-isomorphism
$\phi\colon \MM_k\to \MM_{p-k}$.
However, when we look at the cohomology of $B_GU(1)$, and the Euler classes
of the fibers of the canonical line bundle $\omega_G$,
those fibers are all identified nonequivariantly with $\R^2$ by the
isomorphisms mentioned in Definition~\ref{def:irrRepresentations}.
Hence, in this context, it is more natural to use
the stable isomorphism that agrees nonequivariantly with 
those identifications.

\begin{definition}\label{def:chosenidentification}
Let $\psi\colon \MM_k\to \MM_{p-k}$ be the stable isomorphism given by
the pair of maps $\MM_k\dirsum\R\to \MM_{p-k}\dirsum\R$ and $\R\to \R$,
where the former is the sum of the $G$-isomorphism $\phi$ and $-1$, and the latter is the identity.
We henceforth use $\psi_*$ to identify 
$\tilde H_G^{\MM_k}(X) \iso \tilde H_G^{\MM_{p-k}}(X)$
for any $X$.
\end{definition}

Here is the consequence for the Euler classes: The invariance of the
Euler classes given above tells us that $\phi_* e_k = e_{p-k}$,
but $\phi_* = -\psi_*$, so, identifying $e_k$ with $\psi_*(e_k)$, we see that
\[
 e_k = e(\C_k) = e(\MM_k) = -e_{p-k} \in \tilde H_G^{\MM_{p-k}}(S^0)
  \quad\text{for $(p+1)/2 \leq k \leq p-1$.}
\]

For $p$ odd we have the following elements,
which are interesting primarily when $p>3$.

\begin{definition}\label{def:mu}
Let $p$ be odd, let $1 \leq j, k \leq (p-1)/2$, and let $d\in \Z$ be such that
$d \equiv kj^{-1} \pmod p$. Using our chosen nonequivariant 
identifications of $\MM_k$ with $\C$,
so that the generator $t$ of $G$ acts as $e^{2\pi i k/p}$,
let
\[
 \mu_{j,k,d}\colon S^{\MM_j} \to S^{\MM_k}
\]
be the $G$-map induced by 
\[
 z \mapsto 
  \begin{cases}
    z^d & \text{if $d>0$} \\
    \bar z^{|d|} & \text{if $d<0$}
  \end{cases}
\]
for $z\in \MM_j$.
We also write $\mu_{j,k,d}\in \tilde H_G^{\MM_k - \MM_j}(S^0)$ for the image of 1 under
the map
\[
 \tilde H_G^0(S^0) \iso \tilde H_G^{\MM_k}(S^{\MM_k}) \xrightarrow{\mu_{j,k,d}^*}
  \tilde H_G^{\MM_k}(S^{\MM_j}) \iso \tilde H_G^{\MM_k - \MM_j}(S^0).
\]
\end{definition}

\begin{proposition}\label{prop:rhoMu}
In $\Mackey H_G^\bullet(S^0)$, we have
$\rho(\mu_{j,k,d}) = d\iota_k\iota_j^{-1}$.
\end{proposition}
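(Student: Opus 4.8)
The plan is to compute $\rho(\mu_{j,k,d})$ by tracking the definition of $\mu_{j,k,d}$ through restriction to the $G/e$ level, where everything becomes a nonequivariant computation. Restriction $\rho$ is compatible with suspension isomorphisms and with the maps induced by equivariant maps of spheres, so the diagram defining $\mu_{j,k,d}\in\tilde H_G^{\MM_k-\MM_j}(S^0)$ restricts, at level $G/e$, to the analogous diagram built from the underlying nonequivariant self-map of $S^2$. Thus the first step is to identify $\rho(\mu_{j,k,d})$ as the class obtained from $1\in\tilde H_e^0(S^0)$ by the suspension isomorphism $\tilde H_e^0(S^0)\iso\tilde H_e^{\MM_k}(S^{\MM_k})$, the map $(\mu_{j,k,d})^*$ with $\mu_{j,k,d}$ now viewed as a nonequivariant map $S^{\MM_j}\to S^{\MM_k}$, and the desuspension $\tilde H_e^{\MM_k}(S^{\MM_j})\iso\tilde H_e^{\MM_k-\MM_j}(S^0)$. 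Here one uses our fixed nonequivariant identifications $\MM_j=\MM_k=\C=\R^2$ from Definition~\ref{def:irrRepresentations}, under which the underlying map of $\mu_{j,k,d}$ is $z\mapsto z^d$ (or $\bar z^{|d|}$), which has nonequivariant degree $d$.

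The second step is to express the same restriction in terms of the generators $\iota_k$, $\iota_j^{-1}$. By Definition~\ref{def:iota}, $\iota_k\in\tilde H_e^{\MM_k-2}(S^0)$ is the image of $1$ under the suspension $\tilde H_e^0(S^0)\iso\tilde H_e^{\MM_k}(S^2)\iso\tilde H_e^{\MM_k-2}(S^0)$ coming from our identification $\MM_k=\R^2$, i.e.\ it is precisely the ``change of grading'' isomorphism class; similarly $\iota_j^{-1}\in\tilde H_e^{2-\MM_j}(S^0)$ is its inverse for $j$. The product $\iota_k\iota_j^{-1}$ is then the class of the identity viewed in grading $\MM_k-\MM_j$, and multiplication by the degree-$d$ self-map of $S^2$ multiplies this by $d$ (using that $\tilde H_e^{\MM_k}(S^{\MM_j})$ is a free rank-one module over $\tilde H_e^\bullet(S^0)(G/e)$ in that grading, with multiplication by a map of spheres acting by its nonequivariant degree). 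Assembling, $\rho(\mu_{j,k,d})=d\,\iota_k\iota_j^{-1}$.

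The main subtlety — and the place to be careful rather than the place the argument is hard — is the sign bookkeeping in the decategorification to $RO(G)$-grading, exactly the issue flagged in \S\ref{sec:ROGGrading} and Remark~\ref{rem:restrictionG/e}. One must check that the identifications $\tilde H_e^{\MM_k}(S^{\MM_j})\iso\tilde H_e^{\MM_k-\MM_j}(S^0)$ and $\tilde H_e^{\MM_k}(S^2)\iso\tilde H_e^{\MM_k-2}(S^0)$ used for $\mu_{j,k,d}$, $\iota_k$, and $\iota_j$ are the same suspension isomorphisms (all coming from the fixed identifications $\MM_\ell=\R^2$), so that no stray unit from $A(G)$ or sign $(-1)^{\cdot}$ intervenes; since $p$ is odd, the relevant $\gamma(\alpha,\beta)$ is just a power of $-1$ coming from the trivial-summand dimensions, and here the $\R^{i-j}$ pieces match up on both sides. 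Once these choices are seen to be consistent, the computation of $\rho(\mu_{j,k,d})$ reduces cleanly to ``a degree-$d$ map multiplies the generator by $d$,'' giving the claimed formula. I would write the proof as: (i) restrict the defining diagram to $G/e$; (ii) identify the restricted diagram with the one computing $d\cdot(\iota_k\iota_j^{-1})$, invoking Definition~\ref{def:iota} and the freeness of $\tilde H_e^\bullet(S^0)(G/e)$; (iii) note the sign analysis of \S\ref{sec:ROGGrading} introduces nothing here because the relevant gradings differ by even-dimensional (complex) representations.
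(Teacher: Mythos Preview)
Your proposal is correct and takes essentially the same approach as the paper: both reduce the computation to the observation that, under the fixed nonequivariant identifications $\MM_j = \MM_k = \R^2$, the map $\mu_{j,k,d}\colon S^2\to S^2$ has nonequivariant degree $d$. The paper's proof is a single sentence stating exactly this, while you have spelled out the surrounding suspension-isomorphism and sign-bookkeeping details explicitly.
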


\begin{proof}
This is the observation that,
under our standard nonequivariant identifications of $\MM_k$ with $\R^2$, 
$\mu_{j,k,d}\colon S^2\to S^2$ has nonequivariant degree $d$.
\end{proof}

\begin{proposition}\label{prop:muTimesE}
For $1 \leq j, k \leq (p-1)/2$, we have $\mu_{j,k,d}e_j = e_k$ in $\Mackey H_G^{M_k}(S^0)$.
\end{proposition}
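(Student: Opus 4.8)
The statement $\mu_{j,k,d}e_j = e_k$ lives in $\Mackey H_G^{\MM_k}(S^0)(G/G)$, so the first thing I would do is identify that group. Since $\MM_k^G = 0$, the results summarized in \S\ref{sec:pointSummary} (Theorem~\ref{thm:oddCohomPoint}, or more directly the computation of $\Mackey H_G^\bullet(S^0)$ in gradings $\alpha$ with $|\alpha| > 0$ and $\alpha^G = 0$) give $\Mackey H_G^{\MM_k}(S^0)(G/G) \iso \conc\Z(G/G) = \Z$, with $\Mackey H_G^{\MM_k}(S^0)(G/e) = 0$. In particular $e_k$ generates this group, and the group injects into nothing at level $G/e$, so an element of $\Mackey H_G^{\MM_k}(S^0)(G/G)$ is determined by which integer multiple of $e_k$ it is. (Alternatively, one can detect the integer by restricting along $S^0 \to S^{\MM_k}$ composed with a point inclusion, i.e.\ by nonequivariant degree at a fixed point; but the cleanest route is just to use that this group is infinite cyclic on $e_k$.)

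Next I would unwind the definitions. By Definition~\ref{def:EulerClasses}, $e_j$ is the image of $1$ under $\tilde H_G^0(S^0) \iso \tilde H_G^{\MM_j}(S^{\MM_j}) \to \tilde H_G^{\MM_j}(S^0)$, the last map being restriction along $S^0 \includesin S^{\MM_j}$. By Definition~\ref{def:mu}, $\mu_{j,k,d} \in \tilde H_G^{\MM_k - \MM_j}(S^0)$ is pulled back from $\tilde H_G^{\MM_k}(S^{\MM_k})$ along $\mu_{j,k,d}^* \colon \tilde H_G^{\MM_k}(S^{\MM_k}) \to \tilde H_G^{\MM_k}(S^{\MM_j})$. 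Multiplication by $\mu_{j,k,d}$ on $\tilde H_G^{\MM_j}(S^0)$ is, up to suspension isomorphisms, induced by the map $\mu_{j,k,d}\colon S^{\MM_j} \to S^{\MM_k}$ (this is the same principle noted in \S\ref{sec:pointprelim} for $e_V$: smashing with $e_V^*$ is multiplication by $e_V$). So the plan is to chase $1 \in \tilde H_G^0(S^0)$ through the following commuting square of spaces and maps: on one side, suspend $1$ up to $S^{\MM_j}$ to get (a representative of) $e_j$ and then apply $\mu_{j,k,d}$; on the other side, suspend $1$ up to $S^{\MM_k}$ to get $e_k$. The commutativity I need is exactly the naturality of the suspension isomorphism with respect to the composite $S^{\MM_j} \xrightarrow{\mu_{j,k,d}} S^{\MM_k}$, together with the compatibility of the inclusions $S^0 \includesin S^{\MM_j}$, $S^0 \includesin S^{\MM_k}$ with $\mu_{j,k,d}$ — and here one uses that $\mu_{j,k,d}$ restricted to the fixed points $S^0 \subset S^{\MM_j}$ is the identity $S^0 \to S^0$, since $\mu_{j,k,d}$ is given by $z \mapsto z^d$ (or $\bar z^{|d|}$), which sends $0 \mapsto 0$ and $\infty \mapsto \infty$ and is the identity on the two fixed points of $S^{\MM_j}$.

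The main obstacle, and the only real content beyond diagram-chasing, is getting the precise bookkeeping of the suspension isomorphisms and the grading identifications right — in particular making sure no spurious sign or unit from $A(G)$ creeps in. For $p$ odd this is benign: as recorded at the end of \S\ref{sec:ROGGrading}, the anticommutativity sign $\gamma(\alpha,\beta)$ for odd $p$ is just $(-1)^{|\alpha||\beta|}$ and involves no nontrivial units of $A(G)$, and the Euler classes are invariant under isomorphisms of the representation (the diagram in \S\ref{sec:pointprelim}), so rotating or otherwise moving the representations around contributes nothing. The upshot I expect is that the chase shows $\mu_{j,k,d}e_j$ and $e_k$ are the images of the same element $1 \in \tilde H_G^0(S^0)$ under the same composite, hence equal, with \emph{no} factor of $d$ appearing — the $d$ is invisible at the $G/G$ level precisely because $\mu_{j,k,d}$ is the identity on fixed points (contrast Proposition~\ref{prop:rhoMu}, where the factor $d$ shows up at level $G/e$, which is where $\mu_{j,k,d}$ has nonequivariant degree $d$). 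So I would close by remarking that $\mu_{j,k,d}e_j = e_k$ exactly because $\mu_{j,k,d}^G = \id$, whereas $\rho(\mu_{j,k,d}) = d\,\iota_k\iota_j^{-1}$ records the nonequivariant degree.
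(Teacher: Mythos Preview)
Your core argument is essentially the paper's: both reduce to the commutative triangle
\[
\xymatrix@C-1em{ & S^0 \ar[dl]_{e_j} \ar[dr]^{e_k} \\ S^{\MM_j} \ar[rr]_{\mu_{j,k,d}} & & S^{\MM_k} }
\]
(your observation that $\mu_{j,k,d}$ is the identity on the fixed-point sphere), followed by a chase through suspension isomorphisms.

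Two cautions. First, your opening paragraph invokes the identification $\Mackey H_G^{\MM_k}(S^0)\iso\conc\Z$ from Theorem~\ref{thm:oddCohomPoint}, but the present proposition is one of the ingredients in proving that theorem (via Proposition~\ref{prop:multByMu} and Theorem~\ref{thm:oddAdditiveCohomoPoint}), so citing it here would be circular. Fortunately your actual argument in the later paragraphs is a direct chase that shows $\mu_{j,k,d}e_j$ and $e_k$ are images of the same element under the same composite, and does not rely on knowing the target group; you should simply drop the first paragraph. Second, the paper records one specific piece of the suspension bookkeeping you flag: the two inclusions $e_j\smsh 1,\ 1\smsh e_j\colon S^{\MM_j}\to S^{\MM_j}\smsh S^{\MM_j}$ are homotopic. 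This is what lets you identify the $\MM_j$-suspension used in defining $\mu_{j,k,d}$ with the one arising when you multiply by $e_j$; with it in hand, your chase goes through exactly as you describe.
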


\begin{proof}
This comes down to two observations: That the two inclusions
\[
 e_j\smsh 1, 1\smsh e_j\colon S^{\MM_j}\to S^{\MM_j}\smsh S^{\MM_j}
\]
are homotopic, and that the following diagram commutes:
\[
 \xymatrix@C-1em{
  & S^0 \ar[dl]_{e_j} \ar[dr]^{e_k} \\
  S^{\MM_j} \ar[rr]_{\mu_{j,k,d}} & & S^{\MM_k}
 }
\]
We leave to the reader then to draw the diagram that
verifies the proposition.
\end{proof}

%-------------------------------------------------------
\section{The cohomology of $EG_+$}

In this section we calculate $\Mackey H_G^\bullet(EG_+)$
for $G = \Z/p$.
Unless specified otherwise, we take all coefficients in $\Mackey A_{G/G}$.

\begin{proposition}\label{prop:cohomologyG}
For $\alpha\in RO(G)$,
\[
 \Mackey H_G^\alpha(G/e_+) \iso
  \begin{cases}
   \Mackey A_{G/e} & \text{if $|\alpha| = 0$} \\
   0 & \text{otherwise.}
  \end{cases}
\]
Let $\gamma\colon G/e_+\to G/e_+$ be the $G$-map given by multiplication by
the generator $t$.
\begin{itemize}
\item
If $p$ is odd then $\gamma^*$ is the identity on 
$\Mackey H_G^\alpha(G/e_+)(G/G)$ and multiplication by $t\in\Z[G]$ on
$\Mackey H_G^\alpha(G/e_+)(G/e)$.
\item
If $p = 2$, then $\gamma^*$
acts as $(-1)^{\alpha^G}$ on
$\Mackey H_G^\alpha(G/e_+)(G/G)$
and as $(-1)^{\alpha^G}t$ on
$\Mackey H_G^\alpha(G/e_+)(G/e)$.
\end{itemize}
\end{proposition}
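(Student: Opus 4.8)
The plan is to compute $\Mackey H_G^\alpha(G/e_+)$ via the Wirthm\"uller isomorphism and the dimension axiom, and then to track the action of $\gamma^*$ by reducing to the integer-graded case and suspending. First I would invoke the Wirthm\"uller isomorphism from \cite{CW:ordinaryhomology}, which here reads
\[
 \Mackey H_G^\alpha(G/e_+)(G/K) \iso \tilde H_K^{\alpha|K}(G/e_+;\Mackey A_{G/G}|K),
\]
and note that $G/e_+$ as a $K$-space (for $K = e$ or $K = G$) is a wedge of free cells; combined with the fact that $\alpha|e$ is just the integer $|\alpha|$, the dimension axiom gives that $\Mackey H_G^\alpha(G/e_+)$ is concentrated in gradings with $|\alpha| = 0$. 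In that case, suspending by a representation $V$ with $|V| = 0$, $|\alpha| = 0$ identifies $\Mackey H_G^\alpha(G/e_+)$ with $\Mackey H_G^0(G/e_+) = \Mackey A_{G/e}$, where the last identification is the standard one: $G/e_+ = G_+$ represents the functor $\sorb G(-,G/e)$, so $\Mackey H_G^0(G/e_+) \iso \Mackey A_{G/e}$.

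Next I would analyze $\gamma^*$. Since $\gamma$ is a self-map of $G/e_+$ over a point, it acts compatibly through the suspension isomorphisms that identify $\Mackey H_G^\alpha(G/e_+)$ with $\Mackey H_G^0(G/e_+)$ — \emph{except} that one must be careful: the suspension isomorphism $\Mackey H_G^\alpha(X) \iso \Mackey H_G^{\alpha+V}(X \smsh S^V)$ is natural in $X$, but to land back in $\Mackey H_G^{\alpha}(X)$ after desuspending by a trivial-dimension $V$ one uses a stable self-equivalence of $S^V$, and the degree of $\gamma^G$ on such a sphere is exactly where the sign $(-1)^{\alpha^G}$ enters when $p=2$. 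Concretely, I would write $\alpha = \alpha^G + (\alpha - \alpha^G)$, handle the $\alpha^G$-part (trivial summands) separately from the free part. On $\Mackey H_G^0(G/e_+) = \Mackey A_{G/e}$, the map $\gamma^*$ is induced by precomposition with $\gamma\colon G/e \to G/e$ in $\sorb G$, which is the element $t$ of the self-maps of $G/e$; so $\gamma^*$ is ``multiplication by $t$'' as a self-map of $\Mackey A_{G/e} = \sorb G(-,G/e)$. At level $G/e$ this is literal multiplication by $t \in \Z[G]$; at level $G/G$ it acts on $\Mackey A_{G/e}(G/G) = \Z$, and since $\rho t = \rho$ in $\sorb G$ (recorded in \S\ref{sec:MackeyFunctors}), $t$ acts as the identity there. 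This gives the $p$ odd statement verbatim.

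For $p = 2$ the extra sign comes from desuspending past the copies of $\LL$ in $\alpha$. The key point is: to go from $\Mackey H_G^\alpha(G/e_+)$ to $\Mackey H_G^0(G/e_+)$ one desuspends by $S^{\alpha - \alpha^G}$, and when $|\alpha - \alpha^G| = \alpha^G \equiv \alpha^G \pmod 2$ copies of $\LL$ are involved (using $|\alpha|=0$ so $\alpha^G = -|\alpha - \alpha^G|$, hence $\alpha^G$ and the number of $\LL$'s agree mod $2$), the map $\gamma$ — which on $S^\LL$ is negation — contributes the element $\gamma(\LL,\LL) = 1 - g = \kappa - 1 \in A(\Z/2)$ once per copy of $\LL$, as discussed in \S\ref{sec:ROGGrading}. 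Raising $1-g$ to the power (number of copies of $\LL$) $\equiv \alpha^G \pmod 2$ and using $(1-g)^2 = 1$, we get $(1-g)^{\alpha^G}$. Pushing this unit into the two levels: at level $G/G$ it maps via $\epsilon$ to $(-1)^{\alpha^G}$, and at level $G/e$ it acts as $(-1)^{\alpha^G}$ times the residual $t$ coming from $\gamma$ itself (i.e. $\gamma = \gamma_{\text{permutation}}\circ\gamma_{\text{on spheres}}$, the permutation part giving $t$ and the sphere part giving the sign). This yields $\gamma^* = (-1)^{\alpha^G}$ on level $G/G$ and $(-1)^{\alpha^G}t$ on level $G/e$, as claimed.

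The main obstacle I anticipate is bookkeeping the sign cleanly — specifically, being precise about \emph{which} stable self-map of which sphere $\gamma$ induces after one passes through all the suspension isomorphisms, so that the factor $(1-g)^{\alpha^G}$ is correctly attributed to the $p=2$ case and correctly seen to vanish (become $+1$) when $p$ is odd because all nontrivial irreducibles are then two-dimensional and negation on them is homotopic to the identity. Everything else is formal: the group computation is immediate from Wirthm\"uller plus the dimension axiom, and the odd-$p$ action statement is just unwinding that $G/e_+$ represents $\sorb G(-,G/e)$ together with the relations $\rho t = \rho$ and $t\tau = \tau$ from \S\ref{sec:MackeyFunctors}.
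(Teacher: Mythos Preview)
Your proposal is essentially correct and follows the same strategy as the paper: compute the Mackey functor via Wirthm\"uller plus the dimension axiom, determine $\gamma^*$ in grading $0$ from the representability of $\Mackey A_{G/e}$, and then transport to general $\alpha$ with $|\alpha|=0$, picking up a sign from the copies of $\LL$ when $p=2$.

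The paper's execution of the last step is more concrete than yours and avoids the appeal to \S\ref{sec:ROGGrading}, which is not quite the right mechanism (that section concerns signs in commuting cup products, not the action of a self-map of $G/e_+$). Instead, the paper writes $\alpha = V - W$ with $V$ and $W$ actual representations of the same underlying $\R^n$, realized via homomorphisms $r_V, r_W\colon G\to O(n)$, and builds an explicit $G$-homeomorphism
\[
 \eta\colon G/e_+\smsh S^V \to G/e_+\smsh S^W,\qquad \eta(h,v) = (h,\, r_W(h)r_V(h)^{-1}v).
\]
A small commutative square shows that $\eta$ intertwines $\gamma\smsh 1$ on the source with $\gamma\smsh r_W(t)r_V(t)^{-1}$ on the target. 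Hence $\gamma^*$ in grading $V-W$ equals $\gamma^*$ in grading $0$ multiplied by the \emph{nonequivariant} degree of the linear map $r_W(t)r_V(t)^{-1}\colon \R^n\to\R^n$. For $p$ odd this degree is always $+1$ (the nontrivial irreducibles are $2$-dimensional rotations); for $p=2$ it is $(-1)^a$ with $a$ the number of copies of $\LL$ in $V-W$, i.e.\ $(-1)^{\alpha^G}$. So the sign you want appears as an honest integer degree, not as a Burnside-ring unit $(1-g)^{\alpha^G}$ that then has to be pushed to each level. Your intuition that ``$t$ acts as $-1$ on $\LL$'' is exactly right; the homeomorphism $\eta$ is just the clean way to package it.
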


\begin{proof}
From \cite{CW:ordinaryhomology} we have the Wirthm\"uller isomorphism
\[
 \tilde H_G^\alpha(G_+\smsh_e X;\Mackey A_{G/G}) \iso \tilde H^{|\alpha|}(X;\Z)
\]
for any nonequivariant based space $X$. In particular, taking $X = G/G_+$ and $X = G/e_+$
in turn
gives the calculation of the Mackey functors $\Mackey H_G^\alpha(G/e_+)$.

To see the effect of $\gamma$, first notice that the action of $\gamma$
on $\Mackey H_G^0(G/e_+)$ is $1$ at level $G/G$ and
multiplication by $t$ at level $G/e$, from the dimension axiom.
For a general $\alpha$ with $|\alpha| = 0$ (the action of $\gamma$ is obvious otherwise),
write $\alpha = V - W$ where $V$ and $W$ are actual representations of $G$ of the
same nonequivariant dimension, say $n$.
Consider $V = \R^n$ with the action of $G$ given via a homomorphism
$r_V\colon G\to O(n)$, and similarly let $W = \R^n$ with action given
by $r_W\colon G\to O(n)$. Then we have a $G$-homeomorphism
\[
 \eta\colon G/e_+\smsh S^V \to G/e_+\smsh S^W
\]
given by
\[
 \eta(h,v) = (h, r_W(h)r_V(h)^{-1}v)
\]
for $h\in G$ and $v\in V = \R^n$. We can redo the calculation of $\Mackey H_G^\alpha(G/e_+)$
as
\begin{align*}
 \Mackey H_G^{V-W}(G/e_+)
  &\iso \Mackey H_G^V(G/e_+\smsh S^W) \\
  &\xrightarrow{\eta^*} \Mackey H_G^V(G/e_+\smsh S^V) \\
  &\iso \Mackey H_G^0(G/e_+) \\
  &\iso \Mackey A_{G/e},
\end{align*}
where the last isomorphism comes from the dimension axiom. We also have the following
commutative diagram:
\[
 \xymatrix{
  G/e_+\smsh S^V \ar[d]_{\gamma\smsh 1} \ar[r]^\eta
   & G/e_+\smsh S^W \ar[d]^{\gamma\smsh r_W(t)r_V(t)^{-1}} \\
  G/e_+\smsh S^V \ar[r]_\eta & G/e_+\smsh S^W.
 }
\]
It follows that the action of $\gamma$ on $\Mackey H_G^{V-W}(G/e_+)$ is the same as its
action on $\Mackey H_G^0(G/e_+)$ modified by the nonequivariant sign
of $r_W(t)r_V(t)^{-1}\colon S^n\to S^n$. If $p$ is odd, this sign is always $1$.
If $p$ is even, it is $(-1)^a$ where $a$ is the number of copies of $\LL$
appearing in $V-W$. But, $V-W \iso a(\LL-1)$, so we can also write the sign
as $(-1)^{\alpha^G}$.
\end{proof}

We can now calculate the additive structure of $\Mackey H_G^\bullet(EG_+)$,
verifying the additive parts of Theorems~\ref{thm:EvenEG} and~\ref{thm:OddEG}.

\begin{proposition}\label{prop:EGAdditive}
If $p$ is odd, then
\[
 \Mackey H_G^{\alpha}(EG_+) \iso
  \begin{cases}
   \Mackey R\Z & \text{if $|\alpha| = 0$} \\
   \conc{\Z/p} & \text{if $|\alpha| > 0$ and $\alpha^G$ is even} \\
   0 & \text{otherwise.}
  \end{cases}
\]
If $p$ is even, then
\[
 \Mackey H_G^{\alpha}(EG_+) \iso
  \begin{cases}
   \Mackey R\Z & \text{if $|\alpha| = 0$ and $\alpha^G$ is even} \\
   \Mackey R\Z_- & \text{if $|\alpha| = 0$ and $\alpha^G$ is odd} \\
   \conc{\Z/2} & \text{if $|\alpha|>0$ and $\alpha^G$ is even} \\
   0 & \text{otherwise.}
  \end{cases}
\]
\end{proposition}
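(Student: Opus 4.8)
The plan is to build a cellular model for $EG$ and compute directly, using the standard skeletal filtration together with the known cohomology of the orbits $G/e$ from Proposition~\ref{prop:cohomologyG}. First I would take the usual model $EG = S(\infty\cdot\rho)$, the unit sphere in a countable sum of copies of the regular representation $\rho$ (or, more economically for $\Z/p$, a sum of copies of $\bigoplus_{k} \MM_k$, respectively $\LL$ when $p=2$). This has a $G$-CW structure with free cells, and its skeletal filtration gives cofiber sequences whose subquotients are wedges of free cells $G/e_+ \smsh S^n$. Applying $\Mackey H_G^\bullet(-)$ and Proposition~\ref{prop:cohomologyG}, the associated spectral sequence (or, more concretely, the long exact sequences of the filtration) has $E_1$-term concentrated in gradings $\alpha$ with $|\alpha|\geq 0$ and built out of copies of $\Mackey A_{G/e}$, with the $t$-action twisted by a sign depending on $\alpha^G$ when $p=2$.

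The key computational step is to identify the $d_1$ differential with the boundary map in the (reduced) cellular cochain complex of $EG$, which nonequivariantly is the standard resolution $\cdots \to \Z[G] \xrightarrow{t-1} \Z[G] \xrightarrow{N} \Z[G] \xrightarrow{t-1} \Z[G]$ (with $N = \sum t^k$). Equivariantly, at level $G/e$ this is exactly that complex of $\Z[G]$-modules (possibly sign-twisted for $p=2$), and at level $G/G$ it is obtained by applying the fixed-point functor, i.e. taking $\Z[G]^G = \Z$ in each spot with the induced maps. Computing homology of this two-row complex of Mackey functors gives: in grading $|\alpha|=0$, the functor $\Mackey R\Z$ (from $\ker(t-1)/\mathrm{im}\,N$ type computations — one checks $\tau\circ\rho = p$ exactly as in the definition of $\Mackey R\Z$, or $\Mackey R\Z_-$ when the sign twist is present); in gradings $|\alpha|>0$ with $\alpha^G$ even, a copy of $\conc{\Z/p}$ concentrated at level $G/G$ (reflecting $\tilde H^{>0}(BG;\Z) = \Z/p$ in even degrees at the fixed level, and $0$ at level $G/e$ since $EG/G = \emptyset_+$); and $0$ otherwise. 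The parity conditions on $\alpha^G$ enter because the sign-twisted complex shifts the pattern of $N$ versus $t-1$ maps by one step.

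I would organize this as: (1) fix the $G$-CW model and record its cellular chain complex of Mackey functors, invoking Proposition~\ref{prop:cohomologyG} for the entries and the $t$-action; (2) observe that because all cells are free, the only nontrivial differential is $d_1$, given by the cellular coboundary; (3) compute the cohomology of this explicit periodic complex of Mackey functors, separating the cases $p$ odd (no sign twist, period $2$) and $p=2$ (sign twist, giving the alternation between $\Mackey R\Z$ and $\Mackey R\Z_-$); (4) read off the answer, checking that the integer-graded part reproduces $\tilde H^\ast(BG_+;\Z)$ as it must. The main obstacle I anticipate is bookkeeping the sign twist for $p=2$ correctly — specifically, making sure that the identification of $\Mackey H_G^\alpha(G/e_+)$ with the sign-twisted $\Z[G]$-module $\Z[G]$ (via Proposition~\ref{prop:cohomologyG}, where $\gamma^*$ acts as $(-1)^{\alpha^G}t$) is threaded consistently through the cellular coboundary maps, so that one genuinely gets $\Mackey R\Z$ when $\alpha^G$ is even and $\Mackey R\Z_-$ when $\alpha^G$ is odd, rather than the reverse. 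A clean way to control this is to compare with the $RO(G)$-suspension isomorphism: multiplication by the Euler class $e$ (once it is known to be invertible on $\Mackey H_G^\bullet(EG_+)$, which will follow from the same computation) shifts $\alpha$ by $\LL$, hence flips the parity of $\alpha^G$, and this shift must interchange $\Mackey R\Z$ with $\Mackey R\Z_-$, pinning down the pattern.
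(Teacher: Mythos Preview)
Your approach is correct and closely related to, but packaged differently from, the paper's proof. The paper uses the Serre spectral sequence of the fibration $G/e \to EG \to BG$: because the fiber cohomology $\Mackey H_G^{b+\beta}(G/e_+)$ is concentrated at $b=0$ (by Proposition~\ref{prop:cohomologyG}), the spectral sequence collapses to give $\Mackey H_G^{a+\beta}(EG_+) \iso \Mackey H_G^a(BG_+; (\Mackey A_{G/e})_\beta)$, which is then evaluated via Proposition~\ref{prop:cohomologyTrivialAction} as ordinary cohomology $H^a(BG;\Z)$ or $H^a(BG;\Z_-)$ with (possibly twisted) local coefficients, plus a direct check of $\rho$ and $\tau$ at $|\alpha|=0$. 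Your cellular approach computes the same thing by hand: the periodic complex $\Z[G]\xrightarrow{t-1}\Z[G]\xrightarrow{N}\cdots$ is exactly the standard resolution computing $H^*(G;\Z)$, so both routes reduce to the same group-cohomology calculation. Your version is more explicit; the paper's is quicker once the spectral-sequence and local-coefficient machinery are in place.

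Two corrections. First, the phrase ``$0$ at level $G/e$ since $EG/G = \emptyset_+$'' is garbled: $EG/G = BG$, not $\emptyset$. The correct reason level $G/e$ vanishes in positive degrees is that $\Mackey H_G^\alpha(EG_+)(G/e) \iso \tilde H^{|\alpha|}(EG_+;\Z)$ and $EG$ is nonequivariantly contractible. Second, and more substantively, your proposed sanity check for the $p=2$ sign pattern is wrong: multiplication by the Euler class $e$ is \emph{not} invertible on $\Mackey H_G^\bullet(EG_+)$ --- for instance it sends the $\Mackey R\Z$ in grading $0$ onto the $\conc{\Z/2}$ in grading $\LL$. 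It is on $\Mackey H_G^\bullet(\tE G)$ that $e$ becomes invertible; on $\Mackey H_G^\bullet(EG_+)$ the invertible element is the Thom class $\xi$, which shifts by $2(\LL-1)$ and hence does not change the parity of $\alpha^G$. To pin down the sign pattern you should instead anchor at $\alpha = 0$ (where the unit map forces $\Mackey R\Z$ rather than $\Mackey R\Z_-$) and then track the twist through the cellular differentials directly --- exactly the bookkeeping you anticipated as the main obstacle.
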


\begin{proof}
Consider the equivariant bundle $G/e \to EG \to BG$
and let $\beta\in RO(G)$ with $|\beta| = 0$. 
There is a Serre spectral sequence
\[
 E_2^{a,b} = \Mackey H_G^{a}(BG_+;\Mackey H_G^{b+\beta}(G/e_+;\Mackey A_{G/G}))
  \convto \Mackey H_G^{a+b+\beta}(EG_+;\Mackey A_{G/G}).
\]
In the $E_2$ term we are using local coefficients on $BG$.
Applying Proposition~\ref{prop:cohomologyG}, we see that the spectral sequence
collapses to the line $b = 0$, giving us
\[
 \Mackey H_G^{a+\beta}(EG_+;\Mackey A_{G/G})
  \iso \Mackey H_G^a(BG_+;(\Mackey A_{G/e})_\beta),
\]
where $(\Mackey A_{G/e})_\beta$ denotes the possibly twisted coefficient system
on $BG$ determined by the action of $\gamma$ computed in
Proposition~\ref{prop:cohomologyG}.
Proposition~\ref{prop:cohomologyTrivialAction} gives us
\[
 \Mackey H_G^a(BG_+;(\Mackey A_{G/e})_\beta)(G/e) \iso H_G^a(BG;\Z[G])
  \iso \begin{cases}
         \Z & \text{if $a = 0$} \\
         0 & \text{otherwise}
       \end{cases}
\]
and
\[
 \Mackey H_G^a(BG_+;(\Mackey A_{G/e})_\beta)(G/G)
  \iso H_G^a(BG;\Z_\beta)
\]
where $\Z_\beta$ denotes the local coefficient system on $BG$ determined by the action
from Proposition~\ref{prop:cohomologyG}. If $p$ is odd, this action is trivial,
so we get the usual cohomology of the group $G$, which is easily computed and well known:
\[
 H_G^a(BG;\Z) \iso
  \begin{cases}
    \Z & \text{if $a = 0$} \\
    \Z/p & \text{if $a > 0$ is even} \\
    0 & \text{otherwise.}
  \end{cases}
\]
If $p=2$ and $\beta^G$ is even, then the action is again trivial and the same
calculation holds. However, if $\beta^G$ is odd, then we have the nontrivial
coefficient system and we see the cohomology of $G$ with coefficients in the
nontrivial $\Z[G]$-module $\Z_-$. This is also well-known:
\[
 H_G^a(BG;\Z_-) \iso
  \begin{cases}
   \Z/2 & \text{if $a>0$ is odd} \\
   0 & \text{otherwise.}
  \end{cases}
\]
Note that, because we are assuming $\beta^G$ odd here, the condition that
$a$ is odd can be replaced by the condition that $(a+\beta)^G$ is even.

With $\beta = \alpha - |\alpha|$, we now have almost all of the calculations
in the statement of the proposition
except for one: 
In the case $p$ odd, or $p=2$ and $\beta^G$ even, we need to know the restriction map
\[
 \Z = H_G^0(BG;\Z) \to H_G^0(BG;\Z[G]) = \Z
\]
and the transfer map going the other way. But, these are induced by
$\cdot N\colon \Z\to \Z[G]$ and $\epsilon\colon \Z[G]\to \Z$, respectively, on coefficients,
so are easily calculated to be the identity and multiplication by $p$, respectively.
This confirms that $\Mackey H_G^\beta(EG_+) \iso \Mackey R\Z$ when
$p$ is odd or when $p=2$ and $\beta^G$ is even.
\end{proof}

To determine the multiplicative structure of $\Mackey H_G^\bullet(EG_+)$,
we need the following lemmas.

\begin{lemma}\label{lem:SMk}
If $p$ is odd and $S(\MM_1)$ denotes the unit sphere in $\MM_1$, then
\[
 \Mackey H_G^{\alpha}(S(\MM_1)_+) \iso
  \begin{cases}
    \Mackey R\Z & \text{if $|\alpha| = 0$} \\
    \Mackey L\Z & \text{if $|\alpha| = 1$} \\
    0 & \text{otherwise.}
  \end{cases}
\]
\end{lemma}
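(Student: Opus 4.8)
The plan is to use that $S(\MM_1)$ is a \emph{free} $G$-space, $G$-homeomorphic to the circle with $G=\Z/p$ acting by rotation through $2\pi/p$, and to reduce everything to an elementary $G$-CW computation. The first step is to observe that over $S(\MM_1)$ the $RO(G)$-grading collapses to integer grading. For any representation $V$ of $G$, the $G$-vector bundle $S(\MM_1)\times V\to S(\MM_1)$ descends to the real vector bundle $S(\MM_1)\times_G V\to S(\MM_1)/G\iso S^1$; since $p$ is odd every irreducible summand of $V$ is either trivial or a two-dimensional $\MM_k$ whose monodromy around $S^1$ is a rotation in $SO(2)$, so this bundle is orientable, hence trivial over $S^1$. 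Pulling back along the covering $S(\MM_1)\to S^1$ gives a $G$-bundle isomorphism $S(\MM_1)\times V\iso S(\MM_1)\times\R^{|V|}$ with trivial action on $\R^{|V|}$, and therefore a $G$-homotopy equivalence $S(\MM_1)_+\smsh S^{\Vrep}\simeq S(\MM_1)_+\smsh S^{|V|}$. Via the suspension isomorphisms this yields $\Mackey H_G^\alpha(S(\MM_1)_+)\iso\Mackey H_G^{|\alpha|}(S(\MM_1)_+)$ for every $\alpha$, so it suffices to compute the integer-graded theory as a Mackey functor.

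Next I would put the obvious free $G$-CW structure on $S(\MM_1)$, with one free orbit of $0$-cells and one free orbit of $1$-cells, and read off the two levels of $\Mackey H_G^n(S(\MM_1)_+)$. At level $G/e$ the Wirthm\"uller isomorphism gives $\tilde H_e^n(S(\MM_1))=\tilde H^n(S^1)$, which is $\Z$ for $n=0,1$ and $0$ otherwise. At level $G/G$ the Bredon cochain complex is $\Mackey A_{G/G}(G/e)\to\Mackey A_{G/G}(G/e)$, i.e. $\Z\to\Z$; since $t$ acts trivially on $\Mackey A_{G/G}(G/e)=\Z$ (because $\rho t=\rho$ in $\sorb{G}$), the differential $\pm(1-t^*)$ vanishes, so this level is again $\Z$ for $n=0,1$ and $0$ otherwise. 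Hence $\Mackey H_G^n(S(\MM_1)_+)=0$ for $n\neq 0,1$, which is the last case of the lemma.

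Finally I would identify the two remaining Mackey functors. In degree $0$ the unit $1$ satisfies $\rho(1)=1$ and generates the $G/e$ level, so $\rho$ is an isomorphism and the functor must be $\Mackey R\Z$. In degree $1$ the map $\rho$ is the one induced on $H^1$ by the $p$-fold covering $S(\MM_1)\to S^1$, hence multiplication by $p$, while $t$ acts trivially on $H^1(S^1)$; the only Mackey functor with both levels $\Z$ having these properties is $\Mackey L\Z$. (Alternatively one can run the long exact sequence of the cofibration $S(\MM_1)_+\to EG_+\to\Sigma^{\MM_1}EG_+$, using Proposition~\ref{prop:EGAdditive} together with the fact that $e_1$ restricts on $EG$ to a generator of $H^2(BG;\Z)\iso\Z/p$; this gives the same answer.) The main obstacle is the bookkeeping in the collapse step --- verifying that $S(\MM_1)\times_G\MM_k$ really is orientable, hence trivial, over $S^1$, which is exactly where oddness of $p$ is used --- and pinning down the map $\rho$ in degree $1$ as the restriction along the covering; the rest is routine. (The careful treatment of the grading collapse belongs to the decategorification discussion of \S\ref{sec:ROGGrading}, but for the present integer-graded conclusion nothing delicate is needed.)
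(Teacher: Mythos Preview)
Your proof is correct, and at heart it is the same cellular computation as the paper's, but organized differently. The paper writes down the cofibration sequence
\[
 G/e_+ \to S(\MM_1)_+ \to \Sigma(G/e_+) \xrightarrow{\,1-t\,} \Sigma(G/e_+)
\]
coming from the free $G$-CW structure you describe, invokes Proposition~\ref{prop:cohomologyG} to identify $\Mackey H_G^\beta(G/e_+)\iso\Mackey A_{G/e}$ for every $\beta$ with $|\beta|=0$, and then reads off the answer in one stroke as the kernel and cokernel of the Mackey-functor map $1-t\colon\Mackey A_{G/e}\to\Mackey A_{G/e}$, which are $\Mackey R\Z$ and $\Mackey L\Z$ respectively. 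This absorbs your grading-collapse step (it is already contained in Proposition~\ref{prop:cohomologyG}) and your identification of the Mackey structure (it falls out of the explicit description of $\Mackey A_{G/e}$). Your route---collapse the grading via the orientability argument, compute the Bredon cochain complex levelwise, then recover $\rho$ and $\tau$ from the geometry of the $p$-fold cover $S(\MM_1)\to S^1$---is a perfectly good alternative and makes the geometric meaning of $\rho=p$ in degree~$1$ more visible, at the cost of doing in three steps what the paper does in one.
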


\begin{proof}
$G$ acts freely on $S(\MM_1)$ by rotations. 
If we pick any orbit $G/e\to S(\MM_1)$, we get the following cofibration sequence:
\[
 G/e_+ \to S(\MM_1)_+ \to \susp(G/e_+) \xrightarrow{1-t} \susp(G/e_+).
\]
This is essentially an expression of the cell structure of $S(\MM_1)$, with one
free 0-cell and one free 1-cell, attached to the 0-cell by 1 on one end and $t$ on the other.
Taking cohomology, we get the following exact sequence, 
for any $\beta$ with $|\beta| = 0$:
\[
 0 \to \Mackey H_G^{\beta}(S(\MM_1)_+) \to \Mackey A_{G/e}
  \xrightarrow{1-t} \Mackey A_{G/e}
  \to \Mackey H_G^{1+\beta}(S(\MM_1)_+) \to 0,
\]
and $\Mackey H_G^{a+\beta}(S(\MM_1)_+) = 0$ for any integer $a$ other than 0 or 1.
The kernel of $1-t$ is $\Mackey R\Z$ and its cokernel is $\Mackey L\Z$,
so we get the calculation stated in the lemma.
\end{proof}

Recall that, when $p=2$, we write $\MM_1 = \LL^2$.

\begin{lemma}\label{lem:EGInvertibles}
For each integer $1\leq k\leq p/2$,
our chosen nonequivariant identification of $\MM_k$ with $\R^2$ determines
an invertible element
$\xi_k\in \tilde H_G^{\MM_k-2}(EG_+)$.
\end{lemma}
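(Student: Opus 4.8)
The strategy is to produce $\xi_k$ as a Thom class for a suitable equivariant bundle over $EG$, using the fact that $\omega_G^*$-type orientations come for free over a free $G$-space, and then to check invertibility by restriction to $G/e$. First I would fix the nonequivariant identification $\MM_k = \R^2 = \C$ from Definition~\ref{def:irrRepresentations}, and consider the trivial rank-one complex $G$-bundle $EG\times\MM_k \to EG$. Because $EG$ is a free $G$-space, its fundamental groupoid $\Pi_G(EG)$ is equivalent to $\orb G$ with only the object $G/e$, so the representation $(\MM_k)^*$ of $\Pi_G(EG)$ that this bundle determines is (up to isomorphism) just the pullback of $\MM_k\in RO(G)$; in particular the Thom space $T(EG\times\MM_k)$ is $EG_+\smsh S^{\MM_k}$. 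By the Thom Isomorphism Theorem quoted in \S\ref{sec:genCohomology}, there is a Thom class $t\in\tilde H_G^{(\MM_k)^*}(EG_+\smsh S^{\MM_k};\Mackey A_{G/G}) \iso \tilde H_G^{\MM_k}(EG_+\smsh S^{\MM_k})$, and I would \emph{define}
\[
 \xi_k \in \tilde H_G^{\MM_k - 2}(EG_+)
\]
to be the image of $t$ under the composite $\tilde H_G^{\MM_k}(EG_+\smsh S^{\MM_k}) \iso \tilde H_G^{\MM_k}(EG_+\smsh S^2) \iso \tilde H_G^{\MM_k-2}(EG_+)$, the first isomorphism being induced by the chosen nonequivariant identification $\MM_k = \R^2$. (Equivalently, $\xi_k = \sigma^{-2}\circ\sigma^{\MM_k}(1)$, the result of de-suspending the unit along the two chosen suspension isomorphisms.)

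Next I would verify invertibility. The natural candidate for $\xi_k^{-1}$ is the class obtained the same way from the identification $\MM_k = \R^2$ read in the other direction, i.e.\ $\xi_k^{-1} = \sigma^{-\MM_k}\circ\sigma^2(1)\in\tilde H_G^{2-\MM_k}(EG_+)$; then $\xi_k\cdot\xi_k^{-1}$ is computed by composing the two pairs of suspension isomorphisms, which is the identity by naturality and compatibility of suspension isomorphisms (the same diagram-chase used in \S\ref{sec:pointprelim} for the invariance of Euler classes). To see that $\xi_k\cdot\xi_k^{-1} = \rho(1)$ is nonzero, equivalently that $\xi_k\neq 0$, restrict to level $G/e$: by the Wirthm\"uller isomorphism, $\tilde H_G^{\MM_k-2}(EG_+)(G/e)\iso \tilde H_e^{\MM_k-2}(EG_+)\iso \tilde H^0(EG) = \Z$ since $EG$ is nonequivariantly contractible; under this identification $\xi_k$ restricts to $\rho(\xi_k)$, which is the image of $\iota_k$ (or $\iota$, when $p=2$) from Definition~\ref{def:iota}, hence a generator. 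Since multiplication by $\xi_k$ at level $G/e$ is then an isomorphism and, by Proposition~\ref{prop:EGAdditive}, $\tilde H_G^{\MM_k-2}(EG_+)\iso\Mackey R\Z$ (which is detected by its $G/e$ value together with the relation $\tau\rho = p$), it follows that $\xi_k$ itself is invertible.

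The main obstacle is bookkeeping about which nonequivariant identification is used and where the signs land: $\MM_k$ and $\MM_{p-k}$ carry opposite nonequivariant orientations, so the invertible element attached to $\MM_k$ via the $\R^2$-identification is not symmetric in $k\leftrightarrow p-k$, and one must be careful that the definition of $\xi_k$ here is consistent with the extension $\xi_k = \xi_{p-k}$ asserted in Remark~\ref{rem:extendedXi} and with the relation $\rho(\xi_1)=\iota_1$ in Theorem~\ref{thm:oddCohomPoint}. I would handle this exactly as in Definition~\ref{def:chosenidentification} and the paragraph following it: use the stable isomorphism $\psi\colon\MM_k\to\MM_{p-k}$ agreeing nonequivariantly with the fixed identifications (not the unstable $G$-isomorphism $\phi$), and note that since the self-map it induces on $S^{\MM_j}$ is nonequivariantly degree $\pm1$ it acts invertibly on $\tilde H_G^\bullet(EG_+)$, so the class obtained is still invertible regardless of which representative of the pair $\{k,p-k\}$ we picked. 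For $p=2$ the only subtlety is that $\MM_1=\LL^2$ and the relevant automorphism of $S^{\LL^2}$ has even degree contributions that are $+1$, so no extra sign intervenes; the argument above goes through verbatim with $\iota$ in place of $\iota_k$.
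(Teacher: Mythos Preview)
Your approach is essentially the same as the paper's: both produce $\xi_k$ as a Thom class for the trivial rank-two bundle over $EG$, using the chosen nonequivariant identification $\MM_k\cong\R^2$ to fix the class. The only real difference is in the invertibility step. The paper observes that, once $\xi_k$ is a Thom class, the Thom isomorphism theorem already says that cup product with $\xi_k$ is an isomorphism $\tilde H_G^{\alpha}(EG_+)\to\tilde H_G^{\alpha+\MM_k-2}(EG_+)$ for every $\alpha$, so $\xi_k$ is invertible immediately---no explicit inverse or restriction argument is needed. Your construction of $\xi_k^{-1}$ by hand and the check at level $G/e$ are correct but redundant (and the equation ``$\xi_k\cdot\xi_k^{-1}=\rho(1)$'' is a slip: the product lives at level $G/G$ and should simply be $1$).
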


\begin{proof}
Consider the vector bundle $q\colon EG\times \R^2\to EG$. Because $EG$ has free $G$-action,
and the action of $G$ on $\MM_k$ preserves nonequivariant orientation,
$q$ is an $\MM_k$-bundle in the sense of \cite{CMW:orientation}.
By \cite{CW:ordinaryhomology}, it has a Thom class
\[
 \xi_k \in \tilde H_G^{\MM_k}(\susp^2EG_+) \iso \tilde H_G^{\MM_k-2}(EG_+)
\]
determined by the nonequivariant identification of $\MM_k$ and $\R^2$,
and multiplication by $\xi_k$ gives the Thom isomorphism,
\[
 -\cup \xi_k\colon \tilde H_G^{\alpha}(EG_+) \xrightarrow{\iso} \tilde H_G^{\alpha+\MM_k}(\susp^2EG_+)
  \iso \tilde H_G^{\alpha+\MM_k-2}(EG_+).
\]
Because multiplication by $\xi_k$ is an isomorphism, $\xi_k$ is invertible in this ring.
\end{proof}

\begin{remark}\label{rem:extendedXi}
We have been careful to emphasize that $\xi_k$ is determined by our chosen
identification of $\MM_k$ with $\R^2$. When $p$ is odd, we also want to define elements $\xi_k$
for $(p+1)/2\leq k \leq p-1$ corresponding to the complex representations $\C_k$
or the oriented representations $\MM_k$.
Because the identification chosen in Definition~\ref{def:chosenidentification}
respects the nonequivariant identification,
we get
\[
 \xi_k = \xi_{p-k} \quad\text{for $(p+1)/2 \leq k\leq p-1$.}
\]
\end{remark}

Recall the notation $\xi^\alpha$ introduced in Definition~\ref{def:xipower}.
We can now verify the multiplicative parts of
Theorems~\ref{thm:EvenEG} and~\ref{thm:OddEG}.

\begin{theorem}\label{thm:EGCohomology}
If $p$ is odd, then
\[
 \Mackey H_G^\bullet(EG_+) \iso \Mackey R\Z[e_1,\xi_k,\xi_k^{-1} \mid 1\leq k \leq (p-1)/2]
   /\langle \rho(e_1) \rangle,
\]
where 
\begin{align*}
 e_1 &\in\Mackey H_G^{\MM_1}(EG_+)(G/G) \quad\text{and} \\
 \xi_k &\in \Mackey H_G^{\MM_k-2}(EG_+)(G/G).
\end{align*}
If $p=2$, then
\[
 \Mackey H_G^\bullet(EG_+) \iso
  \Mackey R\Z[e, \iota, \iota^{-1}, \xi, \xi^{-1}]
  /\langle \rho(e), \tau\iota, \rho(\xi)-\iota^2 \rangle,
\]
where
\begin{align*}
 e &\in \Mackey H^{\LL}(EG_+)(G/G), \\
 \iota &\in \Mackey H^{\LL-1}(EG_+)(G/e), \quad\text{and} \\
 \xi &\in \Mackey H^{2\LL-2}(EG_+)(G/G).
\end{align*}
\end{theorem}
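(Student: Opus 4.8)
The plan is to compare $\Mackey H_G^\bullet(EG_+)$ with the presented ring $R$ — meaning $\Mackey R\Z[e_1,\xi_k,\xi_k^{-1}\mid 1\leq k\leq (p-1)/2]/\langle\rho(e_1)\rangle$ when $p$ is odd, and $\Mackey R\Z[e,\iota,\iota^{-1},\xi,\xi^{-1}]/\langle\rho(e),\tau\iota,\rho(\xi)-\iota^2\rangle$ when $p=2$ — by means of the map of $RO(G)$-graded $\Mackey R\Z$-algebras $\Phi\colon R\to\Mackey H_G^\bullet(EG_+)$ sending each generator to the element of the same name. First I would check $\Phi$ is well defined. The target is strictly commutative because, by the sign analysis of \S\ref{sec:ROGGrading}, every generator lies in a grading of even integer dimension (for $p=2$ the generator $e$ is one-dimensional, but the resulting unit $1-g$ is harmless since $2e^2=0$). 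It is an $\Mackey R\Z$-algebra because Proposition~\ref{prop:EGAdditive} shows each $\Mackey H_G^\alpha(EG_+)$ is $\Mackey R\Z$, $\Mackey R\Z_-$ or $\conc{\Z/p}$, all of which are $\Mackey R\Z$-modules (equivalently, $g=\tau\rho$ acts as $p$ on the cohomology of the free $G$-space $EG$). Finally $\rho(e_1)=0$ since $\Mackey H_G^{\MM_1}(EG_+)(G/e)=\conc{\Z/p}(G/e)=0$, and likewise $\tau\iota=0$ and $\rho(\xi)=\iota^2$ for $p=2$, while the $\xi_k$ (resp.\ $\xi$) are units by Lemma~\ref{lem:EGInvertibles}. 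I would then prove $\Phi$ is an isomorphism by a surjectivity argument and an injectivity argument, both organized one coset $\alpha+RO(G)$ at a time and both reducing to a single computation.

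That computation is the main obstacle: the claim that for every $m\geq 1$ the element $e_1^m$ generates the cyclic group $\Mackey H_G^{m\MM_1}(EG_+)(G/G)\iso\Z/p$ (resp.\ $e^m$ generates $\Mackey H_G^{m\LL}(EG_+)(G/G)\iso\Z/2$). To prove it I would smash the cofiber sequence $S(\MM_1)_+\to S^0\to S^{\MM_1}$ with $EG_+$; using the suspension isomorphism and the identification of multiplication by the Euler class $e_{\MM_1}=e_1$ recorded in \S\ref{sec:pointprelim}, the resulting long exact sequence reads
\[
 \cdots\to\tilde H_G^{\alpha-\MM_1}(EG_+)\xrightarrow{\;\cdot e_1\;}\tilde H_G^{\alpha}(EG_+)
   \to\tilde H_G^{\alpha}\bigl((EG\times S(\MM_1))_+\bigr)\to\cdots .
\]
Because $EG\times S(\MM_1)\to S(\MM_1)$ is a $G$-map of free $G$-CW complexes that is a nonequivariant homotopy equivalence, it is a $G$-homotopy equivalence, so the third term is $\Mackey H_G^{\alpha}(S(\MM_1)_+)(G/G)$, which vanishes whenever $|\alpha|\notin\{0,1\}$ by Lemma~\ref{lem:SMk}. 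Taking $\alpha=m\MM_1$ with $m\geq 1$ (so $|\alpha|=2m\geq 2$) forces $\cdot e_1\colon\tilde H_G^{(m-1)\MM_1}(EG_+)\to\tilde H_G^{m\MM_1}(EG_+)$ to be surjective; for $m=1$ this says $e_1$ generates $\Z/p$, and for $m\geq 2$ a surjection $\Z/p\to\Z/p$ is an isomorphism, so $e_1^m$ generates inductively. For $p=2$ one runs the same argument with the free orbit $S(\LL)\homeo G/e$ in place of $S(\MM_1)$: then $EG_+\smsh S(\LL)_+=(EG\times G/e)_+$ and $\Mackey H_G^{\alpha}(EG_+\smsh G/e_+)=\Mackey H_G^{\alpha}(EG_+)(G/e)$ vanishes for $|\alpha|\neq 0$ by Proposition~\ref{prop:EGAdditive}.

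Granting this, surjectivity of $\Phi$ is straightforward: for $|\alpha|<0$ both sides vanish (all of $R$ lies in nonnegative integer dimensions); for $|\alpha|=0$ one has $\alpha=\sum n_k(\MM_k-2)$ and the unit $\prod\xi_k^{n_k}$ maps to a generator of $\Mackey H_G^{\alpha}(EG_+)\iso\Mackey R\Z$; and for $|\alpha|=2m>0$ one has $\alpha-m\MM_1=\sum n_k(\MM_k-2)$ and $e_1^m\prod\xi_k^{n_k}$ maps to a generator of $\Mackey H_G^{\alpha}(EG_+)\iso\conc{\Z/p}$. For injectivity I would again reduce to the gradings $m\MM_1$: multiplication by the units $\xi_k$ identifies $R^{\alpha}$ with $R^{m\MM_1}$ when $|\alpha|=2m$, compatibly with $\Phi$, and $R^{\alpha}=0$ when $|\alpha|$ is odd or negative; in grading $m\MM_1$ with $m\geq 1$ the relations $\rho(e_1^m)=\rho(e_1)^m=0$ and $p\,e_1^m=\tau\rho(e_1^m)=0$ exhibit $R^{m\MM_1}$ as a quotient of $\conc{\Z/p}$, which by the previous step surjects onto $\conc{\Z/p}$, forcing $\Phi$ to be an isomorphism there (and trivially in grading $0$, where both sides are $\Mackey R\Z$). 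The case $p=2$ goes through identically, with the additional bookkeeping that one must also handle gradings $c(\LL-1)$ with $c$ odd — where $R^{c(\LL-1)}\iso\Mackey R\Z_-$ is generated at level $G/e$ by the unit $\iota^c$, the relation $\tau\iota=0$ killing the $G/G$-level — and observe that any monomial containing a positive power of $e$ together with a power of $\iota$ vanishes because $\rho(e)=0$.
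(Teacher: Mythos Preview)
Your proof is correct and follows essentially the same approach as the paper: the key step in both is the Gysin sequence of $EG\times\MM_1\to EG$ (resp.\ $EG\times\LL\to EG$), using that $EG\times S(\MM_1)\simeq S(\MM_1)$ and Lemma~\ref{lem:SMk} (resp.\ $S(\LL)\homeo G/e$) to show multiplication by $e_1$ (resp.\ $e$) is surjective from dimension~$0$ and an isomorphism above. Your framing via an explicit algebra map $\Phi$ and separate surjectivity/injectivity arguments is slightly more formal than the paper's direct ``generators generate, relations hold, compare with the additive answer'' presentation, but the mathematical content is the same.
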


\begin{proof}
Consider the case $p$ odd first.
From Lemma~\ref{lem:EGInvertibles}, we can see that, if $|\alpha| = 0$
(hence $\alpha^G$ is even),
then $\Mackey H_G^\alpha(EG_+) \iso \Mackey R\Z$ is generated by $\xi^\alpha$.

To see the effect of multiplication by $e_1$, we look at the Gysin sequence of
the bundle $EG\times \MM_1\to EG$. Note that the projection
$EG\times S(\MM_1)\to S(\MM_1)$ is a $G$-homotopy equivalence because $S(\MM_1)$ is free.
Therefore, the Gysin sequence takes the following form:
\[
 \Mackey H_G^{\alpha-1+\MM_1}(S(\MM_1)_+) \to
  \Mackey H_G^{\alpha}(EG_+) \xrightarrow{\cdot e_1} \Mackey H_G^{\alpha+\MM_1}(EG_+)
  \to \Mackey H_G^{\alpha+\MM_1}(S(\MM_1)_+).
\]
From Lemma~\ref{lem:SMk} we see that multiplication by $e_1$ is an isomorphism
if $|\alpha|>0$ and
an epimorphism if $|\alpha| = 0$.
Inductively, we see that, if $|\alpha| = 0$ and $n\geq 1$, then
$\Mackey H_G^{\alpha+n\MM_1}(EG_+) \iso \conc{\Z/p}$ is generated by
$e_1^n\xi^\alpha$. 
Note that $\rho(e_1) = 0$ and that this relation is necessary and sufficient
to say that $e_1$ generates a copy of the $\Mackey R\Z$-module $\conc{\Z/p}$.
From the additive calculation in
Proposition~\ref{prop:EGAdditive}, 
we can see that $\Mackey R\Z[e_1,\xi_k,\xi_k^{-1} \mid 1\leq k < p/2]$ maps
onto $\Mackey H_G^\bullet(EG_+)$, with kernel $\langle \rho(e_1)\rangle$, hence
we get the multiplicative result of the theorem.

Now consider the case $p=2$.
We take $\xi = \xi_1$, the invertible element from
Lemma~\ref{lem:EGInvertibles}.
We defined $\iota$ in Definition~\ref{def:iota}.
As noted after that definition, $\iota$ is invertible, meaning that there is an element
$\iota^{-1}$ such that $\iota\cdot\iota^{-1} = \rho(1)$.
Note that $\tau\iota = 0$ (it lives in a 0 group)
and that this relation is sufficient to say that $\iota$
generates a copy of $\Mackey R\Z_- \iso \Mackey H_G^{\LL-1}(EG_+)$.

Now $\iota^2$ is the similar element we would choose using the induced nonequivariant
identification of $\LL^2$ with $\R^2$. On the other hand, so is $\rho(\xi)$ because
$\xi$ was obtained using that same identification. Hence, $\rho(\xi) = \iota^2$.

If $|\alpha| = 0$ and $\alpha^G = 2m$ is even, it follows that
$\Mackey H_G^{\alpha}(EG_+)\iso \Mackey R\Z$ is generated by $\xi^{-m}$, 
because $\xi$ is invertible.

Finally, we consider the Gysin sequence of the bundle $EG\times\LL\to EG$.
Because $S(\LL) \iso G/e$, the sequence takes the form
\[
 \Mackey H_G^{\alpha-1+\LL}(G/e_+) \to
  \Mackey H_G^{\alpha}(EG_+) \xrightarrow{\cdot e} \Mackey H_G^{\alpha+\LL}(EG_+)
  \to \Mackey H_G^{\alpha+\LL}(G/e_+).
\]
It follows that multiplication by $e$ is an epimorphism if $|\alpha| = 0$ and
an isomorphism if $|\alpha|>0$. Inductively, we see that, if $|\alpha|=0$,
$\alpha^G = 2m$ is even,
and $n\geq 1$, then $\Mackey H_G^{\alpha+n\LL}(EG_+) \iso \conc{\Z/2}$
is generated by $e^n\xi^{-m}$. Again, the single relation $\rho(e) = 0$ suffices
to make $e$ generate a copy of $\conc{\Z/2}$.

From the additive calculation in
Proposition~\ref{prop:EGAdditive}, 
we see that $\Mackey R\Z[e, \iota, \iota^{-1}, \xi, \xi^{-1}]$ maps
onto $\Mackey H_G^\bullet(EG_+)$, with kernel 
$\langle \rho(e), \tau\iota, \rho(\xi)-\iota^2 \rangle$, hence
we get the multiplicative result of the theorem.
\end{proof}

Figure~\ref{fig:EvenEG} shows $\Mackey H_G^\bullet(EG_+)$ for $p$ even, while
Figure~\ref{fig:OddEG} shows the result for $p$ odd.

In the case $p$ odd, what about the other Euler classes $e_k$, for $k>1$?
Recall the elements $\mu_{j,k,d}$ from Definition~\ref{def:mu}.
The following result gives the actions of these elements and also locates the
Euler classes $e_k$.

\begin{proposition}\label{prop:muTimesXi}
Let $p$ be odd, let $1 \leq j, k \leq (p-1)/2$, and suppose $d \equiv kj^{-1} \pmod p$.
Then, in $\Mackey H_G^\bullet(EG_+)$ we have
\[
  \mu_{1,j,j}e_1 = e_j \quad\text{and}\quad \mu_{j,k,d}\xi_j = d\xi_k.
\]
From these it follows that
\begin{align*}
  \mu_{j,k,d} &= d\xi_j^{-1}\xi_k,  \quad\text{and} \\
  e_j &= j\xi_1^{-1}\xi_j e_1.
\end{align*}
\end{proposition}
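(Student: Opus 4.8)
The plan is to establish the two product formulas $\mu_{1,j,j}e_1 = e_j$ and $\mu_{j,k,d}\xi_j = d\xi_k$ first, and then deduce the remaining two identities by formal manipulation using that $\xi_j$ is invertible in $\Mackey H_G^\bullet(EG_+)$ (Lemma~\ref{lem:EGInvertibles}).

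First I would prove $\mu_{1,j,j}e_1 = e_j$. This is an instance of Proposition~\ref{prop:muTimesE}, which states $\mu_{j,k,d}e_j = e_k$ in $\Mackey H_G^{\MM_k}(S^0)$; specializing to the source index $1$, target index $j$, and degree $d = j\cdot 1^{-1} \equiv j$, we get $\mu_{1,j,j}e_1 = e_j$ already in $\Mackey H_G^\bullet(S^0)$, and this maps to the corresponding identity in $\Mackey H_G^\bullet(EG_+)$ under the ring map induced by $EG_+\to S^0$. So this step costs essentially nothing beyond citing the earlier result and naturality.

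Next, $\mu_{j,k,d}\xi_j = d\xi_k$. Here the key input is that $\xi_j$ and $\xi_k$ are Thom classes for the bundles $EG\times\MM_j\to EG$ and $EG\times\MM_k\to EG$ respectively, with the identifications of $\MM_j$ and $\MM_k$ with $\R^2$ fixed as in Definition~\ref{def:irrRepresentations}. The map $\mu_{j,k,d}\colon S^{\MM_j}\to S^{\MM_k}$ is the fiberwise one-point compactification of a bundle map $EG\times\MM_j\to EG\times\MM_k$ over $EG$ which, on each fiber, is the degree-$d$ self-map of $\R^2$ (either $z\mapsto z^d$ or $z\mapsto\bar z^{|d|}$). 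Pulling back the Thom class $\xi_k$ along this map gives a Thom class for $EG\times\MM_j$, but one that restricts nonequivariantly on each fiber to $d$ times the standard generator (since the fiber map has nonequivariant degree $d$), hence equals $d\xi_j$ by the normalization of $\xi_j$. The naturality diagram for Euler/Thom classes — the same kind of commuting square used in the proof of Proposition~\ref{prop:muTimesE} — makes this precise: $\mu_{j,k,d}^*$ applied to the suspension isomorphism for $\MM_k$ factors through the one for $\MM_j$ up to multiplication by the fiberwise degree $d$. I expect \emph{this} step — pinning down that the normalization really produces the factor $d$ and not, say, a sign ambiguity — to be the main obstacle, and it is exactly the place where the careful choice of identifications (Definition~\ref{def:chosenidentification} and Remark~\ref{rem:extendedXi}) and the $RO(\Pi_G B)$-grading decategorification discussion in \S\ref{sec:ROGGrading} earn their keep; the cleanest route is probably to invoke the Thom isomorphism's multiplicativity together with the nonequivariant computation, observing that the restriction to level $G/e$ detects $d$ faithfully because $\xi_j$ restricts to a generator there.

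Finally, the two consequences are purely algebraic. From $\mu_{j,k,d}\xi_j = d\xi_k$ and the invertibility of $\xi_j$, multiply both sides by $\xi_j^{-1}$ to get $\mu_{j,k,d} = d\xi_j^{-1}\xi_k$. For the last identity, take $j=1$ in $\mu_{1,j,j}e_1 = e_j$ — wait, that is already the statement; instead combine $e_j = \mu_{1,j,j}e_1$ with the first consequence applied to the pair $(1,j)$ with $d=j$, giving $\mu_{1,j,j} = j\xi_1^{-1}\xi_j$, and substitute to obtain $e_j = j\xi_1^{-1}\xi_j e_1$. This closes the proof; no further case analysis on the parity of $p$ is needed since we are already in the odd case throughout, and the formulas for $(p+1)/2\le k\le p-1$ follow from $\xi_k = \xi_{p-k}$ and $e_k = -e_{p-k}$ as recorded in Remark~\ref{rem:extendedXi} and the surrounding discussion.
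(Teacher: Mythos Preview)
Your proposal is correct and follows essentially the same route as the paper: cite Proposition~\ref{prop:muTimesE} for $\mu_{1,j,j}e_1=e_j$, then compute the effect of $\mu_{j,k,d}$ by reducing to level $G/e$ (where it becomes the nonequivariant degree~$d$) using that the relevant Mackey functor is $\Mackey R\Z$ so $\rho$ is injective. The only cosmetic difference is that the paper carries this out on the \emph{inverses}: it looks at $\mu_{j,k,d}^*\colon \Mackey H_G^2(EG_+\smsh S^{\MM_k})\to \Mackey H_G^2(EG_+\smsh S^{\MM_j})$, identifies the generators as $\xi_k^{-1}$ and $\xi_j^{-1}$, and reads off $\mu_{j,k,d}\xi_k^{-1}=d\xi_j^{-1}$ directly---this sidesteps the slight imprecision in your ``pull back the Thom class $\xi_k$'' description (where the domains of the Thom classes don't quite match) and lands you immediately in a group where the nonequivariant degree is transparent.
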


\begin{proof}
The equality $\mu_{1,j,j}e_1 = e_j$ was shown
in Proposition~\ref{prop:muTimesE}. To see the second equality, consider
the map
\[
 1\smsh\mu_{j,k,d}\colon EG_+\smsh S^{\MM_j} \to EG_+\smsh S^{\MM_k}.
\]
This induces the following diagram in cohomology:
\[
 \xymatrix{
  \Mackey H_G^2(EG_+\smsh S^{\MM_k})(G/G) \ar[r]^{\mu^*_{j,k,d}} \ar[d]_{\rho}
    & \Mackey H_G^2(EG_+\smsh S^{\MM_j})(G/G) \ar[d]^{\rho} \\
  \Mackey H_G^2(EG_+\smsh S^{\MM_k})(G/e) \ar[r]_{\mu^*_{j,k,d}}
   & \Mackey H_G^2(EG_+\smsh S^{\MM_j})(G/e)
 }
\]
Because $\Mackey H_G^2(EG_+\smsh S^{\MM_k}) \iso \Mackey H_G^{2-\MM_k}(EG_+) \iso \Mackey R\Z$,
each group in the diagram above is a copy of $\Z$, and the vertical maps $\rho$ are identities.
Because $EG_+ \hmtpc S^0$ nonequivariantly, and $\mu_{j,k,d}$ has nonequivariant degree $d$
as a map from $S^2$ to itself, the bottom horizontal map is multiplication by $d$,
hence the top horizontal map is as well.
But the top left group is generated by $\xi_k^{-1}$ and the top right by $\xi_j^{-1}$,
hence we get
\[
 \mu_{j,k,d}\xi_k^{-1} = d\xi_j^{-1},
\]
so
\[
 \mu_{j,k,d}\xi_j = d\xi_k \quad\text{and}\quad \mu_{j,k,d} = d\xi_j^{-1}\xi_k
\]
as claimed. 
The last equation of the
proposition is
\[
 e_j = \mu_{1,j,j}e_1 = j\xi_1^{-1}\xi_j e_1.\qedhere
\]
\end{proof}

The last equality of the proposition can also be written as
\[
 \xi_j^{-1}e_j = j\xi_1^{-1}e_1,
\]
which compares two elements of $\Mackey H_G^2(EG_+) \iso \conc{\Z/p}$.

%-------------------------------------------------------
\section{The cohomology of $\tE G$}

We now calculate $\Mackey H_G^\bullet(\tE G)$. 
Recall that $\Mackey H_G^\bullet(S^0)$ acts on $\Mackey H_G^\bullet(\tE G)$,
so we can consider the action of the Euler classes on $\Mackey H_G^\bullet(\tE G)$.

\begin{proposition}\label{prop:tEGPeriodicity}
For $p=2$, multiplication by $e$ induces an isomorphism on $\Mackey H_G^\bullet(\tE G)$.
For $p$ odd, the same is true for multiplication by each $e_k$.
\end{proposition}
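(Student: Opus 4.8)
The plan is to use the defining property of $\tE G$, namely that $\tE G$ is characterized by $\tE G^G \simeq S^0$ and $\tE G^e \simeq *$, together with the fact that $\tE G$ is a ring spectrum (or, more concretely, that $\tE G \simeq \colim_n S^{nV}$ where $V$ ranges over representations with $V^G = 0$). First I would recall that, by construction, $\tE G = \colim S(\infty V)^{+\,\text{cofiber}}$; more usefully, for $G = \Z/p$ one has $\tE G \simeq \colim_n S^{n\MM_1}$ when $p$ is odd and $\tE G \simeq \colim_n S^{n\LL}$ when $p = 2$, where the maps in the colimit system are the inclusions $S^0 \to S^{\MM_1}$ (resp.\ $S^0 \to S^\LL$) that represent the Euler classes. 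This is the key structural input and it immediately gives the result for one Euler class.

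For $p = 2$: multiplication by $e = e_\LL$ on $\Mackey H_G^\bullet(\tE G)$ is, by the remark just before Definition~\ref{def:EulerClasses} applied with $X = \tE G$, the composite $\Mackey H_G^\alpha(\tE G) \iso \Mackey H_G^{\alpha+\LL}(\tE G \smsh S^\LL) \to \Mackey H_G^{\alpha+\LL}(\tE G)$, where the second map is induced by $S^0 \to S^\LL$ smashed with $\tE G$. But $\tE G \smsh S^\LL \to \tE G$ is a weak equivalence: on $G$-fixed points it is $S^0 \smsh S^0 \to S^0$, an equivalence since $(S^\LL)^G = S^0$, and on underlying spaces both sides are nonequivariantly contractible. (Equivalently, $\tE G$ being built from $S^0$ by attaching free cells, smashing with $S^\LL$ — whose reduced cells above the basepoint are all free — changes nothing after collapsing to $\tE G$; this is precisely the statement that $\tE G \smsh E G_+ \simeq *$ combined with the cofiber sequence $EG_+ \to S^0 \to \tE G$.) Hence $e$ acts as an isomorphism. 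For $p$ odd the identical argument applies to each $e_k = e_{\MM_k}$, using that $S^0 \to S^{\MM_k}$ has free reduced cells above the basepoint since $(\MM_k)^G = 0$.

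The cleanest way to package all of this uniformly is: since $(V)^G = 0$ for $V = \LL$ or $V = \MM_k$, the cofiber of $S^0 \to S^V$ is a free $G$-CW complex, so smashing it with $\tE G$ gives something weakly contractible (as $\tE G \smsh (\text{free}) \simeq *$, which follows from $\tE G \smsh EG_+ \simeq *$ and induction over free cells). Therefore $\tE G \to \tE G \smsh S^V$ is a weak equivalence, and multiplication by $e_V$, being the map on cohomology induced by its inverse composed with the suspension isomorphism, is an isomorphism on $\Mackey H_G^\bullet(\tE G)$ in every grading.

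The main obstacle I anticipate is purely bookkeeping: making the identification ``multiplication by $e_V$ = the map induced by $\tE G \smsh S^0 \to \tE G \smsh S^V$ (up to the suspension isomorphism)'' fully precise, including getting the grading shift and the direction of the arrows right — the Euler class lives in $\Mackey H_G^V(S^0)$, so one must be careful that cup product with it raises grading by $V$ and corresponds to restriction along the inclusion of the basepoint's complement, exactly as recorded in the paragraph preceding Definition~\ref{def:EulerClasses}. There is no hard geometry here; everything reduces to the single fact that $\tE G$ smashed with a free complex is contractible, which is immediate from the cofiber sequence $EG_+ \to S^0 \to \tE G$ and the freeness of $EG$. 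Once that is in hand the proposition follows in a couple of lines, uniformly in $p$.
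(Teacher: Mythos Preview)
Your proof is correct and follows essentially the same approach as the paper: the paper invokes the general fact that $\tE G \smsh X^G \to \tE G \smsh X$ is a weak equivalence for any based $G$-space $X$, then applies it to $X = S^V$ with $V^G = 0$, whereas you verify this directly by checking fixed points (and also give the equivalent cofiber argument). One small slip: you write ``$\tE G \smsh S^\LL \to \tE G$ is a weak equivalence,'' but the map goes the other way, $\tE G \to \tE G \smsh S^\LL$; this is just a typo and does not affect the argument.
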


\begin{proof}
For any based $G$-space, the inclusion $\tE G\smsh X^G\to \tE G\smsh X$
is a weak equivalence.
Apply this to $X = S^V$ with $V^G = 0$:
The inclusion $1\smsh e_V\colon \tE G\to \tE G\smsh S^V$ is an equivalence,
hence the induced map in cohomology is an isomorphism.
But, this map is multiplication by the Euler class $e_V$,
so multiplication by this Euler class is an isomorphism on cohomology.
For $p = 2$, this applies to $e$.
For $p$ odd, this applies to each $e_k$.
\end{proof}

This suggests introducing the following ring.

\begin{definition}\label{def:ealpha}
\hspace{2em}
\begin{enumerate}
\item
If $p=2$, let $\Mackey E^\bullet = \Mackey A_{G/G}[e,e^{-1}]$ be 
the $RO(G)$-graded ring generated by an invertible element
$e \in \Mackey E^\LL$.
If $\alpha\in RO(G)$ with $\alpha^G = 0$, so $\alpha = n\LL$, then
$\Mackey E^{\alpha}$ is generated by $e^{n} = e^{\alpha}$.

\item
If $p$ is odd,
let $\Mackey E^\bullet = \Mackey A_{G/G}[e_k, e_k^{-1} \mid 1 \leq k \leq (p-1)/2]$
be the $RO(G)$-graded ring
generated by invertible elements $e_k \in \Mackey E^{\MM_k}$.
If $\alpha\in RO(G)$ satisfies $\alpha^G = 0$, 
then $\Mackey E^\alpha$ is generated by $e^\alpha$.
\end{enumerate}
\end{definition}

By Proposition~\ref{prop:tEGPeriodicity}, $\Mackey E^\bullet$ acts
on $\Mackey H_G^\bullet(\tE G)$, where $e$ or $e_k$ acts as the Euler class of that name.
It follows, then, that it suffices to calculate the integer-graded part of the cohomology,
but this is easy to do from our previous calculations.
The following holds for any prime $p$.

\begin{proposition}\label{prop:intTEGCohomology}
If $n\in\Z$, then
\[
 \Mackey H_G^n(\tE G) \iso
  \begin{cases}
   \conc{\Z} & \text{if $n = 0$} \\
   \conc{\Z/p} & \text{if $n \geq 3$ is odd} \\
   0 & \text{otherwise.}
  \end{cases}
\]
Moreover, we have a short exact sequence
\[
 0 \to \Mackey H_G^0(\tE G) \to \Mackey A_{G/G} \xrightarrow{\epsilon} \Mackey R\Z \to 0
\]
and the connecting homomorphism $\delta$ induces isomorphisms
\[
 \delta\colon \Mackey H_G^{2k}(EG_+) \xrightarrow{\iso} \Mackey H_G^{2k+1}(\tE G)
\]
for $k\geq 1$.
\end{proposition}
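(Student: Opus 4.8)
The plan is to read off everything from the long exact sequence in $RO(G)$-graded cohomology induced by the cofibration sequence $EG_+\to S^0\to \tE G$, restricted to integer gradings. The two inputs are already in hand: the dimension axiom gives $\Mackey H_G^n(S^0)\iso\Mackey A_{G/G}$ for $n=0$ and $0$ for $n\neq 0$, while Proposition~\ref{prop:EGAdditive}, applied with $\alpha=n$ (so $\alpha^G=n$), gives $\Mackey H_G^n(EG_+)\iso\Mackey R\Z$ for $n=0$, $\conc{\Z/p}$ for $n>0$ even, and $0$ otherwise.

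First I would write out the relevant segment
\[
 \cdots \to \Mackey H_G^{n-1}(EG_+) \xrightarrow{\delta} \Mackey H_G^n(\tE G)
  \to \Mackey H_G^n(S^0) \to \Mackey H_G^n(EG_+) \to \cdots,
\]
where $\delta$ and the other maps are those of the cofibration $EG_+\to S^0\to\tE G$. For $n\leq -1$ the two neighbouring groups vanish, so $\Mackey H_G^n(\tE G)=0$. For $n\geq 2$ we have $\Mackey H_G^{n-1}(S^0)=0=\Mackey H_G^n(S^0)$, hence $\delta\colon\Mackey H_G^{n-1}(EG_+)\xrightarrow{\iso}\Mackey H_G^n(\tE G)$; substituting Proposition~\ref{prop:EGAdditive} yields $0$ for $n$ even and $\conc{\Z/p}$ for $n$ odd (that is, $n\geq 3$ odd), and in particular the displayed isomorphisms $\delta\colon\Mackey H_G^{2k}(EG_+)\xrightarrow{\iso}\Mackey H_G^{2k+1}(\tE G)$ for $k\geq 1$. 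The only remaining degrees are $n=0,1$, where, using $\Mackey H_G^{-1}(EG_+)=0$ and $\Mackey H_G^1(S^0)=0$, the sequence reduces to
\[
 0 \to \Mackey H_G^0(\tE G) \to \Mackey A_{G/G} \xrightarrow{r} \Mackey R\Z \to \Mackey H_G^1(\tE G) \to 0.
\]

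The one nonformal point, and the step I expect to be the main (minor) obstacle, is to identify $r\colon\Mackey H_G^0(S^0)\to\Mackey H_G^0(EG_+)$ with the canonical epimorphism $\epsilon\colon\Mackey A_{G/G}\to\Mackey R\Z$ — the map that is $\epsilon\colon A(G)\to\Z$ at level $G/G$ and the identity at level $G/e$. I would argue levelwise: since $EG_+\to S^0$ is a nonequivariant equivalence, $r$ is an isomorphism $\Z\to\Z$ at level $G/e$; then naturality of $r$ with respect to the restriction maps $\rho$, together with $\rho=\epsilon$ on $\Mackey A_{G/G}$ and $\rho=\id$ on $\Mackey R\Z$, forces the level-$G/G$ component of $r$ to be $\pm\epsilon$. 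In either case $r$ is surjective with kernel $\conc\Z$, so $\Mackey H_G^0(\tE G)\iso\conc\Z$, $\Mackey H_G^1(\tE G)=0$, and the short exact sequence in the statement holds. Assembling the cases finishes the proof.
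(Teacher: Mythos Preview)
Your proof is correct and follows essentially the same route as the paper: both run the long exact sequence of $EG_+\to S^0\to\tE G$ in integer grading against the dimension axiom and the known $\Mackey H_G^\bullet(EG_+)$. The only difference is in identifying $r$: the paper simply notes that $r$ is the unit map of rings, hence $\epsilon$, while you argue via compatibility with $\rho$ to get $\pm\epsilon$; you can pin down the sign by observing that $r$ is a ring map so $r(1)=1$, but as you note, either sign yields the same kernel and cokernel, so nothing in the statement is affected.
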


\begin{proof}
Consider the long exact cohomology sequence in integer grading induced by the cofibration
sequence $EG_+\to S^0\to \tE G$. By the dimension axiom, the only nonzero cohomology of $S^0$
in integer grading is $\Mackey H_G^0(S^0) \iso \Mackey A_{G/G}$.
The map $\Mackey H_G^0(S^0)\to \Mackey H_G^0(EG_+) \iso \Mackey R\Z$ is the unit
map $\epsilon$, which is an epimorphism.
This implies the
short exact sequence
\[
 0 \to \Mackey H_G^0(\tE G) \to \Mackey H_G^0(S^0) \to \Mackey H_G^0(EG_+) \to 0
\]
and the computation that
\[
 \Mackey H_G^0(\tE G)\iso \ker\epsilon \iso \conc\Z,
\]
generated by the element $\kappa\in A(G)$.
The dimension axiom also then implies the isomorphism 
$\conc{\Z/p}\iso\Mackey H_G^{2k}(EG_+) \iso \Mackey H_G^{2k+1}(\tE G)$ for $k\geq 1$. 
Together with our previous computations of $\Mackey H_G^\bullet(EG_+)$,
we get that $\Mackey H_G^n(\tE G) = 0$ except for the cases listed
in the proposition.
\end{proof}

Putting everything together, we get the following restatement of
Theorems~\ref{thm:EvenTEG} and~\ref{thm:OddTEG},
although here we give the structure as a module over $\Mackey E^\bullet$
rather than $\Mackey H_G^\bullet(S^0)[e_k^{-1}]$.

\begin{theorem}\label{thm:tEGCohomology}
\[
 \Mackey H_G^\alpha(\tE G) \iso
  \begin{cases}
    \conc{\Z} & \text{if $\alpha^G = 0$} \\
    \conc{\Z/p} & \text{if $\alpha^G\geq 3$ is odd} \\
    0 & \text{otherwise.}
  \end{cases}
\]
For $p=2$,
as a module over $\Mackey E^\bullet$,
$\Mackey H_G^\bullet(\tE G)$ is generated by elements
\begin{align*}
 \kappa &\in \Mackey H_G^0(\tE G)(G/G) \quad\text{and}\\
 \delta\xi^{-k} &\in \Mackey H_G^{1 - 2k(\LL-1)}(\tE G)(G/G)
\end{align*}
for $k\geq 1$,
such that $\rho(\kappa) = 0$, $\rho(\delta\xi^{-k}) = 0$, and
$2\delta\xi^{-k} = 0$.
For $p$ odd,
as a module over $\Mackey E^\bullet$,
$\Mackey H_G^\bullet(\tE G)$ is generated by elements
\begin{align*}
 \kappa &\in \Mackey H_G^0(\tE G)(G/G) \quad\text{and}\\
 \delta\xi_1^{-k} &\in \Mackey H_G^{1 - k(\MM_1-2)}(\tE G)(G/G)
\end{align*}
for $k \geq 1$,
such that $\rho(\kappa) = 0$, $\rho(\delta\xi_1^{-k}) = 0$, and
$p\delta\xi_1^{-k} = 0$.
\end{theorem}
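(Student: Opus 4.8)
The plan is to assemble Theorem~\ref{thm:tEGCohomology} from the three ingredients already in place: the additive calculation of $\Mackey H_G^n(\tE G)$ in integer grading (Proposition~\ref{prop:intTEGCohomology}), the periodicity given by the action of the Euler classes (Proposition~\ref{prop:tEGPeriodicity}, repackaged through the ring $\Mackey E^\bullet$ of Definition~\ref{def:ealpha}), and the structure of $\Mackey H_G^\bullet(EG_+)$ (Theorem~\ref{thm:EGCohomology}) transported across the connecting isomorphism $\delta$. First I would deduce the additive statement for all $\alpha\in RO(G)$: by Proposition~\ref{prop:tEGPeriodicity} every element $e$ (resp. every $e_k$) acts invertibly, so for any $\alpha$ with $\alpha^G = 0$ the group $\Mackey H_G^\alpha(\tE G)$ is isomorphic, via an appropriate monomial in the $e_k$, to $\Mackey H_G^0(\tE G)\iso\conc\Z$, and for any $\alpha$ with $\alpha^G = n$ it is isomorphic to $\Mackey H_G^n(\tE G)$, which Proposition~\ref{prop:intTEGCohomology} computes to be $\conc{\Z/p}$ if $n\geq 3$ is odd and $0$ otherwise. (One should note that $\Mackey H_G^\alpha(\tE G)$ depends, up to the chosen periodicity isomorphisms, only on $\alpha^G$, since multiplication by any $e_k$ changes $\alpha$ by $\MM_k$, which has $\MM_k^G = 0$, and these $\MM_k$ together with the trivial representation span a subgroup of $RO(G)$ of finite index hitting every value of $\alpha^G$; more precisely, for fixed $\alpha^G$ one reaches the integer grading $\alpha^G$ itself.) This gives the displayed additive formula.

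Next I would pin down the module structure over $\Mackey E^\bullet$. From Proposition~\ref{prop:intTEGCohomology} we have the short exact sequence $0\to\Mackey H_G^0(\tE G)\to\Mackey A_{G/G}\xrightarrow{\epsilon}\Mackey R\Z\to 0$, which identifies $\Mackey H_G^0(\tE G)\iso\conc\Z$ with the generator $\kappa = p - g\in A(G)$; in particular $\rho(\kappa) = 0$ since $\conc\Z$ is concentrated at level $G/G$, and $p\kappa$ is not zero but $\kappa$ itself generates a free $\Z$ at that level so there is no torsion relation there. For the odd-degree part, Proposition~\ref{prop:intTEGCohomology} gives connecting isomorphisms $\delta\colon\Mackey H_G^{2k}(EG_+)\xrightarrow{\iso}\Mackey H_G^{2k+1}(\tE G)$ for $k\geq 1$. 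By Theorem~\ref{thm:EGCohomology}, $\Mackey H_G^{2k}(EG_+)$ is generated by $e^k\xi^{-k}$ when $p=2$ (and by $e_1^k\xi_1^{-k}$ when $p$ is odd), so I would \emph{define} $\delta\xi^{-k}$ (resp. $\delta\xi_1^{-k}$) to be $\delta(e^k\xi^{-k})$ (resp. $\delta(e_1^k\xi_1^{-k})$); the grading bookkeeping gives $\delta\xi^{-k}\in\Mackey H_G^{1-2k(\LL-1)}(\tE G)$ and $\delta\xi_1^{-k}\in\Mackey H_G^{1-k(\MM_1-2)}(\tE G)$ as claimed. Since the target is a copy of $\conc{\Z/p}$ concentrated at $G/G$, we automatically get $\rho(\delta\xi^{-k}) = 0$ and $p\delta\xi^{-k} = 0$ (resp. with $\xi_1$), and $\delta$ being an isomorphism shows these elements are generators.

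The remaining point is to check that $\kappa$ together with the $\delta\xi^{-k}$ (resp. $\delta\xi_1^{-k}$) generate $\Mackey H_G^\bullet(\tE G)$ \emph{as an $\Mackey E^\bullet$-module}, i.e. that every group in the additive answer is hit. For a grading $\alpha$ with $\alpha^G = 0$, the periodicity isomorphism carries $\Mackey H_G^0(\tE G)$ onto $\Mackey H_G^\alpha(\tE G)$, so $e^\alpha\kappa$ generates; for a grading with $\alpha^G = 2k+1\geq 3$ odd, one first uses periodicity to reduce to integer grading $2k+1$ and then invokes the $\delta$-isomorphism to see $\Mackey H_G^{2k+1}(\tE G)$ is generated by $\delta\xi^{-k}$ up to a unit of $\Mackey E^\bullet$ — concretely, the element $e^{\alpha - (2k+1)+2k(\LL-1)}\cdot\delta\xi^{-k}$ (and the analogous $e$-monomial times $\delta\xi_1^{-k}$ in the odd-$p$ case) hits the generator. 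I expect the only genuinely fiddly step to be the last one: verifying that the specific $\Mackey E^\bullet$-monomial multiples of the chosen generators land in exactly the right gradings and actually generate (rather than, say, generating a proper subgroup), which is pure grading arithmetic using $\omega$'s and $\MM_k$'s but is easy to get off by a sign or an index. Everything else is a direct translation of results already proved. I would phrase the write-up as: (i) additive statement via periodicity plus Proposition~\ref{prop:intTEGCohomology}; (ii) identification of $\kappa$ and its relations; (iii) definition of the $\delta\xi^{-k}$ via the $\delta$-isomorphism and Theorem~\ref{thm:EGCohomology}, with their relations read off from the target; (iv) the generation check.
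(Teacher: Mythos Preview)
Your approach is essentially the same as the paper's: combine Propositions~\ref{prop:tEGPeriodicity} and~\ref{prop:intTEGCohomology} for the additive statement, identify $\kappa$ from the short exact sequence, and define the remaining generators via the connecting isomorphism $\delta$ from $\Mackey H_G^{2k}(EG_+)$.

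There is, however, a genuine slip in your definition of $\delta\xi^{-k}$ (and $\delta\xi_1^{-k}$), of exactly the kind you flagged as fiddly. You write that $\Mackey H_G^{2k}(EG_+)$ is generated by $e^k\xi^{-k}$ for $p=2$, but $|e| = 1$ and $|\xi| = 0$, so $e^k\xi^{-k}$ lives in grading $k\LL - 2k(\LL-1) = 2k - k\LL$, not integer grading $2k$; the correct generator is $e^{2k}\xi^{-k}$. More seriously, if you set $\delta\xi^{-k} := \delta(e^k\xi^{-k})$ (or $\delta(e_1^k\xi_1^{-k})$ for $p$ odd), the result lands in grading $1 + 2k - k\LL$ (resp.\ integer grading $2k+1$), not in $1 - 2k(\LL-1)$ (resp.\ $1 - k(\MM_1-2)$) as the theorem requires. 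So your definition is inconsistent with your own grading claim.

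The paper's fix is the step you are missing: $\delta$ is a map of $\Mackey H_G^\bullet(S^0)$-modules, so one may pull the Euler classes outside, writing
\[
 \delta(e^{2k}\xi^{-k}) = e^{2k}\cdot\delta(\xi^{-k}),
\]
and then \emph{define} $\delta\xi^{-k} := \delta(\xi^{-k})$, which is literally $\delta$ applied to $\xi^{-k}\in\Mackey H_G^{-2k(\LL-1)}(EG_+)$ and hence lands in the correct grading $1 - 2k(\LL-1)$. Since $e$ acts invertibly on $\Mackey H_G^\bullet(\tE G)$, this element generates iff $e^{2k}\delta\xi^{-k}$ does, and the latter is the image under the isomorphism $\delta$ of a generator. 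The odd-$p$ case is identical with $e_1^k\xi_1^{-k}$ and $\delta\xi_1^{-k} := \delta(\xi_1^{-k})$. Once you insert this linearity step, your argument goes through and matches the paper's.
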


\begin{proof}
The additive structure follows from Propositions~\ref{prop:tEGPeriodicity}
and~\ref{prop:intTEGCohomology}.

For the module structure,
consider the case $p=2$ first. By Proposition~\ref{prop:intTEGCohomology},
$\Mackey H_G^0(\tE G)\iso\conc\Z$ is generated by $\kappa$, with the relation $\rho(\kappa)=0$.
It follows that, for every integer $n$, $\Mackey H_G^{n\LL}(\tE G) \iso\conc\Z$,
generated by $e^n\kappa$.

By that same proposition, for an integer $k\geq 1$,
\[
 \delta\colon \Mackey H_G^{2k}(EG_+)\iso \Mackey H_G^{2k+1}(\tE G) \iso\conc{\Z/2}.
\]
The generator of $\Mackey H_G^{2k}(EG_+)$ is $e^{2k}\xi^{-k}$, so the generator
of $\Mackey H_G^{2k+1}(\tE G)$ is $\delta(e^{2k}\xi^{-k})$.
Now, $\delta$ is a map of $\Mackey H_G^\bullet(S^0)$-modules, so
$\delta(e^{2k}\xi^{-k}) = e^{2k}\delta\xi^{-k}$.
It follows that, for every integer $n$,
$\Mackey H_G^{2k+1+n\LL}(\tE G)$ is generated by
$e^{2k+n}\delta\xi^{-k}$.
This gives the claimed generators of the module structure.

For $p$ odd the argument is similar.
We use that $e_1^k\xi_1^{-k}$ generates $\Mackey H_G^{2k}(EG_+)$ for $k\geq 1$
to see that $e_1^k\delta\xi_1^{-k}$ generates $\Mackey H_G^{2k+1}(\tE G)$,
hence, for $\alpha^G = |\alpha| = 0$,
$\Mackey H_G^{2k+1+n\MM_1 + \alpha}(\tE G)$ is generated by 
$e^\alpha e_1^{k+n}\delta\xi_1^{-k}$.
\end{proof}

Figure~\ref{fig:EvenTEG} shows $\Mackey H_G^\bullet(\tE G)$ for $p$ even
and Figure~\ref{fig:OddTEG} shows the result for $p$ odd.

%-------------------------------------------------------
\section{The cohomology of a point for $p=2$}

We can now put the calculations of the cohomologies of $EG$ and $\tE G$ together
to get the cohomology of a point. The cases $p=2$ and $p$ odd are sufficiently different
that we shall consider them separately, considering the case $p=2$ in this section.

We first need to know the map 
$\delta\colon \Mackey H_G^\bullet(EG_+)\to \Mackey H_G^{\bullet+1}(\tE G)$.
To help visualize it, the map takes Figure~\ref{fig:EvenEG}
to Figure~\ref{fig:EvenTEG} with a shift of one to the right and up.
The following result gives the calculation.

\begin{proposition}
The map $\delta\colon \Mackey H_G^\bullet(EG_+)\to \Mackey H_G^{\bullet+1}(\tE G)$ is given by
\begin{align*}
  \delta(\iota^k) &= 0 \text{ for all $k$ and} \\
  \delta(e^m\xi^n) &=
   \begin{cases}
    e^m\delta\xi^n & \text{if $n\leq -1$} \\
    0 & \text{otherwise.}
   \end{cases}
\end{align*}
\end{proposition}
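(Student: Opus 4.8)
The plan is to run the long exact sequence of the cofibration $EG_+\to S^0\to\tE G$,
\[
 \cdots \to \Mackey H_G^{\bullet-1}(EG_+) \xrightarrow{\ \delta\ } \Mackey H_G^\bullet(\tE G) \xrightarrow{\ \psi\ } \Mackey H_G^\bullet(S^0) \xrightarrow{\ \phi\ } \Mackey H_G^\bullet(EG_+) \to \cdots,
\]
against the two computations already in hand, Theorems~\ref{thm:EGCohomology} and~\ref{thm:tEGCohomology}, and the integer-graded information of Proposition~\ref{prop:intTEGCohomology}. The connecting map $\delta$ is a homomorphism of Mackey functors in each grading and of modules over $\Mackey H_G^\bullet(S^0)$; in particular it commutes with multiplication by the Euler class $e=e_\LL$, which acts on $\Mackey H_G^\bullet(EG_+)$ through $\phi(e_\LL)=e$ by naturality of Euler classes. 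By the additive part of Theorem~\ref{thm:EGCohomology} the classes $e^m\xi^n$ ($m\geq0$, $n\in\Z$) generate the level-$G/G$ groups $\Mackey H_G^\alpha(EG_+)(G/G)$ and the classes $\iota^k$ generate the level-$G/e$ groups, so it suffices to evaluate $\delta$ on these.

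First I would dispose of $\delta(\iota^k)$: the class $\iota^k$ lies at level $G/e$, and by Theorem~\ref{thm:tEGCohomology} every Mackey functor $\Mackey H_G^\alpha(\tE G)$ is one of $\conc\Z$, $\conc{\Z/2}$, or $0$, each of which vanishes at level $G/e$; hence $\delta(\iota^k)=0$. Next, for $n\geq0$ I would argue by a grading count: $e^m\xi^n$ sits in grading $\alpha=(m+2n)\LL-2n$, so $\alpha^G=-2n$ and $(\alpha+1)^G=1-2n$, which for $n\geq0$ is neither $0$ nor an odd integer $\geq3$; thus $\Mackey H_G^{\alpha+1}(\tE G)=0$ by Theorem~\ref{thm:tEGCohomology}, and $\delta(e^m\xi^n)=0$.

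The remaining case is $n=-k\leq-1$. Here I would recall from the proof of Theorem~\ref{thm:tEGCohomology} that the generator $\delta\xi^{-k}$ is characterized by $e^{2k}\,\delta\xi^{-k}=\delta(e^{2k}\xi^{-k})$; applying $\Mackey H_G^\bullet(S^0)$-linearity of $\delta$ and cancelling $e^{2k}$ — legitimate since multiplication by $e$ is an isomorphism on $\Mackey H_G^\bullet(\tE G)$ by Proposition~\ref{prop:tEGPeriodicity} — identifies $\delta\xi^{-k}=\delta(\xi^{-k})$. Then for any $m\geq0$,
\[
 \delta(e^m\xi^{-k}) = e^m\,\delta(\xi^{-k}) = e^m\,\delta\xi^{-k}
\]
once more by linearity over $e$, which is the asserted value; this completes the evaluation of $\delta$ on all generators.

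I do not expect a genuine obstacle here — the content is grading bookkeeping plus the periodicity of $\Mackey H_G^\bullet(\tE G)$. The one point that requires care is that this proposition sits earlier in Part~\ref{part:point} than the computation of $\Mackey H_G^\bullet(S^0)$, so the argument must not invoke the ring structure of the cohomology of a point; it may use only the single class $e_\LL$ from Definition~\ref{def:EulerClasses}, its naturality under $\phi$, and the already-established structures of $\Mackey H_G^\bullet(EG_+)$ and $\Mackey H_G^\bullet(\tE G)$ together with Proposition~\ref{prop:intTEGCohomology}.
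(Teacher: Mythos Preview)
Your proof is correct and follows essentially the same approach as the paper's, only spelled out in more detail. The paper's argument is three sentences: $\delta(\iota^k)=0$ because $\Mackey H_G^\bullet(\tE G)(G/e)=0$; for $n\geq 0$ the target vanishes; and for $n\leq -1$ the identity was established in the proof of Theorem~\ref{thm:tEGCohomology}, where $\delta\xi^{-k}$ is literally defined as $\delta(\xi^{-k})$ via $\Mackey H_G^\bullet(S^0)$-linearity of $\delta$ with respect to $e$.
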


\begin{proof}
That $\delta(\iota^k) = 0$ follows because $\Mackey H_G^\bullet(\tE G)(G/e) = 0$ 
($\tE G$ is nonequivariantly contractible).

For $\delta(e^m\xi^n)$, if $n\geq 0$ then the target of $\delta$ is 0.
On the other hand, that $\delta(e^m\xi^n) = e^m\delta\xi^n$ for $n\leq -1$ was
shown in the proof of Theorem~\ref{thm:tEGCohomology}.
\end{proof}

This determines most of the structure
of $\Mackey H_G^\bullet(S^0)$, the rest being edge cases.
The following verifies the additive part of Theorem~\ref{thm:evenCohomPoint}.

\begin{theorem}\label{thm:evenCohomPointRedux}
Let $p = 2$. Then
\[
 \Mackey H_G^\alpha(S^0) \iso
  \begin{cases}
   \Mackey A_{G/G} & \text{if $\alpha = 0$} \\
   \Mackey R\Z & \text{if $|\alpha| = 0$ and $\alpha^G < 0$ is even} \\
   \Mackey R\Z_- & \text{if $|\alpha| = 0$ and $\alpha^G \leq 1$ is odd} \\
   \Mackey L\Z & \text{if $|\alpha| = 0$ and $\alpha^G > 0$ is even} \\
   \Mackey L\Z_- & \text{if $|\alpha| = 0$ and $\alpha^G \geq 3$ is odd} \\
   \conc\Z & \text{if $|\alpha| \neq 0$ and $\alpha^G = 0$} \\
   \conc{\Z/2} & \text{if $|\alpha| > 0$ and $\alpha^G < 0$ is even} \\
   \conc{\Z/2} & \text{if $|\alpha| < 0$ and $\alpha^G \geq 3$ is odd} \\
   0 & \text{otherwise.}
  \end{cases}
\]
When $\alpha^G<0$, the map $\Mackey H_G^\alpha(S^0)\to \Mackey H_G^\alpha(EG_+)$ is an isomorphism.
When $|\alpha|<0$, $\Mackey H_G^\alpha(\tE G)\to \Mackey H_G^\alpha(S^0)$ is an isomorphism.
We have a short exact sequence
\[
 \xymatrix@R=2.5ex{
 0 \ar[r] & \Mackey H_G^0(\tE G) \ar@{=}[d] \ar[r]
  & \Mackey H_G^0(S^0) \ar@{=}[d] \ar[r]
  & \Mackey H_G^0(EG_+) \ar@{=}[d] \ar[r] & 0 \\
 & \conc\Z \ar[r] 
  & \Mackey A_{G/G} \ar[r]
  & \Mackey R\Z\mathrlap{.}
 }
\]
If $\alpha^G = 0$ and $|\alpha| > 0$, we have a short exact sequence
\[
 \xymatrix@R=2.5ex{
 0 \ar[r] & \Mackey H_G^\alpha(\tE G) \ar@{=}[d] \ar[r]
  & \Mackey H_G^\alpha(S^0) \ar@{=}[d] \ar[r]
  & \Mackey H_G^\alpha(EG_+) \ar@{=}[d] \ar[r] & 0 \\
 & \conc\Z \ar[r]_2
  & \conc\Z \ar[r]_{1}
  & \conc{\Z/2}\mathrlap{.}
 }
\]
If $|\alpha| = 0$ and $\alpha^G\geq 3$ is odd, we have a short exact sequence
\[
 \xymatrix@R=2.5ex{
 0 \ar[r] & \Mackey H_G^\alpha(\tE G) \ar@{=}[d] \ar[r]
  & \Mackey H_G^\alpha(S^0) \ar@{=}[d] \ar[r]
  & \Mackey H_G^\alpha(EG_+) \ar@{=}[d] \ar[r] & 0 \\
 & \conc{\Z/2} \ar[r]
  & \Mackey L\Z_- \ar[r]
  & \Mackey R\Z_-\mathrlap{.}
 }
\]
\end{theorem}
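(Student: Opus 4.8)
The proof proceeds by analyzing the long exact sequence in $RO(G)$-graded cohomology induced by the cofibration sequence $EG_+\to S^0\to\tE G$, using the calculations of $\Mackey H_G^\bullet(EG_+)$ (Theorem~\ref{thm:EGCohomology}) and $\Mackey H_G^\bullet(\tE G)$ (Theorem~\ref{thm:tEGCohomology}) together with the description of $\delta$ just established. The strategy is to break into cases according to the values of $|\alpha|$ and $\alpha^G$, and in each case read off the three-term exact sequence
\[
 \Mackey H_G^{\alpha-1}(EG_+)\xrightarrow{\delta}\Mackey H_G^\alpha(\tE G)\xrightarrow{\psi}\Mackey H_G^\alpha(S^0)\xrightarrow{\phi}\Mackey H_G^\alpha(EG_+)\xrightarrow{\delta}\Mackey H_G^{\alpha+1}(\tE G).
\]
First I would dispose of the generic cases. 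If $\alpha^G<0$, then $\Mackey H_G^\alpha(\tE G)=0=\Mackey H_G^{\alpha+1}(\tE G)$ by Theorem~\ref{thm:tEGCohomology} (which requires $\alpha^G=0$ or $\alpha^G\geq 3$ odd), so $\phi$ is an isomorphism and the answer is whatever $\Mackey H_G^\alpha(EG_+)$ is: $\Mackey R\Z$, $\Mackey R\Z_-$, or $\conc{\Z/2}$ according to the parity of $\alpha^G$ and the sign of $|\alpha|$, matching the claimed table. Dually, if $|\alpha|<0$, then $\Mackey H_G^{\alpha-1}(EG_+)=0=\Mackey H_G^\alpha(EG_+)$ since $\Mackey H_G^\bullet(EG_+)$ is concentrated in nonnegative integer degrees, so $\psi$ is an isomorphism and the answer is $\Mackey H_G^\alpha(\tE G)$: either $\conc\Z$ (if $\alpha^G=0$) or $\conc{\Z/2}$ (if $\alpha^G\geq 3$ odd, forcing $|\alpha|<0$), again matching the table.

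Next I would handle the remaining region $|\alpha|\geq 0$ and $\alpha^G\geq 0$. The subcase $|\alpha|=0$, $\alpha^G\geq 0$: when $\alpha=0$ we get the displayed short exact sequence $0\to\conc\Z\to\Mackey A_{G/G}\to\Mackey R\Z\to 0$, already recorded in Proposition~\ref{prop:intTEGCohomology}, and the extension is $\Mackey A_{G/G}$ essentially by definition of $\kappa$. When $|\alpha|=0$, $\alpha^G>0$ even, both $\tE G$ groups vanish (the relevant $\tE G$ degree is $0$ only when $\alpha^G=0$, and $\alpha^G+1$ is odd but needs to be $\geq 3$), so actually one checks $\Mackey H_G^{\alpha\pm1}(\tE G)$ lands in the $0$ region and $\phi$ is an isomorphism onto $\Mackey H_G^\alpha(EG_+)\iso\Mackey L\Z$; similarly for $\alpha^G=1$ or $2$ one gets $\Mackey R\Z_-$ and $\Mackey L\Z$ respectively from $\Mackey H_G^\bullet(EG_+)$. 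The genuinely interesting subcase is $|\alpha|=0$ with $\alpha^G\geq 3$ odd: here $\Mackey H_G^\alpha(\tE G)\iso\conc{\Z/2}$, $\Mackey H_G^\alpha(EG_+)\iso\Mackey R\Z_-$, and $\Mackey H_G^{\alpha-1}(EG_+)=0$ (its degree $\alpha-1$ has $(\alpha-1)^G$ even but $|\alpha-1|=0$ — wait, one must check it is actually a unit degree of $EG_+$ and that $\delta$ into it is handled), so we get $0\to\conc{\Z/2}\to\Mackey H_G^\alpha(S^0)\to\Mackey R\Z_-\to 0$, and the extension must be identified as $\Mackey L\Z_-$. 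This identification — showing the extension is nonsplit in the correct way and isomorphic to $\Mackey L\Z_-$ rather than $\conc{\Z/2}\dirsum\Mackey R\Z_-$ — is where I expect to spend real effort: one should track the behavior at level $G/e$ (where the sequence gives $0\to 0\to\Mackey H_G^\alpha(S^0)(G/e)\to\Z\to 0$, pinning down the $G/e$ group as $\Z$) and then determine the map $\tau$ by using that $\tau$ composed with the map to $\Mackey R\Z_-$ is the $\tau$ of $\Mackey R\Z_-$, which is zero, hence $\tau$ factors through $\conc{\Z/2}$ — forcing $\Mackey L\Z_-$.

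Finally, the subcase $|\alpha|>0$, $\alpha^G=0$: then $\Mackey H_G^\alpha(EG_+)\iso\conc{\Z/2}$, $\Mackey H_G^{\alpha-1}(EG_+)=0$ (odd total degree), $\Mackey H_G^\alpha(\tE G)\iso\conc\Z$, and $\Mackey H_G^{\alpha+1}(\tE G)=0$, giving $0\to\conc\Z\to\Mackey H_G^\alpha(S^0)\to\conc{\Z/2}\to 0$; one must show the extension is $\conc\Z$ again (i.e.\ the connecting map $\delta$ realizes multiplication by $2$). This follows by chasing the compatible multiplicative structure: $\Mackey H_G^\alpha(S^0)$ for $\alpha=2n(\LL-1)$ is generated by $\xi^n$, whose image in $\Mackey H_G^\alpha(EG_+)=\conc{\Z/2}$ is the generator, while $\kappa\xi=0$ in $\Mackey H_G^\bullet(S^0)$ shows the $\conc\Z$ subgroup is the kernel of reduction mod $2$; alternatively cite the relation $\rho(\xi)=\iota^2$ and Proposition~\ref{prop:EvenMaps}. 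The cases $|\alpha|>0$, $\alpha^G<0$ and $|\alpha|<0$, $\alpha^G\geq 3$ odd were already covered by the two generic arguments above, yielding $\conc{\Z/2}$ in both, and every remaining combination of signs gives a zero group because either $\Mackey H_G^\bullet(EG_+)$ or $\Mackey H_G^\bullet(\tE G)$ vanishes there and the other is zero too. Assembling all cases gives exactly the stated table and the three displayed short exact sequences; the isomorphism statements ($\phi$ an iso when $\alpha^G<0$, $\psi$ an iso when $|\alpha|<0$) are precisely the generic-case observations made at the start.
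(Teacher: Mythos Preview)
Your overall strategy is the paper's, but several cases are mishandled and one key extension argument has a genuine gap.

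\textbf{The case $|\alpha|=0$, $\alpha^G>0$ even, and the vanishing for $|\alpha|>0$, $\alpha^G>0$.} You claim $\Mackey H_G^{\alpha+1}(\tE G)=0$ and hence $\phi$ is an isomorphism onto $\Mackey H_G^\alpha(EG_+)\iso\Mackey L\Z$. Both assertions are wrong. For $|\alpha|=0$ and $\alpha^G=2k\geq 2$ even, one has $(\alpha+1)^G=2k+1\geq 3$ odd, so $\Mackey H_G^{\alpha+1}(\tE G)\iso\conc{\Z/2}$, not $0$; and by Theorem~\ref{thm:EGCohomology}, $\Mackey H_G^\alpha(EG_+)\iso\Mackey R\Z$, not $\Mackey L\Z$. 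The correct exact sequence is
\[
0\to\Mackey H_G^\alpha(S^0)\to\Mackey R\Z\xrightarrow{\delta}\conc{\Z/2}\to 0,
\]
and $\Mackey L\Z$ arises as $\ker\delta$. The same oversight breaks your vanishing claim for $|\alpha|>0$, $\alpha^G>0$: there $\Mackey H_G^\alpha(EG_+)\iso\conc{\Z/2}$ and $\Mackey H_G^{\alpha+1}(\tE G)\iso\conc{\Z/2}$ are both nonzero; vanishing of $\Mackey H_G^\alpha(S^0)$ requires knowing $\delta$ is an isomorphism there, which is exactly the content of the preceding proposition computing $\delta$.

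\textbf{The $\Mackey L\Z_-$ extension.} Your argument that ``$\tau$ factors through $\conc{\Z/2}$, forcing $\Mackey L\Z_-$'' is incomplete: since $\Mackey R\Z_-(G/G)=0$, the factorization is automatic and tells you only that $\tau\colon\Z\to\Z/2$; it does not show $\tau$ is nonzero, which is precisely what distinguishes $\Mackey L\Z_-$ from $\conc{\Z/2}\dirsum\Mackey R\Z_-$. The paper resolves this with a separate topological input: the cofibration $S^{2k\LL}\to S^{(2k+1)\LL}\to\Sigma^{2k+1}G/e_+$ shows $\Mackey H_G^{(2k+1)(1-\LL)}(S^0)$ is a quotient of $\Mackey A_{G/e}$, forcing $\tau$ to be surjective. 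You need some argument of this kind; the long exact sequence alone does not settle it.

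\textbf{The case $\alpha^G=0$, $|\alpha|>0$.} Here $\alpha=m\LL$ with $m\geq 1$, not $2n(\LL-1)$ (the latter has $|\alpha|=0$), and the relevant generator is $e^m$, not $\xi^n$. The paper resolves the extension $0\to\conc\Z\to\Mackey H_G^{m\LL}(S^0)\to\conc{\Z/2}\to 0$ by comparing with the $\alpha=0$ sequence via multiplication by $e^m$ and invoking Proposition~\ref{prop:extensions2}.
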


\begin{proof}
We have that $\Mackey H_G^\alpha(\tE G) = 0$ for $\alpha^G < 0$, and
$\delta$ is zero on $\Mackey H_G^\alpha(EG_+)$ for $\alpha^G< 0$, which implies that
\[
 \Mackey H_G^{\alpha}(S^0) \xrightarrow{\iso} \Mackey H_G^\alpha(EG_+) \quad\text{if $\alpha^G<0$.}
\]
Similarly, $\Mackey H_G^\alpha(EG_+) = 0$ for $|\alpha| < 0$, which implies that
\[
 \Mackey H_G^\alpha(\tE G) \xrightarrow{\iso} \Mackey H_G^\alpha(S^0) \quad\text{if $|\alpha|<0$.}
\]
On the other hand, $\delta\colon \Mackey H_G^\alpha(EG_+)\to \Mackey H_G^{\alpha+1}(\tE G)$
is an epimorphism when $|\alpha| = 0$ and $\alpha^G>0$, and an isomorphism when
$|\alpha|>0$ and $\alpha^G>0$. That implies that
\[
 \Mackey H_G^\alpha(S^0) = 0 \quad\text{if $|\alpha| > 0$ and $\alpha^G>0$.}
\]
Referring to Figure~\ref{fig:EvenCohomPoint}, the names we are using for the generators
of the groups outside of the first quadrant are the names of the corresponding elements
of $\Mackey H_G^\bullet(EG_+)$ or $\Mackey H_G^\bullet(\tE G)$.
To complete the calculation, what remains is to consider the axes
bordering the first quadrant.

From the dimension axiom, we know that $\Mackey H_G^0(S^0) \iso \Mackey A_{G/G}$,
and, in Proposition~\ref{prop:intTEGCohomology}, we pointed out that we have a short exact
sequence
\[
 \xymatrix@R=2.5ex{
  0 \ar[r] & \Mackey H_G^0(\tE G) \ar@{=}[d] \ar[r]
    & \Mackey H_G^0(S^0) \ar@{=}[d] \ar[r]
    & \Mackey H_G^0(EG_+) \ar@{=}[d] \ar[r]
    & 0 \\
   & \conc\Z \ar[r]_\kappa & \Mackey A_{G/G} \ar[r]_\epsilon & \Mackey R\Z 
 }
\]
Examination of $\delta$ shows that we also have the following short exact sequence for $m\geq 1$:
\[
 \xymatrix@R=2ex{
  0 \ar[r] & \Mackey H_G^{m\LL}(\tE G) \ar@{=}[d] \ar[r]
    & \Mackey H_G^{m\LL}(S^0) \ar[r]
    & \Mackey H_G^{m\LL}(EG_+) \ar@{=}[d] \ar[r]
    & 0 \\
  & \conc\Z & & \conc{\Z/2}
 }
\]
Multiplication by $e^m$ for $m\geq 1$ then induces a map of short exact sequences:
\[
 \xymatrix{
  0 \ar[r] & \conc\Z \ar[d]_\iso \ar[r]^\kappa & \Mackey A_{G/G} \ar[d]_{e^m} \ar[r]^{\epsilon}
    & \Mackey R\Z \ar[d]^\pi \ar[r] & 0 \\
  0 \ar[r] & \conc\Z \ar[r] & \Mackey H_G^{m\LL}(S^0) \ar[r] & \conc{\Z/2} \ar[r] & 0
 }
\]
Here $\pi$ is the evident projection at level $G/G$.
Proposition~\ref{prop:extensions2} then implies that, 
for $m\geq 1$, $\Mackey H_G^{m\LL}(S^0) \iso \conc\Z$ and $e^m$ is a generator.

For the remaining axis, we note first that 
\[
 \Mackey H_G^{1-\LL}(S^0) \xrightarrow{\iso} \Mackey H_G^{1-\LL}(EG_+) \iso \Mackey R\Z_-.
\]
If $k\geq 1$, we have a short exact sequence
\[
 \xymatrix@R=2ex{
 0 \ar[r] & \Mackey H_G^{2k(1-\LL)}(S^0) \ar[r]
  & \Mackey H_G^{2k(1-\LL)}(EG_+) \ar@{=}[d] \ar[r]^\delta
  & \Mackey H_G^{1+2k(1-\LL)}(\tE G) \ar@{=}[d] \ar[r] & 0 \\
 & & \Mackey R\Z \ar[r]_{\pi} & \conc{\Z/2}
 }
\]
Therefore, $\Mackey H_G^{2k(1-\LL)}(S^0) \iso \ker\pi \iso \Mackey L\Z$ for $k\geq 1$.
On the other hand, at the odd multiples of $1-\LL$, we get the following short exact sequence
(where, again, $k\geq 1$):
\[
 \xymatrix@R=2ex@C-1em{
  0 \ar[r] & \Mackey H_G^{(2k+1)(1-\LL)}(\tE G) \ar@{=}[d] \ar[r]
    & \Mackey H_G^{(2k+1)(1-\LL)}(S^0) \ar[r]
    & \Mackey H_G^{(2k+1)(1-\LL)}(EG_+) \ar@{=}[d] \ar[r] & 0 \\
  & \conc{\Z/2} & & \Mackey R\Z_-
 }
\]
There are two possible extensions, $\Mackey L\Z_-$ and $\conc{\Z/2}\dirsum\Mackey R\Z_-$.
Here is a way to resolve the extension problem:
Consider the cofibration sequence $S^{2k\LL}\to S^{(2k+1)\LL} \to \susp^{2k+1}G/e_+$.
(This is the top-dimensional cell in a based $G$-CW structure on $S^{(2k+1)\LL}$.)
We have 
\[
 \Mackey H_G^{2k+1}(S^{2k\LL}) \iso \Mackey H_G^{(2k+1)(1-\LL) + \LL}(S^0) = 0
\]
from our previous calculations.
This gives us the following exact sequence:
\[
 \xymatrix@R=2ex{
  \Mackey H_G^{2k+1}(\susp^{2k+1}G/e_+) \ar@{=}[d] \ar[r]
    & \Mackey H_G^{2k+1}(S^{(2k+1)\LL}) \ar[r]
    & \Mackey H_G^{2k+1}(S^{2k\LL}) \ar@{=}[d] \\
  \Mackey A_{G/e} & & 0
 }
\]
Therefore, $\Mackey H_G^{2k+1}(S^{(2k+1)\LL}) \iso \Mackey H_G^{(2k+1)(1-\LL)}(S^0)$
is a quotient of $\Mackey A_{G/e}$. Any quotient of $\Mackey A_{G/e}$ must have
its transfer function $\tau$ being an epimorphism, which is true for
$\Mackey L\Z_-$ but not for $\conc{\Z/2}\dirsum\Mackey R\Z_-$.
Therefore, $ \Mackey H_G^{(2k+1)(1-\LL)}(S^0) \iso \Mackey L\Z_-$.
(Alternatively, the map 
$S^{(2k+1)\LL}\to \susp^{2k+1}G/e_+ \iso G/e_+\smsh S^{(2k+1)\LL}$
represents the transfer map, so, when evaluated in cohomology at level $G/G$,
gives $\tau$. The exact sequence above, evaluated at level $G/G$, then
shows that $\tau$ is an epimorphism.
It would be nice to have a completely algebraic resolution of this extension
problem, along the lines of the argument given for $\Mackey H_G^{m\LL}(S^0)$.)

The elements $\iota^k$, $k\leq -1$, generate the groups at level $G/e$
when $|\alpha| = 0$ and $\alpha^G > 0$,
hence generate the corresponding Mackey functors, which all have the form
$\Mackey R\Z_-$, $\Mackey L\Z$, or $\Mackey L\Z_-$.
\end{proof}

We now verify the multiplicative part of Theorem~\ref{thm:evenCohomPoint}.

\begin{theorem}\label{thm:evenCohomPointProved}
For $p = 2$, $\Mackey H_G^\bullet(S^0)$ is a strictly commutative $RO(G)$-graded ring,
generated multipicatively by elements
\begin{align*}
 \iota &\in \Mackey H_G^{\LL-1}(S^0)(G/e) \\
 \iota^{-1} &\in \Mackey H_G^{1-\LL}(S^0)(G/e) \\
 \xi &\in \Mackey H_G^{2(\LL-1)}(S^0)(G/G) \\
 e &\in \Mackey H_G^{\LL}(S^0)(G/G) \\
 e^{-m}\kappa &\in \Mackey H_G^{-m\LL}(S^0)(G/G) & & m\geq 1 \\
 e^{-m}\delta\xi^{-n} &\in \Mackey H_G^{1 - m\LL - 2n(\LL-1)}(S^0)(G/G)
   & & m, n \geq 1.
\end{align*}
These generators satisfy the following {\em structural} relations:
\begin{align*}
 \tau(\iota^{-1}) &= 0 \\
 \tau(\iota^{-2n-1}) &= e^{-1}\delta\xi^{-n} & & \text{for $n\geq 1$} \\
 \kappa\xi &= 0 \\
 \rho(\xi) &= \iota^2 \\
 \rho(e) &= 0 \\
 \rho(e^{-m}\kappa) &= 0 & & \text{for $m\geq 1$}\\
 \rho(e^{-m}\delta\xi^{-n}) &= 0 & & \text{for $m\geq 2$ and $n\geq 1$}\\
 2e^{-m}\delta\xi^{-n} &= 0 & & \text{for $m\geq 2$ and $n\geq 1$}
\intertext{and the following {\em multiplative} relations:}
  \iota\cdot \iota^{-1} &= \rho(1) \\
  e\cdot e^{-m}\kappa &= e^{-m+1}\kappa & &\text{for $m\geq 1$} \\
 \xi\cdot e^{-m}\kappa &= 0 & & \text{for $m\geq 1$} \\
 e^{-m}\kappa\cdot e^{-n}\kappa &= 2e^{-m-n}\kappa & & \text{for $m\geq 0$ and $n\geq 0$} \\
 e\cdot e^{-m}\delta\xi^{-n} &= e^{-m+1}\delta\xi^{-n} & &\text{for $m\geq 2$ and $n\geq 1$} \\
 \xi\cdot e^{-m}\delta\xi^{-n} &= e^{-m}\delta\xi^{-n+1} & & 
   \text{for $m\geq 1$ and $n\geq 2$} \\
 \xi\cdot e^{-m}\delta\xi^{-1} &= 0 & & \text{for $m\geq 2$} \\
 e^{-m}\kappa \cdot e^{-n}\delta\xi^{-k} &= 0 & & \text{if $m\geq 0$, $n\geq 1$, and $k\geq 1$}\\
 e^{-m}\delta\xi^{-k}\cdot e^{-n}\delta\xi^{-\ell} &= 0 & & 
   \text{if $m, n, k, \ell\geq 1$}
\end{align*}
The following relations are implied by the preceding ones:
\begin{align*}
 \kappa e &= 2e \\
 2e^m\xi^n &= 0 & & \text{if $m>0$ and $n>0$} \\
 t\iota^k &= (-1)^k\iota^k & & \text{for all $k$} \\
 \xi\cdot\tau(\iota^k) &= \tau(\iota^{k+2}) & &\text{for all $k$} \\
 e\cdot\tau(\iota^k) &= 0 & &\text{for all $k$} \\
 e^{-m}\kappa \cdot \tau(\iota^k) &= 0 & & \text{for all $m\geq 1$ and $k$} \\
 e^{-m}\delta\xi^{-n}\cdot \tau(\iota^k) &= 0 & & \text{for all $m, n\geq 1$ and $k$} \\
 \tau(\iota^k)\cdot\tau(\iota^\ell) &= 0 & &\text{if $k$ or $\ell$ is odd} \\
 \tau(\iota^{2k})\cdot\tau(\iota^{2\ell}) &= 2\tau(\iota^{2(k+\ell)}) & &\text{for all $k$ and $\ell$}\\
 \tau(\iota^{2k+1}) &= 0 & & \text{if $k\geq 0$} \\
 e\cdot e^{-1}\delta\xi^{-n} &= 0 & & \text{if $n\geq 1$}
\end{align*}
\end{theorem}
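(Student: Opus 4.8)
The plan is to proceed in three stages: first pin down the generators as named elements, then verify the listed relations, and finally check that these relations generate all relations by a counting/bookkeeping argument against the additive structure in Theorem~\ref{thm:evenCohomPointRedux}. The named elements all arise naturally: $\iota$ and $\iota^{-1}$ are from Definition~\ref{def:iota}; $\xi=\xi_1$ is the element of $\Mackey H_G^{2(\LL-1)}(S^0)$ characterized by $\phi(\xi)=\xi$, as noted after Proposition~\ref{prop:EvenMaps}; $e=e_\LL$ is the Euler class from Definition~\ref{def:EulerClasses}; and $e^{-m}\kappa$ and $e^{-m}\delta\xi^{-n}$ are the generators of $\Mackey H_G^\bullet(\tE G)$ as an $\Mackey H_G^\bullet(S^0)[e^{-1}]$-module from Theorem~\ref{thm:EvenTEG}, pulled into $\Mackey H_G^\bullet(S^0)$ via the map $\psi$ of Proposition~\ref{prop:EvenMaps}. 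Indeed, Theorem~\ref{thm:evenCohomPointRedux} already identifies $\Mackey H_G^\alpha(\tE G)\xrightarrow{\iso}\Mackey H_G^\alpha(S^0)$ for $|\alpha|<0$ (this is $\psi$) and $\Mackey H_G^\alpha(S^0)\xrightarrow{\iso}\Mackey H_G^\alpha(EG_+)$ for $\alpha^G<0$ (this is $\phi$), so outside the two coordinate axes bounding the first quadrant the named elements literally are the images of the generators computed in Theorems~\ref{thm:EvenEG} and~\ref{thm:EvenTEG}.

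For the relations, I would exploit the long exact sequence
\[
 \cdots \to \Mackey H_G^{\bullet-1}(EG_+)
  \xrightarrow{\delta} \Mackey H_G^\bullet(\tE G)
  \xrightarrow{\psi} \Mackey H_G^\bullet(S^0)
  \xrightarrow{\phi} \Mackey H_G^\bullet(EG_+)
  \to \cdots
\]
together with the explicit formulas in Proposition~\ref{prop:EvenMaps}. The relations split into three families. The relations living in $RO(G)$-gradings with $\alpha^G<0$ (e.g. $\kappa\xi=0$, $\rho(\xi)=\iota^2$, $\iota\cdot\iota^{-1}=\rho(1)$, the products $e^{-m}\kappa\cdot e^{-n}\kappa=2e^{-m-n}\kappa$ evaluated in low degree) can be checked after applying the isomorphism $\phi$ to $\Mackey H_G^\bullet(EG_+)$, where they are immediate from Theorem~\ref{thm:EvenEG}. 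The relations landing in gradings with $|\alpha|<0$ (the various $\rho(\cdot)=0$ relations for $e^{-m}\kappa$ and $e^{-m}\delta\xi^{-n}$, the annihilation relations $\xi\cdot e^{-m}\kappa=0$, $2e^{-m}\delta\xi^{-n}=0$, $e^{-m}\kappa\cdot e^{-n}\delta\xi^{-k}=0$, $e^{-m}\delta\xi^{-k}\cdot e^{-n}\delta\xi^{-\ell}=0$) can be checked after applying the isomorphism $\psi$, reducing them to module relations in $\Mackey H_G^\bullet(\tE G)$ already recorded in Theorem~\ref{thm:EvenTEG} (note $\psi$ is a ring map where defined). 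The genuinely mixed relations—$\rho(e)=0$, $e\cdot e^{-m}\kappa=e^{-m+1}\kappa$, the multiplicative recursions for $e^{-m}\delta\xi^{-n}$, and above all $\tau(\iota^{-1})=0$ and $\tau(\iota^{-2n-1})=e^{-1}\delta\xi^{-n}$—require working directly with the exact sequence: $\rho(e)=0$ because $e$ restricts to an equivariantly null map; $e\cdot e^{-m}\kappa=e^{-m+1}\kappa$ from the naming convention and the fact that $e$ acts invertibly on $\Mackey H_G^\bullet(\tE G)$; and the $\tau$-relations by tracing a generator of $\Mackey H_G^\bullet(S^0)(G/e)$ through the transfer, using Definition~\ref{def:iota} and comparing with the computed generator $e^{-1}\delta\xi^{-n}$ in the appropriate grading, where Theorem~\ref{thm:evenCohomPointRedux} tells us the relevant Mackey functor is $\Mackey L\Z_-$ (so $\tau$ is onto and these images are forced). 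Once the listed relations hold, the ``implied'' relations follow by pure algebra: e.g. $\kappa e=2e$ from $\kappa=2-g$ and $\rho(e)=0$ (so $ge=\tau\rho(e)=0$, wait—$ge=\tau(\rho(e))$ is $0$, giving $\kappa e=2e$); $t\iota^k=(-1)^k\iota^k$ from the $p=2$ anticommutativity sign $\gamma(\LL,\LL)=1-g$ and $\rho$ of relations; $\xi\cdot\tau(\iota^k)=\tau(\iota^{k+2})$ from $\rho(\xi)=\iota^2$ and the Frobenius relation $\tau(\rho(\xi)\iota^k)=\xi\tau(\iota^k)$; and similarly for the rest.

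The main obstacle is the third stage: showing the given relations \emph{generate} all relations, equivalently that $\Mackey A_{G/G}[\iota^{\pm1},\xi,e,e^{-m}\kappa,e^{-m}\delta\xi^{-n}]/(\text{listed relations})$ has exactly the additive structure of Theorem~\ref{thm:evenCohomPointRedux} in every grading. The strategy is to show the relations let one rewrite an arbitrary monomial in the generators as a sum of distinguished ``normal form'' monomials, one per grading as dictated by the additive answer—concretely: powers $e^m\xi^n$ ($m,n\ge0$) and $\iota^k$ ($k\ge0$) exhaust the closed first quadrant and the level-$G/e$ part with $\alpha^G>0$; the elements $e^{-m}\kappa$ and their $\rho$-images, plus $e^{-m}\delta\xi^{-n}$, $\tau(\iota^k)$, cover the rest. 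The relations $\xi\cdot e^{-m}\kappa=0$ and $\xi\cdot e^{-m}\delta\xi^{-1}=0$ and the two families of cross-products vanishing are exactly what is needed to kill all products that would otherwise land off the allowed cells, and $e\cdot e^{-m}\kappa=e^{-m+1}\kappa$ etc.\ handle the ``boundary crossing'' from the $\tE G$-region into the $S^0$-region. This is a finite-dimensional-per-grading check, and since Theorem~\ref{thm:oddadditivestructure}—er, Theorem~\ref{thm:evenCohomPointRedux}—already gives the target answer, it suffices to verify the quotient ring surjects onto $\Mackey H_G^\bullet(S^0)$ (clear, since these are generators) and that in each grading the quotient is no larger than the known answer (the normal-form reduction). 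I expect the bookkeeping near the coordinate axes—matching $\tau(\iota^{2k})\cdot\tau(\iota^{2\ell})=2\tau(\iota^{2(k+\ell)})$ and $\tau(\iota^{2k+1})=0$ against the $\Mackey L\Z$ versus $\Mackey L\Z_-$ dichotomy—to be the fussiest point, but it is routine once the $\tau$-values of the basic $\iota^k$ are known.
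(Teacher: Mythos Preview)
Your plan is essentially the paper's own proof: identify the generators via the additive computation (Theorem~\ref{thm:evenCohomPointRedux}), verify the multiplicative and structural relations region-by-region using the isomorphisms $\phi$ and $\psi$ from the cofibration sequence $EG_+\to S^0\to\tE G$, derive the ``implied'' relations via Frobenius, and then check that all products of generators are accounted for. Two small corrections are worth making.

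First, the relation $e^{-m}\kappa\cdot e^{-n}\kappa = 2e^{-m-n}\kappa$ does not live in a grading with $\alpha^G<0$ (it has $\alpha^G=0$, $|\alpha|<0$), and $\phi$ kills $e^{-m}\kappa$ for $m\ge 1$, so you cannot check it via $\phi$. The paper's argument is on the $\tE G$ side: multiply by $e^{m+n}$ (an isomorphism on $\Mackey H_G^\bullet(\tE G)$, hence injective on this column of $\Mackey H_G^\bullet(S^0)$) to reduce to $\kappa^2=2\kappa$ in $A(G)$. The same trick handles $e^{-m}\kappa\cdot e^{-n}\delta\xi^{-k}=0$. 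Also, your derivation of $t\iota^k=(-1)^k\iota^k$ via anticommutativity is not how the paper does it: the paper uses $\rho\tau(\iota^{-1})=(1+t)\iota^{-1}=0$ to get $t\iota^{-1}=-\iota^{-1}$, then exponentiates.

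Second, you do not address the ``strictly commutative'' clause in the theorem. The paper closes with a short case check that, for every pair of generators, the sign $\gamma(\alpha,\beta)$ acts trivially on the product (because $ge=0$, $g\cdot e^{-m}\kappa=0$, $2e^{-m}\delta\xi^{-n}=0$, etc.). You should include this check; it is routine but not automatic.
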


\begin{proof}
The structural relations listed in the theorem follow from the additive calculation
and the discussion of generators and relations for Mackey functors
in \S\ref{sec:MackeyFunctors}.
Note that the structural relation $\tau(\iota^{-2n-1}) = e^{-1}\delta\xi^{-n}$
follows from the proof of the structure of $\Mackey H_G^{(2n+1)(1-\LL)}(S^0)$.

At level $G/e$, we have
\[
 \Mackey H_G^\alpha(S^0)(G/e) \iso \Mackey H_G^\alpha(EG_+)(G/e) \iso  \tilde H^{|\alpha|}(S^0;\Z)
\]
for all $\alpha$, so $\iota$ is invertible at level $G/e$ as already noted for $EG_+$.

The relation $e\cdot e^{-m}\kappa = e^{-m+1}\kappa$, $m\geq 1$, follows from the same
identity in the cohomology of $\tE G$.
The identity $\xi\cdot e^{-m}\kappa = 0$ for $m\geq 1$ follows because the group
in which the product would live is 0.

The relation $e\cdot e^{-m}\delta\xi^{-n} = e^{-m+1}\delta\xi^{-n}$ ($m\geq 2$ and $n\geq 1$)
follows from the same identity in the cohomology of $\tE G$.

For $n\geq 2$, we have 
$\xi\cdot e^{-m}\delta\xi^{-n} = e^{-m}\delta(\xi\cdot\xi^{-n}) = e^{-m}\delta\xi^{-n+1}$.
On the other hand, $\xi\cdot e^{-m}\delta\xi^{-1} = 0$ for $m\geq 2$ because the product
lives in a 0 group.

To determine the product $e^{-m}\kappa \cdot e^{-n}\kappa$, multiply by $e^{m+n}$:
\[
 e^{m+n}(e^{-m}\kappa \cdot e^{-n}\kappa) = \kappa^2 = 2\kappa,
\]
hence $e^{-m}\kappa \cdot e^{-n}\kappa = 2e^{-m-n}\kappa$. Similarly,
\[
 e^{m}(e^{-m}\kappa \cdot e^{-n}\delta\xi^{-k})
  = \kappa e^{-n}\delta\xi^{-k} = 0
\]
because both $2e^{-n}\delta\xi^{-k} = 0$ and $ge^{-n}\delta\xi^{-k} = 0$;
hence $e^{-m}\kappa \cdot e^{-n}\delta\xi^{-k} = 0$.

Finally for the basic relations, $e^{-m}\delta\xi^{-k}\cdot e^{-n}\delta\xi^{-\ell} = 0$
when either $m\geq 2$ or $n\geq 2$ because the product lives in a 0 group.
When $m = n = 1$, this is
\[
 e^{-1}\delta\xi^{-k}\cdot e^{-1}\delta\xi^{-\ell} 
   = \tau(\iota^{-2k-1})\cdot\tau(\iota^{-2\ell-1}),
\]
hence follows from the more general calculation to be done below.

Turning to the remaining relations listed in the theorem,
$ge = \tau\rho(e) = 0$, so $\kappa e = (2 - g)e = 2e$.
We then have
\[
 2e\xi = \kappa e\xi = e(\kappa\xi) = 0,
\]
which implies that $2e^m\xi^n = 0$ for all $m\geq 1$ and $n\geq 1$.

We have
\[
 (1+t)\iota^{-1} = \rho\tau(\iota^{-1}) = 0,
\]
so $t\iota^{-1} = -\iota^{-1}$. For any $k$, we then have
\[
 t\iota^{k} = (t\iota^{-1})^{-k} = (-\iota^{-1})^{-k} = (-1)^k\iota^{k}.
\]
In particular, $\rho\tau(\iota^k) = (1+t)\iota^k = 2\iota^k$ if $k$ is even,
but equals 0 if $k$ is odd.

The next batch of relations follow from the Frobenius relation:
\begin{align*}
 \xi\tau(\iota^k) &= \tau(\rho(\xi)\iota^k) = \tau(\iota^2\iota^k) = \tau(\iota^{k+2}) \\
 e\tau(\iota^k) &= \tau(\rho(e)\iota^k) = 0 \\
 e^{-m}\kappa \tau(\iota^k) &= \tau(\rho(e^{-m}\kappa)\iota^k) = 0 \\
 e^{-m}\delta\xi^{-n} \tau(\iota^k) &= \tau(\rho(e^{-m}\delta\xi^{-n})\iota^k) = 0.
\end{align*}
If $k$ is odd, then $\rho\tau(\iota^k) = 0$, so
\[
 \tau(\iota^k)\tau(\iota^\ell) = \tau(\rho\tau(\iota^k)\iota^\ell) = 0,
\]
and similarly if $\ell$ is odd. On the other hand,
$\rho\tau(\iota^{2k}) = 2\iota^{2k}$, so
\[
 \tau(\iota^{2k})\tau(\iota^{2\ell}) = \tau(\rho\tau(\iota^{2k})\iota^{2\ell})
  = 2\tau(\iota^{2(k+\ell)}).
\]

We then have
\[
 \tau(\iota^{2k+1}) = \xi^{k+1}\tau(\iota^{-1}) = 0
\]
for $k\geq 0$.

We also have
\[
 e\cdot e^{-1}\delta\xi^{-n} = e\tau(\iota^{-2n-1}) = 0.
\]

The relations $\tau(\iota^{2k+1}) = 0$ for $k\geq 0$ and
$t\iota^{-2k} = \iota^{-2k}$ for $k\geq 1$
fill in the remaining structural relations we need to see
that all the Mackey functors are determined correctly.

We can now check that all products of the generators have been computed, so
the relations listed suffice to determine the multiplicative structure
of $\Mackey H_G^\bullet(S^0)$.
Moreover, the only possibility for anticommutativity to introduce a sign among
products of generating elements is in the following:
\begin{itemize}
\item
Products at level $G/e$: The only nonzero elements occur in gradings of the form
$a-a\LL$, and the sign introduced by commuting an element in grading $a-a\LL$ with
one in grading $b-b\LL$ is $(-1)^{ab+ab} = 1$.

\item
Products involving $\tau(\iota^k)$: The Frobenius relation and the preceding observation
show that such products strictly commute.

\item
Products involving $e$: The only unit that might be generated in such a product
is $1-g$, but $(1-g)e = e$ because $ge = 0$, so such a product strictly commutes.

\item
Products involving $e^{-m}\kappa$ for $m\geq 1$: Again, 
the only possible sign is $1-g$, but $(1-g)e^{-m}\kappa = e^{-m}\kappa$,
so any such product strictly commutes.

\item
Products involving $e^{-m}\delta\xi^{-n}$: Here, we have
\[
 -e^{-m}\delta\xi^{-n} = e^{-m}\delta\xi^{-n} = (1-g)e^{-m}\delta\xi^{-n},
\]
so all such products must strictly commute.
\end{itemize}
Therefore, $\Mackey H_G^\bullet(S^0)$ is strictly commutative.
\end{proof}

%-------------------------------------------------------
\section{The cohomology of a point for $p$ odd}\label{sec:cohomPointOdd}

In this section we assume that $p$ is odd.

In Definitions~\ref{def:EulerClasses}, \ref{def:iota} and~\ref{def:mu},
we introduced the elements $e_k$, $\iota_k$, and $\mu_{j,k,d}$.

\begin{definition}\label{def:muBetaD}
Let $\alpha = \sum_{k=2}^{(p-1)/2} n_k (\MM_k-\MM_1) \in RO_0(G)$. 
For $1 < k \leq (p-1)/2$, let $a_k\in\Z$ with $a_k \equiv k^{-1} \pmod p$, and let
\[
 a = \bigl(\prod_{k>1, n_k>0} k^{n_k}\bigr) \cdot
             \bigl(\prod_{k>1, n_k<0} a_k^{|n_k|}\bigr)
         \in \Z
\]
Then define
\[
 \mu^{\alpha,a} = \bigl(\prod_{k>1, n_k>0} \mu_{1,k,k}^{n_k}\bigr) \cdot
             \bigl(\prod_{k>1, n_k<0} \mu_{k,1,a_k}^{|n_k|}\bigr)
           \in \Mackey H_G^\alpha(S^0).
\]
\end{definition}

Note that defining the $a_k$, hence also $a$, involves arbitrary choices.
No matter what choices we make, we have $a \in \nu(\alpha) \in (\Z/p)^\times$.
In Corollary~\ref{cor:muBetaD} we shall extend the notation so that $\mu^{\alpha,a}$ is well-defined for
every integer $a$ in the congruence class $\nu(\alpha)$,
but for the moment it suffices to know that we can define it for at least one such $a$.

Recall also Definitions~\ref{def:xipower} and~\ref{def:ealpha}.

\begin{proposition}\label{prop:multByMu}
Let $\alpha\in RO_0(G)$.
In $\Mackey H_G^\bullet(S^0)$, we have
\[
 \rho(\mu^{\alpha,a}) = a \iota^\alpha.
\]
As elements acting on $\Mackey H_G^\bullet(\tE G)$, we have
\[
 \mu^{\alpha,a} \text{ acts as } e^{\alpha}.
\]
Under the map $\Mackey H_G^\bullet(S^0) \to \Mackey H_G^\bullet(EG_+)$, we have
\[
 \mu^{\alpha,a} \mapsto a \xi^\alpha.
\]
\end{proposition}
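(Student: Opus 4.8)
The plan is to reduce everything to the three base cases $\mu_{1,k,k}$ and $\mu_{k,1,a_k}$ that enter the definition of $\mu^{\alpha,a}$, and then to combine them multiplicatively. For the first assertion, $\rho(\mu^{\alpha,a}) = a\iota^\alpha$, I would apply the ring homomorphism $\rho$ to the defining product in Definition~\ref{def:muBetaD}. Since $\rho$ is multiplicative, it suffices to know $\rho(\mu_{1,k,k}) = k\iota_k\iota_1^{-1}$ and $\rho(\mu_{k,1,a_k}) = a_k\iota_1\iota_k^{-1}$, both of which are instances of Proposition~\ref{prop:rhoMu} (with the subscript conventions matching $\nu$). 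Multiplying these out over the factors, and using $\iota^\alpha = \prod_k \iota_k^{n_k}$, the scalar that emerges is exactly $a = \bigl(\prod_{n_k>0}k^{n_k}\bigr)\bigl(\prod_{n_k<0}a_k^{|n_k|}\bigr)$, which is how $a$ was defined; the $\iota$-part collapses to $\iota^\alpha$.

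For the action on $\Mackey H_G^\bullet(\tE G)$: by Proposition~\ref{prop:tEGPeriodicity} each $e_k$ acts invertibly on $\Mackey H_G^\bullet(\tE G)$, so $\Mackey H_G^\bullet(\tE G)$ is a module over $\Mackey E^\bullet$ and the statement ``$\mu^{\alpha,a}$ acts as $e^\alpha$'' makes sense. Again reduce to the generators: I need that $\mu_{1,k,k}$ acts as $e_k e_1^{-1}$ and $\mu_{k,1,a_k}$ acts as $e_1 e_k^{-1}$. Proposition~\ref{prop:muTimesE} gives $\mu_{j,k,d}e_j = e_k$ in $\Mackey H_G^\bullet(S^0)$, hence in any module; applying this with $(j,k,d) = (1,k,k)$ yields $\mu_{1,k,k}e_1 = e_k$, so on $\Mackey H_G^\bullet(\tE G)$ where $e_1$ is invertible, $\mu_{1,k,k}$ acts as $e_k e_1^{-1}$, and similarly $\mu_{k,1,a_k}e_k = e_1$ gives $\mu_{k,1,a_k}$ acting as $e_1 e_k^{-1}$. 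Taking the product over all factors, $\mu^{\alpha,a}$ acts as $\prod_k (e_k e_1^{-1})^{n_k} = e^{\sum_k n_k(\MM_k-\MM_1)} = e^\alpha$ by Definition~\ref{def:ealpha}, since $\alpha = \sum_k n_k(\MM_k - \MM_1)$.

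For the map to $\Mackey H_G^\bullet(EG_+)$: this is the multiplicative map induced by $EG_+\to S^0$, and Proposition~\ref{prop:muTimesXi} already records exactly what the generators go to, namely $\mu_{1,j,j}\mapsto j\xi_1^{-1}\xi_j$ (from $\mu_{1,j,j} = j\xi_1^{-1}\xi_j$ there, with $d = j$) and, for $\mu_{k,1,a_k}$, the formula $\mu_{j,k,d} = d\xi_j^{-1}\xi_k$ with $(j,k,d) = (k,1,a_k)$ gives $\mu_{k,1,a_k}\mapsto a_k\xi_k^{-1}\xi_1$. Multiplying over the factors, the scalar collected is again $\bigl(\prod_{n_k>0}k^{n_k}\bigr)\bigl(\prod_{n_k<0}a_k^{|n_k|}\bigr) = a$, and the $\xi$-part is $\prod_k (\xi_1^{-1}\xi_k)^{n_k} = \xi^{\sum_k n_k(\MM_k-\MM_1)} = \xi^\alpha$ by Definition~\ref{def:xipower} (recalling $\xi_k$ is invertible in $\Mackey H_G^\bullet(EG_+)$). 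Hence $\mu^{\alpha,a}\mapsto a\xi^\alpha$.

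The only mild subtlety — the ``hard part,'' such as it is — is bookkeeping the exponents: matching the congruence-class conventions so that each $k^{-1}$ in the definition of $a$ corresponds to the correct $\xi$- or $\iota$- or $e$-factor, and checking that the product of the per-generator scalars really is the single integer $a$ rather than merely something congruent to it mod $p$. In the $\tE G$ case this is irrelevant (the target has exponent $p$), but for the $\rho$ and $EG_+$ statements one should be slightly careful; however, since $a$ was \emph{defined} as precisely that product of chosen lifts, equality (not just congruence) holds by construction. No genuine obstacle arises, so the proof is essentially: reduce to generators, cite Propositions~\ref{prop:rhoMu}, \ref{prop:muTimesE}, \ref{prop:tEGPeriodicity}, and \ref{prop:muTimesXi}, and multiply.
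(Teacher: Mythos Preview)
Your proposal is correct and takes essentially the same approach as the paper: reduce to the generating maps $\mu_{1,k,k}$ and $\mu_{k,1,a_k}$, invoke Propositions~\ref{prop:rhoMu}, \ref{prop:muTimesE}, and~\ref{prop:muTimesXi}, and multiply. The paper's proof is terser but cites exactly the same inputs. One small inaccuracy in your aside: $\Mackey H_G^\bullet(\tE G)$ does not have exponent $p$ (the $\conc\Z$ summands are torsion-free), but this is harmless since your argument for that case never actually relies on it---the identity $\mu_{j,k,d}e_j = e_k$ is exact, not merely a congruence.
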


\begin{proof}
We can write
\[
 \alpha = \sum_{k=2}^{(p-1)/2} n_k(\MM_k - \MM_1),
\]
so that
\[
 \iota^\alpha = \prod_{k=2}^{(p-1)/2} (\iota_1^{-1}\iota_k)^{n_k},
\]
and similarly for $e^\alpha$ and $\xi^\alpha$.
The statements of the proposition follow from,
and are elaborations of,
Propositions~\ref{prop:rhoMu}, \ref{prop:muTimesE} and~\ref{prop:muTimesXi}.
\end{proof}

\begin{proposition}
Let $\alpha\in RO_0(G)$ and $a\in\nu(\alpha)^{-1}$.
The map $\delta\colon \Mackey H_G^\bullet(EG_+)\to \Mackey H_G^{\bullet+1}(\tE G)$ is given by
\[
 \delta(e_1^m\xi^\alpha\xi_1^n) = 
  \begin{cases}
   a e^\alpha e_1^m\delta\xi_1^n & \text{if $n < 0$} \\
   0 & \text{otherwise.}
  \end{cases}
\]
Here, in the first case, the equality takes place in a group isomorphic to $\Z/p$,
so the result does not
depend on the choice of $a\in\nu(\alpha)^{-1}$.
\end{proposition}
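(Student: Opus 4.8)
The statement to prove describes the connecting map $\delta\colon \Mackey H_G^{\bullet-1}(EG_+)\to \Mackey H_G^{\bullet}(\tE G)$ on the generators $e_1^m\xi^\alpha\xi_1^n$ of $\Mackey H_G^\bullet(EG_+)$, for $\alpha\in RO_0(G)$. The plan is to reduce everything to the integer-graded calculation already done in Proposition~\ref{prop:intTEGCohomology} and Theorem~\ref{thm:tEGCohomology}, using that $\delta$ is a map of $\Mackey H_G^\bullet(S^0)$-modules.

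First I would dispose of the trivial case: if $n\geq 0$, then $e_1^m\xi^\alpha\xi_1^n$ lies in a grading $\beta$ with $\beta^G$ even (indeed $\beta^G = 2m$, and the $\xi$'s and $\xi_1$ contribute $0$ to the fixed-point dimension), and applying $\delta$ lands in $\Mackey H_G^{\beta+1}(\tE G)$, a grading with odd fixed-point dimension equal to $2m+1$; if $n\geq 0$ this is $\leq 1$ (since $m$ can be negative, but when the source is nonzero we need $|\beta|>0$ forces... actually more carefully: $\Mackey H_G^\beta(EG_+)\neq 0$ with $|\beta|>0$ requires $\beta^G$ even, and then $\Mackey H_G^{\beta+1}(\tE G)=0$ unless $\beta^G+1\geq 3$, i.e.\ $\beta^G\geq 2$). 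Rather than chase these inequalities by hand, the cleanest argument is: the long exact sequence gives $\delta(e_1^m\xi^\alpha\xi_1^n)=0$ precisely when $e_1^m\xi^\alpha\xi_1^n$ lifts to $\Mackey H_G^\beta(S^0)$, and by the additive calculation of Theorem~\ref{thm:oddCohomPoint} together with Proposition~\ref{prop:OddMaps} (the formula for $\phi$), the elements $\mu^{\alpha,a}e_1^m$, $\xi_1^n$, etc.\ with $n\geq 0$ do lift. For the case $n\leq -1$ I would argue as follows. Write $\xi^\alpha\xi_1^n = \xi^\alpha \xi_1^{n}$ and use $\delta(\mu^{\alpha,a}\cdot x) = \mu^{\alpha,a}\cdot\delta(x)$ is not quite available since $\mu^{\alpha,a}$ lives in $\Mackey H_G^\bullet(S^0)$, but by Proposition~\ref{prop:multByMu}, the map $\Mackey H_G^\bullet(S^0)\to\Mackey H_G^\bullet(EG_+)$ sends $\mu^{\alpha,a^{-1}}\mapsto a^{-1}\xi^\alpha$ (taking $a\in\nu(\alpha)^{-1}$, so $a^{-1}\in\nu(\alpha)$ and $\mu^{\alpha,a^{-1}}$ is defined), hence in $\Mackey H_G^\bullet(EG_+)$ we have $\xi^\alpha = a\,\rho'(\mu^{\alpha,a^{-1}})$ where $\rho'$ denotes that map; wait—I should instead use that $\mu^{\alpha,a^{-1}}\cdot \xi_1^n\cdot e_1^m$ maps to $a^{-1}\xi^\alpha\xi_1^n e_1^m$ under this ring map, so $\delta(a^{-1}\xi^\alpha\xi_1^n e_1^m)=\mu^{\alpha,a^{-1}}\cdot\delta(\xi_1^n e_1^m)$ by $\Mackey H_G^\bullet(S^0)$-linearity of $\delta$.

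So the core reduces to computing $\delta(e_1^m\xi_1^n)$ for $n\leq -1$, which is the $\alpha=0$ case. Here I invoke the proof of Theorem~\ref{thm:tEGCohomology}: it was shown there that $\delta(e_1^k\xi_1^{-k})=e_1^k\delta\xi_1^{-k}$ generates $\Mackey H_G^{2k+1}(\tE G)$ for $k\geq 1$, and more generally, using $\Mackey H_G^\bullet(S^0)$-linearity and multiplication by powers of $e_1$ (which is invertible on $\Mackey H_G^\bullet(\tE G)$ by Proposition~\ref{prop:tEGPeriodicity}), that $\delta(e_1^m\xi_1^n)=e_1^m\delta\xi_1^n$ for all $m$ and all $n\leq -1$ --- here $\delta\xi_1^n$ denotes the generator named in Theorem~\ref{thm:OddTEG}. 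Then by Proposition~\ref{prop:multByMu}, $\mu^{\alpha,a^{-1}}$ acts as $e^\alpha$ on $\Mackey H_G^\bullet(\tE G)$, so $\mu^{\alpha,a^{-1}}\cdot\delta(e_1^m\xi_1^n)= e^\alpha e_1^m\delta\xi_1^n$, giving $a^{-1}\delta(\xi^\alpha e_1^m\xi_1^n)=e^\alpha e_1^m\delta\xi_1^n$, i.e.\ $\delta(\xi^\alpha e_1^m\xi_1^n)=a\,e^\alpha e_1^m\delta\xi_1^n$, which is the claim. Finally I would note that the target $\Mackey H_G^\bullet(\tE G)$ in the relevant gradings is $\conc{\Z/p}$, so since $a$ and $a'$ in $\nu(\alpha)^{-1}$ differ by a multiple of $p$, the value $a\,e^\alpha e_1^m\delta\xi_1^n$ is independent of the choice.

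\textbf{Main obstacle.} The substantive point is bookkeeping with the two mutually inverse residue classes $\nu(\alpha)$ and $\nu(\alpha)^{-1}$: one must keep straight that $\mu^{\alpha,a^{-1}}$ (with $a^{-1}\in\nu(\alpha)$) is the element that maps to $a^{-1}\xi^\alpha$ in $\Mackey H_G^\bullet(EG_+)$ and acts as $e^\alpha$ on $\Mackey H_G^\bullet(\tE G)$, so that the scalar appearing in $\delta$ comes out as $a\in\nu(\alpha)^{-1}$ rather than its inverse. Everything else is a direct consequence of $\Mackey H_G^\bullet(S^0)$-linearity of $\delta$, the invertibility of the $e_k$ on $\Mackey H_G^\bullet(\tE G)$, and the already-established $\alpha=0$ case inside the proof of Theorem~\ref{thm:tEGCohomology}; no genuinely new computation is required.
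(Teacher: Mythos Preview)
Your argument for $n<0$ is essentially the paper's: reduce to the $\alpha=0$ case via $\Mackey H_G^\bullet(S^0)$-linearity of $\delta$, using Proposition~\ref{prop:multByMu} to translate between the action of $\mu^{\alpha,b}$ on $\Mackey H_G^\bullet(EG_+)$ (multiplication by $b\xi^\alpha$) and on $\Mackey H_G^\bullet(\tE G)$ (multiplication by $e^\alpha$), then invoke the $\alpha=0$ computation from the proof of Theorem~\ref{thm:tEGCohomology}. One small care point: at this stage $\mu^{\alpha,b}$ has only been constructed for the particular $b$ produced by Definition~\ref{def:muBetaD}, not yet for arbitrary $b\in\nu(\alpha)$ (that extension is Corollary~\ref{cor:muBetaD}, which comes after this proposition). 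The paper handles this by writing ``assume $b$ is chosen so that $\mu^{\alpha,b}$ is defined''; your argument works the same way once you make that choice explicit, since the target is $\Z/p$ and any $a\equiv b^{-1}\pmod p$ gives the same answer.

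For $n\geq 0$, however, your ``cleanest argument'' is circular: you appeal to Theorem~\ref{thm:oddCohomPoint} and Proposition~\ref{prop:OddMaps}, but those are precisely the results this proposition is being used to establish (it feeds directly into Theorem~\ref{thm:oddAdditiveCohomoPoint}). Your first instinct was correct and is what the paper does: the target group is zero. The grading of $e_1^m\xi^\alpha\xi_1^n$ has fixed-set dimension $-2n$, so $\delta$ lands in a grading with fixed-set dimension $1-2n$; for $n\geq 0$ this is $\leq 1$, and by Theorem~\ref{thm:tEGCohomology} the cohomology of $\tE G$ vanishes there. That is the whole argument for this case.
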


\begin{proof}
That $\delta(e_1^m\xi^\alpha\xi_1^n) = 0$ if $n\geq 0$ follows from the fact that
the group it lives in is 0.

While calculating $\Mackey H_G^\bullet(\tE G)$, we saw that
$\delta(e_1^n\xi_1^{-n}) = e_1^n\delta\xi_1^{-n}$ for $n\geq 1$;
these are the cases in integer grading.
Assume that $b$ is chosen so that $\mu^{\alpha,b}$ is defined,
so $b\in\nu(\alpha)$.
Because $\delta$ respects the action of $\mu^{\alpha,b}$, we have
\[
 b\delta(e_1^n\xi^\alpha\xi_1^{-n}) = 
 \delta(e_1^n\mu^{\alpha,b}\xi_1^{-n}) = \mu^{\alpha,b} e_1^n\delta\xi_1^{-n}
 = e^\alpha e_1^n\delta\xi^{-n}.
\]
Because this equality takes place in a group isomorphic to $\Z/p$, we can write
\[
 \delta(e_1^n\xi^\alpha\xi_1^{-n}) = b^{-1} e^\alpha e_1^n\delta\xi^{-n}
  = a e^\alpha e_1^n\delta\xi^{-n}.
\]
Because $\delta$ also respects multiplication by $e_1$, which acts on $\Mackey H_G^\bullet(\tE G)$
by isomorphisms, we get the statement of the proposition.
\end{proof}

The following result verifies the additive part of Theorem~\ref{thm:oddCohomPoint}.

\begin{theorem}\label{thm:oddAdditiveCohomoPoint}
Let $p$ be odd. Then
\[
 \Mackey H_G^{\alpha}(S^0) \iso
  \begin{cases}
   \Mackey A[\nu(\alpha)] & \text{if $|\alpha| = \alpha^G = 0$} \\
   \Mackey R\Z & \text{if $|\alpha| = 0$ and $\alpha^G < 0$ is even} \\
   \Mackey L\Z & \text{if $|\alpha| = 0$ and $\alpha^G > 0$ is even} \\
   \conc\Z & \text{if $|\alpha| \neq 0$ and $\alpha^G = 0$} \\
   \conc{\Z/p} & \text{if $|\alpha| > 0$ and $\alpha^G < 0$ is even} \\
   \conc{\Z/p} & \text{if $|\alpha| < 0$ and $\alpha^G \geq 3$ is odd} \\
   0 & \text{otherwise.}
  \end{cases}
\]
When $\alpha^G<0$, the map $\Mackey H_G^\alpha(S^0) \to \Mackey H_G^\alpha(EG_+)$
is an isomorphism.
When $|\alpha|<0$, $\Mackey H_G^\alpha(\tE G)\to \Mackey H_G^\alpha(S^0)$ is an isomorphism.
When $\alpha^G = 0$ and $|\alpha|= 0$, we have a short exact sequence
\[
 \xymatrix@R=2.5ex{
 0 \ar[r] & \Mackey H_G^\alpha(\tE G) \ar@{=}[d] \ar[r]
  & \Mackey H_G^\alpha(S^0) \ar@{=}[d] \ar[r]
  & \Mackey H_G^\alpha(EG_+) \ar@{=}[d] \ar[r] & 0 \\
 & \conc\Z \ar[r] 
  & \Mackey A[\nu(\alpha)] \ar[r]
  & \Mackey R\Z
 }
\]
while, if $\alpha^G = 0$ and $|\alpha| > 0$, we have a short exact sequence
\[
 \xymatrix@R=2.5ex{
 0 \ar[r] & \Mackey H_G^\alpha(\tE G) \ar@{=}[d] \ar[r]
  & \Mackey H_G^\alpha(S^0) \ar@{=}[d] \ar[r]
  & \Mackey H_G^\alpha(EG_+) \ar@{=}[d] \ar[r] & 0 \\
 & \conc\Z \ar[r]_p
  & \conc\Z \ar[r]_{\nu(\beta)}
  & \conc{\Z/p}
 }
\]
where $\beta = \alpha - (|\alpha|/2)\MM_1$.
\end{theorem}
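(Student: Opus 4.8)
The plan is to feed the long exact cohomology sequence of the cofibration $EG_+\to S^0\to\tE G$,
\[
 \cdots\to\Mackey H_G^{\alpha-1}(EG_+)\xrightarrow{\delta}\Mackey H_G^\alpha(\tE G)\xrightarrow{\psi}\Mackey H_G^\alpha(S^0)\xrightarrow{\phi}\Mackey H_G^\alpha(EG_+)\xrightarrow{\delta}\Mackey H_G^{\alpha+1}(\tE G)\to\cdots,
\]
degree by degree, using Proposition~\ref{prop:EGAdditive}, Theorem~\ref{thm:tEGCohomology}, and the formula for $\delta$ proved just above. Since $\alpha^G\equiv|\alpha|\pmod 2$ and both $\Mackey H_G^\bullet(EG_+)$ and $\Mackey H_G^\bullet(\tE G)$ are concentrated in well‑understood regions, one splits into cases according to $(\alpha^G,|\alpha|)$. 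I would begin with the cases where $\Mackey H_G^\bullet(S^0)$ reduces to a neighbour: if $\alpha^G<0$ then $\Mackey H_G^\alpha(\tE G)=\Mackey H_G^{\alpha+1}(\tE G)=0$, so $\phi$ is an isomorphism and the answer is read off from $\Mackey H_G^\bullet(EG_+)$; if $|\alpha|<0$ then $\Mackey H_G^{\alpha-1}(EG_+)=\Mackey H_G^\alpha(EG_+)=0$, so $\psi$ is an isomorphism and the answer comes from $\Mackey H_G^\bullet(\tE G)$. This simultaneously proves the two isomorphism assertions in the statement. When $\alpha^G>0$ and $|\alpha|>0$, one of $\Mackey H_G^\alpha(\tE G)$, $\Mackey H_G^\alpha(EG_+)$ vanishes while the relevant $\delta$ into, or out of, $\Mackey H_G^{*}(\tE G)$ is surjective — because, by the $\delta$‑formula together with Theorem~\ref{thm:tEGCohomology}, the image of $\delta$ is exactly the part of $\Mackey H_G^\bullet(\tE G)$ concentrated in odd fixed degrees $\ge 3$ (the $\delta\xi_1^{-n}$‑classes) — so $\Mackey H_G^\alpha(S^0)=0$.

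The substantive content lies in the two regions with $\alpha^G=0$. For $|\alpha|=\alpha^G=0$ the adjacent $EG_+$‑ and $\tE G$‑groups (in odd fixed degree) vanish, so the sequence collapses to $0\to\conc\Z\to\Mackey H_G^\alpha(S^0)\to\Mackey R\Z\to 0$, and by Proposition~\ref{prop:extensions} this must be $\Mackey A[d]$ for some $d$. To pin down $d=\nu(\alpha)$ I would use the element $\mu^{\alpha,a}$ of Definition~\ref{def:muBetaD}: by Proposition~\ref{prop:multByMu} one has $\rho(\mu^{\alpha,a})=a\iota^\alpha$ with $\iota^\alpha$ a generator at level $G/e$, and since $\kappa^\alpha=p\mu^{\alpha,a}-a\tau(\iota^\alpha)$ and $\gcd(a,p)=1$, a short computation shows $\{\mu^{\alpha,a},\tau(\iota^\alpha)\}$ is a $\Z$‑basis of $\Mackey H_G^\alpha(S^0)(G/G)$; comparing with the presentation of $\Mackey A[d]$ in \S\ref{sec:MackeyFunctors} gives $d\equiv a\pmod p$, i.e. $\Mackey H_G^\alpha(S^0)\cong\Mackey A[\nu(\alpha)]$.

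For $\alpha^G=0$, $|\alpha|>0$, the sequence again collapses to $0\to\conc\Z\to\Mackey H_G^\alpha(S^0)\to\conc{\Z/p}\to 0$. Writing $m=|\alpha|/2$ and $\beta=\alpha-m\MM_1\in RO_0(G)$, multiplication by $e_1^m$ carries the short exact sequence computing $\Mackey H_G^\beta(S^0)\cong\Mackey A[\nu(\beta)]$ onto the one computing $\Mackey H_G^\alpha(S^0)$; the left vertical map is an isomorphism because $e_1$ acts invertibly on $\Mackey H_G^\bullet(\tE G)$ (Proposition~\ref{prop:tEGPeriodicity}), and the right vertical map is the natural projection $\Mackey R\Z\to\conc{\Z/p}$ because $e_1$ acts that way on $\Mackey H_G^\bullet(EG_+)$. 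Proposition~\ref{prop:extensions2} then identifies $\Mackey H_G^\alpha(S^0)$ with $\conc\Z$ fitting into $0\to\conc\Z\xrightarrow{p}\conc\Z\xrightarrow{\nu(\beta)}\conc{\Z/p}\to 0$, the split alternative being excluded since $\nu(\beta)\not\equiv 0$. The one remaining subcase, $\alpha^G>0$ with $|\alpha|=0$ (then $\alpha^G$ is even), falls out of the $\alpha^G>0$ analysis: here $\Mackey H_G^\alpha(\tE G)=0$ and $\Mackey H_G^{\alpha-1}(EG_+)=0$, so $\Mackey H_G^\alpha(S^0)=\ker\bigl(\delta\colon\Mackey H_G^\alpha(EG_+)=\Mackey R\Z\to\Mackey H_G^{\alpha+1}(\tE G)=\conc{\Z/p}\bigr)$, and the $\delta$‑formula, applied to the generator written in the form $\xi_1^{-\alpha^G/2}\xi^{\beta'}$ with $\beta'\in RO_0(G)$, shows $\delta$ is the natural projection, whose kernel is $\Mackey L\Z$.

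The hard part will be the bookkeeping in the two $\alpha^G=0$ cases: verifying that the extension is the non‑split one and that it carries precisely the invariant $\nu(\alpha)$ (resp. $\nu(\beta)$). Concretely this means tracking the explicit classes $\mu^{\alpha,a}$, $\xi^\alpha$, $\delta\xi_1^{-n}$ through $\rho$, $\phi$, $\psi$, $\delta$ and matching them against the Mackey‑functor presentations of $\Mackey A[d]$ and $\conc\Z$ from \S\ref{sec:MackeyFunctors}; everything else is a routine reading‑off from the previously established computations of $\Mackey H_G^\bullet(EG_+)$ and $\Mackey H_G^\bullet(\tE G)$.
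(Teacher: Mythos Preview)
Your proposal is correct and follows essentially the same route as the paper: split into regions by the signs of $|\alpha|$ and $\alpha^G$, read off the easy cases from the long exact sequence, and resolve the two $\alpha^G=0$ extension problems using Propositions~\ref{prop:extensions} and~\ref{prop:extensions2} together with Proposition~\ref{prop:multByMu}. The only presentational difference is in the $|\alpha|=\alpha^G=0$ case: the paper identifies the extension by exhibiting the map of short exact sequences induced by multiplication by $\mu^{\alpha,a}$ (invoking the second clause of Proposition~\ref{prop:extensions} directly), whereas you argue by showing $\{\mu^{\alpha,a},\tau(\iota^\alpha)\}$ is a $\Z$-basis and matching against the presentation of $\Mackey A[d]$. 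Your ``short computation'' that these two elements span $\Mackey H_G^\alpha(S^0)(G/G)$ tacitly needs the fact that $\kappa\mu^{\alpha,a}$ generates the kernel $\conc\Z$, which in turn comes from the $\tE G$ clause of Proposition~\ref{prop:multByMu}---so the two arguments use the same ingredients, the paper's packaging being slightly more efficient.
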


\begin{proof}
Consider first the case $|\alpha| = \alpha^G = 0$, so $\alpha\in RO_0(G)$.
We know that $\Mackey H_G^{\alpha-1}(EG_+) = 0$ and
$\Mackey H_G^{\alpha+1}(\tE G) = 0$, which gives us the following short exact sequence:
\[
 \xymatrix@R=2.5ex{
 0 \ar[r] & \Mackey H_G^\alpha(\tE G) \ar@{=}[d] \ar[r]
  & \Mackey H_G^\alpha(S^0) \ar[r]
  & \Mackey H_G^\alpha(EG_+) \ar@{=}[d] \ar[r] & 0 \\
 & \conc\Z
  & 
  & \Mackey R\Z
 }
\]
We know from Proposition~\ref{prop:intTEGCohomology} that, when $\alpha=0$, the
sequence is
\[
 \xymatrix@R=2.5ex{
 0 \ar[r] & \Mackey H_G^0(\tE G) \ar@{=}[d] \ar[r]
  & \Mackey H_G^0(S^0) \ar@{=}[d] \ar[r]
  & \Mackey H_G^0(EG_+) \ar@{=}[d] \ar[r] & 0 \\
 & \conc\Z \ar[r] 
  & \Mackey A_{G/G} \ar[r]
  & \Mackey R\Z
 }
\]
Consider the effect of multiplication by
$\mu^{\alpha,a}\in \Mackey H_G^\alpha(S^0)$.
By Proposition~\ref{prop:multByMu},
mutliplication by $\mu^{\alpha,a}$ on $\Mackey H_G^\bullet(\tE G)$ is the same as
multiplication by $e^\alpha$, so takes the generator $\kappa\in\Mackey H_G^0(\tE G)$
to the generator $e^\alpha\kappa\in\Mackey H_G^\alpha(\tE G)$.
By the same proposition, multiplication by $\mu^{\alpha,a}$ on $\Mackey H_G^\bullet(EG_+)$
is the same as multiplication by $a \xi^\alpha$, hence takes
the generator $1\in \Mackey H_G^0(EG_+)$ to $a$ times the generator
$\xi^\alpha\in \Mackey H_G^\alpha(EG_+)$.
Therefore, multiplication by $\mu^{\alpha,a}$ gives the following map of short exact sequences:
\[
 \xymatrix{
 0 \ar[r] & \conc\Z \ar[d]_1 \ar[r] & \Mackey A_{G/G} \ar[d]^{\mu^{\alpha,a}} \ar[r]
  & \Mackey R\Z \ar[d]^{a} \ar[r] & 0 \\
 0 \ar[r] & \conc\Z \ar[r] & \Mackey H_G^\alpha(S^0) \ar[r]
  & \Mackey R\Z \ar[r] & 0
 }
\]
It follows from Proposition~\ref{prop:extensions} that 
$\Mackey H_G^\alpha(S^0) \iso \Mackey A[a] = \Mackey A[\nu(\alpha)]$,
generated by $\mu^{\alpha,a}$ and $\iota^\alpha$.
Note again that $\mu^{\alpha,a}$ maps to $a\xi^\alpha$ in
$\Mackey H_G^\alpha(EG_+)$; we also have that
$\tau(\iota^\alpha)$ maps to $p\xi^\alpha$.

Similarly, if $\alpha^G = 0$ and $|\alpha|>0$, we have
a short exact sequence
\[
 \xymatrix@R=2.5ex{
 0 \ar[r] & \Mackey H_G^\alpha(\tE G) \ar@{=}[d] \ar[r]
  & \Mackey H_G^\alpha(S^0) \ar[r]
  & \Mackey H_G^\alpha(EG_+) \ar@{=}[d] \ar[r] & 0 \\
 & \conc\Z
  & 
  & \conc{\Z/p}
 }
\]
Because $\alpha^G = 0$, we know that $|\alpha|$ is even; let $n = |\alpha|/2$.
Let $\beta = \alpha-n\MM_1$, then $\beta\in RO_0(G)$.
Consider the effect of multiplication by $e_1^n$ on the Mackey functors in grading $\beta$.
It takes the generator $e^\beta\kappa\in\Mackey H_G^\beta(\tE G)$ to the generator
$e^\beta e_1^n\kappa\in\Mackey H_G^{\alpha}(\tE G)$;
it takes the generator $\xi^\beta\in\Mackey H_G^\beta(EG_+)$ to the generator
$e_1^n\xi^\beta\in\Mackey H_G^{\alpha}(EG_+)$.
Therefore, we have the following map of short exact sequences:
\[
 \xymatrix{
 0 \ar[r] & \conc\Z \ar[d]_1 \ar[r] & \Mackey A[\nu(\beta)] \ar[d]^{e_1^n} \ar[r]
  & \Mackey R\Z \ar[d]^{\pi} \ar[r] & 0 \\
 0 \ar[r] & \conc\Z \ar[r] & \Mackey H_G^\alpha(S^0) \ar[r]
  & \conc{\Z/p} \ar[r] & 0
 }
\]
Proposition~\ref{prop:extensions2} then tells us that
$\Mackey H_G^\alpha(S^0) \iso \conc\Z$ and the diagram takes the form
\[
 \xymatrix{
 0 \ar[r] & \conc\Z \ar[d]_1 \ar[r] & \Mackey A[\nu(\beta)] \ar[d] \ar[r]
  & \Mackey R\Z \ar[d]^{\pi} \ar[r] & 0 \\
 0 \ar[r] & \conc\Z \ar[r]_p & \conc\Z \ar[r]_{\nu(\beta)}
  & \conc{\Z/p} \ar[r] & 0
 }
\]
where $e_1^n\mu^{\beta,b}$ generates the bottom middle group, $e_1^n\tau(\iota^\beta) = 0$,
and $e_1^n\mu^{\beta,b}$ maps to $b e_1^n\xi^\beta \in \Mackey H_G^\alpha(EG_+)$.

If $\alpha^G < 0$ is even, both $\Mackey H_G^\alpha(\tE G) = 0$ and
$\Mackey H_G^{\alpha+1}(\tE G) = 0$, so the long exact sequence gives the isomorphism
$\Mackey H_G^\alpha(S^0) \iso \Mackey H_G^\alpha(EG_+)$.
If $\alpha^G<0$ is odd, then $\Mackey H_G^\alpha(\tE G) = 0$ and
$\Mackey H_G^\alpha(EG_+) = 0$, so we get the same isomorphism (with both being 0).
From the known structure of $\Mackey H_G^\bullet(EG_+)$, this gives us the Mackey functors claimed
in the theorem with $\alpha^G<0$.
To identify the generators, 
first let $\xi_1\in \Mackey H_G^{\MM_1-2}(S^0)\iso \Mackey H_G^{\MM_1-2}(EG_+)$ 
be the element corresponding to the element of the same name under this isomorphism.
For a general $\alpha$ with $\alpha^G<0$, let $\alpha = a + b\MM_1 + \beta$
where $\beta\in RO_0(G)$.
Let $d\in\nu(\beta)$ such that $\mu^{\beta,d}$ is defined and let $d^{-1}$ be an integer
such that $dd^{-1}\equiv 1 \pmod p$.
Then let
\[
 \lambda^{\beta,d^{-1}} = d^{-1}\mu^{\beta,d} + \frac{1-dd^{-1}}{p}\tau(\iota^\beta)
  \in \Mackey H_G^\beta(S^0).
\]
From our calculations in the first case above,
$\lambda^{\beta,d^{-1}}$ maps to $\xi^\beta$ in $\Mackey H_G^\beta(EG_+)$.
From this it follows,
for $m\geq 0$ and $n \geq 1$, that $e_1^m\lambda^{\beta,d^{-1}}\xi_1^n \in\Mackey H_G^\bullet(S^0)$
maps to the generator $e_1^m\xi^\beta\xi_1^n\in \Mackey H_G^\bullet(EG_+)$,
hence generates the Mackey functor in which it lives.

If $|\alpha|<0$, then $\Mackey H_G^\alpha(EG_+) = 0$, which implies that
$\Mackey H_G^\alpha(\tE G)\to \Mackey H_G^\alpha(S^0)$ is an isomorphism for all such $\alpha$.
This gives us the Mackey functors claimed in the theorem with $|\alpha|<0$.
Because $\mu^{\alpha,a}$ and $e^\alpha$ have identical action on $\Mackey H_G^\bullet(\tE G)$,
we may use the same names for the generators of these groups
as in Theorem~\ref{thm:tEGCohomology}, replacing
$e^\alpha$ with $\mu^{\alpha,a}$.

If $\alpha^G>0$, the preceding proposition and the known structures
of $\Mackey H_G^\bullet(EG_+)$ and $\Mackey H_G^\bullet(\tE G)$ show that
$\delta\colon\Mackey H_G^\alpha(EG_+) \to \Mackey H_G^{\alpha+1}(\tE G)$
is an isomorphism if $|\alpha|>0$ and an epimorphism if $|\alpha|=0$. 
This implies that $\Mackey H_G^\alpha(S^0) = 0$ 
if $|\alpha|>0$ and $\alpha^G>0$.

With $|\alpha| = 0$ and $\alpha^G>0$, combining the fact that $\delta$
is onto with the fact that $\Mackey H_G^\alpha(\tE G) = 0$,
we get the following short exact sequence:
\[
 0 \to \Mackey H_G^\alpha(S^0) \to \Mackey R\Z \xrightarrow{\pi} \conc{\Z/p} \to 0,
\]
which implies that $\Mackey H_G^\alpha(S^0) \iso \Mackey L\Z$, the kernel of
the projection $\pi$.
Because $\Mackey L\Z$ is generated as a Mackey functor by a generator of its group at level $G/e$,
and an appropriate $\iota^\beta\iota_1^{-n}$ generates at that level,
we have shown the claims of the theorem.
\end{proof}

The following allows us to extend the notation $\mu^{\alpha,a}$ to any $a\in\nu(\alpha)$,
and also shows that these elements are well defined.

\begin{corollary}\label{cor:muBetaD}
Let $\alpha \in RO_0(G)$. 
For every integer
$a\in\nu(\alpha)$,
there is a unique element $\mu^{\alpha,a}\in \Mackey H_G^\alpha(S^0)$ such that
$\mu^{\alpha,a}$ and $\iota^\alpha$ generate $\Mackey H_G^\alpha(S^0)$ and
$\rho(\mu^{\alpha,a}) = a\iota^\alpha$.
\end{corollary}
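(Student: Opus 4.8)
The plan is to reduce everything to the structure of $\Mackey A[d]$ recorded in \S\ref{sec:MackeyFunctors} together with the identification $\Mackey H_G^\alpha(S^0)\iso\Mackey A[\nu(\alpha)]$ from Theorem~\ref{thm:oddAdditiveCohomoPoint}. Fix the integer $a_0\in\nu(\alpha)$ produced by Definition~\ref{def:muBetaD}; then Theorem~\ref{thm:oddAdditiveCohomoPoint} (via Proposition~\ref{prop:extensions}) gives an isomorphism $\Mackey H_G^\alpha(S^0)\iso\Mackey A[a_0]$ under which $\mu^{\alpha,a_0}$ and $\iota^\alpha$ correspond to the standard generators, so that $\rho(\mu^{\alpha,a_0}) = a_0\iota^\alpha$, the pair $\{\mu^{\alpha,a_0},\tau(\iota^\alpha)\}$ is a $\Z$-basis of the $G/G$ level, and, using the structural relation $t\iota_k^{-1} = \iota_k^{-1}$ (hence $t\iota^\alpha = \iota^\alpha$), we have $\rho\tau(\iota^\alpha) = p\iota^\alpha$ and $\kappa\tau(\iota^\alpha) = 0$. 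These are exactly the identities that drive the computation.

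For existence, given any integer $a\in\nu(\alpha)$, I would simply set
\[
 \mu^{\alpha,a} = \mu^{\alpha,a_0} + \frac{a-a_0}{p}\,\tau(\iota^\alpha),
\]
which makes sense since $p\mid a-a_0$. Then $\rho(\mu^{\alpha,a}) = a_0\iota^\alpha + \frac{a-a_0}{p}\,p\,\iota^\alpha = a\iota^\alpha$, and because $\mu^{\alpha,a}$ differs from $\mu^{\alpha,a_0}$ only by a multiple of $\tau(\iota^\alpha)$, the pair $\{\mu^{\alpha,a},\tau(\iota^\alpha)\}$ is still a $\Z$-basis at level $G/G$; since $\tau(\iota^\alpha)$ is obtained from $\iota^\alpha$ and $\rho(\mu^{\alpha,a})$ is a multiple of $\iota^\alpha$, it follows that $\mu^{\alpha,a}$ and $\iota^\alpha$ generate $\Mackey H_G^\alpha(S^0)$. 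As a sanity check this recovers the redundancy relation $\mu^{\alpha,a+p} = \mu^{\alpha,a} + \tau(\iota^\alpha)$. The construction a priori depends on $a_0$, but uniqueness will show it does not.

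For uniqueness, suppose $\mu'$ also satisfies $\rho(\mu') = a\iota^\alpha$ and generates $\Mackey H_G^\alpha(S^0)$ together with $\iota^\alpha$. Writing $\mu' = x\mu^{\alpha,a} + y\tau(\iota^\alpha)$ in the basis above, the condition $\rho(\mu') = a\iota^\alpha$ becomes $ax + py = a$; since $\gcd(a,p) = 1$ this forces $x = 1+np$, $y = -na$ for some $n\in\Z$, i.e. $\mu' = \mu^{\alpha,a} + n\kappa\mu^{\alpha,a}$ (using $\kappa\mu^{\alpha,a} = p\mu^{\alpha,a} - a\tau(\iota^\alpha)$). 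The submodule generated by $\mu'$ and $\iota^\alpha$ meets the $G/G$ level in the subgroup spanned by $\mu'$ and $\tau(\iota^\alpha)$ — the Burnside ring action produces nothing new, since $g\mu' = \tau\rho(\mu') = a\tau(\iota^\alpha)$ — and the transition matrix from $\{\mu^{\alpha,a},\tau(\iota^\alpha)\}$ to $\{\mu',\tau(\iota^\alpha)\}$ has determinant $1+np$. For $\{\mu',\tau(\iota^\alpha)\}$ to be a $\Z$-basis we need $1+np = \pm1$, and as $p\geq3$ is odd this forces $n=0$, so $\mu' = \mu^{\alpha,a}$.

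The only real content beyond bookkeeping is the uniqueness step, and within it the point to stress is that $\rho$ by itself does not determine $\mu$ — its kernel on the $G/G$ level is the infinite cyclic group $\Z\,\kappa\mu^{\alpha,a}$ — so the hypothesis that $\mu^{\alpha,a}$ and $\iota^\alpha$ generate the whole Mackey functor is indispensable and is precisely what the determinant computation exploits. Everything else is a direct transcription of the $\Mackey A[d]$ discussion in \S\ref{sec:MackeyFunctors}, and I expect no further obstacles.
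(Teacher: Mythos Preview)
Your proof is correct and follows essentially the same approach as the paper's: both reduce to the description of generators of $\Mackey A[d]$ from \S\ref{sec:MackeyFunctors}, noting that the possible generators at level $G/G$ are $\pm\mu + q\tau(\iota)$ and then checking which of these satisfy the $\rho$-constraint. The paper's proof is a terse one-liner pointing back to that discussion, whereas you unpack the linear algebra explicitly (the determinant $1+np$ argument), but the content is the same.
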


\begin{proof}
When discussing the generators and relations of $\Mackey A[a]$
in Section~\ref{sec:MackeyFunctors}, we pointed out that, if given generators
$\mu$ and $\iota$ with $\rho(\mu) = a\iota$, then the other possible generators
are given by the elements $\pm\mu + q\tau(\iota)$, with
$\rho(\pm\mu+q\tau(\iota)) = \pm a + qp$.
The corollary follows.
\end{proof}

We can check that Proposition~\ref{prop:multByMu}
holds as written for all of the elements $\mu^{\alpha,a}$.

As pointed out when discussing generators for $\Mackey A[a]$, the element
\[
 \kappa^\alpha = \kappa\mu^{\alpha,a} = p\mu^{\alpha,a} - a\tau(\iota^\alpha)
\]
is independent of the choice of $a$ in its congruence class modulo $p$. 
Extend the definition of $\lambda^{\alpha,a}$, $a\in\nu(\alpha)^{-1}$, by setting
\[
 \lambda^{\alpha,a} = a\mu^{\alpha,a^{-1}} + \frac{1-aa^{-1}}{p}\tau(\iota^{\alpha}),
\]
for any integer $a^{-1}\in\nu(\alpha)$.
Then $\lambda^{\alpha,a}$ depends only on $a$, 
that is, it is independent of the choice of $a^{-1}$ in its congruence class,
and is characterized by the fact
that $\{\lambda^{\alpha,a},\kappa^{\alpha}\}$ is another set of generators of $\Mackey H_G^\alpha(S^0)$,
$\rho(\lambda^{\alpha,a}) = \iota^\alpha$, and
$\kappa\lambda^{\alpha,a} = a\kappa^{\alpha}$.

We now verify the multiplicative part of Theorem~\ref{thm:oddCohomPoint}.

\begin{theorem}\label{thm:oddMultiplicativeCohomoPoint}
For $p$ odd, $\Mackey H_G^\bullet(S^0)$ is a strictly commutative unital $RO(G)$-graded ring,
generated multiplicatively by elements
\begin{align*}
 \iota_k &\in \Mackey H_G^{\MM_k-2}(S^0)(G/e) & & 1\leq k \leq (p-1)/2 \\
 \iota_k^{-1} &\in \Mackey H_G^{2-\MM_k}(S^0)(G/e) & & 1\leq k \leq (p-1)/2 \\
 \xi_1 &\in \Mackey H_G^{\MM_1-2}(S^0)(G/G) \\
 e_1 &\in \Mackey H_G^{\MM_1}(S^0)(G/G) \\
 \mu^{\alpha,a} &\in \Mackey H_G^\alpha(S^0)(G/G) & & \alpha\in RO_0(G),\ a\in\nu(\alpha) \\
 e_1^{-m}\kappa &\in \Mackey H_G^{-m\MM_1}(S^0)(G/G) & & m\geq 1 \\
 e_1^{-m}\delta\xi_1^{-n} &\in \Mackey H_G^{1-m\MM_1 - n(\MM_1-2)}(S^0)(G/G)
  & & m, n \geq 1
\end{align*}
These generators satisfy the following {\em structural} relations:
\begin{align*}
 t\iota_k^{-1} &= \iota_k^{-1} & & \text{ for all $k$} \\
 \kappa\xi_1 &= 0 \\
 \rho(\xi_1) &= \iota_1 \\
 \rho(e_1) &= 0 \\
 \rho(\mu^{\alpha,a}) &= a \iota^\alpha 
     & & \text{ for all $\alpha\in RO_0(G)$ and $a\in\nu(\alpha)$} \\
 \rho(e_1^{-m}\kappa) &= 0 & & \text{ for $m\geq 1$} \\
 \rho(e_1^{-m}\delta\xi_1^{-n}) & = 0 & & \text{ for $m, n \geq 1$} \\
 p e_1^{-m}\delta\xi_1^{-n} &= 0 & & \text{ for $m, n \geq 1$,}
\intertext{the {\em redundancy} relations}
 \mu^{0,1} &= 1 \\
 \mu^{\alpha, a + p} &= \mu^{\alpha,a} + \tau(\iota^\alpha)
  & & \text{for all $\alpha\in RO_0(G)$ and $a\in\nu(\alpha)$}
\intertext{and the following {\em multiplicative} relations:}
 \iota_k \cdot \iota_k^{-1} &= \rho(1) & & \text{for all $k$} \\
 \mu^{\alpha,a}\cdot \mu^{\beta,b} &= \mu^{\alpha+\beta,ab}
      & & \text{for $\alpha, \beta \in RO_0(G)$} \\
 e_1\cdot e_1^{-m}\kappa &= e_1^{-m+1}\kappa & & \text{for $m\geq 1$} \\
 \xi_1\cdot e_1^{-m}\kappa &= 0 & & \text{for $m\geq 1$} \\
 e_1 \cdot e_1^{-m}\delta\xi_1^{-n} &= e_1^{-m+1}\delta\xi_1^{-n} & & \text{for $m\geq 2$ and $n\geq 1$} \\
 e_1 \cdot e_1^{-1}\delta\xi_1^{-n} &= 0 & & \text{for $n\geq 1$} \\
 \xi_1\cdot e_1^{-m}\delta\xi_1^{-n} &= e_1^{-m}\delta\xi_1^{-n+1} 
     & & \text{for $m\geq 1$ and $n\geq 2$} \\
 \xi_1\cdot e_1^{-m}\delta\xi_1^{-1} &= 0 & & \text{for $m\geq 1$} \\
 e_1^{-m}\kappa \cdot e_1^{-n}\kappa &= p e_1^{-m-n}\kappa & & \text{for $m, n \geq 1$} \\
 e_1^{-\ell}\kappa\cdot e_1^{-m}\delta\xi_1^{-n} &= 0 & & \text{for $\ell, m \geq 1$} \\
 e_1^{-k}\delta\xi_1^{-\ell} \cdot e_1^{-m}\delta\xi_1^{-n} &= 0
     & & \text{for $k, \ell, m, n \geq 1$}
\end{align*}
For $\alpha\in RO_0(G)$ and $a\in \nu(\alpha)^{-1}$, let $a^{-1}\in\nu(\alpha)$ so
that $aa^{-1} \equiv 1 \pmod p$ and let
\[
 \lambda^{\alpha,a} = a\mu^{\alpha,a^{-1}} + \frac{1-aa^{-1}}{p}\tau(\iota^\alpha);
\]
this is independent of the choice of $a^{-1}$.
The following relations are implied by the preceding ones:
\begin{align*}
 \xi_1 \tau(\iota^\alpha) &= \tau(\iota_1\iota^\alpha)
  && \text{for $\alpha^G = 0$} \\
 \mu^{\alpha,a}\tau(\iota^\beta) &= a\tau(\iota^{\alpha+\beta}) 
  && \text{for $\alpha\in RO_0(G)$ and $\beta^G = 0$} \\
 \tau(\iota^\alpha)\tau(\iota^\beta) &= p\tau(\iota^{\alpha+\beta}) 
  && \text{for $\alpha^G = 0 = \beta^G$} \\
 e_1\tau(\iota^\alpha) &= 0 
  && \text{for $\alpha^G = 0$}\\
 \kappa e_1 &= pe_1 \\
 pe_1\xi_1 &= 0 \\
 \mu^{\alpha,a} &= \mathrlap{a\lambda^{\alpha,a^{-1}} + \frac{1-aa^{-1}}{p}\kappa\mu^{\alpha,a}} \\
  & && \text{for $\alpha\in RO_0(G)$, $a\in\nu(\alpha)$, $a^{-1}\in\nu(\alpha)^{-1}$} \\
 \lambda^{\alpha, a + p} &= \lambda^{\alpha,a} + \kappa\mu^{\alpha,a^{-1}} 
  && \text{for $\alpha\in RO_0(G)$, $a\in\nu(\alpha)^{-1}$, $a^{-1}\in\nu(\alpha)$} \\
 \lambda^{\alpha, a + p}\xi_1 &= \lambda^{\alpha,a}\xi_1 
  && \text{for $\alpha\in RO_0(G)$} \\
 e_1\lambda^{\alpha,a} &= a e_1\mu^{\alpha,a^{-1}}
  && \text{for $\alpha\in RO_0(G)$, $a\in\nu(\alpha)^{-1}$, $a^{-1}\in\nu(\alpha)$} \\
 \mu^{\alpha,a}\xi_1 &= a\lambda^{\alpha,a^{-1}}\xi_1
  && \text{for $\alpha\in RO_0(G)$, $a\in\nu(\alpha)$, $a^{-1}\in\nu(\alpha)^{-1}$} \\
 \rho(\lambda^{\alpha,a}) &= \iota^\alpha  
  && \text{for $\alpha\in RO_0(G)$} \\
 \lambda^{\alpha,a}  \lambda^{\beta,b} &= \lambda^{\alpha+\beta, ab} 
  && \text{for $\alpha, \beta \in RO_0(G)$}
\end{align*}
\end{theorem}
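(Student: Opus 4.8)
The strategy is to reduce every asserted relation to one of three sources: (a) the Mackey‑functor structure already pinned down additively in Theorem~\ref{thm:oddAdditiveCohomoPoint}, (b) the known ring and module structures of $\Mackey H_G^\bullet(EG_+)$ and $\Mackey H_G^\bullet(\tE G)$, transported across the $\Mackey H_G^\bullet(S^0)$‑module maps $\phi$ and $\psi$ of the long exact sequence (Proposition~\ref{prop:OddMaps}), or (c) the defining maps of representation spheres. First I would dispose of the \emph{structural} relations: by Theorem~\ref{thm:oddAdditiveCohomoPoint} each $\Mackey H_G^\alpha(S^0)$ is a specific Mackey functor from a short list, with a specified named generator, so the relations prescribing $\rho$, $\tau$, the action of $t$, and multiplication by $p$ are forced by the generators‑and‑relations dictionary of \S\ref{sec:MackeyFunctors} (for instance $\kappa\xi_1=0$ because $\Mackey H_G^{\MM_1-2}(S^0)\iso\Mackey R\Z$, on which $\kappa=p-g$ acts as $0$; likewise $\rho(e_1)=0$, $\rho(e_1^{-m}\kappa)=0$, $\rho(\mu^{\alpha,a})=a\iota^\alpha$, $p\,e_1^{-m}\delta\xi_1^{-n}=0$), and the \emph{redundancy} relations are Corollary~\ref{cor:muBetaD} verbatim. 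The Frobenius relations for $\boxprod$ then yield every derived identity that moves a $\tau$ outside a product — $\xi_1\tau(\iota^\alpha)=\tau(\iota_1\iota^\alpha)$, $\mu^{\alpha,a}\tau(\iota^\beta)=a\tau(\iota^{\alpha+\beta})$, $\tau(\iota^\alpha)\tau(\iota^\beta)=p\tau(\iota^{\alpha+\beta})$, $e_1\tau(\iota^\alpha)=0$ — while $\kappa e_1=pe_1$ and $p\,e_1\xi_1=0$ fall out of $\kappa=p-g$ and $g=\tau\rho$.

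Next, $\iota_k\cdot\iota_k^{-1}=\rho(1)$ is detected at level $G/e$, where cohomology is nonequivariant and $\iota_k$ is already invertible. For the relations among $e_1$, $\xi_1$, $e_1^{-m}\kappa$ and $e_1^{-m}\delta\xi_1^{-n}$ I would work in the negative‑dimensional region: in gradings $\alpha$ with $|\alpha|<0$ the connecting map $\delta$ out of $EG_+$ vanishes, so $\psi$ is injective and each of $e_1^{-m}\kappa$, $e_1^{-m}\delta\xi_1^{-n}$ is $\psi$ of the class of the same name in $\Mackey H_G^\bullet(\tE G)$. Since $\psi$ is $\Mackey H_G^\bullet(S^0)$‑linear and $e_1$ acts invertibly on $\Mackey H_G^\bullet(\tE G)$, each such relation reduces to an identity in the $\Mackey E^\bullet$‑module $\Mackey H_G^\bullet(\tE G)$ of Theorem~\ref{thm:tEGCohomology}: $e_1\cdot e_1^{-m}\kappa$ and $e_1\cdot e_1^{-m}\delta\xi_1^{-n}$ from the module relations, $\xi_1\cdot\delta\xi_1^{-k}=\delta\xi_1^{-(k-1)}$ (and $0$ for $k=1$), $\kappa^2=p\kappa$, $\kappa\cdot\delta\xi_1^{-k}=p\,\delta\xi_1^{-k}=0$, and the vanishing of $\delta\xi_1^{-\ell}\cdot\delta\xi_1^{-n}$ and of $e_1^{-\ell}\kappa\cdot e_1^{-m}\delta\xi_1^{-n}$ because the target grading has even fixed dimension, where $\Mackey H_G^\bullet(\tE G)=0$. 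Transporting along $\psi$ and cancelling the injective action of the relevant power of $e_1$ yields the stated relations; all remaining products of listed generators land in zero groups, as one reads off Theorem~\ref{thm:oddAdditiveCohomoPoint}.

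The main obstacle is $\mu^{\alpha,a}\mu^{\beta,b}=\mu^{\alpha+\beta,ab}$. In a grading $\alpha\in RO_0(G)$ the group $\Mackey H_G^\alpha(S^0)\iso\Mackey A[\nu(\alpha)]$ has $\Z$‑rank two at level $G/G$, and neither $\rho$ nor $\phi$ detects the $\tau(\iota^\alpha)$‑component — their common kernel is exactly $\Z\,\kappa\mu^{\alpha,a}$ — so the relation cannot be checked by those maps. The resolution is geometric: the classes $\mu_{j,k,d}$ of Definition~\ref{def:mu} are pullbacks along honest maps of representation spheres that literally compose ($z\mapsto z^e\mapsto z^{de}$, with conjugations inserted where signs require), so by functoriality $\mu_{i,j,e}\cdot\mu_{j,k,d}=\mu_{i,k,de}$ in $\Mackey H_G^\bullet(S^0)$ on the nose, with no $\kappa$‑ambiguity. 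Writing $\mu^{\alpha,a}$ and $\mu^{\beta,b}$ as the explicit products of $\mu_{1,k,k}$'s and $\mu_{k,1,a_k}$'s of Definition~\ref{def:muBetaD}, multiplying, and collecting cancelling pairs $\mu_{1,k,k}\cdot\mu_{k,1,a_k}=\mu_{1,1,ka_k}=\mu^{0,ka_k}=1+\tfrac{ka_k-1}{p}g$ produces $\mu^{\alpha+\beta,c}$ for an explicit integer $c$, and the bookkeeping is closed by the redundancy relation $\mu^{\gamma,a'+p}=\mu^{\gamma,a'}+\tau(\iota^\gamma)$ together with $g\,x=\tau(\rho(x))$, which is exactly the mechanism turning the $g$‑factors into the shift making $c=ab$. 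Once this is established, $\lambda^{\alpha,a}$ is introduced by its defining formula and the remaining $\lambda$‑relations ($\rho(\lambda^{\alpha,a})=\iota^\alpha$, $\lambda^{\alpha,a}\lambda^{\beta,b}=\lambda^{\alpha+\beta,ab}$, the $\xi_1$‑ and $e_1$‑identities, $\lambda^{\alpha,a+p}=\lambda^{\alpha,a}+\kappa\mu^{\alpha,a^{-1}}$, and $\mu^{\alpha,a}=a\lambda^{\alpha,a^{-1}}+\tfrac{1-aa^{-1}}{p}\kappa\mu^{\alpha,a}$) follow by substituting the definition and using $\kappa\tau(\iota^\alpha)=0$, $\kappa\xi_1=0$, and the relations already proved.

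Finally, strict commutativity follows from \S\ref{sec:ROGGrading}: for $p$ odd the commutativity sign is $(-1)^{\alpha^G\beta^G}$, the only generators in a grading of odd fixed dimension are the $e_1^{-m}\delta\xi_1^{-n}$, and products of two of those vanish, so every sign that could occur is trivial. That the listed relations generate \emph{all} relations is then checked by noting that the subring generated by the listed elements contains $\kappa=e_1\cdot e_1^{-1}\kappa$ (hence $g=p-\kappa$ and all of $A(G)$), contains $\tau(\iota^\gamma)$ for every $\gamma$ with $\gamma^G=0$ by Frobenius ($x g=\tau(\rho(x))$ applied to $x=\xi_1^k\mu^{\alpha,a}$), and hence contains the $\lambda^{\alpha,a}$; by the additive proof of Theorem~\ref{thm:oddAdditiveCohomoPoint} it therefore contains a generator of each $\Mackey H_G^\alpha(S^0)$, so it is all of $\Mackey H_G^\bullet(S^0)$. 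The relations above then allow an arbitrary product of generators to be rewritten in terms of that additive basis — family by family, exactly as in the reductions above — so no further relations are needed.
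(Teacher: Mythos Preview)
Your overall strategy matches the paper's: structural relations from the additive calculation plus the Mackey-functor dictionary of \S\ref{sec:MackeyFunctors}, redundancy from Corollary~\ref{cor:muBetaD}, the $\tau$-identities from Frobenius, and the relations among $e_1$, $\xi_1$, $e_1^{-m}\kappa$, $e_1^{-m}\delta\xi_1^{-n}$ by transport from $\Mackey H_G^\bullet(\tE G)$ or by vanishing of target groups.

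The one substantive difference is your argument for $\mu^{\alpha,a}\mu^{\beta,b}=\mu^{\alpha+\beta,ab}$. You correctly observe that $\rho$ and $\phi$ both kill $\kappa\mu^{\alpha,a}$, so neither detects the product completely. Your proposed fix --- composition of sphere maps, giving $\mu_{i,j,e}\cdot\mu_{j,k,d}=\mu_{i,k,de}$, followed by bookkeeping with the redundancy relation --- is correct in principle, but the reduction of a general $\mu^{\alpha,a}\cdot\mu^{\beta,b}$ to $\mu^{\alpha+\beta,c}$ and the identification $c=ab$ are more delicate than you sketch (one must handle mixed signs in the exponents $n_k$ and track the resulting factors $\mu^{0,ka_k}=1+\tfrac{ka_k-1}{p}g$ carefully). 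The paper sidesteps this entirely: by Proposition~\ref{prop:multByMu}, multiplication by $\mu^{\alpha,a}\mu^{\beta,b}$ acts on $\Mackey H_G^\bullet(\tE G)$ as $e^{\alpha+\beta}$ (an isomorphism) and on $\Mackey H_G^\bullet(EG_+)$ as $ab\,\xi^{\alpha+\beta}$, yielding a map of the short exact sequences $0\to\conc\Z\to\Mackey A[\nu]\to\Mackey R\Z\to0$ from grading $0$ to grading $\alpha+\beta$. The characterization in Corollary~\ref{cor:muBetaD} then pins down $\mu^{\alpha,a}\mu^{\beta,b}$ as $\mu^{\alpha+\beta,ab}$ directly. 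So the action on $\tE G$ supplies exactly the detection you thought was missing, and gives a much shorter argument.

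One small slip: the target grading of $e_1^{-\ell}\kappa\cdot e_1^{-m}\delta\xi_1^{-n}$ has fixed dimension $1+2n$, which is \emph{odd}, so $\Mackey H_G^\bullet(\tE G)$ is $\conc{\Z/p}$ there, not zero. The vanishing follows instead from the identity $\kappa\cdot\delta\xi_1^{-n}=0$ you already recorded, together with invertibility of $e_1$ on $\Mackey H_G^\bullet(\tE G)$; equivalently, as the paper does, multiply by $e_1^{\ell}$ (injective in this region since $\psi$ is an isomorphism) and use $\kappa\,e_1^{-m}\delta\xi_1^{-n}=(p-g)\,e_1^{-m}\delta\xi_1^{-n}=0$.
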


\begin{proof}
The additive calculation in Theorem~\ref{thm:oddAdditiveCohomoPoint}
shows that the elements listed generate $\Mackey H_G^\bullet(S^0)$ multiplicatively.
We need to verify that the relations listed hold, and that 
the listed structural, redundancy, and multiplicative relations suffice
to determine all relations.

The structural relations in the theorem follow from the additive calculation
and the discussion of generators and relations for Mackey functors
in \S\ref{sec:MackeyFunctors}.
The redundancy relations come from the proof of
Corollary~\ref{cor:muBetaD}.

We have already noted that $\iota_k$ is invertible, in the sense that we have
an element $\iota_k^{-1}$ such that $\iota_k\cdot \iota_k^{-1} = \rho(1)$, so that
relation has already been verified.

The relation $e_1\cdot \tau(\iota^\beta) = 0$ mentioned in the last group of relations
is a consequence of the Frobenius relations:
\[
 e_1\cdot \tau(\iota^\beta) = \tau(\rho(e_1)\iota^\beta) = 0.
\]
We mention it now because we will use it several times in what follows.

To calculate $\mu^{\alpha,a}\cdot\mu^{\beta,b}$ we do the following.
Because $\rho$ is multiplicative,
\[
 \rho(\mu^{\alpha,a}\mu^{\beta,b}) = ab\iota^{\alpha+\beta}.
\]
As in the proof of Theorem~\ref{thm:oddAdditiveCohomoPoint}, we note
that the effect of multiplication by $\mu^{\alpha,a}\mu^{\beta,b}$ on
$\Mackey H_G^\bullet(\tE G)$ is the same as multiplication by $e^\alpha e^\beta = e^{\alpha+\beta}$,
so takes the generator $\kappa\in \Mackey H_G^0(\tE G)$ to the generator
$e^{\alpha+\beta}\kappa\in \Mackey H_G^{\alpha+\beta}(\tE G)$.
On the other hand, multiplication by $\mu^{\alpha,a}\mu^{\beta,b}$ on
$\Mackey H_G^\bullet(EG_+)$ is the same as multiplication by 
$(a\xi^\alpha)(b\xi^\beta) = ab\xi^{\alpha+\beta}$, so takes
$1\in \Mackey H_G^0(EG_+)$ to $ab$ times the generator 
$\xi^{\alpha+\beta}\in \Mackey H_G^{\alpha+\beta}(EG_+)$. Thus, we have
the following map of short exact sequences:
\[
 \xymatrix{
 & & \Mackey H_G^0(S^0) \ar@{=}[d] \\
 0 \ar[r] & \conc\Z \ar[d]_1 \ar[r] & \Mackey A_{G/G} \ar[d]_{\mu^{\alpha,a}\mu^{\beta,b}} \ar[r]
  & \Mackey R\Z \ar[d]^{ab} \ar[r] & 0 \\
 0 \ar[r] & \conc\Z \ar[r] & \Mackey A[ab] \ar[r]
  & \Mackey R\Z \ar[r] & 0 \\
 & & \Mackey H_G^{\alpha+\beta}(S^0) \ar@{=}[u]
 }
\]
This implies that $\mu^{\alpha,a}\mu^{\beta,b}$ and $\iota^{\alpha+\beta}$ generate
$\Mackey H_G^{\alpha+\beta}(S^0)$ with $\rho(\mu^{\alpha,a}\mu^{\beta,b}) = ab\iota^{\alpha+\beta}$.
But, this characterizes the element $\mu^{\alpha+\beta,ab}$, so we must have
$\mu^{\alpha,a}\mu^{\beta,b} = \mu^{\alpha+\beta,ab}$ as claimed.

The relation $e_1\cdot e_1^{-m}\kappa = e_1^{-m+1}\kappa$ ($m\geq 1$) follows from
the same identity in the cohomology of $\tE G$.
The identity $\xi_1\cdot e_1^{-m}\kappa = 0$ ($m\geq 1$) follows because the group
in which the product would live is 0.

The relation $e_1\cdot e_1^{-m}\delta\xi_1^{-n} = e_1^{-m+1}\delta\xi_1^{-n}$ ($m\geq 2$ and $n\geq 1$)
follows from the same identity in the cohomology of $\tE G$.

For $n\geq 2$, we have 
$\xi_1\cdot e_1^{-m}\delta\xi_1^{-n} = e_1^{-m}\delta(\xi_1\cdot\xi_1^{-n}) = e_1^{-m}\delta\xi_1^{-n+1}$.
On the other hand, $\xi_1\cdot e_1^{-m}\delta\xi_1^{-1} = 0$ for $m\geq 2$ because the product
lives in a 0 group.

To determine the product $e_1^{-m}\kappa \cdot e_1^{-n}\kappa$, multiply by $e_1^{m+n}$:
\[
 e_1^{m+n}(e_1^{-m}\kappa \cdot e_1^{-n}\kappa) = \kappa^2 = p\kappa,
\]
hence $e_1^{-m}\kappa \cdot e_1^{-n}\kappa = pe_1^{-m-n}\kappa$. Similarly,
\[
 e_1^{\ell}(e_1^{-\ell}\kappa \cdot e_1^{-m}\delta\xi_1^{-n})
  = \kappa e_1^{-m}\delta\xi_1^{-n} = 0
\]
because both $pe_1^{-m}\delta\xi_1^{-n} = 0$ and $ge_1^{-m}\delta\xi_1^{-n} = 0$;
hence $e_1^{-\ell}\kappa \cdot e_1^{-m}\delta\xi_1^{-n} = 0$.

Finally for the basic relations, 
$e_1^{-k}\delta\xi_1^{-\ell} \cdot e_1^{-m}\delta\xi_1^{-n} = 0$
because the product would live in a 0 group.

We now verify that the other relations listed in the theorem are implied by those
already shown.

The first several follow from the Frobenius relations (the fourth of these we noted above):
\begin{align*}
  \xi_1\tau(\iota^\beta) &= \tau(\rho(\xi_1)\iota^\beta) = \tau(\iota_1\iota^\beta) \\
  \mu^{\alpha,a}\tau(\iota^\beta) &= \tau(\rho(\mu^{\alpha,a})\iota^\beta)
     = \tau(a\iota^\alpha\iota^\beta) = a\tau(\iota^{\alpha+\beta}) \\
  \tau(\iota^\alpha)\tau(\iota^\beta) &= \tau(\rho\tau(\iota^\alpha)\iota^\beta)
     = \tau(p\iota^\alpha\iota^\beta) = p\tau(\iota^{\alpha+\beta}) \\
  e_1\tau(\iota^\beta) &= \tau(\rho(e_1)\iota^\beta) = 0.
\end{align*}

The calculation $\kappa e_1 = pe_1$ follows from the fact that
$ge_1 = \tau\rho(e_1) = 0$. We then have
\[
 pe_1\xi_1 = \kappa e_1\xi_1 = e_1(\kappa \xi_1) = 0.
\]

The formula $\mu^{\alpha,a} = a\lambda^{\alpha,a^{-1}} + ((1-aa^{-1})/p)\kappa\mu^{\alpha,a}$
is part of the change of basis from $\{\mu^{\alpha,a},\tau(\iota^\alpha)\}$
to $\{\lambda^{\alpha,a^{-1}}, \kappa\mu^{\alpha,a}\}$;
see the discussion of generators and relations in \S\ref{sec:MackeyFunctors}.
Explicitly:
\begin{align*}
 a\lambda^{\alpha,a^{-1}} &{}+ \frac{1-aa^{-1}}{p}\kappa\mu^{\alpha,a} \\
  &= a\left(a^{-1}\mu^{\alpha,a} + \frac{1-aa^{-1}}{p}\tau(\iota^\alpha)\right)
     + \frac{1-aa^{-1}}{p}(p\mu^{\alpha,a} - a\tau(\iota^\alpha)) \\
  &= \mu^{\alpha,a}.
\end{align*}

For the relation $\lambda^{\alpha, a + p} = \lambda^{\alpha,a} + \kappa\mu^{\alpha,a^{-1}}$,
we have
\begin{align*}
 \lambda^{\alpha, a + p}
  &= (a+p)\mu^{\alpha,a^{-1}} + \frac{1-(a+p)a^{-1}}{p} \tau(\iota^\alpha) \\
  &= \left(a\mu^{\alpha,a^{-1}} + \frac{1-aa^{-1}}{p}\tau(\iota^\alpha)\right)
      + (p\mu^{\alpha,a^{-1}} - a^{-1}\tau(\iota^\alpha)) \\
  &= \lambda^{\alpha,a} + \kappa\mu^{\alpha,a^{-1}}.
\end{align*}
The relation $\lambda^{\alpha,a + p}\xi_1 = \lambda^{\alpha,a}\xi_1$ follows from this relation
and the structural relation $\kappa\xi_1 = 0$.

The relation $e_1\lambda^{\alpha,a} = ae_1\mu^{\alpha,a^{-1}}$ follows from the definition
of $\lambda^{\alpha,a}$ and the vanishing of $e_1\tau(\iota^\alpha)$.
That $\mu^{\alpha,a}\xi_1 = a\lambda^{\alpha,a^{-1}}\xi_1$ follows similarly from the formula
for $\mu^{\alpha,a}$ in terms of $\lambda^{\alpha,a^{-1}}$ and $\kappa\mu^{\alpha,a}$,
and the vanishing of $\kappa\xi_1$.

Because $\rho(\mu^{\alpha,a}) = a\iota^\alpha$ and
$\rho\tau(\iota^\alpha) = p\iota^\alpha$, we get
\begin{align*}
 \rho(\lambda^{\alpha,a}) 
   &= \rho\left(a\mu^{\alpha,a^{-1}} + \frac{1-aa^{-1}}{p}\tau(\iota^\alpha)\right) \\
   &= aa^{-1}\iota^\alpha + (1-aa^{-1})\iota^\alpha \\
   &= \iota^\alpha.
\end{align*}

The formula for the product $\lambda^{\alpha,a}\lambda^{\beta,b}$ follows directly from
the definition of $\lambda^{\alpha,a}$ and previous product formulas we've verified.
 
We can now check that all products of the generators have been computed,
so the relations listed suffice to determine the multiplicative structure
of $\Mackey H_G^\bullet(S^0)$.
Moreover, the only possibility for anticommutativity to introduce a sign
among products of generating elements is in the product
$e_1^{-k}\delta\xi_1^{-\ell} \cdot e_1^{-m}\delta\xi_1^{-n}$, but this product is 0,
hence $\Mackey H_G^\bullet(S^0)$ is strictly commutative.
\end{proof}

What about the other Euler classes?

\begin{proposition}
We have
\[
 e_k = \mu^{\MM_k-\MM_1,a} e_1, \quad 2\leq k \leq (p-1)/2,
\]
for any $a\in \nu(\MM_k-\MM_1) = [k]$. We also have
\[
 e_k = -\mu^{\MM_{p-k}-\MM_1,a} e_1, \quad (p+1)/2 \leq k \leq p-1,
\]
for $a\in [p-k] = -[k]$.
\end{proposition}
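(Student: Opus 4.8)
The plan is to fix one convenient value of $a$, establish the identity there, and then propagate to all $a$ using the redundancy relations, working entirely inside $\Mackey H_G^\bullet(S^0)$ as computed in \S\ref{sec:cohomPointOdd}. First take $2\leq k\leq (p-1)/2$ and $a=k$. Specializing Definition~\ref{def:muBetaD} to $\alpha=\MM_k-\MM_1$ (the case in which the only nonzero coefficient is the one at $k$, which equals $1$, so that $a=k$ and both defining products reduce to the single factor $\mu_{1,k,k}$) gives $\mu^{\MM_k-\MM_1,k}=\mu_{1,k,k}$ literally. Proposition~\ref{prop:muTimesE}, applied with $j=1$, says $\mu_{1,k,k}e_1=e_k$, so $e_k=\mu^{\MM_k-\MM_1,k}e_1$.

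Next I would remove the dependence on the choice of $a\in\nu(\MM_k-\MM_1)=[k]$. Any such $a$ equals $k+mp$ for some $m\in\Z$, and iterating the redundancy relation $\mu^{\alpha,a+p}=\mu^{\alpha,a}+\tau(\iota^\alpha)$ of Corollary~\ref{cor:muBetaD} gives
\[
 \mu^{\MM_k-\MM_1,a}=\mu^{\MM_k-\MM_1,k}+m\,\tau(\iota^{\MM_k-\MM_1}).
\]
Multiplying by $e_1$ and using $e_1\tau(\iota^{\MM_k-\MM_1})=0$ --- valid since $(\MM_k-\MM_1)^G=0$, and itself a Frobenius-relation consequence of $\rho(e_1)=0$ recorded in Theorem~\ref{thm:oddMultiplicativeCohomoPoint} --- yields $\mu^{\MM_k-\MM_1,a}e_1=e_k$ for every $a\in[k]$. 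This settles the range $2\leq k\leq (p-1)/2$.

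For $(p+1)/2\leq k\leq p-1$ I would route through the sign convention $e_k=-e_{p-k}$ of \S\ref{sec:pointSummary}, whose justification is the content of \S\ref{sec:pointprelim}: the chosen stable isomorphism $\psi\colon\MM_k\to\MM_{p-k}$ of Definition~\ref{def:chosenidentification} satisfies $\psi_*=-\phi_*$ for the unstable $G$-isomorphism $\phi$, while invariance of Euler classes gives $\phi_*e_{\MM_k}=e_{\MM_{p-k}}$, so under the identification used for $RO(G)$-grading one has $e_k=-e_{p-k}$. Since $1\leq p-k\leq (p-1)/2$, when $p-k\geq 2$ the first two paragraphs applied to the index $p-k$ give $e_{p-k}=\mu^{\MM_{p-k}-\MM_1,a}e_1$ for any $a\in[p-k]$, hence $e_k=-\mu^{\MM_{p-k}-\MM_1,a}e_1$; note $[p-k]=-[k]$ in $(\Z/p)^\times$, matching the statement. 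The one remaining case is $k=p-1$, where $\MM_{p-k}=\MM_1$ and $\MM_{p-k}-\MM_1=0$: then $\mu^{0,a}$ with $a\equiv 1\pmod p$ has $\mu^{0,1}=1$, and the same redundancy-plus-vanishing argument (now using $e_1\tau(\iota^0)=0$, again from $\rho(e_1)=0$) gives $\mu^{0,a}e_1=e_1$, so $e_{p-1}=-e_1=-\mu^{0,a}e_1$, consistent with $-[p-1]=[1]$.

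The step I expect to be the genuine obstacle is not any of the algebra in $\Mackey H_G^\bullet(S^0)$ but pinning down the sign in the range $k>p/2$. It is not an internal identity of the cohomology ring; it hinges on having committed, once and for all, to the stable identification $\psi$ of $\MM_k$ with $\MM_{p-k}$ rather than to the unstable $G$-isomorphism $\phi$ --- a choice made in \S\ref{sec:pointprelim} precisely so that Euler classes stay compatible with the fixed nonequivariant identifications of the fibers of $\omega_G$ with $\C$. Once that bookkeeping is in place, the remainder is just the redundancy relations of Corollary~\ref{cor:muBetaD} together with the vanishing $e_1\tau(\iota^\alpha)=0$ for $\alpha^G=0$.
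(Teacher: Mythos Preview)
Your proof is correct and follows essentially the same approach as the paper: invoke Proposition~\ref{prop:muTimesE} with $j=1$ for the first range, use the redundancy relation together with $e_1\tau(\iota^\alpha)=0$ to make the result independent of the representative $a$, and then deduce the second range from the sign convention $e_k=-e_{p-k}$. Your treatment is in fact slightly more careful than the paper's, which glosses over the boundary case $k=p-1$ (where $p-k=1$ falls outside the range $2\leq k\leq (p-1)/2$ already established); you handle this correctly via $\mu^{0,1}=1$ and the same vanishing argument.
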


\begin{proof}
For $2\leq k\leq (p-1)/2$, this is a restatement of Proposition~\ref{prop:muTimesE},
combined with the fact that $e_1\tau(\iota^\alpha) = 0$ to show that
the choice of $a$ in its equivalence class is not important.

If $(p+1)/2\leq k \leq p-2$ and $a\equiv p-k \pmod p$, then
\[
 e_k = -e_{p-k} = -\mu^{\MM_{p-k}-\MM_1,a}e_1. \qedhere
\]
\end{proof}

Recall that we defined $\xi_1\in \Mackey H_G^{\MM_1-2}(S^0)$ as the element
corresponding to $\xi_1\in\Mackey H_G^{\MM_1-2}(EG_+)$, because these groups are isomorphic
in this grading. Similarly, we can define
\[
 \xi_k\in \Mackey H_G^{\MM_k-2}(S^0) \quad\text{for $1\leq k\leq p-1$}
\]
to correspond to the element of the same name in
$\Mackey H_G^{\MM_k-2}(EG_+)$.

\begin{proposition}
For $2 \leq k \leq p-1$,
\[
 \xi_k = \lambda^{\MM_k - \MM_1,a}\xi_1
\]
for any $a\in\nu(\MM_k-\MM_1)^{-1}$.
\end{proposition}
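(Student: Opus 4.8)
The plan is to exploit the fact that $\phi\colon \Mackey H_G^{\MM_k-2}(S^0)\to \Mackey H_G^{\MM_k-2}(EG_+)$ is an isomorphism in this grading, exactly as was done for $\xi_1$ and as was already observed in the discussion following Proposition~\ref{prop:OddMaps}. Since $\xi_k\in\Mackey H_G^{\MM_k-2}(S^0)$ is \emph{defined} to be the element corresponding to $\xi_k\in\Mackey H_G^{\MM_k-2}(EG_+)$ under $\phi$, it suffices to show that $\phi(\lambda^{\MM_k-\MM_1,a}\xi_1) = \xi_k$ in $\Mackey H_G^{\MM_k-2}(EG_+)$.

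First I would recall from Proposition~\ref{prop:OddMaps} (or rather its consequence stated right after it) that $\phi(\lambda^{\MM_k-\MM_1,a}\xi_1) = \xi^{\MM_k-\MM_1}\xi_1 = \xi_k$, using the multiplicativity of $\phi$ and the fact that $\phi(\xi_1) = \xi_1$ on the right-hand factor. Here $\xi^{\MM_k-\MM_1} = \prod \xi_j^{n_j}\xi_1^{-\sum n_j}$ in $\Mackey H_G^\bullet(EG_+)$, where the $\xi_j$ are invertible by Lemma~\ref{lem:EGInvertibles}, and $\xi^{\MM_k-\MM_1}\xi_1 = \xi_k$ is exactly the statement $\xi_j = \xi_1\xi^{\MM_j-\MM_1}$ recorded in the proof of Proposition~\ref{prop:chicEG}. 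Combining, $\phi(\lambda^{\MM_k-\MM_1,a}\xi_1) = \xi_k = \phi(\xi_k)$, and since $\phi$ is injective in grading $\MM_k-2$ (because $\Mackey H_G^{\MM_k-2}(\tE G) = 0$, this grading having $(\MM_k-2)^G = -2 \neq 0$ and $|\MM_k-2| = 0$, so the relevant part of the long exact sequence forces $\phi$ to be an isomorphism), we conclude $\lambda^{\MM_k-\MM_1,a}\xi_1 = \xi_k$ in $\Mackey H_G^{\MM_k-2}(S^0)$.

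It remains to check the claimed independence of the choice of $a\in\nu(\MM_k-\MM_1)^{-1}$. This follows from the relation $\lambda^{\alpha,a+p}\xi_1 = \lambda^{\alpha,a}\xi_1$ established in Theorem~\ref{thm:oddMultiplicativeCohomoPoint} (a consequence of $\lambda^{\alpha,a+p} = \lambda^{\alpha,a} + \kappa\mu^{\alpha,a^{-1}}$ together with the structural relation $\kappa\xi_1 = 0$), so that the product depends only on the congruence class of $a$ modulo $p$, i.e.\ only on the element $\nu(\MM_k-\MM_1)^{-1}\in(\Z/p)^\times$. For $(p+1)/2\leq k\leq p-1$ one uses $\xi_k = \xi_{p-k}$ (Remark~\ref{rem:extendedXi}) and $\nu(\MM_k-\MM_1) = \nu(\MM_{p-k}-\MM_1)$ under the identification $\MM_k = \MM_{p-k}$ to reduce to the range already handled.

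I do not anticipate a serious obstacle: this is essentially a bookkeeping statement assembled from pieces already in hand (the isomorphism $\phi$ in this grading, the behavior of $\phi$ on $\lambda^{\alpha,a}\xi_1$ from the analysis of Proposition~\ref{prop:OddMaps}, and the relation $\lambda^{\alpha,a+p}\xi_1 = \lambda^{\alpha,a}\xi_1$). The only point demanding mild care is being explicit that $\phi$ is injective in grading $\MM_k-2$ — which I would justify either by the cohomology-of-a-point long exact sequence as above, or simply by quoting that $\phi$ is an isomorphism in grading $\MM_k-2$ exactly as already asserted for $k=1$ in the text following Proposition~\ref{prop:OddMaps}, the argument being identical for general $k$.
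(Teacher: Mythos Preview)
Your proposal is correct and follows essentially the same approach as the paper: both arguments use that $\phi(\lambda^{\MM_k-\MM_1,a}\xi_1) = \xi^{\MM_k-\MM_1}\xi_1 = \xi_k$ together with the fact that $\phi$ is an isomorphism in grading $\MM_k-2$, then handle $(p+1)/2 \le k \le p-1$ via $\xi_k = \xi_{p-k}$. Your write-up is slightly more explicit in justifying injectivity of $\phi$ and the independence of $a$, but the substance is identical.
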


\begin{proof}
First consider the case $1\leq k \leq (p-1)/2$.
In the proof of Theorem~\ref{thm:oddAdditiveCohomoPoint} we pointed out that
$\lambda^{\alpha,a}$ maps to $\xi^\alpha$ in $\Mackey H_G^\alpha(EG_+)$.
Thus, $\lambda^{\MM_k-\MM_1,a}\xi_1$ maps to $\xi^{\MM_k-\MM_1}\xi_1 = \xi_k$.
This gives $\lambda^{\MM_k-\MM_1,a}\xi_1 = \xi_k$ by the definition of $\xi_k$.

For $(p+1)/2\leq k\leq p-1$, we have
\[
 \xi_k = \xi_{p-k} = \lambda^{\MM_{p-k}-\MM_1,a}\xi_1
  = \lambda^{\MM_k-\MM_1,a}\xi_1. \qedhere
\]
\end{proof}

\begin{remark}[Stable homotopy and the Picard group]
In \cite[2.1]{Le:Hurewicz}, Lewis showed that the Hurewicz map 
$\pi^{G,s}_*(S^0) \to H_*^G(S^0)$ is an isomorphism
in $RO_0(G)$ grading, where $\pi^{G,s}_*$ is stable homotopy. Thus, as pointed out in 
\cite{Le:Hurewicz} and \cite{Le:projectivespaces},
our computation of the ordinary (co)homology of a point
in this grading coincides with the computation of the stable homotopy groups
in the same grading given by tom Dieck and Petrie in \cite{tDP:geomodules}.
tom Dieck and Petrie showed that, for any compact Lie group,
these homotopy groups are projective $\pi^{G,s}_0(S^0) = A(G)$-modules
of rank 1, hence can be considered as elements of the Picard group
$\Pic(A(G))$.
Further, the pairings
$\pi^{G,s}_\alpha(S^0)\tensor_{A(G)} \pi^{G,s}_\beta(S^0) \to \pi^{G,s}_{\alpha+\beta}(S^0)$
are isomorphisms, so the assignment of $\pi^{G,s}_\alpha(S^0)$ to $\alpha$
defines a homomorphism $RO_0(G)\to \Pic(A(G))$.
tom Dieck and Petrie go on to give a method for computing this Picard group,
given in its most explicit form for finite abelian groups
by a formula at the bottom of p.\ 162 of \cite{tDP:homotopyrepns}.

Applying this formula to $G = \Z/p$, we see that
\[
 \Pic(A(\Z/p)) \iso (\Z/p)^\times/\{\pm 1\}.
\]
We can see from our computations, then, that the elements of $\Pic(A(\Z/p))$ are the $A(G)$-modules
$A[a]$, $[a]\in (\Z/p)^\times/\{\pm 1\}$, that occur at the $G/G$ level in our Mackey
functors $\Mackey A[a]$. Further, the homomorphism
$RO_0(\Z/p)\to \Pic(A(\Z/p))$ is an epimorphism and is given explicitly by
\[
 \alpha \mapsto [\nu(\alpha)]\in (\Z/p)^\times/\{\pm 1\}.
\]
\end{remark}

%-------------------------------------------------------
\section{The cohomology of a point for other coefficient systems}\label{sec:cohomPointOtherCoeffs}

So far, we have been using the coefficient system $\Mackey A_{G/G}$ for all our cohomology.
There are other systems of interest, particularly $\Mackey R\Z$, which is often referred to
as ``constant $\Z$ coefficients,'' because both $\Mackey R\Z(G/G)$
and $\Mackey R\Z(G/e)$ are $\Z$ and the restriction map
is the identity. So we describe here the cohomology of a point with $\Mackey R\Z$ coefficients,
as well as several others.

First, let $C$ be any abelian group and consider $\conc{C}$, the system with
$\conc{C}(G/G) = C$ and $\conc{C}(G/e) = 0$.

\begin{proposition}\label{prop:concCcohomology}
There is a natural isomorphism
$\tilde H_G^\alpha(X;\conc{C}) \iso \tilde H^{\alpha^G}(X^G;C)$, for any $\alpha\in RO(G)$.
\end{proposition}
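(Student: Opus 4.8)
The plan is to reduce the statement, by formal manipulations with the cofibration sequence $EG_+\to S^0\to\tE G$, to its integer-graded version, which is elementary. A slicker but less self-contained route would be to represent $\tilde H_G^\bullet(-;\conc C)$ by the Eilenberg--MacLane $G$-spectrum $H\conc C$: its underlying spectrum is trivial, since $\pi_\ast^e(H\conc C)\iso\conc C(G/e)=0$, so $H\conc C\hmtpc\tE G\smsh H\conc C$ and the geometric fixed points satisfy $\Phi^G(H\conc C)\hmtpc HC$; then $\tilde H_G^\alpha(X;\conc C)\iso[\Sigma^\infty X^G,\Sigma^{\alpha^G}HC]=\tilde H^{\alpha^G}(X^G;C)$, naturally. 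I will instead give the space-level argument, for which we may assume $X$ is a based $G$-CW complex.

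First I would dispose of the integer-graded case: for $n\in\Z$ and any based $G$-CW complex $Z$ there is a natural isomorphism $\tilde H_G^n(Z;\conc C)\iso\tilde H^n(Z^G;C)$. This is immediate from the reduced Bredon cochain complex $\Hom_{\orb G}(\MackeyOp C_\bullet(Z),\conc C)$, where $\MackeyOp C_\bullet(Z)$ is the coefficient system $G/K\mapsto\tilde C_\bullet^{\mathrm{cell}}(Z^K)$. Because $\conc C$ vanishes at $G/e$, a map of coefficient systems into $\conc C$ is precisely a homomorphism out of the value at $G/G$ (all the compatibility conditions take values in $\conc C(G/e)=0$), so $\Hom_{\orb G}(\MackeyOp C_n(Z),\conc C)\iso\Hom(\tilde C_n^{\mathrm{cell}}(Z^G),C)$, the cellular $n$-cochains of $Z^G$ with coefficients in $C$, and the differentials agree. (Alternatively one combines Proposition~\ref{prop:cohomologyTrivialAction} with the vanishing in the next step.)

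Next I would carry out the reduction in four steps, each natural in $X$. (a) $EG_+\smsh X$ is a free $G$-CW complex; since $\conc C$ restricts to the zero group on the trivial subgroup, the Wirthm\"uller isomorphism gives $\tilde H_G^\alpha((G/e)_+\smsh Y;\conc C)\iso\tilde H^{|\alpha|}(Y;0)=0$ for every based $G$-space $Y$ and every $\alpha$, and a skeletal induction (or the Atiyah--Hirzebruch spectral sequence of \cite{CW:ordinaryhomology}) then yields $\tilde H_G^\alpha(EG_+\smsh X;\conc C)=0$ for all $\alpha\in RO(G)$; the long exact sequence of $EG_+\smsh X\to X\to\tE G\smsh X$ gives $\tilde H_G^\alpha(X;\conc C)\iso\tilde H_G^\alpha(\tE G\smsh X;\conc C)$. (b) The inclusion $X^G\includesin X$ induces a weak $G$-equivalence $\tE G\smsh X^G\to\tE G\smsh X$ (as in the proof of Proposition~\ref{prop:BEGtilde}), hence $\tilde H_G^\alpha(\tE G\smsh X;\conc C)\iso\tilde H_G^\alpha(\tE G\smsh X^G;\conc C)$. (c) For any representation $V$ with $V^G=0$, the Euler class $e_V\colon S^0\to S^V$ becomes a weak $G$-equivalence after smashing with $\tE G\smsh Z$ for any $Z$ (the argument of Proposition~\ref{prop:tEGPeriodicity}: $(\tE G)^e$ is nonequivariantly contractible and $(S^V)^G=S^0$), so multiplication by $e_V$ is invertible on $\tilde H_G^\bullet(\tE G\smsh Z;\conc C)$. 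Writing $\alpha=\alpha^G+V\ominus W$ with $V,W$ representations having no trivial summands and applying this to $V$ and $W$ in turn, $\tilde H_G^\alpha(\tE G\smsh X^G;\conc C)\iso\tilde H_G^{\alpha^G}(\tE G\smsh X^G;\conc C)$, now in integer grading. (d) Since $(\tE G\smsh X^G)^G=(\tE G)^G\smsh X^G=S^0\smsh X^G=X^G$, the integer-graded case gives $\tilde H_G^{\alpha^G}(\tE G\smsh X^G;\conc C)\iso\tilde H^{\alpha^G}(X^G;C)$. Composing the four isomorphisms yields the statement.

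The part demanding the most care is the passage from integer grading to general $RO(G)$ grading: proving the vanishing in step (a) in \emph{all} gradings rather than only integer ones, and the Euler-class periodicity of step (c). Once these are in hand the remainder is bookkeeping, and they are exactly the content packaged by the two facts "$H\conc C$ has trivial underlying spectrum" and "$\Phi^G(H\conc C)\hmtpc HC$" in the spectrum-level proof.
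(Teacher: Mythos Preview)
Your proof is correct but takes a genuinely different route from the paper's. The paper argues in two lines: the functor $X\mapsto\tilde H^{\alpha^G}(X^G;C)$ is an $RO(G)$-graded cohomology theory in $X$ satisfying the same dimension axiom as $\tilde H_G^\bullet(-;\conc C)$ (namely $\conc C(G/G)=C$ concentrated at $0$, $\conc C(G/e)=0$), so the uniqueness of equivariant ordinary cohomology gives the isomorphism. Your argument instead builds the isomorphism explicitly through the isotropy separation sequence and Euler-class periodicity, essentially proving by hand that $H\conc C$ is geometrically concentrated. The paper's approach is shorter and more conceptual, relying on a black-box uniqueness theorem; yours is more self-contained and makes the geometry visible (and your spectrum-level sketch via $\Phi^G(H\conc C)\hmtpc HC$ is closer in spirit to the paper's, just phrased representably). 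Either is fine here; the paper's is quicker because the uniqueness theorem is already in hand from \cite{CW:ordinaryhomology}.
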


\begin{proof}
This is a special case of \cite[1.13.22]{CW:ordinaryhomology}, or can be seen directly as follows:
$\tilde H^{\alpha^G}(X^G;C)$ is an $RO(G)$-graded cohomology theory in based $G$-spaces $X$.
It obeys a dimension axiom in integer grading, with
$\tilde H^n((G/G)^G_+;C) \iso C$ if $n=0$ but equal to $0$ if $n\neq 0$, and
$\tilde H^n((G/e)^G_+;C) = 0$ for all $n$.
This is precisely the dimension axiom satisfied by $\tilde H_G^\bullet(X;\conc{C})$,
so the two theories must be naturally isomorphic by the uniqueness of equivariant
ordinary cohomology.
\end{proof}

In particular, consider $\conc\Z$ and the short exact sequence
\[
 0 \to \conc\Z \xrightarrow{\kappa} \Mackey A_{G/G} \to \Mackey R\Z \to 0,
\]
where the map $\kappa$ takes $1$ to $\kappa\in A(G)$.
We use this to think of $\conc\Z$ as a submodule of $\Mackey A_{G/G}$, identifying
$\Z$ as the multiples of $\kappa$.

\begin{proposition}\label{prop:concZcohomologypoint}
Additively,
\[
 \Mackey H_G^\alpha(S^0;\conc\Z) \iso
  \begin{cases}
    \conc\Z & \text{if $\alpha^G = 0$} \\
    0 & \text{otherwise.}
  \end{cases}
\]
The map $\Mackey H_G^\bullet(S^0;\conc\Z) \to \Mackey H_G^\bullet(S^0;\Mackey A_{G/G})$
is injective and identifies $\Mackey H_G^\bullet(S^0;\conc\Z)$ as the
ideal
\[
 \Mackey H_G^\bullet(S^0;\conc\Z) \iso 
  \langle e^\alpha\kappa \mid \alpha^G=0 \rangle 
  \subset \Mackey H_G^0(S^0;\Mackey A_{G/G}).
\]
The map factors through
$\Mackey H_G^\bullet(\tE G;\Mackey A_{G/G})$, where it identifies
$\Mackey H_G^\bullet(S^0;\conc\Z)$ as the submodule of elements in gradings
$\alpha$ with $\alpha^G = 0$.
\end{proposition}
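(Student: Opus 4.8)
The plan is to combine the general identification of $\conc\Z$-cohomology with fixed-point cohomology (Proposition~\ref{prop:concCcohomology}) with the cohomology of $\tE G$ already computed in Theorem~\ref{thm:tEGCohomology}, and then track everything through the natural map of coefficient systems $\conc\Z\xrightarrow{\kappa}\Mackey A_{G/G}$. First, applying Proposition~\ref{prop:concCcohomology} with $C=\Z$ and $X=S^0$ gives immediately
\[
 \Mackey H_G^\alpha(S^0;\conc\Z)(G/K) \iso \tilde H^{\alpha^G}((S^0)^G;\Z) \iso \tilde H^{\alpha^G}(S^0;\Z),
\]
and since $(S^0)^e = (S^0)^G = S^0$ this is $\Z$ if $\alpha^G=0$ and $0$ otherwise, with $\rho$ the identity when nonzero and $\tau=0$; this is exactly $\conc\Z$, giving the additive statement. (One should note that the grading in Proposition~\ref{prop:concCcohomology} is on $RO(G)$, and that the decategorification issues of \S\ref{sec:ROGGrading} do not introduce sign subtleties here because $\conc\Z(G/e)=0$ forces the ring to be concentrated in gradings with $\alpha^G=0$, where the anticommutativity sign $\gamma(\alpha,\beta)$ is trivial.)

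Next I would use the weak equivalence $\tE G \smsh X^G \to \tE G\smsh X$, or rather the fact (used repeatedly, e.g.\ in Proposition~\ref{prop:tEGPeriodicity}) that $\tE G$ only sees fixed points. Concretely, the map $S^0\to\tE G$ induces an isomorphism $\Mackey H_G^\alpha(S^0;\conc\Z)\to \Mackey H_G^\alpha(\tE G;\conc\Z)$ for all $\alpha$, because the cofiber $EG_+\smsh S^0 = EG_+$ has trivial fixed points, so $\Mackey H_G^\bullet(EG_+;\conc\Z)=0$ by Proposition~\ref{prop:concCcohomology} again, and the long exact sequence of the cofibration $EG_+\to S^0\to\tE G$ collapses. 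Now I would map the short exact coefficient sequence $0\to\conc\Z\xrightarrow{\kappa}\Mackey A_{G/G}\to\Mackey R\Z\to 0$ into cohomology of $\tE G$: since $\Mackey H_G^\bullet(\tE G;\Mackey R\Z)$ can be computed (or one observes $\Mackey R\Z$-cohomology of $\tE G$ is a shift of $\conc\Z$-cohomology since $\rho\tau=0$ there), one gets that $\Mackey H_G^\bullet(\tE G;\conc\Z)\to\Mackey H_G^\bullet(\tE G;\Mackey A_{G/G})$ is injective. Composing with the iso $\Mackey H_G^\bullet(S^0;\conc\Z)\iso\Mackey H_G^\bullet(\tE G;\conc\Z)$ and the map $\Mackey H_G^\bullet(\tE G;\Mackey A_{G/G})\leftarrow \Mackey H_G^\bullet(S^0;\Mackey A_{G/G})$ (which by the structure of $\tE G$ is the projection onto the submodule generated by $\kappa$ and the $\delta\xi_1^{-k}$), one sees the composite $\Mackey H_G^\bullet(S^0;\conc\Z)\to\Mackey H_G^\bullet(S^0;\Mackey A_{G/G})$ is injective.

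To pin down the image as the ideal $\langle e^\alpha\kappa\mid \alpha^G=0\rangle$, I would argue directly from the structure of $\Mackey H_G^\bullet(\tE G;\Mackey A_{G/G})$ as a module over $\Mackey E^\bullet$ (Theorem~\ref{thm:tEGCohomology}): the submodule of elements in gradings $\alpha$ with $\alpha^G=0$ is precisely the $\Mackey E^\bullet$-span of $\kappa$, i.e.\ $\{e^\alpha\kappa : \alpha^G=0\}$, since every other generator $\delta\xi_1^{-k}$ sits in a grading with $\alpha^G$ odd $\geq 1$. That this submodule is exactly the image of $\Mackey H_G^\bullet(\tE G;\conc\Z)$ follows by comparing the additive answers degree by degree: both are $\conc\Z$ in gradings with $\alpha^G=0$ and zero elsewhere, and the map $\kappa\colon \Mackey H_G^\bullet(\tE G;\conc\Z)\to\Mackey H_G^\bullet(\tE G;\Mackey A_{G/G})$ is the identity $\Z\to\Z$ (multiplication by the generator $\kappa$) in degree $0$, hence an iso onto the $e^\alpha\kappa$-submodule in all $\alpha^G=0$ degrees by $\Mackey E^\bullet$-linearity. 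Pulling back along $\Mackey H_G^\bullet(S^0;\Mackey A_{G/G})\to\Mackey H_G^\bullet(\tE G;\Mackey A_{G/G})$, and using that the elements $e^\alpha\kappa\in\Mackey H_G^0(S^0;\Mackey A_{G/G})$ (Definition~\ref{def:ekappa}) are characterized by mapping to $e^\alpha\kappa\in\Mackey H_G^\bullet(\tE G)$, gives the ideal description; the statement that this sits inside $\Mackey H_G^0(S^0;\Mackey A_{G/G})$ is because all the $e^\alpha\kappa$ have integer dimension $0$. I expect the main obstacle to be purely bookkeeping: keeping the three coefficient systems and the three spaces $S^0$, $EG$, $\tE G$ straight, and checking carefully that the map labeled $\kappa$ on coefficients really does send the additive generator of $\Mackey H_G^{\alpha}(S^0;\conc\Z)$ to $e^\alpha\kappa$ rather than to a unit multiple of it — this is where one must use the explicit identification of $e^\alpha\kappa$ via its image in $\tE G$-cohomology, exactly as the parenthetical remark in the statement suggests.
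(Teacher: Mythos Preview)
Your additive computation and the passage to $\tE G$ are correct and match the paper. The identification of the image also has the right core idea—that multiplication by Euler classes is an isomorphism and one should track the degree-$0$ generator $\kappa$—but the paper gets there more directly, and you have a persistent arrow-direction error.

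The paper's route to injectivity and the ideal description is shorter than going through $\tE G$ first. It observes that the inclusion $S^0\hookrightarrow S^{\MM_k}$ (or $S^0\hookrightarrow S^\LL$ for $p=2$) induces an isomorphism on $\conc\Z$-cohomology, immediately from Proposition~\ref{prop:concCcohomology}, since $(S^{\MM_k})^G=S^0$. But that induced map \emph{is} multiplication by $e_k$, so each Euler class acts invertibly on $\Mackey H_G^\bullet(S^0;\conc\Z)$. Since the degree-$0$ generator maps to $\kappa$ under the coefficient change (this is the dimension axiom), the generator in each degree $\alpha$ with $\alpha^G=0$ is $e^\alpha\kappa$, and it maps to the element of the same name in $\Mackey A_{G/G}$-cohomology, which is nonzero by the known computation of $\Mackey H_G^\bullet(S^0;\Mackey A_{G/G})$. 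This gives injectivity and the ideal description simultaneously, with no need for the coefficient long exact sequence on $\tE G$ or any computation of $\Mackey H_G^\bullet(\tE G;\Mackey R\Z)$. Your third paragraph essentially rediscovers this Euler-class argument, just transported to $\tE G$ via $\Mackey E^\bullet$-linearity; your second paragraph's detour through the coefficient long exact sequence is both vague and unnecessary.

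On the direction error: the map induced by $S^0\to\tE G$ is $\psi\colon\Mackey H_G^\bullet(\tE G;-)\to\Mackey H_G^\bullet(S^0;-)$, not the reverse. There is no natural map ``$\Mackey H_G^\bullet(S^0;\Mackey A_{G/G})\to\Mackey H_G^\bullet(\tE G;\Mackey A_{G/G})$'' to pull back along, and what you describe as ``the projection onto the submodule generated by $\kappa$ and the $\delta\xi_1^{-k}$'' does not exist. The factorization in the statement reads
\[
 \Mackey H_G^\bullet(S^0;\conc\Z)\xleftarrow{\ \sim\ }\Mackey H_G^\bullet(\tE G;\conc\Z)
 \longrightarrow\Mackey H_G^\bullet(\tE G;\Mackey A_{G/G})
 \xrightarrow{\ \psi\ }\Mackey H_G^\bullet(S^0;\Mackey A_{G/G}),
\]
with the left arrow inverted. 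Once the arrows are corrected your argument goes through, but as written it invokes a nonexistent map.

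A minor point: the ``$\Mackey H_G^0$'' in the displayed ideal inclusion is a typo for $\Mackey H_G^\bullet$. Your rationalization that ``all the $e^\alpha\kappa$ have integer dimension $0$'' is incorrect—for instance $e_1\kappa$ lives in grading $\MM_1$, which has integer dimension $2$.
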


\begin{proof}
The additive calculation is immediate from Proposition~\ref{prop:concCcohomology}.
We also know that the map $\Mackey H_G^0(S^0;\conc\Z)\to \Mackey H_G^0(S^0;\Mackey A_{G/G})$
is the inclusion $\conc\Z\to \Mackey A_{G/G}$.
Write $\kappa\in \Mackey H_G^0(S^0;\conc\Z)$ for the generator that maps
to $\kappa\in \Mackey H_G^0(S^0;\Mackey A_{G/G})$.

If $p$ is odd,
consider the inclusion $S^0\to S^{\MM_i}$. Again by Proposition~\ref{prop:concCcohomology},
we have that the induced map
$\Mackey H_G^\bullet(S^{\MM_i};\conc\Z)\to \Mackey H_G^\bullet(S^0;\conc\Z)$ is an
isomorphism. But this map is multiplication by $e_i$, so multiplication by $e_i$
is an isomorphism on $\Mackey H_G^\bullet(S^0;\conc\Z)$. This implies that,
for $\alpha$ such that $\alpha^G = 0$, $\Mackey H_G^\alpha(S^0;\conc\Z)$
is generated by $e^\alpha\kappa$, an element that maps to
$e^\alpha\kappa\in H_G^\alpha(S^0;\Mackey A_{G/G})$.

Similarly, for $p=2$, considering the inclusion $S^0\to S^\Lambda$,
we see that multiplication by $e$ is an isomorphism and that,
for $\alpha^G = 0$, $\Mackey H_G^\alpha(S^0;\conc\Z)$ is generated by
an element $e^\alpha\kappa = e^{|\alpha|}\kappa$.

We can now see that, for any $p$,
the map $\Mackey H_G^\bullet(S^0;\conc\Z) \to \Mackey H_G^\bullet(S^0;\Mackey A_{G/G})$
is injective with image the ideal $\langle e^\alpha\kappa \mid \alpha^G=0 \rangle$.

For the last statement, Proposition~\ref{prop:concCcohomology} implies
that
\[
 \Mackey H_G^\bullet(S^0;\conc\Z) \iso \Mackey H_G^\bullet(\tE G;\conc\Z)
\]
because $\tE G^G \hmtpc S^0$. Thus, we get a factorization
\[
 \Mackey H_G^\bullet(S^0;\conc\Z) \iso \Mackey H_G^\bullet(\tE G;\conc\Z)
  \to \Mackey H_G^\bullet(\tE G;\Mackey A_{G/G})
  \to \Mackey H_G^\bullet(S^0;\Mackey A_{G/G}).
\]
The image in $\Mackey H_G^\bullet(\tE G;\Mackey A_{G/G})$ is clear from our computations.
\end{proof}

It's useful to notice that the image of
$\Mackey H_G^\bullet(S^0;\conc\Z) \to \Mackey H_G^\bullet(\tE G;\Mackey A_{G/G})$
is a direct summand. The other summand is given by the groups in gradings $\alpha$
with $\alpha^G\neq 0$; it's straightforward to check that this is a submodule.

\begin{theorem}\label{thm:pointEvenRZcoeffs}
Let $p=2$.
Additively,
\[
 \Mackey H_G^\alpha(S^0;\Mackey R\Z) \iso
  \begin{cases}
    \Mackey R\Z & \text{if $|\alpha| = 0$ and $\alpha^G \leq 0$ is even} \\
    \Mackey R\Z_- & \text{if $|\alpha| = 0$ and $\alpha^G \leq 1$ is odd} \\
    \Mackey L\Z & \text{if $|\alpha| = 0$ and $\alpha^G > 0$ is even} \\
    \Mackey L\Z_- & \text{if $|\alpha| = 0$ and $\alpha^G \geq 3$ is odd} \\
    \conc{\Z/2} & \text{if $|\alpha| > 0$ and $\alpha^G \leq 0$ is even} \\
    \conc{\Z/2} & \text{if $|\alpha| < 0$ and $\alpha^G \geq 3$ is odd} \\
    0 & \text{otherwise}.
  \end{cases}
\]
$\Mackey H_G^\bullet(S^0;\Mackey R\Z)$ is a strictly commutative $RO(G)$-graded 
algebra over $\Mackey R\Z$,
generated multiplicatively by elements
\begin{align*}
 \iota &\in \Mackey H_G^{\LL-1}(S^0;\Mackey R\Z)(G/e) \\
 \iota^{-1} &\in \Mackey H_G^{1-\LL}(S^0;\Mackey R\Z)(G/e) \\
 \xi &\in \Mackey H_G^{2(\LL-1)}(S^0;\Mackey R\Z)(G/G) \\
 e &\in \Mackey H_G^{\LL}(S^0;\Mackey R\Z)(G/G) \\
 e^{-m}\delta\xi^{-n} &\in \Mackey H_G^{1 - m\LL - 2n(\LL-1)}(S^0;\Mackey R\Z)(G/G)
   & & m, n \geq 1.
\end{align*}
These generators satisfy the following {\em structural} relations:
\begin{align*}
 \tau(\iota^{-1}) &= 0 \\
 \rho(\xi) &= \iota^2 \\
 \rho(e) &= 0 \\
 e^{-1}\delta\xi^{-n} &= \tau(\iota^{-2n-1}) & & \text{for $n\geq 1$} \\
 \rho(e^{-m}\delta\xi^{-n}) &= 0 & & \text{for $m\geq 2$ and $n\geq 1$}\\
\intertext{and the following {\em multiplicative} relations:}
  \iota\cdot \iota^{-1} &= \rho(1) \\
 e\cdot e^{-m}\delta\xi^{-n} &= e^{-m+1}\delta\xi^{-n} & &\text{for $m\geq 2$ and $n\geq 1$} \\
 \xi\cdot e^{-m}\delta\xi^{-n} &= e^{-m}\delta\xi^{-n+1} & & 
   \text{for $m\geq 1$ and $n\geq 2$} \\
 \xi\cdot e^{-m}\delta\xi^{-1} &= 0 & & \text{for $m\geq 2$} \\
 e^{-m}\delta\xi^{-k}\cdot e^{-n}\delta\xi^{-\ell} &= 0 & & 
   \text{if $m, n, k, \ell\geq 1$}
\end{align*}
The following relations are implied by the preceding ones:
\begin{align*}
 2e^m\xi^n &= 0 & & \text{if $m > 0$ and $n \geq 0$} \\
 2e^{-m}\delta\xi^{-n} &= 0 & & \text{if $m\geq 2$ and $n\geq 1$} \\
 t\iota^k &= (-1)^k\iota^k & & \text{for all $k$} \\
 \xi\cdot\tau(\iota^k) &= \tau(\iota^{k+2}) & &\text{for all $k$} \\
 e\cdot\tau(\iota^k) &= 0 & &\text{for all $k$} \\
 e^{-m}\delta\xi^{-n}\cdot \tau(\iota^k) &= 0 & & \text{for all $m, n\geq 1$ and $k$} \\
 \tau(\iota^k)\cdot\tau(\iota^\ell) &= 0 & &\text{if $k$ or $\ell$ is odd} \\
 \tau(\iota^{2k})\cdot\tau(\iota^{2\ell}) &= 2\tau(\iota^{2(k+\ell)}) & &\text{for all $k$ and $\ell$}\\
 \tau(\iota^{2k+1}) &= 0 & & \text{if $k\geq 0$} \\
 e\cdot e^{-1}\delta\xi^{-n} &= 0 & & \text{if $n\geq 1$}
\end{align*}
\end{theorem}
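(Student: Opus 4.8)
The plan is to derive the $\Mackey R\Z$-cohomology of a point from the already-completed $\Mackey A_{G/G}$-computation (Theorems~\ref{thm:evenCohomPointRedux} and~\ref{thm:evenCohomPointProved}) using the short exact sequence of coefficient systems $0 \to \conc\Z \xrightarrow{\kappa} \Mackey A_{G/G} \to \Mackey R\Z \to 0$. The first step is to feed this short exact sequence into the long exact sequence in cohomology, obtaining
\[
 \cdots \to \Mackey H_G^\alpha(S^0;\conc\Z) \xrightarrow{\kappa} \Mackey H_G^\alpha(S^0;\Mackey A_{G/G})
  \to \Mackey H_G^\alpha(S^0;\Mackey R\Z) \to \Mackey H_G^{\alpha+1}(S^0;\conc\Z) \to \cdots.
\]
By Proposition~\ref{prop:concZcohomologypoint}, $\Mackey H_G^\alpha(S^0;\conc\Z)$ is $\conc\Z$ when $\alpha^G = 0$ and vanishes otherwise, and the map $\kappa$ to $\Mackey H_G^\alpha(S^0;\Mackey A_{G/G})$ is injective with image the ideal $\langle e^\alpha\kappa\rangle$. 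Thus the connecting map $\Mackey H_G^\alpha(S^0;\Mackey R\Z) \to \Mackey H_G^{\alpha+1}(S^0;\conc\Z)$ is zero (its target is nonzero only when $(\alpha+1)^G = 0$, i.e.\ $\alpha^G = -1$, and there one checks directly using the structure of $\Mackey H_G^{\bullet}(S^0;\Mackey A_{G/G})$ that $\kappa$ into grading $\alpha+1$ is already surjective onto the relevant summand — alternatively, note the image of $\kappa$ is a direct summand as remarked after Proposition~\ref{prop:concZcohomologypoint}). Hence for every $\alpha$ we get a short exact sequence
\[
 0 \to \Mackey H_G^\alpha(S^0;\conc\Z) \xrightarrow{\kappa} \Mackey H_G^\alpha(S^0;\Mackey A_{G/G}) \to \Mackey H_G^\alpha(S^0;\Mackey R\Z) \to 0,
\]
so that $\Mackey H_G^\alpha(S^0;\Mackey R\Z) \iso \Mackey H_G^\alpha(S^0;\Mackey A_{G/G})/\langle e^\alpha\kappa\rangle$.

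From this quotient description the additive statement follows by inspection of Theorem~\ref{thm:evenCohomPointRedux}: when $\alpha^G = 0$ the quotient of $\Mackey A_{G/G}$ by the $\kappa$-line is $\Mackey R\Z$ (at $\alpha = 0$) and the quotient of $\conc\Z$ by its $\kappa$-multiples is $\conc{\Z/2}$ (for $|\alpha| \neq 0$), while for $\alpha^G \neq 0$ the subobject $\Mackey H_G^\alpha(S^0;\conc\Z)$ is zero so nothing changes; one reads off exactly the case list in the theorem statement, noting in particular that the $\Mackey A_{G/G}$ at the origin becomes $\Mackey R\Z$ and the $\conc\Z$'s with $\alpha^G = 0$, $|\alpha| > 0$ become $\conc{\Z/2}$. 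For the multiplicative structure, the key observation is that the surjection $\Mackey H_G^\bullet(S^0;\Mackey A_{G/G}) \to \Mackey H_G^\bullet(S^0;\Mackey R\Z)$ is a ring map (it is induced by a map of unital ring Mackey functors), so the images of the generators from Theorem~\ref{thm:evenCohomPointProved} generate, and the relations in the target are exactly the images of the relations in the source together with the new relations coming from killing the ideal $\langle e^\alpha\kappa \rangle$. Concretely, the generators $e^{-m}\kappa$ all map to $0$ (they lie in the killed ideal, being $e^{-m}\kappa = e^{-m\LL}\kappa$), so they disappear from the generating set; the generator $\kappa = \kappa\mu^{0,1}$ of $A(G)$ maps to $1$, making the ring an $\Mackey R\Z$-algebra; and every relation of Theorem~\ref{thm:evenCohomPointProved} either survives verbatim, becomes trivial (those involving $e^{-m}\kappa$, such as $e\cdot e^{-m}\kappa = e^{-m+1}\kappa$ and $\xi\cdot e^{-m}\kappa = 0$ and $e^{-m}\kappa\cdot e^{-n}\kappa = 2 e^{-m-n}\kappa$), or needs to be recorded as a consequence of killing the ideal. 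One checks that the remaining listed structural relations ($\tau(\iota^{-1}) = 0$, $\rho(\xi) = \iota^2$, $\rho(e) = 0$, $e^{-1}\delta\xi^{-n} = \tau(\iota^{-2n-1})$, $\rho(e^{-m}\delta\xi^{-n}) = 0$ for $m\geq 2$) and multiplicative relations are precisely the images of those in the $\Mackey A_{G/G}$-case, and that no new relation among the surviving generators is forced beyond these — this last point follows because the additive calculation already pins down every group, and the listed relations are enough to rewrite an arbitrary monomial into the additive basis. The ``implied'' relations are verified exactly as in the proof of Theorem~\ref{thm:evenCohomPointProved} (Frobenius relations, the identity $\rho\tau(\iota^k) = (1+t)\iota^k$, and $ge = 0 \Rightarrow \kappa e = 2e$, but now $\kappa = 1$ so this reads $e = 2e$... no — rather $2e\xi = \kappa e\xi$ is no longer available since $\kappa\xi \neq 0$ here; instead $2e^m\xi^n = 0$ follows from $2e\cdot\xi = \rho\tau(\iota)\cdot\xi = \tau(\rho(\text{unit})\cdots)$ — actually more simply from the additive fact that the group containing $e^m\xi^n$, $m>0$, $n\geq0$, is $\conc{\Z/2}$). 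Strict commutativity is inherited from the source since the quotient map is a ring map and, as in the $\Mackey A_{G/G}$-case, the only potential sign $1-g$ acts trivially — indeed $g$ acts as $0$ on $\Mackey R\Z$, so $(1-g)x = x$ for every $x$, making the ring strictly commutative outright.

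The main obstacle I anticipate is not the additive computation or the surjectivity of the ring map, but verifying that the list of relations given is \emph{complete} — that the displayed structural and multiplicative relations, together with the $\Mackey R\Z$-algebra structure, suffice to present the ring, with nothing missing. The clean way to handle this is the same strategy used in Theorem~\ref{thm:evenCohomPointProved}: show that the listed relations allow any monomial in the generators to be rewritten as an $\Mackey R\Z$-linear combination of a fixed additive basis (namely the images of the admissible $\Mackey A_{G/G}$-basis elements not killed by the quotient), and then invoke the additive calculation to conclude that no further relations can hold. A secondary subtlety is the bookkeeping around $e^{-1}\delta\xi^{-n} = \tau(\iota^{-2n-1})$: this element survives to the $\Mackey R\Z$-cohomology (it is not in the killed ideal since $e^{-1}\delta\xi^{-n}$ lives in a grading with nonzero fixed-point dimension), and one must confirm the group $\Mackey H_G^{(2n+1)(1-\LL)}(S^0;\Mackey R\Z) \iso \Mackey L\Z_-$ from the quotient and that $\tau$ is an epimorphism there — which is immediate since $\tau$ was already an epimorphism in the $\Mackey A_{G/G}$-case and the quotient by $\conc\Z$ (concentrated at level $G/G$) does not affect the $G/e$ level or the transfer being surjective. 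With these two points dispatched, the proof reduces to transporting the $\Mackey A_{G/G}$-results through the exact quotient and is essentially mechanical.
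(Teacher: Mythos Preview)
Your approach is the same as the paper's: use Proposition~\ref{prop:concZcohomologypoint} to get the short exact sequence of cohomologies, read off the quotient additively, and transport the generators and relations from Theorem~\ref{thm:evenCohomPointProved}. The paper's proof is essentially a two-sentence version of what you wrote.

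There is one genuine slip, though. You write that ``the generator $\kappa = \kappa\mu^{0,1}$ of $A(G)$ maps to $1$,'' and later ``now $\kappa = 1$.'' This is wrong: under $\epsilon\colon A(G)\to\Z$ we have $\epsilon(\kappa) = \epsilon(p-g) = p - p = 0$, so $\kappa$ maps to $0$, not $1$. This is precisely why the quotient is an $\Mackey R\Z$-algebra (the relation $\kappa = 0$ is imposed). Once you correct this, your worried passage about $2e^m\xi^n = 0$ resolves cleanly: the relation $\kappa e = 2e$ in $\Mackey H_G^\bullet(S^0;\Mackey A_{G/G})$ becomes $0 = 2e$ in the quotient, and more generally $\kappa\xi = 0$ is no obstacle since $\kappa$ itself vanishes. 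The paper phrases this as the observation that, for an $\Mackey R\Z$-module generated by $x$ at level $G/G$, $\rho(x) = 0$ forces $2x = 0$ (because $\tau\rho = 2$ in such a module). Your fallback to the additive structure is valid but unnecessary once $\kappa\mapsto 0$ is in place.
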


\begin{proof}
By Proposition~\ref{prop:concZcohomologypoint}, we have a short exact sequence
\[
 0 \to \Mackey H_G^\bullet(S^0;\conc\Z) \to \Mackey H_G^\bullet(S^0;\Mackey A_{G/G})
  \to \Mackey H_G^\bullet(S^0;\Mackey R\Z) \to 0
\]
exhibiting $\Mackey H_G^\bullet(S^0;\Mackey R\Z)$ as a quotient ring of
$\Mackey H_G^\bullet(S^0;\Mackey A_{G/G})$.
The additive calculation follows by noticing what is killed off in the quotient, which
are all of the $e^{-m}\kappa$ for $m\geq 0$ and the elements $2e^m$ for $m\geq 1$.
The rest is just seeing what relations are still necessary from those in
Theorem~\ref{thm:evenCohomPointProved}.
We also simplify a bit by noticing that, for an $\Mackey R\Z$-module
generated by an element $x$ at level $G/G$, $\rho(x) = 0$ implies that $2x = 0$.
\end{proof}

\begin{figure}
\[\def\objectstyle{\scriptstyle}
 \xymatrix@!0@R=4ex@C=2.5em{
  & & & & & & & & & & \\
  & \conc{\Z/2} &\cdot& \conc{\Z/2} &\cdot& \conc{\Z/2} &\cdot& \cdot &\cdot& \cdot & \cdot \\
  & \conc{\Z/2} &\cdot& \conc{\Z/2} &\cdot& \conc{\Z/2} &\cdot& \cdot &\cdot& \cdot & \cdot \\
  & \conc{\Z/2} &\cdot& \conc{\Z/2} &\cdot& \conc{\Z/2} &\cdot& \cdot &\cdot& \cdot & \cdot \\
  & \conc{\Z/2} &\cdot& \conc{\Z/2} &\cdot& \conc{\Z/2} &\cdot& \cdot &\cdot& \cdot & \cdot \\
  \ar@{-}'[r]'[rr]'[rrr]'[rrrr]'[rrrrr]'[rrrrrr]'[rrrrrrr]'[rrrrrrrr]'[rrrrrrrrr]'[rrrrrrrrrr][rrrrrrrrrrr]
   & \Mackey R\Z & \Mackey R\Z_{\mathrlap{-}} & \Mackey R\Z & \Mackey R\Z_{\mathrlap{-}} & \Mackey R\Z & \Mackey R\Z_{\mathrlap{-}} & \Mackey L\Z & \Mackey L\Z_{\mathrlap{-}} & \Mackey L\Z & \Mackey L\Z_{\mathrlap{-}} &\\
  & \cdot &\cdot& \cdot &\cdot& \cdot &\cdot& \cdot & \conc{\Z/2} & \cdot &  \conc{\Z/2}  \\
  & \cdot &\cdot& \cdot &\cdot& \cdot &\cdot& \cdot & \conc{\Z/2} & \cdot &  \conc{\Z/2}  \\
  & \cdot &\cdot& \cdot &\cdot& \cdot &\cdot& \cdot & \conc{\Z/2} & \cdot &  \conc{\Z/2}  \\
  & \cdot &\cdot& \cdot &\cdot& \cdot &\cdot& \cdot & \conc{\Z/2} & \cdot &  \conc{\Z/2}  \\
  & & & & & \ar@{-}'[u]'[uu]'[uuu]'[uuuu]'[uuuuu]'[uuuuuu]'[uuuuuuu]'[uuuuuuuu]'[uuuuuuuuu][uuuuuuuuuu]
 }
\]
%\vskip 1ex
\[\def\objectstyle{\scriptstyle}
 \xymatrix@!0@R=4ex@C=2.5em{
  & & & & & & & & & & \\
  & e^4\xi^2 &\cdot& e^4\xi &\cdot& e^4 &\cdot& \cdot &\cdot& \cdot & \cdot \\
  & e^3\xi^2 &\cdot& e^3\xi &\cdot& e^3 &\cdot& \cdot &\cdot& \cdot & \cdot \\
  & e^2\xi^2 &\cdot& e^2\xi &\cdot& e^2 &\cdot& \cdot &\cdot& \cdot & \cdot \\
  & e\xi^2 &\cdot& e\xi &\cdot& e &\cdot& \cdot &\cdot& \cdot & \cdot \\
  \ar@{-}'[r]'[rr]'[rrr]'[rrrr]'[rrrrr]'[rrrrrr]'[rrrrrrr]'[rrrrrrrr]'[rrrrrrrrr]'[rrrrrrrrrr][rrrrrrrrrrr]
   & \xi^2 & (\iota^3) & \xi & (\iota) & 1 & (\iota^{-1}) & (\iota^{-2}) & (\iota^{-3}) & (\iota^{-4}) & (\iota^{-5}) &\\
  & \cdot &\cdot& \cdot &\cdot& \cdot &\cdot& \cdot & e^{-2}\delta\xi^{-1} & \cdot &  e^{-2}\delta\xi^{-2}  \\
  & \cdot &\cdot& \cdot &\cdot& \cdot &\cdot& \cdot & e^{-3}\delta\xi^{-1} & \cdot &  e^{-3}\delta\xi^{-2}  \\
  & \cdot &\cdot& \cdot &\cdot& \cdot &\cdot& \cdot & e^{-4}\delta\xi^{-1} & \cdot &  e^{-4}\delta\xi^{-2}  \\
  & \cdot &\cdot& \cdot &\cdot& \cdot &\cdot& \cdot & e^{-5}\delta\xi^{-1} & \cdot &  e^{-5}\delta\xi^{-2}  \\
  & & & & & \ar@{-}'[u]'[uu]'[uuu]'[uuuu]'[uuuuu]'[uuuuuu]'[uuuuuuu]'[uuuuuuuu]'[uuuuuuuuu][uuuuuuuuuu]
 }
\]
\caption{$\protect\Mackey H_G^\bullet(S^0;\protect\Mackey R\Z)$ and its generators, $p=2$}\label{fig:EvenCohomPointRZ}
\end{figure}
Figure~\ref{fig:EvenCohomPointRZ} shows $\Mackey H_G^\bullet(S^0;\Mackey R\Z)$ and its
generators.
You can see something interesting in this figure.
Recall that the integer-graded part lies along the diagonal through the origin, from
southwest to northeast. As it must, because of the dimension axiom,
it contains only one nonzero group, which is $\Mackey R\Z$.
Now look at the diagonal one to the right. It also contains only one nonzero group,
which is $\Mackey R\Z_-$. So, if we were to shift the $\Mackey R\Z$ cohomology one to the left,
uniqueness shows that we get cohomology with coefficients in $\Mackey R\Z_-$.
This continues for two more diagonals and gives us the following result.

\begin{corollary}\label{cor:manyIsoCoeffs}
There are natural isomorphisms
\begin{multline*}
 \Mackey H_G^\alpha(X;\Mackey R\Z)
  \iso \Mackey H_G^{\alpha+(\Lambda-1)}(X;\Mackey R\Z_-) \\
  \iso \Mackey H_G^{\alpha+2(\Lambda-1)}(X;\Mackey L\Z)
  \iso \Mackey H_G^{\alpha+3(\Lambda-1)}(X;\Mackey L\Z_-).
\end{multline*}
Moreover, these are isomorphisms of modules over
$\Mackey H_G^\bullet(S^0;\Mackey R\Z)$.
\end{corollary}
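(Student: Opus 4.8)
The plan is to obtain all three isomorphisms at once from the uniqueness of equivariant ordinary cohomology, applied to \emph{shifted} theories. For a fixed $\beta\in RO(G)$ the assignment $X\mapsto \Mackey H_G^{\bullet+\beta}(X;\Mackey R\Z)$ is again an $RO(G)$-graded cohomology theory: it inherits the long exact sequences and all the suspension isomorphisms of $\Mackey H_G^\bullet(-;\Mackey R\Z)$, merely reindexed. By the uniqueness theorem of \cite{CW:ordinaryhomology} (the same principle already used in the proofs of Propositions~\ref{prop:cohomologyTrivialAction} and~\ref{prop:concCcohomology}), an $RO(G)$-graded ordinary cohomology theory is determined by its coefficient system, i.e.\ by its values on the orbits $G/H_+$ in integer gradings. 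So it suffices to check that, for $\beta=-k(\LL-1)$ with $k=1,2,3$, the shifted theory $X\mapsto \Mackey H_G^{\bullet-k(\LL-1)}(X;\Mackey R\Z)$ satisfies the dimension axiom with coefficient Mackey functor $\Mackey R\Z_-$, $\Mackey L\Z$, $\Mackey L\Z_-$ respectively.

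The first step is the case check on $G/G_+$. For $n\in\Z$ the grading $\alpha=n-k(\LL-1)$ has $|\alpha|=n$ and $\alpha^G=n+k$, so Theorem~\ref{thm:pointEvenRZcoeffs} evaluates $\Mackey H_G^{\alpha}(S^0;\Mackey R\Z)$ immediately. For $n\neq0$ the only two families of Theorem~\ref{thm:pointEvenRZcoeffs} that are not already concentrated at $n=0$, namely $|\alpha|>0$ with $\alpha^G\le0$ and $|\alpha|<0$ with $\alpha^G\ge3$, are both incompatible with $\alpha^G=n+k$ once $1\le k\le3$, so the shifted theory vanishes there; at $n=0$ it is $\Mackey H_G^{-k(\LL-1)}(S^0;\Mackey R\Z)$, which is $\Mackey R\Z_-$ for $k=1$, $\Mackey L\Z$ for $k=2$, and $\Mackey L\Z_-$ for $k=3$, and since Theorem~\ref{thm:pointEvenRZcoeffs} is a statement about Mackey functors this pins down the coefficient Mackey functor, not merely its groups. (At $k=4$ the grading $-1-4(\LL-1)$ has $\alpha^G=3$ with nonzero value $\conc{\Z/2}$, which is precisely why the chain stops at three; the ultimate source of the sign-twist is the factor $(-1)^{\alpha^G}$ of Proposition~\ref{prop:cohomologyG}.) On $G/e_+$ the theory is concentrated in integer dimension $0$ for dimension reasons, since by the same Wirthm\"uller argument as in the proof of Proposition~\ref{prop:cohomologyG} the group $\Mackey H_G^\alpha(G/e_+;\Mackey R\Z)$ is nonzero only for $|\alpha|=0$. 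Hence the dimension axiom holds on every orbit, and uniqueness yields natural isomorphisms of $RO(G)$-graded theories
\begin{gather*}
 \Mackey H_G^\alpha(X;\Mackey R\Z_-)\iso \Mackey H_G^{\alpha-(\LL-1)}(X;\Mackey R\Z), \\
 \Mackey H_G^\alpha(X;\Mackey L\Z)\iso \Mackey H_G^{\alpha-2(\LL-1)}(X;\Mackey R\Z), \\
 \Mackey H_G^\alpha(X;\Mackey L\Z_-)\iso \Mackey H_G^{\alpha-3(\LL-1)}(X;\Mackey R\Z),
\end{gather*}
which, after reindexing, compose to give the chain of isomorphisms in the statement.

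It remains to see that these are isomorphisms of $\Mackey H_G^\bullet(S^0;\Mackey R\Z)$-modules. Each shifted theory $\Mackey H_G^{\bullet+\beta}(-;\Mackey R\Z)$ is a module over the ring $\Mackey H_G^\bullet(S^0;\Mackey R\Z)$ via the cup product $\Mackey H_G^\gamma(S^0;\Mackey R\Z)\otimes\Mackey H_G^{\alpha+\beta}(X;\Mackey R\Z)\to\Mackey H_G^{\gamma+\alpha+\beta}(X;\Mackey R\Z)$, while $\Mackey H_G^\bullet(-;\Mackey R\Z_-)$ (and likewise for $\Mackey L\Z$ and $\Mackey L\Z_-$) is a module over $\Mackey H_G^\bullet(S^0;\Mackey R\Z)$ because $\Mackey R\Z_-$ is a module over the ring $\Mackey R\Z$ when $p=2$. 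The comparison isomorphism can be normalized to be the identity on coefficients in integer degree $0$, where it is visibly an $\Mackey R\Z$-module map; since both module structures are determined by the coefficient pairing $\Mackey R\Z\boxprod\Mackey R\Z_-\to\Mackey R\Z_-$ together with the suspension isomorphisms, the same uniqueness argument upgrades the normalized isomorphism to a module isomorphism in all gradings.

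I expect the main obstacle to be exactly this Mackey-functor-level bookkeeping: extracting from the uniqueness theorem an honest natural transformation (not just a degreewise isomorphism of groups) that is compatible with restriction, transfer, and the $\Mackey H_G^\bullet(S^0;\Mackey R\Z)$-action, and making sure the twist distinguishing $\Mackey R\Z_-$ from $\Mackey R\Z$—the reason one must shift by $(\LL-1)$ rather than by an actual representation—is tracked correctly through the Wirthm\"uller identifications underlying Proposition~\ref{prop:cohomologyG}. The arithmetic of the case check itself is routine.
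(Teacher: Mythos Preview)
Your argument is correct and is exactly the paper's approach: shift the $\Mackey R\Z$-coefficient theory by $-k(\LL-1)$, read off from Theorem~\ref{thm:pointEvenRZcoeffs} that the resulting coefficient Mackey functor is $\Mackey R\Z_-$, $\Mackey L\Z$, or $\Mackey L\Z_-$ with vanishing in all other integer degrees, and invoke uniqueness. Your case check $|\alpha|=n$, $\alpha^G=n+k$ is precisely the ``diagonal'' observation the paper makes just before the corollary, and your remark that the chain stops at $k=4$ because $\conc{\Z/2}$ appears at $n=-1$ matches the paper's implicit reasoning. The separate verification on $G/e_+$ is redundant (Theorem~\ref{thm:pointEvenRZcoeffs} already gives the full Mackey functor, so vanishing at level $G/e$ is included), but harmless.
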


\begin{proof}
The proof of the isomorphisms was given above. That all these theories are modules
over $\Mackey H_G^\bullet(S^0;\Mackey R\Z)$ follows from the fact that
$\Mackey R\Z_-$, $\Mackey L\Z$, and $\Mackey L\Z_-$ are all modules over $\Mackey R\Z$,
as noted in \S\ref{sec:MackeyFunctors}, or as exhibited in the fact that all
appear as groups within $\Mackey H_G^\bullet(S^0;\Mackey R\Z)$.
\end{proof}

Here are the similar results for $p$ odd.

\begin{theorem}\label{thm:pointOddRZcoeffs}
Let $p$ be odd.
Additively,
\[
 \Mackey H_G^{\alpha}(S^0;\Mackey R\Z) \iso
  \begin{cases}
   \Mackey R\Z & \text{if $|\alpha| = 0$ and $\alpha^G \leq 0$ is even} \\
   \Mackey L\Z & \text{if $|\alpha| = 0$ and $\alpha^G > 0$ is even} \\
   \conc{\Z/p} & \text{if $|\alpha| > 0$ and $\alpha^G \leq 0$ is even} \\
   \conc{\Z/p} & \text{if $|\alpha| < 0$ and $\alpha^G \geq 3$ is odd} \\
   0 & \text{otherwise.}
  \end{cases}
\]
$\Mackey H_G^\bullet(S^0;\Mackey R\Z)$ is a strictly commutative unital $RO(G)$-graded algebra over $\Mackey R\Z$,
generated by elements
\begin{align*}
 \iota_k &\in \Mackey H_G^{\MM_k-2}(S^0;\Mackey R\Z)(G/e) & & 1\leq k \leq (p-1)/2 \\
 \iota_k^{-1} &\in \Mackey H_G^{2-\MM_k}(S^0;\Mackey R\Z)(G/e) & & 1\leq k \leq (p-1)/2 \\
 \xi_1 &\in \Mackey H_G^{\MM_1-2}(S^0;\Mackey R\Z)(G/G) \\
 e_1 &\in \Mackey H_G^{\MM_1}(S^0;\Mackey R\Z)(G/G) \\
 \lambda^{\alpha} &\in \Mackey H_G^\alpha(S^0;\Mackey R\Z)(G/G) & & \alpha\in RO_0(G) \\
 e_1^{-m}\delta\xi_1^{-n} &\in \Mackey H_G^{1-m\MM_1 - n(\MM_1-2)}(S^0;\Mackey R\Z)(G/G)
  & & m, n \geq 1
\end{align*}
These generators satisfy the following {\em structural} relations:
\begin{align*}
 t\iota_k^{-1} &= \iota_k^{-1} & & \text{for all $k$} \\
 \rho(\xi_1) &= \iota_1 \\
 \rho(e_1) &= 0 \\
 \rho(\lambda^{\alpha}) &= \iota^\alpha 
     & & \text{for all $\alpha\in RO_0(G)$} \\
 \rho(e_1^{-m}\delta\xi_1^{-n}) & = 0 & & \text{for $m, n \geq 1$} \\
\intertext{and the following {\em multiplicative} relations:}
 \iota_k \cdot \iota_k^{-1} &= \rho(1) & & \text{for all $k$} \\
 \lambda^0 &= 1 \\
 \lambda^\alpha\cdot\lambda^\beta &= \lambda^{\alpha+\beta}
      & & \text{for $\alpha, \beta \in RO_0(G)$} \\
 e_1 \cdot e_1^{-m}\delta\xi_1^{-n} &= e_1^{-m+1}\delta\xi_1^{-n} & & \text{for $m\geq 2$ and $n\geq 1$} \\
 e_1 \cdot e_1^{-1}\delta\xi_1^{-n} &= 0 & & \text{for $n\geq 1$} \\
 \xi_1\cdot e_1^{-m}\delta\xi_1^{-n} &= e_1^{-m}\delta\xi_1^{-n+1} 
     & & \text{for $m\geq 1$ and $n\geq 2$} \\
 \xi_1\cdot e_1^{-m}\delta\xi_1^{-1} &= 0 & & \text{for $m\geq 1$} \\
 e_1^{-k}\delta\xi_1^{-\ell} \cdot e_1^{-m}\delta\xi_1^{-n} &= 0
     & & \text{for $k, \ell, m, n \geq 1$}
\end{align*}
The following relations are implied by the preceding ones:
\begin{align*}
 \xi_1 \tau(\iota^\alpha) &= \tau(\iota_1\iota^\alpha) && \text{for $\alpha^G = 0$}\\
 \lambda^\alpha\tau(\iota^\beta) &= \tau(\iota^{\alpha+\beta}) 
   && \text{for $\alpha\in RO_0(G)$ and $\beta^G = 0$}\\
 \tau(\iota^\alpha)\tau(\iota^\beta) &= p\tau(\iota^{\alpha+\beta}) 
   && \text{for $\alpha^G = 0 = \beta^G$} \\
 e_1\tau(\iota^\alpha) &= 0 \\
 pe_1 &= 0 \\
 p e_1^{-m}\delta\xi_1^{-n} &= 0 & & \text{for $m, n \geq 1$}
\end{align*}
\qed
\end{theorem}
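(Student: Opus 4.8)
The plan is to proceed exactly as in the proof of Theorem~\ref{thm:pointEvenRZcoeffs}, realizing $\Mackey H_G^\bullet(S^0;\Mackey R\Z)$ as a quotient of $\Mackey H_G^\bullet(S^0;\Mackey A_{G/G})$ via the short exact sequence of coefficient systems
\[
 0 \to \conc\Z \xrightarrow{\kappa} \Mackey A_{G/G} \to \Mackey R\Z \to 0 .
\]
First I would invoke Proposition~\ref{prop:concZcohomologypoint}, which computes $\Mackey H_G^\bullet(S^0;\conc\Z)$ additively and states that the induced map $\Mackey H_G^\bullet(S^0;\conc\Z)\to\Mackey H_G^\bullet(S^0;\Mackey A_{G/G})$ is injective in every grading. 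Injectivity in grading $\alpha+1$ forces the connecting homomorphism out of grading $\alpha$ in the long exact sequence to vanish, so that sequence breaks into short exact sequences
\[
 0 \to \Mackey H_G^\alpha(S^0;\conc\Z) \to \Mackey H_G^\alpha(S^0;\Mackey A_{G/G}) \to \Mackey H_G^\alpha(S^0;\Mackey R\Z) \to 0
\]
for all $\alpha\in RO(G)$. In particular $\Mackey H_G^\bullet(S^0;\Mackey R\Z)$ is a quotient \emph{ring} of $\Mackey H_G^\bullet(S^0;\Mackey A_{G/G})$, so strict commutativity is inherited from Theorem~\ref{thm:oddMultiplicativeCohomoPoint} with no extra work, and the kernel of the quotient map is the ideal $\langle e^\alpha\kappa^\beta \mid \alpha^G=0,\ \beta\in RO_0(G)\rangle$ identified in Proposition~\ref{prop:concZcohomologypoint}.

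Next I would read off the additive statement grading by grading from these short exact sequences, using the additive description in Theorem~\ref{thm:oddAdditiveCohomoPoint}. When $\alpha^G\neq 0$ the $\conc\Z$-coefficient group vanishes, so $\Mackey H_G^\alpha(S^0;\Mackey R\Z)\iso\Mackey H_G^\alpha(S^0;\Mackey A_{G/G})$, giving $\Mackey R\Z$ (for $\alpha^G<0$ even) or $\Mackey L\Z$ (for $\alpha^G>0$ even) or $0$. When $\alpha^G=|\alpha|=0$ we have the extension $0\to\conc\Z\to\Mackey A[\nu(\alpha)]\to\Mackey R\Z\to 0$ of Proposition~\ref{prop:extensions}, so the cokernel is $\Mackey R\Z$. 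When $\alpha^G=0$ and $|\alpha|>0$, the image of $e^\alpha\kappa^\beta$ is $p$ times a generator of $\Mackey H_G^\alpha(S^0;\Mackey A_{G/G})\iso\conc\Z$ (using $e_1\tau(\iota^\beta)=0$ to kill the $\tau(\iota^\beta)$-part), so the quotient is $\conc{\Z/p}$; and when $\alpha^G=0$, $|\alpha|<0$, the generator of $\Mackey H_G^\alpha(S^0;\Mackey A_{G/G})$ is itself one of the $e^\alpha\kappa^\beta$, so the quotient vanishes. This reproduces the displayed additive formula.

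For the multiplicative statement I would push the generators and relations of Theorem~\ref{thm:oddMultiplicativeCohomoPoint} through the quotient. The classes $e_1^{-m}\kappa$ and $\kappa^\beta=\kappa\mu^{\beta,b}$ all lie in the kernel, so they and every generator and relation mentioning them drop out. Because $\kappa\mu^{\alpha,a}$ dies, the identity $\mu^{\alpha,a}=a\lambda^{\alpha,a^{-1}}+\tfrac{1-aa^{-1}}{p}\kappa\mu^{\alpha,a}$ and the relation $\lambda^{\alpha,a+p}=\lambda^{\alpha,a}+\kappa\mu^{\alpha,a^{-1}}$ collapse all the $\mu^{\alpha,a}$ and $\lambda^{\alpha,a}$ to a single class $\lambda^\alpha$ in grading $\alpha$, with $\lambda^0=1$, $\rho(\lambda^\alpha)=\iota^\alpha$, and $\lambda^\alpha\lambda^\beta=\lambda^{\alpha+\beta}$ coming from the corresponding identities upstairs. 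The surviving generators $\iota_k$, $\iota_k^{-1}$, $\xi_1$, $e_1$, $e_1^{-m}\delta\xi_1^{-n}$, together with these relations and $pe_1=0$ (the image of $\kappa e_1=pe_1$), then generate, and the listed and implied relations are exactly the images of the surviving relations from Theorem~\ref{thm:oddMultiplicativeCohomoPoint}. The work is entirely bookkeeping; the only point needing care is checking that the collapse to the single class $\lambda^\alpha$ is consistent, i.e.\ that the two bases $\{\mu^{\alpha,a},\tau(\iota^\alpha)\}$ and $\{\lambda^{\alpha,a},\kappa\mu^{\alpha,a}\}$ for $\Mackey A[\nu(\alpha)]$ both map compatibly onto the generator of $\Mackey R\Z$, which follows from Proposition~\ref{prop:extensions} and the basis discussion in Section~\ref{sec:MackeyFunctors}. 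I do not expect any genuine obstacle beyond this.
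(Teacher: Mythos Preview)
Your proposal is correct and follows essentially the same approach as the paper: use Proposition~\ref{prop:concZcohomologypoint} to obtain the short exact sequence exhibiting $\Mackey H_G^\bullet(S^0;\Mackey R\Z)$ as the quotient of $\Mackey H_G^\bullet(S^0;\Mackey A_{G/G})$ by the ideal generated by the $e^\alpha\kappa$, then observe that killing $\kappa$ collapses all the $\lambda^{\alpha,a}$ (and $\mu^{\alpha,a}$, up to a unit) to a single class $\lambda^\alpha$, and read off the surviving generators and relations from Theorem~\ref{thm:oddMultiplicativeCohomoPoint}. Your write-up is in fact more detailed than the paper's, which dispatches most of this with ``the rest of the argument is as in the case $p=2$.''
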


\begin{proof}
We again consider the short exact sequence
\[
 0 \to \Mackey H_G^\bullet(S^0;\conc\Z) \to \Mackey H_G^\bullet(S^0;\Mackey A_{G/G})
  \to \Mackey H_G^\bullet(S^0;\Mackey R\Z) \to 0.
\]
Again, the effect is to kill all the elements $e^\alpha\kappa$.
Because $\lambda^{\alpha,a} - \lambda^{\alpha,a'}$ is a multiple of $\kappa$, this also
identifies $\lambda^{\alpha,a}$ with $\lambda^{\alpha,a'}$ when $a\in\nu(\alpha)^{-1}$ and
$a'\in\nu(\alpha)^{-1}$. Thus, we write the common image as $\lambda^\alpha$.
Note that the image of the $\mu^{\alpha,a}$ is $a\lambda^\alpha$, $a\in\nu(\alpha)$, so
it is $\lambda^\alpha$ that is much more useful in this context.
The rest of the argument is as in the case $p=2$.
\end{proof}

\begin{figure}
\[\def\objectstyle{\scriptstyle}
 \xymatrix@!0@R=4ex@C=2.5em{
  & & & & & & & & & & \\
  & \conc{\Z/p} & & \conc{\Z/p} & & \conc{\Z/p} & & \cdot & & \cdot &  \\
  &  &\cdot&  &\cdot&  &\cdot&  &\cdot&  & \cdot \\
  & \conc{\Z/p} & & \conc{\Z/p} & & \conc{\Z/p} & & \cdot & & \cdot &  \\
  &  &\cdot&  &\cdot&  &\cdot&  &\cdot&  & \cdot \\
  \ar@{-}'[r]'[rrr]'[rrrrr]'[rrrrrrr]'[rrrrrrrrr][rrrrrrrrrrr]
   & \Mackey R\Z &  & \Mackey R\Z &  & \Mackey R\Z &  & \Mackey L\Z &  & \Mackey L\Z &  &\\
  &  &\cdot&  &\cdot&  &\cdot&  & \conc{\Z/p} &  &  \conc{\Z/p}  \\
  & \cdot & & \cdot & & \cdot & & \cdot &  & \cdot &     \\
  &  &\cdot&  &\cdot&  &\cdot&  & \conc{\Z/p} &  &  \conc{\Z/p}  \\
  & \cdot & & \cdot & & \cdot & & \cdot &  & \cdot &     \\
  & & & & & \ar@{-}'[u]'[uuu]'[uuuuu]'[uuuuuuu]'[uuuuuuuuu][uuuuuuuuuu]
 }
\]
%\vskip 1ex
\[\def\objectstyle{\scriptstyle}
 \xymatrix@!0@R=4ex@C=2.5em{
  & & & & & & & & & & \\
  & \lambda^{\alpha}e_1^2\xi_1^2 & & \lambda^{\alpha}e_1^2\xi_1 & & \lambda^{\alpha}e_1^2 & & \cdot & & \cdot &  \\
  &  &\cdot&  &\cdot&  &\cdot&  &\cdot&  & \cdot \\
  & \lambda^{\alpha}e_1\xi_1^2 & & \lambda^{\alpha}e_1\xi_1 & & \lambda^{\alpha}e_1 & & \cdot & & \cdot &  \\
  &  &\cdot&  &\cdot&  &\cdot&  &\cdot&  & \cdot \\
  \ar@{-}'[r]'[rrr]'[rrrrr]'[rrrrrrr]'[rrrrrrrrr][rrrrrrrrrrr]
   & \lambda^{\alpha}\xi_1^2 &  & \lambda^{\alpha}\xi_1 &  & \lambda^{\alpha} &  & (\iota^\alpha\iota_1^{-1}) &  & (\iota^\alpha\iota_1^{-2}) &  &\\
  &  &\cdot&  &\cdot&  &\cdot&  & \lambda^{\alpha}e_1^{-1} \delta\xi_1^{-1} &  &  \lambda^{\alpha}e_1^{-1} \delta\xi_1^{-2}  \\
  & \cdot & & \cdot & & \cdot & & \cdot &  & \cdot &     \\
  &  &\cdot&  &\cdot&  &\cdot&  & \lambda^{\alpha}e_1^{-2} \delta\xi_1^{-1} &  &  \lambda^{\alpha}e_1^{-2} \delta\xi_1^{-2}  \\
  & \cdot & & \cdot & & \cdot & & \cdot &  & \cdot &     \\
  & & & & & \ar@{-}'[u]'[uuu]'[uuuuu]'[uuuuuuu]'[uuuuuuuuu][uuuuuuuuuu]
 }
\]
\caption{$\protect\Mackey H_G^\bullet(S^0;\protect\Mackey R\Z)$ and its generators, $p$ odd}\label{fig:OddCohomPointRZ}
\end{figure}
Figure~\ref{fig:OddCohomPointRZ} shows $\Mackey H_G^\bullet(S^0;\Mackey R\Z)$ and its generators.
Comparing this figure to Figure~\ref{fig:OddCohomPoint}, we have replaced some $\mu$'s with
$\lambda$'s. At the origin, this reflects the fact that $\lambda^\alpha$ generates the
copy of $\Mackey R\Z$ in that location. Elsewhere, the factor of $\mu^{\alpha,a}$ should become
$a\lambda^\alpha$, but the group in each case is $\Z/p$, where $a$ is invertible, hence we can
divide by $a$ to get a simpler generator.

Notice that the elements $\lambda^\alpha$, $\alpha\in RO_0(G)$, are invertible
in $\Mackey H_G^\bullet(S^0;\Mackey R\Z)$.
Hence $\Mackey H_G^\alpha(X;\Mackey R\Z) \iso \Mackey H_G^\beta(X;\Mackey R\Z)$ if
$|\alpha| = |\beta|$ and $\alpha^G = \beta^G$, the isomorphism given by multiplication
by $\lambda^{\beta-\alpha}$.
When using $\Mackey A_{G/G}$ coefficients, this is true only when
$\nu(\alpha-\beta) = \pm[1]$.

From the figure we can deduce the following analogue of
Corollary~\ref{cor:manyIsoCoeffs}.

\begin{corollary}\label{cor:manyOddIsoCoeffs}
There is a natural isomorphism
\[
 \Mackey H_G^\alpha(X;\Mackey R\Z) \iso \Mackey H_G^{\alpha+(\MM_1-2)}(X;\Mackey L\Z).
\]
Moreover, this is an isomorphism of modules over $\Mackey H_G^\bullet(S^0;\Mackey R\Z)$.
\qed
\end{corollary}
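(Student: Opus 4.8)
The plan is to follow the pattern of the paragraph preceding, and the proof of, Corollary~\ref{cor:manyIsoCoeffs}: realize a suitable regrading of $\Mackey H_G^\bullet(-;\Mackey R\Z)$ as an equivariant ordinary cohomology theory with coefficients in $\Mackey L\Z$, and then appeal to uniqueness. First I would rewrite the asserted isomorphism $\Mackey H_G^\alpha(X;\Mackey R\Z)\iso\Mackey H_G^{\alpha+(\MM_1-2)}(X;\Mackey L\Z)$ in the equivalent form $\Mackey H_G^{\beta+(2-\MM_1)}(X;\Mackey R\Z)\iso\Mackey H_G^{\beta}(X;\Mackey L\Z)$, and consider the functor $X\mapsto \Mackey H_G^{*+(2-\MM_1)}(X;\Mackey R\Z)$, graded on $*\in RO(G)$ (more precisely on $RO(\Pi_G X)$, but for $X$ a point it is $RO(G)$, which is the case that matters here). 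Regrading an ordinary cohomology theory by a fixed virtual representation again yields an ordinary cohomology theory: this is formal from the axioms of \cite{CW:ordinaryhomology} via the suspension isomorphisms, and is the same move implicit in the discussion of \S\ref{sec:ROGGrading}. So this regraded functor is such a theory.

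Next I would compute its coefficient Mackey functor, i.e.\ its value in integer grading on orbits. On $G/G$ the group in integer degree $n$ is $\Mackey H_G^{n+2-\MM_1}(S^0;\Mackey R\Z)$; since $|n+2-\MM_1|=n$ and $(n+2-\MM_1)^G=n+2$, Theorem~\ref{thm:pointOddRZcoeffs} (equivalently, inspection of Figure~\ref{fig:OddCohomPointRZ}) shows this is $\Mackey L\Z$ when $n=0$ and $0$ when $n\neq 0$, as none of the nonzero cases listed in that theorem is met for $n\neq 0$. On $G/e$ the virtual representation $2-\MM_1$ restricts to $0$, so the group in degree $n$ is $\Mackey H_G^n(G/e_+;\Mackey R\Z)$, which is $\Z$ for $n=0$ and $0$ otherwise. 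Thus the regraded theory satisfies the dimension axiom, with coefficient system having the same underlying groups as $\Mackey L\Z$; to finish the identification I would check that the restriction, transfer and $t^*$ maps agree with those of $\Mackey L\Z$, which one reads off from the explicit $\Mackey H_G^\bullet(S^0;\Mackey R\Z)$-module structure in Theorem~\ref{thm:pointOddRZcoeffs} (the structural relations $\rho(\xi_1)=\iota_1$, $\rho(e_1)=0$, etc.). By uniqueness of equivariant ordinary cohomology --- the same principle invoked in Propositions~\ref{prop:cohomologyTrivialAction} and~\ref{prop:concCcohomology} --- the regraded theory is then naturally isomorphic to $\Mackey H_G^*(-;\Mackey L\Z)$, which is the stated natural isomorphism.

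For the module statement I would observe that both sides are modules over $\Mackey H_G^\bullet(S^0;\Mackey R\Z)$: the regraded theory $\Mackey H_G^{\bullet+(2-\MM_1)}(-;\Mackey R\Z)$ via the ring structure on $\Mackey H_G^\bullet(-;\Mackey R\Z)$, and $\Mackey H_G^\bullet(-;\Mackey L\Z)$ because $\Mackey L\Z$ is a module over $\Mackey R\Z$ (\S\ref{sec:MackeyFunctors}). Since the uniqueness theorem for equivariant ordinary cohomology applies equally to module-valued theories, the isomorphism may be chosen to be $\Mackey H_G^\bullet(S^0;\Mackey R\Z)$-linear; equivalently, the natural transformation is pinned down by the image of the unit, which lies in $\Mackey H_G^{2-\MM_1}(S^0;\Mackey R\Z)\iso\Mackey L\Z$, and module-linearity extends it uniquely over all of $RO(G)$ and all $X$.

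I expect the only real obstacle to be the bookkeeping in the middle step --- confirming that the regraded integer-graded theory has coefficient Mackey functor equal to $\Mackey L\Z$ on the nose, structure maps included, and not merely a Mackey functor with the same underlying abelian groups. This is routine given the complete description in Theorem~\ref{thm:pointOddRZcoeffs}, but it is the one place where care is needed; the module-equivariant form of the uniqueness theorem, while standard, should also be cited precisely.
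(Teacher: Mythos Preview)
Your proposal is correct and follows essentially the same approach as the paper: the paper simply says ``From the figure we can deduce the following analogue of Corollary~\ref{cor:manyIsoCoeffs}'' and marks the result with \qed, relying on the argument spelled out before Corollary~\ref{cor:manyIsoCoeffs} (shift the grading so the integer-graded diagonal meets exactly one nonzero Mackey functor, here $\Mackey L\Z$, and invoke uniqueness). Your only extra care---checking the structure maps of the coefficient Mackey functor---is already handled by Theorem~\ref{thm:pointOddRZcoeffs}, which computes $\Mackey H_G^{2-\MM_1}(S^0;\Mackey R\Z)$ as the Mackey functor $\Mackey L\Z$, not merely its underlying groups.
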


\clearpage	% force figure to appear before references

%\backmatter
\bibliographystyle{amsplain}
\bibliography{Topology}

\end{document}